\documentclass[10pt,twoside,openright,a4paper]{report} 
\usepackage[utf8]{inputenc} 
\usepackage[T1]{fontenc}
\usepackage{lmodern}

\usepackage{amsmath}
\usepackage{amsfonts}
\usepackage{stixgras}
\usepackage{mathrsfs}
\usepackage{mathtools}
\usepackage{stmaryrd}
\usepackage[all]{xy}
\usepackage{makeidx}
\usepackage[british]{babel}
\usepackage{csquotes}

\usepackage{amssymb}
\usepackage{xspace}
\usepackage{amsthm}
\makeatletter\def\th@plain{\slshape}\makeatother
\makeatletter\patchcmd{\th@remark}{\itshape}{\slshape}{}{}\makeatother

\usepackage{xr}

\usepackage{titlesec}
\titleformat{\chapter}[hang]{\Huge\bfseries}{\thechapter.}{.5em}{\vspace*{.05em}}[\vskip 0em]
\renewcommand\thechapter{\Alph{chapter}}
\titlelabel{\thetitle.\quad}

\usepackage[url=true,isbn=false,doi=true,backend=biber,defernumbers=true,backref=true,safeinputenc,maxnames=4,sorting=anyt]{biblatex}

\addbibresource{Rgeom.bib}

\newcounter{bidon}
\newcommand{\rdb}{\refstepcounter{bidon}}
\usepackage[french,nohints]{minitoc}



\theoremstyle{plain}

\newtheorem{theorem}{Theorem}[section]

\newtheorem{pstc}[theorem]{Concrete Positivstellensatz}
\newtheorem{pstf}[theorem]{Formal Positivstellensatz}

\newtheorem{lemma}[theorem]{Lemma}
\newtheorem{corollary}[theorem]{Corollary}
\newtheorem{proposition}[theorem]{Proposition}
\newtheorem{propdef}[theorem]{Proposition and definition}
\newtheorem{prpta}[theorem]{Expected proprerties}
\newtheorem{plcc}[theorem]{Concrete local-global principle}
\newtheorem{fact}[theorem]{Fact}

\newtheorem{theoremc}[theorem]{Theorem\etoz}

\newtheorem{corollaryc}[theorem]{Corollary\etoz}
\newtheorem{propositionc}[theorem]{Proposition\etoz}

\theoremstyle{definition}
 
\newtheorem{rstra}[theorem]{Admissible structural rules}

\newtheorem{definition}[theorem]{Definition}

\newtheorem{dfni}[theorem]{Informal definition}

\newtheorem{definota}[theorem]{Definition and notation} 
\newtheorem{definotas}[theorem]{Definitions and notations} 

\newtheorem{question}[theorem]{Question}
\newtheorem{questions}[theorem]{Questions}

\newtheorem{example}[theorem]{Example}
\newtheorem{examples}[theorem]{Examples}

\newtheorem{definitionc}[theorem]{Definition\etoz}

\theoremstyle{remark}
\newtheorem{remark}[theorem]{Remark}
\newtheorem{remarks}[theorem]{Remarks}

\newtheorem{notE}[theorem]{Note}


\newcommand\Subsection[1]{
\rdb\addcontentsline{toc}{subsection}{#1} \subsection*{#1}}

\newcommand\Subsectio[2]{
\rdb\addcontentsline{toc}{subsection}{#2} 
\subsection*{#1}}

\newcommand\Subsubsection[1]{
\rdb\addcontentsline{toc}{subsubsection}{#1} \subsubsection*{#1}}

\newcommand\gui[1]{``{#1}''}

\renewcommand\paragraph[1]{

\rdb\addcontentsline{toc}{subsubsection}{#1} \medskip \noindent $\bullet$ \textbf{#1}}

\newcommand{\vou}{\MA{\tsbf{ op }}}
\newcommand{\Vou}{\MA{\tsbf{OP}}}
\newcommand \EXists[1] {\tsbf{Introduce }{#1\,}\tsbf{ such that }\,}
\newcommand \vet {\tsbf{,}\;}
\newcommand \Atcl {\mathrm{Atcl}}
\newcommand \Tcl {\mathrm{Tcl}}
\newcommand \Atclv {\mathrm{Atclv}}

\newcommand \Propeq {T.F.A.E.}

\newcommand \ssi {if, and only if,\xspace}

\newcommand \afr {$f$-ring\xspace}
\newcommand \afrs {$f$-rings\xspace}
\newcommand \afrdc {$2$-closed $f$-ring\xspace}

\newcommand \aftrs {strongly real rings\xspace}
\newcommand \alg {algebra\xspace}
\newcommand \algs {algebras\xspace}

\newcommand \alrds {residually discrete local rings\xspace}

\newcommand \cad {\textsl{i.e.}\xspace}
\newcommand \ie {\cad}
\newcommand \cdi {discrete field\xspace}

\newcommand \codi {discrete ordered field\xspace}

\newcommand \cof {constructive\xspace}

\newcommand \egt {equality\xspace}

\newcommand \elt {element\xspace}
\newcommand \elts {elements\xspace}
\newcommand \entrel {entailment relation\xspace}

\newcommand \eqv {equivalent\xspace}
\newcommand \eqve {\eqv}
\newcommand \eseq {essentially equivalent\xspace}
\newcommand \esid {essentially identical\xspace}
\newcommand \evc {vector space\xspace}

\newcommand \gnl {general\xspace}
\newcommand \grl {$\ell$-group\xspace}
\newcommand \grls {$\ell$-groups\xspace}
\newcommand \prmt {precisely\xspace}
\newcommand \prt {property\xspace}

\newcommand \nds {\textsl{non} discrete\xspace}
\newcommand \ndrcf {\nds \rcf}
\newcommand \ndrcfs {\nds \rcfs}
\newcommand \ndsof {\nds ordered field\xspace}
\newcommand \ndsofs {\nds ordered fields\xspace}

\newcommand \ralgs {Horn rules\xspace}
\newcommand \rdy {dynamical rule\xspace}
\newcommand \rdys {dynamical rules\xspace}
\newcommand \rsim {simplification rule\xspace}
\newcommand \rsims {simplification rules\xspace}

\newcommand \sad {dynamic algebraic structure\xspace}
\newcommand \sads {dynamic algebraic structures\xspace}
\newcommand \tho {theorem\xspace}
\newcommand \talg {Horn theory\xspace}
\newcommand \talgs {Horn theories\xspace}
\newcommand \tgm {geometric theory\xspace}

\newcommand \tdy {dynamical theory\xspace}

\newcommand \trdi {distributive lattice\xspace}
\newcommand \trdis {distributive lattices\xspace}

\newcommand \mcu {uniform continuity modulus\xspace}
\newcommand \mcus {uniform continuity moduli\xspace}

\newcommand \sagcs {semialgebraic closed set\xspace}

\newcommand \fsagc {continuous semialgebraic map\xspace}

\newcommand \dofs {discrete ordered fields\xspace}

\newcommand \rcf {real closed field\xspace}
\newcommand \rcfs {real closed fields\xspace}
\newcommand \coma {constructive mathematics\xspace}
\newcommand \clama {classical mathematics\xspace}

\newcommand \Kev {$\gK$-\evc}

\newcommand \QQlg {$\QQ$-\alg}

\newcommand \Rlgs {$\gR$-\algs}

\newcommand \RRlgs {$\RR$-\algs}

\newcommand \TwoRegles {\DeuxRegles}
\newcommand \TwoCols {\DeuxCols}

\newcommand \N {\NN}

\newcommand \oups {\\[}

\newcommand{\Tp}{{\sa T\,'}}

\newcommand \ClI[1] {[\,#1\,]}

\newcommand \thref[1] {Theorem~\ref{#1}}
\newcommand \lemref[1] {Lemma~\ref{#1}}
\newcommand \paref[1] {page~\pageref{#1}}
\newcommand \pstfref[1] {Formal Positivstellensatz~\ref{#1}}
\newcommand \pstref[1] {Positivstellensatz~\ref{#1}}

\newcommand \Note{\rdb
\noi{\sl Note. }}

\newcommand \comm{\rdb
\noi{\sl Comment. }}

\newcommand \rem{\rdb
\noi{\sl Remark. }}

\newcommand \rems{\rdb
\noi{\sl Remarks. }}

\input{MathMacrosRgeomEnglish.tex}

\makeindex

\marginparwidth 0pt
\oddsidemargin  .5cm
\evensidemargin  .5cm
\marginparsep 0pt
\topmargin -1cm
\textwidth 15cm
\textheight 23cm 
\sloppy
\usepackage[bookmarksopen=false,pdftex=true,breaklinks=true,%
 plainpages=false,%
 hyperindex=true,pdfstartview=FitH,%
 pdfpagelabels=true,
 linkcolor=blue,%
 citecolor=blue,urlcolor=red,
 colorlinks=true%
 ]%
 {hyperref}

\newcommand \sibrouillon[1]{}
\newcommand \Today {\sibrouillon{\vspace{-18em}\hspace{10cm}\fbox{\today}\vspace{15.6em}}}
\RequirePackage{etoolbox}

\pagestyle{headings}
\patchcmd{\sectionmark}{\MakeUppercase}{}{}{}
\patchcmd{\chaptermark}{\MakeUppercase}{}{}{}

\begin{document} 

\title{Geometric theories for real number algebra without sign test or dependent choice axiom}
\author{Henri Lombardi and Assia Mahboubi}

\thispagestyle{empty}
~
\vspace{1em}
\begin{center} 
{\bf \LARGE
Geometric theories for real number algebra
\oups.5em]
without sign test or dependent choice axiom}
\oups2em]
{\large
Henri Lombardi and Assia Mahboubi}
\oups.8em]\normalsize
First proposal
\oups.8em]
last version is available in\oups.3em] \url{http://hlombardi.free.fr/Real-Geom.pdf}

\vspace{2em}

  \today
\end{center}

\vspace{2em}

\centerline{\bf Abstract}

\medskip 
\begin{quotation} \label{quotation}

In this memoir, we seek to construct a dynamical theory that is as complete as possible to describe the algebraic properties of the real number field in constructive mathematics without a dependent choice axiom.

\medskip In the first part, we give a few general points about geometric theories and their dynamical version, dynamical theories.

\smallskip The second part is devoted to the study of a finitary geometric theory whose ambition is to describe exhaustively the algebraic properties of the real number field, and more generally of a \ndrcf, at least those that can be expressed in a restricted language close to the language of ordered rings. 
The result is a theory which, in classical mathematics, turns out to be the theory of local real closed rings. The theory of real closed rings is presented here in a constructive form as a purely natural equational theory, based on the virtual root maps introduced in earlier work.
All this constitutes a development, with some minor terminological modifications, of the ideas given in the article \cite{LM2017}.
Finally, we ask whether an infinitary axiom of archimedianity would provide a better understanding of the proposed finitary theory. 

\smallskip In the third part, we introduce a more ambitious theory in which continuous semialgebraic maps are given their own place: new sorts are created for them. This makes it possible to talk \und{inside the theory} about the \mcu of a continuous semialgebraic map on a bounded closed subset of $ \RR^n $. 
This new theory is resolutely infinitary. We then obtain a better description of the algebraic properties of $\RR$, but also a first outline for a constructive theory of certain o-minimal structures.

\end{quotation}

\rdb
\setcounter{tocdepth}{1}
\setcounter{minitocdepth}{3}
\dominitoc

\newpage
\setcounter{page}{0}

\thispagestyle{empty}
~
\newpage

\small
\tableofcontents
\normalsize
\thispagestyle{empty}
%
%

\chapter*{Foreword}
\addstarredchapter{Foreword}
\markboth{Foreword}{Foreword}

The memoir we present here is an unfinished development of the article \cite{LM2017}. Compared to that paper, however, we have modified the definition of continuous semialgebraic maps (Definition \ref{defiFSAGC2}), in the same spirit in which Bishop defines a continuous real map as a uniformly continuous map on any bounded interval. 

Despite its unfinished nature and the many questions that we do not currently know how to answer, we hope that this paper will arouse interest for its original approach to the subject. 

\medskip 
The paper is written in the style of constructive mathematics à la Bishop, i.e.\ mathematics with intuitionistic logic (see \cite{Bi67,BB85,BR1987,CACM,MRR,CCAPM}).

Let us define \textsl{real algebra} as the study of the algebraic properties of real numbers, i.e., the properties of $\RR$ formulable in a first-order formal theory on the language of strictly ordered rings defined by the signature 
\Sigt{\Aso}{\cdot=0,\cdot\geq 0,\cdot>0\mathrel{;}\cdot+\cdot, \cdot\times\cdot,-\cdot, 0,1} 

\noindent with possibly all or some of the constructive reals as constants. We can also envisage introducing new function symbols for well-defined (from a constructive point of view) maps $ \RR^n\to\RR $ whose description is purely algebraic, such as the $ \sup $, $ \inf $ maps and many continuous semialgebraic maps defined on $\QQ$. 

\smallskip 
\textsl{Real constructive algebra} is not well understood! \textsl{Constructive analysis} ($ \simeq $ certified methods in numerical analysis) is much better studied.

From a constructive point of view, real algebra is far removed from the usual classical theory of real closed fields à la Artin-Schreyer-Tarski, in which we assume that we have a sign test for the reals.

Most algorithms in classical real algebra fail with real numbers, because they require \textsl{a sign test}. 

Even in constructive analysis, there could be interesting spin-offs from further study of real algebra. For example, it would help us to understand how to avoid using the axiom of dependent choice (which is common in Bishop's work).
 
\smallskip The understanding of constructive real algebra can also be a first step towards a constructive (and therefore algorithmic) theory of o-minimal structures (cf.~\cite{cos99}, \cite{vdD}). The real line and the $ \RR^n $ spaces studied from a purely algebraic point of view can be seen as constituting the simplest of o-minimal structures. The classical (non-algorithmic) theory of o-minimal structures yields pseudo-algorithms which, in order to work correctly, require at least one sign test on the reals (sorts must also be introduced for the definable parts of $ \RR^n $). And the theory of o-minimal structures has, a priori, a very important area of applications in analysis.

\medskip Thus we are looking for as complete a dynamical theory as possible to describe the algebraic properties of the real number  field in constructive mathematics without an axiom of dependent choice.

In the study we present here, we also avoid the use of negation. Fred Richman \cite{Ric2001} shows that constructive mathematics is more elegant when the axiom of dependent choice is dispensed with. We believe that they are also more elegant if negation is dispensed with.

\medskip In the first part, consisting of Chapters \ref{sectgmq} and \ref{subsecgeominfini} we give some general information on geometric theories and their dynamical version, dynamical theories. For the most part, we refer to sections 1 to 3 of the article \cite{LM2022}. 

\smallskip The second part is devoted to the study of a geometric theory whose ambition is to describe exhaustively the algebraic properties of the real number field, and more generally of a \ndrcf, at least those expressible in a restricted language, close to the language of ordered rings. This constitutes a development, with some minor terminological modifications, of the ideas given in the article \cite{LM2017}.

Chapter \ref{chapcoo} proposes a definition of the ordered field structure in the absence of a sign test.

 Chapter \ref{chap-afr} deals with $f$-rings and some derived structures.

 Chapter \ref{chapreelclos} tries to define the structure of a real closed ordered field in the absence of a sign test.

 Chapter \ref{secGeomReelsArchi} discusses an infinitary geometric theory when we add the axiom that the real number field is \textsl{archimedean}.

So, at the end of this second part, we propose for the coveted dynamical theory that of the archimedean local real closed ring structure. The theory of real closed rings is presented here in an elementary, purely equational form, in the style of \cite{Tre2007}. 

\smallskip In the third part, we add the sorts corresponding to continuous semialgebraic maps on bounded closed semialgebraic subsets. In this way, we hope to obtain a more precise description of real algebra and to be able to sketch a first constructively satisfactory theory for o-minimal structures.

\smallskip Throughout the text, theorems or lemmas in classical mathematics that have no known constructive proof, and often cannot have one, are indicated with a star. 

\smallskip Finally, the article \cite{Lom11} contains reflections, in a more philosophical framework, similar to those proposed here.

\medskip\noindent {\bf Acknowledgements} We would like to thank Michel Coste and Marcus Tressl for their patient answers to our many questions.

\bigskip 
\begin{flushright}
Henri Lombardi, Assia Mahboubbi, \today
\end{flushright}
\part{Geometric theories}

\chapter*{Introduction}
\addstarredchapter{Introduction}

\rdb

This first part is the subject of a more detailed memoir in preparation \cite{Lom-tgac}, which can be found at: \url{http://hlombardi.free.fr/Theories-geometriques.pdf}

We give the main definitions and refer for the main part to sections 1 to 3 of the article \cite{LM2022}

A dynamical theory can be understood as a formalisation of a well-defined piece of intuitive mathematics. This intuitive mathematics, practised by the mathematical community, is studied in a completely computational form independent of any philosophical point of view. But where the classical point of view makes free use of \tsbf{LEM}\footnote{Law of Excluded Middle.} and the axiom of choice, dynamical theories replace these non-computational tools with the dynamical point of view of incompletely specified structures, which is the point of view of lazy evaluation in Computer Algebra. 

\smallskip 
Chapter \ref{chap-gmqfini} deals with finitary dynamical theories. 

A finitary geometric theory corresponds to what is known in classical mathematics as a coherent formal theory. But the geometric theory we are considering is governed by intuitionistic logic, whereas the coherent theory is generally governed by classical logic. 

What's more, the corresponding dynamical theory is a minimalist version of the geometric theory: it's pure computational machinery without logic, rather similar to Goodstein's recursive arithmetic.

A dynamical theory can also be seen as a partial version of natural deduction, in which the formulas examined are all of a very simple type, without the implication connector (hence without negation) and with very limited use of quantifiers. 

The surprise is that dynamical theories are nevertheless very expressive (in classical mathematics any first-order formal theory can be seen as a coherent theory) and that they erase the distinction between classical logic and intuitionistic logic.

In the frequent case where the signature is a countable set and the axioms form a decidable part of the language, the mathematical world outside the theory, which is where we situate ourselves in order to study a given structure, see how the formal system that describes it works, and establish theorems about it, has no interference with the dynamical theory itself. This is confirmed in a general way by the fundamental theorem \ref{thFond}: if we force a finitary geometric theory to behave in a classical way, the \rdys written in the initial language that are valid afterwards were already valid before.

This corresponds to the fact that Grothendieck's coherent topos, which are another form of coherent theories, have an intuitionistic internal logic, but that they can nevertheless be understood in different ways depending on whether or not we are in a constructive external mathematical world.\footnote{That is, essentially, whether or not we accept \tsbf{LEM} in this external world.} 

In constructive mathematics, only certain structures come under finite dynamical theories. For example, the discrete field structure, but not the Heyting field structure.\footnote{A discrete field is a non-trivial ring in which every element is zero or invertible, and a Heyting field is a local ring in which every non-invertible element is zero. Classical mathematics does not know the relevant distinction between the notion of a discrete field and that of a Heyting field.}\index{field!Heyting ---}\index{Heyting!field} In this respect, the restricted viewpoint of dynamical theories opens the way to a relevant classification, invisible in classical mathematics, concerning the degrees of complexity of mathematical beings invented by humans. 

\smallskip 
Chapter \ref{chap-gmqinfini} deals with infinitary dynamical theories, in which infinite disjunctions are allowed in the conclusion of a \rdy.

An essential restriction must be noted: the free variables present in such a disjunction must be specified in advance and in finite number.

Intuitively, such rules are used in the proof system of dynamical theories by ``opening the branches of computation corresponding to the infinite disjunction''. What does this mean precisely? It means that a conclusion will be declared valid if it is valid in each of the branches. 

 These theories are more expressive than finitary theories and make it possible to axiomatise a very large number of common mathematical structures. 
 
 Unlike finitary dynamical theories, the external mathematical world inevitably intervenes to certify the validity of a \rdy.
 
 Let's take a simple example and show what happens if the axioms contain an infinitary rule of the following type

\Regles{\lab {~} $ \Vdi{x_1,\dots,x_k} \Vou_{i\in I}\; \Gamma_i $ 
}

\noindent with an infinite set $ I $ and the $ \Gamma_i $ are lists of atomic formulas with no free variables other than those mentioned (i.e.\ $ \xk $). If for each $ i\in I $ we have a valid rule $ \,\,\Gamma_i\vd B(\ux) $, then we declare the rule $ \Vdi{x_1,\dots,x_k}B(\ux) $ to be valid. 

There is therefore necessarily an intuitive proof \und{external} to the dynamical theory to certify that the desired conclusion is valid in each of the branches. In fact, the computation system at work in the dynamical theory cannot handle such an infinite number of proofs. A purely mechanical computation cannot open up an infinite number of branches! For example, with $ I=\N $ the external intuitive proof could be a proof by induction.

Note, on the other hand, that the internal proof must show the validity of the desired conclusion according to the rules of proof \gui{without logic} of the dynamical theory.

\medskip\noindent 
{\bf Terminology.} 
Since we are dealing with constructive mathematics, terminological problems inevitably arise, simply because, for example, the same classical concept generally gives rise to several interesting constructive concepts which are not equivalent, but which are equivalent in classical mathematics. 

 Below are small tables comparing our terminology (in constructive mathematics) and the most common English terminology (in classical mathematics) for geometric theories. The one found in \cite[Chapter~D1]{Joh2-02}, \cite{Car2017} and in~\cite{BH2017}.

The comparison is somewhat biased by the fact that dynamical theories do not use logic as such. They are pure computational machines. Thus, although a finitary dynamical theory \gui{generate} a (first-order formal ) consistent theory and although every consistent theory admits a version \gui{finitary dynamical theory}, they are not the same formal objects. Witness the fact that a coherent theory does not work in the same way with classical logic and with intuitionistic logic, whereas a dynamical theory is insensitive to this distinction because, structurally, dynamic proofs are always constructive. 

\newpage

\begin{center}
 \tabcolsep0pt\renewcommand{\arraystretch}{0}%
\begin{tabular}{|c|}
\hline 
\Boite{.7}{12}{\large Finitary geometric theories}\\
\hline 
\end{tabular}
\begin{tabular}{|c|c|}
\Boite{.6}{7.5}  {Our termnology}&
\Boite{.6}{4.5} {Elephant}\\
\hline
\Boite{.6}{7.5}  {geometric theory}&
\Boite{.6}{4.5} {geometric theory}\\
\hline
\Boite{.6}{7.5}  {dynamical theory}&
\Boite{.6}{4.5} {~}\\
\hline
\Boite{.6}{7.5} {purely equational}&
\Boite{.6}{4.5} {algebraic}\\
\hline
\Boite{.6}{7.5} {direct}&
\Boite{.6}{4.5} {}\\
\hline
\Boite{.6}{7.5}  {Horn}&
\Boite{.6}{4.5} {Horn}\\
\hline
\Boite{.6}{7.5}  {disjunctive}&
\Boite{.6}{4.5} {}\\
\hline
\Boite{.6}{7.5}  {propositional}&
\Boite{.6}{4.5} {propositional}\\
\hline
\Boite{.6}{7.5}  {existential, or regular}&
\Boite{.6}{4.5} {regular}\\
\hline
\Boite{.6}{7.5}  {existentially rigid}&
\Boite{.6}{4.5} {}\\
\hline
\Boite{.6}{7.5}  {existential existentialy rigid, or cartesian}&
\Boite{.6}{4.5} {cartesian}\\
\hline
\Boite{.6}{7.5} {rigid}&
\Boite{.6}{4.5} {disjunctive, \cite{Johnstone79}}\\
\hline
\Boite{.6}{7.5}  {finitary dynamical}&
\Boite{.6}{4.5} {}\\
\hline
\Boite{.6}{7.5}  {intuitionnist coherent}&
\Boite{.6}{4.5} {}\\
\hline
\Boite{.6}{7.5}  {classical coherent}&
\Boite{.6}{4.5} {coherent}\\
\hline
\end{tabular}
\end{center}

\begin{center}
\tabcolsep0pt\renewcommand{\arraystretch}{0}%
\begin{tabular}{|c|}
\hline 
\Boite{.7}{12}{\large General  (infinitary) geometric theories}\\
\hline 
\end{tabular}
\begin{tabular}{|c|c|}
\Boite{.6}{7.5}  {Theory}&
\Boite{.6}{4.5} {Theory}\\
\hline
\Boite{.6}{7.5}  {dynamical}&
\Boite{.6}{4.5} {}\\
\hline
\Boite{.6}{7.5}  {geometric}&
\Boite{.6}{4.5} {geometric intuitionist}\\
\hline
\Boite{.6}{7.5}  {geometric classical}&
\Boite{.6}{4.5} {geometric}\\
\hline
\end{tabular}
\end{center}
\begin{center}
\tabcolsep0pt\renewcommand{\arraystretch}{0}%
\begin{tabular}{|c|c|}
\hline
\Boite{.7}{7.5}{Dynamical theories}&
\Boite{.7}{4.5}{Geometric theories}\\
\hline
\Boite{.6}{7.5}{identical (same signature)}&
\Boite{.6}{4.5}{equivalent}\\
\hline
\Boite{.6}{7.5}{essentially identical (same sorts)}&
\Boite{.6}{4.5}{~}\\
\hline
\Boite{.6}{7.5}{classically essentially identical (same sorts)}&
\Boite{.6}{4.5}{definitionally equivalent}\\
\hline
\Boite{.6}{7.5}{essentially equivalent}&
\Boite{.6}{4.5}{}\\
\hline
\Boite{.6}{7.5}{classically essentially equivalent?}&
\Boite{.6}{4.5}{Morita equivalent}\\
\hline
\end{tabular}
\end{center}
\newpage \thispagestyle{empty}

\chapter{Finitary geometric theories}\label{sectgmq}\label{chap-gmqfini}
\index{theory!finitary geometrical ---}\index{theory!geometrical ---}

\Today

\minitoc

\section{Coherent and finitary dynamical theories}\label{subsectdy}

\Subsection{Coherent theories}\index{theory!coherent ---}
A \textsl{coherent theory} $\sa{T}=(\cL,\cA)$ is a first-order formal theory based on the language $\cL$ in which the axioms (the elements of $ \cA $) are all \gui{geometric}, i.e.\ of the following form:
%
\begin{equation} \label{eqAgeom}
\forall \und x \;\;\big(C\; \Longrightarrow \; \exists\, \und{y^1} \,D_1\;
\vee\;\cdots\;\vee\;\exists\,\und{y^m}\,D_m\big)
\end{equation}
where $ C $ and the $ D_j $ are \textsl{conjunctions of atomic formulas} of the language $\cL$ of the formal theory, the $ \und{y^j} $ are lists of variables, and $ \und x $ is the list of other occuring variables (these lists may be empty). The variables in $ C $ are only in the $ \und x $ list. The variables in $ D_j $ are only in the disjoint lists $ \und x $ and $ \und{y^j} $. An empty disjunction in the second member can be replaced by the symbol $ \bot $ representing $ \False $.

We also say \textsl{finite geometric theory} instead of coherent theory when we use intuitionist logic.\index{coherent!theory}\index{theory!finitary geometric ---}

\Cadre{.9}{\noindent In the remainder of Chapter \ref{sectgmq}, we almost always omit the qualifier \gui{finitary} before \gui{geometric theory} or \gui{dynamical theory}}

\Subsection{Finitary dynamical theories}\index{theory!dynamic ---}\index{dynamic!theory}\label{sectdyfinitaire}

Main reference \cite{CLR01}. This article introduces the notions of \gui{dyanamical theory} and \gui{dynamical proof}. See also: the article \cite[Bezem \& Coquand, 2005]{BC2005} which describes a number of advantages provided by this approach, and the precursor articles \cite[Prawitz 1971, sections 1.5 and 4.2]{pra1971}, \cite[Matijasevi\v c 1975]{Mat75} and \cite[Lifschitz, 1980]{Lif80}.

If \sa{T} is a (finitary) geometric theory, the corresponding \textsl{(finitary) dynamical theory} differs from it only by an extremely limited use of proof methods:
\begin{itemize}

\item 
Firstly, no formulas other than atomic formulas are ever used: no new predicates using logical connectors or quantifiers are ever introduced. Only lists of atomic formulas from the~$\cL$ language are manipulated.

\item 
Secondly, and in accordance with the previous point, axioms are not seen as true formulas, but as \textsl{deduction rules}: an axiom such as \pref{eqAgeom} is used as a rule \pref{eqRgeom}\label{NOTAvou}\index{dynamic!rule} 
\begin{equation} \label{eqRgeom}
\Gamma \vd \EXists{\und{y^1}} \Delta_1
\vou \cdots \vou \EXists{\und{y^m}} \Delta_m
\end{equation}
Here the conjunctions of atomic formulas $ C $, $ D_1 $, \dots, $ D_m $ of \pref{eqAgeom} have been replaced by the corresponding lists $\Gamma$, $ \Delta_1 $, \dots, $ \Delta_m $.

\item 
Thirdly, we only prove \textsl{\rdys}, i.e.\ theorems which are in the form of the deduction rules above.\index{rule!dynamical ---}. 

\item 
Fourth, the only way to prove a \rdy is by a tree computation \gui{without logic}. At the root of the tree are all the hypotheses of the theorem we want to prove. The tree develops by applying the axioms according to pure algebraic computation machinery in the structure. See Examples \ref{exasaCd}. The precise formal definitions are given in \cite{CLR01}, we extend them to the case where there are several types of objects as in the theory of modules on a commutative ring with objects of type \gui{elements of the ring} and objects of type \gui{elements of the module}. 
\end{itemize}

\smallskip When we apply an axiom such as \pref{eqRgeom}, we substitute arbitrary terms $ (t_i) $ from the language for the free variables $ (x_i) $ present in the rule. If the hypotheses, rewritten with these terms, are already proven, then branches of computation are opened in each of which fresh variables corresponding to the dummy variables $ \und{y^k} $ are introduced (their names may have to be changed to avoid conflict with the free variables present in the $ t_i $ terms) and each conclusion~$ B_k $ is valid in its branch.\footnote{$ B_k $ is the list $ \Delta_k $ in which the $x_i$ variables have been replaced by the $ t_i $ terms.} 

The very elementary examples \ref{exasaCd} show how to validate a dynamic rule in a given dynamical theory . We develop a computation tree using the axioms of the dynamical theory  as indicated above and we have won when, at each leaf of the tree, the conclusion is validated.

\begin{examples} \label{exasaCd}   
The dynamical theory  \SA{Cd} of \textsl{discrete fields} is based on the language of commutative rings and its axioms are those of non-trivial commutative rings (theory \Sa{Ac} in the example \ref{exaAc}) and 
the dynamic rule for discrete fields:

\Regles{\Lab{CD} $\vd x=0\vou\EXists {y} xy=1$}

\smallskip \noindent 1) To demonstrate the dynamic rule

\Regles{\Lab{ASDZ} $\,\,xy=0\vd x=0 \vou y=0$}

\noindent we open two branches in accordance with the axiom \tsbf{CD}.
In the first we have $x=0$ and the conclusion is proved.
In the second, we introduce a ``parameter'' (a fresh variable) $z$ with the relation \hbox{$xz=1$}. The axioms of commutative rings can then be used to prove the equalities $y=1\times y=(xz) y=(xy)z=0\times z=0$.
The conclusion is therefore validated for each of the two leaves of the tree. 

\smallskip \noindent 2) Then, for example, we deduce from the previous dynamic rule the rule 

\Regles{\Lab{Anz} $z^2=0 \vd z=0$}

\smallskip \noindent because this time both leaves of the tree have the same conclusion $z=0$.

\smallskip \noindent 3) The theory \SA{Al} of local rings is based on the language of commutative rings and its axioms are those of commutative rings (theory \Sa{Ac0} in Example \ref{exaAc}) and 
the dynamic rule for local rings

\Regles{\Lab{AL} $\,\, (x+y)z=1 \vd \EXists u \; xu=1 \;\vou\;\EXists u\;yu=1$}

\noindent To prove that a discrete field satisfies the rule \tsbf{AL},
we open two branches in accordance with the axiom \tsbf{CD}.
In the first, we have $x=0$ and the conclusion is proved because $(x+y)z=1$ gives $yz=1$.
In the second we introduce a ``parameter'' (a fresh variable) $v$ with the relation~\hbox{$xv=1$}.
The conclusion in the rule \tsbf{AL} is therefore proven at both leaves of the calculation tree.
\eoe
\end{examples}

Note also that the validity of the following rule, which could be called \gui{Concrete existence implies formal existence}, is purely tautological. 

Let us consider a list $\Gamma(\ux,\uy)$ of atomic formulas in a dynamical theory $\sa{T}$. Let us denote $ \Gamma(\ux,\underline t)$ the list of these formulas in which we have substituted for each variable $y_j$ a term $t_j$ constructed on the $x_i$ and on the constants of the theory. Then the following existential rule is valid. 
\[
\Gamma(\ux,\underline t)\vd \Exists y_1,\dots,y_m \;\Gamma(\ux,\uy).
\]

\paragraph{Logic replaced by computation}~
 
\smallskip In practice, proving a \rdy within the framework of a dynamical theory always follows an intuitive natural reasoning, and this gymnastics can be seen as a simplified version of Gentzen's natural deduction. The symbol $\vou$ should be understood as an abbreviation for \gui{{\bf op}en (branches in the calculation)}.

\smallskip The symbols $\vou$ and $\;\EXists{\cdot} $ have been preferred to $\vuu$ and $\exists$, to make it clear that their use in deduction rules is not the use of new formulas constructed from atomic formulas. The symbol $\,\vd\,$ has been preferred to $\vdash$ to avoid confusion with the symbol used for entailment relations in distributional lattices. Note also that it does not have the same interpretation as the analogous symbol used in Gentzen-style sequence calculations. 

\smallskip Thus the language of a dynamical theory contains no logical symbols (connectors or quantifiers) that can be used to construct complicated formulas from atomic formulas. The \gui{logic} is replaced by the symbols $\vdi$, $\vou$ and $\;\EXists\cdot$ and by the separator \gui{$\vet$}, but these symbols are used to describe a machinery of arborescent calculations and not to form formulas. The non-logical part of a dynamical theory consists of symbols for variables, and the \textsl{signature}, which contains symbols for sorts, predicates and functions.

\Cadre{.9}{\noindent In the following, we replace \gui{\,$\EXists{\cdot} \dots$\,} with the less cumbersome \hbox{\gui{\,$\Exists\,\cdot\;\dots $\,}}, which is closer to and yet different from the traditional \gui{\,$\exists\,\cdot\;\dots $\,}.\label{NOTAExists}} 
 
\paragraph{Equality predicate} ~

\smallskip 
In a dynamical theory each sort must be provided with an equality predicate $ \cdot=\cdot $ and we give the axioms which authorise the substitution of a term $t$ by a term $ t' $ when $ \vd t=t' $ is valid in the theory\footnote{This excludes the case where $t$ contains a variable $x$ under the dependence of an $ \Exists \,x $.} in any occurrence of an atomic formula present in a valid \rdy.

We could just as well not give any axioms relating to this substitution and consider that it is simply a legitimate calculation procedure. 
 
\paragraph{Simple extension of a dynamical theory}
\begin{definition} \label{defiextsimple}
It is said that the dynamical theory \textsl{$ \Tp=(\cL',\cA') $ is a simple extension of the dynamical theory $ \sa T=(\cL,\cA) $} if $ \cL\subseteq \cL' $ and $ \cA\subseteq \cA' $.
In this case the \rdys formulated in $\cL$ and valid (i.e.\ demonstrable) in \sa T  are valid in $\Tp$.\index{extension --- of a dynamical theory!simple} 
\end{definition}

\begin{remark} \label{remdefiextsimple} 
 In the previous definition, the expression \gui{simple extension} can be questioned. If $ \cL,\cA,\cL',\cA' $ are finite sets, or if they are discrete countable sets, we can consider that everything is intuitively clear. However, it may happen that we wish to use more complicated sets, for example to introduce all the reals as constants in a theory of which one sort is intended to describe the real numbers. In such a case, the word \gui{simple extension} is questionable because there is no canonical monomorphism in Bishop's category~$ \slbSet $: in Bishop's conception, a part of a set corresponds to the categorical notion of a subobject. In this framework, therefore, \gui{simplicity} is not an objective notion, or if you prefer, it has no precise mathematical definition. \eoe
\end{remark}

\Subsection{Structural rules}

Here we give \textsl{admissible structural rules} for a dynamical theory. These are \textsl{external} deduction rules (different from \rdys, which are internal to the theory). They say that if certain \rdys are valid, then other \rdys are automatically valid.\index{rule!external deduction ---}\index{rule!admissible ---}\index{rule!structural ---}

Here are the admissible structural rules that we feel are the most important.
\begin{rstra} \label{rstr1}  ~
\begin{enumerate}\setcounter{enumi}{-1}
 
\item 
\textsl{Free variables, dummy variables}

\begin{enumerate}
 
\item \label{6rstr} 
\textsl{Substitution.} In a \rdy, you can replace all occurrences of a free variable with a term, provided that you never create a conflict between free and dummy variables. 
 
\item \label{5rstr} 
\textsl{Renaming.} In a \rdy, you can rename free variables or dummy variables (those present in the $ \Exists $) as long as you never create a conflict between free and dummy variables. 
\end{enumerate}

\item \textsl{Benefit from work already done}
 
\begin{enumerate} 
 
\item \label{12rstr} 
\textsl{Shortcuts}. Once the validity of a \rdy has been demonstrated, it can be added to the axioms of the theory.
 
\item \label{11rstr} 
\textsl{Simultaneous reinforcement of hypothesis and conclusions}. In a dynamical theory, we consider a valid rule 
\[
\Gamma \vd \Exists{\und{y^1}} \Delta_1 \vou \cdots\vou \Exists{\und{y^m}} \Delta_m
\]
Let $A$ be an atomic formula which does not involve any of the existential variables of the second member. Let $\Gamma'$ be the list $\Gamma$ followed by $A$ and $\Delta'_i$ the list $\Delta_i$ followed by~$A$. Then the following rule is also valid:
\[
\Gamma' \vd \Exists{\und{y^1}}\, \Delta'_1 \vou \cdots\vou \Exists{\und{y^m}}\,\Delta'_m
\]
\end{enumerate}%
 
\item 
\textsl{Lists as finite sets}
\begin{enumerate} 
 
\item \label{1rstr} 
\textsl{Permutation of atomic formulas appearing in a list.} 
 
\item \label{2rstr} 
\textsl{Contraction} If two identical atomic formulas appear in a list, one of the two can be deleted. \\
Conversely, you can duplicate an atomic formula in an arbitrary list.
 
\item \label{3rstr} 
\textsl{Monotony}. Atomic formulas can be added as required to the list to the left of $ \vd $. 
 
\item \label{4rstr} 
\textsl{Permutation, contraction and monotony for the $\vou$ to the right of the $ \vd $}. 

\end{enumerate} 

\item \label{10rstr} 
\textsl{Lists of atomic formulas as conjunctions}

\begin{enumerate}
 
\item \label{10rstr1} 
\textsl{To prove a list of atomic formulas is to prove each of them.}
In a theory, consider a \rdy $ \Gamma\vd (A_1\vet \dots\vet A_n) $. This \rdy is valid if, and only if, the rules $ \, \Gamma\vd A_k \; (k\in\lrbn) $ are valid. 
 
\item \label{10rstr2}
\textsl{Distributivity of $\vou$ on the implicit \gui{and} in the lists.}
In a theory, we consider a \rdy 
\[
\Gamma\vd (A_1\vet \dots\vet A_n) \vou \Exists{\und{y^1}}\,\Delta_1\vou \dots \vou \Exists{\und{y^m}}\,\Delta_m.\] 
This \rdy is valid if, and only if, the following \rdys are valid 
\[
\Gamma\vd A_k \vou \Exists{\und{y^1}}\,\Delta_1\vou \dots \vou \Exists{\und{y^m}}\,\Delta_m\quad \quad (k\in\lrbn).
\] 
\end{enumerate}
 
\item \textsl{Transitivity and variants}
\begin{enumerate}
 
\item \label{7rstr} 
\textsl{Transitivity.} We give an example, leaving it to the reader to give the general formulation. Let us suppose that we have valid \rdys in a dynamical theory 
\[ 
\begin{array}{rcl} 
\Gamma(\ux) & \vd & \Exists y,z\; \Delta_1(\ux,y,z)\vou \Exists u \;\Delta_2(\ux,u), \oups.3em] 
\Gamma(\ux),\Delta_1(\ux,y,z) & \vd & \Exists r,s,t\; \Delta_3(\ux,y,z,r,s,t), \oups.3em] 
\Gamma(\ux),\Delta_2(\ux,u) & \vd & \Exists v\; \Delta_4(\ux,u,v)
\vou \Exists w\; \Delta_5(\ux,u,w). 
 \end{array}
\]
Then the rule
\[
\Gamma(\ux)\vd\Exists y,z,r,s,t\; \Delta_3(\ux,y,z,r,s,t)\vou \Exists u,v\; \Delta_4(\ux,u,v)\vou \Exists u,w\; \Delta_5(\ux,u,w).
\]
is also valid.
 
\item \label{8rstr} \textsl{Cut.} 
Consider lists of atomic formulas $\Gamma(\ux),\Delta_0(\ux),\Delta_1(\ux),\dots,\Delta_m(\ux)$  $(m\geq 1) $ in a dynamical theory $ \sa{T} $. If the two \rdys 
\[
\Gamma\vd\Delta_0\vou\Delta_1\vou\dots\vou\Delta_m \quad \hbox{ and }\quad \Gamma,\Delta_0\vou\Delta_1\vou\dots\vou\Delta_m 
\]
are valid in $\sa{T}$, then the rule $\Gamma\vd\Delta_1\vou \dots\vou \Delta_m$ is also valid.
 
\item \label{8rstr2} \textsl{Cut with existence.} 
A more general version is as follows. 
Consider lists of atomic formulas $\Gamma(\ux),\Delta_0(\ux,\und{y^0}),\Delta_1(\ux,\und{y^1}),\dots,\Delta_m(\ux,\und{y^m})$ $(m\geq 1)$ dans une \tdy $\sa{T}$ $(m\geq 1)$ in a dynamical theory $ \sa{T}$.
If the two \rdys 
\[\hspace{-1.5em}
\Gamma\vd\Exists{\und{y^0}}\,\Delta_0\vou \Exists{\und{y^1}}\,\Delta_1\vou \dots\vou \Exists{\und{y^m}}\,\Delta_m \; \hbox{ and }\; \Gamma,\Delta_0\vd\Exists{\und{y^1}}\,\Delta_1\vou \dots\vou \Exists{\und{y^m}}\,\Delta_m 
\]
are valid in $\sa{T}$, then the rule
$\Gamma\vd\Exists{\und{y^1}}\,\Delta_1\vou \dots\vou \Exists{\und{y^m}}\,\Delta_m $ 
is also valid.
\end{enumerate}
\end{enumerate}
\end{rstra}

\Subsection{Collapsus}\index{collapsus}%
\index{rule!collapse ---}\label{NOTABot}

A \rdy is called a \textsl{collapse rule} when the second member is \gui{$ \Faux $}, which we note $\Bot$. We can also see $\Bot$ as designating the empty disjunction. Once $\Bot$ has been proved, the universe of discourse collapses, and every atomic formula is then deemed to be \gui{true}, or at least \gui{valid}. This is the application of the rule \gui{ex falso quod libet}, which is the relevant intuitive meaning of $ \False $ in constructive mathematics. Thus in dynamical theories the rules 

\Regles{\lAb{False$ _{P} $} $ \,\,\Bot\vd P $ \quad (ex falso quod libet)} 

\noindent are valid for all atomic formulas.

In the language, we also give the logical constant $\Top$ for \gui{$ \True $}, with the following Horn rule  as its axiom. \label{NOTATop} 

\Regles {\lAb{True} $ \,\,\vd \Top $}

We can also see $\Top$ as designating the empty conjunction.\footnote{When there's nothing to prove, let's prove nothing and everything will be OK. Moreover, in a dynamical theory with at least one sort $ \iS $, $\Top$ is equivalent to $ x=_\iS x $.}
The constants $\Bot$ and $\Top$ are the only logical symbols in dynamical theories.

When a dynamical theory has no collapse rule, it always admits the model reduced to a point\footnote{If there are several sorts, each sort is reduced to a point.} where all atomic formulas are evaluated true. This is the final object in the category of models of the theory.

We can say that a dynamical theory without a collapse rule collapses if all the atomic formulas are valid, with the exception of $\Bot$.

A dynamical theory with a collapse rule is said to collapse when $\Bot$ is provable, and consequently so are all the \rdys. In this case the theory admits no model.
 
To consider collapse in the sense of a single model reduced to a point, rather than in the sense of pure nothingness, is merely a matter of taste which changes nothing in the essence of things.%
\footnote{In fact, one of the authors must have a horror of the void~:{\tiny )}, the silence of this infinite space frightens him~:{\tiny (}. Moreover, if total disappearance into nothingness is the true meaning of $ \False $, the fact remains that, even before forbidding the existence of models, $ \False $ begins by reducing them to a single point, which satisfies all the predicates. As Boris Vian's song says: \gui{on est descendu chez Satan et en bas c'était épatant!}.} 

Instead of saying that a collapsing dynamic algebraic structure has no model, we say (without negation) that any model of this dynamic algebraic structure is trivial, reduced to a point, and that \gui{everything in it is true}.

To formally reconcile these two points of view, the best solution seems to be the following: each sort $\iS$ introduced is accompanied by at least two constants in this sort, say $0_\iS$ and $1_\iS$ to fix ideas, with the axiom \fbox{$ 0_\iS=_\iS1_\iS\vd\Bot $}. In what follows, this is what would normally happen for the theory of non-trivial distributive lattices and the theory of non-zero commutative rings, as well as in all their extensions. But we prefer to use the following convention.

\Cadre{.9}{\noindent Throughout this memoir, in the case of a ring or a distributive lattice, we consider that the collapse is always given in the form $1=0$ or a formula of the same style, for example $0>0$ for an ordered field. One disadvantage of using the symbol $\Bot$ is that it takes us out of the realm of Horn theories when we could be staying there. Readers who so wish can add an axiom of the type $ 1=0\vd \Bot $.}

\Subsection{Classification of dynamical theories} 

\paragraph{Horn theories}~

\smallskip\noindent A \rdy which does not contain to the right of the symbol $\vd\,$ either $\vou$, or $\,\Exists\,$ or $\Bot$ is called a \textsl{Horn rule}. A dynamical theory is said to be \textsl{Horn} when it contains only Horn rules as axioms. In the french version of this paper and in the paper \cite{CLR01}, Horn theories are called \textsl{théories algébriques}. A special case is provided by purely equational theories, which are Horn theories with a single sort and the only predicate being the equality predicate.
\index{Horn!rule}\index{Horn!theory}\index{rule!Horn ---}%
\index{theory@Horn ---}

\smallskip
\Note We use the following terminology from \cite{CLR01}. A Horn rule is said to be \textsl{direct} when, to the left of the symbol $ \vd\, $, there are only atomic formulas relating to variables different from each other or to constants. The other Horn rules are called \textsl{\rsims}. For example, in the two examples below, the first is direct, the second is not.%
\index{direct!rule}\index{simplification!rule}\index{rule!simplification ---}\index{rule!direct ---}

\DeuxRegles
{\labu $ x=0\vet y=0\vd x+y=0 $}
{\labu $ x+y= 0\vet x=0\vd y=0 $}

\paragraph{Disjunctive theories}~

\smallskip \noindent A dynamical theory is said to be \textsl{disjunctive} if in the axioms there are no $ \Exists $ to the right of the~$ \vd $ \index{theory!disjunctive ---}\index{rule!disjunctive ---}. 

\paragraph{Existential theories}~

\smallskip\noindent A \rdy is said to be \textsl{simple existential} if the second member (the conclusion) is of the form $ \Exists \uy\; \Delta $ where $\Delta$ is a finite list of atomic formulas.\index{rule!simple existential ---}

A dynamical theory is said to be \textsl{existential} if its axioms are all simple algebraic or existential rules (a Horn rule  can also be considered as a special case of a simple existential rule). A typical existential theory is the theory of \textsl{Bézout rings} (any finitely generated ideal is principal). In the English literature on categorical logic (studied in the context of classical mathematics), an existential theory is called a \textsl{regular theory}. In the french version of this paper an existential theory is called a \textsl{théorie existentielle}
\index{theory!existential ---}\index{theory!regular ---}

\paragraph{Existentially rigid, cartesian theories}~

\smallskip\noindent 
Existentially rigid theories are dynamical theories in which the existential axioms are simple and correspond to unique existences. This generalises (very slightly) the disjunctive theories. 

\smallskip 
An existentially rigid theory is said to be cartesian.
This generalises (very slightly) Horn theories. 

\paragraph{Rigid theories}~

\smallskip\noindent A dynamical theory is said to be \textsl{rigid} if it is existentially rigid and if the disjuncts of the second member in the disjunctive axioms are two by two incompatible. For example, the theory of discrete fields is rigid, but the theory of local rings is not. The theory of real closed discrete fields can be stated rigidly, but the theory of algebraically closed discrete fields cannot. See \cite{Johnstone79}, who uses the terminology \gui{disjunctive theory} where we use \gui{rigid theory}.

\paragraph{Propositional theories}~

\smallskip\noindent
The (classical or intuitionistic) logic of propositions has a very abstract character, which may seem useless from a dynamic point of view, since it is already present in the form of some of the admissible structural rules \ref{rstr1}. However, it is useful for the definition of distributive lattices associated with dynamic algebraic structures (Section \ref{subsectrdisad}.)
 
 The logic of propositions can be presented in a minimal dynamic form as follows, without any sort, which implies that the constants must be interpreted as pure abstract truth values (in classical logic they only hesitate between $\True$ and $\False$). 
 
The constants are therefore $\Bot$, $\Top$, and \textsl{propositional constants} or \textsl{propositions}. To define such a theory $(\cL,\cA)$, we give a set $G$ of propositional constants.\footnote{This fixes the language $\cL$ via the signature $\so{G,\Top,\Bot}$} and a set $ \cA $ of axioms which are disjunctive rules on the language $\cL$) 

First we have the axioms $ \Bot\vd p $ and $ p \vd \Top $, and the axioms which handle equality in $G$: $ p\vd q $ each time $ p=_G q $. If $G$ is not a discrete set, these axioms reflect the structure of the set $G$ in the informal category \sa{Set}.

The additional axioms given in $ \cA  $ are of the type $ p_1\vet p_n\vd q_1 \vou \dots \vou q_m $ where $ p_i $ and $ q_j $ are constants in $G$ (with possibly $ m=0 $ or $ n=0 $).

Two constants $p$ and $ q $ are said to be \textsl{opposite} or \textsl{complementary} if they satisfy the axioms of negation

\DeuxRegles
{\labu $ \vd p\vou q $}
{\labu $ p\vet q\vd \Bot $}

While classical propositional logic can be interpreted as given by dynamical theories without any sort of the type described above, the same cannot be said for intuitionistic logic. Indeed, the~$\Rightarrow$ connector cannot be described by restricting ourselves to dynamical theories. Obviously, this connector can be introduced into the language, but the external structural rule used to introduce implication in natural deduction cannot be formulated as a \rdy. 

\section{Dynamic algebraic structures}\label{subsubsecSAD}

References: \cite{CLR01}, \cite{Lom06}, \cite{LM2022}.
 
\smallskip The dynamic algebraic structures are explicitly named in~\cite{Lom06}. 
In~\cite{CLR01}, they are implicit, but explicit in the form of their presentation. They are also implicit in~\cite{Lom02}, and, last but not least, in \cite[{D5}, 1985]{D5}, which has been an essential source of inspiration: one can compute safely in the algebraic closure of a discrete field, even when it is not possible to construct this algebraic closure. It is therefore sufficient to consider the algebraic closure as a dynamic algebraic structure \gui{à la D5} rather than as a usual algebraic structure: \textsl{lazy evaluation in D5 provides a constructive semantics for the algebraic closure of a discrete field}.

\Subsection{Definitions, examples} 

\begin{example} \label{exaAc} 
Our first example is the purely equational theory of commutative rings (with only one sort, called $ \Ac $) in which most of the calculations are entrusted to machines outside the formal theory. This possibility is based on the fact that the elements of the ring $ \ZZxn $ can be reduced to a predefined normal form. This implies that the equality of two terms is equivalent to the identity of their normal forms. Consequently, the binary equality predicate can be replaced by the equality to~$0$ predicate.

The \textsl{theory \SA{Ac0} of commutative rings} is written on the following signature. There is only one sort, called $ \Ac$.
\Sigt{\Ac}{\cdot=_\Ac0\mathrel{;}\cdot+\cdot,\cdot\times \cdot,-\,\cdot,0_\Ac,1_\Ac} \label{NOTASigAc}

\noindent The only axioms are the following (these are direct rules):\footnote{The names of the rules are written as follows: for the direct rules, all lower case, for the other Horn rules (the \rsims), the first letter in upper case, and finally the other \rdys, all upper case.} 

\DeuxRegles{
\Lab{ac0} $ \vd 0_\Ac =_\Ac0 $ 
\Lab{ac2} $ \,\,x=_\Ac0\Vdi{x,y:\Ac} x\times y=_\Ac0 $ 
}
{
\Lab{ac1} $ \,\, x=_\Ac 0\vet y=_\Ac 0\Vdi{x,y:\Ac} x+y=_\Ac0 $ 
}

\smallskip 
The term \gui{$ x-y $} is an abbreviation for \gui{$ x+(-y) $} and the binary predicate \gui{$ \cdot=\cdot $} is \textsl{defined} by convention: \gui{$ x=y $} is an abbreviation for \gui{$ x-y=0 $}.

\smallskip \rdb 
We often consider the theory $\SA{Ac}$ of \textsl{\und{non-trivial} commutative rings}, which is obtained from $ \sa{Ac0} $ by adding the collapse axiom

\Regles{\lAb{CL$_{\Ac}$} $\,\,1=_\Ac0\vd \Bot$ \label{AxCLnqAc}}

\smallskip\noindent \textbf{Explanations.}\label{Ac-comments}

\noindent \textsl{1.} The rules that define the \sa{Ac0} theory of commutative rings must be understood precisely as follows. Any term of the theory can be seen as a polynomial with integer coefficients in the present variables. We then use the computational machinery of commutative polynomials with integer coefficients (\gui{external} to the theory), which rewrites any term (formed over constants and variables) as a polynomial with integer coefficients in a predefined normal form.

\noindent The distributivity rule $ x(y+z)=_\Ac xy+xz $, for example, is then entrusted to an automatic calculation which reduces to $0$ the term
 $ x(y+z)-(xy+xz) $.

\noindent Similarly, the transitivity of binary equality is handled by the rule \Tsbf{ac1} and by the automatic calculation which reduces the term 
 $ (x-y)+(y-z) $ to $ (x-z) $.

\smallskip\noindent \textsl{2.} In the three rules \Tsbf{ac0}, \Tsbf{ac1} and \Tsbf{ac2} we recognise the axioms of ideals, which make it possible to create a quotient ring structure, and which signify the compatibility of equality with addition and multiplication. In the \sa{Ac0} theory, any atomic formula is of the form \gui{$ t(\xn)=_\Ac0 $} where $x_i$'s are variables and $t$ a term of the language. Any atomic formula is therefore immediately equivalent to an atomic formula in which $t$ is an element of the ring $\ZZ[\xn]$, written in the agreed normal form. The \sa{Ac0} theory is therefore the \gui{the theory of algebraic identities}, in the old sense of the expression. Precisely, it is easy to check that the validity of a simple Horn rule such as

\Regles{\labu $ \,\,p_1=_\Ac0\vet\dots\vet p_m=_\Ac0\Vdi{\xr:\Ac} q=_\Ac0 $}

\noindent means exactly that the polynomial $q\in \ZZxr$ is in the ideal generated by the polynomials $p_1,\dots,p_m$ of $\ZZxr$. This property is rather difficult to decide.\footnote{This follows for example from Theorem VIII-1.5 in \cite{MRR}.} 

\noindent The dynamical theory of a purely equational theory does not provide any additional tool to the purely equational theory itself. There is therefore nothing really \gui{dynamic} about purely equational dynamical theories. The really interesting dynamical theories are obtained by adding dynamical axioms to Horn theories.

\noindent For the theory \sa{Ac0} the validity of more complicated rules than those considered above is handled by the structural rule \ref{10rstr1} \paref{10rstr1}. 

\smallskip\noindent \textsl{3.} The theory \sa{Ac0} as it is presented does not seem \gui{purely equational} at first sight because the axioms are not simple equalities between terms. This is due to our decision to replace equality with the unary predicate \gui{$ \cdot=0 $} accompanied by the external computational machinery of polynomials with integer coefficients. This approach has the advantage, in our opinion, of showing the true logical structure of the theory by reducing it to three very simple axioms and by entrusting to an automatic calculation what can be entrusted to it, which has little to do with logic proper. The same remark will subsequently apply to many theories that we will describe as purely equational. \eoe
\end{example}

\begin{definition} \label{defiSAD}
If $\sa{T}=(\cL,\cA)$ is a dynamical theory, a \textsl{dynamic algebraic structure of type~\sa{T}} is given by~a set $G$ of \textsl{generators} and~a set $ R $ of \textsl{relations}. A \gui{relation} is by definition an atomic formula $ P(\und{t}) $ constructed on the language $ \cL\cup G $ with closed terms $ t_i $ in this language. Such a relation is associated with the axiom \gui{$ \vdi P(\und{t}) $} of the dynamic algebraic structure. So, this \sad is the \tdy $(\cL\cup G,\cA\cup R)$, also denoted by $\big((G,R),\sa{T}\big)$. 
\end{definition}

\begin{example} \label{exaSaCd} 
For example, we obtain a dynamic algebraic structure for a discrete field 

\snic{\gK=\big((G,R),\Sa{Cd}\big)}

\noindent by taking $ G=\so{a,b} $ and $ R=\so{105=0,\,a^2+b^2-1=0}. $ This dynamic discrete field corresponds to any discrete field of characteristic $ 3 $ or $ 5 $ or $ 7 $ generated by two elements $\alpha$ and $ \beta $ satisfying $ \alpha^2+\beta^2=1 $. 

\noindent In addition to the \rdys valid in all discrete fields, there are now those obtained by extending the language with the constants taken from $G$ and by adding to the axioms the relations taken from $ R $.
For example the disjunctive rule

\Regles{\labu $3=0 \vou 5=0  \vou 7=0$}

\noindent is valid, and so is the  \rdy

\Regles{\labu $\Exists z \;15z=1 \vou \Exists z \; 21z=1  \vou \Exists z \;35z=1$ \eoe}
\end{example}

\begin{definotas} \label{defiFact}  \label{notasadreglevalide}~\\
Let $ \gS=\big((G,R),\sa{T}\big) $ be a dynamic algebraic structure of type $\sa{T}=(\cL,\cA)$.
\begin{itemize}
 
\item We will indicate that the rule \gui{$\,\Gamma\vd \dots $} is valid in the dynamic algebraic structure $\gS$ in the following abbreviated form: \gui{$ \,\Gamma\vdi_{\gS} \dots $}. We could also use the notation \gui{$ \,R\vet \Gamma\vdi_{\sA{T}} \dots $}, which means that the proof can use a finite list of axioms extracted from $ R $.
 
\item The set of closed terms of $\gS$, i.e.\ the terms built on $\cL\cup G$, is denoted by $\Tcl(\gS)$. The set of closed atomic formulas is denoted by $\Atcl(\gS)$. 
  
\item An Horn rule $\vd P$ for~$P\in\Atcl(\gS)$ is called \textsl{a fact in} $\gS$. The set of valid facts in $\gS$ is called $\Atclv(\gS)$. A fact only concerns syntactically definable objects in the structure. It is clear that $\gS$ proves exactly the same \rdys as the dynamic algebraic structure $ \wi\gS=\big((\Tcl(\gS),\Atclv(\gS)),\sa{T}\big) $.
\end{itemize}
\end{definotas}

Concrete algebra very often consists of proving facts or dynamic rules in particular dynamic algebraic structures. It is a little more general than the (inexhaustible) theory of algebraic identities, i.e.\ the universal algebra behind a large proportion of the great theorems of abstract algebra.

\smallskip 
In the case of a Horn theory \sa{T}, a dynamic algebraic structure of type \sa{T} gives a usual algebraic structure, defined by generators and relations, satisfying the required Horn rules. 

\smallskip 
The dynamic method is often a practical way of constructing algebraic identities (\gui{Positivstellensätze} for example), following as closely as possible the paths indicated in the proofs given in classical mathematics. 
 
\smallskip In a dynamic algebraic structure a fact $ P(\und t) $ is \textsl{absolutely true} if it is provable (i.e.\ if the rule \gui{$ \vdi P(\und t)\,$} is valid). It is \textsl{absolutely false}, or more precisely \textsl{catastrophic} if \gui{$ P(\und t)\vd \Bot $} is valid. There are many possibilities in between these two cases: a dynamic algebraic structure does not have a single fixed model, but represents all the possible ideal realisations of the structure in the potential state (this notion remains deliberately vague). Adding a catastrophic fact as an axiom amounts to eliminating all models.\footnote{In the variant where the collapse reduces all models to a singleton: \dots\ amounts to allowing only the trivial model.}

\begin{example} \label{exasdz} 
We consider a presentation $(G,R)$ in the language of  \Sa{Ac}. Let \sa T be a dynamical theory which extends the theory \Sa{Ac} without extending the language, for example the theory \sa{Cd} of discrete fields. Any closed term of the dynamic algebraic structure $ \big((G,R),\sa T\big) $ is rewritten as a polynomial $ f(\ux)\in\ZG $ with integer coefficients in the \gui{constants} $ x_i\in G $ of the dynamic algebraic structure. The elements of $ R $ are relations $ f(\ux)=0 $, so that by a slight abuse of language, we can consider $ R $ as a set of elements of $ \ZG $. 

\noindent We are therefore studying the ring $ \gA=\aqo\ZG R $, or more precisely what happens to this ring when we ask it to satisfy certain new axioms. We will note~$ \sa T(\gA) $ the dynamic algebraic structure $ \big((G,R),\sa T\big) $. 

\noindent In many examples, the theory collapses if, and only if, $\gA$ is trivial. For example, the dynamic algebraic structure $ \Sa{Cd}(\gA) $ collapses if, and only if, $ 1=_\gA0 $. In classical mathematics we say: indeed a non-trivial ring has a prime ideal $\fp$, and the field of fractions of the integral ring~$ \gA/\fp $ is a non-trivial model of $ \Sa{Cd}(\gA) $. More simply, without using model theory or the axiom of the prime ideal, we transform a proof of $ 1=0 $ in~$ \Sa{Cd}(\gA) $ into a proof of $ 1=0 $ in~$ \Sa{Ac}(\gA) $ (proof analogous to that of \cite[Theorem~2.4]{CLR01}).

\noindent Concerning the facts $\theta=0$\footnote{With $\theta=t(\uxi)\in \gA$, where $t\in\ZZ[G]$ and the $\xi_k$ are the $x_k$ seen in the quotient $\gA$ of $\ZZ[G]$.} valid in the theory $\sa T(\gA)$, the situation is a little more complicated.\\
The theory of local rings \Sa{Al} proves $\theta=0$ exactly when $\theta=_\gA0$, hence the great importance of local rings in commutative algebra.\index{ring!local ---}
\\
The theory \sa{Cd} proves $\theta=0$ exactly when $ \theta \in\!\sqrt[\gA]0 $. This corresponds to the reduced quotient of~$\gA$: if $ \theta =0 $ in $ \sa{Cd}(\gA) $ then $ \theta $ is nilpotent in $\gA$. This is a (relatively weak) abstract form of the Nullstellensatz. The proof is elementary. Naturally, if two theories prove the same facts, they can differ in terms of \rdys that are more general tha Horn rules. \eoe
\end{example}

\Subsection{Positive diagram of an algebraic structure} 

\begin{definota} \label{defidiagramme}~
\begin{enumerate}
 
\item Let $ \sa T_1=(\cL_1,\cA_1) $ be a dynamical theory and $\gA$ an algebraic structure over a language $ \cL\subseteq \cL_1 $. We call \textsl{positive diagram of $\gA$ for the language $\cL$}, a presentation $(G,R)$ of~$\gA$ as an algebraic structure over the language $\cL$. Such a diagram is denoted~$ \Diag(\gA,\cL) $. \\
We then denote $ \sa T_1(\gA) $ the dynamic algebraic structure $ \big(\Diag(\gA,\cL),\sab{T}_{\!1}\big) $.
\index{positive diagram!of an algebraic structure on a given language} 
 
\item Let $ \sa T=(\cL,\cA) $ be a dynamical theory, $\gB$ a model of $ \sa T $, and $ \sab{T}_{\!1}=(\cL_1,\cA_1) $ a simple extension of $ \sa{T} $. Consider a presentation $(G,R)$ of $\gB$ as a dynamic algebraic structure of type \sa{T}. Such a diagram is called \textsl{positive diagram of $\gB$ for the dynamical theory $ \sa T $} and it is denoted~$ \Diag(\gB,\sa T) $. We then note $ \sab{T}_{\!1}(\gB) $ the dynamic algebraic structure $ \big(\Diag(\gB,\sa T),\sab{T}_{\!1}\big) $.
\index{positive diagram!of a model of a dynamical theory}
\end{enumerate}
\end{definota}
Item 1 can be seen as a special case of item 2, where $ \cA=\emptyset $.

\begin{remarks} \label{remdefidiagramme}~

\noindent 1) Here is a typical example for Item 1. The theory $\sa T_1$ is a simple extension of the theory \sa{Ac} where equality is the only predicate. Let $\cL$ be the language of~$\sa{Ac}$ and let~$\gA$ be a commutative ring. For generators of $\Diag(\gA,\cL)$ we can take the elements of the set underlying~$\gA$, and for relations we can restrict ourselves to the equalities $0_\gA=0$, $1_\gA=1$, $-a=b$, $a+b=c$ and $ab=c$ when they are satisfied for elements of $\gA$. This positive diagram does not contain any $a\neq b$ inequalities for the simple reason that they are not part of the language of \sa{Ac}. This is why we call it a \gui{positive} diagram.

\smallskip\noindent 2) 
An element $a$ of $\gA$ does not always have a canonical representative in a set à la Bishop, even if the set is discrete. In such a case, to return to the definition of the set underlying $\gA$ according to Bishop, we can take a different constant $ x_b $ for each representative $b$ of the element~$a$. We then find in the positive diagram of $\gA$ a relation $ x_b=x_c $ each time $ b=_\gA c $.
\eoe 
\end{remarks}

\Subsection{Constructive versus classical models}\label{subsecmodelescofsSAD}
Consider a dynamic algebraic structure $ \gA=\big((G,R),\sa T\big) $ of type \sa{T} with one or more sorts. To simplify the notation, we assume a single sort. A \textsl{model of~$\gA$} is a usual (static) algebraic structure~$ M $ described in the language associated with~$\gA$ and verifying the axioms of $\gA$ (those of $ \sa{T} $ and those given by the presentation of $\gA$).

When $\gA$ is defined by the empty presentation, we speak of \textsl{models of \sa T}.

\smallskip 
The notion of model is therefore based a priori on an intuitive notion of \textsl{algebraic structure} à la Bourbaki. We can describe these algebraic structures as \gui{static} in contrast to the general dynamic algebraic structures. Note that here the set \gui{underlying} the structure is a \gui{naive set} (or several naive sets if there are several sorts) structured by giving predicates and functions (in the naive sense) subject to certain axioms.
 
From a constructive point of view, the models must satisfy the axioms by respecting the intuitive sense of \gui{or} and \gui{there exists}: to prove that a particular algebraic structure satisfies the axioms, we allow only intuitionistic logic. Note also that the set theory to which we refer is a priori Bishop's informal set theory. 

\Subsection{Morphisms between dynamic algebraic structures of the same type}

Consider a disjunctive theory \sa{T}.
In this case, a possible natural notion of morphism from a dynamic algebraic structure $ \gA=\big((G, R), \sa{T}\big) $ to a dynamic algebraic structure $ \gA'=\big((G', R'), \sa{T}\big) $ for the disjunctive theory \sa{T} is as follows.

 An element of $ \Sad_{\sA T}(\gA,\gA') $ is given by a map $ \varphi\colon G\to \Tcl(\gA') $ which interprets the elements of $G$ by closed terms of $\gA'$. This map uniquely extends into a map $\Tcl(\gA)\to \Tcl(\gA')$ respecting the construction of the terms by means of the function symbols present in $\cL$. In addition, the elements of $ R $ must give valid facts in $\gA'$ according to this interpretation.
 
The equality between two elements $\varphi$ and $\psi$ of the set $M=\Sad_{\sA T}(\gA,\gA')$ is defined as follows: one has $ \varphi=_M\psi $ if, and only if, for all $x\in G$, the equality $ \varphi(x)=\psi(x) $ is valid in $\gA'$.
 
The composition of morphisms is defined in a natural way for three dynamic algebraic structures $\gA$, $\gB$ and $\gC$. 

\smallskip This gives us a very interesting (informal) category. The objects are dynamic algebraic structures of type \sa{T}. The set of arrows from $\gA$ to $\gA'$ is $ \Sad_{\sA T}(\gA,\gA') $. 

This category has arbitrary limits and colimits, constructed very naively at the level of the presentations $(G,R)$, based on the naive intuitive set theory that we consider in the ambient mathematical world. 

For example, the product in the category $ \Sad_{\sA T} $ of $ \gA=\big((G, R), \sa{T}\big) $ and $ \gA'=\big((G', R'), \sa{T}\big) $ is the dynamic algebraic structure of type \sa{T} whose presentation is given by $ (G\times G',R\times R') $. When~\sa{T} is the theory of discrete fields, in the disjunctive version where a discrete field is defined as a connected reduced zero-dimensional ring (a function symbol must be introduced for the quasi-inverse). The previous product is a new dynamic algebraic structure of a discrete field, and $ \gA\times \gA' $ is provided with a usual algebraic structure of a reduced zero-dimensional ring. This situation seems analogous to that of bundles of discrete fields (according to the semantics of Kripke-Joyal), which are discrete fields only in the fibres.

\smallskip \rem If we are dealing with an existentially rigid theory, we can reduce ourselves to the case of a disjunctive theory by skolemisation of the rigid existential rules. However, it seems that in the case of a dynamical theory with non-rigid existential axioms, things are not very clear. \eoe 

\smallskip 
Sometimes we are interested in a more restrictive notion of morphism between two dynamical algebraic structures $\gA$ and $\gA'$ of the same type \sa{T}, for example the notion of local morphism between commutative rings, adapted to a specific context. In such a case, we would like the new morphisms from $\gA$ to $\gA'$ to be treated as given by dynamic algebraic structures for a certain dynamical theory defined from \sa{T}, $\gA$ and $\gA'$ (as may be the case for local morphisms).
 
\Subsection{Examples}

\noindent 1) The disjunctive theory \SA{Asdz0} (resp. \SA{Asdz}) of \textsl{rings without zerodivisor} is obtained 
from the theory \Sa{Ac0} (resp.\ \Sa{Ac}) by adding the \rdy%
\index{ring!without zerodivisor ---}%

\Regles {\lab{ASDZ} $\,\,xy=0\vd x=0 \vou y=0$}

\smallskip \noindent 2) Reference \cite[section VIII-3]{ACMC}. 
The existential theory \SA{Alsdz0} (resp. \SA{Alsdz}) of the \textsl{rings locally without zerodivisor} is obtained by adding to the theory \Sa{Ac0} (resp. \Sa{Ac}) the axiom \tsbf{LSDZ}:%
\index{ring!locally without zerodivisor ---}

\Regles {\Lab{LSDZ} $\,\,xy=0\vd \Exists u,v \;(ux=0\vet vy=0\vet u+v=1)$}

\smallskip \noindent 3) Dynamical theory \SA{Ai} of \textsl{integral rings} (or \textsl{domains}).
With the signature
\Sigt{\Ai}{\cdot=0,\cdot\neq0\mathrel{;}\cdot+\cdot,\cdot\times \cdot,-\,\cdot,0,1} \label{NOTASigAi}
\noindent the theory  of integral rings is obtained 
from the theory \Sa{Ac0} by adding as axioms the following dynamical rules
\index{ring!integral ---} 

\DeuxRegles 
{
\labu $\,\,x\neq 0\vet y= 0\vd x+y\neq0$
\labu $\,\,x\neq 0\vet y\neq 0 \vd xy\neq0$
\labu $\,\,x\neq 0\vet xy=0 \vd y=0$
\lAb{ED\inq} $\vd x=0\vou x\neq0$ \label{AxEdnq}
}
{
\labu $\vd 1\neq0$
\labu $\,\,xy\neq 0 \vd x\neq0$
\lab{} 
\lAb{col\inq} $\,\,0\neq 0 \vd 1=0$ \label{Axcolnq}
}

Note the significant difference between the theories \sa{Asdz} and \sa{Ai}, which corresponds to an important distinction in \coma but invisible in \clama.

\smallskip \noindent 4) The theory \SA{Al1} of local rings with units has the signature: 
\Sigt{\Alu}{\,\cdot=0,\U(\cdot)\mathrel{;}\cdot+\cdot,\cdot\times \cdot,-\,\cdot,0,1\,}\label{NOTASigAlu}
\noindent This theory is an extension of the theory of commutative rings. A predicate~$\U(x)$ is defined as the invertibility predicate with the two convenient axioms.

\DeuxRegles 
{\Lab{Uv} $\,\, xy=1\Vd \U(x)$
}
{\Lab{UV} $\,\, \U(x) \Vd \Exists y\;xy=1$
}

\noindent We add the proper
axiom \tsbf{AL1} of local rings. 

\Regles 
{\Lab{AL1} $\,\, \U(x+y) \Vd \U(x) \vou \U(y)$}

\noindent The valid rule $\,\,\U(0)\Vd 1=0$ is seen as a collapse rule.

\smallskip \noindent 5) Theory \SA{Alrd} of \textsl{\alrds}. 

\noindent A local ring is sait \textsl{residually discrete} when the residual ring is a discrete field. 
The notion of residual ring (the quotient of the ring by its Jacobson radical) is not obvious to introduce in the framework of dynamical theories. However, for residually discrete local rings, this happens properly. The dynamical theory of residually discrete local rings is obtained from the theory \Sa{Al1} by adding a predicate $\Rn$ (for residually zero elements) as a predicate opposite to the invertibility predicate by means of the axioms\index{residually discrete!local ring}\index{local ring!residually discrete ---}

\DeuxRegles{
\Lab{Alrd} $\,\, \U(x)\vet \Rn(x)\vd 1=0$
}
{
\Lab{ALRD} $\vd \U(x) \vou \Rn(x)$
}

\noindent  
So we have the following signature.
\Sigt{\Alrd}{\cdot=0,\U(\cdot),\Rn(\cdot)\mathrel{;}\cdot+\cdot,\cdot\times \cdot,-\,\cdot,0,1}
\label{NOTASigAlrd}

\noindent
In classical mathematics, any non-trivial local ring is residually discrete. It is significant that the difference between the two notions (present in constructive mathematics) is natural at the level of dynamical theories.
 
\noindent 
The theory \sa{Alrd} can also be described from \Sa{Ac} by adding as axioms the following direct rules (see \cite{Lom1997})

\vspace{-.2em}
\DeuxRegles{
 \Lab{alrd0} $\,\,x=0 \vd \Rn(x)$ 
 \Lab{alrd2} $\,\,\Rn(x) \vd \Rn(xy)$
 \Lab{alrd4} $\vd \U(1)$
}
{
 \Lab{alrd1} $\,\,\Rn(x)\vet \Rn(y) \vd \Rn(x+y)$
 \Lab{alrd3} $\,\,\Rn(x)\vet \U(y) \vd \Iv(x+y)$
 \Lab{alrd5} $\,\,\U(x)\vet \U(y) \vd \U(xy)$
}

\noindent then the \rsim $\,\,\U(xy) \vd \U(x)$ and the \rdys \Tsbf{IV}, \Tsbf{AL1},  \Tsbf{ALRD} 
\eoe

\Subsection{Primitive récursive arithmetic}
\label{PrimRec}
This subsection shows the interest of using sorts for maps that can be defined constructively in an algebraic structure (for example here the semi-ring of natural numbers) when the language in which the structure is defined does not allow the introduction of function symbols corresponding to these maps in the corresponding geometric theory.

This example motivates us to introduce sorts for certain continuous semialgebraic maps and their continuity moduli in the theory of \ndrcfs. 

\smallskip Long before the machinery of dynamical theories was set up, R.~L.~Goodstein explained how to treat a large part of the usually practised mathematics by means of purely computational formal systems, \textsl{without logic}.

In the book \cite{Goo1957} the author, following a suggestion by Skolem, shows how a calculus system \gui{without logic, and without quantifiers} makes it possible to develop a very important part of \gui{arithmetic}, understood in the sense of a formal theory of the natural integers.
 
The only problem, and it's a major problem that is likely to put off many a mathematician, is that the usual mathematical statements have to be encoded in the form of primitive recursive maps. This may seem to take us back to the realm of second-order arithmetic and Reverse Mathematics. 

In a second book (\cite{Goo1961}) Goodstein extends his study to recursive analysis. We also highly recommend \cite{Goo1979}.

\Subsubsection {Goodtein's formal system}
If we restrict ourselves to primitive recursive arithmetic,\footnote{Goodtein's book studies broader systems of calculation that include maps defined by multiple recurrences, such as the Ackerman function.} we can describe the formal system proposed by Goodstein as follows.

As in the formal theory \sa{Peano}, the variables and constants represent natural numbers. There is a single constant, $0$, and a single relation symbol, which is the equality $ x=y $. 

For any primitive recursive map $ f\colon \N^{k+1}\to\N $, defined by simple recurrence using the equations
\[ 
\begin{array}{rcl} 
f(\xk,0) & = & g(\xk) \oups.3em] 
f(\xk,y+1) & = & h(\xk,y,f(\xk,y)) \xk,y,f(\xk,y) 
 \end{array}
\] 
where $g$ and $h$ are previously defined primitive recursive maps, we introduce a function symbol corresponding to this definition of $f$. 

Similarly, for any primitive recursive map defined by composition of previously defined primitive recursive maps, we introduce a function symbol corresponding to this definition.

The function symbols which \gui{initialise} the system are $S$ for the successor map, $0_1$ for the null map in one variable \hbox{and $\pi_{n,k}$} for the $k$-th coordinate map $\N^n\to\N$ ($k\in\lrbn$, $n\in\N$).

In this way we obtain a function symbol of arity $r$ for each definition of a primitive recursive map $\N^r\to\N$.

Note that we have a formal name $\und n$, an abbreviation of $S(S(\dots(S(0))\dots))$, for each integer~$n$.

Calculations in primitive recursive arithmetic à la Goodstein consist of establishing \gui{identities} between two functions defined in this way corresponding to two function symbols $f_i$ and $f_j$ of the same arity.
\[ 
\forall \xk\;\; f_i(\xk)=f_j(\xk)
\]
which we can write in the form of a valid rule in the proposed system: 

\Regles{ \lab{Eq$ _{i,j} $} $ \vd f_i(\xk)=f_j(\xk) $}

However, this is not a dynamical theory because the valid rules do not result from a simple axiomatic system of \rdys. 

In fact, we must obviously take as axioms all the equalities mentioned earlier, which are used to define arbitrary primitive recursive maps, but this is clearly not enough. 

Indeed, if the equality $ (\und m+\und n)+\und p=\und m+(\und n+\und p) $ can be established for arbitrary $ m,n,p $ integers by simply using the definition of $ + $ by recurrence, the corresponding rule

\Regles {\lab{~} $ \vd (x+y)+z=x+(y+z) $}

\noindent does not result in a purely finitary way from the axioms of definition of $+$.

The same applies, for example, to an equality $ f_i(\und m,\und n)=f_j(\und m,\und n) $ which could be found for all integers $ m,n $ by simply applying the axioms defining $f_i$ and $f_j$, whereas the corresponding rule \tsbf{Eq$_{i,j}$} cannot in general be established in a purely finitary way if the definitions of $f_i$ and $f_j$ use the simple induction scheme.

The computational system allows us to validate composition of maps,\footnote{For example, $g=f_1\circ (f_2\circ f_3)$ and $h=(f_1\circ f_2)\circ f_3$ which correspond to two different definitions, give rise to the identity $ \vd g(x)=h(x) $ because according to the definitions \hbox{$ \vd g(x)=f_1(f_2(f_3(x)))$} and $\vd h(x)=f_1(f_2(f_3(x)))$.} because it is subject to the axioms of equality. 
But to complete primitive recursive arithmetic, we need the \textsl{external rules} corresponding to definitions by recurrence. Specifically, for example, from the following two valid rules 

\Regles {\labu $ \vd u(x,0)=v(x,0) $ 
\labu $ \,\,u(x,y)=v(x,y)\vd u(x,y+1)=v(x,y+1) $}

\noindent we deduce the validity of the rule

\Regles {\labu $ \vd u(x,y)=v(x,y) $} 
 
\noindent (here $u$ and $v$ are two terms containing $x$ and $y$ as free variables)

The use of these external rules avoids recourse to the corresponding axiom, which can be formulated in a first-order formal theory, but not in a finitary dynamical theory:
\[
\big[\forall x\;u(x,0)=v(x,0)\,\vii\, \forall x,y\;\big((u(x,y)=v(x,y)\Rightarrow u(x,y+1)=u(x,y+1)\big)\big] \Rightarrow \forall x,y\;u(x,y)=v(x,y)
\]

\Subsubsection{A finitary geometric theory for primitive recursive arithmetic} \label{secPRA}

In this subsection we show how to treat Goodstein-style primitive recursive arithmetic within the framework of a dynamical theory.

We now explain how the use of sorts for maps allows us to avoid recourse to these external rules and to define a simple formal system (a finitary Horn theory) for primitive recursive arithmetic. 
\begin{enumerate}
 
\item \textsl{Sorts} \\
For each integer $k$ we introduce the sort $\iF_k$ of the primitive recursive maps $ f\colon \iN^k\to\iN $. The sort $ \iF_0 $ is the sort of integers denoted $\iN$.
 
\item \textsl{Predicates.} \\
For each sort $\iF_k$, there is a corresponding equality symbol $ \cdot=_k\cdot $.
 
\item \textsl{Constants.}
\begin{enumerate}
 
\item 
The basic constants are (names for) 
\begin{itemize}
 
\item $0$ of sort $\iN$,
 
\item $ 0_k $ of sort $\iF_k$ (for the constant null map, $ k\geq 1 $),
 
\item the successor map $ S $ of sort $ \iF_1 $, 
 
\item for $ n\geq 1 $, the $ n $ coordinate maps\footnote{Note that 
$\pi_{1,1}$ is the constant which designates the identity in $\iF_1$.} 
$\pi_{n,k}$ of sort $ \iF_n $ ($ 1\leq k\leq n $). 
\end{itemize}

\item For any primitive recursive map $f\colon \N^{k}\to\N$ ($k\geq 1 $), defined by simple recurrence using the equations
\[ 
\begin{array}{rcll} 
f(\ux,0) & = & g(\ux) &g\colon \N^{k-1}\to\N\oups.3em] 
f(\ux,S(y)) & = & h(\ux,y,f(\ux,y))\qquad &h\colon \N^{k+1}\to\N 
 \end{array}
\] 
where $g$ and $h$ are previously defined primitive recursive maps, we introduce a name for~$f$ as a constant of sort $ \iF_{k} $.\footnote{For $k=2$ for example this could be the name $R2(G,H)$ if $G$ and $H$ are names for $g$ and $h$.}

\item A name is also introduced for any primitive recursive map defined by composition of previously defined primitive recursive maps.
\end{enumerate}
 
\item \textsl{The other function symbols.}
The function symbols given in 4b and 4c can be used, if desired, to avoid creating the constants given in 3b and 3c.
\begin{enumerate}
 
\item \textsl{Evaluations}. For each $ \ell\geq 1 $ there is a function symbol for the evaluation $ \Ev_{\ell} $ of the constant $ f\in \iF_\ell $ in a $k$-uplet of integers. It is a symbol of the type $ \iF_\ell\times \iN^\ell\to \iN $. We abbreviate $ \Ev_{\ell}(f,x_1,\dots,x_\ell) $ to $ f(x_1,\dots,x_\ell) $. 
 
\item \textsl{Simple recurrence}. For $ k\geq 1 $ we have a function symbol $ \rR_{k} $ for the element $ f\in \iF_k $ defined \gui{by simple recurrence} from an element $ g\in \iF_{k-1} $ and an element~\hbox{$ h\in \iF_{k+1} $} (as in 3b, but here, $f$, $g$ and $h$ are variables). It is a function symbol of the type~\hbox{$ \iF_{k-1}\times \iF_{k+1}\to \iF_{k} $}.

\item \textsl{Compositions}. For $ k\geq 1 $ and $ \ell\geq 1 $ we have a function symbol $ \rC_{\ell,k} $ for the \gui{composition} of the element $ f\in \iF_\ell $ with $\ell$ elements $ g_i\in \iF_k $. It is a symbol of the type $ \iF_\ell\times \iF_k^{\,\ell}\to \iF_k $. We abbreviate $ \rC_{\ell,k}(f,g_1,\dots,g_\ell) $ to $ f\circ(g_1,\dots,g_\ell) $ or $ f(g_1,\dots,g_\ell) $. We can see $ \Ev_{\ell} $ as the special case $ \rC_{\ell,0} $.
\end{enumerate}
\item \textsl{The axioms} are as follows.
\begin{enumerate}
 
\item The usual equality axioms for $ =_k $ relations.
 
\item A collapse axiom $ \,\,S(0)=0\vd \Bot $. We note $ 1 $ rather than $ \und 1 $ for $ S(0) $.
 
\item The axioms that establish equalities linking constants and other function symbols. For example: 

\Regles{
\lab{~} $ \Vdi{f:\iF_k} f\circ (\pi_{k,1},\dots,\pi_{k,k})=f $ 
\lab{~} $ \Vdi{f_1,\dots,f_k:\iF_\ell} 0_k(f_1,\dots,f_k)=0_\ell $ 
\lab{~} $ \Vdi{f_1,\dots,f_k:\iF_\ell} \pi_{k,i}(f_1,\dots,f_k)=f_i $ 
\lab{prod} $ \Vdi{x,y:\iN} \Prod=\rR_2(0_1,\mathrm{\Sum}\circ (\pi_{3,3},\pi_{3,1}) $ 
}

The last rule gives the recurrence definition of the product (the constant $ \Prod $ in~$ \iF_2 $) from the addition (the constant $ \mathrm{\Sum} $ in $ \iF_2 $).

\item The axioms for the associativity of compositions, including the cases of evaluations. \\
For example:

\Regles{
\lab{asC$ _{1,1,1} $} $ \Vdi{f,g,h:\iF_1} f\circ (g\circ h)=(f\circ g)\circ h $ 
\lab{asC$ _{1,1,0} $} $ \Vdi{f,g:\iF_1;x:\iN} (f\circ g)(x)=f(g(x)) $ 
\lab{asC$ _{2,1,1} $} $ \Vdi{f:\iF_2;g_1,g_2,h_1,h_2:\iF_1} f\circ (g_1\circ h_1,g_2\circ h_2)=(f\circ (g_1,g_2))\circ (h_1,h_2) $ 
}
 
\item The axioms for definitions by recurrence.
For example, $ \rR_{2} $:

\Regles{
\lab{Rec$_{2,\mathrm{ini}} $} $ \,\,f=\rR_2(g,h)\Vdi{f:\iF_2;g:\iF_1;h:\iF_3} f\circ (\pi_{1,1},0_1)=g $ 
\lab{Rec$_{2,\mathrm{rec}} $} $ \,\,f=\rR_2(g,h)\Vdi{f:\iF_2;g:\iF_1;h:\iF_3} f\circ (\pi_{2,1},S\circ \pi_{2,2})=h\circ (\pi_{2,1},\pi_{2,2},f) $ 
}
 
\item The axioms for proofs by induction (one for each arity).
For example, $ \tsbf{REC}_2 $:

\Regles{
\lab{REC$ _2 $}
 $ \left.
\begin{array}{rcll} 
f_1\circ (\pi_{1,1},0_1)&=&f_2\circ (\pi_{1,1},0_1)\vet \oups.3em] 
f_1\circ (\pi_{2,1},S\circ \pi_{2,2}) & = & h\circ (\pi_{2,1},\pi_{2,2},f_1)\vet \oups.3em] 
f_2\circ (\pi_{2,1},S\circ \pi_{2,2}) & = & h\circ (\pi_{2,1},\pi_{2,2},f_2) 
 \end{array} \right\}
\Vdi{f_1,f_2:\iF_2;h:\iF_3} f_1=f_2
 $ 
}
\end{enumerate}
\end{enumerate}

Note that in (e) the axioms assert that the map $ f=\rR_2(g,h) $ verifies the properties expected of a definition by recurrence, whereas in (f) the axiom asserts the uniqueness of the map verifying these properties. 

Let's now look at how a usual proof by induction translates into the \gui{language of maps} that we have set up. For example, the distributivity of multiplication over addition. In the usual proof, we prove the equality $ x\times (y+z)=(x\times y)+(x\times z) $ by induction on $z$ as follows:
\begin{itemize}
 
\item \textsl{Initialisation.} 
\[
x\times (y+0)\,\eqdf{1}\, x\times y\,\eqdf{2}\, (x\times y)+0 \,\eqdf{3}\,(x\times y)+(x\times 0)
\]
with: $ 1: $ initialisation of $ a+\cdot $, $ 2: $ initialisation of $ a+\cdot $, $ 3: $ initialisation of $ a\times \cdot $ 
 
\item \textsl{Induction.} 
\[ 
\begin{array}{rccclll} 
x\times (y+S(z)) & \eqdf{4} & x\times S(y+z)& \eqdf{5} & (x\times (y+z))+x& \eqdf{6}\oups.3em] 
((x\times y)+(x\times z))+x & \eqdf{7} & (x\times y)+((x\times z)+x) 
 & \eqdf{8} & (x\times y)+(x\times S(z))
 \end{array}
\] 
with $ 4: $ induction of $ a+\cdot $, $ 5,8: $ induction of $ a\times \cdot $, $ 6: $ induction hypothesis, $ 7: $ associativity of $ + $ (demonstrated earlier),

\end{itemize}

\smallskip Let's translate all this into the language of maps, for the elements of $ \iF_3 $ 
\[
f=\Prod(\pi_{3,1},\mathrm{\Sum}(\pi_{3,2},\pi_{3,3}))\;\hbox{ and }\; g=\mathrm{\Sum}(\Prod(\pi_{3,1},\pi_{3,2}),\Prod(\pi_{3,1},\pi_{3,3}).
\]

To validate $ f=g $, we use the $ \tsbf{REC}_3 $ principle. To do this, we validate the three hypotheses. First of all the initialisation, which is $ f(\pi_{2,1},\pi_{2,2}, 0_1)=g(\pi_{2,1},\pi_{2,2}, 0_1) $. 
\begin{itemize}
 
\item We have $ f(\pi_{2,1},\pi_{2,2}, 0_1)=\Prod(\pi_{2,1},\mathrm{\Sum}(\pi_{2,2},0_1)) $. Since $ \mathrm{\Sum}(\pi_{2,2},0_1)=\pi_{2,2} $ according to the initialisation in the recurrence definition of $ \mathrm{\Sum} $, we obtain 
\[
f(\pi_{2,1},\pi_{2,2}, 0_1)=\Prod(\pi_{2,1},\pi_{2,2}).
\]
 
\item We have $ g(\pi_{2,1},\pi_{2,2}, 0_1)=\mathrm{\Sum}(\Prod(\pi_{2,1},\pi_{2,2}),\Prod(\pi_{2,1},0_1)) $. Since $ \Prod(\pi_{2,1},0_1)=0_1 $ from the recurrence definition of $ \Prod $, we obtain 
\[
g(\pi_{2,1},\pi_{2,2}, 0_1)=\mathrm{\Sum}(\Prod(\pi_{2,1},\pi_{2,2}),0_1),
\] 
then $ g(\pi_{2,1},\pi_{2,2}, 0_1)=\Prod(\pi_{2,1},\pi_{2,2}) $ from the recurrence definition of $ \Sum $.
\end{itemize}

\smallskip Next we need to validate the passage from $ n $ to $ S(n) $, i.e.\ find a suitable element $ h\in \iF_4 $, i.e.\ satisfying the equalities
\[ 
\begin{array}{rcl} 
f(\pi_{3,1},\pi_{3,2},S(\pi_{3,3})) & = & h(\pi_{3,1},\pi_{3,2},\pi_{3,3},f) \oups.3em] 
g(\pi_{3,1},\pi_{3,2},S(\pi_{3,3})) & = & h(\pi_{3,1},\pi_{3,2},\pi_{3,3},g) \end{array}
\] 

\begin{itemize}
 
\item We have $ f(\pi_{3,1},\pi_{3,2},S(\pi_{3,3})) = \Prod(\pi_{3,1},\Sum(\pi_{3,2},S(\pi_{3,3})) $ which gives according to the induction in the definition by recurrence of $ \Sum $ 
\[
f(\pi_{3,1},\pi_{3,2},S (\pi_{3,3}))=\Prod(\pi_{3,1},S(\Sum(\pi_{3,2},\pi_{3,3}))),
\]
then according to the recurrence definition of $ \Prod $ 
\[
f(\pi_{3,1},\pi_{3,2},S (\pi_{3,3}))=\Sum(\Prod(\pi_{3,1},\Sum(\pi_{3,2},\pi_{3,3})),\pi_{3,1})=\Sum(f,\pi_{3,1}).
\] 
 
\item In the same way (using the associativity of addition) 
\[
g(\pi_{3,1},\pi_{3,2},S (\pi_{3,3}))=\Sum(\Sum(\Prod(\pi_{3,1}, \pi_{3,2}),\Prod(\pi_{3,1}, \pi_{3,3})),\pi_{3,1})=\Sum(g,\pi_{3,1}).
\]
 
\item We have therefore validated the hypotheses with the element $ h\in \iF_4 $ defined by 
\[
h=h(\pi_{4,1},\pi_{4,2},\pi_{4,3},\pi_{4,4}):=\Sum(\pi_{4,4},\pi_{4,1}). 
\]
\end{itemize}

We call \SA{PRA} the dynamical theory of primitive recursive arithmetic that we have just defined. This dynamical theory demonstrates exactly the same statements as the system developed by Goodstein.
 
\section{Conservative extensions of a dynamical theory} \label{secconservative}

\Subsection{Essentially equivalent extensions}

\begin{definition} \label{defithconserv} A dynamical theory~$ \sab{T}' $ is said to be a \textsl{conservative extension of the theory~\sa{T}}
if it is an extension of \sab{T} and if the \rdys formulable in \sab{T} and valid in~$ \sab{T}' $ are valid in~\sa{T}.\footnote{The reciprocal is clear.}\index{extension --- of a dynamical theory!conservative}
\end{definition}

Two dynamical theories \und{on the same language} are said to be \textsl{identical} when they prove exactly the same \rdys. In other words, the axioms of one are valid \rdys in the other. Each is obviously a conservative extension of the other.

\smallskip The simplest case of conservative extension when the language has grown is that of extensions which are essentially equivalent in the following meaning.

\begin{definition} \label{defi-exteseq}
An extension $\Tp$ de la \tdy \sa T is said \textsl{\eseq}
if it is obtained, up to renamings, by means of the following procedures, used iteratively, each time giving the appropriate axioms (see the details in \cite[section 2.3]{LM2022}). 

\noindent \textsl{Essentially identical} extensions are those which are obtained without adding new sorts.
\index{extension!identical ---}%
\index{extension!essentially identical ---}%
\index{extension!equivalent ---}%
\index{extension!essentially equivalent ---}%
\begin{itemize}
 
\item Add abbreviations.
 
\item Addition of predicates expressing the conjunction or disjunction of already existing predicates. This amounts to accepting $ \vii $ and $\vuu$ as logical symbols for constructing compound formulas, i.e.\ accepting a bit of intuitionist logic in the language.\rdb\label{NOTAvii}\label{NOTAvuu} 
 
\item Add predicates translating a formula $\exists x P$ where $P$ is an existing predicate and $x$ is a variable. Same comment as for the previous point.\rdb\label{NOTAexists} 
 
\item \rdb Add a function symbol when a unique existence is valid under certain hypotheses.\label{skolemunique} 
 
\item Add a sort that is the product of several sorts.
 
\item Add a sort that is the disjoint union of several sorts.
 
\item Add a subsort of an existing sort, defined as the elements satisfying an existing predicate. 
 
\item Add a quotient sort of an already existing sort, defined by a binary predicate, which is provably an equivalence relation, and which defines the new equality in the quotient sort. 
 
\item Add a sort whose objects are (certain) morphisms of one sort into another that shares some algebraic structure with the first. 
\end{itemize}
\end{definition}

An \eseq extension is intuitively equivalent in the following meaning.
 
\begin{dfni} \label{defextintequiv}
Consider a dynamical theory \sa{T} and an extension $ \Tp $ of \sa{T}. We say that $ \Tp $ is an \textsl{intuitively equivalent} extension of  \sa T if the following three properties are verified.\index{extension!intuitively equivalent} 
\begin{enumerate}
\item  $ \Tp $ is a conservative extension of \sa{T}.
 
\item [\textsl{2}.] Any \rdy formulated in the language of $ \Tp $ is equivalent\footnote{The equivalence in question is an external rule, like the structural rules described above. It may depend on the logic used in the external world.} to a family of \rdys formulated in the language of \sa{T}.
 
\item For any presentation $(G,R)$ in the language of \sa{T}, the dynamic algebraic structures $ \gA=\big((G,R),\sab{T}\big) $ and 
 $ \Tp(\gA):=\big((G,R),\Tp\big) $ have the same models (in constructive mathematics as in classical mathematics).
\end{enumerate}
\end{dfni}

\smallskip In the rest of this section we give two very important cases of conservative extensions (Theorems~\ref{thFond} and \ref{thFondExists}), relating to the use of classical logic, which do not fall into the previous simple case. Before that, we give Theorem~\ref{thFond0} relating to the use of constructive (intuitionistic) logic.

\Subsection{Comparison with intuitionistic logic}\label{subsubsecthFond0}

Dynamical theories can be considered to be nothing more than truncated versions of intuitionistic natural deduction, in which neither the $ \Rightarrow $ connector nor the~$ \forall $ quantifier is introduced.

This is precisely the strength of dynamical theories: not being encumbered with \gui{complicated} formulas such as $ (A\Rightarrow B)\Rightarrow C $, or $ \forall x\;\exists y\;\forall z \dots $, makes it possible to see things more clearly and to simplify a certain number of non-trivial results, when they can be demonstrated at the basic level of natural deduction, i.e.\ with the \gui{logic-free} system of dynamic proofs. 

\begin{theorem}[conservativity of intuitionistic logic with respect to dynamic proofs] \label{thFond0} 
Consider a finitary dynamical theory \sa{T}. If a consistent formula on the language of~\sa{T} is proved using the axioms of \sa{T} and intuitionistic logic, the corresponding \rdy can be proved valid directly in the dynamical theory \sa{T}.
\end{theorem}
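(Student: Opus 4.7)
The strategy is to pass through a cut-free sequent calculus for intuitionistic logic with the axioms of \sa T added as initial sequents, and then to read off the dynamic proof from the resulting cut-free derivation.

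First I would fix a single-succedent Gentzen-style intuitionistic sequent calculus $\mathbf{LJ}$ and add, for every coherent axiom of \sa T of the shape $\forall\ux\,(C(\ux)\Rightarrow\exists\und{y^1}D_1\vee\cdots\vee\exists\und{y^m}D_m)$, the initial sequent
\[ C(\ux)\;\vdash\;\exists\und{y^1}D_1(\ux,\und{y^1})\,\vee\,\cdots\,\vee\,\exists\und{y^m}D_m(\ux,\und{y^m}). \]
A derivation in intuitionistic natural deduction of a coherent formula $\forall\ux\,(\Gamma(\ux)\Rightarrow \exists\und{y^1}\Delta_1\vee\cdots\vee\exists\und{y^m}\Delta_m)$ from the axioms of \sa T is equivalent, via the standard translation between natural deduction and sequent calculus, to an $\mathbf{LJ}$-derivation of the sequent $\Gamma(\ux)\vdash \exists\und{y^1}\Delta_1\vee\cdots\vee\exists\und{y^m}\Delta_m$.

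I would then invoke Gentzen's cut elimination theorem for $\mathbf{LJ}$, adapted to the presence of coherent initial sequents: a cut on a universally quantified formula at the head of an axiom is reduced to a cut on the result of substituting terms for those variables, and the cut degree strictly decreases. The resulting cut-free derivation enjoys the subformula property, so every formula appearing in it is built from atomic formulas of $\cL$ using only $\wedge$, $\vee$ and $\exists$; in particular, no nested occurrence of $\Rightarrow$ or $\forall$ survives inside the derivation.

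The translation into a dynamic proof would then proceed by induction on this cut-free derivation. Each $\mathbf{LJ}$ inference maps to a step in a dynamic proof tree: $\wedge$-rules correspond to the implicit list-handling of atomic formulas (rules~\ref{10rstr1} and~\ref{1rstr}); $\vee$-right is absorbed by the $\vou$ on the right of $\vd$; $\vee$-left is exactly the dynamic cut (rule~\ref{8rstr}); $\exists$-right is the tautological \gui{concrete existence implies formal existence}; $\exists$-left is the opening of a branch with a fresh variable under $\Exists$; weakening and contraction correspond to rules~\ref{3rstr} and~\ref{2rstr}; and the use of a coherent initial sequent is the application of the corresponding dynamical axiom after term-substitution for the universal variables.

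The main obstacle is the eigenvariable management under $\exists$-left: in cut-free $\mathbf{LJ}$ an eigenvariable introduced by $\exists$-left may be used freely throughout the sub-derivation above it, whereas in a dynamic proof each fresh variable introduced by $\Exists$ is strictly local to the branch in which it appears. Verifying that the eigenvariable discipline of $\mathbf{LJ}$ descends correctly onto the branching structure of dynamic proofs---that no fresh variable ever needs to escape from its branch to the outside---is the technical heart of the translation, and it is precisely what makes the dynamic system, with no implication connective and only very limited quantifier usage, sufficient to emulate intuitionistic reasoning whenever the conclusion is a coherent formula.
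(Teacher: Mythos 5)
Your proposal is correct in outline, but it takes a genuinely different route from the paper. The paper proves Theorem~\ref{thFond0} by citing Coquand~\cite{Coq2005}, whose argument is semantic: one constructs a \emph{generic model} of the dynamic algebraic structure (a Kripke/forcing-style model in which an atomic formula is forced exactly when it is dynamically derivable), verifies that this model satisfies the coherent axioms of \sa{T}, and then appeals to intuitionistic soundness to conclude that any coherent consequence provable by intuitionistic logic is already dynamically valid. Your route is instead the proof-theoretic one in the Negri--von Plato tradition: cut elimination for an intuitionistic sequent calculus enriched with the coherent axioms, then reading off a dynamic proof from the cut-free derivation via the subformula property. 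One technical correction: you should not present the coherent axioms as \emph{initial sequents} $C\vdash\exists\und{y^1}D_1\vee\cdots\vee\exists\und{y^m}D_m$, because cuts against an initial sequent with a compound succedent do not in general reduce; the standard formulation is to add them as \emph{left rules}
\[
\frac{\Delta_1,\,C,\,\Gamma\;\vdash\;E\qquad\cdots\qquad\Delta_m,\,C,\,\Gamma\;\vdash\;E}{C,\,\Gamma\;\vdash\;E}
\]
with eigenvariable conditions on the $\und{y^j}$ in the premises, for which cut elimination and the subformula property do go through. This formulation also makes your remark about the leading universal quantifier unnecessary (the rule schema absorbs the instantiation), and it resolves the eigenvariable concern you flag: the eigenvariables introduced by such a rule live only in the subderivations above its premises, which is precisely the branch-local discipline of the dynamic proof tree, so the descent onto dynamic proofs is unproblematic. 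The trade-off between the two approaches is the familiar one: Coquand's generic-model argument is shorter and more conceptual, while the cut-elimination argument is syntactically explicit and yields a proof-transformation procedure, at the cost of some bookkeeping and potentially large blow-up.
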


%
\begin{proof}
See \cite[Coquand, 2005]{Coq2005}. 
\end{proof}

Thierry Coquand intuitively interprets the proof as the construction of a certain type of model (a \textsl{generic model}) of the dynamic algebraic structure under consideration. The proof in \cite{Coq2005} is more direct and more intuitive than that of the slightly stronger conservativity result given in Theorem~\ref{thFond}. 

\Subsection{Fundamental theorem of dynamical theories}\label{subsubsecthFond}

To a dynamical theory \sa{T} corresponds a coherent theory, or finitary geometric theory, obtained by replacing the \rdys by the corresponding formulas according to the scheme given on \paref{subsectdy} at the beginning of Section \ref{subsectdy}. This coherent theory can be treated according to classical logic or intuitionistic logic. Let us note $ \sab{T}^{\rm c} $ and $ \sab{T}^{\rm i} $ respectively. 

We have the fundamental theorem \ref{thFond} below (cf. for example Theorem 1 in \cite{CLR01}).
This theorem is already given for purely equational theories in \cite[Prawitz, 1971]{pra1971}, and this kind of result is omnipresent in the contemporary literature, in more or less varied forms. We recommend the recent progress on this theme described in \cite{DN2015,Dyc2015} which shows that, properly treated, classical proofs do not provide significantly longer constructive proofs. The proof in \cite{CLR01} is constructive and relatively intuitive, but leads to an explosion in the size of proofs.

It is based on the following lemma which explains the harmlessness, \und{in certain circumstances}, of the rule \tsbf{LEM} of excluded thirds.

\begin{lemma}[elimination of classical negation] \label{lemNegValide} ~ \\
Let \sa{T} be a finitary dynamical theory, and $P(.,.)$ be a predicate forming part of the signature (we have taken it here of arity 2 by way of example). Let us introduce \gui{the predicate opposed to $P$}, let us note it $Q(.,.)$, with the two \rdys which define it in classical mathematics:\footnote{The definition of the predicate opposed to a predicate $P$ in constructive mathematics is not the same, and it cannot be treated in the framework of dynamical theories, except in the case where the predicate is decidable. The constructive meaning of $P$ is $P\Rightarrow\Bot$ and the constructive implication cannot be treated by the dynamical method alone.}

\DeuxRegles{
\lab{In-non$ _P $} $ \vd P(x,y) \;\vou\; Q(x,y) $ 
}{
\lab{El-non$ _P $} $ \,\, P(x,y),\, Q(x,y) \vd \Bot 
 $ 
}

\smallskip\noindent Then, the new dynamical theory is a conservative extension of \sa{T}. 
\end{lemma}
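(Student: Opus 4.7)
The plan is to eliminate, from any dynamic proof of $\Gamma\vd\Delta$ in the extension, all uses of the two new rules, provided $\Gamma$ and~$\Delta$ are formulated in the language of~\sa{T}. I would proceed by induction on the number~$k$ of \tsbf{In-non$_P$}-applications in the proof tree. The preliminary observation is that $Q$-atoms occur in the axioms of the extension only as conclusions of \tsbf{In-non$_P$} and as premises of \tsbf{El-non$_P$}; hence any $Q$-atom appearing in a branch must have been introduced by \tsbf{In-non$_P$}, and its only role is to combine with a matching $P(t,u)$ and close the branch via~$\Bot$. When $k=0$ no $Q$-atom appears at all, so the proof already lives in~\sa{T}.

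For the inductive step I would pick an \tsbf{In-non$_P$}-application at a node~$N$ whose path from the root contains no other \tsbf{In-non$_P$}-application; such a node exists whenever $k>0$. Let $\Gamma_N$ be the context at~$N$ (which mentions no~$Q$), and let $\mathcal{P}_L$, $\mathcal{P}_R$ denote the subtrees rooted at its two children, proving respectively $\Gamma_N, P(t,u)\vd\Delta$ and $\Gamma_N, Q(t,u)\vd\Delta$, with $k_L+k_R+1=k$. Since $k_L<k$, the induction hypothesis turns~$\mathcal{P}_L$ into a~\sa{T}-proof $\mathcal{P}_L^\star$ of $\Gamma_N, P(t,u)\vd\Delta$. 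The key operation is then a surgery on~$\mathcal{P}_R$: traverse it and locate every node~$M$ at which \tsbf{El-non$_P$} is applied to the specific pair $P(t,u), Q(t,u)$ inherited from~$N$ (so both formulas occur in the context~$\Gamma_M$); replace the part of the branch below $M$, which merely uses~$\Bot$ to validate $\Delta$, by~$\mathcal{P}_L^\star$, weakened by the extra hypotheses in~$\Gamma_M$ via the monotony rule~\ref{3rstr}. This removes the \tsbf{El-non$_P$}-application at~$M$ and, since $\mathcal{P}_L^\star$ is already in~\sa{T}, introduces no new \tsbf{In-non$_P$}-application. After all such replacements, the atom $Q(t,u)$ is inert throughout the modified right subtree and can be erased from every context, yielding a proof $\mathcal{P}_R^\star$ of $\Gamma_N\vd\Delta$ containing at most $k_R$ \tsbf{In-non$_P$}-applications.

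Splicing $\mathcal{P}_R^\star$ in place of the whole subtree at~$N$ produces a proof of $\Gamma\vd\Delta$ with at most $k_R<k$ applications of \tsbf{In-non$_P$}, and the induction hypothesis concludes. The main obstacle I anticipate is the bookkeeping required to isolate the inherited pair $(t,u)$ inside~$\mathcal{P}_R$: nested \tsbf{In-non$_P$}-applications below~$N$ may introduce further $Q$-atoms for distinct pairs $(t',u')$, or even fresh copies of $Q(t,u)$ after substitutions, and the traversal must carefully leave the \tsbf{El-non$_P$}-applications for those other pairs untouched. Once this is done, termination of the recursion is guaranteed because the splicing of $\mathcal{P}_L^\star$ never increases the count of \tsbf{In-non$_P$}-applications.
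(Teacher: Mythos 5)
The paper leaves this lemma without a written proof (it points to the literature around Theorem~\ref{thFond}, e.g.\ \cite{CLR01}), so I can only assess your argument on its own terms. Your cut-elimination plan---induction on the number of \tsbf{In-non$_P$}-applications, together with a surgery that replaces each collapse produced by \tsbf{El-non$_P$} on the inherited pair $P(t,u), Q(t,u)$ by a weakened copy of $\mathcal{P}_L^\star$---is a reasonable attack, and the splice itself (via monotony) is sound. The gap lies in the sentence asserting that ``$Q(t,u)$ is inert throughout the modified right subtree and can be erased from every context.'' Erasure of a hypothesis is not among the admissible structural rules, and it does not preserve proofhood when $\mathcal{P}_R$ contains a nested \tsbf{In-non$_P$}-application whose terms are $(t,u)$ (or $\vd$-provably equal to them): at such a node the right child is supposed to acquire $Q(t,u)$ as a new hypothesis, and once $Q(t,u)$ has been deleted everywhere, that parent-to-child transition is no longer an instance of any axiom, so the modified tree ceases to be a dynamic proof. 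You flag the nested case as bookkeeping but propose only to ``leave the \tsbf{El-non$_P$}-applications for those other pairs untouched''; this does not address the problem, which requires a genuine further transformation (collapsing each such nested node onto its right subtree, recursively), and with it neither the erasure claim nor the count ``at most $k_R$ applications'' is established as stated.

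An argument that avoids the surgery altogether is to prove, by one bottom-up induction on the given proof tree, the invariant that at each node with context $\Gamma'$ whose $Q$-atoms are $Q(t_1,u_1),\dots,Q(t_r,u_r)$, the \rdy $\Gamma'_0 \vd \Delta \vou P(t_1,u_1) \vou\cdots\vou P(t_r,u_r)$ is valid in \sa{T}, where $\Gamma'_0$ denotes $\Gamma'$ stripped of its $Q$-atoms. This is immediate at a leaf (validating $\Delta$ uses only the \sa{T}-part of the context) and at an \tsbf{El-non$_P$}-node (where $P(t,u)\in\Gamma'_0$ is already one of the listed disjuncts); at an \tsbf{In-non$_P$}-node the two branches combine by a cut \ref{8rstr2} on $P(t,u)$; and at a node applying an \sa{T}-axiom the branches combine by transitivity \ref{7rstr}. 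Reading the invariant at the root yields the conservativity statement, and the cases that make your surgery delicate---nested or repeated \tsbf{In-non$_P$}-applications, substitutions---are absorbed uniformly into the invariant rather than handled by case analysis.
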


Note that this time some constructive models of the first theory may no longer be constructive models of the second. Nevertheless, this is not too serious, as the lemma indicates, and is generalised in the following fundamental theorem.

\begin{theorem}[elimination of cuts] 
\label{thFond}  ~\\
As far as finitary dynamical theories are concerned, logic, including classical logic (and in particular the \tsbf{LEM}) only serves to shorten proofs. More precisely
a \rdy is valid in a dynamical theory \sa{T} if, and only if, it is valid in the corresponding classical coherent theory (the one with the same signature and axioms as~\sa{T}): connectors, quantifiers and classical first-order logic are used in the coherent theory.
\end{theorem}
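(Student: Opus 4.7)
The direction from dynamical validity to coherent validity is immediate: any dynamical derivation translates verbatim into a derivation in $\sab{T}^{\rm c}$, since the dynamical apparatus (the rules $\vou$, $\Exists$, $\vdi$, the admissible structural rules~\ref{rstr1}, and the geometric axioms) are all special cases of classical first-order reasoning on those same axioms. The substance of the theorem is the converse: a classically coherent proof of a \rdy must already admit a dynamical proof in \sa{T} itself.

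The plan is to factor the converse into two steps. First, reduce classical coherent provability to intuitionistic coherent provability via the opposite-predicate device of Lemma~\ref{lemNegValide}. Then apply Theorem~\ref{thFond0}, which turns an intuitionistic proof of a \rdy into a valid dynamical derivation.

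For the first step, I would iterate Lemma~\ref{lemNegValide} over every predicate symbol appearing in the signature of \sa{T}, obtaining a conservative extension $\Tp$ in which each atomic formula $P(\underline{t})$ has a syntactic opposite $Q_P(\underline{t})$ satisfying the pair of rules \tsbf{In-non}$_P$ and \tsbf{El-non}$_P$. In $\Tp$ every atomic formula is dynamically decidable, because at any moment the computation tree may branch on $P$ versus $Q_P$. This controlled form of excluded middle is exactly what one needs to internalise classical reasoning without introducing the implication connector. A classical derivation in $\sab{T}^{\rm c}$ of a geometric sequent $\Gamma \vd \Delta$ can then be translated, by a Gödel--Gentzen-style rewriting that replaces every $\neg P$ by $Q_P$ and commutes with $\vii$, $\vuu$ and $\exists$, into an intuitionistic derivation of $\Gamma \vd \Delta$ in $(\Tp)^{\rm i}$. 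Since the \rdy we are trying to prove contains no negations in the first place, the translation leaves its statement unchanged.

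The second step is then a direct appeal to Theorem~\ref{thFond0}, which produces a valid dynamical derivation in $\Tp$. Finally, peeling off the opposite predicates one at a time via the conservativity clause of Lemma~\ref{lemNegValide} brings this derivation back into \sa{T}. The main obstacle lies in the translation step: one must check that the classical equivalences such as $\neg\exists x\,\varphi \leftrightarrow \forall x\,\neg\varphi$ and the De~Morgan laws are matched by admissible manipulations of the opposites $Q_P$ in $\Tp$, so that every use of classical reasoning on existentially quantified or disjunctive geometric subformulas can be replayed as tree-opening steps on atomic decidabilities. This is precisely where the proof-size explosion mentioned in the comment on~\cite{CLR01} originates; everything downstream (the invocation of Theorem~\ref{thFond0} and the iterated conservativity removal) is routine.
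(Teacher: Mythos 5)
You correctly dispatch the easy direction, and you correctly identify Lemma~\ref{lemNegValide} as the technical ingredient the paper relies on. But your translation step does not go through as described, and the route cannot be repaired. No variant of the negative translation commutes with disjunction and existential quantification: the Gödel--Gentzen clauses are $(\varphi\vee\psi)^{N}=\neg(\neg\varphi^{N}\wedge\neg\psi^{N})$ and $(\exists y\,\varphi)^{N}=\neg\forall y\,\neg\varphi^{N}$, and the Kuroda and Glivenko variants still leave a $\neg\neg$ wrapped around the conclusion. Decidability of atomics, which Lemma~\ref{lemNegValide} buys you, gives stability $\neg\neg P\leftrightarrow P$ at the \emph{atomic} level only; it does not strip a double negation sitting in front of an existential or a disjunction. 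The equivalence you cite, $\neg\exists x\,\varphi\leftrightarrow\forall x\,\neg\varphi$, is in fact fine intuitionistically; but the directions you would actually need to push negations inward past $\forall$ and $\wedge$ in an arbitrary classical derivation---$\neg\forall x\,\varphi\to\exists x\,\neg\varphi$ and $\neg(\varphi\wedge\psi)\to(\neg\varphi\vee\neg\psi)$---are not intuitionistically valid, and remain invalid for compound $\varphi,\psi$ even with decidable atomics. So after translating and invoking Theorem~\ref{thFond0} you obtain a dynamical derivation of a doubly negated statement, not the \rdy itself, and stripping that outer $\neg\neg$ from a coherent formula is a Markov-type principle which is precisely what one may not assume.

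The constructive proof (Theorem~1 of \cite{CLR01}) does not factor through a negative translation. It proceeds by direct structural induction on a Gentzen-style classical derivation of the coherent sequent from the geometric axioms, replaying each classical rule as admissible manipulations in the dynamical setting, the opposite predicates of Lemma~\ref{lemNegValide} being used to neutralise negation at the atomic level wherever the derivation introduces it; the proof-size explosion the paper mentions originates there. Your two-stage factoring ``classical $\to$ intuitionistic $\to$ dynamical'' would be more elegant if the first arrow existed, but conservativity of classical over intuitionistic provability of coherent sequents is exactly the finitary Barr-type statement and must be proved on its own; it is not a corollary of double-negation translation.
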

\begin{remark} \label{remprogHilbert} 
The preceding theorem, and the following one concerning skolemisation, show that the use of dynamical theories allows Hilbert's programme to be partly realised, by providing a constructive semantics for certain uses of  \tsbf{LEM} and the axiom of choice. \eoe
\end{remark}

\Subsection{Skolemisation}

We now look at the general process of skolemisation, which consists of getting rid of the $ \,\Exists\, $ in some valid rules of a dynamical theory by replacing the existing $ \,\Exists\, $ with functions symbols.

We have already indicated the case where this operation is harmless, according to the following informal remark: when the existent in a valid rule is provably unique, it doesn't hurt to replace the dummy variable which designates the existent by a function symbol. 

On the other hand, replacing the dummy variable that designates the existing with a function symbol when the existing is not provably unique is more problematic. This is called skolemisation. Some constructive models before skolemisation may no longer be suitable after skolemisation, and the new theory may no longer have any known constructive model. And even in classical mathematics, if the models are \gui{almost} the same, it is on condition that the axiom of choice is assumed.

\begin{theorem}[skolemisation] 
\label{thFondExists} 

Consider a dynamical theory \sa{T}. We denote $ \sab{T}' $ the \gui{skolemised} theory, where we have skolemised all the existential axioms by replacing the $ \Exists $ with the introduction of function symbols. Then~$ \sab{T}' $ is a conservative extension of \sa{T}.
\end{theorem}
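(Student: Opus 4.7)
The plan is to show conservativity by directly transforming dynamic proofs in $\sab{T}'$ into dynamic proofs in \sa{T}. Let each existential axiom of \sa{T} have the form
\[
\Gamma(\ux)\vd \Exists\uy\;\Delta(\ux,\uy)
\]
(the case of a disjunctive conclusion and of several existential axioms is analogous), so that its skolemised counterpart in $\sab{T}'$ reads $\Gamma(\ux)\vd \Delta(\ux,\bom{f}(\ux))$, where $\bom{f}$ is a tuple of fresh function symbols. Given a valid proof tree $\Pi$ in $\sab{T}'$ of a rule $\Gamma^{\star}\vd \dots$ whose hypotheses and conclusion are formulated entirely in the language $\cL$ of \sa{T}, I would define, by induction on the structure of $\Pi$, a new proof tree $\Pi'$ in \sa{T} with the same end-rule.

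First, I would process the nodes of $\Pi$ top-down. At each node where the skolemised axiom is applied with substitution $\ux\mapsto\underline{t}$, producing the atomic formulas $\Delta(\underline{t},\bom{f}(\underline{t}))$ in the current branch, I would replace the step by an application of the original existential axiom: from $\Gamma(\underline{t})$ one opens a branch introducing a fresh tuple of variables $\und{z}$ together with $\Delta(\underline{t},\und{z})$, and then systematically renames every later occurrence of the term $\bom{f}(\underline{t})$ in that branch by $\und{z}$. Nested skolem terms $g(\bom{f}(\underline{t}))$ with $g\in\cL$ are rewritten functorially to $g(\und{z})$. Because the end-rule of $\Pi$ lies in $\cL$, no skolem symbol can occur in it, so the procedure terminates on a well-formed proof tree $\Pi'$ in \sa{T} establishing the same rule.

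The main delicate points are twofold. First, one must keep a coherent bookkeeping: the same skolem term $\bom{f}(\underline{t})$ occurring several times on the same branch must always be mapped to the same fresh variable $\und{z}$, even though the original existential axiom is applied only once; this is possible because the atomic formulas produced by an axiom application persist along the branch thanks to the monotony structural rule of \ref{rstr1}. Second, and more subtly, the term $\bom{f}(\underline{t})$ may syntactically appear in $\Pi$ before, or even without, $\Gamma(\underline{t})$ being derived in the current branch. Such occurrences are harmless because no skolem symbol appears in the end-rule, and the admissible cut and transitivity rules of \ref{rstr1} allow one to reorganise the proof so that the introduction of the corresponding fresh variable is deferred until $\Gamma(\underline{t})$ becomes available; alternatively, vacuous occurrences can be pruned altogether.

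A more concise alternative would be to invoke Theorem~\ref{thFond}: translate a dynamic proof in $\sab{T}'$ into a classical first-order proof in the associated coherent theory $(\sab{T}')^{\rm c}$, apply the classical conservativity of skolemisation (provable either by cut-elimination or, semantically, by expanding any model of $\sab{T}^{\rm c}$ into a model of $(\sab{T}')^{\rm c}$ through a choice of witnesses), and then re-apply Theorem~\ref{thFond} to return to the dynamic setting of \sa{T}. The hard part, in either approach, is the treatment of disjunctive existential axioms: there, the skolem data involves not only function symbols for the witnesses but also an auxiliary device, such as a fresh predicate or sort, to record which disjunct has been selected, and the bookkeeping above must be refined to track this additional parameter along the branches of $\Pi$.
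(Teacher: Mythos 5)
The paper does not actually supply a proof of this theorem: it observes that the classical argument simply notes that $\sa{T}$ and $\sab{T}'$ "have the same models" (modulo the axiom of choice), and then defers the syntactic, constructive proof entirely to the reference \cite{BC2019}. Your proposal is therefore an attempt to reconstruct that syntactic argument, and while the \emph{strategy} — replace each use of the skolemised axiom $\Gamma\vd\Delta(\ux,\bom f(\ux))$ by a use of the original existential axiom introducing a fresh variable $\und z$, then substitute $\und z$ for $\bom f(\underline t)$ throughout — is the right one, the hard point is buried in the sentence you yourself flag as "more subtle."

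Here is where the gap is. In $\sab T'$ the Skolem term $\bom f(\underline t)$ is a legitimate term of the language \emph{independently} of whether $\Gamma(\underline t)$ has been derived, and any other axiom of $\cL$ may be instantiated with $\bom f(\underline t)$ before — or indeed on a branch where — the skolemised axiom is never applied with argument $\underline t$. Worse, the derivation of $\Gamma(\underline t)$ itself may use facts $A(\bom f(\underline t))$ obtained by substituting $\bom f(\underline t)$ into other rules; in that case the existential axiom cannot be "deferred until $\Gamma(\underline t)$ becomes available" because availability of $\Gamma(\underline t)$ circularly depends on treating $\bom f(\underline t)$ as an existing term. Your proposed fix — invoking cut/transitivity to reorganise the tree — is exactly the core of the difficulty and is not an obviously admissible transformation; it is what makes the result a paper-length theorem rather than a remark. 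A fully worked proof has to explain how to interleave the fresh-variable introductions with the dependency order of the proof tree, and why nested Skolem terms $\bom f(\underline s[\bom f(\underline t)])$ can be peeled off consistently. Simply asserting that "vacuous occurrences can be pruned" or that the structural rules suffice does not close this gap.

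Two smaller points. Your alternative route through Theorem~\ref{thFond} is essentially the classical proof: pass to $(\sab T')^{\mathrm c}$, use classical conservativity of skolemisation (which already hides either a cut-elimination argument or a choice-based model expansion), and come back by Theorem~\ref{thFond}. This is a correct reduction in classical mathematics, but it does not give the constructive content the paper is after, and it silently rests on exactly the same kind of syntactic work that the direct approach needs. Finally, your remark that a disjunctive existential axiom requires "an auxiliary device, such as a fresh predicate or sort, to record which disjunct has been selected" is not how skolemisation is set up here: one simply introduces one tuple of Skolem functions per disjunct, yielding $\Gamma\vd\Delta_1(\bom f_1(\ux))\vou\dots\vou\Delta_m(\bom f_m(\ux))$, with no selector predicate. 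The genuine complication in the disjunctive case is again the dependency bookkeeping across branches, not the encoding of the chosen disjunct.
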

 
%
\begin{proof}
A proof in classical mathematics using an axiom of choice consists in noting that the two theories have \gui{the same models}. A syntactic and constructive proof is given in \cite[Bezem \& Coquand, 2019]{BC2019}.
\end{proof}

\section[Distributive lattices associated with a dynamic algebraic structure]{Distributive lattices and spectral spaces associated with a dynamic algebraic structure} \label{subsectrdisad}

For this section, we refer to \cite[Chapters XI and XIII]{CACM} \cite[sections 1 and 3]{LM2022}

\Subsection{Distributive lattices and entailment relations}

A particularly important rule for distributive lattices, called \textsl{cut}, is the following
\begin{equation}\label{coupure1}
\hbox{if } \, x\vi a \leq b \, \hbox{ and } \, a \leq x\vu b 
\,\hbox{ then } \, a \leq b.
\end{equation}

If $ A\in\Pfe(\gT) $ (set of finitely enumerated parts of~$\gT$) we will note
\[ 
\ndsp \Vu A:=\Vu _{x\in A}x\qquad \hbox{ and }\qquad \Vi A:=\Vi _{\!x\in A}x.
\]
We denote $ A \vda B $ or $ A \vdash_\gT B $ the relation defined as follows on the set $ \Pfe(\gT) $:

\snic{A \vda B \; \; \equidef\; \; \Vi A\;\leq \;
\Vu B.}

This relationship verifies the following axioms, in which we write $x$ for $ \{x\} $ and $ A, B $ for $ A\cup B $.

\vspace{-.5em}
\[\arraycolsep3pt\begin{array}{rcrclll}
& & x &\vda& x &\; &(R) \oups1mm]
 \hbox{if } A \vda B & \hbox{then} & A,A' &\vda& B,B' &\; &(M) \oups1mm]
\hbox{if } (A,x \vda B) \hbox{ and } (A \vda B,x) 
& \hbox{then} & A &\vda& B &\;
&(T).
\end{array}
\]
The relationship is said to be \textsl{reflexive}, \label{remotr} \textsl{monotonic} and \textsl{transitive}. The third rule (transitivity) can be seen as a rewriting of the rule (\ref{coupure1}) and is also called the \textsl{cut} rule.

\begin{definition}
\label{defEntrel}
For an arbitrary set $S$, a binary relation on $ \Pfe(S) $ which is reflexive, monotonic and transitive is called an {\sl entailment relation}. 
\end{definition}

The following theorem is fundamental. It states that the three properties of entailment relations are exactly what is needed for the interpretation of an entailment relation as the trace of that of a distributive lattice to be adequate.

\begin{theorem}[fundamental theorem of entailment relations]
\label{thEntRel1} {\rm \cite[Satz 7]{Lor1951}, \cite{CC00}, \cite[\hbox{XI-5.3}]{ACMC}}.
Let $S$ be a set with an entailment relation $ \vdash_S $ on $ \Pfe(S) $. Consider the distributive lattice~$\gT$ defined by generators and relations as follows: the generators are the elements of $S$ and the relations are the

\snic {A \, \vdash_\gT\, B}

\noindent each time $ A\, \vdash_S \, B $. Then, for all $A$, $ B $ in $ \Pfe(S) $, we have

\snic {\hbox{if }A\, \vdash_\gT \, B
\hbox{ then } A\, \vdash_S \, B.}

\noindent 
In particular, two elements $x$ and $y$ of $S$ define the same element of $\gT$ if, and only if, we have $ x \vdash_S y $ and $ y \vdash_S x $.
\end{theorem}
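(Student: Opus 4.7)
The second assertion will follow from the first: since $A \vdash_S B$ trivially implies $A \vdash_\gT B$ (this is built into the presentation of~$\gT$), two generators $x,y \in S$ represent the same element of~$\gT$ \ssi $x \vdash_\gT y$ and $y \vdash_\gT x$, which by the first assertion is equivalent to $x \vdash_S y$ and $y \vdash_S x$.

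For the first assertion, my plan is to construct $\gT$ explicitly via disjunctive normal forms so that its order can be read off directly from $\vdash_S$. I would set $\mathsf{DNF} := \Pfe(\Pfe(S))$, viewing $\mathcal{A}=\{A_1,\dots,A_n\}$ as the formal join of meets $\bigvee_i \bigwedge A_i$, and define a relation $\preceq$ on $\mathsf{DNF}$ by
\[
\mathcal{A} \preceq \mathcal{B} \iff \forall A \in \mathcal{A}\ \forall \sigma \in {\textstyle\prod_{B \in \mathcal{B}} B} \colon A \vdash_S \sigma(\mathcal{B}),
\]
where $\sigma(\mathcal{B}) := \{\sigma(B) : B \in \mathcal{B}\}$. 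This formula mirrors the distributive identity $\bigvee_j \bigwedge B_j = \bigwedge_\sigma \bigvee_j \sigma(B_j)$ that holds in any distributive lattice. I would then equip $\mathsf{DNF}$ with $\mathcal{A} \vee \mathcal{B} := \mathcal{A} \cup \mathcal{B}$ and $\mathcal{A} \wedge \mathcal{B} := \{A \cup B : A \in \mathcal{A},\ B \in \mathcal{B}\}$, and let $\gT'$ be the quotient by the equivalence $\preceq \cap \succeq$.

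Next, I would verify three points: (i) $\preceq$ is a preorder---reflexivity comes from (R), monotonicity from (M), and transitivity is the essential cut-elimination step using (T); (ii) the operations descend to $\gT'$ and make it a distributive lattice, the axioms (associativity, absorption, distributivity) reducing to direct consequences of (R) and (M) applied to the explicit set-theoretic formulas; (iii) the map $\eta\colon S \to \gT'$ sending $x$ to $[\{\{x\}\}]$ turns each defining relation $A \vdash_S B$ into the inequality $\bigwedge \eta(A) \leq \bigvee \eta(B)$ in~$\gT'$, and satisfies the universal property of~$\gT$: any $f \colon S \to \gL$ into a distributive lattice~$\gL$ respecting the relations extends uniquely via $[\mathcal{A}] \mapsto \bigvee_{A \in \mathcal{A}} \bigwedge_{a \in A} f(a)$, well-definedness being again the distributive identity used to define $\preceq$. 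Hence $\gT' \simeq \gT$ canonically, and under this identification $A \vdash_\gT B$ becomes $\{A\} \preceq \{\{b\} : b \in B\}$; but the only choice function on a DNF of singletons is forced and returns $B$ itself, so by the definition of $\preceq$ this is exactly $A \vdash_S B$.

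The main obstacle will be step~(i), transitivity of $\preceq$: given $\mathcal{A} \preceq \mathcal{B}$ and $\mathcal{B} \preceq \mathcal{C}$, one must construct, for every $A \in \mathcal{A}$ and every choice $\tau$ on $\mathcal{C}$, a derivation of $A \vdash_S \tau(\mathcal{C})$ by assembling the many hypotheses $B \vdash_S \tau(\mathcal{C})$ (for $B \in \mathcal{B}$) with the hypotheses indexed by choices $\sigma$ on $\mathcal{B}$. This is the classical cut-elimination argument---an induction on $|\mathcal{B}|$ applying (T) repeatedly to eliminate one element of the mediating set at a time---and is the only place where the cut axiom is genuinely needed. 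Once it is in place, the lattice-theoretic checks in (ii) and the universal-property argument in (iii) are straightforward unfoldings of the distributive normal-form calculus.
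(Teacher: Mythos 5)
Your proposal is correct and matches the standard argument: the paper itself gives no proof of Theorem~\ref{thEntRel1} (it refers to \cite{Lor1951}, \cite{CC00} and \cite[XI-5.3]{ACMC}), and those references prove it exactly as you do, by representing elements as formal disjunctive normal forms in $\Pfe(\Pfe(S))$, ordering them via the transversal condition, and establishing transitivity by the cut-elimination induction that eliminates the mediating set one element at a time using $(T)$. The only detail hidden in your ``straightforward'' step (ii) is that checking $\wedge$ gives the infimum requires, for a given transversal of $\mathcal{A}\wedge\mathcal{B}$, extracting a transversal of $\mathcal{A}$ or of $\mathcal{B}$ contained in it; this is constructively unproblematic because membership in a finitely enumerated union comes with the index of the component, and it uses only $(R)$ and $(M)$, so your overall structure stands.
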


\Subsection{The spectrum of a distributive lattice} 

In classical mathematics a {\sl prime ideal} $\fp$ of a distributive lattice $ \gT\neq \Un $ is an ideal whose complementary $ \fv $ is a filter (which is then a {\sl prime filter}). We then have $ \gT/(\fp=0,\fv=1)\simeq\Deux $. It is the same to give a prime ideal of $\gT$ or a morphism of distributive lattices $ \gT \rightarrow \Deux $.

It is easy to check that if $S$ is a generating part of the distributive lattice $\gT$, a prime ideal~$\fp$ of $\gT$ is completely characterised by its trace on $S$ (cf. \cite{CC00}).

\begin{definition} \label{defiSpecTrdi}
 The \textsl{spectrum of a distributive lattice $\gT$} is the set $ \Spec \,\gT $ of its prime ideals, with the following topology: a basis of opens is given by the 
\[
\DT(a)\eqdefi\sotq{\fp\in\Spec,\gT}{a\notin\fp},\quad a\in \gT.
\]
\end{definition}
Classical mathematics verifies that
\begin{equation} \label{eqDa}
\left.
\begin{array}{rclcrcl}
 \DT(a\vi b) & = & \DT(a)\cap \DT(b),&\quad & \DT(0) & = & 
\emptyset ,\\
 \DT(a\vu b) & = & \DT(a)\cup \DT(b),&& \DT(1) & = & 
\Spec\,\gT. \end{array}
\right\}
\end{equation}

The complementary of $ \DT(a) $ is a closed set which we denote $ \VT(a) $.

The notation $ \VT(a) $ is extended as follows: if $ I\subseteq\gT $, then $ \VT(I)\eqdefi\Vi_{x\in I}\VT(x) $. If $ I $ generates the ideal $ \fII $, then $ \VT(I)=\VT(\fII) $. It is sometimes said that $ \VT(I) $ is \textsl{the variety associated~with~$ I $}.

\begin{definition} \label{defiSpectralSpace}
 A topological space homeomorphic to a space $ \Spec(\gT) $ is called a \textsl{spectral space}. 
 \end{definition}

The spectral spaces come from the study \cite[Stone, 1937]{Sto37}.

\cite{Joh1986} calls these spaces \textsl{coherent spaces}. \cite{BW74} calls them \textsl{Stone spaces}. This terminology is obsolete, because since \cite{Joh1986} Stone spaces are the spectral spaces associated with distributive lattices which are Boolean algebras.

The name \textsl{spectral space} was given by \cite[Hochster, 1969]{Hoc1969}, who popularised them in the mathematical community after a prolonged hibernation since 1937.

With classical logic and the axiom of choice, the space $ \Spec (\gT) $ has \gui{enough points}: we can find the lattice $\gT$ from its spectrum. 

\smallskip 
A point $\fp$ in a spectral space $ X $ is said to be the \textsl{generic point of the closed set $ F $} \hbox{if $ F=\ov{\so{\fp}} $}. This point (when it exists) is necessarily unique because spectral spaces are Kolmogoroff spaces. In fact, the $ \ov{\so{\fp}} $ closed sets are exactly all the irreducible closed sets of $ X $. The order relation $ \fq\in\ov{\so{\fp}} $ will be denoted $ \fp\leq_X \fq $, and we have the equivalences
\begin{equation} \label {eqOrdreSpec}
\fp\leq_X \fq\;\Longleftrightarrow\; \ov{\so{\fq}}\subseteq \ov{\so{\fp}}\,.
\end{equation}
The closed points of $ \Spec(\gT) $ are the maximal ideals of $\gT$. When $ X=Spec(\gT) $ the relation $ \fp\leq_X \fq $ is simply the usual inclusion relation $ \fp\subseteq \fq $ between prime ideals of the distributive lattice $\gT$.

In the category of spectral spaces we define \textsl{spectral morphisms} as maps such that the reciprocal image of any quasi-compact open is a quasi-compact open (in particular they are continuous).

\Subsection{Stone's antiequivalence}

Stone's antiequivalence asserts (in modern language) that in classical mathematics the category of distributive lattices is antiequivalent to the category of spectral spaces.

Although spectral spaces have invaded contemporary abstract algebra, it is only in constructive mathematics that this anti-equivalence of classical mathematics is given the attention it deserves. 

The aim is to correctly define the distributive lattices corresponding to the spectral spaces in the classical literature, and, if possible, to decipher the classical discourses using spectral spaces into constructive discourses concerning the corresponding distributional lattices. 

\Subsection{The Zariski lattice and spectrum of a commutative ring}

The Zariski lattice of a commutative ring can be obtained from rules valid in different extensions of the theory \sa{Ac} of commutative rings.

We choose the theory of local rings because of their fundamental role in Grothendieck schemes.

We consider precisely the dynamical theory of \textsl{local rings with units} \Sa{Al1}

Let $\gA$ be a commutative ring. Consider the entailment relation $ \vdash_{\gA,\mathrm{Zar} }$ on the set underlying $\gA$ defined by the following equivalence 
\begin{equation} \label {eqZarclass}
\begin{aligned} 
 a_1,\dots,a_n &\,\vdash_{\gA,\mathrm{Zar}} c_1,\dots,c_m 
 \qquad\quad \equidef \oups.3em] 
\U(a_1)\vet \dots\vet \U(a_n) & \Vdi{\sA{Al1}(\gA)} \U(c_1)\vou \dots\vou \U(c_m) 
 \end{aligned}
\end{equation}

We define the \textsl{Zariski lattice of $\gA$}, denoted $ \ZarA $ or $ \Zar(\gA) $, as the \trdi generated by the entailment relation $ \vdash_{\gA,\mathrm{Zar}} $.
 
The corresponding map $ \DA:\gA\to\ZarA $ is called the \textsl{Zariski support of $\gA$}. When $\gA$ is fixed by the context we simply note $ \rD $.

The usual \textsl{Zariski spectrum} is the dual spectral space of this distributive lattice.

Note that since $ \rD(a_1)\vi\dots\vi\rD(a_n)=\rD(a_1\cdots a_n) $, the elements of $ \ZarA $ are all of the form 
 $ \rD(c_1,\dots,c_m):= \rD(c_1)\vu\dots\vu\rD(c_m) $.

We have the following equivalences. Equivalence between (1) and (2) essentially copies the definition of~$ \ZarA $. The equivalence with~(3) is the subject of a \textsl{formal Nullstellensatz}. The Hilbert Nullstellensatz itself is a more difficult result. 
\begin{theorem}[Nullstellensatz formel] \label{thNstFormel} ~\\
Let $\gA$ be a commutative ring, et $ \an,c_1,\dots c_m\in\gA $. \Propeq
\vspace{-.5em}
\[ 
\begin{aligned} 
(1)\qquad\quad\; \rD(a_1),\dots,\rD(a_n) &\,\vdash_{\ZarA} \rD(c_1),\dots,\rD(c_m) \oups.3em] 
(2)\qquad\quad \U(a_1)\vet \dots\vet \U(a_n) &\Vdi{\sA{Al1}(\gA)} \U(c_1)\vou \dots\vou \U(c_m) \\ 
(3)\qquad\; \exists k>0\;\;(a_1 \cdots a_n)^k&\,\in\gen{c_1,\dots,c_m} 
 \end{aligned} \label {eqNstFormel}
\] 
\end{theorem}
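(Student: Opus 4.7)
The strategy is to handle (1) $\Leftrightarrow$ (2) via the construction of $\ZarA$ and the fundamental theorem of entailment relations, and to handle (2) $\Leftrightarrow$ (3) by direct analysis of dynamical proofs in $\sa{Al1}(\gA)$.

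For (1) $\Leftrightarrow$ (2), one notes that the binary relation on $\Pfe(\gA)$ defined by the right-hand side of \pref{eqZarclass} is an entailment relation on the set underlying $\gA$: reflexivity is immediate, monotony comes from the admissible structural rule of monotony (rstr1, item 3c), and transitivity comes from the cut rule (rstr1, item 4b). Since $\ZarA$ is by construction the distributive lattice generated by this entailment relation, Theorem \ref{thEntRel1} (applied to the generating set $S = \gA$) yields the converse implication: an entailment in $\ZarA$ between generators is an entailment in the relation of (2). Combined with $\rD(a_1)\vi\cdots\vi\rD(a_n)=\rD(a_1\cdots a_n)$ (which, up to routine checks, follows from the unit axioms), this gives the equivalence.

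The direction (3) $\Rightarrow$ (2) is elementary. Assume $(a_1\cdots a_n)^k=\sum_{i=1}^m b_ic_i$ for some $b_i\in\gA$, and reason dynamically under the hypotheses $\U(a_1),\dots,\U(a_n)$. Using the standard Horn rules for units of a commutative ring one deduces first $\U(a_1\cdots a_n)$, then $\U\bigl((a_1\cdots a_n)^k\bigr)$, then $\U\bigl(\sum_i b_ic_i\bigr)$. Applying axiom \tsbf{AL1} iteratively (and using the structural cut rules) opens $m$ branches: in branch $i$ one has $\U(b_ic_i)$, hence $\U(c_i)$ by the unit rules. The disjunction of the conclusions of the branches is precisely $\U(c_1)\vou\cdots\vou\U(c_m)$.

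The direction (2) $\Rightarrow$ (3) is the real content. The plan is induction on the structure of a dynamical proof tree witnessing (2). One defines, for each node of the tree, an \emph{invariant}: to the current list $\Gamma$ of atomic formulas assumed valid at that node is attached a finite algebraic identity witnessing an ideal membership of the form $(a_1\cdots a_n \cdot t_1\cdots t_r)^k \in \langle c_1,\dots,c_m,\, u_1,\dots,u_s\rangle$, where the $t_j$ are the terms involved in the current $\U(t_j)$ atoms and the $u_\ell$ are the terms involved in the current $t_\ell=0$ atoms. One checks that every axiom of $\sa{Al1}(\gA)$ preserves the invariant: the commutative ring axioms are immediate, the collapse axiom \tsbf{CL$_{Al1}$} triggers the case $1\in\langle\cdots\rangle$, and, most importantly, axiom \tsbf{AL1} splits $\U(x+y)$ into two branches whose identities can be recombined because $x+y$ and the relevant power already belong to the ideal (so that an identity for each branch, multiplied by a suitable power and added, yields an identity for the parent node). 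The relations $R$ of the presentation of $\gA$ contribute directly as generators of the ambient ideal. At the root of the tree, the invariant gives exactly (3).

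The main obstacle is the bookkeeping of the algebraic identities through the inductive step for \tsbf{AL1}: one must combine two identities coming from the two branches into a single identity for the parent node, which requires clearing denominators and using the binomial expansion of an appropriate power of $(x+y)$. This is the technical core of the formal Nullstellensatz; once it is set up, the remaining axioms of $\sa{Al1}(\gA)$ are handled routinely.
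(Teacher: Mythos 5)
Your proposal is correct and follows essentially the same route as the paper: the equivalence $(1)\Leftrightarrow(2)$ is exactly what the paper means by "copying the definition of $\ZarA$" (the relation \pref{eqZarclass} is an entailment relation by the structural rules, and Theorem \ref{thEntRel1} gives conservativity over the generators), while $(2)\Leftrightarrow(3)$ is the standard formal Nullstellensatz the paper invokes, proved as you outline by the easy dynamical derivation for $(3)\Rightarrow(2)$ and, for $(2)\Rightarrow(3)$, by tracking ideal-membership certificates through the proof tree with the binomial recombination at the branching axiom \tsbf{AL1}. The only bookkeeping left implicit in your sketch is the elimination of the fresh inverse variables introduced by the existential unit axiom (handled by multiplying through by a power of the corresponding unit term, which is presumably what your "clearing denominators" refers to), and the collapse case via \tsbf{CL$_{Al1}$} is more naturally recorded as $0$ entering the multiplicative part of the certificate than as "$1\in\gen{\cdots}$".
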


We can therefore identify the element $ \rD(c_1,\dots,c_m) $ of $ \ZarA $ with the ideal $ \sqrt[\gA]{\gen{c_1,\dots,c_m}} $. Modulo this identification, the order relation is the inclusion relation.
\begin{corollary} \label{corZarA}
The lattice $ \ZarA $ is generated by the smallest entailment relation on (the set underlying~to)~$\gA$ satisfying the following relations.

\DeuxRegles
{
\labu $ \;\;0\vdash 0_\gT $ 
\labu $ \;\;ab\vdash a $ 
\labu $ \;\;a+b\vdash a,b $ 
}
{
\labu $ \;\; 1\vdash 1_\gT $ 
\labu $ \;\;a,b\vdash ab $ 
} 

\noindent In other words, the map $ \rD:\gA\to\ZarA $ satisfies the relations 
\[
\rD(0)=0,\;\rD(1)=1,\;\rD(ab)=\rD(a)\vi\rD(b),\;\rD(a+b)\leq \rD(a)\vu \rD(b),
\]
and any other map $ \rD':\gA\to \gT $ which satisfies these relations factorises via $ \ZarA $ with a unique morphism of distributive lattices $ \ZarA\to \gT $.
\end{corollary}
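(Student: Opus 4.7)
The plan is to verify that the relation $\vdash_{\ZarA}$ from Theorem~\ref{thNstFormel} coincides with the entailment relation $\vdash_{\ast}$ generated by the five listed rules (where $0_\gT$ and $1_\gT$ are to be read as the empty disjunction and the empty conjunction respectively, i.e.\ bottom and top), and then to invoke Theorem~\ref{thEntRel1} together with the usual universal property of presentations.

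\smallskip
\textbf{Step 1: $\vdash_{\ZarA}$ satisfies the five rules.} Each rule is a direct instance of condition~(3) of Theorem~\ref{thNstFormel} with $k=1$: $0\in\gen{}$ (since $\gen{}=\so{0}$), $1\in\gen{1}$, $ab\in\gen{a}$, $a+b\in\gen{a,b}$, and $ab\in\gen{ab}$. Hence $\rD$ satisfies the displayed equalities/inequality ($\rD(ab)\leq\rD(a)$ and $\rD(ab)\leq\rD(b)$ from $ab\vdash a$ applied also to $ba$, together with $\rD(a)\wedge\rD(b)\leq\rD(ab)$ from $a,b\vdash ab$).

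\smallskip
\textbf{Step 2: $\vdash_{\ast}$ contains $\vdash_{\ZarA}$.} Suppose $a_1,\dots,a_n\vdash_{\ZarA}c_1,\dots,c_m$, i.e.\ $(a_1\cdots a_n)^k=\sum_{j=1}^{m}b_jc_j$ in $\gA$ for some $k$ and $b_j\in\gA$. I will chain the following $\vdash_{\ast}$-derivations:
\begin{enumerate}
\item[(i)] $a_1,\dots,a_n\vdash_{\ast}a_1\cdots a_n$, by iterating $a,b\vdash ab$;
\item[(ii)] $a_1\cdots a_n\vdash_{\ast}(a_1\cdots a_n)^k$, obtained from $a,a\vdash a\cdot a$ and the contraction structural rule, iterated $k-1$ times;
\item[(iii)] since $(a_1\cdots a_n)^k$ and $\sum_j b_jc_j$ are the \emph{same element} of $\gA$, iterating $a+b\vdash a,b$ gives $(a_1\cdots a_n)^k\vdash_{\ast}b_1c_1,\dots,b_mc_m$;
\item[(iv)] for each $j$, $b_jc_j\vdash_{\ast}c_j$ by $ab\vdash a$ applied to the product $c_jb_j=b_jc_j$;
\item[(v)] combining (iii) and (iv) by repeated use of the cut rule $(T)$ and monotonicity $(M)$ yields $(a_1\cdots a_n)^k\vdash_{\ast}c_1,\dots,c_m$.
\end{enumerate}
Chaining (i), (ii) and (v) via transitivity produces $a_1,\dots,a_n\vdash_{\ast}c_1,\dots,c_m$, as required. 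Combined with Step~1, this shows that $\vdash_{\ZarA}$ \emph{is} the smallest entailment relation on $\gA$ (augmented with the formal symbols $0_\gT,1_\gT$) containing the five rules.

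\smallskip
\textbf{Step 3: Universal property.} Given any map $\rD'\colon\gA\to T$ into a distributive lattice $T$ satisfying the listed relations, define $A\vdash_{T,\rD'}B$ by $\Vi_{a\in A}\rD'(a)\leq\Vu_{b\in B}\rD'(b)$ (with the convention that $0_\gT$ and $1_\gT$ are sent to $0_T$ and $1_T$). This is an entailment relation on $\gA$ and, by hypothesis, it contains the five generating rules; hence by Step~2 it contains $\vdash_{\ZarA}$. Theorem~\ref{thEntRel1} then delivers a unique morphism of distributive lattices $\ZarA\to T$ sending $\rD(a)$ to $\rD'(a)$.

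\smallskip
The only slightly delicate point is the bookkeeping in (v): one must be careful to apply cut in the right order, using reflexivity to eliminate the $b_jc_j$ one at a time on the left while keeping the $c_j$ accumulating on the right. Everything else is either a direct Nullstellensatz computation or a straightforward structural manipulation.
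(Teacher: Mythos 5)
Your argument is correct and follows the route the paper intends: the five rules are verified through item (3) of the formal Nullstellensatz~\ref{thNstFormel}, a Positivstellensatz-style certificate $(a_1\cdots a_n)^k=\sum_j b_jc_j$ is converted into a derivation in the generated entailment relation, and the universal property then comes from Theorem~\ref{thEntRel1}. The only thing left implicit is the degenerate cases $m=0$ and $n=0$ (handled by the rules $0\vdash$ and $\vdash 1$, read as you do as empty disjunction and empty conjunction), which is routine bookkeeping.
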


\Subsection{Other examples}

Remember that a disjunctive rule is a \rdy without the $ \Exists $ symbol, and that a simple disjunctive rule is a \rdy of the following form, with $ m,n\geq 0 $.
\index{rule!disjunctive ---}\index{rule!disjunctive!simple ---}
 
\begin{equation} \label{eqAxds}
C_1\vet \ldots \vet C_n \vd D_1\vou \ldots \vou D_m\end{equation}
where $ C_i $'s and $ D_j $'s are atomic formulas. (Simple) Horn rules are special cases of (simple) disjunctive rules. 

\paragraph{First example.} Consider a dynamic algebraic structure $ \gA=\big((G,R),\sa{T}\big) $ for a dynamical theory $\sa{T}=(\cL,\cA)$. If $ P(x,y) $ is a binary predicate in the signature, and if $ Tcl=\Tcl(\gA) $ is the set of closed terms of $\gA$, we obtain an entailment relation $ \vdash_{\gA,P} $ on $ Tcl \times Tcl $ by defining
 
\vspace{-.8em}
\begin{equation} \label {eq1}
\begin{aligned} 
 (a_1,b_1),\dots,(a_n,b_n) &\,\vdash_{\gA,P} (c_1,d_1),\dots,(c_m,d_m) 
 \qquad\quad \equidef \oups.3em] 
 P(a_1,b_1)\vet \dots\vet P(a_n, b_n) & \Vdi{\gA} P(c_1, d_1)\vou \dots\vou P(c_m, d_m) 
 \end{aligned}
\end{equation}

Intuitively, the distributive lattice generated by this entailment relation is the lattice of the \gui{truth values} of the predicate~$P$ in the dynamic algebraic structure $\gA$.
 
\paragraph{More generally} Consider a dynamic algebraic structure $ \gA=\big((G,R),\sa{T}\big) $ for a dynamical theory $\sa{T}=(\cL,\cA)$. Let $S$ be a set of closed atomic formulas of $\gA$. We define \textsl{the entailment relation on $S$ associated with $\gA$} as follows: 
 
\vspace{-.5em}
\begin{equation} \label {eq2}
\begin{aligned}
 A_1,\dots,A_n &\,\vdash_{\gA,S} B_1,\dots,B_m 
 \qquad\quad \equidef \oups.2em] 
 A_1\vet \dots\vet A_n &\Vdi{\gA} B_1\vou\dots\vou B_m 
 \end{aligned}
\end{equation}
We can denote $ \Zar(\gA,S) $ the distributive lattice generated by this entailment relation.

In particular, the lattice $ \Zar(\gA,\Atcl(\gA)) $ is called the \textsl{absolute Zariski lattice of the dynamic algebraic structure $\gA$}.\index{absolute Zariski lattice of a dynamic algebraic structure}.
 
\paragraph{Case of an extension \sa{T$ _1 $} which reflects valid disjunctive rules.} Let \sa{T$ _1 $} be an extension of a dynamical theory \sa{T} which proves exactly the same disjunctive rules (for example a conservative extension). Let $ \gA=\big((G,R),\sa{T}\big) $ and $ \gA_1=\big((G,R),\sa{T$ _1 $}\big) $. Let $S$ be a set of closed atomic formulas of $\gA$. Then the Zariski lattices $ \Zar(\gA,S) $ and $ \Zar(\gA_1,S) $ are isomorphic. 

In particular, when \sa{T$ _1 $} is an essentially equivalent extension of \sa{T} the absolute Zariski lattices of $\gA$ and $ \gA_1 $ are isomorphic.

\paragraph{Zariski lattices, however, give a lesser image} of a dynamic algebraic structure. On the one hand, in these Zariski lattices nothing is taken into account a priori that corresponds to valid \rdys when they are not disjunctive. On the other hand, adding classical logic and skolemising a dynamical theory do not change the lattices corresponding to $ \Atcl(\gA) $, but in this case, the absolute Zariski lattice of $ \gA_1 $ is the Boolean algebra generated by $ \Zar(\gA,\Atcl(\gA)) $. To rediscover the richness of dynamical theories seen from a constructive point of view, it is necessary to call upon the theory of bundles or topos. 

\section{Model theory}\label{subsecModeles}

\Subsection{Theories that share certain rules}

We consider two dynamical theories $\sa{T}_1$ and $\sa{T}_2$ whose signatures extend the same signature  $\Sigma$. 

\begin{definition} \label{defiColSim} We assume that each of the two theories has a collapse axiom.
We say that the dynamical theories $\sa{T}_1$ and $\sa{T}_2$ \textsl{collapse simultaneously} (for $\Sigma$) when, for any presentation $(G,R)$ on $\Sigma$, the dynamical algebraic structures $\gA_1=\big((G,R),\sa T_1\big)$ and $\gA_2=\big((G,R),\sa T_2\big)$ collapse simultaneously.
\end{definition}
  
\begin{definition} \label{defiMemesRalgs}
The dynamical theories $\sa{T}_1$ and $\sa{T}_2$ are said to prove the same algebraic rules (for $\Sigma$) when, for any presentation $(G, R)$ on $\Sigma$, the dynamical algebraic structures $\gA_1=\big((G,R),\sa T_1\big)$ and $\gA_2=\big((G,R),\sa T_2\big)$ prove the same \ralgs.
\end{definition}

It is the same to say that the theories prove the same facts (Definition \ref{defiFact}).

\Subsection{Completeness theorem, simultaneous collapse}

First of all, here is the completeness theorem in its minimal form: its intuitive interpretation in classical mathematics is that classical logic gives exhaustive rules for reasoning in accordance with \gui{absolute truth}, based on an ideal mathematical universe in which no doubt is ever allowed, \tsbf{LEM} is absolutely true and the axiom of choice just the same.
 
\begin{theoremc}[Gödel's completeness theorem, first form] \label{thGodel1} ~\\
A dynamic algebraic structure that does not collapse admits a non-trivial model. 
\end{theoremc}

\comm An equivalent form of the completeness theorem is
the following special case (Krull's lemma): \textsl{any non-trivial commutative ring has a non-trivial integral quotient}.

\noindent
 The constructively acceptable form of Krull's lemma is the following easy result: 
 \textsl{when we add the \rdy \gui{$\,\,xy=0 \vd x=0 \vou y=0\,\,$} to the theory of commutative rings, a dynamical algebraic structure collapses in the former theory \ssi it collapses in the latter}.

\noindent
In other words, the theories \Sa{Ac} and \Sa{Asdz} collapse simultaneously. 
\eoe

\begin{theoremc} [Gödel's completeness theorem, second form] \label{thGodel2} ~\\
Consider a dynamical theory \sa{T} and a dynamic algebraic structure $\gA$ of type~\sa{T}. A fact is valid in $\gA$ if, and only if, it is satisfied in all models of $\gA$. 
\end{theoremc}
%
A dynamical theory that extends another (by adding sorts and/or predicates and/or axioms) proves \textsl{a priori} more results. An interesting case is when it proves the same results while offering greater facilities for proofs. This was the essence of the fundamental theorems \ref{thFond} and \ref{thFondExists}. A variant in model theory, but only in classical mathematics, is given by the following theorems. 
 
%
\begin{theoremc}[simultaneous collapses and non-trivial models] \label{thcolsimcomp} 
Let \sa{T} be a dynamical theory and $ \sab{T}' $ an extension which collapses simultaneously with \sa{T}. If a dynamic algebraic structure of type~\sa{T} admits a non-trivial model, it also admits a non-trivial model
as a dynamic algebraic structure of type~$ \sab{T}' $. More precisely, if~$ M $ is a non-trivial model of \sa{T}, the dynamic algebraic structure $ (\Diag(M),\sab{T}') $ admits a non-trivial model.
\end{theoremc}

\comm
A good constructive version of the completeness theorem is 
a pure tautology: if a dynamic algebraic structure does not collapse, then \dots\ it does not collapse. Or again: if a first-order formal theory doesn't collapse, then \dots\  it doesn't collapse. And for  \thref{thcolsimcomp}: if \sa{T} and $\Tp$ collapse simultaneously, then \dots\ they collapse simultaneously. The same would apply to Theorems \ref{thGodel2} and~\ref{thcolsimralg}. 

\noindent
 Indeed, what we call a constructive version of a \gui{doubtful} classical theorem  is a statement, correct in constructive mathematics, which, in practice, 
i.e.\ to demonstrate concrete results, provides the same services as the classical theorem. Indeed, in practice, all these \gui{abstract} theorems  are only used, to arrive at concrete results, only in reasoning by the absurd, which uses fictitious models to conclude that they cannot exist. The concrete result, on the other hand, is much closer to the hypothesis of the abstract theorem that has been invoked. A detailed analysis of the whole proof then generally shows that one has tautologised in circles without realising it (see for example \cite{Lom98} for Hilbert's 17th problem). This is one of the reasons why classical mathematics is so often constructive, contrary to the appearance given by its demonstrations.
\eoe

\Subsection {Representation theorem, theories proving the same Horn rules}

\begin{theoremc}[representation theorem] \label{thcolsimralg} 
Consider a dynamical theory $\sab{T}'$ which extends a Horn theory~\sa{T} and proves the same Horn rules. 
Any algebraic structure of type \sA{T} is a subdirect product of algebraic structures of type $\sAb{T}'$
\end{theoremc}

For example \sa{T} is the theory of $\ell$-groups and $ \sab{T}' $ is the theory of linearly ordered abelian groups. The important and constructive result is that these two theories prove the same Horn rules. The intuitive interpretation in classical mathematics is that every lattice group is a lattice subgroup of a product of linearly ordered groups. When Paul Lorenzen proved this result, he generalised Krull's analogous result that the integral closure of an integral ring $\gA$ is the intersection of the valuation rings of its fraction field that contain $\gA$. 

\medskip 
The following intuitive theorems, which will be useful to us, concern extensions which prove the same Horn rules; they are proved in \cite{Lom-tgac}. 

\begin{theorem} \label{thEseqMemesfaits}
Let $ \sab{T}_{\!2} $ be a dynamical theory which is a simple extension of a $ \sab{T}_{\!1} $ theory and which proves the same Horn rules. Consider an essentially equivalent extension $ \sab{T}_{\!1}' $ of $ \sab{T}_{\!1} $ obtained without the addition of existential predicates. We assume that there is no syntactic interference at the level of language extensions between $ \sab{T}_{\!1}' $ and $ \sab{T}_2 $. We can therefore construct an essentially equivalent extension $ \sab{T}_{\!2}' $ of $ \sab{T}_{\!2} $ by copying for $ \sab{T}_{\!2} $ what was done for $ \sab{T}_{\!1} $. In these conditions, $ \sab{T}_{\!2}' $ proves the same Horn rules as 
 $ \sab{T}_{\!1}' $.
\end{theorem}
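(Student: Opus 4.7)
The plan is to build, on both sides, a syntactic translation $\tau$ from the language of the extended theory back to the language of the unextended one that (i) sends Horn rules to finite families of Horn rules, (ii) produces families equivalent to the original rule inside the extended theory, and (iii) commutes with the \gui{copying} of the extension procedures from $\sab{T}_{\!1}$ to $\sab{T}_{\!2}$. Granting such a $\tau$, one direction is immediate: since $\sab{T}_{\!2}'$ is obtained from $\sab{T}_{\!2}$ by the same enrichments as $\sab{T}_{\!1}'$ from $\sab{T}_{\!1}$, and the no-interference hypothesis makes $\sab{T}_{\!2}'$ a simple extension of $\sab{T}_{\!1}'$, every Horn rule provable in $\sab{T}_{\!1}'$ is provable in $\sab{T}_{\!2}'$. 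The content of the theorem is the converse.

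I would construct $\tau$ by induction on the chain of enrichments of Definition~\ref{defi-exteseq}. Abbreviations are expanded in place; a predicate defined as a conjunction is split into its conjuncts, yielding several Horn rules when it occurs in the conclusion; a predicate defined as a disjunction occurring in a hypothesis is handled by case analysis; a function symbol obtained from a unique existence is replaced by a fresh variable together with its defining premise; product, coproduct, subsort and quotient sorts are decomposed componentwise, by case on the tag, by adding the defining predicate as a premise, and by replacing quotient equality with the defining equivalence relation; morphism sorts are encoded by their graphs. The crucial invariant is that each such step preserves Horn form, i.e.\ that no existential quantifier is introduced in the conclusion of any rule. This is exactly where the exclusion of existential predicate additions enters: an added predicate of the shape $Q(\ux) :\equiv \Exists y\, P(\ux,y)$ appearing in the conclusion of a Horn rule would translate to a genuinely existential rule, breaking Horn form, whereas all other procedures merely strengthen hypotheses or split a single rule into a finite family of Horn rules.

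With $\tau$ in hand the conclusion is immediate. Let $r$ be a Horn rule in the language of $\sab{T}_{\!1}'$ that is provable in $\sab{T}_{\!2}'$. Then $\tau(r)$, consisting of Horn rules in the language of $\sab{T}_{\!1}$, is provable in $\sab{T}_{\!2}'$ by (ii); it is provable in $\sab{T}_{\!2}$ by the conservativity built into essential equivalence; by the hypothesis that $\sab{T}_{\!1}$ and $\sab{T}_{\!2}$ prove the same Horn rules, it is provable in $\sab{T}_{\!1}$; hence $r$ is provable in $\sab{T}_{\!1}'$ by invoking the defining equivalences once more. I expect the main obstacle to be the morphism-sort clause of Definition~\ref{defi-exteseq}: one must check that the graph-based encoding translates composition, substitution and equality of morphisms into genuine Horn rules rather than rules involving bare existence, which reduces to the unique-existence case provided that clause is formulated so that morphisms are introduced only when their existence is syntactically witnessed.
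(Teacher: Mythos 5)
You should first be aware that the paper itself contains no proof of Theorem~\ref{thEseqMemesfaits}: it is stated as one of two results "proved in \cite{Lom-tgac}", so your proposal can only be judged against the statement and Definition~\ref{defi-exteseq}. Your general route is the natural one — a translation $\tau$ sending each Horn rule of the enriched language to a finite family of Horn rules of the base language, equivalent to it over the enriched theory and compatible with the copying of the enrichment from $\sab{T}_{\!1}$ to $\sab{T}_{\!2}$, followed by a shuttle through the hypothesis that $\sab{T}_{\!1}$ and $\sab{T}_{\!2}$ prove the same Horn rules — and the easy inclusion (every Horn rule provable in $\sab{T}_{\!1}'$ is provable in $\sab{T}_{\!2}'$) is handled correctly.

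The genuine gap is your "crucial invariant", namely that among the procedures of Definition~\ref{defi-exteseq} only existential predicates can destroy Horn form. A predicate $Q$ added as a disjunction of existing predicates, with its defining axioms $P_1\vd Q$, $\;P_2\vd Q$, $\;Q\vd P_1\vou P_2$, is just as destructive when $Q$ occurs in the \emph{conclusion} of a Horn rule; you only treat occurrences in hypotheses. This is not bookkeeping: take $\sab{T}_{\!1}=\sa{Grl}$ and $\sab{T}_{\!2}=\sa{Gtosup}$, which prove the same Horn rules by Positivstellensatz~\ref{thfairecommesi0}, and enrich both by $Q(x)$ standing for $(x\geq 0)\vuu(x\leq 0)$. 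Then $\vd Q(x)$ is a Horn rule of the enriched language, provable in $\sab{T}_{\!2}'$ (open the two branches of \tsbf{OT}), but not provable in $\sab{T}_{\!1}'$: the $\ell$-group $\ZZ^2$ with componentwise order and the evident interpretation of $Q$ is a model of $\sab{T}_{\!1}'$ in which it fails. So your argument cannot close at this clause, and no repair of $\tau$ alone will do; you must either exclude disjunctively defined predicates along with the existential ones (which is presumably what the intended hypothesis amounts to), or restrict the conclusion to Horn rules in which such predicates occur only in hypotheses. A second, smaller soft spot is the function symbol introduced for a unique existence valid only "under certain hypotheses" $H$: replacing $f(\und{t})$ by a fresh variable $y$ together with the defining premise is equivalent to the original rule only when $H(\und{t})$ is available from that rule's hypotheses, so your property (ii) still needs an argument there — or a restriction to unconditional unique existences, which is in fact all the paper ever uses (e.g.\ adding $\vu$ or $\Fr$).
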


\begin{theorem} \label{thEseqMemesfaitsbis}
Let $ \sab{T}_{\!2} $ be a dynamical theory which is a simple extension of a $ \sab{T}_{\!1} $ theory and which proves the same Horn rules. Consider an extension $ \sab{T}_{\!1}' $ of $ \sab{T}_{\!1} $ obtained by adding a family of axioms which are all Horn rules. Consider the extension $ \sab{T}_{\!2}' $ of $ \sab{T}_{\!2} $ obtained by adding the same axioms. Under these conditions, $ \sab{T}_{\!2}' $ proves the same Horn rules as $ \sab{T}_{\!1}' $.
\end{theorem}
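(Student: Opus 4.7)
The forward inclusion is immediate. Since $\sab{T}_2$ extends $\sab{T}_1$ as a simple extension (Definition~\ref{defiextsimple}) and the same family $\mathcal{A}$ of Horn axioms is added to both, $\sab{T}_2'$ is a simple extension of $\sab{T}_1'$, so any dynamic proof in $\sab{T}_1'$ is already a dynamic proof in $\sab{T}_2'$.

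For the reverse inclusion, let $R=(\Gamma\vd A)$ be a Horn rule in the common language $\cL_1$ with a dynamic proof $\pi$ in $\sab{T}_2'$; by rule~\ref{10rstr1} I may assume $A$ is a single atomic formula. The proof $\pi$ invokes only finitely many instances of axioms from $\mathcal{A}$, which I enumerate, in an order compatible with the proof tree, as $h_1,\dots,h_n$ with $h_j=(\Gamma_j\vd A_j)$. The strategy is to iterate the cut rule~\ref{8rstr} and its existential variant~\ref{8rstr2} so as to factor $\pi$ into $\sab{T}_2$-derivations interleaved with applications of the $h_j$. Concretely, this should yield, in $\sab{T}_2$ alone, proofs of
\[
\Gamma,\,A_1,\dots,A_{j-1}\;\vdash_{\sab{T}_2}\;\Gamma_j\quad(j=1,\dots,n)\qquad\text{and}\qquad\Gamma,\,A_1,\dots,A_n\;\vdash_{\sab{T}_2}\;A.
\]
By the hypothesis that $\sab{T}_1$ and $\sab{T}_2$ prove the same Horn rules, each of these is then provable in $\sab{T}_1$; chaining the $\sab{T}_1$-proofs with the applications of $h_1,\dots,h_n$ produces a proof of $R$ in $\sab{T}_1'$.

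The main obstacle is the factoring step, which must cope with two features of $\pi$ simultaneously. First, $\pi$ branches whenever a disjunctive or existential axiom of $\sab{T}_2$ is used, and different branches may invoke different instances of $\mathcal{A}$-axioms; this I would handle by induction on the depth of $\pi$, applying~\ref{8rstr2} at each branching point in order to merge the information collected on each branch and push the $\mathcal{A}$-applications past the splits (using that the conclusions of axioms in $\mathcal{A}$ do not mention any of the fresh variables introduced by existential rules, so no capture can occur). Second, terms substituted into the $\mathcal{A}$-axioms inside $\pi$ may involve symbols of $\cL_2\setminus\cL_1$, so that the intermediate Horn rules above a priori sit in $\cL_2$ rather than $\cL_1$; one must abstract these maximal $\cL_2$-subterms coherently, replacing equal occurrences by the same fresh variable, and verify that the $\sab{T}_2$-derivations still go through on the abstracted rules, so that the same-Horn-rules hypothesis applies and the substitution rule~\ref{6rstr} restores the concrete instance at the end. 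This uniformisation is of Herbrand flavour and is precisely the delicate point addressed in the proof of Theorem~\ref{thEseqMemesfaits}; I expect its resolution here to follow a closely analogous scheme.
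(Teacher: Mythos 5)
Your forward inclusion is fine, but the factorisation step on which everything else rests is not merely "delicate": it fails, and the obstacle is essential given only the hypotheses you invoke (which are exactly those printed in the statement; the paper itself gives no proof here and refers to \cite{Lom-tgac}). The instances $h_j$ of the added Horn axioms are applied \emph{inside} branches of the computation tree opened by disjunctive or existential axioms of $\sab{T}_2$, so their hypotheses $\Gamma_j$ are in general derivable only from $\Gamma$ together with the atoms introduced along that particular branch — not from $\Gamma, A_1,\dots,A_{j-1}$ in $\sab{T}_2$ — and the cut rules \ref{8rstr}, \ref{8rstr2} let you discharge a disjunct, not push a Horn-axiom application below a branching node when the two branches use different instances of the added axioms. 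A concrete scenario: one sort, a constant $c$, unary predicates $P,Q,R$; let $\sab{T}_1$ be the pure equality theory and $\sab{T}_2$ be $\sab{T}_1$ plus the axiom $\vd P(x)\vou Q(x)$. These prove the same Horn rules (by Theorem \ref{thFond} one may argue with classical models: in any countermodel of a Horn rule with atomic conclusion, enlarge $Q$ to the whole universe if the conclusion is a $P$-atom, and $P$ otherwise; the atomic hypotheses stay true, the conclusion stays false, and the disjunctive axiom becomes satisfied). Now add the two Horn axioms $P(x)\vd R(x)$ and $Q(x)\vd R(x)$: then $\sab{T}_2'$ proves $\vd R(c)$ by opening the two branches at $x:=c$, whereas $\sab{T}_1'$ does not (the model with $P,Q,R$ empty satisfies both added axioms and refutes $R(c)$). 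In particular the $\sab{T}_2$-provable sequent $\vd P(c)$ that your linearisation would require does not exist, and no rearrangement of cuts can produce it.

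So the gap is not a Herbrand-style uniformisation to be patched (and, incidentally, your abstraction of maximal $\cL_2$-subterms is also unsound as described, since the $\sab{T}_2$-derivations of the abstracted sequents may use precisely the $\sab{T}_2$-axioms governing the symbols you replace by fresh variables). A correct treatment must use more than "same Horn rules plus same added axioms": either a strengthened hypothesis, or a finer analysis of \emph{how} $\sab{T}_2$ proves Horn rules which remains available after the new axioms are added. This is what happens in the uses the paper makes of the statement (for instance \sa{Afr} versus \sa{Atosup} with \tsbf{Anz} added to both, in Positivstellensatz \ref{Pst2}): there the branch-merging argument of Positivstellensatz \ref{thfairecommesi} — the conclusion lies in the two solid ideals $\cI(c^+)$ and $\cI(c^-)$, whose intersection is zero — is compatible with the added Horn axioms, and it is this theory-specific compatibility, not a general proof-factoring argument, that does the work.
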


\newpage \thispagestyle{empty}

\chapter{Infinitary geometric theories}\label{subsecgeominfini}\label{chap-gmqinfini}
\index{theory!infinitary geometric ---}\index{theory!geometric ---}
\Today

\minitoc

\section{General}
A very useful general notion of geometric theory is defined, which is not necessarily expressed in finitary terms. This is known as \textsl{infinitary geometric theory}. In an infinitary geometric theory, we allow \rdys that have an infinite disjunction in the second member. There is one essential restriction: the free variables present in such a disjunction must be specified in advance and in finite number.

Intuitively, we use such rules in the proof system of dynamical theories by \gui{opening the branches of calculation corresponding to the infinite disjunction}. What does this mean precisely? It means that a conclusion will be declared valid if it is valid in each of the branches. 

Let's take a simple example, and show what happens if we have in the axioms an infinitary rule of the type

\Regles{\lab {~} $ \Vdi{x_1,\dots,x_k} \Vou_{i\in I}\; \Gamma_i $ 
}

\noindent with an infinite set $ I $ and the $ \Gamma_i $ are lists of atomic formulas with no free variables other than those mentioned (i.e.\ $ \xk $). If for each $ i\in I $ we have a valid rule $ \,\,\Gamma_i\vd B(\ux) $, then we declare the rule $ \Vdi{x_1,\dots,x_k}B(\ux) $ to be valid. 

There is therefore necessarily an intuitive proof \und{external} to the dynamical theory to certify that the desired conclusion is valid in each of the branches. Indeed, the system of calculation \gui{without logic} at work in the dynamical theory cannot handle such an infinity of deductions. A purely mechanical calculus cannot open up an infinite number of branches! For example, with $ I=\N $ the external intuitive proof could be a proof by induction.

Note, on the other hand, that the internal proof must show the validity of the desired conclusion according to the deduction rules \gui{without logic} of the dynamical theory.

\smallskip The above presentation is only a sketch. All this deserves a more formal definition of what is the legal functioning of an infinitary geometric theory; even if there is an unavoidable informal aspect in the recourse to \gui{external} proofs in intuitive mathematics. 

We should also say a few words about the operation of the formal intuitionist and classical theories that extend the infinite dynamical theory (by adding the $ \Rightarrow $ connector and the universal quantifier in the intuitionist case, and by adding \tsbf{LEM} in the classical case).

As in the case of finitary geometric theories, we will reserve the name of dynamical theory for proofs whose internal part is \gui{without logic}. 

\Subsection{Example: nilpotent elements, Krull dimension} 

 An element $x$ of a ring is nilpotent if there exists an $ n\in\N^+ $ such that $ x^n=0 $. If we introduce a predicate $ \mathrm{Z}(x) $ for \gui{$x$ is nilpotent}, it will be subject to the natural axioms 

\DeuxRegles{
\lab{nil1} $ \vd \mathrm{Z}(0) $ 
\lab{nil2} $ \,\, \mathrm{Z}(x),\,\mathrm{Z}(y)\vd \mathrm{Z}(x+y) $ 
\lab{NIL1} $ \,\,Z(x)\vd \Exists z\;z(1+x)=1 $ 
}{
\lab{nil3} $ \,\,\mathrm{Z}(x)\vd \mathrm{Z}(xy) $ 
\lab{Nil} $ \,\,\mathrm{Z}(x^2)\vd \mathrm{Z}(x) $ 
}

\smallskip In the corresponding dynamical theory, the only $t$ terms for which we will be able to demonstrate~$ \mathrm{Z}(t) $ will be those for which we will be able to demonstrate $ t^n=0 $ for a~$ n>0 $. However, there is no guarantee that in a model of the theory, the predicate~$ \mathrm{Z}(x) $ corresponds to \gui{$x$ is nilpotent}.

The only way to be sure is to introduce the infinitary \rdy

\UneRegle{NIL}{$ \,\,\mathrm{Z}(x)\vd\Vou_{n\in\N^+} x^n=0 $}\label{AxNIL}

\smallskip This concern is directly related to the Krull dimension of commutative rings. The Krull dimension of a distributive lattice can be formulated in a dynamical theory as follows.

\begin{definition}\label{defiDDKTRDI}~
\begin{enumerate}
 
\item Two sequences $ (\xzn) $ and $ (\bzn) $ in a distributive lattice $\gT$ are said to be \textsl{complementary} if
\begin{equation}\label{eqC2G}
\left.\arraycolsep3pt
\begin{array}{rcl}
 b_0\vi x_0& = & 0 \\
 b_1\vi x_1& \leq & b_0\vu x_0 \\
 \vdots~~~~& \vdots &~~~ \vdots \\
 b_n\vi x_n & \leq & b_{n -1}\vu x_{n -1} \\
 1& = & b_n\vu x_n
\end{array}
\right\}
\end{equation}
A sequence which has a complementary sequence is said to be \textsl{singular}.
 
\item For $ n\geq0 $ we will say that the distributive lattice $\gT$ is \textsl{of Krull dimension $ \leq n $} if any sequence $ (\xzn) $ in $\gT$ is singular. Furthermore, the distributive lattice $\gT$ is of Krull dimension $ -1 $ if it is trivial, i.e.\ if $ 1_\gT=0_\gT $.
\end{enumerate}
\end{definition}

For example, for $ n=2 $ the equalities and inequalities \pref{eqC2G} correspond to the following drawing in $\gT$.
\[\SCO{x_0}{x_1}{x_2}{b_0}{b_1}{b_2}\]

And the dimension $ \leq 2 $ corresponds to the following existential axiom.

\UneRegle{KDIM2}{$ \,\,\vd \Exists b_0,b_1,b_2 \;(x_2\vu b_2=1,\,x_2\vi b_2\leq x_1\vu b_1,\,x_1\vi b_1\leq x_0\vu b_0, \,x_0\vi b_0=0 ) $}

\smallskip For the Krull dimension of rings, we need to involve the distributive lattice formed by the radicals of finitely generated ideals and we express for example the dimension $ \leq 2 $ as follows, noting $ \DA(x,y)=\sqrt{\gen{x,y}}=\sotq{z\in\gA}{\exists n\in\N^+, z^n\in\gen{x,y}} $:

For all $ x_0 $, $ x_1 $, $ x_2 \in \gA $ there exist $ b_0 $, $ b_1 $, $ b_2\in \gA $ such that
\begin{equation}\label{eqCG}
\left.\arraycolsep2pt
\begin{array}{rcl}
\DA(b_0x_0)& = &\DA(0) \\
\DA(b_1x_1)& \subseteq & \DA(b_0,x_0) \\
\DA(b_2 x_2 )& \subseteq & \DA(b_{1},x_{1}) \\
\DA(1)& = & \DA(b_2,x_2 )
\end{array}
\right\}
\end{equation}

Note that $ \DA(b_2 x_2 ) \subseteq \DA(b_{1},x_{1}) $ means that there exist $ a_1,y_1\in\gA $ and $ n\in\N^+ $ such that
\[
(b_2 x_2)^n=a_1b_1+y_1x_1.
\] 

We can therefore express \gui{$ \Kdim\gA\leq 2 $} in the context of an infinitary geometric theory. And for example the theorems of Serre or Foster-Swan (\cite[Chapter XIV]{CACM}) with the Krull dimension as an assumption can be treated entirely within the framework of geometric theories~(\cite{CL05}).

\section{Barr's Theorem}

The fundamental theorem of dynamical theories \ref{thFond} is a solid basis for the constructive decoding of classical proofs. In classical mathematics, we show that a coherent theory proves a \rdy by looking at what happens in the models of the theory, which we study with powerful but dubious tools such as the excluded third, the axiom of choice and sometimes even the full power of \sa{ZFC}. However, \thref{thFond} assures us that if the rule in question is provable in formal theory with classical logic, it is also provable by the elementary methods \gui{without logic} that constitute dynamic proofs.

The essential part of decoding is therefore to check that the classical proof can be formalised in classical first-order logic. This is not always easy, because after all, \sa{ZFC} theory can be used to prove results that are much more  \gui{strange}  than Gödel's completeness theorem, and why not results that are downright false if \sa{ZFC} is inconsistent. But in practice, in classical mathematics, even the overuse of ultrafilters or the continuum hypothesis always seems to hide simpler arguments.

\smallskip {\bf Barr's theorem}, established in classical mathematics (and apparently impossible to prove in constructive mathematics), states that for geometric theories, any result proved with classical logic can also be proved with constructive logic. This is a generalisation of \thref{thFond} which is confirmed in practice, even if it is not completely certain from the constructive point of view. A recent study of the problem is made by Rathjen in the article \cite{Rat2016} published in the book~\cite{CPM2016}.

Barr's theorem gives us good reason to believe that the type of decryption provided by \thref{thFond} also applies for infinitary geometric theories, with the same caveats as we indicated for finitary theories.
The reader can find examples of this type in \cite[Sections XV-6 and XV-7]{CACM}.\label{inThGeomR}

\smallskip At the end of this chapter we illustrate how Barr's theorem should not be understood.

\Subsection{An infinitary geometric theory for primitive recursive arithmetic}\label{PrimRecOmega}

We now consider the infinitary geometric theory  \sa{PRA$\omega $} obtained from the finitary geometric theory  \Sa{PRA} by adding the following axiom which forces the sort $ \iN $ to contain only usual integers.

\Regles{
\Lab{Nat} $ \Vdi{x:\iN} \Vou_{n\in\iN}\; x=\und n $ 
}

In the theory thus obtained, we see that to prove $\vd f(x)=g(x)$ with $f,g$ of sort~$F_1$, it is sufficient to know how to show that for each $n\in\NN$ the rule
$\vd f(\und n)=g(\und n)$ is valid. Indeed, we then deduce $\,\,x=\und n\vd f(x)=g(x)$ for each concrete integer $n$, and by using the rule~$\tsbf{Nat}$, we obtain $\vd f(x)=g(x)$.

Now for maps $f$ and $g$ defined in \sa{PRA} (i.e.\ two arbitrary primitive recursive maps), $f(\und n)$ and $g(\und n)$ are two explicit usual integers. 
Thus two primitive recursive maps are \gui{provably everywhere equal} in the theory if they take the same values in any integer, i.e.\ if they are concretely equal, i.e.\ if we have a proof for the fact that they are equal. 

But this proof is not always formalisable within the theory (it may, for example, use a double induction). In any case, it is supposed to be produced in intuitive mathematics in the intuitive mathematical world of the natural integers.

An example is provided by the primitive recursive map $C_\PRA:\N\to\NN$ which is everywhere zero if the theory \sa{PRA} is consistent (which we are intimately convinced). According to Gödel's incompleteness theorem, the theory \sa{PRA} cannot prove $\vd C_\PRA(x)=0$. However, it does prove $\vd C_\PRA (\underline n)=0$ for all $n$.

In the same way it seems probable that the \tgm $\sa{PRA}\omega$, although it is capable of proving $\vd C_\PRA(x)=0$, cannot however prove $\vd C_\PRA=0_1$.

\smallskip The infinitary axiom \tsbf{Nat} that we add therefore authorises up to a certain point, but only up to a certain point, the use of the $\omega$-rule for the \egt of primitive recursive maps.

\Subsection{Conclusion}

The study we have just made casts a shadow over Barr's theorem, because it would seem to assert in this case that any primitive recursive map proved null in classical mathematics would be provably null in constructive mathematics.

However, it may be that the intuitive proof of classical mathematics uses dubious principles, such as those formalised in the theory \sa{ZF}, in which case it may lead to quite questionable results.

For example, consider the primitive recursive map $\mathrm{consisZ}$, which always takes the value $0$ until such time as we eventually find a proof of $0 = 1$ in \sa{Z}, at which point the map takes the value $1$. Thus $\mathrm{consisZ}$ is a well-defined constant of sort $\iF_1$ in \sa{PRA}.

In classical mathematics with a sufficiently strong intuitive set theory (e.g.\ \sa{ZF}), it can be shown that $\mathrm{consisZ}$ is identically zero. And this demonstrates in classical mathematics (using \sa{ZF} in the external intuitive mathematical world) the rule $\vd \mathrm{consisZ}(x)=0$ in the geometric theory $\sa{PRA}\omega$.

Now this result clearly escapes any constructive proof. And there can be no constructive proof of the rule $\vd \mathrm{consisZ}(x)=0$ in the geometric theory $\sa{PRA}\omega$.

\smallskip In fact we need to clarify the statement of Barr's theorem. It does not say that the framework of classical mathematics is conservative for geometric properties in a geometric theory. It only says that when we use the same mathematics outside the formal infinitary theory, adding the connectives, quantifiers and rules of classical logic inside the infinitary geometric theory, does not allow us to prove new properties formulable as geometric rules.


\vspace{3cm} 

%

\newpage \thispagestyle{empty}

\part{Finitary geometric theories for real algebra}

\chapter*{Introduction}
\addstarredchapter{Introduction}

This second part is devoted to the development of a finitary dynamical theory whose ambition is to describe exhaustively the algebraic properties of the real number field, and more generally of a \ndrcf\footnote{In this text, a negation is italicised when the corresponding statement, true in classical mathematics, implies in constructive mathematics a well-recorded non-constructive principle, such as \tsbf{LPO} or even \tsbf{MP}.}
 at least those that are expressible in a restricted language, close to the language of ordered rings. This constitutes a development, with some minor terminological modifications, of the ideas given in the article \cite{LM2017}. The axiom of archimedianity, introduced in the last chapter, takes us out of the realm of finitary geometric theories.

\smallskip Chapter \ref{chapcoo} proposes a first definition of the ordered field structure in the absence of a sign test. It also discusses the possibility of a suitable axiomatic for  discrete real closed fields, such as the field of real numbers.

Section \ref{seccodi} gives some reminders about the dynamical theory of discrete ordered fields and that of discrete real closed fields.
 
Section \ref{secPstFormels} discusses the decisive consequences of formal Positivstellensätze, in our framework.

Section \ref{secConondisc} proposes an axiomatic for \ndsofs (Definition \ref{defiConondiscret}). 

Section \ref{condna} describes an example of a non-archimedean discrete Heyting ordered field.

Section \ref{secCRCnondis} gives a first discussion on acceptable dynamical theories for $\RR$ as a \ndrcf.

\smallskip Chapter \ref{chap-afr} deals with dynamical theories which admit extensions essentially equivalent to the theory \Sa{Co} of \ndsofs. 

We start (Section \ref{sectrdisad}) with the theory of distributive lattices (a \textsl{non} discrete ordered field is a distributive lattice for its order relation). 

In Section \ref{secgrl} we deal with $\ell$-groups of lattice groups (purely equational theory, valid for addition on the reals).

Then (Section \ref{secfrings}) we move on to $f$-rings ($f$-rings in french litterature), a theory inspired by rings of continuous real maps. 

Section \ref{secArftr} describes dynamical theories in which the predicate $\, \cdot>0 $ is added to the signature (strict $f$-rings and variants).

Section \ref{secCOG} proposes a return to the theory \Sa{Co} by confronting it with suitable extensions of the theory of strict $f$-rings.

\smallskip Chapter \ref{chapreelclos} proposes a definition of the structure of a real closed ordered field in the absence of a sign test.

Section \ref{subsecclotrlRR} explains how to introduce square roots of the elements~\hbox{$\geq 0$} in a \ndsof. This is done as a warm-up to the more general notion of virtual roots.

Section \ref{secCoVR} introduces virtual root maps and some corresponding dynamical theories: in particular $f$-rings with virtual roots and \ndsofs with virtual roots,

Section \ref{secArc} deals with real closed rings and Section \ref{secCrc2} proposes a definition for \ndrcfs as local real closed rings. The theory of real closed rings is presented here in an elementary, purely equational form, in the style of \cite{Tre2007}. 

\smallskip Chapter \ref{secGeomReelsArchi} deals with an infinitary geometric theory where we add the axiom that the field of real numbers is \textsl{archimedean}.

\smallskip Thus, we propose for the coveted dynamical theory the structure of a local real closed ring (possibly archimedean if that proved useful). 

\smallskip In a concluding chapter, we summarise the situation we have arrived at, specifying the important questions, from a constructive point of view, which we do not know how to answer satisfactorily today. 

\newpage \thispagestyle{empty}

\chapter{Ordered fields} \label{chapcoo}
\Today
\minitoc

\section*{Introduction}
\addtocontents{toc}{\vskip0.8em}
\addcontentsline{toc}{section}{Introduction}
\rdb

This chapter offers a first constructive approach to the classical theory of real closed fields. In fact, the classical theory applies only to real closed fields for which we have a sign test on any element of the field, if it is given in accordance with the definition. In other words, the usual classical theory is a theory of discrete real closed fields. But it is well known that matrix numerical analysis, used in applications of the theory to concrete situations, never uses such a sign test. A constructive approach to a theory of algebraic properties of the real number field requires a dynamical theory of \ndrcfs. 

Section \ref{seccodi} gives some reminders on the dynamical theory of discrete ordered fields and that of discrete real closed fields.

Section \ref{secPstFormels} explains the great utility of formal Positivstellensätze.

Section \ref{secConondisc} proposes an axiomatic for \ndsofs (Definition \ref{defiConondiscret}). We must abandon the axioms of total order in their usual discrete formulation and replace them with \rdys relevant to $\RR$. We then find that many well-defined continuous rational maps on $\QQ$, such as the map $ \mathrm{sup} $, need to be introduced into the language.

Section \ref{condna} describes an example of a \textsl{non} discrete non-archimedean Heyting ordered field.

Section \ref{secCRCnondis} gives a first discussion of acceptable dynamical theories for~$\RR$ as a \ndrcf. We are guided by the continuity extension principle \ref{thRRcomplet} which can be seen as an algebraic version of the completeness of $\RR$. In this framework  \thref{thParamcontFsagc0} plays a fundamental role for a relevant definition of continuous semialgebraic maps, by reducing the definition to the case of continuous semialgebraic maps in the discrete framework of $\RRa$.

\section{About discrete ordered fields}\label{seccodi}

A \textsl{discrete ordered field} is a discrete field $\gR$ equipped with a suitable order relation, which can be described by the data of the subset $P$ formed by the elements  $\geq  0$. This subset $P$ must satisfy the following properties.
\begin{enumerate}
\item $P\cup -P=\gR$.
\item $P\cap -P =\so0$.
\item $P+P:=\sotq{x+y}{x,y\in P}\subseteq P$.
\item $P\cdot P:=\sotq{x\cdot y}{x,y\in P}\subseteq P$.
\item $\gK^2:=\sotq{x^2}{x\in \gR}\subseteq P$.
\end{enumerate}
The order relation is defined by  $x\geq y\equidef x-y\in P$, and the strict order is defined by $x>0\equidef x\geq0 \vii x\neq 0$. 

From a constructive point of view, the \gui{or} hidden in the  $\cup$ of Item 1 must be explicit. So we have explicitly the trichotomy \fbox{$x=0\vuu x> 0\vuu x< 0$}.

In terms of dynamical theory, all this is translated by predicates and laws given in the language of \Sa{Aso}. In order to demonstrate the Positivstellensatz constructively, we have chosen axioms suitably ordered, starting with the direct rules.

\Subsection{A natural dynamical theory for discrete ordered fields}\label{subseccodi}

We recall here the dynamical theory of \textsl{discrete ordered fields} \SA{Cod} given in \cite{CLR01}.\index{discrete ordered field}

\vspace{-.2em}
\Sigt{\Aso}{\cdot=0,\cdot\geq 0,\cdot>0\mathrel{;}\cdot+\cdot, \cdot\times\cdot,-\cdot, 0,1} \label{NOTASigAso}

\noindent  If we want to give a dynamic discrete ordered field, i.e.\ a dynamic algebraic structure of type \Sa{Cod}, we add to the signature a presentation by generators and relations of the dynamic algebraic structure considered. For example, this can be the empty presentation, or a countable set of generators, without any relations, or it can be based on an existing algebraic structure in which certain relations are required to be preserved, for example all the equality relations between terms constructed on the elements of the structure. Thus any ring defines a dynamic discrete ordered field.

\bni {\bf Abbreviations}

\vspace{-1em} \DeuxCols{
\begin{itemize}
\itbu $x\neq0 $ signifie $x^2>0$
\itbu $x = y $ signifie $ x - y = 0$
\itbu $x > y $ signifie $ x - y > 0$
\itbu $x < y $ signifie $ y > x$
\end{itemize}
}
{\begin{itemize}
\itbu $x \geq y$ signifie $x-y\geq 0$ $\phantom {x^2>0}$
\itbu $x \neq y $ signifie $ x - y \neq 0$
\itbu $x \leq y $ signifie $ y \geq x$
\end{itemize}
} 

\bni {\bf Axioms}

\medskip\noindent {\sl Direct rules}

\smallskip First we put the axioms of commutative rings, then the rules concerning $ \cdot=0 $ and $ \cdot\geq 0 $, then the rules involving $ \cdot>0 $.\footnote{The rules \Tsbf{ga0} and \Tsbf{ga1} have been introduced with namaes \Tsbf{ac0} and \Tsbf{ac1} in Example \ref{exaAc}.}

\DeuxRegles{
\laB{ga0} $\vd 0=0$
\laB{ac2} $\,\,x=0\vd xy=0 $
}
{
\laB{ga1} $\,\, x=0\vet y=0\vd x+y=0$
}

\DeuxRegles{
 \laB{gao0} $\vd 0 \geq 0$
 \Lab{ao1} $\vd x^2 \geq 0$
}
{
 \laB{gao1} $\,\, x \geq 0\vet y \geq 0 \vd x + y \geq 0$
 \Lab{ao2} $\,\, x \geq 0\vet y \geq 0 \vd x y\geq0$ 
}

\DeuxRegles{
 \Lab{aso1} $ \vd 1> 0$ \phantom{$x^2$}
 \Lab{aso2} $\,\, x> 0 \vd x \geq 0$
}
{
 \Lab{aso3} $\,\, x > 0\vet y \geq 0 \vd x + y > 0$
 \Lab{aso4} $\,\, x > 0\vet y > 0 \vd xy > 0$
}

\medskip\noindent {\sl Collapse} 

\Regles{\lAb{col$ _> $} $ \,\,0> 0 \vd 1=0 $ 
\label{Axcoligt}}

\medskip\noindent {\sl Simplification rules}

\DeuxRegles{
\laB{Gao} $\,\, x\geq 0\vet x\leq 0 \vd x = 0$
}
{
\Lab{Iv} $\,\, xy = 1 \vd x\neq  0$
}

\medskip\noindent {\sl Dynamical rules}

\DeuxRegles{
\Lab{IV} $\,\, x\neq 0 \vd \Exists y\; xy = 1$

\item [\Edinq]  $\vd x=0 \vou \,x\neq 0$
}
{
\Lab{OT} $ \vd x \geq 0 \vou x\leq 0$
}

\smallskip 
The dynamical theory \SA{Crcd} of \textsl{discrete real closed fields} is obtained from the theory \Sa{Cod} by adding as axioms the \rdys \tsbf{RCF$_n$}.\footnote{A theorem essentially equivalent to these rules is proved by Bishop for the field $\RR$, but using the axiom of dependent choice.}\index{real closed!discrete field}

\Regles{
\lAb{RCF$_n$} {$ \,\, a< b\vet P(a)P(b)<0 \vd \Exists x\, \big(P(x)=0,\,a<x<b\big) $ \quad ($ P(x)=\sum_{k=0}^na_kx^k $) }\label{RCFn}}

\medskip The rules \tsbf{gao0} and \tsbf{gao1} express, in the context of abelian groups, the reflexivity and transitivity of the order relation (compatible with the group law). The rule \tsbf{Gao} corresponds to antisymmetry for the order relation.

\smallskip The rules \Edinq\  and \Tsbf{OT} express that the equality is discrete and the order total. They are not satisfied constructively for~$\RR$. For Bishop's reals, the rule \tsbf{ED$ _> $} is equivalent to the omniscience principle \tsbf{LPO} and the rule \Ednq\ is equivalent to the principle~\tsbf{LLPO}. Note also that the principle \gui{any regular element of $\RR$ is invertible} is equivalent to the Markov principle\footnote{Equivalence suggested by Fred Richman.} \tsbf{MP}.

\smallskip Given the form \gui{without negation} adopted here for collapse, the trivial ring is a discrete ordered field, and the collapse axiom \tsbf{col$ _> $} is a consequence of \tsbf{IV}.

\smallskip By means of the direct rules alone we see that $\,1=0\vd (x=0\vet x\geq 0\vet x>0)$. This justifies taking $1=0$ as a substitute for $\Bot$. 

\Subsection{Some valid rules in \Sa{Cod}}

\noindent  {\sl Four valid \rsims} \label{regsimpcod}

\DeuxRegles{
\laB{Anz} $ \,\, x^2= 0 \vd x = 0 $ 
\Lab{Aonz} $ \,\, c\geq 0\vet x(x^2+c)\geq 0 \vd x\geq 0 $ 
}
{
\Lab{Aso1} $ \,\, x> 0\vet xy\geq 0 \vd y\geq 0 $ 
\Lab{Aso2} $ \,\, x\geq 0\vet xy> 0 \vd y> 0 $ 
}

\medskip\noindent {\sl Two valid \rdys}

\DeuxRegles{
\Lab{OTF} $ \,\, x+y> 0 \vd x > 0\vou y>0 $ 
}
{
\lAb{OTF $ \eti $} $ \,\, xy< 0 \vd x <0 \vou y<0 $ \label{AxOTFx}
}

\smallskip\noindent Note that the rule \tsbf{Aso1} implies that elements $ >0 $ are regular.

\begin{lemma} \label{lemthRRco} The following rule is provable with direct axioms.

\Regles{\Lab{Aonz2} $ \,\, c\geq 0\vet x(x^2+c)\geq 0\vet x<0 \vd 0> 0 $}
 
\end{lemma}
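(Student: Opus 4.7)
The plan is to combine the four strict-positivity direct rules (\tsbf{ao1}, \tsbf{aso3}, \tsbf{aso4}) with the interpretation of $x<0$ as $-x>0$, and show that $x(x^2+c)$ and $(-x)(x^2+c)$ are both nonnegative with at least one strictly positive, so that their sum $0$ is strictly positive.

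First I would rewrite the hypothesis $x<0$ as $-x>0$ according to the abbreviations, and derive $x^2>0$ from $-x>0$ by applying \tsbf{aso4} with both factors equal to $-x$ (noting $(-x)(-x)=x^2$ is handled by the external ring-computation machinery discussed in the comments on \Sa{Ac0}). Next, combining $x^2>0$ with the hypothesis $c\geq 0$ via \tsbf{aso3} yields $x^2+c>0$. A second application of \tsbf{aso4}, this time to $-x>0$ and $x^2+c>0$, gives $(-x)(x^2+c)>0$.

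Finally I would use \tsbf{aso3} once more, taking the strictly positive $(-x)(x^2+c)$ on one side and the (by hypothesis) nonnegative $x(x^2+c)$ on the other, to obtain
\[
x(x^2+c)+(-x)(x^2+c)>0.
\]
The left-hand side simplifies to $0$ by the external ring machinery, and the conclusion $0>0$ follows.

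I do not anticipate a real obstacle: every step is a single application of a direct rule, and the only ``hidden'' work is delegated to the polynomial normal-form computation that the framework of \Sa{Ac0} makes legitimate (as explained in the explanations following Example~\ref{exaAc}). The only point worth checking carefully is that no simplification rule or dynamical rule is silently used — in particular, one must resist the temptation to invoke \tsbf{Gao} or \tsbf{Aso1}, since the statement specifies that only direct axioms are available.
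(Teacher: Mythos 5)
Your proof is correct, and since the paper leaves Lemma~\ref{lemthRRco} without an explicit proof, you have supplied the natural argument: derive $x^2>0$ from $-x>0$ via \tsbf{aso4}, upgrade to $x^2+c>0$ via \tsbf{aso3}, multiply once more by $-x>0$ to get $(-x)(x^2+c)>0$, and then one final \tsbf{aso3} with the hypothesis $x(x^2+c)\geq 0$ yields $0>0$ after the automatic polynomial simplification. All four inference steps are single applications of \tsbf{aso3} or \tsbf{aso4}, which are indeed direct rules, and no simplification or dynamical rule is used, exactly as the statement requires.
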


\begin{theorem} \label{thRRco}
With the exception of the rules \tsbf{ED$ _> $} and \tsbf{OT}, all the above rules are constructively valid for $\RR$, without using the axiom of dependent choice. 
\end{theorem}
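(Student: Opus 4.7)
The plan is to verify each rule individually, grouping them by type: the direct ring and order axioms; the collapse rule \tsbf{col$_>$} and the antisymmetry \tsbf{Gao}; the inverse axiom \tsbf{IV} and its simplification counterpart \tsbf{Iv}; the two cotransitivity rules \tsbf{OTF} and its mirror $xy<0\vd x<0\vou y<0$; and finally the simplification rules \tsbf{Anz}, \tsbf{Aso1}, \tsbf{Aso2}, \tsbf{Aonz}. Throughout, I would use the standard Bishop-style definitions for $\RR$: a witness for $x>0$ is a pair $(\delta,N)$ consisting of a positive rational $\delta$ and an index $N$ such that the regular approximations $x_n$ satisfy $x_n>\delta$ for all $n\geq N$, while $x\geq 0$ abbreviates the negation of $x<0$.

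First, I would dispatch the direct rules (\tsbf{ga0}--\tsbf{ga2}, \tsbf{ac1}, \tsbf{gao1}--\tsbf{gao2}, \tsbf{ao1}--\tsbf{ao2}, \tsbf{aso1}--\tsbf{aso4}). Each expresses a closure property of one of the sets $\{x:x=0\}$, $\{x:x\geq 0\}$, $\{x:x>0\}$ under the ring operations, together with $1>0$ and $x^2\geq 0$, and follows immediately from the definitions without any appeal to choice. The collapse rule \tsbf{col$_>$} is vacuous in $\RR$ since $0>0$ is refutable, and \tsbf{Gao} reduces to the standard fact that the joint refutation of $x>0$ and of $x<0$ forces $x=0$ in Bishop's framework.

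Next, I would treat the invertibility axioms. For \tsbf{IV}, the witness $(\delta,N)$ accompanying $x>0$ directly supplies the regular Cauchy sequence $(1/x_n)_{n\geq N}$, which defines an inverse $y$ with $xy=1$. The decisive point is that $(\delta,N)$ is \emph{part of} the data of $x>0$, so the inverse is produced without extracting any choice sequence. The rule \tsbf{Iv} then follows since $xy=1$ forces $x$ apart from $0$, yielding $x^2>0$. The cotransitivity rule \tsbf{OTF} exploits the apartness structure: from a witness $x+y>\delta$, comparison with rational approximations algorithmically decides whether $x>\delta/3$ or $x<2\delta/3$, and in the second case one gets $y>\delta/3$. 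The mirror rule for $xy<0$ reduces to \tsbf{OTF} after using the invertibility granted by \tsbf{IV} to normalise the sign of one factor.

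Finally, \tsbf{Anz} follows from $|x|^2=x^2$, while \tsbf{Aso1} and \tsbf{Aso2} follow by multiplying through by the inverse of the positive factor supplied by \tsbf{IV}. The crucial rule is \tsbf{Aonz}, which I would derive by contraposition from the already-direct Lemma \ref{lemthRRco}: if $x<0$ with witness $\delta>0$ giving $x<-\delta$, then $c\geq 0$ yields $x^2+c\geq\delta^2$ and therefore $x(x^2+c)<-\delta^3$, contradicting the hypothesis $x(x^2+c)\geq 0$ read as $\neg\bigl(x(x^2+c)<0\bigr)$. The hard part of the whole program is to confirm that no step secretly appeals to dependent choice; this is guaranteed by the observation that every use of invertibility or cotransitivity is fed the explicit numerical witness already carried by the hypothesis $>0$, so no infinite sequence of choices ever needs to be extracted.
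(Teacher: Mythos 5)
Your proposal is correct and takes essentially the same route as the paper: the only delicate rule is \tsbf{Aonz}, and you treat it exactly as the paper does, by noting that on $\RR$ one proves $x\geq 0$ by refuting $x<0$ and invoking the directly provable rule \tsbf{Aonz2} of Lemma \ref{lemthRRco}, the remaining rules being routine witness-by-witness verifications which the paper simply declares clear. The only point your sketch leaves implicit is that for \tsbf{Aso2} (and similarly for \tsbf{OTF$\eti$}) one must first extract $x>0$ from $x\geq 0$ together with $xy>0$ (via $\abs{xy}>0\Rightarrow\abs{x}>0$ and tightness) before the inverse supplied by \tsbf{IV} can be used — a one-line repair fully within your framework.
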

%
\begin{proof}
Everything is clear except perhaps the rule \Tsbf{Aonz}. For $ x\in\RR $, we can prove $ x\geq 0 $ by reducing $ x<0 $ to the absurd. This is what \Tsbf{Aonz2} does (take $ c=0 $).
\end{proof}
%

\begin{remark} \label{remOTFx} 
 The rules \Edinq\ and \OTFx  imply the rule $ \vd x=0 \,\vou\, x<0,\vou\, x>0 $, and a fortiori~\Tsbf{OT}. See also~\ref{lemAfrsdz}, \ref{lemAsrs} and \ref{lemArftr}. 
 \eoe
\end{remark}

If $\gK$ is a discrete ordered field, we denote $ \Sa{Cod}(\gK) $ the dynamic algebraic structure of type \Sa{Cod} having for presentation the \textsl{positive diagram of $\gK$}. A non-trivial model of $ \Sa{Cod}(\gK) $ is a non-trivial discrete ordered field $ \gL $ given with a morphism $ \gK\to\gL $. Similarly, if $\gA$ is a commutative ring (or an ordered ring), we denote $ \Sa{Cod}(\gA) $ the dynamic algebraic structure~of type \Sa{Cod} having for presentation the \textsl{positive diagram of $\gA$}. A model of $ \Sa{Cod}(\gA) $ is a discrete ordered field $\gL$ given with a morphism $ \gA\to\gL $ (of commutative ring, or of ordered ring).

\Subsection{Weaker dynamical theories} 

The rule \Tsbf{Aonz} implies $ x^3\geq 0\vd x\geq 0 $, therefore also, under \Tsbf{Gao}, $ x^3= 0\vd x= 0 $, and a fortiori~\Tsbf{Anz}.

\begin{definition} \label{defisaAor}~
\\
A) Theories based on the language of ordered rings\index{ring!ordered ---} \sIgt{\Ao}{\cdot=0,\cdot\geq 0\mathrel{;}\cdot+\cdot, \cdot\times\cdot,-\,\cdot,0,1}. \label{NOTASigAo}
\begin{enumerate}
 
\item The direct theory \SA{Apo} of \textsl{preordered rings}.\index{ring!preordered ---}
The axioms are those of commutative rings and the direct rules \Tsbf{gao0}, \Tsbf{gao1}, \Tsbf{ao1}, \Tsbf{ao2}.
 
\item The Horn theory \SA{Ao} of \textsl{ordered rings}. The axioms are those of pre-ordered rings and the \rsim \Tsbf{Gao}.%
 
\item The Horn theory \SA{Aonz} of \textsl{strictly reduced ordered rings}\footnote{The rule \tsbf{Aonz} is stronger than the rule \tsbf{Anz}, so \gui{strictly reduced} is used rather than \gui{reduced}. However, see Item \textsl{3} of Lemma \ref{lemAtonz}.} is obtained by adding the \rsim \Tsbf{Aonz} to the theory \sa{Ao}.\index{strictly reduced!ordered ring}\index{ring!strictly reduced ordered ---}
 
\item The dynamical theory \SA{Ato} of \textsl{linearly ordered rings}\footnote{An order relation is \textsl{linear} or \textsl{total} when two elements are always comparable.} is obtained by adding the \rdy \Tsbf{OT} to the theory \sa{Ao}.%
 
\item The dynamical theory \SA{Atonz} of \textsl{reduced linearly ordered rings} is obtained by adding the \rdy \Tsbf{Anz} to the theory \sa{Ato}. This theory proves the rules \tsbf{Aonz} and \tsbf{ASDZ} (Lemma~\ref{lemAtonz})
\end{enumerate}
B) Theories based on the language of strictly ordered rings: we add the predicate $ \cdot> 0 $.
\begin{enumerate}\setcounter{enumi}{5}
 
\item The direct theory \SA{Apro} of \textsl{proto-ordered rings} (cf. \cite{CLR01}). The axioms are those of commutative rings, all the direct rules stated for \Sa{Cod} (\Tsbf{gao0}, \Tsbf{gao1}, \Tsbf{ao1}, \Tsbf{ao2}, \Tsbf{aso1} to \Tsbf{aso4}) and the collapsus \coligt.
 
\item The Horn theory \SA{Aso} of \textsl{strictly ordered rings} is the theory \sa{Apro} to which we add the \rsims \Tsbf{Gao}, \Tsbf{Aso1} and \Tsbf{Aso2}. It can also be seen as constructed from \sa{Ao} by adding the predicate $\, \cdot> 0 \,$ in the language, the direct rules \Tsbf{aso1} to \Tsbf{aso4} and the \rsims \Tsbf{Aso1} and \Tsbf{Aso2}.\index{strictly ordered!ring}\index{ring!trictly ordered ---}
 
\item The Horn theory \SA{Asonz} of \textsl{reduced strictly ordered rings} (\gui{quasi-ordered rings} in \cite{CLR01}) is obtained by adding the \rsim \Tsbf{Aonz} to~\sa{Aso}. It can also be seen as the theory \sa{Apro} to which we add the \rsims \Tsbf{Gao}, \Tsbf{Aonz}, \Tsbf{Aso1} and \Tsbf{Aso2}.
 
\item The dynamical theory \SA{Asto} of \textsl{strictly linearly ordered rings} is the theory \sa{Aso} to which we add the \rdy \Tsbf{OT}. It can also be seen as constructed from \sa{Ato} by adding the predicate $ \cdot> 0 $ in the language, the direct rules \Tsbf{aso1} to \Tsbf{aso4} and the \rsims \Tsbf{Aso1} and \Tsbf{Aso2}.
 
\item The dynamical theory \SA{Aito} of \textsl{linearly ordered integral rings} is obtained by adding the \rdy \tsbf{ED$ _> $}  to \sa{Asto}. The rules \tsbf{Aonz}, \tsbf{OTF} and \tsbf{OTF $ \eti $} are valid in this theory.%
\end{enumerate} 
\end{definition}

In Items 6, 7 and 8, the meaning of $ x>0 $ is not fixed a priori. This can range from \gui{$x$ is regular and $\geq 0$} to \gui{$x$ is invertible and $\geq 0$}.

The direct theory \Sa{Apro} is the one in which the collapse is the clearest, directly given by an algebraic certificate, as specified in the following lemma. 

Recall that in a ring, a \textsl{cone}\index{cone} is a part $ C $ which contains squares and which is stable by addition and product: $ C+C\subseteq C $, $ C\times C\subseteq C $.

\begin{lemma}[algebraic certificate of collapse] \index{collapse}\label{lemColApo}~\\
Let $\gK$ be a dynamic algebraic structure of type \Sa{Apro} given by a presentation $ (G; R_{>0}, R_{\geq 0}, R_{=0}) $ with the following meaning: $G$ is the set of generators of the structure, $R_{>0}$, $R_{\geq 0}$ and $R_{=0}$ are three parts of $\ZZ[G]$, the elements of $R_{>0}$ (resp. $R_{\geq 0}$, $R_{=0}$) are assumed $> 0$ (resp. $\geq 0$, $=0$) in $\gK$.\\ 
The dynamic algebraic structure $\gK$ collapses if, and only if, we have in $\ZZ[G]$ an equality 
\[\fbox{$s+p+z=0$}\] 
where $s$ is in the multiplicative monoid generated by $R_{>0}$, $p$ is in the cone generated by $R_{>0}\cup R_{\geq 0}$ and $z$ in the ideal generated by $R_{=0}$. 
\end{lemma}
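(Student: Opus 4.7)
The statement is an equivalence; I would treat the two directions separately. For sufficiency, the plan is to turn the algebraic certificate $s+p+z=0$ into an explicit dynamic derivation of collapse. Write $s=r_{1}\cdots r_{k}$ with $r_{i}\in R_{>0}$ (the case $k=0$ gives $s=1$). Iterated application of \tsbf{aso4}, using \tsbf{aso1} as the base case, yields $\vd s>0$. Writing $p$ as a finite sum of products of squares and of elements of $R_{>0}\cup R_{\geq 0}$, one uses \tsbf{ao1}, \tsbf{aso2}, \tsbf{ao2}, \tsbf{gao1} and \tsbf{gao2} (together with the presentation hypotheses $\vd a\geq 0$ and $\vd r>0$) to derive $\vd p\geq 0$. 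Writing $z=\sum_{i}t_{i}a_{i}$ with $a_{i}\in R_{=0}$, rules \tsbf{ac1}, \tsbf{ga2} and \tsbf{ga0} give $\vd z=0$, and then \tsbf{gao1} gives $\vd z\geq 0$. Two applications of \tsbf{aso3} produce $\vd s+p+z>0$; but as elements of $\ZZ[G]$ one has $s+p+z=0$, so this is the very atomic formula $\vd 0>0$, and \coligt triggers collapse.

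For necessity, the plan is to associate to every atomic formula derivable in $\gK$ an algebraic certificate, by induction on the dynamic proof. Since closed terms reduce canonically to elements of $\ZZ[G]$, atomic formulas have one of the three forms $t>0$, $t\geq 0$, $t=0$ with $t\in\ZZ[G]$. I would maintain the following invariant: a derivable $\vd t>0$ admits a decomposition $t=s+p+z$ with $s$ in the multiplicative monoid of $R_{>0}$, $p$ in the cone generated by $R_{>0}\cup R_{\geq 0}$, and $z$ in the ideal of $R_{=0}$; a derivable $\vd t\geq 0$ admits $t=p+z$ of the same kind; a derivable $\vd t=0$ admits $t=z$. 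The base cases are covered by the presentation axioms $\vd r>0$, $\vd a\geq 0$, $\vd a=0$ (with certificates $s=r$, $p=a$, $z=a$ respectively), together with the universally valid \tsbf{ga0}, \tsbf{aso1} and \tsbf{ao1} (the last using that every square lies in the cone). The inductive step is a rule-by-rule verification for \tsbf{ga2}, \tsbf{ac1}, \tsbf{gao1}, \tsbf{gao2}, \tsbf{ao2}, \tsbf{aso2}, \tsbf{aso3}, \tsbf{aso4}, exploiting that the cone is stable under addition, multiplication, and multiplication by any element of the monoid (each such element is a product of generators of $R_{>0}$, hence in the cone). The representative case is \tsbf{aso4}, which combines $x=s_{1}+p_{1}+z_{1}$ and $y=s_{2}+p_{2}+z_{2}$ into
\[
xy \;=\; s_{1}s_{2} \;+\; \bigl(s_{1}p_{2}+p_{1}s_{2}+p_{1}p_{2}\bigr) \;+\; \bigl(s_{1}z_{2}+p_{1}z_{2}+z_{1}s_{2}+z_{1}p_{2}+z_{1}z_{2}\bigr),
\]
the three summands lying respectively in the monoid, the cone and the ideal.

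Once the invariant is established, collapse of $\gK$ means $\vd 1=0$ is derivable: either it is obtained via \coligt, in which case $\vd 0>0$ is already provable and the invariant yields the certificate $0=s+p+z$ directly, or it is obtained by purely equational rules, in which case the invariant gives $1\in (R_{=0})$ and hence the certificate $0=1+0+(-1)$ with $s=1$, $p=0$, $z=-1$. I expect the main obstacle to be administrative rather than conceptual: carefully checking, for each direct rule, that the triple $(s,p,z)$ can be reassembled into the required form, and in particular that monoid factors can be absorbed into the cone in \tsbf{aso2}, \tsbf{ao2} and \tsbf{aso4}, which is legitimate because any finite product of elements of $R_{>0}$ already belongs to the cone generated by $R_{>0}\cup R_{\geq 0}$.
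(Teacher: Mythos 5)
The paper states this lemma without a proof (it is inherited from \cite{CLR01}), so there is no in-paper argument to compare against. Your two directions are the natural ones, and the sufficiency direction — translating the identity $s+p+z=0$ into a derivation of $\vd 0>0$ by the direct rules and then collapsing via \coligt\ — is sound as written.

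The necessity direction has a genuine gap in the statement of your invariant. You claim that every \emph{derivable} fact admits the postulated decomposition, but your rule-by-rule verification covers only the direct rules and not \coligt; and the invariant as stated is in fact false once \coligt\ becomes applicable, because after collapse \emph{every} atomic fact becomes derivable while the decomposition need not hold. Take $G=\{a\}$, $R_{>0}=R_{=0}=\{a\}$, $R_{\geq 0}=\emptyset$: the structure collapses (from $\vd a>0$ and $\vd a=0$ one reaches $\vd 0>0$), hence $\vd -1>0$ is derivable, yet $-1$ cannot be written as $s+p+z$ with $s$ a power of $a$, $p$ in the cone generated by $a$ and $z\in\gen{a}$ — evaluating at $a=0$, the first two summands are $\geq 0$ and the third vanishes. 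Consequently you cannot simply apply the invariant to the $\vd 0>0$ used in \coligt\ in your final case analysis. The fix is modest but needed: either prove the invariant only for derivations that do not use \coligt, and observe that a shortest derivation of $\vd 0>0$ cannot use \coligt\ (its premise is itself $\vd 0>0$, which would furnish a strictly shorter derivation), so the certificate can be read off that shortest derivation; or weaken the invariant to the disjunction \gui{either a certificate $0=s+p+z$ already exists, or the decomposition holds}, which \coligt\ preserves trivially. With either patch your argument is complete.
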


\begin{lemma} \label{lemAtonz} \emph{(See also Lemma \ref{lemAfrsdz}.)}
\begin{enumerate}
 
\item The dynamical theory \sa{Aso} proves the \rsim \Tsbf{Iv} and the collapsus \coligt.
\item The dynamical theory \sa{Ato} proves the following \rsims. 

\TwoRegles{
\Lab{Ato1} $ \,\,y\geq 0 \vet xy=1\vd x\geq 0 $ 
}
{
\Lab{Ato2} $ \,\, c\geq 0\vet x(x^2+c)\geq 0 \vd x^3\geq 0 $ 
}

\item The theory \sa{Atonz} proves the rules \Tsbf{Aonz} and \Tsbf{ASDZ}.
\end{enumerate}
 
\end{lemma}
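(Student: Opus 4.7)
The plan is to dispatch the six rules in order, using the total order axiom \Tsbf{OT} systematically to split on signs and the antisymmetry rule \Tsbf{Gao} to collapse any branch in which both $a\geq 0$ and $a\leq 0$ arise for the same term $a$. The main obstacle will be \Tsbf{ASDZ}, where a small but non-obvious algebraic manipulation is needed in order to bring \Tsbf{Anz} into play.

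For Item 1, the collapsus \coligt is already listed among the axioms of \sa{Apro}, so it holds in \sa{Aso} by inclusion. For \Tsbf{Iv}, I start from $xy=1$, square to obtain $x^2y^2=1$, and combine $y^2\geq 0$ (from \Tsbf{ao1}) with $1>0$ (from \Tsbf{aso1}) to apply \Tsbf{Aso2} with the substitution $x\leftarrow y^2$, $y\leftarrow x^2$, which gives $x^2>0$.

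For Item 2, both \rsims are proved in \sa{Ato} by a single application of \Tsbf{OT} to $x$. For \Tsbf{Afr4}, the branch $x\geq 0$ is immediate; in the branch $x\leq 0$, \Tsbf{ao2} applied to $-x\geq 0$ and $y\geq 0$ gives $(-x)y\geq 0$, i.e.\ $-1\geq 0$, while \Tsbf{aso1} together with \Tsbf{aso2} gives $1\geq 0$, so \Tsbf{Gao} forces $1=0$; the ring identity $x\cdot 1=x$ then gives $x=x\cdot 0=0$, hence $x\geq 0$ by \Tsbf{gao1}. For \Tsbf{Afr5}, the branch $x\geq 0$ gives $x^3=x\cdot x^2\geq 0$ from \Tsbf{ao1} and \Tsbf{ao2}; in the branch $x\leq 0$, the signs $-x\geq 0$, $x^2\geq 0$, $c\geq 0$ combine via \Tsbf{ao2} and \Tsbf{gao2} (applied to the negated variables) to yield $xc\leq 0$ and $x^3\leq 0$, hence $x^3+xc\leq 0$; together with the hypothesis $x(x^2+c)=x^3+xc\geq 0$, \Tsbf{Gao} forces $x^3+xc=0$, so $x^3=(-x)c\geq 0$.

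For Item 3, \Tsbf{Aonz} is deduced from \Tsbf{Afr5} together with \Tsbf{Anz}: \Tsbf{Afr5} already gives $x^3\geq 0$, and in the branch $x\leq 0$ coming from \Tsbf{OT}, the product $(-x)\cdot x^2=-x^3\geq 0$ also yields $x^3\leq 0$, so \Tsbf{Gao} forces $x^3=0$; then $x^4=x\cdot x^3=0$ equals $(x^2)^2$, and two applications of \Tsbf{Anz} give first $x^2=0$ and then $x=0$, hence $x\geq 0$ by \Tsbf{gao1}.

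Finally, for \Tsbf{ASDZ}, applying \Tsbf{OT} to $x$ and then to $y$ and replacing $x$ by $-x$ or $y$ by $-y$ whenever needed (an operation that preserves $xy=0$ up to sign and preserves the target conclusion $x=0\vou y=0$) reduces the rule to the case $x\geq 0$ and $y\geq 0$. In that case, apply \Tsbf{OT} to $x-y$: in the branch $x\geq y$ one has $x-y\geq 0$, so \Tsbf{ao2} gives $y(x-y)\geq 0$; but $y(x-y)=xy-y^2=-y^2$, which yields $y^2\leq 0$, and together with $y^2\geq 0$ from \Tsbf{ao1}, \Tsbf{Gao} gives $y^2=0$, so \Tsbf{Anz} concludes $y=0$. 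The branch $x\leq y$ is symmetric and produces $x=0$. The crucial ingredient is the identity $y(x-y)=-y^2$ under the hypothesis $xy=0$, which is exactly what allows \Tsbf{Anz} to finish the argument and is, in my view, the only non-mechanical step in the whole lemma.
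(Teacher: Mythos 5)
Your proof is correct and follows essentially the same route as the paper. The paper dispatches Items 1 and 2 as ``easy'' without detail, which you have usefully filled in, and the steps you give do close those branches: for \Tsbf{Iv} the squaring trick $x^2y^2=1$ followed by \Tsbf{Aso2} is exactly what is needed; for \Tsbf{Afr4} and \Tsbf{Afr5} the split on the sign of $x$ via \Tsbf{OT} and the subsequent use of \Tsbf{ao2}, \Tsbf{gao2}, and \Tsbf{Gao} is the standard argument. For Item 3, your derivation of \Tsbf{Aonz} from \Tsbf{Afr5} (splitting on the sign of $x$, concluding $x^3=0$ in the negative branch, then iterating \Tsbf{Anz} through $x^4=(x^2)^2$) matches the paper's brief indication that \Tsbf{Aonz} follows from \Tsbf{Afr5}.

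The only cosmetic divergence is in \Tsbf{ASDZ}. The paper phrases the argument with absolute values: if $\abs a\leq\abs b$ then $0\leq\abs a^2\leq\abs a\,\abs b=\abs{ab}=0$, so $a^2=0$ and then $a=0$. You avoid introducing $\abs{\cdot}$ by first normalising signs via \Tsbf{OT} to the case $x,y\geq 0$ and then splitting on the sign of $x-y$, producing the inequality $y(x-y)=-y^2\geq 0$. The algebraic content is identical (the key move is multiplying the smaller element by the difference and using $xy=0$ to collapse to $-y^2$); your version is slightly more self-contained because it does not implicitly appeal to $\abs a\,\abs b=\abs{ab}$. One small clarification worth making explicit: the reduction to $x,y\geq 0$ is not a literal substitution but a four-way branch in which, say, $(-x)y=0$ is derived from $xy=0$ in the branch $x\leq 0$, and the conclusion $-x=0\vou y=0$ is then rewritten as $x=0\vou y=0$. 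As you phrase it the intent is clear, but in a dynamical proof this is a branching argument rather than a replacement.
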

%
\begin{proof}
Items \textsl{1} and \textsl{2} are easy (use \Tsbf{Gao}). For Item \textsl{3}, \tsbf{Aonz} follows from~\tsbf{Ato2}. Let's look at \tsbf{ASDZ}. Let $ a,b $ be such that $ab=0$; if $ \abs a\leq \abs b $, we have $ 0\leq {\abs a}^2\leq \abs {ab}=0 $ so $ a=0 $, and in the case where $ \abs b\leq \abs a $ we get $b=0$. 
\end{proof}

In the \sa{Aso} theory, the rule \tsbf{Ato1} is a weakened variant of \Tsbf{Aso2}. In the \sa{Ao} theory, the rule \tsbf{Ato2} is a weakened variant of \Tsbf{Aonz}.

\smallskip In a linearly ordered ring, if we define \gui{$x>0$} by \gui{$x$ is regular \hbox{and $\geq 0$}}, all the rules that define \Sa{Aso} are satisfied (and $x\neq 0$ is a priori stronger than the simple negation of $x=0$). This explains the interest of the Horn theory  \Sa{Aso}. A lattice variant, the \Sa{Asr} theory of strict $f$-rings, will be defined later.

\Subsection{An example with nilpotents}

\begin{example}[a non-reduced linearly ordered ring] \label{exatotordnonreduced}  The example we now give is the one that should be kept in mind in order to fully understand the difference between linearly ordered rings and linearly ordered domains.

\noindent This is the linearly ordered ring $ \QQ[\alpha] $ where $ \alpha>0 $ and $ \alpha^6=0 $ ($\alpha$ is an infinitesimal~\hbox{$ >0 $} nilpotent). Let $ c $ be an element such that $ c^2=0 $ (for example $ c=\alpha^5 $). The system of constraints 

\snic{x^2=c^2,\;x\geq 0,}

\noindent which could be suggested to characterise $ \abs c $ without using the sign test in the case of an ordered field, now admits an infinite number of solutions: all $ r\alpha^3+y\alpha^4 $ where $ r>0 $ in $\QQ$ and $y$ arbitrary in $ \QQ[\alpha] $. \eoe
\end{example}

\Subsection{Adding the function symbol $\vu$ for the lub}\label{secCodisup}

In a linearly ordered set, and even more so in a discrete ordered field, every pair of elements has a least upper bound (lub): the greater of the two. We therefore change nothing essential in the theory of \sa{Cod} by adding a function symbol $\cdot\vu\cdot$ subject to the three axioms which define the sup of two elements, when it exists for a of given order relation.

\begin{definition} \label{deficodisup}
 The dynamical theory of \textsl{discrete ordered fields with sup}, denoted \SA{Codsup}, is the dynamical theory of discrete ordered fields to which we add a function symbol~$ \cdot\vu\cdot $ and for axioms the following Horn rules \tsbf{sup1}, \tsbf{sup2} and \tsbf{Sup}.\index{discrete!ordered field with sup}\index{discrete ordered field!with sup}%

\TwoRegles{
\laB{sup1} $ \vd x\vu y\,\geq x $ 
\laB{Sup} $ \,\, z\geq x\vet z\geq y\vd z\geq x\vu y $ 
}
{
\laB{sup2} $ \vd x\vu y\,\geq y $ 
}

\noindent From the theories \Sa{Ato}, \Sa{Asto}, \Sa{Aito} and \Sa{Crcd} the theories \SA{Atosup}, \SA{Astosup}, \SA{Aitosup} and \SA{Crcdsup} are defined in the same way.
\end{definition}

In the case of discrete ordered fields and discrete real closed fields we could also have replaced the \rsim \Tsbf{Sup} by the following direct rule \tsbf{sup}, so as to add only direct rules to \Sa{Cod}.

\Regles{
\laB{sup} $ \vd \big((x\vu y)- x\big)\,\big((x\vu y)- y\big)=0 $ 
}
 
We can also think of the theory \Sa{Crcdsup} as the theory \Sa{Codsup} to which we add the axioms of real closure \RCFn.
 
\begin{remark} \label{remCrdsup} 
The theories \Sa{Cod} and \Sa{Codsup} are essentially identical. The same applies to the other pairs of theories in Definition \ref{deficodisup}. 
\eoe\end{remark}

\section{Formal Positivstellensätze}\label{secPstFormels}

The formal Positivstellensatz of classical mathematics (\cite[Theorem~4.4.2]{BCR}) admits the following constructive version (see \cite{CLR01}).
\begin{pstf}[formal Positivstellensatz for ordered fields, 1] \label{Pst1}
\index{Positivstellensatz!formal --- !for ordered fields, 1}~

\noindent  
On considère the dynamic algebraic structures in the language of strictly ordered rings.
\begin{enumerate}
 
\item The dynamical theories \Sa{Apro}, \Sa{Aso}, \Sa{Cod} and \Sa{Crcd} collapse simultaneously. 
 
\item The dynamical theories \Sa{Asonz}, \Sa{Aito}, \Sa{Cod} and \Sa{Crcd} prove the same Horn rules.
\end{enumerate}
\end{pstf}

The strength of this theorem lies in the fact that the collapse of a dynamic algebraic structure of type \Sa{Apro} is given by an \textsl{algebraic certificate of collapse} (Lemma \ref{lemColApo}), which we call a \textsl{Positivstellensatz}.%
\index{Positivstellensatz}\index{algebraic certificate}

As a special case, for a commutative ring $\gR$, the dynamic algebraic structure $ \Sa{Crcd}(\gR) $ collapses if, and only if, $ -1 $ is a sum of squares in $\gR$.

In classical mathematics, thanks to Gödel's completeness theorem \ref{thGodel1}, we deduce from previous Item~\textsl{1} the abstract formal \textsl{Positivstellensatz} in the following form (see~\cite{CLR01}).

\smallskip\noindent \textsl{A system of sign conditions imposed on elements of a ring $\gA$ admits an algebraic certificate of impossibility if, and only if, the only model of $ \Sa{Cod}(\gA) $ is trivial, if, and only if, the only model of $ \Sa{Crcd}(\gA) $ is trivial. \index{Positivstellensatz!abstract formal ---}} 

\smallskip Item \textsl{2} of \ref{Pst1} also admits \gui{abstract} classical versions  via model theory, in application of   \thref{thcolsimralg} (see \cite{CLR01}). 

\smallskip We now examine what happens to the previous results in the absence of the predicate~\hbox{\gui{$ \cdot>0 $}} in the presentation of a dynamic algebraic structure.

\begin{pstf}[formal Positivstellensatz, 1bis]\label{Pst1bis}%
\index{Positivstellensatz!formal --- !for ordered fields, 1bis} 

\noindent 
We consider dynamic algebraic structures in the language of ordered rings.
\begin{enumerate}
 
\item The dynamical theories \Sa{Ao}, \Sa{Apro}, \Sa{Ato}, \Sa{Cod} and \Sa{Crcd} collapse simultaneously. 
 
\item The dynamical theories \Sa{Aonz}, \Sa{Atonz}, \Sa{Cod} and \Sa{Crcd} prove the same Horn rules.
\end{enumerate}
NB. The language used for the presentation must not mention the predicate $ \cdot>0 $. There is no collapse axiom in \Sa{Ao} and \Sa{Ato}, and a dynamic algebraic structure of type \Sa{Ao} or \Sa{Ato} is said to collapse when it proves $1=0$.
\end{pstf}
 
%
\begin{proof}

\noindent \textsl{1}. Consider a dynamic algebraic structure $\gA$ for \Sa{Ao}. The same presentation gives a dynamic algebraic structure~$\gA'$ for \Sa{Apro}. Suppose that $\gA'$ proves $ 1=0 $. The collapse for~$\gA'$ (of type~\Sa{Apro}) has the form of a very precise algebraic certificate (a Positivstellensatz). This certificate is written $ 1+p=0 $, where $p$ is $\geq 0$ by virtue of the presentation and the axioms of~\Sa{Ao}. We therefore have both $ 1\geq 0 $ and $ 1\leq 0 $ in $\gA$, so $ 1=0 $ by virtue of \Tsbf{Gao}. Conversely, if $\gA$ proves~$ 1=0 $, then a fortiori the same will be true for $\gA'$.
\\ Finally, we apply \pstref{Pst1} and note that \Sa{Ato} is an intermediate theory between \Sa{Ao} and \Sa{Cod}.

\sni\textsl{2}. Consider a dynamic algebraic structure $\gA$ for \Sa{Aonz}. It suffices to prove the result for a fact of the form $x\geq0$ (because $x=0$ is equivalent to $ x\geq 0 $ and $ x\leq 0 $). This fact is valid in~\Sa{Cod} if, and only if, the fact $x<0$ collapses the dynamic algebraic structure $ \Sa{Cod}(\gA) $. According to \pstref{Pst1}, this corresponds to an algebraic certificate of the form $ x^{2n}+p=xq $, where~$p$ and~$ q $ are $\geq 0$ by virtue of the presentation and the axioms of \Sa{Ao}. This gives $x(x^{2n}+p)\geq0$, then $ x^k(x^{2k}+p_1)\geq 0 $ for a suitable odd integer $k$. The rule \Tsbf{Aonz} tells us that $ x^k\geq 0 $ in $\gA$. This same rule shows that $ x^3\geq 0 $ implies~\hbox{$ x\geq 0 $}, and consequently $ x^k\geq 0 $ implies $ x\geq 0 $ for all odd~$k$.
\end{proof}

A consequence of Item \textsl{1} in classical mathematics (via  \thref{thcolsimcomp}) is that a field~$\gK$ in which $-1$ is not a sum of squares can be ordered. On the other hand, the only known \und{computational} meaning of this result of classical mathematics is that the theory~$\Sa{Cod}(\gK)$ collapses if, and only if, $-1$ is a sum of squares in $\gK$. 

From a classical point of view, since the theory $ \Sa{Cod}(\RR) $ does not collapse, we can provide $\RR$ with an order relation which extends the usual order relation and which is a total order. But the only constructive meaning of this result of classical mathematics is that $-1$ is not a sum of squares in $\RR$. 

Another consequence of Formal \pstref{Pst1bis} is the following corollary. A more direct proof seems a challenge. A direct proof will be given in the theory \Sa{Afrnz}.

\begin{corollary} \label{corPst1bis}
The following rule is valid in the theory \sa{Aonz}.

\Regles{
\Lab{Aonz3} $\,\, a\geq 0\vet b\geq 0\vet a^2=b^2 \vd a=b$%
}
 
\end{corollary}

\Subsection{The demonstrative force of formal Positivstellensätze}

The dynamical theories we explore in the following to describe the algebraic properties of real numbers are extensions of \Sa{Asonz} (if the predicate $ \cdot > 0 $ is present) or \Sa{Aonz} (in the opposite case). Moreover, the theories explored are always weaker than \Sa{Crcd}. And any Horn rule valid in the dynamical theory \Sa{Crcd} is valid in \sa{Asonz} (in \sa{Aonz} if the predicate $ \cdot > 0 $ is absent).

Now $\RR$ constitutes a constructive model of the \Sa{Asonz} theory for the language based on the signature $ (\cdot=0,\cdot>0,\cdot\geq0\cdot\mathrel{;}\cdot+\cdot, \cdot\times \cdot,-\cdot,0,1) $ (\thref{thRRco}).

Thus from the point of view of Horn rules alone, the formal Positivstellensätze tell us that the theory \Sa{Crcd} is entirely satisfactory, including for $ \RR$, which does however satisfy neither \coligt \ nor~\Tsbf{OT}. However, to temper this optimistic statement, here is the precise result. Note also that it applies only to Horn rules, not to other \rdys.

\begin{theorem} \label{thRRtalg}
Consider a Horn rule formulated in the dynamic algebraic structure \hbox{$ \gR=\Sa{Asonz}(\RR) $}. If the constants involved in the rule are in a discrete subfield $ \gR_0 $ of $\RR$, for the rule to be valid in $\gR$, it is sufficient for it to be valid in $ \Sa{Crcd}(\gR_0) $. 
\end{theorem}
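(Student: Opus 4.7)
\noindent The plan is to factor the argument through the intermediate dynamic algebraic structure $\Sa{Asonz}(\gR_0)$, using \pstref{Pst1}(2) to bridge $\Sa{Crcd}$ and $\Sa{Asonz}$ at the theory level, and then invoking monotonicity to pass from $\gR_0$ to $\RR$.

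First, since $\gR_0$ is a discrete ordered subfield of $\RR$, its positive diagram $\Diag(\gR_0)$ in the signature of strictly ordered rings consists only of Horn facts: atoms of the shape $t=0$, $t\geq 0$, $t>0$ for closed terms $t$ built from the elements of $\gR_0$. By hypothesis, the given rule is valid in the dynamic algebraic structure $\Sa{Crcd}(\gR_0)=\big(\Sa{Crcd},\Diag(\gR_0)\big)$. I would then combine \pstref{Pst1}(2) with \thref{thEseqMemesfaitsbis}: the former asserts that $\Sa{Asonz}$ and $\Sa{Crcd}$ prove the same Horn rules; the latter, applied with the common family of Horn axioms $\Diag(\gR_0)$, yields that $\Sa{Asonz}(\gR_0)$ and $\Sa{Crcd}(\gR_0)$ also prove the same Horn rules. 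One subtle point is that \thref{thEseqMemesfaitsbis} requires $\Sa{Crcd}$ to be a simple extension of $\Sa{Asonz}$; this is achieved by adjoining to $\Sa{Crcd}$, via the shortcut rule~\ref{12rstr}, the simplification axioms \Tsbf{Aonz}, \Tsbf{Aso1} and \Tsbf{Aso2}, all valid in $\Sa{Crcd}$ precisely by \pstref{Pst1}(2). Thus the given rule is also valid in $\Sa{Asonz}(\gR_0)$.

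To finish, I would remark that the embedding $\gR_0 \hookrightarrow \RR$ is a morphism of strictly ordered rings and that $\RR$ is a constructive model of $\Sa{Asonz}$ by \thref{thRRco}; therefore every atomic fact in $\Diag(\gR_0)$ remains valid in $\RR$, whence $\Diag(\gR_0)\subseteq\Diag(\RR)$. Consequently $\Sa{Asonz}(\RR)=\gR$ is a simple extension of $\Sa{Asonz}(\gR_0)$ in the sense of Definition~\ref{defiextsimple}, and the rule, valid in the latter, remains valid in $\gR$.

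The hard part, in my view, is justifying the application of \thref{thEseqMemesfaitsbis}: since $\Sa{Crcd}$ and $\Sa{Asonz}$ are presented independently in the text rather than one literally extending the other, a small amount of bookkeeping via the shortcuts rule is needed to view $\Sa{Crcd}$ as a genuine simple extension of $\Sa{Asonz}$. Once this adjustment is in place, the remainder reduces to a straightforward combination of the existing general results and the constructive validity of the Horn rules of $\Sa{Asonz}$ in $\RR$.
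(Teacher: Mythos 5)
Your proof is correct and follows essentially the same route the paper sketches (as a discussion rather than an explicit demonstration) just before the theorem: combine \pstref{Pst1}, Item~2, which identifies the Horn rules of $\Sa{Asonz}$ and $\Sa{Crcd}$ over a fixed presentation, with monotonicity of dynamic proofs when passing from $\Diag(\gR_0)$ to $\Diag(\RR)$, backed by \thref{thRRco}. The detour through \thref{thEseqMemesfaitsbis} is not needed, since \pstref{Pst1} is already stated for an arbitrary dynamic algebraic structure and so applies directly to the presentation $\Diag(\gR_0)$, but including it is harmless.
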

The existential rules satisfied in $\RR$ and introduced in the dynamical theories under consideration will as far as possible be treated in the framework of provably unique existences and can therefore be skolemised without damage, providing theories without existential axioms which are essentially equivalent to those which would have required existential axioms.

\begin{remark} \label{remRRtdy0} 
 We can also apply  \thref{thFond}  with the dynamical theory $ \Sa{Co--}(\RR) $ 
(Definition \ref{defiCo0}). We will then introduce a predicate $ x\succeq y $ opposed to $ x<y $. The new dynamical theory will treat $\RR$ as a discrete ordered field and any \rdy proved in the new theory but not using $x\succeq y$ will also be valid in $\RR$. The disadvantage is of course that $\RR$ is not a constructive model of the new theory. Another drawback is the mysterious status of the new predicate $x\succeq y $, which is weaker than $x\geq y$ in the new dynamical theory. In conclusion, the advantage that \thref{thFond} seems to provide (the use of classical logic is harmless) does not seem to go beyond the considerations we have developed on the proper use of the formal Positivstellesatz.
\eoe\end{remark}

\Subsection{Concrete Positivstellensatz}

First, we recall Tarski's fundamental theorem. For a simple Cohen-Hormander proof, see \cite[Section 1.4]{BCR} or \cite[Lemma 3.12]{CLR01}. Some instructive comments can be found in \cite[Theorems 10, 11, 12]{LR90}.

\begin{theorem}[elimination of quantifiers] \label{thTarski} \label{NOTARa}
The first-order intuitionistic formal theory associated with the dynamical theory \Sa{Crcd} admits the elimination of quantifiers. It is complete and decidable. In particular, it exhaustively describes all the purely algebraic properties of $\RRa$ (those formulated to first-order in the language of ordered rings). 
\end{theorem}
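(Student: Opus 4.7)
The plan is to follow the Cohen–Hörmander constructive procedure, as carried out in \cite[Section~1.4]{BCR} and in the proof of \cite[Lemma~3.12]{CLR01}. The heart of the argument is an effective one-step elimination: given a quantifier-free formula $\phi(x,\bar y)$ in the signature of $\Sa{Crcd}$, produce a quantifier-free $\psi(\bar y)$ together with a proof in $\Sa{Crcd}$ that $\exists x\,\phi(x,\bar y)$ and $\psi(\bar y)$ are equivalent. A symmetric statement handles $\forall x$, and full quantifier elimination then follows by induction on formula complexity. Since $\Sa{Crcd}$ proves that equality (\tsbf{ED$_>$}) and the order (\tsbf{OT}) are decidable, every quantifier-free formula is provably decidable inside the associated intuitionistic first-order theory, so intuitionistic and classical Boolean reasoning on sign conditions coincide there.

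For the elimination of a single $\exists x$, I would first put $\phi$ in disjunctive normal form and distribute $\exists$ over $\vee$, reducing to the case where $\phi(x,\bar y)$ is a conjunction of atomic sign conditions $P_i(x,\bar y)=0$, $P_i(x,\bar y)>0$, or $P_i(x,\bar y)<0$ with $P_i\in\ZZ[x,\bar y]$. Working in $\ZZ[\bar y][x]$, I would then compute a finite family $\mathcal F$ containing the $P_i$ and closed under $x$-differentiation and Euclidean pseudo-division, modulo a case split on the signs of the leading coefficients (polynomials in $\bar y$ alone). By Thom's lemma, the sign of any $F\in\mathcal F$ at a real root of any $G\in\mathcal F$ is determined by sign conditions on the derivatives of $G$ and on elements of $\mathcal F$ of lower $x$-degree. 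This produces a finite branching tree whose leaves are indexed by the consistent sign tables for $\mathcal F$ on $\RRa$; each leaf yields an explicit quantifier-free formula $\psi_j(\bar y)$ certifying when that branch realises an $x$ satisfying the original conjunction, and the disjunction of the $\psi_j$ is the desired $\psi(\bar y)$.

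Completeness and decidability are then immediate. Any closed sentence reduces, via quantifier elimination, to a quantifier-free sentence over $\ZZ$, that is, a Boolean combination of sign comparisons between explicit integers. Such a sentence evaluates to either $\Top$ or $\Bot$ by a finite computation carried out inside the theory via \tsbf{ED$_>$}, \tsbf{OT} and the equational machinery of $\Sa{Ac}$. Since $\RRa$ is a model of $\Sa{Crcd}$, completeness then identifies the theory with the first-order theory of $\RRa$, so every purely algebraic first-order property of $\RRa$ is decided by $\Sa{Crcd}$.

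The main obstacle is the uniform constructive handling of the parameters $\bar y$. A naive one-variable Sturm-style procedure would need sign tests on the $\bar y$-coefficients, which are unavailable when these coefficients vary. The Cohen–Hörmander algorithm circumvents this by producing the finite case tree explicitly, so that $\psi(\bar y)$ appears as a disjunction over all sign configurations of a fixed finite family of auxiliary polynomials in $\bar y$; what then has to be checked is that $\Sa{Crcd}$ actually proves the equivalence on each branch. This verification rests on the intermediate value rules \RCFn to witness the existence of the required $x$ in each branch, and the bulk of the work lies in unwinding Thom's lemma into a finite combination of applications of \RCFn together with the Horn rules inherited from $\Sa{Cod}$.
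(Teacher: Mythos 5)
Your proposal follows the Cohen--Hörmander elimination procedure, which is exactly the route the paper takes: it does not give its own argument but refers to \cite[Section~1.4]{BCR} and \cite[Lemma~3.12]{CLR01}, the same sources and the same case-tree construction on sign conditions (closure under derivation and pseudo-division, branching on leading coefficients, witnesses supplied by the rules \RCFn) that you describe. The sketch is correct in substance (your invocation of ``Thom's lemma'' is really just the sign-table recursion of Hörmander's algorithm), so nothing essential is missing.
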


This paragraph gives a theorem equivalent to Krivine-Stengle's Positivstellensatz, stated here in the language of dynamic algebraic structures. 

A constructive proof of \pstref{thPstStengle} can be found in \cite{CLR01} or \cite{Lom91}. It is based on the formal Positivstellensatz and on  Lemma 3.12 of \cite{CLR01}, a variant of Tarski's theorem.

For a more conceptual approach and better complexity bounds see \cite{LPR2015}. For the construction of the real closure of a discrete ordered field see \cite{LR91,LR90}.
\begin{pstc} \label{thPstStengle}%
\index{Positivstellensatz!concrete ---} ~
\\
Let $\gK$ be a discrete ordered field and $\gR$ be a discrete real closed field containing $\gK$, (for example the real closure of $\gK$). Let $\gA=\big((G,Rel),\sa{Cod}(\gK)\big)$ be a dynamic algebraic structure where $G=(\xn)$ and where $Rel$ is finite. 
\begin{enumerate}
 
\item The dynamic algebraic structure $\gA$ collapses if, and only if, it is impossible to find a model of $\gA$ contained in $\gR$. 
 
\item The collapse if it occurs is given by an algebraic certificate according to Item~1 of   \thref{Pst1} and Lemma \ref{lemColApo}.
 
\item We have an algorithm which decides whether $\gA$ collapses and which in the case of a negative answer gives the description of a system $(\xin)$ in $\gR^n$ which satisfies the constraints given in the relations $Rel$.
\end{enumerate} 
\end{pstc}

This statement is not valid in this general form if we take $\gK=\gR=\RR$ because there is no sign test in $\RR$ and the algorithms which explicite  
\pstref{thPstStengle}\footnote{These algorithms are provided by the constructive proof of the theorem.} make crucial use of this sign test. 

\smallskip Here's a small example of the problems we run into. On $\RR$, as on an arbitrary local ring\footnote{For the constructive treatment of local rings, the Jacobson radical and Heyting fields see for example \cite[section IX-1]{CACM}.} in which $x\neq0$ denotes the invertibility predicate, we have the equivalence
\begin{equation} \label {eqTiersexclu1}
\exists y\ x^2y=x \,\iff\; x=0\; \mathrm{ or }\; x\neq 0.
\end{equation}
Let's assume that $x(1-xy)=0$. If $xy$ is invertible, then $x$ is invertible, and if $1-xy$ is invertible, then $x=0$. This proof translates formally into the corresponding dynamical theory by establishing the following three valid rules: 

\Regles{
\labu $ \,\, x^2y=x\vd x=0 \; \vou\; x\neq 0 $, 
\labu $ \,\, x=0\vd\Exists y\; x^2y=x $,
\labu $ \,\, x\neq0\vd\Exists y\; x^2y=x $.
}

This simple case of eliminating the quantifier $ \exists $ shows that the calculations lead to dead ends from the point of view of decidability, since \gui{$ x=0 \;\mathrm{ or }\;  \hbox{ invertible} $} is undecidable in~$\RR$.

\smallskip Nevertheless, in the final section of the article \cite{GL93}, we find a fully satisfactory constructive form for the 17th Hilbert problem on $\RR$. And other cases of constructively provable Positivstellensätze on $\RR$ are also treated.

\section{\textsl{Non} discrete ordered fields}\label{secConondisc}

As a first approximation, and following a suggestion by Heyting, we could choose as a first-order formal theory for the algebraic properties of $\RR$ the~\Sa{Asonz} theory (seen as a first-order formal theory) to which we add the geometric axioms \Tsbf{IV} and \Tsbf{OTF} as well as the following axiom \tsbf{HOF}, non-geometric and therefore undesirable. \index{ordered field!Heyting ---}\index{Heyting!ordered field}

\Regles{
\Lab{HOF} $ \quad (x> 0 \,\Rightarrow 1=0) \,\Rightarrow\, x\leq 0 $ \qquad (Heyting axiom for ordered field)
}

This amounts to replacing the axioms \coligt \ and \Tsbf{OT} by the axioms~\Tsbf{OTF} and~\Tsbf{HOF}. We then have a local ring structure, because the rules \Tsbf{Iv} and \Tsbf{IV} imply that \gui{$ x\neq0 $} means \gui{$x$ is invertible}, so \tsbf{OTF} implies that for all $x$, $x$ \hbox{or $ 1-x $} is invertible. 
In this context, the axiom~\tsbf{HOF} means that the Jacobson radical is reduced to $0$.

\begin{remark} \label{remHOF} 
Note that the axiom \tsbf{HOF}, which can be formulated to first-order even though it is not part of dynamical theories, is satisfied indirectly in the following form: \textsl{in a dynamic algebraic structure of type \Sa{Asonz}, if a closed term $t$ verifies $\,t>0\vd\Bot $, then it also verifies $ \vd t\leq 0 $.}. This follows from the formal Positivstellensatz. 
\\
In fact we even have: \textsl{if a closed term $t$ verifies $\,t\geq 0\vd\Bot $, then it also verifies $ \vd t< 0 $}. This means that Markov's principle, which is expressed on $\RR$ by the implication $ \,\lnot (t\geq 0)\,\Rightarrow\,t<0 $ holds as an \textsl{external} deduction rule\footnote{We must add the word \textsl{external} here, as this is not a valid rule in the dynamic algebraic structure itself. It refers to the fact that we deduce the validity of one rule from that of another rule.} admissible in the dynamical theory $ \sa{Asonz}(\RR)$.\\
The same remarks apply to dynamical theories which extend \sa{Asonz} while simultaneously collapsing.
\eoe
\end{remark}

\Subsection{A first dynamical theory}\label{subsecCo0}
 
 Apart from the undesirable character of \Tsbf{HOF}, the formal theory considered at the beginning of the section has a major drawback, which is that it cannot prove the existence of the upper bound of two elements: see on this subject~\cite{Coq07}.

It is therefore legitimate to explore the possibilities offered by the addition of a law for this upper bound, with the appropriate rules. We now propose a minimalist dynamical theory for \ndsofs by introducing the function symbol $\vu$ into the language.
 
\begin{definition} \label{defiCo0}
A first minimal \tdy for \ndsofs, denoted \SA{Co--}, is based on the following signature. There is only one sort, named $ \CoO $.
\Sigt{\CoO}{\cdot=0,\cdot\geq 0,\cdot>0\mathrel{;}\cdot+\cdot, \cdot\times\cdot,\cdot\vu\cdot,-\,\cdot,0,1} \label{NOTASigCoO}
\noindent The axioms are those of \Sa{Asonz}, the axioms \Tsbf{IV} and \Tsbf{OTF}, and the natural axioms for $\vu$: \Tsbf{sup1}, \Tsbf{sup2}, \Tsbf{Sup}, \Tsbf{grl} and~\Tsbf{afr}. They are all listed below ($ x^+ $ is an abbreviation of $ x\vu 0 $).

\DeuxRegles{
\laB{ga0} $ \vd 0=0 $ 
\laB{ac2} $ \,\,x=0\vd xy=0 $ 
\laB{gao0} $\vd 0 \geq 0 $ 
\laB{ao1} $ \vd x^2 \geq 0 $ 
}
{
\laB{ga1} $ \,\, x=0\vet y=0\vd x+y=0 $ 
\laB{}
\laB{gao1} $ \,\, x \geq 0\vet y \geq 0 \vd x + y \geq 0 $ 
\laB{ao2} $ \,\, x \geq 0\vet y \geq 0 \vd x y\geq0 $ 
}

\DeuxRegles{
\laB{aso1} $ \vd 1> 0 $ \phantom{$ x^2 $}
\laB{aso2} $ \,\, x> 0 \vd x \geq 0 $ 
\laB{sup1} $ \vd x\vu y\,\geq x $ 
\laB{sup2} $ \vd x\vu y\,\geq y $ 
}
{
\laB{aso3} $ \,\, x > 0\vet y \geq 0 \vd x + y > 0 $ 
\laB{aso4} $ \,\, x > 0\vet y > 0 \vd xy > 0 $ 
\laB{grl} $ \vd x+(y\vu z)=(x+y)\vu(x+z) $ 
\laB{afr} $ \vd x^+\, (y\vu z)=(x^+\, y)\vu(x^+\, z) $ 
}

\DeuxRegles{
\laB{Gao} $ \,\, x\geq 0\vet x\leq 0 \vd x = 0 $ 
\laB{Anz} $ \,\, x^2= 0 \vd x = 0 $ 
\laB{Aonz} $ \,\, c\geq 0\vet x(x^2+c)\geq 0 \vd x\geq 0 $ 
\laB{Sup} $ \,\, z\geq x\vet z\geq y\vd z\geq x\vu y $ 
\laB{IV} $ \,\, x> 0 \vd \Exists y\, xy = 1 $ 
}
{
\laB{Iv} $ \,\, xy = 1 \vd x^2> 0 $ 
\laB{Aso1} $ \,\, x> 0\vet xy\geq 0 \vd y\geq 0 $ 
\laB{Aso2} $ \,\, x\geq 0\vet xy> 0 \vd y> 0 $ 
\laB{}
\laB{OTF} $ \,\, x+y> 0 \vd x >0 \vou y>0 $ 
}

\end{definition}

\begin{remarks} \label{remCo0}~\\ 
 1) Note that the collapse \gui{$ \,\,0>0\vd 1=0 $ \,\,} is deduced from \Tsbf{IV}. 

\smallskip \noindent 2) If we add the axioms \Tsbf{OT} and \coligt \ to the theory \Sa{Co--} we find a theory which is essentially identical to \Sa{Codsup} or \Sa{Cod}. In fact, we just need to add the axiom \tsbf{ED$ _> $}: see Lemma \ref{lemAfrsdz}. 
\eoe\end{remarks}

\begin{examples} \label{exacorpsnondiscret} 
 Many natural subfields of $\RR$ are \nds, for example the enumerable field $ \RR_{\tsbf{PR}} $ of real numbers computable in primitive recursive time, or the enumerable field $ \RR_{\tsbf{Ptime}} $ of real numbers computable in polynomial time, or the \textsl{non} enumerable field $\RR_{\tsbf{Rec}}$ of recursive real numbers. 
A satisfactory dynamical theory for the algebraic properties of the real numbers will have to accept as models these natural subfields of $\RR$.
\eoe
\end{examples}

The subfields $ \RR_{\tsbf{PR}} $ and $ \RR_{\tsbf{Ptime}} $ can be handled on a machine in a nicer way than the field $ \RR_{\tsbf{Rec}} $. These are enumerable fields (albeit \nds), whose elements do not need to be accompanied by \gui{certificates} external to the dynamical theory under consideration (a general recursive map exists constructively only if it is accompanied by a \gui{certificate}: a constructive proof of the fact that it is total). 

Note that the \gui{complete} character of $\RR$ seems to come more from analysis than from algebra. Note also that the \gui{field} of Puiseux series on $\RR$ does not seem to satisfy \Tsbf{OTF} (for any attempt at a reasonable definition for the order relation).

\Subsection{The convexity axiom and the theory \Sa{Co}}\label{subsecCocvx}

In addition to the lub map, other \gui{rational} maps pose the same kind of problem. 

\smallskip In the theory of real closed rings, in classical mathematics, (see the articles~\cite{Sch84,PN2002} and Section \ref{secArc}), the  following axiom \gui{of convexity}\footnote{There are two very distinct uses of the term \gui{convex} in the present text. On the one hand, an ordered ring can be declared convex, as here.\index{convex!ordered ring}\index{ordered ring!convex ---} On the other hand, a subgroup of an ordered group can be declared convex as \paref{subsecgrlsolide}.} is satisfied

\Regles{\Lab{CVX} $ \,\,0\leq a\leq b\vd \Exists z\; zb=a^2 $ \quad (convexity)}

\noindent Note that if $ zb=a^2 $ then $ (z\vi a)b=zb\vi ab=a^2\vi ab=a(a \vi b)=a^2 $. Similarly $ (z\vu 0)b=a^2 $. Consequently, an equivalent axiom which ensures the uniqueness of $z$ is given by the following \rdy:

\Regles{
\Lab{FRAC} $ \,\,0\leq a\leq b\vd \Exists z\; (zb=a^2\vet 0\leq z\leq a) $ 
}\label{RFRAC}

\noindent This rule is valid for $\RR$ because it defines the element $z$ as a continuous function from  $\sotq{(a,b)}{a,b\in\RR,\,0\leq a\leq b}$ to $\RR$.

\begin{lemma} \label{lemUniqFRAC}
The uniqueness of $z$ (when it exists) in the rule \tsbf{FRAC} is guaranteed in the Horn theory \Sa{Atonz} and in~\Sa{Co--}. The same calculation shows that uniqueness is guaranteed in the Horn theory \Sa{Afrnz} (Lemma~\ref{lemAfrnzFRAC}).
\end{lemma}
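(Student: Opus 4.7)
The strategy is to set $w := z - z'$ and to show that $w = 0$ in each theory. Common to both cases, from $zb = a^2 = z'b$ we immediately get $wb = 0$, and from $0 \leq z \leq a$ and $0 \leq z' \leq a$ we obtain $-a \leq w \leq a$, whence $w \leq b$ and $-w \leq b$ using $a \leq b$.

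For \Sa{Atonz} I would apply the rule \tsbf{ASDZ} (valid here by Lemma \ref{lemAtonz}) to $wb = 0$: either $w = 0$, in which case we are done, or $b = 0$, in which case the hypothesis $0 \leq a \leq b$ combined with \tsbf{Gao} gives $a = 0$, and then $0 \leq z, z' \leq 0$ gives $z = z' = 0$ again by \tsbf{Gao}.

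For \Sa{Co0} I would work instead with the positive part $w^+ = w \vu 0$. From $w \leq b$ and $0 \leq b$, the rule \tsbf{Sup} gives $0 \leq w^+ \leq b$. Applying \tsbf{afr} with $x = b$ (so $x^+ = b$), $y = w$, $z = 0$ yields $b \cdot w^+ = (bw) \vu 0 = 0 \vu 0 = 0$. Multiplying both sides of $w^+ \leq b$ by $w^+ \geq 0$ (via \tsbf{ao2} applied to $(b - w^+)\cdot w^+$) gives $(w^+)^2 \leq b\, w^+ = 0$; together with \tsbf{ao1} and \tsbf{Gao} this forces $(w^+)^2 = 0$, whence $w^+ = 0$ by \tsbf{Anz}. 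The symmetric argument applied to $-w$ (which satisfies $(-w)\, b = 0$ and $-w \leq b$) shows that $w^- := (-w)^+ = 0$. Finally, the identity $w + w^- = w^+$, an instance of \tsbf{grl} (since $w + ((-w) \vu 0) = (w - w) \vu (w + 0) = 0 \vu w = w^+$), gives $w = w^+ - w^- = 0$.

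The only genuine step that is specific to \Sa{Co0} is the absorption equation $b \cdot w^+ = 0$, which rests entirely on the distributivity axiom \tsbf{afr}. I do not expect any obstacle beyond this: the rest is the familiar lattice-ordered manipulation that an element squeezed between $-b$ and $b$ and killed by multiplication by $b$ must itself vanish, and all the pieces (monotonicity of multiplication by a non-negative element, antisymmetry, and the reduction rule \tsbf{Anz}) are directly available among the axioms listed in Definition \ref{defiCo0}.
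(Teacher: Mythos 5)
Your proof is correct. For \Sa{Co0} your argument is essentially the paper's: the paper sets $w=y-z$, notes $wb=0$ and $\abs{w}\leq a\leq b$, and concludes from $\abs{w}^2\leq \abs{w}\,b=0$ via \tsbf{Anz}; you run the same squeeze-and-square computation, only split into the two halves $w^+$ and $w^-$ instead of working with $\abs w$ directly, with the details (\tsbf{afr} giving $b\,w^+=0$, \tsbf{ao2}, \tsbf{Gao}, \tsbf{Anz}, and \tsbf{grl} to reassemble $w=w^+-w^-$) spelled out correctly. Where you genuinely diverge is \Sa{Atonz}: the paper uses the one calculation uniformly for all three theories (in \Sa{Atonz} the absolute value is handled by the total order), whereas you invoke the derived disjunctive rule \tsbf{ASDZ} from Lemma \ref{lemAtonz} and case-split on $w=0$ or $b=0$. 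That is legitimate --- a proved disjunctive rule may be used in a dynamic proof of a Horn rule --- but it leans on a heavier derived result than needed, and it is specific to \Sa{Atonz}: it would not transfer to \Sa{Afrnz}, where \tsbf{ASDZ} fails. The statement's closing remark that ``the same calculation'' covers \Sa{Afrnz} is immediate for the paper's uniform argument; in your write-up it is your \Sa{Co0} computation (which uses only $f$-ring axioms, \tsbf{gao}/\tsbf{ao} rules and \tsbf{Anz}) that plays this role, a point worth making explicit.
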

%
\begin{proof}
 Suppose $ yb=zb=a^2 $, $ 0\leq z\leq a $ and $ 0\leq y\leq a $. We have $ {(y-z)}\,b=0 $, $ \abs{y-z}\leq a\leq b $ and \hbox{thus $ \abs{y-z}^2\leq\abs{y-z}\,b=0 $}. 
\end{proof}
%

\begin{lemma} \label{lemCo0FRAC}
The addition of the axiom \tsbf{FRAC} to the theory \Sa{Co--} can be replaced by the introduction of a function symbol $ \Fr $ with the axioms

\DeuxRegles{
\Lab{fr1} $ \vd \mathrm{Fr}(a,b)\, \abs b=(\abs a\!\vi\! \abs b)^2 $ 
}
{
\Lab{fr2} $ \vd 0\leq {\mathrm{Fr}(a,b)}\leq \abs a\!\vi\! \abs b \phantom{(\abs a\!\vi\! \abs b)^2} $ 
}

\noindent The same applies to the theories \Sa{Atonz} and \Sa{Afrnz}.
\end{lemma}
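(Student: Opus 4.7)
The plan is to establish this as two essentially equivalent extensions of the base theory, invoking the uniqueness already proved in Lemma~\ref{lemUniqFRAC} and the general skolemisation-by-unique-existence principle of Definition~\ref{defi-exteseq}.

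First I would argue that in $\sa{Co0}+\tsbf{FRAC}$ the following rule is valid, with the existent being provably unique:
\[
\Vdi{a,b}\Exists z\;\big(z\,\abs b=(\abs a\vi\abs b)^2\,\vet\,0\leq z\leq \abs a\vi\abs b\big).
\]
Indeed, set $a':=\abs a\vi\abs b$ and $b':=\abs b$; then $0\leq a'\leq b'$ (note $\abs a$, $\abs b$ and $\abs a\vi\abs b$ are definable from $\vu$ via $\abs x=x\vu(-x)$ and $a\vi b=-((-a)\vu(-b))$, and $\sa{Co0}$ proves $a'\leq b'$ from $a'\leq\abs b$). Applying \tsbf{FRAC} to $(a',b')$ yields $z$ with $z\,b'=a'^{\,2}$ and $0\leq z\leq a'$, which is precisely fr1 and fr2 for this choice of $a,b$. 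Uniqueness is obtained by recopying the proof of Lemma~\ref{lemUniqFRAC} with $(a',b')$ in place of $(a,b)$: if $y\,b'=z\,b'=a'^{\,2}$ and $0\leq y,z\leq a'\leq b'$, then $\abs{y-z}\leq b'$, hence $\abs{y-z}^2\leq\abs{y-z}\,b'=0$, and \tsbf{Anz} gives $y=z$. By Definition~\ref{defi-exteseq} (the clause on skolemisation of unique existences), one may consequently introduce a function symbol $\mathrm{Fr}$ with the two axioms fr1 and fr2, and the resulting extension is essentially equivalent to $\sa{Co0}+\tsbf{FRAC}$.

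Conversely, I would show that in $\sa{Co0}$ enriched by a function symbol $\mathrm{Fr}$ satisfying fr1 and fr2, the axiom \tsbf{FRAC} is derivable. Under the hypothesis $0\leq a\leq b$ one proves in $\sa{Co0}$ that $\abs a=a$, $\abs b=b$, and $\abs a\vi\abs b=a\vi b=a$; hence fr1 specialises to $\mathrm{Fr}(a,b)\,b=a^2$ and fr2 to $0\leq\mathrm{Fr}(a,b)\leq a$, so $z:=\mathrm{Fr}(a,b)$ witnesses the existential quantifier in \tsbf{FRAC}.

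For the theories $\sa{Atonz}$ and $\sa{Afrnz}$, the same argument applies: the definability of $\abs\cdot$ and $\vi$ is available ($\sa{Afrnz}$ has the lattice structure built in, and in $\sa{Atonz}$ one uses \tsbf{OT} to express $a\vi b$ pointwise, or works in the essentially identical extension adding these operations as in Remark~\ref{remCrdsup}); the uniqueness calculation carries over verbatim since it only uses \tsbf{Anz} and basic monotonicity, valid in both theories. The main delicate point — and the one I would check carefully — is precisely that the uniqueness needed for the essentially equivalent extension holds in each of the three ambient theories; once that is in place, the bidirectional passage between \tsbf{FRAC} and the pair (fr1, fr2) is formal.
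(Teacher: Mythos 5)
Your proof follows essentially the same route as the paper's: you reformulate \tsbf{FRAC} as the existential rule $\vd \Exists z\;(z\abs b=(\abs a\vi\abs b)^2\vet 0\leq z\leq \abs a\vi\abs b)$, verify the two-way equivalence, invoke the uniqueness of Lemma~\ref{lemUniqFRAC}, and conclude by skolemisation of a unique existence as in Definition~\ref{defi-exteseq}. The paper states the equivalence without spelling out the substitution $(a',b')=(\abs a\vi\abs b,\abs b)$ or the converse direction, but that is exactly the argument intended.
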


%
\begin{proof} The rule \tsbf{FRAC} is equivalent to the following rule

\Regles{\labu $ \vd \Exists z\; \big(z\abs b = (\abs a\vi\abs b)^2\vet\, 0\leq z\leq \abs a\vi\abs b\big) $}

\noindent We have therefore simply skolemised an existential rule in the case of unique existence (Lemma~\ref{lemUniqFRAC}). This gives us an essentially identical extension (see \paref{skolemunique}). Moreover, once the function symbol~$ \mathrm{Fr} $ and the axioms \tsbf{fr1} and \tsbf{fr2} are added, the rule \tsbf{FRAC} clearly becomes unnecessary.
\end{proof}
%

\begin{definition} \label{defiConondiscret} 
 The dynamical theory \SA{Co} of \ndsofs\footnote{In \cite{LM2017}, a slightly different, slightly stronger definition was given, see Remark \ref{remCo}.} is the extension of the theory \Sa{Co--} obtained by adding the function symbol $\mathrm{Fr}$ and the axioms \Tsbf{fr1} and \Tsbf{fr2}.\index{ordered field!\textsl{non} discrete ---}
\end{definition}

This is a one sort dynamical theory with the signature
\Sigt{\Co}{\cdot=0,\cdot\geq 0,\cdot>0\mathrel{;}\cdot+\cdot, \cdot\times\cdot,\cdot\vu\cdot,-\,\cdot,\Fr(\cdot,\cdot),0,1}
\label{NOTASigCo}
\noindent The \nds subfields of $\RR$ of Example \ref{exacorpsnondiscret} are models of~\sa{Co}, and also of certain extensions of \sa{Co} which we define later, such as~\Sa{Crc1} or~\Sa{Corv}. 

\begin{remark} \label{remPst1Co} 
In the Formal \pstref{Pst1} statement, we can add the \sa{Codsup} theory which is essentially identical to the \sa{Cod} theory, and the \sa{Co} theory which is intermediate between \sa{Asonz} and \sa{Codsup}. For more information, see Formal \pstref{Pst2}.\eoe 
\end{remark}

\Subsection{Other continuous operations}\label{subsecratcon}

Here is another paradigmatic example with a continuous function defined everywhere
\vspace{-.5em}
\begin{equation} \label {eqratcont}
f(x,y)=\frac{(ax+by)xy}{x^2+y^2}
\end{equation}
This rational map \footnote{It is a priori defined for $ (x,y)\neq (0,0) $ and extends by continuity into $ f(0,0)=0 $. We can then see that it is uniformly continuous on any cube $ [-a,+a]^4 $.} is the prototype of a family, parametrised by $a,b$, of continuous real maps $\RR^2\to \RR$ (or of a continuous real map $\RR^4\to \RR$).

A \rdy defines this map:
\begin{equation} \label {eqratcont2}
\vd \Exists z\quad \big(z(x^2+y^2)=(ax+by)xy ,\;
\abs z\leq \abs{ax+by}\big)
\end{equation}
and it does not seem valid in the basic theory \Sa{Co--}.

In this example, if $a=b=1$, the fraction is of the type $z=u/v$ with $u^2\leq v^3$. It is characterised by the relationships $zv=u$ and $\abs z^2\leq \abs v $. The following \rdys are satisfied for $\RR$, and also for discrete real closed fields: 

\Regles{\lAb{FRAC$_n$} $ \,\,\abs u^n\leq \abs v^{n+1}\vd \Exists z\; (zv=u\vet \abs z^{n}\leq \abs v) $ \quad ($ n\geq 1 $) }\label{AxFRACn}

\smallskip Intuitively, this rule means that the fraction $ u/v $ is well-defined. In the case of a discrete ordered field we reason case by case: if $ v\neq 0 $ it is clear, if $ v=0 $ the rule forces $ z=0 $. More generally we check that existence, if assumed, is uniquely proved in the theory \Sa{Afrnz} (\paref{theorieAfrnz}) as follows.
\\
If $ zv=u=yv,\, \abs z^{n}\leq \abs v,\, \abs {y}^{n}\leq \abs v $, we pose $ w=\abs{z-y} $ and we obtain 

\snic{w\abs v=0,\,\leq \abs z +\abs{y}\leq 2 {\abs v}^{\frac1 n},\,w^n\leq 2^n {\abs v},\,0\leq w^{n+1}\leq 2^n\abs v w=0
,}

\noindent therefore $ w^{n+1}=0 $ and finally $ w=0 $.

\noindent Another argument is that the \rsim

\Regles {\labu $ \,\,zv=u\vet yv=u\vet \abs z^{n}\leq \abs v\vet \abs {y}^{n}\leq \abs v \vd z=y $}

\noindent is satisfied in \Sa{Cod}, and that \Sa{Afrnz} and \Sa{Cod} prove the same Horn rules (Formal \pstref{Pst2}).

\smallskip In the Horn theory \Sa{Aonz} the rule \tsbf{FRAC} follows from \tsbf{FRAC$_1$} by posing~\hbox{$ u=a^2 $} \hbox{and $ v=b $}. 

Conversely, the rules  \FRACn \ can be deduced from the rule \tsbf{FRAC} in a fairly general framework (see Lemma \ref{lemfracfrac2}). In the following we will only use the rule \tsbf{FRAC}. 

\begin{remark} \label{remCo} 
It would have been more logical to ask, in the definition of the theory \Sa{Co}, in addition to the validity of the rule \tsbf{FRAC}, that of the rules $ \tsbf{FRAC}_n $ 
(this was the choice made in the article \cite{LM2017}, definition 2.13). More generally, for any map $ f\colon \QQ^n\to \QQ $ which extends by continuity\footnote{More precisely: the fraction $ h/p $ with coefficients in $\QQ$, defined for $ p(x)\neq 0 $, must extend into a continuous map $ \QQ^n\to\QQ $.} a fraction $ h/p $, where $h$ and $p$ are semipolynomials,\footnote{A sup-inf combination of polynomials.} the zeros of $p$ being of empty interior, we should ask that the rule analogous to $ \tsbf{FRAC}_n $ which says that \gui{$f$ exists} be valid, and more precisely introduce a corresponding function symbol with the appropriate axioms. But we did not want to complicate too much the definition of the theory \Sa{Co} of  \ndsofs insofar as we have essentially in view the theory of  \ndrcfs, in which the rule \tsbf{FRAC} is sufficient.
\eoe\end{remark}

\section{A non-archimedean \ndsof}
\label{condna}

In this section we describe an example of a \nds non-archimedean Heyting ordered field.

Let $ \vep $ be an indeterminate. Let $ \gZ=\QQ[[\vep]] $ be the ring of formal series with rational coefficients and $ \gQ=\QQ((\vep)):=\QQ[[\vep]][1/\vep] $. In classical mathematics $\gZ$ is a local integral henselian ring and $\gQ$ is its field of fractions. Let $\gZ$ have the order relation for which $ \vep $ is an infinitesimal $ >0 $ (i.e.\ $ 0< \vep $ and $ \vep<r $ for any rational $ r>0 $). Let $\gZ$ be the linearly ordered ring thus obtained. The localised $ \gZ[1/\vep] $ with the order relation compatible with that of $\gZ$ will again be denoted~$\gQ$. This is probably the simplest example of a non-archimedean Heyting ordered field.

These classical statements are still valid in constructive mathematics, provided that suitable definitions are used. For example, we will see that, modulo suitable definitions, $\gQ$ is a model of the~\Sa{Co} theory.

From a constructive point of view, there is no sign test on $\gZ$ or on $\gQ$. And it is not immediate to define an order structure corresponding to the intuition given by classical mathematics.

\smallskip Here's how to treat this example constructively.
An element of $\gZ$ is given by a formal series $ \xi=\sum_{j=0}^{+\infty} x_j \vep^j $ with $ x_j\in\QQ $. Any $c\in\QQ$ can be considered as an element of $\gZ$ according to the usual procedure. The coefficient $x_j$ is denoted $\rc_j(\xi)$. Conventionally, $\rc_{j}(\xi)=0$ is given for $j<0$ in $\ZZ$ and for all $\xi\in\gZ$. 

For a series $ \xi=\sum_{j=0}^\infty x_j \vep^j $ in $\gZ$ we define for each $ k\geq 0 $ a \textsl{potential sign in exponent} $k$, denoted $ \kappa_k(\xi)\in\so{-1,0,1} $ as follows, by induction on $k$: 
\begin{itemize}
 
\item $\kappa_{0}(\xi) $ is the sign of $ x_{0} $;
 
\item if $ \kappa_k(\xi)\neq 0 $ then $ \kappa_{k+1}(\xi)=\kappa_k(\xi) $, otherwise $ \kappa_{k+1}(\xi) $ is the sign of $ x_{k+1} $;
 
\item we conventionally pose $ \kappa_{j}(\xi)=0 $ for $ j<0 $ in $ \ZZ $ and for all $ \xi\in\gZ $.
\end{itemize}

\smallskip At least intuitively we have the following result: if $ \kappa_k(\xi)=1 $, then \hbox{$ \xi>0 $}; if $ \kappa_k(\xi)=-1 $, then~\hbox{$ \xi<0 $}; if $ \kappa_k(\xi)=0 $, then the sign of $ \xi $ is a priori unknown.

The set $\gZ$ has the usual ring structure (for formal series) and the equality $ \xi=\zeta $ occurs exactly when the series are identical. This is equivalent to $ \forall k\geq 0\; \kappa_k(\xi-\zeta)=0 $. This ring is the projective limit of the sequence of surjective morphisms $ \pi_{k+1,k}\colon\aqo{\QQ[\vep]}{\vep^{k+1}}\to \aqo{\QQ[\vep]}{\vep^k} $ ($ k\in\N $), via the natural morphisms $ \pi_k\colon\gZ\to \aqo{\QQ[\vep]}{\vep^k} $ obtained by truncation of the series to order $k$. 

\smallskip
The foundations of the constructive theory of residually discrete henselian local rings, including the construction of the henselisation of a residually discrete local ring, are treated in the article~\cite{ALP08}. 

\smallskip Everything necessary for the constructive treatment of the ordered ring~$\gZ$ is now introduced in detail.

\begin{enumerate}
 
\item We define $ \xi>0 $ by \fbox{$ \exists k \;\kappa_k(\xi)=1 $} and $ \xi\geq 0 $ by \fbox{$ \forall k\;\kappa_k(\xi)\geq 0 $}. 
\\
Then we have:
\begin{itemize}
 
\item $ \xi=0 $ if, and only if, $ \xi\geq 0 $ and $ \xi\leq 0 $;
 
\item the rules \Tsbf{OTF} and \OTFx are valid in $\gZ$;
 
\item Heyting's axiom \gui{$ \lnot (x>0)\Rightarrow -x\geq 0 $} is satisfied. 
%
\end{itemize}

\item \textsl{Absolute value and map $ \sup $}. We define the map \gui{absolute value} $ \xi\mapsto\abs\xi $ by posing \fbox{$ \rc_k(\abs\xi):=\kappa_k (\xi) \rc_k(\xi) $} for all~$k$. Finally \fbox{$ \xi\vu\zeta:=(\xi+\zeta+\abs{\xi-\zeta})/2 $}.
 
\item We then check that $\gZ$ is a strict $f$-ring (theory \sa{Asr}, Chapter \ref{chap-afr}), in other words, by adding the fact that $\gZ$ is reduced, all the Horn rules valid in the theory \sa{Codsup} are satisfied in $\gZ$ (see Formal Positivstellensatz \ref{Pst2}, Item \ref{i4Pst2}). 
 
\item \textsl{Valuation}.\label{avalnd}\index{valuation} 
\begin{enumerate}
 
\item We define $ \rv(\xi)= k \equidef (\kappa_k(\xi)=\pm1,\; \kappa_{k-1}(\xi)=0) $.
 
\item We define $ \rv(\xi)> k \equidef \kappa_k(\xi)=0 $.
 
\item Intuitively, we read $ \rv(\xi)> k $ as \gui{the valuation of $ \xi $ is $ >k $}. In fact, $ \rv(\xi) $ is not an element of $ \N $ but of a suitable compactification of $ \N $ containing $ +\infty $.\footnote{This is the metric space obtained by taking on $\N$ the metric $d(k,\ell)=\sup(2^{-k},2^{-\ell})$ and completing (this sends $\infty$ to $0 $).} 
 
\item We have precisely the following description for $ \alpha<\beta $;
\begin{equation} \label {eqcondna1}
\alpha<\beta\;\Longleftrightarrow\; \exists k\;
\formule{
(\kappa_k(\alpha)=-1,\; \kappa_k(\beta)\geq 0) & \vu\\
(\kappa_k(\alpha)\leq 0,\; \kappa_k(\beta)=+1) & \vu\\
(\rv(\alpha)=\rv(\beta)=k, \; \rc_k(\alpha)<\rc_k(\beta))}
\end{equation}

\item We have $ \rv(\xi^2)=2\rv(\xi) $ (equality defined by $ \rv(\xi^2)>2k-1 \Leftrightarrow \rv(\xi^2)>2k \Leftrightarrow \rv(\xi)>k $).
 
\item We deduce that $\gZ$ is a reduced ring (it is thus a constructive model of the \Sa{Asrnz} theory).
 
\item Finally, $\gZ$ is a \textsl{valuation ring} in the following sense: it is a reduced strict $f$-ring in which two strictly positive elements $\alpha$ and $ \beta $ are always comparable for divisibility. In other words, the following rule \tsbf{Val1} is valid.\footnote{We give here a definition for the case of a strict $f$-ring. A more general definition could be given for a residually discrete local ring with a suitable $ \cdot \neq 0 $ predicate.}\index{valuation!ring} 

\Regles
{\Lab{Val1} $ \,\,\alpha>0\vet \beta>0 \vd \Exists \xi\; \alpha\xi=\beta \vou \Exists \xi\; \beta\xi=\alpha $}

A neighbouring rule that is also valid in $\gZ$ is the following.

\Regles
{\Lab{Val2} $ \,\, \beta\geq \alpha\geq 0\vet\beta>0 \vd \Exists \xi\; \alpha=\beta\xi $ 
}

Let's prove these rules. From $ \beta>0 $ we deduce that there is a $k$ such that $ \beta=\vep^k\gamma $ with $ \rv(\gamma)=0 $. We will see in Item \ref{serconv} that $ \gamma\in\gZ\eti $. Therefore $ \vep^k=\gamma^{-1}\beta $. For \tsbf{Val1} we also have a $\ell$ such that $ \alpha=\vep^\ell\delta $ with $ \delta\in\gZ\eti $. Hence the disjunction depending on whether $ \ell\geq k $ or $ k > \ell $. For \tsbf{Val2}, from $ 0\leq \alpha\leq \beta $ we deduce $ \kappa_{k-1}(\alpha)=0 $, so $ \alpha= \vep^k\delta= \beta\gamma^{-1}\delta $ for a $ \delta\in\gZ $.

\noindent Note also that the following implication is satisfied.
\[
\rv(\alpha)=\rv(\beta)=k \Rightarrow \exists \xi\in\gZ\eti\;\; \alpha=\beta\xi
\]
 
\item The \textsl{valuation group} is the ordered group defined as the symmetrisation of the monoid of divisibility $ \gZ\eti/\gZ^+ $ where $ \gZ^+=\sotq{\alpha\in\gZ}{\exists k>0\;\rv(\alpha)=k} $. This valuation group is isomorphic to $ (\ZZ,+,\geq) $, and is generated by (the class of) $ \vep $. In the usual terminology, we say that $\gZ$ is a \textsl{discrete valuation ring} (DVR),
but here the word \gui{discrete} does not have the usual meaning given to it in constructive mathematics.%
\index{valuation!group of ---}\index{discrete valuation} 

\end{enumerate}

\item \label{serconv} \textsl{Convergent series.} Il we have an infinite sequence $ (\xi_n)_{n\in\N} $ in $\gZ$ and if the sequence of $ \rv(\xi_n) $ tends to $ +\infty $, then the infinite sum $ \sum_{n\in\N}\xi_n $ is well-defined. \\
In particular, if $ \rv(\xi)>0 $ the sum $ \zeta=1+\sum_{n\in\N}\xi^n $ is well-defined and we have $ \zeta(1-\xi)=1 $. \\
From this we can deduce the following properties.
\begin{enumerate}
 
\item \label{iaserconv} The ring $\gZ$ is a local ring, whose residual field is discrete, isomorphic to $\QQ$. 
 
\item \label{ibserconv} We have $\gZ\eti=\sotq{\xi\in\gZ}{\kappa_0(\xi)=\pm1}$ and $\Rad(\gZ)=\sotq{\xi\in\gZ}{\kappa_0(\xi)=0}$. 
 
\item \label{icserconv} If $ \rc(\beta)=k $, we write $ \beta=\vep^k\gamma $ with $ \gamma=c_0(1- \alpha) $ and $ \rv(\alpha)>0 $, therefore: $\vep^k=\beta \gamma^{-1}=\beta c_0^{-1}(1+sum_{n\in\N}\alpha^n) $.
 
\item \label{idserconv} The ring $\gZ$ is henselian. Precisely, if $ P\in\gZ[X] $ satisfies the conditions $ \rv(P(0))>0 $ and $ \rv(P'(0))=0 $, there exists a (unique) $ \xi\in\gZ $ such that $ P(\xi)=0 $ and $ \rv(\xi)>0 $. To construct the series $ \xi $, we use Newton's method.
 
\item \label{ieserconv} The ring $\gZ$ is the henselisation of the residually discrete local ring $ (\QQ[\vep])_{1+\vep\QQ[\vep]} $. 
\end{enumerate}
 
\item Finally, we show that the rule \Tsbf{FRAC} is valid in $\gZ$. The hypothesis is given by two elements $ \xi,\zeta\in\gZ $ which verify $ 0\leq \xi\leq \zeta $ and we look for a $ \rho $ such that $ 0\leq \rho\leq \xi $ \hbox{and $ \rho\zeta=\xi^2 $}. According to Lemma \ref{lemAfrnzFRAC}, the uniqueness of~$ \rho $ (if existence) is guaranteed in strict $f$-rings, as in the theory \sa{Co--} (Lemma \ref{lemUniqFRAC}).
\\ 
We note that $ 0\leq \xi\leq \zeta $ implies that $ \rv(\xi)\geq \rv(\zeta) $. We define $ c_k(\rho) $ as follows. 
\begin{itemize}
 
\item If $\kappa_k(\zeta)=0$, then $\kappa_k(\xi)=0$, which forces $\kappa_k(\rho)=0$, so $\rc_k(\rho)=0$.
 
\item If $ \rv(\zeta)=k $, we have $ \zeta= z_k \vep^k(1+\vep \alpha) $ with $ z_k>0 $ and $ \alpha\in\gZ $, and $ \xi=\vep^k\beta $ with $ \beta\in\gZ $. Then we must have the equality \fbox{$ \rho=z_k^{-1}\beta^2\vep^k(1+\vep \alpha)^{-1} $}. As this equality implies $ \kappa_{k-1}(\rho)=0 $, it is compatible with the coefficients of $ \rho $ calculated up to exponent $ k-1 $. This equality makes it possible to define $ \rc_{k+m}(\rho) $ for all $ m>0 $.
 
\item Finally if $ \kappa_{k-1}(\zeta)=\kappa_k(\zeta)=1 $, we look for the exponent $ \ell<k $ such that $ \kappa_{\ell-1}(\zeta)=0 $ and 
$\kappa_\ell(\zeta)=1$, and we are brought back to the previous case via the calculation of the series $\rho$. 
\end{itemize}
So $ \rho $ is well-defined.
\end{enumerate}

Let's summarise the results.
\begin{proposition} \label{propQ[[T]]}
The ring $\gZ$ is an  henselian residually discrete local ring and a reduced strict $f$-ring. Moreover it satisfies the rules \tsbf{OTF}, \tsbf{FRAC}, \tsbf{Val1} and \tsbf{Val2}. 
\end{proposition}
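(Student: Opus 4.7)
The proposition is a compilation of the six enumerated items that precede it, so my plan is to verify that each conjunct follows cleanly from the constructions already in hand, rather than to reprove anything from scratch.

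First I would package the ring-theoretic content. The sup and absolute value introduced in item 2 via \fbox{$\rc_k(\abs\xi):=\kappa_k(\xi)\rc_k(\xi)$} and $\xi\vu\zeta=(\xi+\zeta+\abs{\xi-\zeta})/2$ are checked to satisfy the axioms of a strict $f$-ring; reducedness is the equality $\rv(\xi^2)=2\rv(\xi)$ from item 4e, which forces $\xi^2=0\Rightarrow\rv(\xi)=+\infty\Rightarrow\xi=0$. For the local structure, item 5 gives the convergent geometric series: if $\rv(\xi)>0$ then $1-\xi$ is invertible with inverse $1+\sum_n\xi^n$. Combined with the decomposition $\beta=\vep^k\gamma$ with $\rv(\gamma)=0$, this yields the characterisation $\gZ^\times=\{\xi:\kappa_0(\xi)=\pm1\}$ and $\Rad(\gZ)=\{\xi:\kappa_0(\xi)=0\}$. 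Residual discreteness follows because the residue map is $\xi\mapsto\rc_0(\xi)\in\QQ$ and the sign of a rational is decidable. Henselianity is by Newton iteration on the coefficients $\rc_k(\xi)$ of the prospective root, as sketched in item 5d.

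Second, I would gather the order-theoretic rules. The rule \tsbf{OTF} is the content of item 1: if $\xi+\zeta>0$ then by definition there is a $k$ with $\kappa_k(\xi+\zeta)=1$, and the recursive definition of $\kappa_k$ in terms of the first nonzero $\rc_j$ forces $\kappa_j(\xi)=1$ or $\kappa_j(\zeta)=1$ for some $j\leq k$. The rules \tsbf{Val1} and \tsbf{Val2} have already been established in item 4g by comparing $\rv(\alpha)$ and $\rv(\beta)$ and using $\vep^k=\gamma^{-1}\beta$ to extract the quotient as an element of $\gZ$.

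Third, and this is the part that needs the most care, I would verify the rule \tsbf{FRAC}. Given $0\leq\xi\leq\zeta$, uniqueness of $\rho$ with $\rho\zeta=\xi^2$ and $0\leq\rho\leq\xi$ is Lemma~\ref{lemUniqFRAC} (or equivalently Lemma~\ref{lemAfrnzFRAC}, since $\gZ$ is now known to be a reduced strict $f$-ring). For existence I would construct $\rho$ coefficient by coefficient along the trichotomy of item 6, splitting on the behaviour of $\kappa_k(\zeta)$: the zero coefficients of $\rho$ are forced, and on the lowest index $\ell$ with $\kappa_\ell(\zeta)=1$ we have the closed form \fbox{$\rho=z_\ell^{-1}\beta^2\vep^\ell(1+\vep\alpha)^{-1}$} from the unit series inversion of item 5, which produces all higher coefficients. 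The main obstacle will be checking that the three branches glue into a well-defined formal series and that the resulting $\rho$ simultaneously satisfies $\rho\zeta=\xi^2$ and $0\leq\rho\leq\xi$; the first comes by construction from the closed form, and the second follows from $\rv(\rho)\geq\rv(\xi)$ together with a careful comparison of leading coefficients using that $\xi^2\leq\xi\zeta$ in the $f$-ring.

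Finally I would assemble the statement: the ring-theoretic items supply \emph{henselian residually discrete local ring} and \emph{reduced strict $f$-ring}; the order items supply \tsbf{OTF}, \tsbf{Val1} and \tsbf{Val2}; the last verification supplies \tsbf{FRAC}. No new computation is required beyond what is already in the six enumerated items, so the proof is essentially a bookkeeping operation.
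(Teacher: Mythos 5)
Your proposal matches the paper's own treatment: the proposition there carries no separate proof but is explicitly a summary of the six enumerated items, and you assemble exactly those items (abs/sup and the Positivstellensatz argument for the strict $f$-ring structure, $\rv(\xi^2)=2\rv(\xi)$ for reducedness, the geometric-series inversion for the local/residually discrete/henselian claims, item 4g for \textbf{Val1}--\textbf{Val2}, and the coefficientwise construction of $\rho$ for \textbf{FRAC}). Your added sketch for \textbf{OTF} and your flag about verifying $0\leq\rho\leq\xi$ are if anything slightly more explicit than the paper, so no gap.
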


The test on $\gZ$ for $ \forall \alpha (\alpha^2>0 \vuu \alpha=0) $ is equivalent to \tsbf{LPO}.

Nor can we prove constructively that $\gZ$ is a ring without zerodivisor: the hypothesis $ \xi\zeta=0 $ is equivalent to $ \abs\xi\vi\abs\zeta=0 $, but the implication $ \abs\xi \vi\abs \zeta=0\Rightarrow ((\abs\xi=0) \vuu(\abs\zeta=0)) $ is equivalent to the principle \tsbf{LPPO}. 

Finally, we cannot prove that every regular element $\geq 0$ is strictly positive: this is equivalent to the Markov principle \tsbf{MP}. The total ring of fractions of $\gZ$ is therefore a somewhat mysterious object, a ring which contains $\gQ$ and which fortunately is of no obvious mathematical interest.

\smallskip
\Note The articles \cite{KL00,KLP03} propose a constructive theory of valuation rings (without order relation) only in the case of integral rings with a decidable divisibility relation. It would be useful to generalise the results (obtained constructively) to valuation rings in the sense given for $\gZ$, and to other similar cases (we need a separation relation on the ring)\footnote{See also the article \cite{ALP08}.} can be used as a basis. \eoe

\smallskip It is easy to deduce the following theorem from Proposition \ref{propQ[[T]]}.
 
\begin{theorem} \label{propQ((T))}
The ring $ \gQ=\gZ[1/\vep] $ satisfies all the axioms of the theory \Sa{Co} as well as the ordered Heyting axiom. It is a residually discrete local ring with $ \Rad(\gQ)=0 $ (thus a Heyting field in the terminology of \cite{CACM} or \cite{MRR}). In short, it is a non-archimedean Heyting field,  and a  \textsl{non} discrete ordered field in the sense of \sa{Co} theory.
\end{theorem}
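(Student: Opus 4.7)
The plan is to transfer the structure of $\gZ$ to its localisation $\gQ = \gZ[1/\vep]$ via the scaling trick: every element of $\gQ$ can be written as $\vep^{-n}\xi$ with $\xi \in \gZ$ and $n \geq 0$, and multiplication by $\vep^n$ in $\gZ$ is order-preserving, injective, and commutes with the lattice and $\mathrm{Fr}$ operations. I first extend the predicates and operations to $\gQ$ by declaring, for $\alpha = \vep^{-n}\xi$: $\alpha \geq 0$ (resp.\ $\alpha > 0$, $\alpha = 0$) iff $\xi \geq 0$ (resp.\ $>0$, $=0$) in $\gZ$; $\abs\alpha := \vep^{-n}\abs\xi$; and, for two elements brought to a common denominator $\vep^{-n}$, $\alpha \vu \beta := \vep^{-n}(\xi \vu \zeta)$ and $\mathrm{Fr}(\alpha, \beta) := \vep^{-n}\mathrm{Fr}(\xi, \zeta)$. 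Independence from the choice of $n$ is immediate from the coefficient-level definitions of the predicates on $\gZ$ together with the homogeneity $\vep^k(\xi \vu \zeta) = (\vep^k \xi) \vu (\vep^k \zeta)$ (rule \tsbf{afr}) and the analogous relation $\vep^k\mathrm{Fr}(\xi, \zeta) = \mathrm{Fr}(\vep^k \xi, \vep^k \zeta)$, which follows from uniqueness in \tsbf{FRAC} (Lemma \ref{lemUniqFRAC}).

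I would then verify the axioms of \Sa{Co}. Each Horn axiom (including the lattice axioms \tsbf{sup1}, \tsbf{sup2}, \tsbf{Sup}, \tsbf{grl}, \tsbf{afr} and the axioms \tsbf{fr1}, \tsbf{fr2}) and the disjunctive rule \tsbf{OTF} transfer from $\gZ$ to $\gQ$ by clearing denominators in each atomic formula with a common power of $\vep$ and invoking the corresponding rule in $\gZ$, which is a reduced strict $f$-ring satisfying \tsbf{OTF} and \tsbf{FRAC} by Proposition \ref{propQ[[T]]}. For the existential rule \tsbf{IV}, given $\alpha = \vep^{-n}\xi > 0$ in $\gQ$, one has $\xi > 0$ in $\gZ$; item (5) preceding Proposition \ref{propQ[[T]]} supplies a concrete $k$ with $\rv(\xi) = k$ and a factorisation $\xi = \vep^k \gamma$ with $\gamma \in \gZ\eti$, so that $\vep^{n-k}\gamma^{-1} \in \gQ$ inverts $\alpha$. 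For \tsbf{FRAC}, given $0 \leq \alpha \leq \beta$ in $\gQ$, a common denominator gives $\alpha = \vep^{-n}\xi$ and $\beta = \vep^{-n}\zeta$ with $0 \leq \xi \leq \zeta$ in $\gZ$; applying \tsbf{FRAC} in $\gZ$ yields $\rho_0$ with $\rho_0\zeta = \xi^2$ and $0 \leq \rho_0 \leq \xi$, and $\rho := \vep^{-n}\rho_0 \in \gQ$ is the required witness. The non-archimedean character of $\gQ$ is visible directly: $\vep^{-1} > r$ for every positive rational $r$.

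The Heyting-ordered axiom \tsbf{HOF} transfers similarly: for $\alpha = \vep^{-n}\xi$, the hypothesis $(\alpha > 0 \Rightarrow 1 = 0)$ in $\gQ$ gives $(\xi > 0 \Rightarrow 1 = 0)$, which, since $1 \neq 0$ in $\gZ$, is exactly $\lnot(\xi > 0)$; Heyting's axiom on $\gZ$ then yields $\xi \leq 0$, hence $\alpha \leq 0$. The main obstacle is the Heyting field property: showing that $\gQ$ is local with $\Rad(\gQ) = 0$, i.e.\ that every non-invertible element of $\gQ$ is zero. Given $\alpha = \vep^{-n}\xi$ non-invertible in $\gQ$, I claim $\rc_k(\xi) = 0$ for every $k \in \NN$. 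For each fixed $k$ the equality $\rc_k(\xi) = 0$ is decidable in $\QQ$; if some $\rc_k(\xi) \neq 0$, then among the finitely many indices $0, \dots, k$ one constructively finds the smallest $k_0$ with $\rc_{k_0}(\xi) \neq 0$, giving $\xi = \vep^{k_0}\gamma$ with $\gamma \in \gZ\eti$ by item (5) of the setup; then $\vep^{n-k_0}\gamma^{-1}$ inverts $\alpha$ in $\gQ$, contradicting non-invertibility. Hence $\rc_k(\xi) = 0$ for every $k$, so $\xi = 0$ and $\alpha = 0$, which proves the Heyting field property and confirms that $\gQ$ is a \textsl{non} discrete ordered field in the sense of \Sa{Co}.
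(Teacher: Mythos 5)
Your proof is correct and fills in exactly the details the paper leaves implicit: the paper only states that the result is \enquote{easy to deduce from Proposition \ref{propQ[[T]]}} and appends a note encoding elements of $\gQ$ as pairs $(j_0,\alpha)$ with $\gamma=\vep^{j_0}\alpha$, $\alpha\in\gZ$, $j_0\in\ZZ$, together with the induced $\kappa_k$, $\rc_k$, $\rv$; your scaling argument with common powers of $\vep$ is that same encoding made explicit. The one place worth tightening is the blanket \enquote{clear denominators and invoke the rule in $\gZ$} for Horn axioms with monomials of different degrees (e.g.\ \tsbf{Aonz}): it does work, but only after first rebalancing the representation (taking $\alpha=\vep^{-m}\xi$, $\gamma=\vep^{-2m}c$ with $m$ large enough so the $\vep$-powers cancel), or more cleanly by appealing to the paper's localisation principle for $f$-rings (Section \ref{locafr}) together with \pstfref{Pst2} to conclude the reduced strict $f$-ring Horn rules hold in $\gQ$ automatically. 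The Heyting-field step — $\rc_k(\xi)=0$ decidable in $\QQ$, finite search for the least non-zero coefficient, contradiction with non-invertibility — is the genuinely constructive core and you handle it correctly.
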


\noindent \textsl{Note.} An element of $ \gQ=\gZ[1/\vep] $ is written $ \vep^{j_0}\alpha $ with $ \alpha\in\gZ $ and $ j_0\in\ZZ $, it can be encoded in the form $ \gamma=(j_0,\alpha) $. The equality $ (j_0,\alpha)=(j'_0,\alpha') $ is defined as follows: 
\begin{itemize}
 
\item if $ j_0\leq j'_0 $, $ \vep^{j'_0-j_0}\alpha=\alpha' $;
 
\item si $ j'_0\leq j_0 $, $ \vep^{j_0-j'_0}\alpha '=\alpha $. 
\end{itemize}
We then define, for $ \gamma=(j_0,\alpha) $:
\begin{itemize}
 
\item $\kappa_k(\gamma)=\kappa_{k-j_0}(\alpha) $ (so $ \kappa_k(\gamma)=0 $ for $ k<j_0 $) ;
 
\item $ \rc_k(\gamma)=\rc_{k-j_0}(\alpha) $ (so $ \rc_k(\gamma)=0 $ for $ k<j_0 $) ;
 
\item $ \rv(\gamma)=\rv(\alpha)-j_0 $ (so $ \rv(\gamma)\geq -j_0 $). \eoe
\end{itemize}
 

\section{\textsl{Non} discrete real closed fields: position of the problem}
\label{secCRCnondis}

Our dream is to repeat the feat that Artin, Schreier and Tarski achieved for the description of the algebraic properties of $\RR$ through the theory of discrete real closed fields, but in a constructive framework, in intuitionistic logic without \tsbf{LEM}, taking into account the fact that $\RR$ \textsl{is not} discrete, and avoiding the axiom of dependent choice.


\begin{remark} \label{remRRtdy} 
We can consider that our quest is the following: to fix a signature~$ \Sigma $ which allows us to describe as precisely as possible the structure of a \ndrcf, to describe on this signature a dynamical theory which is essentially equivalent to a theory weaker than \Sa{Crcd}, while being the strongest possible among the dynamical theories which admit $\RR$ as a constructive model, without using the axiom of dependent choice. This Holy Grail seems out of reach in absolute terms, as there is no clear criterion for knowing whether a \rdy is constructively satisfied on $\RR$.\footnote{Moreover, the axiom of dependent choice is not allowed in proofs.}
\eoe
\end{remark}

\Subsection{The principle of extension by continuity}

The \gui{completion} property of $\RR$ is expressed naturally in the following form, without interference from the axiom of dependent choice.

\begin{theorem} \label{thRRcomplet}
If a map $ f\colon \QQ^n\to\RR $ is uniformly continuous on all bounded subsets it extends uniquely into a map $ \wi f\colon \RR^n\to\RR $ uniformly continuous on all bounded subsets. 
\end{theorem}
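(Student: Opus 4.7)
The plan is to construct $\wi f$ pointwise, using the fact that in Bishop-style constructive mathematics a point $x \in \RR^n$ is presented by a concrete regular sequence $(x_k)_{k\geq 1}$ of rationals with $|x_k - x_\ell| \leq 1/k + 1/\ell$. Because this representing sequence is part of the datum of $x$, no axiom of dependent choice is needed to select an approximation. All subsequent uniform continuity moduli appearing below are assumed to be given explicitly, as part of the hypothesis that $f$ be uniformly continuous on bounded subsets.

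First I would fix $x\in\RR^n$ with its regular sequence $(x_k)$ in $\QQ^n$. All the $x_k$ lie in the explicit bounded subset $B = \sotq{q\in\QQ^n}{|q|\leq |x_1|+2}$, on which $f$ admits a given uniform continuity modulus $\omega_B$. From the regularity of $(x_k)$ and $\omega_B$, a direct computation shows that $\big(f(x_k)\big)_{k\geq 1}$ is a Cauchy sequence in $\RR$ with an explicit modulus (essentially obtained by composing $\omega_B$ with the identity modulus of $(x_k)$). I would then \emph{define} $\wi f(x)$ as the limit of this sequence in $\RR$. For $x$ already rational, taking the constant sequence recovers $\wi f(x) = f(x)$; for two regular sequences representing the same real, the uniform continuity of $f$ on $B$ forces the same limit, so $\wi f$ is well defined on the Bishop set $\RR^n$.

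Next I would establish that $\wi f$ is itself uniformly continuous on every bounded subset of $\RR^n$. Given $B'\subseteq \RR^n$ bounded by $r$, enlarge to the bounded rational set $B = \{q\in\QQ^n : |q|\leq r+1\}$ and let $\omega_B$ be the corresponding modulus of $f$. For $x,y\in B'$ with $|x-y|<\omega_B(\eps/3)$, approximating $x$ and $y$ by rationals in $B$ that are within distance $\omega_B(\eps/3)/3$ of them and applying the triangle inequality together with $\omega_B$ yields $|\wi f(x)-\wi f(y)|\leq \eps$; this gives an explicit uniform continuity modulus for $\wi f$ on $B'$. Uniqueness is then immediate: if $g\colon\RR^n\to\RR$ is uniformly continuous on bounded subsets and agrees with $f$ on $\QQ^n$, continuity of $g$ forces $g(x)=\lim_k g(x_k) = \lim_k f(x_k) = \wi f(x)$.

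The main obstacle I anticipate is purely bookkeeping: tracking the various moduli (the regularity modulus of $(x_k)$, the modulus $\omega_B$ of $f$, the modulus governing the convergence $f(x_k)\to \wi f(x)$) in such a way that the construction of $\wi f(x)$ can be written as an honest regular sequence of rationals, rather than appealing silently to countable choice to pick better and better approximations. Because each $x\in\RR^n$ is already given together with $(x_k)$, and $\omega_B$ is given by hypothesis, this is achievable by a direct explicit computation with no selection step, which is what makes the statement provable in the absence of dependent choice.
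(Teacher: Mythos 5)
The paper states this theorem without giving a proof: it is offered as a known fact of Bishop-style constructive analysis, and the point of stating it is simply that it holds without dependent choice and therefore can guide the design of the subsequent dynamical theory. Your construction is the expected one, and it is correct. In Bishop's foundations a point $x$ of $\RR^n$ comes equipped with a regular Cauchy sequence $(x_k)$ of rational approximants as part of its data, so $\wi f(x) := \lim_k f(x_k)$ is well defined with no invocation of countable choice (and the sequence $\big(f(x_k)\big)$ comes with an explicit Cauchy modulus, obtained by composing $\omega_B$ with the regularity modulus of $(x_k)$, so its limit in $\RR$ is produced without choice as well). The modulus of $f$ on rational points of a bounded set then transfers by a three-term triangle inequality to a modulus of $\wi f$ on the corresponding bounded subset of $\RR^n$, and uniqueness is by density. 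The only slip is a constant: with the premise $|x-y| < \omega_B(\epsilon/3)$, the approximating rationals $q_x,q_y$ may be as far as roughly $2\omega_B(\epsilon/3)$ apart, which is not enough; you should instead require $|x-y| < \omega_B(\epsilon/3)/3$ so that $|q_x-q_y| < \omega_B(\epsilon/3)$. You explicitly flagged the bookkeeping of moduli, so this is a cosmetic adjustment rather than a gap.
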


This theorem is a theorem of analysis and cannot be expressed directly in the context of a dynamical theory which aims at the algebraic properties of $\RR$, because the property \gui{to be uniformly continuous} is not geometric.

Nevertheless, it is essentially this theorem that guides us in our quest expressed in Remark \ref{remRRtdy}. We will replace the property \gui{be uniformly continuous} by a formulation where uniform continuity is controlled a priori and no longer hides $ \forall\exists $.

Moreover, the only maps that we can envisage inside in a purely algebraic framework are semialgebraic maps.

We must therefore rely on relevant properties of continuous semialgebraic maps, which we develop in the following paragraph. 
 
\Subsection{Continuous semialgebraic maps}

 First of all we recall that the uniform continuity over any bounded subset of a continuous semialgebraic map $ \gR^n\to\gR $, where $\gR$ is a discrete real closed field, is controlled à la \L{o}jasiewicz precisely as follows.

\begin{lemma} \label{factfsagcLoja}
Let $\gR$ be a discrete real closed field and $ K\subseteq \gR^n $ be a bounded semialgebraic closed subset. 
\begin{enumerate}
 
\item 
Let $ g\colon K\to\gR $ be a continuous semialgebraic map. Then $g$ has a \mcu which is expressed à la {\L}ojasiewicz  as follows (with $ c\in\gR $ and $\ell$ integer $ \geq 1 $)
\begin{equation} \label {eqfactfsagcLoja1}
\forall \uxi,\uxi'\in K\;\abs{g(\uxi)-g(\uxi')}^\ell
 \leq \abs c \, 
 \,\norm{\uxi-\uxi'}.
\end{equation}
 
\item 
Let $ f\colon \gR^n\to\gR $ be a continuous semialgebraic map. Then $f$ has a \mcu over any bounded subset which is expressed 
à la {\L}ojasiewicz as follows (with $ c\in\gR $ and integers $ k,\ell \geq 1 $) 
\begin{equation} \label {eqfactfsagcLoja}
\forall \uxi,\uxi'\in \gR^n\;\abs{f(\uxi)-f(\uxi')}^\ell
 \leq \abs c \, \big(1+\norm{\uxi}+\norm{\uxi'}\big)^k 
 \,\norm{\uxi-\uxi'}.
\end{equation}
\end{enumerate}
\end{lemma}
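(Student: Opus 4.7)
The proof of both items reduces to the {\L}ojasiewicz inequality for continuous semialgebraic functions, which is available over any discrete real closed field via Tarski's quantifier elimination and the standard triangulation arguments (see e.g.\ \cite[Chapter~2]{BCR}). The compact form used below is: if $M\subseteq\gR^{m}$ is a bounded semialgebraic closed set and $\varphi,\psi\colon M\to\gR$ are continuous semialgebraic with values $\geq 0$ and $\varphi^{-1}(0)\subseteq\psi^{-1}(0)$, then there exist $c\in\gR$ and an integer $\ell\geq 1$ such that $\psi^{\ell}\leq\abs{c}\,\varphi$ pointwise on $M$.

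For Item~1 I would apply this inequality on the bounded semialgebraic closed set $K\times K\subseteq\gR^{2n}$ with $\varphi(\uxi,\uxi'):=\norm{\uxi-\uxi'}$ and $\psi(\uxi,\uxi'):=|g(\uxi)-g(\uxi')|$. Single-valuedness of $g$ forces $\varphi=0\Rightarrow\uxi=\uxi'\Rightarrow\psi=0$, so the hypothesis is met and the resulting bound is precisely \eqref{eqfactfsagcLoja1}. For Item~2 the plan is to reduce to Item~1 via a semialgebraic one-point compactification of $\gR^{n}$: let $\sigma\colon S^{n}\setminus\{N\}\to\gR^{n}$ be stereographic projection from the north pole $N$ of the unit sphere $S^{n}\subseteq\gR^{n+1}$, and fix the weight $\lambda(y):=1-\langle y,N\rangle$ on $S^{n}$, continuous semialgebraic, vanishing exactly at $N$ and satisfying $\lambda(\sigma^{-1}(\uxi))=2/(1+\norm{\uxi}^{2})$. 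Since any semialgebraic function on $\gR^{n}$ is polynomially bounded at infinity (by quantifier elimination applied to $R\mapsto\sup_{\norm{\uxi}\leq R}|f(\uxi)|$), there exist $C\in\gR$ and $k_{0}\in\NN$ with $|f(\uxi)|\leq\abs{C}(1+\norm{\uxi})^{k_{0}}$; choose an integer $M>k_{0}/2$.

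One checks that the weighted maps
\[
\widetilde\varphi(y,y'):=\norm{\sigma(y)-\sigma(y')}\,\lambda(y)^{M}\lambda(y')^{M},\qquad \widetilde\psi(y,y'):=|f(\sigma(y))-f(\sigma(y'))|\,\lambda(y)^{M}\lambda(y')^{M}
\]
extend continuously by zero to all of $S^{n}\times S^{n}$ at pairs involving $N$, and $\widetilde\varphi^{-1}(0)\subseteq\widetilde\psi^{-1}(0)$ (on the diagonal by continuity of $f$, elsewhere by the vanishing of the weight). Item~1 applied on $S^{n}\times S^{n}$ then gives $\widetilde\psi^{\ell}\leq\abs{c}\,\widetilde\varphi$, which after dividing both sides by $\lambda(y)^{M}\lambda(y')^{M}$ and substituting the explicit expression of $\lambda\circ\sigma^{-1}$ becomes
\[
|f(\uxi)-f(\uxi')|^{\ell}\leq\abs{c'}\,(1+\norm{\uxi}^{2})^{M(\ell-1)}(1+\norm{\uxi'}^{2})^{M(\ell-1)}\,\norm{\uxi-\uxi'}
\]
for some $c'\in\gR$. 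Using the crude estimates $(1+\norm{\uxi}^{2})\leq(1+\norm{\uxi})^{2}\leq(1+\norm{\uxi}+\norm{\uxi'})^{2}$ and the analogous bound for $\uxi'$ then yields \eqref{eqfactfsagcLoja} with $k:=4M(\ell-1)$.

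The main obstacle is the bookkeeping in Item~2: choosing $M$ large enough relative to the polynomial growth exponent $k_{0}$ of $f$ so that $\widetilde\psi$ genuinely extends continuously across the locus where $y$ or $y'$ equals $N$, and then correctly converting the inverse powers of $\lambda$ into the polynomial growth factor $(1+\norm{\uxi}+\norm{\uxi'})^{k}$ when pulling back through $\sigma$. Item~1 itself is a direct invocation of the classical semialgebraic {\L}ojasiewicz inequality and presents no genuine difficulty.
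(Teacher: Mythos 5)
Your proof is correct, and for Item~1 it coincides with the paper's own argument: both apply a Łojasiewicz-type inequality on the bounded closed semialgebraic set $K\times K$ to the pair $F(\uxi,\uxi')=\norm{\uxi-\uxi'}$, $G(\uxi,\uxi')=\abs{g(\uxi)-g(\uxi')}$, using that $F^{-1}(0)\subseteq G^{-1}(0)$. (A small presentational slip: when you ``apply Item~1 on $S^n\times S^n$'' you are really invoking the compact-form Łojasiewicz inequality stated at the start of your proof, since Item~1 as literally phrased concerns a single map $g$; this does not affect correctness.)

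For Item~2 your route is genuinely different from the paper's. The paper quotes \cite[Theorem~2.6.6]{BCR} in the stronger ``division'' form $G^N = hF$ with $h$ \emph{continuous semialgebraic} on the locally closed set $\gR^n\times\gR^n$, and then appeals to the standard fact that a continuous semialgebraic function on $\gR^m$ is bounded by a polynomial; the factor $(1+\norm{\uxi}+\norm{\uxi'})^k$ is the polynomial bound on $h$. You instead only use the \emph{compact} Łojasiewicz inequality (no control on the quotient $h$ is needed), and recover the non-compact case by a semialgebraic one-point compactification of $\gR^n$ via stereographic projection, using polynomial boundedness of $f$ itself to choose the weight exponent $M>k_0/2$ so that $\widetilde\varphi,\widetilde\psi$ extend continuously to $S^n\times S^n$. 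The bookkeeping $k=4M(\ell-1)$ is correct. Both approaches ultimately rest on polynomial boundedness of some semialgebraic function (the Łojasiewicz quotient $h$ for the paper, the map $f$ for you), but your argument substitutes an explicit compactification for the quotient structure and so only needs the compact, qualitative form of Łojasiewicz, which is marginally more self-contained at the cost of more computation. The paper's route is shorter because BCR~2.6.6 already packages exactly the uniform polynomial control needed for the unbounded case.
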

\begin{proof} This is a consequence of Theorem 2.6.6 of \cite{BCR} which states that on a locally closed semialgebraic set, if there are two continuous semialgebraic maps~$ F $ and~$G$ such that $G$ vanishes at the zeros of $ F $, there exists an exponent $ N $ and a continuous semialgebraic map $h$ such that $ G^N=hF $. In the compact case, $h$ is bounded by a constant; in the general case, $h$ is bounded by a polynomial map. We apply this with $ F(\ux,\ux')=\norm{\ux-\ux'} $ \hbox{and $ G(\ux,\ux')=\abs{f(\ux)-f(\ux')} $}. 
\end{proof}

\paragraph{Continuous parametrisation of continuous semialgebraic maps}~

\noindent We now present a parametrisation result saying that, \textsl{from the point of view of continuous semialgebraic maps, everything comes continuously from what happens on the subfield $\RRa$ of algebraic real numbers}. In other words, any continuous semialgebraic map~\hbox{$ \gR^n\to\gR $} is a point with coordinates in $\gR$ of an equicontinuous family defined on $\RRa$.

The idea is in fact a simple generalisation of the following remark. The family of univariate polynomials $ f(x)=ax^2+bx+c $ (family parametrised by $ (a,b,c)\in\RR^3 $) is never just the polynomial in four variables $ g(a,b,c,x)=ax^2+bx+c $ {defined on $\QQ$}, where we take $ (a,b,c) $ as parameters and $x$ as variable, all in $\RR$: so we don't have to worry too much about the \nds character of $\RR$, since everything is defined on $\QQ$. Each individual map $ f(x) $ (depending on parameters taken from $ \RR^3 $) is a real point of a family defined on $\RRa$. This real point comes from the extension by continuity at $ \RR^4 $ of a continuous map $ \RRa^4\to\RRa $. 

\begin{theorem} \label{thParamcontFsagc0} 
Let $\gR$ be a discrete real closed field and $ f\colon \gR^n\to\gR $ be a continuous semialgebraic map. There exists an integer $ r\geq 0 $, a continuous semialgebraic map $ g\colon \gR^{r+n}\to\gR $ defined on~$\RRa$, and an element $ \uy\in\gR^r $ such that
\[
\forall \xn\in\gR\;\; f(\xn)=g(\yr,\xn).
\] 
\end{theorem}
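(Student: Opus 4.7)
The plan is to expose, as free variables, the (possibly non-algebraic) parameters from $\gR$ appearing in any semialgebraic description of $f$. Since $f\colon\gR^n\to\gR$ is continuous and semialgebraic, its graph $\Gamma_f\subseteq\gR^{n+1}$ is semialgebraic, hence defined by a quantifier-free formula in the language of ordered rings using finitely many constants $y_1,\dots,y_r\in\gR$. Introducing fresh variables $\uy'=(y'_1,\dots,y'_r)$ in place of these constants produces a quantifier-free formula $\Phi(\uy',\ux,t)$ over $\QQ$ such that $\Phi(\uy,\ux,t)$ defines $\Gamma_f$, where $\uy=(y_1,\dots,y_r)$.

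Next, let $Y\subseteq\gR^r$ be the set of parameter tuples $\uy'$ for which $\Phi(\uy',\cdot,\cdot)$ defines the graph of a continuous function $\gR^n\to\gR$. By Tarski's theorem of quantifier elimination (\thref{thTarski}), both the condition ``being the graph of a single-valued, total map'' and the condition ``this map is continuous'' can be expressed by quantifier-free formulas in $\uy'$, so $Y$ is semialgebraic and definable over $\QQ$; by construction $\uy\in Y$. On $Y\times\gR^n$, setting $g_0(\uy',\ux)$ equal to the unique $t$ with $\Phi(\uy',\ux,t)$ yields a continuous semialgebraic map defined over $\QQ$, satisfying $g_0(\uy,\cdot)=f$.

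To conclude, one must extend $g_0$ to a continuous semialgebraic map $g\colon\gR^{r+n}\to\gR$ still defined over $\RRa$, with $g(\uy,\ux)=f(\ux)$. Using semialgebraic cell decomposition over $\QQ$ adapted to $Y$, one can find a closed semialgebraic subset $V\subseteq\gR^r$ definable over $\QQ$ with $\uy\in V\subseteq Y$. Then $V\times\gR^n$ is closed in $\gR^{r+n}$ and $g_0|_{V\times\gR^n}$ is a continuous semialgebraic function defined over $\QQ$; the semialgebraic Tietze extension theorem, valid over any \rcf (cf.\ Bochnak-Coste-Roy, Theorem~2.6.9), then yields the desired $g$, which clearly satisfies $g(\uy,\ux)=g_0(\uy,\ux)=f(\ux)$.

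The main obstacle is this last step. Since $\uy$ may have non-algebraic coordinates, the singleton $\{\uy\}$ is not definable over $\RRa$, so the closed set $V$ must be positive-dimensional and the cell decomposition must be performed over $\RRa$ rather than over $\gR$. Ensuring that the closure of the cell of $Y$ containing $\uy$ is still contained in $Y$ may require refining the decomposition, and invoking Tietze while preserving the field of definition needs some care. A variant that sidesteps Tietze altogether is to enlarge $r$ by adjoining auxiliary parameters that ``freeze'' the ambient cell structure near $\uy$, so that the parametric family directly covers a closed neighborhood of $\uy$ definable over $\RRa$ and no extension is needed.
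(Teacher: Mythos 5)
Your proposal hits the same starting idea as the paper — pull the non-algebraic constants of $\gR$ out as fresh parameters, work with the resulting parametric family defined over $\RRa$, and then cut the parameter space down to a piece containing $\uy$ — but it contains a genuine gap at the crucial step. You assert that the map $g_0\colon Y\times\gR^n\to\gR$ is \emph{continuous} (and semialgebraic, defined over $\QQ$). Continuity of each fibre $g_0(\uy',\cdot)$ (which is what your definition of $Y$ guarantees) does \emph{not} imply joint continuity of $g_0$ in $(\uy',\ux)$: a semialgebraic family of continuous functions need not depend continuously on the parameters. The paper has to address exactly this: after restricting parameters to a set $S$ (which it even equips with a uniform \L{}ojasiewicz modulus controlled by an extra parameter $c_0$, already stronger than your $Y$), it still invokes Remark~7.4.9 of \cite{BCR} to produce a semialgebraic partition $A_1\cup\dots\cup A_k$ of the parameter space, defined over $\RRa$, such that the family is jointly continuous on each $A_i\times\gR^n$, and only \emph{then} selects the cell containing the relevant parameter point. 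Without some substitute for that partition step, your $g_0$ is simply not a continuous map, and everything downstream (restriction to a closed $V$, Tietze) does not start from valid hypotheses.

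A secondary divergence: the paper never uses a semialgebraic Tietze extension at all. Once it has a cell $\Delta\subseteq A_1$, defined over $\RRa$ and semialgebraically homeomorphic (over $\RRa$) to $\gR^r$, containing the parameter point, it composes with this homeomorphism to get a globally defined map $g\colon\gR^{r+n}\to\gR$. This is cleaner than your proposed extension from a closed $V\subseteq Y$: it avoids the problem you yourself flag (that the closure of the cell containing $\uy$ might not stay inside $Y$, that Tietze must preserve the field of definition), because a cell of a CAD is, by construction, semialgebraically homeomorphic to a cube, and the homeomorphism is defined over $\RRa$ when the CAD is. If you want to repair your argument along your own lines, you would still need to first fix the joint continuity gap; after that, replacing ``closed $V$ + Tietze'' by ``cell + homeomorphism onto $\gR^r$'' would be the more economical finish.
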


This result seems to be part of folklore. We give here a proof inspired by the advices of Michel Coste and Marcus Tressl. However, it is not entirely constructive. This would require, for example, a constructive re-reading of Chapter 7 of \cite{BCR}. See Question \ref{questthParamcontFsagc0}.
\begin{proof}
The map $f$ has a  closed graph $F$ which is a semialgebraic union of basic semialgebraic closed sets $F_i=\sotq{(\ux,y)\in\gR^{n+1}}{p_{i}(\ux,y)\geq 0}$. The coefficients of $p_{i}$ are in $\gR $ but can be seen as specialisations of parameters $c_{k}$ ($ k\in\lrbm $) so that we have polynomials $P_{i}(\uc,\ux,y)$ with parameters $c_k$. The inequalities $ P_{i}(\uc,\ux,y)\geq 0 $ define for $i$ a fixed semialgebraic closed set $ G_i\subseteq \gR^{m+n+1} $. The union of $G_i $'s, denoted $G$, is a semialgebraic which is not sufficiently relevant. We add a parameter $c_0$ and we will now restrict the domain of variation of $c_k$ to a \gui{suitable} semialgebraic set $S$. Suitable means that the following formula is satisfied
\[ 
\begin{array}{ccc} 
\hspace{-.5em} \forall \uxi,\uxi'\in \gR^n \,\forall \zeta,\zeta'\in \gR\;\;((\uc,\uxi,\zeta)\in G,\,(\uc,\uxi',\zeta')\in G) \;\Rightarrow\;\abs{\zeta-\zeta'}^\ell \leq \abs {c_0} \, \big(1+\norm{\uxi}+\norm{\uxi'}\big)^k \,\norm{\uxi-\uxi'}.
 \end{array}
\]
where $k$ and $\ell$ are integers for which the map $f$ satisfies these inequalities (for a certain specialisation of $ c_0 $). Note that $ S\subseteq \gR^{m+1} $. It is clear that the semialgebraic set $S$ is defined on $\RRa$. Let $H$ be the semialgebraic subset of $ \gR^{m+n+2} $ formed by the points of $G$ whose $ m+1 $ first coordinates (the parameters) form a point of $S$. The semialgebraic set $H$ is the graph of a map $ h\colon S\times \gR^n\to \gR $, which is seen as a family of maps $ \gR^n\to\gR $ parametrised by $S$. For any point $ s\in S $ the corresponding graph $ H_s $ is that of a continuous semialgebraic map $ f_s:\gR^n\to\gR $ whose \mcu is controlled by $ \abs{c_0} $,~$k$ and~$\ell$. The initial map $f$ corresponds to a point $ s_0\in S $ with coordinates in $\gR$. By means of a cylindrical algebraic decomposition of $\gR^{m+1}$ adapted to $S$, we insert $ s_0 $ in a cell~$\Gamma$ defined on $\RRa$ semialgebraically homeomorphic to $ \gR^q $ for a $ q\geq 0 $ ($ q=0 $ implies that $ s_0 $ has coordinates in $\RRa$). Moreover the homeomorphism is defined on $\RRa$. We then obtain a semialgebraic map $ \varphi\colon \gR^{q+n}\to\gR $ defined on $\RRa$ which has the following properties:
\begin{itemize}
 
\item There is an element $ \gamma=(\gamma_1,\dots,\gamma_q)\in \gR^q $ such that $ \varphi(\gamma,\uxi)=f(\uxi) $ for all $ \uxi\in\gR^n $.
 
\item For any element $ \alpha=(\alpha_1,\dots,\alpha_q)\in \gR^q $, the map $ \uxi\mapsto \varphi(\alpha,\uxi) $ is continuous semialgebraic. 
The map $\varphi$ is locally bounded. 
\end{itemize}
Under these hypotheses, Remark 7.4.9 of \cite{BCR}, assures us that there exists a semialgebraic partition $ A_1\cup\dots\cup A_k $ of the space of parameters $ \gR^q $, defined on $\RRa$, such that the map $\varphi$ restricted to each of the $ A_i\times \gR^n $ is continuous. For example the point $ \gamma=(\gamma_1,\dots,\gamma_q) $ belongs to $ A_1 $. By means of a cylindrical algebraic decomposition of $ \gR^{q} $ adapted to $ A_1 $, we insert $ \gamma $ in a cell $\Delta$ defined on $\RRa$ semialgebraically homeomorphic to $ \gR^r $ for an $ r\geq 0 $. Moreover the homeomorphism is defined on $\RRa$. This provides the continuous semialgebraic map $ g\colon\gR^{r+n}\to \gR $ defined on $\RRa$ requested in the statement.
\end{proof}

\paragraph{A reasonable definition}  

\noindent We therefore propose the following definition in constructive mathematics, made legitimate by  \thref{thParamcontFsagc0}.
\begin{definota} \label{defiFSAGC2} 
Let $\gR$ be an ordered subfield\footnote{Precisely $\gR$ is a subobject of $\RR$ for the \ndsof defined by the theory~\sA{Co}.
Moreover, for the simple existential rule \Tsbf{IV}, we require that an element of $\gR$ invertible in $\RR$ be invertible in~$\gR$.} 
 of $\RR$ containing the field of algebraic reals $\RRa$ and a map $f\colon \gR^n\to \gR$.
\index{continuous semialgebraic map!on an ordered subfield of $\RR$ containing $\RRa$}
\begin{enumerate}
 
\item (elementary case) The map $f$ is semialgebraic continuous if there exists a continuous semialgebraic map $ g\colon \RRa^{n}\to\RRa $ of which $f$ is the extension by continuity. Precisely we must have the following two properties: $f$ is an extension of $g$, and $f$ has the same uniform modulus of continuity as $g$, given in Item \textsl{2} of Lemma \ref{factfsagcLoja}.
 
\item (general case) The map $f$ is semialgebraic continuous if there exist an integer \hbox{$r\geq 0$}, elements $ y_1,\dots,y_r\in \gR $ and a map $ h\colon \gR^{r+n}\to \gR $ which belongs to the previous elementary case such that 
\[
\forall \xn\in\gR\;\; f(\xn)=h(\yr,\xn).
\]
\end{enumerate}
We denote $ \Sac_n(\gR) $ the ring of these maps (it is a reduced strict $f$-ring for the natural order relation). 
\end{definota}

Some important properties of these function spaces will be established in Section \ref{PropGenFsagcs}.

\Subsection{Rational dynamical theories for real number algebra} \label{secCrcdesirable}

Recall that the field $\RRa$ of the algebraic reals is a discrete real closed field in the constructive sense.
 
Following on from Remark \ref{remRRtdy} and Definition \ref{defiFSAGC2}, here are the properties we have in mind for a dynamical theory \sa{Crc} of (\nds) real closed  fields, described here in a rather informal way.

\begin{prpta} \label{prptaCrcdesirable} ~
\begin{enumerate}
 
\item The theory \sa{Crc} is an extension of \Sa{Co}.
 
\item The fields $\RR$, $ \RR_{\tsbf{PR}} $, $ \RR_{\tsbf{Ptime}} $ and $ \RR_{\tsbf{Rec}} $ (cf.\ Example \ref{exacorpsnondiscret}) are constructive models of \sa{Crc} (without using the axiom of dependent choice).
 
\item The theory \sa{Crc} becomes essentially equivalent to \Sa{Crcd} when we add to it the axiom~\tsbf{ED$ _> $}.
 
\item \label{Ra} The continuous semialgebraic maps $ \RRa^n\to\RRa$ are nicely defined in the language of \sa{Crc} and the Horn rules they satisfy are valid in the theory. 
 
\item \label{prolcont} Continuity extension principles (as broad as possible) are satisfied in a suitable form in the dynamical theory. 
 
\item \label{prirec} Gluing principles (the broadest possible) for maps defined on a finite covering by semialgebraic opens, or by semialgebraic closed subsets, are satisfied in a suitable form in the dynamical theory.
\end{enumerate}

\noindent The following points are open to discussion. Item \textsl{\ref{lsalg}} will be dropped if we want to describe more \gui{structure} on $\RR$, for example for an o-minimal structure. Item \textsl{\ref{lprimrec}} will be abandoned for example if we wish to introduce all the reals as constants of the theory: in a general dynamical theory $ \sa T=(\cL,\cA) $, $\cL$ and $ \cA $ are only supposed to be naive sets (à la Bishop). 
\begin{enumerate}\setcounter{enumi}{6}
\item \label{lsalg} All function symbols in \sa{Crc} define on $\RR$ continuous semialgebraic maps of their variables (Definition \ref{defiFSAGC2}). 
 
\item \label{lprimrec} The language of \sa{Crc} is enumerated in a natural way and in this framework the axioms are decidable in a primitive recursive way. 
\end{enumerate}
 
\end{prpta}
A slightly crude way of getting a relatively satisfactory answer is to take seriously Item \textsl{\ref{Ra}} above. 
The result is as follows.

\begin{definition} \label{defiCrc1} The dynamical theory \SA{Crc1} is obtained from the dynamical theory \Sa{Co} by adding a function symbol and suitable axioms for each continuous semialgebraic map \hbox{$ f\colon \RRa^n\to\RRa $}.%
\index{real closed!non dis@\textsl{non} discrete --- field}
\end{definition}

\noindent \textsl{Explanation.} More precisely, we proceed as follows. We know from the finiteness theorem (\cite[Theorem~2.7.1]{BCR}) that the graph $G_f=\sotq{(\ux,y)}{\ux\in\gR^n,y=f(\ux)}$ of $f$ (which is assumed to be semialgebraically continuous) is a \sagcs of~$ \RRa^{n+1} $ which can be described as the zero set of  a \textsl{semipolynomial map} $ F\colon \RRa^{n+1}\to \RRa $, \textsl{i. e.} a map written in the form
\[
\sup\nolimits_i\,(\inf\nolimits_{ij}\,p_{ij})  \quad \hbox{ where }p_{ij}\in\RRa[\xn,y]
\]
We can decide whether such a \sagcs $G_f$ described in this way is that of a continuous semialgebraic map, and if so calculate a \mcu 
à la {\L}ojasiewicz. Whenever such a (description of) semipolynomial map defines a continuous semialgebraic map, we introduce a function symbol $ \fsac_F $ with the corresponding axiom \label{Notafsa}

\UneRegle{Df$_{F}$} {$ \vd F(\ux,\fsac_F(\ux))=0 $.}

Furthermore, for an arbitrary term $ t(\ux) $ in the language thus defined, if this term defines a map everywhere zero on $ \RRa^n $ ($n\geq 0$), we introduce the corresponding axiom $\vd t(\ux)= 0$. 

For example, for the map $\fsac_F$ we will have an axiom of continuity which repeats that which is satisfied for the algebraic reals.

\UneRegle{Cont$_F$}{$\vd\abs{\fsac_F(\ux)-\fsac_F(\ux')}^\ell
 \leq  \abs c \, \big(1+\norm{\ux}+\norm{\ux'}\big)^k 
  \,\norm{\ux-\ux'}.$}

\noindent Indeed, an inequality between two terms, $t_1\leq t_2$, is equivalent to making the term $(t_2-t_1)^-$ equal to $0$. 
\eoe 

\smallskip  
Naturally, such a dynamical theory is frustrating at first sight, because it is not very natural and it is undoubtedly difficult to practise from an effective point of view.

However, we shall see that a more natural way, with the addition of few function symbols, which we propose later, leads to the theory \Sa{Corv} essentially identical to~\Sa{Crc1}.

All this is closely related to the theory of real closed rings and its rewriting in concrete form in \cite{Tre2007}.

\section{General properties of continuous semialgebraic maps}\label{PropGenFsagcs}

In this section we give some remarkable properties of the rings $ \Sac_n(\gR) $ (continuous semialgebraic maps $ \gR^n\to\gR $ according to Definition \ref{defiFSAGC2}) over the field $ \gR=\RR $. More generally we can consider an ordered subfield $\gR$ of $\RR$ containing $\RRa$ and in which any continuous semialgebraic map defined on $\RRa$ takes its values in $\gR$ at the points whose coordinates are in $\gR$, for example $ \RR_{\tsbf{PR}} $, $ \RR_{\tsbf{Ptime}} $ or $ \RR_{\tsbf{Rec}} $.

These properties known for discrete real closed fields are extended to $\RR$ because we take the precaution of only taking properties whose formulation does not imply the discrete nature of the order. 

\Subsection{Stability by composition} 

 For example we compose $ f,g\in\Sac_3(\gR) $ with $ h\in\Sac_2(\gR) $. Suppose that 
\begin{itemize}
 
\item $f$ is given in the form $ f(x,y,z)=\wi f(a,b,x,y,z) $, for $ a,b\in\gR $ and $ \wi f\colon \gR^5\to\gR $ extends by continuity $ \ov f\colon \RRa^5\to\RRa $,
 
\item $g$ is given by the form $ g(x,y,z)=\wi g(c,x,y,z) $, for $ c\in\gR $ and $ \wi g\colon \gR^4\to\gR $ extends by continuity $ \ov g\colon \RRa^4\to\RRa $,
 
\item $h$ is given by the form $ h(u,v)=\wi h(d,u,v) $, for $ d\in\gR $ and $ \wi h\colon \gR^3\to\gR $ is a continuous extension of $ \ov h\colon \RRa^3\to\RRa $,
 
\item then $ h\circ(f,g):\gR^3\to\gR $ is of the form $ k(x,y,z)=\wi k(a,b,c,d,x,y,z) $ for $ (a,b,c,d)\in\gR^4 $ and $ \wi k:\gR^7\to\gR $ extends by continuity the map $ \ov k:\RRa^7\to\RRa $ defined by 
\[
\ov k(a,b,c,d,x,y,z)=h(d,f(a,b,x,y,z),g(c,x,y,z)).
\]
\end{itemize}

\Subsection{Stability by upper bound}

 For example we have $ f\in \Sac_4(\gR) $ and we want to show that there is a $ g\in \Sac_2(\gR) $ such that $ g(x,y)=\sup_{z,t\in\ClI{0,1}}f(x,y,z,t) $. If $f$ is given in the form $ f(x,y,z,t)=\wi f(a,b,x,y,z,t) $, for $ a,b\in\gR $, where $ \wi f\colon \gR^6\to\gR $ extends by continuity $ \ov f\colon \gRa^6\to\gRa $, consider the continuous semialgebraic map $ \ov g\colon \RRa^4\to\RRa $ defined by  $ \ov g(a,b,x,y)=\sup_{z,t\in\ClI{0,1}}\ov f(a,b,x,y,z,t) $.\footnote{Here, $ a,b $ are variables.} It extends by continuity into a map $ \wi g\colon \gR^4\to\gR $ and we define $ g(x,y)=\ov g(a,b,x,y) $. The fact that $g$ is indeed the desired lub is due to the fact that the lub on a compact is a continuous function of the parameters and that $\RRa$ is dense in $\gR$. 
 
\Subsection{Finiteness properties}

 In classical mathematics, any continuous semialgebraic map $ f\colon \RR\to\RR $ has a finite table of signs and variations. But this table does not depend continuously on the parameters, for example when $ f(x)=g(\an,x) $ for parameters $ \an\in\RR^n $, where $g$ is the continuity extension of a continuous semialgebraic map $ \ov g\colon \RRa^{n+1}\to\RRa $.

However, when $f$ is a monic polynomial, a constructive approach to the question is to use virtual root maps. For example we can see Items \textsl{\ref{ivrBudan}}, \textsl{\ref{ivrTVI}}, \textsl{\ref{ivrExtrema}} and \textsl{\ref{ivrMinAbs}} of  \thref{thVirtualRoots} as well as Proposition \ref{factCompleteTable}. 

Analogous results should be established in constructive mathematics for arbitrary continuous semialgebraic maps, at least for Proposition \ref{factCompleteTable}, but restricted to maps on the interval $ \ClI{-1,1} $ (on $\RR$ this would not be possible). It may be necessary to use an infinite $ \Vou $. 

\section{Some questions} 

\begin{question} \label{questthParamcontFsagc0} 
Give a complete constructive proof of \thref{thParamcontFsagc0}.
\end{question}

\begin{question} \label{questeqratcont} Show that the Horn rule \Tsbf{FRAC} is not valid in \Sa{Co--}. Similarly, show that the Horn rule \pref{eqratcont2} which is equivalent to the existence of the map \pref{eqratcont} \paref{subsecratcon} is not valid in~\Sa{Co--}. 
\end{question}

\begin{question} \label{quest17thRR} 
Determine which algebraic properties of $\RR$ allow us to prove the constructively satisfying forms of Positivstellensätze proved for~$\RR$ in~\cite{GL93}. See in particular Question \ref{quest17H}. 
\end{question}

\Subsection{Continuous variations}

Continuous semialgebraic maps $ \RR^n\to\RR $ could have been defined as follows. This was the definition adopted in \cite[Definition 3.3]{LM2017}.

\begin{definition} \label{defiAlgAfrnz}
Let $\gR$ be a commutative ring. A map $ f\colon  \gR^n\to\gR $ is said to be \textsl{algebraic on $ \Rxn=\Rux $} if there is a polynomial \hbox{$ g(\ux,y)=\sum_{k=0}^mg_k(\ux)y^k\in\gR[\ux, y] $}, with at least one of the coefficients of a $ g_k(\ux) $  invertible, such that $ g(\uxi,f(\uxi))=0 $ for all $ (\uxi)\in\gR^n $.
\end{definition}

\begin{definition}[alternative definition to \ref{defiFSAGC2}] \label{defiFSAGC} 
 Let $\gR$ be a 
subfield of $\RR$.
A map $ f\colon \gR^n\to \gR $ is said to be \textsl{semialgebraic continuous} if, on the one hand, it is algebraic on $ \Rux $ and if, on the other hand, it has a \mcu on everything bounded à la {\L}ojasiewicz, given by an inequality \ref{eqfactfsagcLoja} as in Lemma \ref{factfsagcLoja}. \index{continuous semialgebraic map!on an ordered subfield of $\RR$, alternative definition}
\end{definition}

This definition is legitimate for subfields of $\RR$ because 
\begin{itemize}
 
\item it is valid in classical mathematics, 
 
\item it has a clear constructive meaning,
  
\item semialgebraic maps which are continuous in the sense of Definition \ref{defiFSAGC2} are also continuous in the sense of Definition \ref{defiFSAGC}.
\end{itemize}

\begin{question} \label{questRRfsagc1} 
 If a map $ f\colon \RR^n\to \RR $ is algebraic on $ \RR[\ux] $ and if it is uniformly continuous on all bounded subsets, does it have a \mcu à la {\L}ojasiewicz, as in Lemma \ref{factfsagcLoja}?
\\
NB: the answer is positive in classical mathematics, but it seems much trickier in constructive mathematics.
\end{question}

\begin{question} \label{questRRfsagc2} Is a semialgebraic map that is continuous in the sense of Definition \ref{defiFSAGC} also continuous in the sense of Definition \ref{defiFSAGC2}? 
\\
Yes in classical mathematics, but the problem arises in constructive mathematics, and seems very difficult. It may be that, by preferring the Definition \ref{defiFSAGC2} to Definition~\ref{defiFSAGC}, we are in a situation similar to that which led Bishop to define the continuity of a map $ \RR\to\RR $ as meaning uniform continuity on any bounded interval.
\end{question}

\newpage \thispagestyle{empty}


\chapter{$f$-rings}\label{chap-afr}
\Today
\minitoc

\section*{Introduction}
\addtocontents{toc}{\skip0.8em}
\addcontentsline{toc}{section}{Introduction}
\rdb

This chapter takes up the problem of \ndsofs from scratch. 

All the theories introduced admit extensions essentially equivalent to the theory \Sa{Co} of \ndsofs (Definition \ref{defiConondiscret}). 

We start (Section \ref{sectrdisad}) with the theory of distributive lattices (a \ndsof is a distributive lattice for its order relation). 

In Section \ref{secgrl} we deal with $\ell$-groups or lattice groups (purely equational theory, valid for addition on the reals).

Then (Section \ref{secfrings}) we move on to $f$-rings ($f$-rings in french litterature), a theory inspired by rings of continuous real maps. 

Section \ref{secArftr} describes dynamical theories in which we add the
predicate $\, \cdot>0 \,$  (strict $f$-rings and variants).

Section \ref{secCOG} proposes a return to the theory \Sa{Co} by confronting it with suitable extensions of the theory of strict $f$-rings. 

\smallskip In this chapter we say \gui{group} for \gui{abelian group}. And the rings are commutative unitary as throughout the memoir.

\section{Distributive lattices}\label{sectrdisad}

References: \cite{CC00,CL05,CLQ2006,Lor1951}

\Subsection{Distributive lattice theory}\label{subsecTrdi}

The theory of lattices \SA{Tr0} with the only sort $\TR$ is a purely equational theory based on the following signature,\footnote{More precisely, we could prefer $ \cdot=_\TR\cdot $, $ \cdot\vi_\TR\cdot $, $ \cdot\vu_\TR\cdot $, $ 0_\TR $ and $ 1_\TR $.}
\Sigt{\TR}{\cdot=\cdot\mathrel{;}\cdot \vi \cdot, \cdot \vu \cdot, 1, 0}
\label{NOTASigTr}

\vspace{-1em}

\noindent Symbols $\vu$ et $\vi$ we use for the binary least upper bound 
and greatest lower bound must not be confused with the symbols~$\vuu$ and $\vii$ for the logical disjunction and conjunction.

 In addition to the axioms of equality we have the following axioms

\TwoRegles{
 \labu $ \vd 0\vi x=0 $ 
 \labu $ \vd x\vi x=x $ 
 \labu $ \vd x\vi y=y\vi x $ 
 \labu $ \vd (x\vi y)\vi z=x\vi (y\vi z) $ 
 \labu $ \vd (x\vi y)\vu x=x $ 
}
{
 \labu $ \vd 1\vu x=1 $ 
 \labu $ \vd x\vu x=x $ 
 \labu $ \vd x=y\vu x $ 
 \labu $ \vd (x\vu y)\vu z=x\vu (y\vu z) $ 
 \labu $ \vd (x\vu y)\vi x=x $ 
} 

\noindent We define $ x\geq y $ as an abbreviation of $ x=x\vu y $. This is an extension of the theory of ordered sets.

\smallskip The following rules are easily proved

\DeuxRegles{
 \labu $\vd 0\leq  x\leq 1$
}
{
 \labu $\,\,1=0\vd  x=0$
 }

\smallskip The theory \SA{Tr} of \textsl{non-trivial lattices} is obtained by adding the collapse axiom

\Regles {\lAb{CL$_{\Tr}$} $\,\,1=_{\Tr} 0\vd \Bot$}

\smallskip 
The theory \SA{Trdi} of \textsl{distributive lattices} is obtained by adding the following distributivity axiom (the dual axiom, obtained by reversing the order, is deduced from this)

\Regles { \labu $ \vd (x\vi y)\vi z=(x\vi z)\vu (y\vi z) $}

The theory \SA{Etob} of \textsl{bounded linearly ordered sets}
is obtained from \sa{Trdi} by adding the axiom of total order 

\Regles {\labu $\vd x=x\vi y\vou y=x\vi y$} 

\smallskip  
The theory \SA{AgB} of \textsl{Boolean algebras} is obtained from \sa{Trdi} by adding the axiom 

\Regles {\labu $\vd \Exists y\,(x\vi y=0\vet x\vu y=1)$}

\smallskip  
The theory \SA{ABo} of \textsl{Boolean rings} is obtained from \sa{Ac} en by adding the axiom 
 
\Regles {\labu $\vd x^2=x$}

It is well known that theories \sa{AgB} and \sa{ABo} are \esid.

\smallskip 
The theory \SA{AgBo} of the Boolean algebra \hbox{$\BB=\so{\zB,\uB}$} is obtained from \sa{Trdi} by adding the axiom 

\Regles {\labu $\vd x=0\vou x=1$}

\Subsection{Ideals and filters in a distributive lattice}\label{subsecTrdiIdeFi}
 An \textsl{ideal} $ \fb $ of a distributive lattice $ (\gT,\vi,\vu,0,1) $ is a part that satisfies the constraints:
\begin{equation}\label{eqIdeal}
\left.
\begin{array}{rcl}
  & & 0 \in \fb  \\
x,y\in \fb & \Longrightarrow  & x\vu y \in \fb  \\
x\in \fb ,\; z\in \gT& \Longrightarrow  & x\vi z \in \fb  \\
\end{array}
\right\}
\end{equation}
We denote $ \gT/(\fb=0) $ the quotient lattice obtained by forcing the elements of $ \fb $ to be zero. Ideals can also be defined as kernels of morphisms.

A  \textsl{principal ideal} is an ideal generated by a single element $a$, and is denoted by $\dar a $. We have $\dar a=\sotq{x\in\gT}{x\leq a}$. The ideal $\dar a$, subject to the laws $\vi$ and $\vu$ of $\gT$ is a distributive lattice in which the maximum element is $a$. The canonical injection $ \dar a\to \gT$ is not a morphism of distributive lattices because the image of $a$ is not equal to $ 1_\gT $. On the other hand, the map $ \gT\rightarrow \dar a,\;x\mapsto x\vi a $ is a surjective morphism, which therefore defines $\dar a$ as a quotient structure $\gT/(a=1)$.

The notion of \textsl{filter} is the opposite notion (obtained by reversing the order relation) to that of ideal.

Let $ \fa $ be an ideal and $ \ff $ a filter of $\gT$. We say that $(\fa,\ff) $ is a \textsl{saturated pair} in $  \gT $ if
\[
(g\in \ff,\; x\vi g \in \fa) \Longrightarrow x\in \fa,
\hbox{ and }(a\in \fa,\; x\vu a \in \ff) \Longrightarrow x\in \ff.
\]
A saturated pair is a pair $ (\varphi^{-1}(0),\varphi^{-1}(1)) $ for a morphism $ \varphi\colon \gT\to\gT' $ of distributive lattices. When $(\fa,\ff) $ is a saturated pair, we have the equivalences
\[
1\in \fa\; \;\Longleftrightarrow\;\; 0\in \ff
\Longleftrightarrow;\ (\fa,\ff)=(\gT ,\gT )
\]

If $A$ and $ B $ are two parts of $\gT$ we note
\begin{equation}\label{eqvuvi}
A\vu B=\sotq{ a\vu b}  {a\in A,\,b\in B}  \; \hbox{ and } \; A\vi
B=\sotq{ a\vi b}  {a\in A,\,b\in B}.
\end{equation}
Then the ideal generated by two ideals $ \fa $ and $ \fb $ is equal to $ \fa\vu\fb $. The set of ideals of $\gT$ itself forms a distributive lattice\footnote{In fact it is necessary to introduce a restriction to really obtain a set, so that we have a well-defined procedure for constructing the ideals concerned. For example, we can consider the set of ideals obtained from the principal ideals by certain predefined operations, such as countable meetings and intersections.} for inclusion, with the lower bound of $ \fa $ and $ \fb $ being the ideal $ \fc=\fa\vi\fb $. Thus the operations $\vu$ and $\vi$ defined in (\ref{eqvuvi}) correspond to the sup and inf in the lattice of ideals.

When we consider the lattice of filters, we must pay attention to what the inversion of the order relation produces: $ \ff\cap\ffg=\ff\vu\ffg $ is the inf of the filters $ \ff $ and $ \ffg $, whereas their sup is the lattice generated by $ \ff\cup \ffg $, equal to $ \ff\vi\ffg $.

\Subsection{Quotients}

A \textsl{quotient distributive lattice $\gT'$ of $\gT$} is given by a binary relation
 $ \preceq $ over $  \gT $ satisfying the following properties:
\begin{equation} \label{eqPreceq}
\left.
\begin{array}{rcl}
a\leq b&  \Longrightarrow  & a\preceq b   \\
a\preceq b,\,b\preceq c&  \Longrightarrow  & a\preceq c   \\
a\preceq b,\,a\preceq c&  \Longrightarrow  & a\preceq b\vi c   \\
b\preceq a,\,c\preceq a&  \Longrightarrow  & b\vu c\preceq a
\end{array}
\right\}
\end{equation}

\begin{proposition}\label{propIdealFiltre}
Let $\gT$ be a distributive lattice and $ (J,U) $ be a pair of parts of $\gT$. Consider the quotient $\gT'$ of $\gT$ defined by the relations $ x=0 $ for the $ x\in J $ and $ y=1 $ for the $ y\in U $. Then we have $ a\leq_{\gT'}b $ if, and only if, there exists a finite part $ J_0 $ of $ J $ and a finite part $ U_0 $ of $ U $ such that:
\begin{equation} \label{eqpropIdealFiltre}
a \vi \Vi U_0 \; \leq_\gT\; b \vu \Vu J_0
\end{equation}
We will note $ \gT/(J=0,U=1) $ this quotient lattice $ \gT'.$ 
\end{proposition}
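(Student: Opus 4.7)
The plan is to introduce, for a pair $(J,U)$ of parts of $\gT$, the binary relation $\preceq$ defined by the right-hand side of \eqref{eqpropIdealFiltre}: namely $a \preceq b$ exactly when there exist finite $J_0 \subseteq J$ and $U_0 \subseteq U$ with $a \vi \Vi U_0 \leq_\gT b \vu \Vu J_0$ (using the conventions $\Vi\emptyset = 1$ and $\Vu\emptyset = 0$). The strategy is to check that $\preceq$ satisfies the four clauses \eqref{eqPreceq}, so that it defines a quotient of $\gT$, and then to verify that this quotient is characterised precisely by forcing the elements of $J$ to be $0$ and those of $U$ to be $1$.

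First I would establish the four properties. Reflexivity (if $a \leq_\gT b$ then $a \preceq b$) is immediate by taking $J_0 = U_0 = \emptyset$. For the sup/inf properties, given $a \preceq b$ via $(J_0,U_0)$ and $a \preceq c$ via $(J_1,U_1)$, I take $J_2 = J_0 \cup J_1$, $U_2 = U_0 \cup U_1$, and use $a \vi \Vi U_2 \leq (b \vu \Vu J_0) \vi (c \vu \Vu J_1) \leq (b \vi c) \vu \Vu J_2$ (the last step uses distributivity). The dual argument handles $b \vu c \preceq a$.

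The step I expect to be the most delicate is transitivity, as it is the only one that genuinely exploits distributivity in a non-trivial way. Suppose $a \preceq b$ via $(J_0,U_0)$ and $b \preceq c$ via $(J_1,U_1)$. Setting $J_2 = J_0 \cup J_1$ and $U_2 = U_0 \cup U_1$, I would compute
\[
a \vi \Vi U_2 \;\leq\; (b \vu \Vu J_0) \vi \Vi U_1 \;=\; (b \vi \Vi U_1) \vu (\Vu J_0 \vi \Vi U_1) \;\leq\; (c \vu \Vu J_1) \vu \Vu J_0 \;=\; c \vu \Vu J_2,
\]
where the middle equality invokes distributivity and the final inequality uses $\Vu J_0 \vi \Vi U_1 \leq \Vu J_0$.

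Having established that $\preceq$ defines a quotient $\gT'$, it remains to see that this is the quotient $\gT/(J=0,U=1)$. On the one hand, for $x \in J$ we have $x \preceq 0$ (take $J_0 = \{x\}$, $U_0 = \emptyset$), and for $y \in U$ we have $1 \preceq y$ (take $J_0 = \emptyset$, $U_0 = \{y\}$); together with the converse inequalities (which hold in $\gT$ already), this shows $x =_{\gT'} 0$ and $y =_{\gT'} 1$. On the other hand, for the universal property, any morphism $\varphi\colon \gT \to \gT''$ sending $J$ to $0$ and $U$ to $1$ must preserve the inequality $a \vi \Vi U_0 \leq b \vu \Vu J_0$, which in $\gT''$ becomes $\varphi(a) \leq \varphi(b)$; hence $\varphi$ factorises through $\gT'$, and $\preceq$ is the smallest such relation. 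This finishes the characterisation \eqref{eqpropIdealFiltre}.
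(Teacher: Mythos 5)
Your proof is correct. The paper states Proposition \ref{propIdealFiltre} without a proof of its own, so there is no official argument to compare against, but your approach is exactly the one the paper's setup suggests: you take the candidate relation $\preceq$ given by the right-hand side of \eqref{eqpropIdealFiltre} and verify the four clauses of \eqref{eqPreceq} that the paper uses to define a quotient distributive lattice.

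All four verifications are sound. Clauses (1), (3), (4) follow by taking unions $J_0\cup J_1$ and $U_0\cup U_1$ and a routine application of distributivity and monotonicity, and your transitivity computation
\[
a \vi \Vi U_2 \;\leq\; (b \vu \Vu J_0) \vi \Vi U_1 \;=\; (b \vi \Vi U_1) \vu (\Vu J_0 \vi \Vi U_1) \;\leq\; c \vu \Vu J_2
\]
is the essential step where distributivity does real work; it is correct. The identification of the quotient then proceeds as you say: $x\preceq 0$ for $x\in J$ (take $J_0=\{x\}$, $U_0=\emptyset$), $1\preceq y$ for $y\in U$ (take $J_0=\emptyset$, $U_0=\{y\}$), so the quotient by $\preceq$ does satisfy the imposed relations, and the universal-property argument (any lattice morphism killing $J$ and sending $U$ to $1$ maps the inequality $a\vi\Vi U_0\leq b\vu\Vu J_0$ to $\varphi(a)\leq\varphi(b)$) shows $\preceq$ is the smallest admissible quotient relation forcing $J=0$ and $U=1$, hence it is $\leq_{\gT'}$. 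The empty-set conventions $\Vi\emptyset=1$, $\Vu\emptyset=0$ are the right ones and worth stating explicitly as you did.
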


\Subsection{Representation theorem}

The following constructive theorem gives in classical mathematics the representation theorem which says that any distributive lattice is a sub-distributive-lattice of the Boolean algebra of the parts of a set.
Or the one which says that any distributive lattice is a subdirect product of bounded linearly ordered sets.

\begin{theorem} \label{thTrdiAgb}
The theory \Sa{Trdi} of \trdis, the theory \Sa{Etob} of bounded linearly ordered sets, the theory \Sa{AgB} of Boolean algebras ans the theory \Sa{AgBo} of Boolean rings prove the same \ralgs. 
\end{theorem}
\begin{proof} We must demonstrate that a fact in a dynamic algebraic structure of type \sa{Trdi}, that is, in a distributive lattice $\gT_0$, is provable in one of the 4 theories if, and only if, it is provable in \sa{Trdi}. It is enough to see this for \sa{Trdi} and \sa{AgBo} because the others are intermediate theories. For a dynamic algebraic structure of type \sa{AgBo}, a proof uses a computation tree governed by the additional axiom $\vd x=0\vou x=1$. At a node $\nu$ of such a tree, the accumulated hypotheses give a distributive lattice $\gT_\nu$ quotient of $\gT_0$ obtained by forcing certain elements to be equal to 0 or 1. If $\nu$ is not a leaf, below the node $\nu$ we have two branches: a certain element $y\in\gT_0$ defines an element $\ov y\in\gT_r$, in the first branch we force $\ov y=0$ and in the other we force $\ov y=1$. If a fact $a=b$ of $\gT_\nu$ is proven in the two new branches, it is also provable in $\gT_\nu$ because the canonical morphism $\gT_\nu\to\gT_\nu/(\ov y=0)\times \gT_\nu/(\ov y=1)$ is injective.
\end{proof}

A corollary is the following proposition.

\begin{proposition} \label{propTriTrdi} Let $\xn$ be \elts in a \trdi.\\
We define $\Tri(\ux)=[\Tri_1(\ux), \Tri_2(\ux),\ldots,\Tri_n(\ux)]$, where 
\[\ndsp
\Tri_k(\xn) =\Vi_{I\in \cP_{k,n}}\big(\Vu_{i\in I}x_i\big) \quad(k\in\lrbn)\footnote{We note $\cP_{k,n}$ the set of $k$ \elts subsets of $\cP_{n}:=\so{1,\dots,n}$.}.
\]
\noindent 
We have the following results.
\begin{enumerate}
\item $\Tri_k(\xn) =\Vu_{J\in \cP_{n-k+1,n}}\big(\Vi_{j\in J}x_j\big)$, $(k\in\lrbn)$.
\item $\Tri_1(\ux)\leq \Tri_2(\ux)\leq \cdots \leq \Tri_n(\ux).$
\item If $x_i$'s are comparable, the list $\Tri(\ux)$
is the list of~$x_i$'s in nondecreasing order (it is not necessary that the lattice be discrete).
\end{enumerate} 
\end{proposition}
%
\begin{proof}
This is clear in the case of a bounded linearly ordered set.
\end{proof}

\Subsection{Coverings and gluings of \trdis}

In commutative algebra, if $ \fa $ and $ \fb $ are two ideals of a ring $\gA$ 
we have an \gui{exact sequence} of $\gA$-modules (with $ j $ and $p$ ring homomorphisms)
\[
0\to\gA/(\fa\cap\fb)\vers{j}(\gA/\fa) \times 
(\gA/\fb)\vers{p}\gA/(\fa+\fb)\to 0
\]
which can be read in everyday language: the system of congruences $ x\equiv a\mod\fa $, $ x\equiv b\mod\fb $ has a solution if, and only if, $ a\equiv b\mod\fa+\fb $ and in this case the solution is unique modulo $ \fa\cap\fb $. It is remarkable that this Chinese remainder theorem generalises to a system of congruences if, and only if, the ring is \textsl{arithmetic} (\cite[Theorem XII-1.6]{CACM}), i.e.\ if the lattice of ideals is distributive (the \gui{contemporary} Chinese remainder theorem concerns the special case of a family of two-by-two comaximal ideals, and it works without any hypothesis on the base ring).

Other epimorphisms in the category of commutative rings are localisations. And there is a covering principle analogous to the Chinese remainder theorem for localisations, which is extremely fruitful (the local-global principle).

 In the same way we can recover a distributive lattice from a finite number of its quotients, if the information they contain is \gui{sufficient}. This can be seen either as a procedure for covering (going from local to global), or as a version of the Chinese remainder theorem for distributive lattices. Let's take a closer look. 
\begin{definition}
\label{defRecouvTD}
Let $\gT$ be a distributive lattice, $ (\fa_i)_{i=1,\ldots n} $ (resp. $ (\ff_i)_{i=1,\ldots n} $) a finite family of ideals (resp. filters) of $\gT$. We say that the ideals $ \fa_i $ \textsl{cover $\gT$} if $ \Vi_i\fa_i=\so{0} $. Similarly we say that the filters $ \ff_i $ \textsl{cover $\gT$} if $ \Vi_i\ff_i=\so{1} $.
\end{definition}

\begin{theorem}[coverings of a \trdi]
\label{propRecouvTD}~
\begin{enumerate}
\item Let $\gT$ be a distributive lattice, $ (\fa_i)_{i=1,\ldots n} $ be a finite family of principal ideals of~$\gT$ and $ \fa=\Vi_i\fa_i $.

\begin{enumerate}
 
\item If $ (x_i) $ is a family of elements of $\gT$ such that for each $ i,j $ we have $ x_i\equiv x_j\;\mod\;(\fa_i\vu\fa_j=0) $, then there exists a unique $x$ modulo $\fa=0$ satisfying: $x\equiv x_i\;\mod\;(\fa_i=0)$ for each $i$.
 
\item Let us note $ \gT_i=\gT/(\fa_i=0) $, $ \gT_{ij}=\gT_{ji}=\gT/(\fa_i\vu\fa_j=0) $, $ \pi_i:\gT\to\gT_i $ and $ \pi_{ij}:\gT_i\to\gT_{ij} $ the canonical projections. If $(\fa_i)_{i=1}^n$ covers $\gT$, then $(\gT,(\pi_i)_{i=1,\ldots n})$ is the projective limit of the diagram $ ((\gT_i)_{1\leq i\leq n},(\gT_{ij})_{1\leq i<j\leq n}; (\pi_{ij})_{1\leq i\neq j\leq n})$ (see figure below). 
\end{enumerate}

\item Now let $ (\ff_i)_{i=1,\ldots n} $ be a finite family of principal filters and $\ff=\Vu_i\ff_i$.  
\begin{enumerate}
\item If $(x_i)$ is a family of elements of $\gT$ such that for each $i,j$ we have $x_i\equiv x_j\;\mod\;(\ff_i\vu\ff_j=1) $, then there exists a unique $x$ modulo $\ff=1$ satisfying: $x\equiv x_i\;\mod\;(\ff_i=1)$ fo each $i$.
\item Let $ \gT_i=\gT/(\ff_i=1) $, $ \gT_{ij}=\gT_{ji}=\gT/(\ff_i\cup\ff_j=1) $, $ \pi_i: \gT_i\to\gT_i $ and $ \pi_{ij}:\gT_i\to\gT_{ij} $ the canonical projections. If $\ff_i$'s cover $\gT$, $ (\gT,(\pi_i)_{i=1,\ldots n}) $ is the projective limit of the diagram $ ((\gT_i)_{1\leq i\leq n},(\gT_{ij})_{1\leq i<j\leq n};(\pi_{ij})_{1\leq i\neq j\leq n}) $ (see figure below).
\end{enumerate}
\end{enumerate}
\end{theorem}
 {\hspace*{10em}{
\xymatrix @R=2em @C=7em{
 & \gT \ar[rd]^{\pi _{k}}\ar[d]^{\pi _{j}}\ar[ld]_{\pi _{i}}\\
 \gT _i\ar[d]_{\pi _{ij}}\ar@/-0.75cm/[dr]^{\pi _{ik}} &
 \gT _j\ar@/-1cm/[dl]^{\pi _{ji}}\ar@/-1cm/[dr]_{\pi _{jk}} &
 \gT _k\ar@/-0.75cm/[dl]_{\pi _{ki}}\ar[d]^{\pi _{kj}} &
\\
 \gT _{ij} & 
 \gT _{ik} & 
 \gT _{jk} 
}
}}

\medskip There is also a procedure for gluing properly speaking (voir \cite[proposition 1.2.7]{CLQ2006}
).

\begin{theorem}[gluing \trdis]
\label{thRecolTD}
Suppose given a linearly ordered finite set~$I$ and in the category of distributive lattices a diagram

\snic{\big((\gT_i)_{i\in I},(\gT_{ij})_{i<j\in I},(\gT_{ijk})_{i<j<k\in I};
(\pi_{ij})_{i\neq j},(\pi_{ijk})_{i< j, j\neq k\neq i}\big)}

\noindent 
as in the figure below, as well as a family of elements 

\snic
{(s_{ij})_{i\neq j\in I}\in \prod\nolimits_{i\neq j\in I}\gT_{i}}

\noindent satisfying the following conditions:
\begin{itemize}
\item the diagram is commutative ($\pi_{ijk}\circ \pi_{ij}=\pi_{ikj}\circ \pi_{ik}$ for all $i$, $j$, $k$ distinct), 
\item for $i\neq j$, $\pi_{ij}$ is a morphism of passage to the quotient by the ideal $\dar s_{ij}$,
\item for $i$, $j$, $k$ distinct, $\pi_{ij}(s_{ik})=\pi_{ji}(s_{jk})$ and  $\pi_{ijk}$ is a morphism of passage to the quotient by the ideal $\dar\pi_{ij}(s_{ik})$.   
\end{itemize}

\smallskip {\hspace*{10em}
\xymatrix @R=2em @C=7em{
 \gT_i\ar[d]_{\pi _{ij}}\ar@/-0.75cm/[dr]^{\pi _{ik}} &
     \gT_j\ar@/-.8cm/[dl]_{\pi _{ji}}\ar@/-.8cm/[dr]^{\pi _{jk}} &
        \gT_k\ar@/-0.75cm/[dl]_{\pi _{ki}}\ar[d]^{\pi _{kj}} &
\\
 ~\gT_{ij}~ \ar[rd]_{\pi _{ijk}} & 
    ~\gT_{ik}~  \ar[d]^{\pi _{ikj}} & 
      ~\gT_{jk}~  \ar[ld]^{\pi _{jki}} 
\\
   &  ~\gT_{ijk}~ 
\\
}
}

\smallskip \noindent Then if $\big(\gT\,;\,(\pi_i)_{i\in I}\big)$ is the projective limit of the diagram, the $\pi_i:\gT\to \gT_i$ form a covering of $\gT$ by quotients by principal ideals, and the diagram is isomorphic to that obtained in \thref{propRecouvTD}.
More precisely, there exist $s_i$'s $\in\gT$ such that each $\pi_i$ is a  morphism of passage to the quotient by the ideal $\dar s_i$ and $\pi_i(s_j)=s_{ij}$ for all $i\neq j$.

\noindent The analogous result is valid for quotients by principal filters.
\end{theorem}

The previous covering and gluing propositions have analogous versions for the category of modules over a commutative ring and for that of lattice groups of Krull dimension $\leq 1$. On the other hand, in the category of commutative rings, only the covering procedure (by localisation in comaximal elements) is valid, and it was necessary for Grothendieck to invent the category of schemes to have a gluing procedure: a quasi-compact quasi-separated scheme is nothing other than the gluing of a finite number of affine schemes along quasi-compact open sets, which corresponds to an  \gui{abstract}  gluing of commutative rings in the style of \thref{thRecolTD} for localisation epimorphisms (see \cite{CLS2009}).

\section{$\ell$-groups}\label{secgrl}

\Subsection{Ordered and linearly ordered abelian groups}

The \SA{Gao} theory of ordered (abelian) groups is defined as follows. There is only one sort, called $\Gao$.\index{abelian group!ordered ---}

\vspace{-.5em}
\Sigt{\Gao}{\cdot=0,\cdot\geq 0\mathrel{;}\cdot+\cdot,-\,\cdot,0}
\label{NOTASigGao}

\noindent {\bf Abbreviations}

\smallskip \noindent \textsl{Predicates}

\vspace{-1em} \DeuxCols{
\begin{itemize}
\itbu $x = y $ means $ x - y = 0$
\itbu $x \leq y $ means $ y\geq x$
\end{itemize}
}
{\begin{itemize}
\itbu $x \geq  y $ means $ x - y \geq  0$
\end{itemize}
} 

\medskip \noindent {\bf Axioms}

\smallskip \noindent {\it Direct rules of abelian groups}

\DeuxRegles{
\Lab{ga0} $\vd 0=0$
\Lab{ga2} $\,\,x=0\vd -x=0 $
}
{
\Lab{ga1} $\,\, x=0\vet y=0\vd x+y=0$
}

\smallskip \noindent NB. The rules \tsbf{ga0}, \tsbf{ga1} and \tsbf{ga2} concern the purely equational theory \SA{Ga} of abelian groups. We must then replace, in the explanation given on \paref{Ac-comments} for commutative rings (example \ref{exaAc}), the computational machinery of commutative rings $\ZZxn$ (freely generated by the $x_i$'s) by that of abelian groups $\ZZ^{\{\xn\}}$ (freely generated by the $x_i$'s).

\smallskip \noindent {\it Règles pour la relation d'ordre}

\DeuxRegles{
\Lab{gao0} $\vd x \geq 0$
\Lab{Gao} $\,\, x\geq 0\vet x\leq 0\vd x=0$
}
{
\Lab{gao1} $\,\, x\geq 0\vet y\geq 0\vd x+y\geq 0$
}

The reflexivity and transitivity of $\cdot\geq \cdot$, and the rule \tsbf{gao2} $\,\,x\geq y\vd x+z\geq y+z\,$ esult directly from the definitions. The \tsbf{Gao} rule gives the antisymmetry of $\cdot\geq \cdot$.

The \SA{Gto} theory of \textsl{linearly ordered (abelian) groups} is obtained from  \Sa{Gao} by adding as an axiom the dynamic rule \rdy \Tsbf{OT}.

\Subsection{Definition of the purely equational theory of \grls}

The theory \SA{Grl} of \textsl{$\ell$-groups} (or \textsl{reticulated groups}, or \textsl{lattice groups}) is defined as follows. There is only one sort, named $ \GRL $.\index{l-group@$\ell$-group}\index{l-group@$\ell$-group!lattice group}

\vspace{-.5em}
\Sigt{\GRL}{\cdot=0\mathrel{;}\cdot+\cdot,-\,\cdot,\cdot\vu\cdot,0}
\label{NOTASigGrl}
 
\smallskip\noindent {\bf Abbreviations}

\smallskip\noindent \textsl{Function symbols}
 
\vspace{-1em} \TwoCols{
\begin{itemize}
\labu $ x\vi y $ means $ - (-x\vu -y) $ 
\labu $ \abs{x} $ means $ x \vu -x $ 
\end{itemize}
}
{
\begin{itemize}
\labu $ {x}^+ $ means $ x \vu 0 $ 
\labu $ {x}^- $ means $ -x \vu 0 $ 
\end{itemize}
}

\medskip\noindent \textsl{Predicates}

\vspace{-1em} \TwoCols{
\begin{itemize}
\labu $ x = y $ means $ x - y = 0 $ 
\labu $ x \perp y $ means $ \abs x \vi \abs y =0 $ 
\end{itemize}
}
{\begin{itemize}
\labu $ x \geq y $ means $ x \vu y = x $ 
\labu $ x \leq y $ means $ y\geq x $ 
\end{itemize}
} 

\medskip\noindent {\bf Axioms}

\smallskip After the axioms of abelian groups, we add the following axioms.

\smallskip\noindent {\it Rule for the compatibility of $\vu$ with equality}

\Regles{
\lAb{sup$_=$} $\,\, x=0\vet y=0\vd (u+x)\vu (v+y)= u\vu v$ \label{Axsup=}
}
 
\smallskip\noindent {\it Equality rules}

\smallskip 
The following identities express the fact that $\vu$ defines an unbounded sup-lattice as well as the compatibility of $\vu$ with $ + $ (the fact that translations are  morphisms for the law $\vu$).\rdb

\DeuxRegles{
\Lab{sdt1} $\vd x\vu x=x $
\Lab{sdt2} $\vd x\vu y=y\vu x$
}
{
\Lab{sdt3} $\vd (x\vu y)\vu z=x\vu (y \vu z)$
\Lab{grl} $\vd x+(y\vu z)=(x+y)\;\vu\;(x+z)$
}

\smallskip We thus obtain an  (abelian) $\ell$-group with all the related geometric rules (see \cite{BKW}, \cite[Chapter 2]{Zaa97}, [Bourbaki, Algebra, Chapter 6], and \cite[Section XI-2, results 2.2-2.5, 2.11 and 2.12]{CACM}). 

\smallskip A theory essentially identical to \sa{Grl} is obtained from \Sa{Gao}  by adding the axioms which say that two elements $x$ and $y$ of the ordered group always have an upper bound, which we denote by $x\vu y$.\footnote{The existence of the upper bound is indeed a unique existence due to antisymmetry.}.\label{GrlGao}

\DeuxRegles{
\Lab{sup1} $ \vd x\vu y\,\geq x $
\Lab{Sup} $ \,\, z\geq x\vet z\geq y\vd z\geq x\vu y$
}
{
\Lab{sup2} $ \vd x\vu y\,\geq y $
}

\Subsection{Some rules derived in \sa{Grl}}

\DeuxRegles{
\Lab{grl1} $\cramped{\vd x\vu(y_1\vi y_2)= (x\vu y_1)\vi (x\vu y_2)}$
\Lab{grl2} $\cramped{\vd x\vi(y_1\vu y_2)= (x\vi y_1)\vu (x\vi y_2)}$
\Lab{grl3} $\vd (x\vi y)\vu x=x$
\Lab{grl4} $\vd (x\vu y)\vi x=x$
\Lab{grl5} $\vd (x\vi y)+(x\vu y) = x+y$
\Lab{grl6} $\vd x=x^+-x^-$
\Lab{grl7} $\vd \abs{x}=x^++x^-=x^+\vu x^-$
}
{
\laB{Sup} $ \,\, z\geq x\vet z\geq y\vd z\geq x\vu y$
\laB{Gao} $\,\, x\geq 0\vet x\leq 0 \vd x=0$

\Lab{Grl1} $\cramped{\,\, y\geq 0\vet z\geq 0\vet y\perp z \vd (y-z)^+=y}$
\Lab{Grl2} $\,\, x\leq z \vd (x\vi y)\vu z=x\vi(y\vu z) $
\lAb{Grl3$_n$} \label{AxGrl3} $\,\, nx\geq0\vd x\geq0\quad (n\in\NN, \,n>1) $
\lAb{Grl4$_n$} $\,\, nx\geq \Vi_{k=1}^n(ky+(n-k)x) \vd x\geq y $ \label{AxGrl4}
}

\vspace{-.7em}
\Regles{\Lab{Gauss} $\,\, x\geq 0\vet y\geq 0\vet z\geq 0\vet x\perp y\vet x\leq y+z \vd x\leq z $
}

\Subsection{Quotient structures}\label{subsecgrlsolide}

 The kernels of morphisms of ordered (abelian) groups are the \textsl{convex subgroups}: a subgroup $H$ is convex if, and only if, it verifies the property
\[
 (x\in H,\,y\in G,\, 0\leq y\leq x)\Rightarrow y\in H.
\]
If a subgroup is convex, the order relation \gui{pass to quotient} in $ G/H $ \index{convex!subgroup (in an ordered group).}.

The kernels of $\ell$-group morphisms are the \textsl{solid subgroups}. A subgroup is solid if, and only if, it is a convex  $\ell$-subgroup, or convex and stable by $ x\mapsto \abs x $ (\cite[theorem 2.2.1]{BKW})\index{solid!subgroup (in an $\ell$-group)}.

\smallskip The solid subgroup generated by an element $a$ is $ \cC(a):=\sotq{x\,}{\exists n\in\N,\abs x\leq n\abs a} $.

Solid finitely generated subgroups are all principal: $ \cC(\abs a + \abs b)=\cC(\abs a \vu \abs b) $ is the solid subgroup generated by $a$ and $b$. The principal solid subgroups form a distributive lattice (with $ \cC(a)\cap\c(b)=\cC(\abs a \vi\abs b) $), except that a maximum element is missing, which can be added formally. 

The Krull dimension of this distributive lattice is called the \textsl{dimension, or height, of the $\ell$-group}. This is a constructive definition equivalent to the classical definition in classical mathematics, but does not require the existence of prime convex subgroups (see \cite[section XIII-6]{CACM} for the Krull dimension of distributive lattices). In the case of linearly ordered groups, this corresponds to the \textsl{rank} of the group.

\Subsection{Representation theorem} 
In classical mathematics, any lattice group is a subgroup of a product of linearly ordered groups.

The method of proof explained in \cite[Principle XI-2.10]{CACM} gives a constructive equivalent of this property: to prove a concrete fact in a lattice group, we can always act as if we were in the presence of a product of linearly ordered groups.

\smallskip In fact, we have a {better} (more formal) formulation in the language of dynamical theories: \textsl{both dynamical theories (with and without the axiom of total order) prove the same Horn rules}. Let's look at this in more detail.

%
\begin{definition} \label{defiGtosup}
The dynamical theory \SA{Gtosup} of \textsl{linearly ordered groups with sup}\footnote{Or totally ordered groups with sup.} is the dynamical theory of $ \ell$-groups to which we add the \rdy \tsbf{OT} saying that the order is total.

\Regles{
\laB{OT} {$\vd x\geq 0\;\vou\;x\leq 0 $}} 
\end{definition}

\vspace{-.5em}
Note that compared with the usual theory of linearly ordered groups \Sa{Gto} we have introduced into the signature the law $ \cdot\vu\cdot $ which is well-defined. The \Sa{Gtosup} theory is essentially identical to the \Sa{Gto} theory.

\begin{pstf} [for $\ell$-groups] \label{thfairecommesi0} \index{Positivstellensatz!formal --- !for $\ell$-groups}  ~\\
The dynamical theories \Sa{Grl} and \Sa{Gtosup} prove the same Horn rules. 
\end{pstf}
\begin{proof}
The reader can refer to the proof of Formal Positivstellensatz \ref{thfairecommesi}, and change the very little that needs to be changed.
\end{proof}

 For example, the reader can easily prove the rules \Tsbf{Grl2} and \Grlqn\ using  \pstref{thfairecommesi0}, which would otherwise be much less simple.

A corollary in classical mathematics of  \pstref{thfairecommesi0} is the  the following theorem (as a special case of  \thref{thcolsimralg}).
\begin{corollaryc}[representation theorem]
 \label{corthfairecommesi0} 
\emph{See \cite[Lorenzen, 1939]{Lor1939}, and the developments \cite{Lor1950,Lor1953} commented in \cite{CLN2019b}.}
Any $\ell$-group $G$ is a subdirect product of linearly ordered groups\footnote{Any $\ell$-group $G$ is a substructure of a product of linearly ordered quotient groups of $G$. 
In other words, there is a lattice subgroup of a product of linearly ordered groups which, as a lattice group, is isomorphic to the original lattice group.} quotients of $G$.
\end{corollaryc}

\begin{remark} \label{remLLM01} 
 The theory of algorithmic complexity in the space of continuous real maps on the interval $\ClI{0,1} $ makes natural use of the divisible  $\ell$-group structure (2-divisibility is sufficient). This space of functions is seen essentially as a Riesz space, and the multiplication of maps is relegated to the background. See for example \cite[definition 3.2.1]{LLM2001}. Note also that in this theory  formulas are replaced by circuits (a short circuit can encode a very long formula). In this case we are in analysis rather than abstract algebra. 
 \eoe\end{remark}

An example of applying the formal Positivstellensatz for $\ell$-groups is given in \cite[Fact XI-2.12]{CACM} which we reproduce below.

\begin{fact}[other identities in $\ell$-groups]\label{factGpRtcl} ~\\
Let $x$, $y$, $ x' $, $ y' $, $z$, $ t\in G $, $ n\in\N $, $ x_1 $, \dots, $ x_n\in G $.

\vspace{-.10em} 
\begin{enumerate}
 
\item \label{i1factGpRtcl} $ x+y =\abs{x-y} +2(x\vi y) $ 
 
\item $ (x\vi y)^+=x^+\vi y^+ $, $ (x\vi y)^-=x^-\vu y^- $, $ (x\vu y)^+=x^+\vu y^+ $, $ (x\vu y)^-=x^-\vi y^- $.
 
\item $ 2(x \vi y)^+ \leq (x+y)^+ \leq x^++y^+ $.
 
\item $ \abs{x+y} \leq \abs{x}+\abs{y};:\; $ 
 $ \abs{x}+\abs{y}=\abs{x+y}+2(x^+\vi y^-) +2( x^-\vi y^+) $.
 
\item $ \abs{x-y} \abs{x}+\abs{y};:\; $ 
 $ \abs{x}+\abs{y}=\abs{x-y}+2(x^+\vi y^+) +2( x^-\vi y^-) $.
 
\item $ \abs{x+y}\vu\abs{x-y}=\abs{x}+\abs{y} $.
 
\item $ \abs{x+y}\vi\abs{x-y}=\abS{\abs{x}-\abs{y}} $.

\item $ \abs{x-y}=(x\vu y)-(x\vi y) $.
 
\item $ \abs{(x\vu z)-(y\vu z)}+\abs{(x\vi z)-(y\vi z)}= \abs{x-y}. $ 
 
\item $ \abs{x^+ - y^+} + \abs{x^- - y^-} = \abs{x-y} $.
 
\item \label{i11factGpRtcl} $ x\leq z \;\Longrightarrow\; (x\vi y)\vu z= x\vi (y\vu z) $.
 
\item $ x+y=z+t \;\Longrightarrow\; x+y=(x\vu z)+(y\vi t) $.
 
\item \label{i13factGpRtcl} $ n\, x\geq \Vi_{k=1}^n (k y+(n-k)x) \,\Longrightarrow\,x\geq y $.
 
\item $ \Vu_{i=1}^nx_i = \sum_{k=1}^n(-1)^{k-1}
 \sum_{I\in \cP_{k,n}}\Vi_{i\in I}x_i\big) $\,\, ($n=2$, $x\vi y + x\vu y =x+y$).
 
\item $ x\perp y\,\Longleftrightarrow\, \abs{x+y}=\abs{x-y}
\,\Longleftrightarrow\, \abs{x+y}=\abs{x}\vu \abs{y} $.
 
\item $ x\perp y\,\Longleftrightarrow\, \abs{x+y}=\abs{x}
+\abs{y}=\abs{x}\vu \abs{y} $.
 
\item \label{i15bisfactGpRtcl} $ (x'\perp y,\,x'\perp y,\,x'\perp y',\,x'\perp y',\,x+y=x'+y') \,\Longrightarrow\, (x=x', \, y=y') $.
 
\end{enumerate}
Suppose $u$, $v$, $ w\in G^+ $.

\begin{enumerate}\setcounter{enumi}{17}\itemsep=1pt
 
\item \label{i18factGpRtcl} $ u\perp v\,\Longleftrightarrow\, u+v=\abs{u-v} $.
 
\item $ (u+v)\vi w \leq (u\vi w)+(v\vi w) $.
 
\item $ (x+y)\vu w \leq (x\vu w)+(y\vu w) $.
 
\item $ v\perp w \,\Longrightarrow\,(u+v)\vi w = u\vi w $.
 
\item $ u\perp v \,\Longrightarrow\,(u+v)\vi w = (u\vi w)+(v\vi w) $.
\end{enumerate}
\end{fact}
\begin{proof}
All this is more or less immediate in a linearly ordered group, reasoning case by case. We conclude with the formal Positivstellensatz.
\end{proof}

\Subsection{The \grl generated by an ordered group} 
 
We have a natural morphism between \talgs, from $\Sa{Gao}$ to $\Sa{Grl}$, because we have defined $x\geq 0$ as a simple abbreviation in the theory $\sa{Grl}$.

If $(G,\geq)$ is an ordered group, as $\sa{Grl}$ is an \talg, the dynamic algebraic structure $\sa{Grl}(G)$ defines a usual algebraic structure $H$ of \grl.
This is the \grl \textsl{generated by} $G$ in the usual sense.

A precise description of the usual algebraic structure defined by $\sa{Grl}(G)$ has been given by Lorenzen. See \cite{Lor1951,CLN2019b}. Let us recall these results.

\begin{theorem}\label{thmgogrlfree} \emph{(\cite[Theorem 4.15]{CLN2019b})}
If $G$ is an ordered group, we can construct an \grl $H$ with a morphism 
\(\varphi\colon G\to H\) such that 
\[
0\leq_H\varphi(a) \iff \exists n\in \NN\etl, \;0\leq_G\varphi(na)
\]
More \prmt, $H$ is the \grl freely generated by $G$  
(in the sense of the left adjoint functor to the forgetting functor).
\\
Here is the construction. We consider the distributive lattice $\gT$ generated by the \entrel on $G$ defined as follows.
\begin{enumerate}
\item $\varphi(a)\vdash \varphi(b) $ for $(a,b)$'s such that $a\leq b$,
\item 
$\varphi(a_1),\dots,\varphi(a_k)\vdash \varphi(b_1),\dots,\varphi(b_\ell)
$
\ssi there exist integers \(n_1,\dots,n_k,m_1,\dots,\alb m_\ell\geq 0\) such that 
\begin{itemize}
\item \(n_1+\cdots+n_k=m_1+\dots+m_\ell\geq1\) and
\item  \(n_1a_1+\dots+n_ka_k\leq_Gm_1b_1+\dots+m_\ell b_\ell\).
\end{itemize}
\end{enumerate}
On this \trdi $\gT$, the group law of~$G$ is uniquely extended, which defines the \grl~$H$.\\ 
So $\varphi(a_1),\dots,\varphi(a_k)\vdash \varphi(b_1),\dots,\varphi(b_\ell)$
means \fbox{$\varphi(a_1)\vi\dots\vi\varphi(a_k)\leq_H\varphi(b_1)\vu\dots\vu\varphi(b_\ell)$}.
\end{theorem}

Thus the morphism $\varphi$ is injective \ssi for all $a\in G$ and all integers $n\geq 1$, the inequality $na\geq 0$ implies $a\geq 0$.
This  \cof \tho implies in \clama the Lorenzen-Clifford-Dieudonné \tho according to which an abelian group can be equipped with an ordered group structure \ssi the equality $na= 0$ implies $a= 0$.

Furthermore Lorenzen highlighted a crucial \rsim valid in the \sa{Grl} theory which is the \textsl{regularity} (see \cite{Lor1953} and \cite[Section 2]{CLN2019b})

The result of Lorenzen is that an ordered group $G$ satisfies the \prt of regularity \ssi it identifies with an ordered subgroup of an \grl 
(or, what amounts to the same thing, of the \grl it generates).

\Regles{\Lab{Reg}
$\,\,x+a\geq 0\vet y+b\geq 0\vd x+b\geq 0\vou y+a\geq 0 $
}

\medskip \noindent {\bf Another way to define the \grl generated by an ordered group}

\smallskip The \grl structure on an abelian group $H$ can be defined from the unary law $x\mt\abs{x}$. Indeed 
$2x^+=x+\abs x$, $a\vu b=b+(a-b)^+$, and $x\geq 0$ is equivalent to $x=\abs x$.

We now indicate which axioms must satisfy this unary law so that it defines a \grl law $a\vu b$.

First of all, the group must be \textsl{$2$-divisible}, that is, verify the following axioms.\label{grl-abs}

\DeuxRegles
{
\Lab{2div1} $\,\,2y=0\vd y=0$
}    
{
\Lab{2div2} $\vd \Exists x\;2y=x $
}

In which case, we can define unambiguously $\frac 1 2\,x$.
Then, the law $x\mt \abs x$ must satisfy the following axioms. 

\DeuxRegles
{
\lAb{abs$_=$}$\,\, x=0\vd \abs {y+x}=\abs{y}$
\Lab{abs1}$\vd \abs x=\abs{-x}$
\Lab{abs3}$\vd\abs x+ x= \abS{\abs{x}+ x}$
}    
{
\Lab{abs0}$\vd \abs{0}=0$
\Lab{abs2}$\vd\abs x + \abs y= \abS { \abs x + \abs y}$
\Lab{Abs1}$\,\,z\geq x\vet z\geq -x\vd z\geq\abs x$
}

When these conditions are met, we use the following abbreviations.
\begin{itemize}
\item $x\geq 0$ means $\abs x=x$.
\item $a\geq b$ means $a-b\geq 0$.
\item $x^+:=\frac 1 2\,(x+\abs x) $. 
\item $a\vu b:=b+(a-b)^+$. 
\end{itemize}

\smallskip The validity of the following rules must be verified

\DeuxRegles{
\laB{gao0} $\vd 0 \geq 0$
\laB{Gao} $\,\, x\geq 0\vet x\leq 0\vd x=0$
\laB{sup1} $ \vd x\vu y\,\geq x $
\laB{Sup} $ \,\, z\geq x\vet z\geq y\vd z\geq x\vu y$
}
{
\laB{gao1} $\,\, x\geq 0\vet y\geq 0\vd x+y\geq 0$
\item[ ]
\laB{sup2} $\vd x\vu y\,\geq y $
\laB{grl} $\vd x+(y\vu z)=(x+y)\;\vu\;(x+z)$
}

\begin{enumerate}
\item A small calculation gives  \fbox{$a\vu b = \frac{a+b}2+\frac{\abs{a-b}}2 $}. Using \tsbf{abs1} this shows that $a\vu b =b\vu a$.
\item We get $2\abs u= \abs{2u}$ from \tsbf{abs2} taking $u=x=y$. We have $2\abs u=2u\vd \abs u=u$ from \tsbf{2div1}. Thus  \fbox{$\abs {2u}=2u\vd \abs u=u$}. Thus $\abs {z}=z\vd \abs {\frac z 2}=\frac z 2$ and 
\fbox{$\abs {\frac z 2}=\frac{\abs z} 2 $}. So we get $2u\geq 0\vd u\geq 0$, and \fbox{$2a\geq 2b\vd a\geq b$}.
\item Note that the compatibility with the equality of the predicates $x\geq 0$, $a\geq b$ and the laws $x\mt x^+$ and $(x,y)\mt x \vu y$ is an automatic consequence of the fact that the law $\abs x$ respects the equality.
\item \tsbf{gao0} means $\abs0=0$ which is \tsbf{abs0}.
\item \tsbf{gao1} means $\abs x= x\vet\abs y =y\vd \abs{x+y}=x+y$. this results from \tsbf{abs2} since the hypotheses imply we may replace $\abs x$ and $\abs y$ with  $x$ et~$y$.  
\item \tsbf{Gao} means $\abs x= x\vet\abs x =-x\vd x=0$. Using  \tsbf{abs1} and symmetry and transitivity of \egt,
we deduce $x=-x$. We conclude with~\tsbf{2div1}. 
\item \tsbf{sup1} means $y+(x-y)+-x\geq 0$, \ie $(x-y)^+-(x-y)\geq 0$. It is sufficient to show $a^+-a\geq 0$, \ie $\frac 1 2\,(a+\abs a)-a\geq 0$, \ie $\frac 1 2\,(\abs a -a)\geq 0$. We concluded with \tsbf{abs3} et $\abs {\frac z 2}=\frac{\abs z} 2 $. 
\item \tsbf{sup2} is deduced from \tsbf{sup1}  because $a\vu b=b\vu a$.  
\item  \tsbf{Sup} means $\abs{z-x}=z-x,\vet \abs{z-y}=z-y\vd z\geq \abs{x+y}$. \\
Using $\abs u = \abs {-u}$ the hypotheses give $2z-(x+y)=\abs{z-x}+\abs{y-z}$. As $\abs a \geq a$ we get with \tsbf{gao1} $2z-(x+y)\geq y-x$. Symmetrically $2z-(x+y)\geq x-y$. Thus  \tsbf{Abs1} gives $2z-(x+y)\geq \abs{y-x}$, \ie $2z\geq 2 (y\vu x)$ and we conclude with $2a\geq 2b\vd a\geq b$.
\item \tsbf{grl} means $2x+(y+z)+\abs{y-z}=(x+y)+(x+z)+\abs{(x+y)-(x+z)}$, which is clear.  
\end{enumerate}

\section{$f$-rings}\label{secfrings}

References: \cite{BKW}, \cite[Section V-4]{Joh1986}, \cite{BP56,DM1995,Mad10}. 

\smallskip The french terminology of \cite{BKW} is \gui{$f$-anneau} or \gui{anneau de fonctions}. They study the case of non-commutative and non-unitary rings, for which the results are more subtle and delicate than those given here for the commutative unitary case. The terminology \gui{$f$-anneau} can be found in Bourbaki's exercises (Algebra, Chapter VI, Exercises, \S2, exercise 5). 

\smallskip The  (commutative unitary) $f$-rings are defined by a purely equational theory.\index{f-ring@$f$-ring} The axioms are those of commutative rings, those of $\ell$-groups for addition, and finally the equality rule \Tsbf{afr} which expresses a form of compatibility of $\vu$ with multiplication.\footnote{Compared to the theory \sa{Grl}, 
we added the law \gui{$ \cdot\times \cdot $} and the rules \tsbf{ac2} and \tsbf{afr}. In addition, the computational machinery that reduces any term on the variables $\Xn$ to its canonical writing in the free abelian group $\ZZ^{\{\Xn\}}$ has been replaced with the computational machinery that reduces any element of $\ZZXn$ to a normal form.}
Here's everything in detail.

\Subsectio{Definition of the purely equational theory \sa{Afr}}{Purely equational theory of $f$-rings}

The \SA{Afr} theory is defined as follows.

\vspace{-.5em}
\Sigt{\AfR}{\cdot=0\mathrel{;}\cdot+\cdot, \cdot\times\cdot,\cdot\vu\cdot,-\,\cdot,0,1}
\label{NOTASigAfr}

\noindent {\bf Abbreviations} (as for $\ell$-groups)

\smallskip\noindent \textsl{Function symbols}

\vspace{-1em} \TwoCols{
\begin{itemize}
\labu $ x\vi y $ means $ - (-x\vu -y) $ 
\labu $ \abs{x} $ means $ x \vu -x $ 
\end{itemize}
}
{
\begin{itemize}
\labu $ {x}^+ $ means $ x \vu 0 $ 
\labu $ {x}^- $ means $ -x \vu 0 $ 
\end{itemize}
}

\medskip\noindent \textsl{Predicates}

\vspace{-1em} \TwoCols{
\begin{itemize}
\labu $ x = y $ means $ x - y = 0 $ 
\labu $ x \perp y $ means $ \abs x \vi \abs y =0 $ 
\end{itemize}
}
{\begin{itemize}
\labu $ x \geq y $ means $ x \vu y = x $ 
\labu $ x \leq y $ means $ y\geq x $ 
\end{itemize}
} 

\medskip\noindent {\bf Axioms}

\smallskip\noindent \textsl{Rules of commutative rings}

\TwoRegles{
\lab{ga0} $ \vd 0=0 $ 
\lab{ac2} $ \,\, x=0\vd xy=0 $ 
}
{
\lab{ga1} $ \,\, x=0\vet y=0\vd x+y=0 $ 
}

\smallskip\noindent \textsl{Rule for compatibility of $\vu$ with equality}

\Regles{
\lAb{sup$_=$} $\,\, x=0\vet y=0\vd (u+x)\vu (v+y)= u\vu v$ \label{Axsup=}
}

\smallskip\noindent \textsl{Equality rules}

\TwoRegles{
\lab{sdt1} $ \vd x\vu x=x $ 
\lab{sdt2} $ \vd x=y\vu x $ 
\lab{sdt3} $ \vd (x y)\vu z=x (y z) $ 
}
{
\lab{grl} $ \vd x+(y\vu z)=(x+y)\vu(x+z) $ 
\Lab{afr} $ \vd x^+\, (y\vu z)=(x^+\, y)\vu(x^+\, z) $ 
}

\begin{notE} \label{noteAfrp}
If we choose the signature

\Sigt{\AfR'}{\cdot=0,\cdot\geq 0\mathrel{;}\cdot+\cdot, \cdot\times\cdot,\cdot\vu\cdot,-\,\cdot,0,1}
\label{NOTASigAfr'}

\vspace{-.8em}
\noindent we give the three rules \Tsbf{sup1}, \Tsbf{sup2} et \Tsbf{Sup} (\paref{Axsup1})
in order to connect $\cdot\geq 0$ and $\cdot\vu\cdot$. We name this theory  \SA{Afr'}, it is \esid to \sa{Afr}. \eoe
\end{notE}

\Subsection{Note on $\ell$-rings}

The theory \SA{Arl} of $\ell$-rings (or lattice rings)  is defined by replacing the rule \Tsbf{afr} by the rules \Tsbf{ao1} and \Tsbf{ao2} of ordered rings, valid in~\Sa{Afr}.\index{l-ring@$\ell$-ring}
\label{ao1}

\DeuxRegles{
\Lab{ao1} $\vd \,x^2\geq 0$
}
{
\Lab{ao2} $\,\,x\geq 0\vet y\geq 0\vd xy\geq 0$ \phantom{$,a^2\geq 0$}
}

\begin{lemma} \label{lemaoafr}
In the theory of $\ell$-rings, the following rules are all equivalent.

\DeuxRegles{
\laB{afr}  $\vd  a^+\, (b\vu c)=(a^+\, b)\vu(a^+\, c)$
\Lab{afr'}  $\vd  a^+\, (b\vi c)=(a^+\, b)\vi(a^+\, c)$
\Lab{afr0} $\vd b^-\vi  a^+b^+= 0  $
\Lab{afr1} $\vd a^+\ a^-= 0  $
\Lab{afr2} $\vd \abs a\,\abs b= \abs{ab}  $
\Lab{afr3a} $\vd  (ab)^+=a^+b^++a^-b^-$
\Lab{afr3b} $\vd  (ab)^-=a^+b^-+a^-b^+$
\Lab{afr4} $\vd  c^+\abs a= \abs{c^+a}$
\Lab{afr5} $\vd  (a\vi b)(a\vu b)=ab $
}
{
\Lab{Afr}  $\,\,a \geq 0\vd  a(b\vu c)=ab\vu ac\phantom{a^+}$
\Lab{Afr'}  $\,\,a \geq 0\vd  a(b\vi c)=ab\vi ac\phantom{a^+}$
\Lab{Afr0}  $\,\,b\vi c = 0\vet a \geq 0 \vd  b\vi ac = 0\phantom{a^+}$
\Lab{Afr1} $\,\,   a \vi b =0\vd  ab=0  $ 
\Lab{afr6a} $\vd  a^2=(a^+)^2+(a^-)^2$
\Lab{afr6b} $\vd  a^2= {\abs a}^2$
\Lab{sup} $ \vd  \big((x\vu y)- x\big)\,\big((x\vu y)- y\big)=0$
\Lab{Afr2} $\,\, b\perp c\vd  ab\perp ac$
}
\end{lemma}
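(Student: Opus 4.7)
The plan is to establish the equivalences by exhibiting a web of implications built from $\ell$-group and $\ell$-ring identities, clustered around the axiom \tsbf{afr1}. First I would dispose of the purely formal equivalences that follow from the $\ell$-group laws already recorded in Fact \ref{factGpRtcl}: the block \tsbf{afr} $\iff$ \tsbf{afr'} $\iff$ \tsbf{Afr} $\iff$ \tsbf{Afr'} follows by substituting $-b,-c$ for $b,c$ (using $-(x\vu y)=(-x)\vi(-y)$) and by noting that $a\geq 0$ means $a=a^+$; the equivalence \tsbf{afr1} $\iff$ \tsbf{Afr1} follows because $(a^+,a^-)$ is the canonical orthogonal pair of non-negatives with difference $a$, while conversely any $(u,v)$ with $u,v\geq 0$ and $u\perp v$ equals $((u-v)^+,(u-v)^-)$ by uniqueness of the orthogonal decomposition. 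The equivalence \tsbf{afr0} $\iff$ \tsbf{Afr0} is immediate since $b^+,b^-\geq 0$ with $b^+\vi b^-=0$ and $a^+\geq 0$, and \tsbf{Afr0} $\Rightarrow$ \tsbf{Afr1} is obtained by applying \tsbf{Afr0} first with $a:=b$ and then with $a:=c$, yielding $bc\vi bc=0$.

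Next I would cluster the remaining rules around \tsbf{afr1} using explicit computations, relying on $a=a^+-a^-$ and $\abs{a}=a^++a^-=a^+\vu a^-$. Expanding squares gives \tsbf{afr1} $\iff$ \tsbf{afr6a} $\iff$ \tsbf{afr6b}: from $a^2=(a^+)^2+(a^-)^2-2a^+a^-$ and $\abs a^2=(a^+)^2+(a^-)^2+2a^+a^-$, all three collapse to $a^+a^-=0$, provided one uses that $\ell$-groups have no $2$-torsion (rule \Tsbf{Grl3$_2$}). The equivalence \tsbf{afr2} $\iff$ \tsbf{afr6b} is immediate by setting $b:=a$, and \tsbf{afr4} is the special case of \tsbf{afr2} with $c$ replaced by $c^+$. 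For \tsbf{afr3a} $\iff$ \tsbf{afr3b}, both statements amount to identifying the canonical orthogonal decomposition of $ab=(a^+-a^-)(b^+-b^-)$ with the pair $(a^+b^+\!+\!a^-b^-,\;a^+b^-\!+\!a^-b^+)$; under \tsbf{afr1} the four cross-products vanish and uniqueness of the decomposition settles both. The equivalence \tsbf{afr1} $\iff$ \tsbf{afr5} results from the expansion $(a\vi b)(a\vu b)=ab-u^+u^-$ where $u=a-b$, obtained using $a\vu b=b+(a-b)^+$ and $a\vi b=a-(a-b)^+$. Finally, \tsbf{Afr1} $\iff$ \tsbf{sup} is obtained by noting that with $u=(x\vu y)-x$, $v=(x\vu y)-y$ one has $u,v\geq 0$ and $u\vi v=(x\vu y)+((-x)\vi(-y))=0$.

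The central technical step is the equivalence \tsbf{Afr1} $\iff$ \tsbf{Afr}. The direction \tsbf{Afr} $\Rightarrow$ \tsbf{Afr1} is straightforward: applying \tsbf{Afr'} with $a:=a^+$, $b:=a^+$, $c:=a^-$ yields $a^+\cdot 0=(a^+)^2\vi a^+a^-$, so $a^+a^-\geq (a^+)^2\vi a^+a^-=0$ and a symmetric computation using \tsbf{Afr} gives the matching upper bound. For the converse, the key observation is that \tsbf{Afr} is equivalent, via $b\vu c=b+(c-b)^+$, to the identity $a\cdot u^+=(au)^+$ for $a\geq 0$ and arbitrary $u$; by uniqueness of the canonical orthogonal decomposition in an $\ell$-group, this is in turn equivalent to $au^+\vi au^-=0$, which is precisely \tsbf{Afr0} for the pair $(u^+,u^-)$. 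Hence \tsbf{Afr0} $\Rightarrow$ \tsbf{Afr}. Likewise \tsbf{Afr} $\Rightarrow$ \tsbf{Afr2} is transparent (since $\abs{ab}\leq\abs a\abs b$ is an $\ell$-ring identity, and \tsbf{Afr'} applied to $\abs a,\abs b\vi\abs c$ gives $0$), and \tsbf{Afr2} $\Rightarrow$ \tsbf{Afr0} is a simple rewriting.

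The main obstacle is therefore the implication \tsbf{Afr1} $\Rightarrow$ \tsbf{Afr0}. The naive route --- setting $d:=b\vi ac$ for $b\vi c=0$ and $a\geq 0$, deriving $d\vi c=0$, hence $dc=0$ by \tsbf{Afr1}, and then $d^2\leq d\cdot ac=a\cdot dc=0$ --- shows only that $d^2=0$, and one cannot conclude $d=0$ from reducedness (which is not part of \sa{Arl}, as the example $\RR[\vep]/(\vep^2)$ shows). The genuine $f$-ring content must be put in here. The cleanest route is through \tsbf{afr5}, which is already equivalent to \tsbf{afr1}: since $(d)(b\vu ac)=(b\vi ac)(b\vu ac)=b\cdot ac=abc=0$ by \tsbf{Afr1}, and since $b\vu ac\geq d$, a further manipulation of the form $d^2\leq d(b\vu ac)=0$ combined with the uniqueness of decomposition applied to well-chosen elements in the $\ell$-group should force $d=0$; this is the classical BKW argument (\cite[chapter 9]{BKW}) which needs to be reworked carefully in the present syntactic setting. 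This step will consume the most effort, but once it is established, all equivalences in the statement follow by assembling the implications sketched above.
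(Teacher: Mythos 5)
The decisive step of the lemma is the passage from the weak rules back to the $f$-ring axiom, i.e.\ some implication of the shape \tsbf{afr1}/\tsbf{Afr1} $\Rightarrow$ \tsbf{Afr0}/\tsbf{afr} over \sa{Arl}, and this is exactly where your proposal stops. Both of your routes (the naive one and the ``cleanest route through \tsbf{afr5}'') end at $d^2=0$ for $d=b\vi ac$, and you then assert that uniqueness of orthogonal decomposition ``should force $d=0$'', deferring to BKW. That deferred step is the mathematical content of the lemma, not a routine verification: rings of type \sa{Arl} need not be reduced, and the implication is actually false in the non-unital setting --- the paper notes, citing \cite{BKW} Proposition 9.1.10, that without a unit \tsbf{afr0} is strictly stronger than the remaining rules --- so any correct argument must make essential use of $1$ (compare the rule \tsbf{afr7}, where $a^2+1$ appears), and nothing in your sketch does. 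Until that bridge is supplied, what you have amounts to derivations of the listed rules from \tsbf{afr} together with implications among the weak rules; the equivalence claimed by the lemma is not established.

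Two secondary points. First, several of your ``immediate equivalences'' are argued in one direction only (\tsbf{afr2} $\Leftrightarrow$ \tsbf{afr6b} ``by setting $b:=a$'', \tsbf{afr4} as ``a special case of \tsbf{afr2}'', \tsbf{afr3a}/\tsbf{afr3b} obtained only as consequences of \tsbf{afr1}), so even the easy part of your implication graph does not close as written; these are repairable (e.g.\ $b:=a$ in \tsbf{afr3a} gives \tsbf{afr6a}), but they need to be said. Likewise ``\tsbf{Afr} $\Rightarrow$ \tsbf{afr1}'' needs more than the announced symmetric computation: from $(a^+)^2\vi a^+a^-=0$ and $(a^-)^2\vi a^+a^-=0$ one must still combine with $a^+a^-\leq (a^+)^2+(a^-)^2$ (from $(a^+-a^-)^2\geq 0$) to conclude. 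Second, note that the paper treats the whole direction ``\tsbf{afr} implies the listed rules'' in one stroke via the formal Positivstellensatz \ref{thfairecommesi}: each rule is a Horn rule, checked case by case in linearly ordered rings, hence valid in \sa{Afr}. So your hand computations are only genuinely needed for the converse direction --- which is precisely the part that is missing.
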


\vspace{-.5em}
In other words, each of these rules can be used to define $f$-rings by adding it to the theory \Sa{Arl}. The fact that \tsbf{afr} implies \tsbf{ao1}, \tsbf{ao2} and the rules indicated in Lemma \ref{lemaoafr} results from Formal Positivstellensatz \ref{thfairecommesi}. 

In the case of a non-unitary $f$-ring the rule \tsbf{afr0} is stronger than the others (see \cite{BKW}, proposition 9.1.10\footnote{The book deals more generally with ordered rings which are not necessarily commutative or unitary. The condition \tsbf{afr0} must then be split to take account of the non-commutativity.}).

\Subsectio{Some derived rules in the theory \sa{Afr}}{Some derived rules}

 In addition to the rules derived for $\ell$-groups and those indicated in Lemma \ref{lemaoafr}, here are some very useful classical rules in which multiplication is involved.

\DeuxRegles{
\laB{Ato1} $\,\,   b\geq 0 \vet  ab=1\vd  a\geq 0 \vphantom{\abs{a}^2} $}
{
\laB{Ato2} $\,\, c\geq 0\vet a(a^2+c)\geq 0\vd  a^3\geq 0 \vphantom{\big)}$
}

\vspace{-1em}
\Regles{
\Lab{afr7} {$\vd  ab^+ = (ab \vi (a^2 +1)b) \vu (-(a^2 +1)b\vi 0)$}  
}

\smallskip \rem The rule \tsbf{afr7} is used to demonstrate the possibility of writing terms in a simplified form in a  free $f$-ring: see Lemma \ref{lemAfrReecriture}. 
\eoe

\Subsection{Quotient structures}

\paragraph{Solid ideals (or $\ell$-ideals)}~

\smallskip By definition, the kernels of $f$-ring morphisms are called \textsl{solid ideals or $\ell$-ideals}.\index{solid!ideal (in an $f$-ring)}\index{l-ideal@$\ell$-ideal} 

An ideal is solid if, and only if, it is solid as a subgroup. 

The solid ideal generated by an element $a$ is 
\[
\cI(a):=\sotq{x\,}{\,\exists y,\,\abs x\leq \abs {ya}}.
\] 
We have $ \cI(a)=\cI(\abs a) $ and $ \cI(a)\cap\cI(b)=\cI(\abs a \vi \abs b) $. Finally, the $\ell$-ideal generated by $ \an $ is 
\[
\cI(\an)=\cI(\abs {a_1}+\dots+\abs {a_n})= \cI(\abs {a_1} \vu \cdots \vu \abs {a_n}). 
\]

\paragraph{Irreducible $\ell$-ideals}~

\smallskip We say that \textsl{a solid ideal $ \fa  $ of an $f$-ring $\gA$ is \textsl{irreducible}} if the quotient $f$-ring is linearly ordered. In other words, for any $ x\in\gA $, $ x^+\in \fa  $ or $ x^-\in \fa  $.\index{irreducible!solid ideal (in an $f$-ring)} 

By Lemma \ref{lemAfrsdz}, every prime solid ideal is irreducible.

Moreover, a convex prime ideal (as an additive subgroup) $\fp$ is solid: we must see that it is stable by $\vu$. If $a,b\in \fp$ we have $(a-b)^+$ or $(a-b)^-\in\fp$. And the identities $ b+(a-b)^+=a\vu b=a+(a-b)^- $ are valid in $\ell$-groups (and a fortiori in $f$-rings) because they are valid in linearly ordered groups (Formal Positivstellensatz~\ref{thfairecommesi0}).

\Subsectio{Formal Positivstellensatz and representation theorem for $f$-rings}{Formal Positivstellensatz and representation theorem} 

Recall that the dynamical theory of linearly ordered rings with sup is the dynamical theory of linearly ordered rings to which we add a function symbol $\cdot\vu\cdot$ which must satisfy the following Horn rules.

\TwoRegles{
\laB{sup1} $ \vd x\vu y\,\geq x $ 
\laB{Sup} $ \,\, z\geq x\vet z\geq y \vd z\geq x\vu y $ 
}
{
\laB{sup2} $ \vd x\vu y\,\geq y $ 
}

\smallskip\noindent We can also see \sa{Atosup} as the theory of $f$-rings to which we add as an axiom the \rdy \Tsbf{OT} (saying that the order is total).

\Regles{
\laB{OT} $ \vd x\geq 0\;\vou\;x\leq 0 $ 
}

Given the unique existence of the lub in a linearly ordered ring, the theories \Sa{Ato} and \Sa{Atosup} are essentially identical. In particular, they prove the same \rdys (when formulated without using $\vu$). 

The theorem for $f$-rings analogous to Positivstellensatz \ref{thfairecommesi0} is as follows. It is a result of the same type as Item \textsl{2} of Positivstellensatz \ref{Pst1bis}.

\begin{pstf}[for $f$-rings] 
\index{Positivstellensatz!formal --- !for $f$-rings} \label{thfairecommesi}~\\
We consider \sads on the signature\\ 
\centerline{\sIgt{\AfR'}{\cdot=0,\cdot\geq 0\mathrel{;}\cdot+\cdot, \cdot\times\cdot,\cdot\vu\cdot,-\,\cdot,0,1} }\\
The theories \sa{Afr}, \sa{Ato} and \sa{Atosup} prove the same Horn rules.
\end{pstf}

\begin{proof} First \Sa{Ato} and \Sa{Atosup} are essentially identical. Let us consider a Horn rule proved in the dynamical theory $ \sa{Atosup}\! $. We can assume without loss of generality that the conclusion of the rule is an equality $ t=0 $ for a suitable term $t$. In the corresponding calculation, in the presence of a term $u$, we are authorised by \Tsbf{OT} to open two branches. One where $ u\geq 0 $, the other where $ u\leq 0 $. At each node of the dynamic proof, we are in fact working in an $f$-ring defined by generators and relations: the generators are given in the presentation and in the hypotheses of the Horn rule to be proved; the same applies to the relations, with the addition of those which we have added, in the branch we are in, to the branches which precede the node. Suppose that at a given moment, for two terms $a$ and $b$, we have opened a branch where $a\geq b$ and another where $a\leq b$. Let's put $c=b-a$. In the first branch we have added the hypothesis $c^-=0 $, in the second the hypothesis $c^+=0$. If in each of the branches we can prove $t=0$, this means that in the $f$-ring corresponding to the node in question, we have on the one hand $t\in \cI(c^-)$, and on the other hand $ t\in \cI(c^+)$. Now in an $f$-ring we have $\cI(c^+)\cap \cI(c^-)= \cI(c^+\vi c^-)=\so 0$.
\end{proof}

Let's use \pstref{thfairecommesi} for proving Horn Rules \Tsbf{Ato1} and \Tsbf{Ato2}.

\TwoRegles{
\lab{Ato1} $ \,\, y\geq 0 \vet xy=1\vd x\geq 0\phantom{x^3} $ 
}
{
\lab{Ato2} $ \,\, c\geq 0\vet x(x^2+c)\geq 0\vd x^3\geq 0 $ 
}

\smallskip In both cases, we open two branches, one where $ x\geq 0 $, and the result is clear, the other \hbox{where $ x\leq 0 $}. For \Tsbf{Ato1} we deduce that $ 1\leq 0 $, then $ 1=0 $, then $ x=0 $. For \Tsbf{Ato2} we deduce that $ x^3\geq -xc\geq 0 $.

\smallskip Similarly, we prove \Tsbf{afr7} by examining separately the cases \gui{$ b\geq 0 $}, \gui{$ b\leq 0,\,a\geq 0 $} \hbox{and \gui{$ b\leq 0,\,a\leq 0 $}}. As a consequence of Formal \pstref{thfairecommesi} we obtain in classical mathematics the following representation theorem (as a special case of \thref{thcolsimralg}).

\begin{corollaryc}[representation theorem] \label{corthfairecommesi}
Any $f$-ring $\gA$ is a subproduct of linearly ordered rings quotients of $\gA$. 
\end{corollaryc}

 The following theorem is of the same type as Item \textsl{1} of  Positivstellensatz~\ref{Pst1bis}. This result can be seen as a second form of Formal Positivstellensatz \ref{thfairecommesi} for $f$-rings. 
\begin{theorem}[simultaneous collapse for \afrs]~\\
We consider \sads on the signature\\ 
\centerline{\sIgt{\AfR'}{\cdot=0,\cdot\geq 0\mathrel{;}\cdot+\cdot, \cdot\times\cdot,\cdot\vu\cdot,-\,\cdot,0,1} \label{thColsimafr}}\\  
The theories \Sa{Afr}, \Sa{Crcdsup} and all intermediate theories collapse simultaneously. 
\end{theorem}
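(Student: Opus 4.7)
The plan is to chain together previously established Positivstellensätze, sandwiching every intermediate theory between $\sa{Afr}$ and $\sa{Crcdsup}$, and then reducing the two endpoints to one another. One direction is immediate: if a presentation $(G,R)$ on the given signature is such that $\sa{Afr}((G,R))$ proves $\vd 1=0$, then so does every stronger theory by monotony, since each intermediate theory is obtained from $\sa{Afr}$ by adding further axioms. It therefore suffices to establish the converse: if $\sa{Crcdsup}((G,R))$ collapses, then $\sa{Afr}((G,R))$ collapses too.

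For that converse I would proceed through three reductions. First, by Remark~\ref{remCrdsup}, $\sa{Crcdsup}$ is essentially identical to $\sa{Crcd}$, and by the same pattern of skolemisation of a provably unique sup under the total order axiom~\tsbf{OT}, the theory $\sa{Atosup}$ is essentially identical to $\sa{Ato}$. Hence it is equivalent to work with $\sa{Crcd}((G,R'))$ and $\sa{Ato}((G,R'))$, where $(G,R')$ is the presentation obtained by introducing fresh generators for each $\vu$-subterm together with the sup relations as atomic constraints; the signature of $(G,R')$ no longer mentions $\vu$ or $\cdot>0$, which puts us in the scope of Positivstellensatz~\ref{Pst1bis}. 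Second, by Item~1 of \pstref{Pst1bis}, $\sa{Ato}$ and $\sa{Crcd}$ collapse simultaneously on any such presentation. Putting these together, $\sa{Atosup}((G,R))$ collapses iff $\sa{Crcdsup}((G,R))$ collapses.

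The crucial step is the third: by the formal Positivstellensatz~\ref{thfairecommesi} for $f$-rings, $\sa{Afr}$ and $\sa{Atosup}$ prove exactly the same Horn rules. Since the collapse of the presentation $(G,R)$ is equivalent to the validity of the Horn rule obtained by taking the atomic formulas of $R$ as hypotheses and $1=0$ as the conclusion, we conclude that $\sa{Afr}((G,R))$ collapses iff $\sa{Atosup}((G,R))$ collapses. Chaining the three reductions yields the desired equivalence $\sa{Afr}((G,R))$ collapses iff $\sa{Crcdsup}((G,R))$ collapses, and any theory intermediate between the two is sandwiched between equivalent conditions.

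The main obstacle is the invocation of \pstref{thfairecommesi}, whose actual content is the non-trivial dynamic proof that branching an $f$-ring computation on \emph{all} comparisons $a \lessgtr b$ is harmless for Horn conclusions, because the two opened branches at any node amount to adding either $c^+=0$ or $c^-=0$ for $c=b-a$, and $\cI(c^+)\cap \cI(c^-)=\so 0$. All the other steps are routine once one has verified the careful bookkeeping needed to justify treating the added function symbol $\vu$ as an essentially identical extension on both ends of the chain, so that the signature restriction in \pstref{Pst1bis} is genuinely met.
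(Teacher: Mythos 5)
Your proposal is correct and follows essentially the same route as the paper's proof: the formal Positivstellensatz~\ref{thfairecommesi} identifying the Horn rules of \sa{Afr} and \sa{Atosup}, Item~1 of Positivstellensatz~\ref{Pst1bis} for the simultaneous collapse of \sa{Ato} and \sa{Crcd}, and the essential identity of \sa{Ato} with \sa{Atosup} and of \sa{Crcd} with \sa{Crcdsup} (Remark~\ref{remCrdsup}), with the intermediate theories handled by monotony. Your explicit elimination of $\vu$ from the presentation is just an unpacking of that appeal to essential identity; the only point to make precise is that the atomic constraints attached to a fresh generator $u$ replacing $t_1\vu t_2$ must include $(u-t_1)(u-t_2)=0$ and not merely the upper-bound relations \tsbf{sup1} and \tsbf{sup2}, so that the translation in the direction of \sa{Crcd} is faithful.
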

%
\begin{proof} 
The theories \Sa{Afr} and \Sa{Atosup} collapse simultaneously according to Positivstellensatz~\ref{thfairecommesi}.
\\
The theories \Sa{Ato} and \Sa{Crcd} collapse simultaneously according to Item \textsl{1} of  Positivstellensatz \ref{Pst1bis}.
\\
Finally, the theories \Sa{Ato} and \Sa{Crcd} are essentially identical to the theories \Sa{Atosup} and \Sa{Crcdsup} respectively.
\end{proof}
%
\Subsection{Localisations of $f$-rings}\label{locafr}

\paragraph{Generalities}~

Consider a monoid $S$ in an $f$-ring and construct the solution of the universal problem (in the category of $f$-rings) consisting in inverting the elements of $S$. 

To do this, we need only consider the usual localised ring $ S^{-1}\gA $ and define the law $\vu$ correctly. Since inverting $s$ or inverting $ s^2 $ amounts to the same thing, we can consider only fractions with denominator $\geq 0$. We then define 

\snic{ \dsp\frac{\,a\,}{s}\vu\frac{\,b\,}{t}:=\frac{\,at\vu bs\,}{st} \qquad (s,t\geq 0).}
 
\Note We have no choice, because since $s,t\geq 0$, we must have $st\,(\frac{a}{s}\vu\frac{b}{t})=st\,\frac{a}{s}\vu st\,\frac{b}{t}=at\vu bs$ in~$S^{-1}\gA$. It remains to be seen that the law is well-defined and that it continues to satisfy the required axioms. For example, let's check that it is well-defined. Suppose that $ \frac{a_1}{s_1}=\frac{a_2}{s_2} $, i.e.\ that $ a_1s_2s_3=a_2s_1s_3 $ for an~$ s_3\geq 0 $ in~$S$. Then we can easily check that the two elements $ \frac{a_i}{s_i}\vu\frac{b}{t} $ given by the definition above are equal in $ S^{-1}\gA $. This is the same calculation that was used to justify addition in $ S^{-1}\gA $ when we were young.\footnote{When we fell over in admiration of Claude Chevalley who dared to invert zerodivisors, and nothing awful resulted, quite the contrary.} Just replace $+$ by $\vu$, with the precaution of having denominators $\geq 0$.\eoe

\begin{lemma} \label{lemAfrqlg}
An $f$-ring $\gA$ can always be considered as immersed in a $\QQ$-$f$ -algebra. 
\end{lemma}
%
\begin{proof} Indeed, according to  \Grltn, the  \gui{integers} $n.1_\gA$ are regular and therefore $\gA$ injects itself into the $\QQ$-algebra $\QQ\otimes_\ZZ \gA$ which is an $f$-ring as a localisation of $\gA$.\footnote{This is true even if $\gA$ is trivial: the only case where the $\QQ$-algebra in question does not contain $\QQ$ as a subring.}
\end{proof}

\paragraph{Gluing $f$-ring structures}

\smallskip We assume that an \afr structure is given on a commutative ring $\gA$, locally. If these locally defined structures coincide two by two on the open sets where they are defined in common, then we can glue them together into an \afr structure defined on all $\gA$. This is similar to the covering principle for distributive lattices (Principle \ref{propRecouvTD}) and is stated in a similar precise manner as follows.

\begin{plcc}[concrete gluing of $f$-rings structure over a ring]\label{plcc.frings}  
Let $ S_1 $, $ \dots $, $ S_n $ be comaximal monoids of a ring $\gA$. Let $ \gA_i $ denote $ \gA_{S_i} $, $ \gA_{ij} $ denote $ \gA_{S_iS_j} $, and assume that an $f$-ring structure with a $ \vu\!_i $ law is given on each $ \gA_i $. It is further assumed that the images in $ \gA_{ij} $ of the laws $ \vu\!_i $ and $ \vu\!_j $ coincide. Then there exists a unique $f$-ring structure on $\gA$ which induces by localisation in each $ S_i $ the structure defined on $ \gA_i $. This $f$-ring is identified with the projective limit of the diagram 
\[
\big(( \gA_i)_{i\in\lrbn},( \gA_{ij})_{i<j\in\lrbn};(\alpha_{ij})_{i\neq j\in\lrbn}\big),
\]
where $ \alpha_{ij} $ are localisation morphisms, in the category of $f$-rings.
\end{plcc}
 {\hspace*{10em}{
\xymatrix @R=2em @C=7em{
 & \gA \ar[rd]^{\alpha _{k}}\ar[d]^{\alpha _{j}}\ar[ld]_{\alpha _{i}}\\
 \gA _i\ar[d]_{\alpha _{ij}}\ar@/-0.75cm/[dr]^{\alpha _{ik}} &
 \gA _j\ar@/-1cm/[dl]^{\alpha _{ji}}\ar@/-1cm/[dr]_{\alpha _{jk}} &
 \gA _k\ar@/-0.75cm/[dl]_{\alpha _{ki}}\ar[d]^{\alpha _{kj}} &
\\
 \gA _{ij} & 
 \gA _{ik} & 
 \gA _{jk} 
}
}}
\begin{proof}
The ring $\gA$ is the limit of the projective system formed by $ \gA_i $ and $ \gA_{ij} $ in the category of commutative rings, and therefore also in the category of sets. It follows that there is a unique law $\vu$ on $\gA$ which gives the $ \vu_i $ on the $ \gA_i $ by the canonical maps $ \gA\to\gA_i $. It remains to check that it satisfies the axioms of the $\vu$ law for an $f$-ring. This follows from the fact that these axioms are given by equalities between terms, and from the fact that the natural map $ \varphi\colon \gA\to \prod_i\gA_i $, on the one hand preserves the laws of the $f$-ring structure, and on the other hand is injective.
\end{proof}
%


\paragraph{Real schemes} 

\begin{remark} \label{remplcc.frings} 
A corollary of the gluing Principle \ref{plcc.frings} is that the notion of a  Grothendieck $f$-scheme is well-defined. An $f$-scheme seems to be the most natural definition for the notion of a real scheme. Indeed, it allows nilpotents and therefore a good theory of multiplicities in real schemes.

\noindent More precisely, one could define a \gui{real scheme} as a scheme obtained by gluing together a finite number of affine schemes defined by $f$-rings which are finitely presented \RRlgs, or more generally finitely presented \Rlgs for a given ordered field with virtual roots $\gR$.

\noindent But this apparently remains unexplored.
\eoe\end{remark}

\Subsection{Rewriting terms in $f$-rings}

Reference: \cite{Del86}. 

\smallskip Contrary to the theory of commutative rings in which the terms are rewritten in a unique normal form, we do not have such a satisfactory result for $f$-rings. We do, however, have a simplified form, similar to the conjunctive normal form in distributive lattices.

\begin{lemma} \label{lemAfrReecriture}
Let $\gA$ be an  $f$-ring and $t$ be a term written over indeterminates $ \xn $ and constants in $\gA$. This term can be rewritten as 
\[
\sup\nolimits_{i\in I}\big(\inf\nolimits_{j\in J_i}(f_{i,j}(\ux))\big)
\] 
for a suitable finite family of polynomials $ f_{i,j}\in\AXn $.
\end{lemma}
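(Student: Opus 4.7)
I would prove the result by structural induction on $t$, showing that any term $t$ can be brought to the form $\sup_{i\in I}\inf_{j\in J_i} f_{ij}$ with polynomials $f_{ij}\in\gA[\ux]$ (call this \textsl{normal form}, NF). Constants $a\in\gA$ and variables $x_k$ are already polynomials, so the base cases are trivial.

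For the inductive step, suppose $s_1=\sup_i\inf_j p_{ij}$ and $s_2=\sup_k\inf_l q_{kl}$ are in NF. The cases $s_1+s_2$, $-s_1$, $s_1\vu s_2$, $s_1\vi s_2$ all reduce to manipulations valid in any $\ell$-group: distributivity of $+$ over $\vu$ and $\vi$ (rule \tsbf{grl} and its dual), the identity $-(y\vu z)=(-y)\vi(-z)$, and the distributivity of $\vu$ and $\vi$ over each other. Thus $s_1+s_2=\sup_{i,k}\inf_{j,l}(p_{ij}+q_{kl})$; the expression $-s_1=\inf_i\sup_j(-p_{ij})$ is reshuffled to NF using $\inf_i\sup_j c_{ij}=\sup_\phi\inf_i c_{i,\phi(i)}$, valid in the underlying distributive lattice; $s_1\vu s_2$ is obtained by merging outer suprema; $s_1\vi s_2$, by distributing $\vi$ across the outer $\vu$'s and then merging infima.

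The crux is the multiplication case $t=s_1\cdot s_2$. The key ingredient is the rule \tsbf{afr7},
\[
a b^+ = (ab \vi (a^2+1)b) \vu (-(a^2+1)b \vi 0),
\]
derivable in $\sa{Afr}$ (cf.\ Lemma~\ref{lemaoafr}). Combined with $b\vu c=c+(b-c)^+$ (and its $\vi$-dual), it yields, for any polynomials $p,q,r$, an explicit expression of $p(q\vu r)$ as a $\sup\inf$ of polynomials in $p,q,r$ (and likewise for $p(q\vi r)$). From this I would establish the sub-claim: \textsl{if $p\in\gA[\ux]$ is a polynomial and $s$ is in NF, then $ps$ is in NF}, by a secondary induction on the number of lattice operations in the NF of $s$, peeling off outer $\vu$'s and $\vi$'s one at a time via the rewriting above.

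Granting the sub-claim, the general product $s_1\cdot s_2$ is handled as follows. Write $s_1=s_1^+-s_1^-$ with $s_1^\pm\geq 0$, both in NF (obtained by adjoining $0$ as a new sup-term to $s_1$ and to $-s_1$ respectively). Since $s_1^\pm\geq 0$, rule \tsbf{afr} ($a^+(b\vu c)=a^+b\vu a^+c$) and its $\vi$-dual distribute them across $s_2$: $s_1^+ s_2=\sup_k\inf_l(s_1^+ q_{kl})$, and each factor $s_1^+ q_{kl}$ is a polynomial-times-NF, hence in NF by the sub-claim (using commutativity of the product). Flattening this $\sup\inf$-of-NFs into a single $\sup\inf$ of polynomials via $\inf_l\sup_m X_{l,m}=\sup_\phi\inf_l X_{l,\phi(l)}$ yields the NF of $s_1^+ s_2$; the NF of $s_1^- s_2$ is obtained in the same way, and $s_1 s_2=s_1^+ s_2-s_1^- s_2$ is then in NF by the addition/negation cases already treated. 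The main obstacle will be designing the secondary induction in the sub-claim so that the complexity measure chosen on the NF of $s$ strictly decreases at each application of the \tsbf{afr7}-based rewriting, even though that rewriting temporarily produces somewhat larger expressions before being reorganised into NF.
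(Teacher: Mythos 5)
Your proof is correct and follows essentially the same route as the paper: the paper's two-line argument reduces to rewriting $a+(b\vu c)$ (via \tsbf{grl}) and $a(b\vu c)$ (via \tsbf{grl6} and \tsbf{afr7}), and your structural induction, with the same two key rules and the identity $b\vu c = c+(b-c)^+$, is precisely the spelled-out version of what the paper compresses into ``usual rewritings in distributive lattices.'' The worry you raise about the termination measure for the sub-claim is unfounded: once $s_1^+$ is distributed over the $\sup\inf$ structure of $s_2$ by \tsbf{afr} and its dual, \tsbf{afr7} is applied only at the polynomial leaves, so no recursion on expressions of growing size ever occurs.
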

%
\begin{proof} Given the usual rewritings in distributive lattices and given that $ x\mapsto -x $ exchanges~$\vu$ and~$\vi$, it is sufficient to know how to rewrite $ a+(b\vu c) $ and $ a(b\vu c) $ in the desired form. This follows from the equality rules \Tsbf{grl}, \Tsbf{grl6} and \Tsbf{afr7}.
\end{proof}
%

\begin{definota} \label{notaAFR}~
\begin{enumerate}
\item   Let $ \gB=\big((G,R),\Sa{Afr}\big) $ be a dynamic algebraic structure of $f$-ring. Since the theory \Sa{Afr} is Horn, $\gB$ admits a \hyperref[Modelegnq]{\textsl{generic model}}, denoted $ \AFR(\gB) $, which is the usual $f$-ring defined by the generators $G$ and the relations $ R $. 
\item When $G=\so{1,\dots,n}$ and $R$ is empty, we get the $f$-ring $\AFR(\ZZ[\Xn])$ freely generated by $n$ \elts.
\end{enumerate}

 \end{definota}

Since the theory \Sa{Afr} is purely equational, Lemma \ref{lemAfrReecriture} is equivalent to its statement restricted to special cases where $\gA$ is an $f$-ring free over a finite set. 
\begin{lemma} \label{lemafrgenerique}~
\begin{enumerate}
 
\item The elements of the ring $ \AFR(\gB) $ can all be written in the form given in Lemma \ref{lemAfrReecriture} with the $ f_{ij}\in\ZG $.

\item If $ \gC $ is a commutative ring, take for $(G,R)$ the positive diagram of $ \gC $. Then $ \AFR(\gC) $ is the $f$-ring freely generated by the commutative ring $ \gC $, and the elements of $ \AFR(\gC) $ are written in the form $ \sup\nolimits_{i\in I}\! \big(\inf\nolimits_{j\in J_i}a_{ij})\big) $ with elements~$ a_{ij} $ of~$ \gC $. 
\end{enumerate}
 \end{lemma}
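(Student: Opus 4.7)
The plan is to exploit the fact that \sa{Afr} is purely equational. Because of this, the generic model $\AFR(\gB)$ is nothing but the quotient of the absolutely free term algebra on $G$ (built with the function symbols $+,\times,-,\vu,0,1$) by the congruence generated by the equational axioms of \sa{Afr} together with the relations in $R$. For Item~1, pick any element of $\AFR(\gB)$, represented by a term $t$ on $G$. I would apply Lemma \ref{lemAfrReecriture} viewed as a statement about rewriting modulo the equational theory \sa{Afr}: the proof of that lemma uses only the identities \tsbf{grl}, \tsbf{grl6}, \tsbf{afr7}, together with the standard distributive-lattice rewriting, all of which are axioms (or consequences of axioms) of \sa{Afr}. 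Hence the rewriting is a sequence of equalities already valid in the free $f$-ring on $G$, and a fortiori in the quotient $\AFR(\gB)$. The integers appearing as coefficients come from the normal form in $\ZZ[G]$ used at the innermost polynomial level, since the only constants available to build terms are $0$ and $1$ and the generators in $G$.

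For Item~2, I would first verify the universal property. When $(G,R)$ is the positive diagram of $\gC$, the relations in $R$ are exactly those needed to force any valuation $G \to \gD$ (with $\gD$ an $f$-ring) to extend to a ring homomorphism $\gC \to \gD$. By the general description of morphisms out of a dynamic algebraic structure of Horn type (Section~\ref{subsubsecSAD}), morphisms $\AFR(\gC) \to \gD$ in the category of $f$-rings are in natural bijection with ring homomorphisms $\gC \to \gD$. This is precisely the universal property of the free $f$-ring over $\gC$.

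It remains to upgrade the $\ZG$-coefficient form of Item~1 to a $\gC$-coefficient form. By Item~1 every element of $\AFR(\gC)$ is of the shape $\sup_{i\in I}\bigl(\inf_{j\in J_i} f_{ij}(\underline g)\bigr)$ with $f_{ij}\in\ZZ[G]$ and $\underline g$ a finite tuple in $G$. But the canonical ring homomorphism $\ZZ[G] \to \gC$, sending each generator $g \in G$ to its image $a_g \in \gC$, becomes the identity modulo $R$: the positive diagram of $\gC$ records precisely that $0_\gC = 0$, $1_\gC = 1$, and the sum, product and opposite of generators are computed as in~$\gC$. Consequently each $f_{ij}(\underline g)$ equals, in $\AFR(\gC)$, a single element $a_{ij}\in\gC$, which yields the announced form. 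I expect the only mildly delicate step to be the clean separation between the syntactic (rewriting in a free term algebra) and semantic (quotient by $R$) aspects of the argument; once that bookkeeping is set up, both items reduce to direct applications of Lemma~\ref{lemAfrReecriture} and of the universal property of the generic model.
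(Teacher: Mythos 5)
The paper gives no explicit proof of this lemma; it is presented as a direct consequence of Lemma~\ref{lemAfrReecriture} together with the definition of $\AFR(\gB)$ as the generic model of a Horn (indeed purely equational) theory. Your proposal correctly fills in the intended argument: Item~1 is Lemma~\ref{lemAfrReecriture} read as an equational rewriting result, specialised so that the only available constants are $0,1$ (hence $f_{ij}\in\ZZ[G]$), and then pushed along the quotient map from the free $f$-ring on $G$ to $\AFR(\gB)$; Item~2 combines the standard universal-property argument for a positive diagram (a valuation $G\to\gD$ satisfying $R$ \emph{is} a ring homomorphism $\gC\to\gD$, not an extension of one, a minor slip of phrasing in your write-up) with the observation that each polynomial $f_{ij}(\underline g)$ collapses modulo $R$ to a single generator, i.e.\ an element of $\gC$. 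This is the natural route and matches what the paper leaves to the reader.
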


\Subsection{$f$-rings of maps, semipolynomials}

For any set $E$ and any $f$-ring $\gA$ the ring of maps $f\colon E\to\gA$ is provided with a natural structure of $f$-ring (it is the product structure).

\begin{definota} \label{defiSIPD}
Let $ \varphi\colon \gA\to\gB $ be a morphism of $f$-rings. 
The ring of  \textsl{$\gA$-semipolynomials in $n$ variables}\footnote{Semipolynomials are often called \gui{SIPD} or \gui{sup-inf-polynomially-defined maps}.} \textsl{on $\gB$} is the $f$-subring of maps $f\colon \gB^n\to\gB$ generated by the constants in $ \varphi(\gA)$ and the coordinate maps. It will be noted $\SIPD_n(\gA,\gB)$. We shorten $\SIPD_n(\gA,\gA)$ to $\SIPD_n(\gA)$.
\\
The definition extends to the case where $\gA$ and/or $\gB$ are linearly ordered rings, which are considered to be $f$-rings.
\end{definota}

Note that it is not really restrictive to suppose that $\varphi$ is injective, which makes it possible to look at $\gA$ as an $f$-subring of $\gB$.

\begin{lemma} \label{lemSIPD}
It is assumed that $ \gA\subseteq  \gB $. Any element of $ \SIPD_n(\gA,\gB) $ is rewritten as $ \sup_{i\in I}\!\big(\inf_{j\in J_i}(f_{i,j})\big) $ for a suitable finite family of polynomials $ f_{i,j}\in\Axn $.
\end{lemma}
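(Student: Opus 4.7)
The plan is to reduce the statement directly to Lemma \ref{lemAfrReecriture}, which provides exactly the desired rewriting inside an arbitrary $f$-ring, and then to transport it to the ambient $f$-ring of maps $\gB^n \to \gB$.

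First, I would observe that, by the very definition of $\SIPD_n(\gA,\gB)$, every element $\sigma$ is obtained from the constants $\varphi(a)$ ($a \in \gA$) and the coordinate maps $\pi_k \colon \gB^n \to \gB$ by finitely many applications of the $f$-ring operations $+$, $-$, $\times$, $\vu$. Hence $\sigma$ is the value, in the $f$-ring $\gB^{\gB^n}$ of all maps $\gB^n \to \gB$, of some term $t$ built on the indeterminates $\xn$ with coefficients in $\gA$, where each $x_k$ is interpreted as the $k$-th coordinate map and each constant symbol from $\gA$ is interpreted as the corresponding constant map.

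Next, I would apply Lemma \ref{lemAfrReecriture} (or equivalently Item 2 of Lemma \ref{lemafrgenerique} applied to the ring $\gA$ with its $n$ fresh indeterminates) to rewrite the term $t$ as
\[
t(\xn) \;=\; \sup\nolimits_{i\in I}\bigl(\inf\nolimits_{j\in J_i} f_{i,j}(\xn)\bigr)
\]
for a suitable finite family of polynomials $f_{i,j} \in \gA[\xn]$. The rewriting is performed using only identities that are provable in the purely equational theory $\sa{Afr}$ and are therefore valid in \emph{every} $f$-ring. In particular they are valid in the $f$-ring $\gB^{\gB^n}$, so the two terms evaluate there to the same map.

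Finally, since $\vu$ and $\vi$ in the product $f$-ring $\gB^{\gB^n}$ are computed pointwise, the value at a tuple $\uxi \in \gB^n$ of the right-hand side is exactly $\sup_{i\in I}(\inf_{j\in J_i} f_{i,j}(\uxi))$, interpreting each $f_{i,j}$ as the polynomial map $\gB^n \to \gB$ induced by the inclusion $\gA \subseteq \gB$. This gives the announced representation of $\sigma$. The whole proof is therefore essentially a bookkeeping exercise; there is no real obstacle, the only delicate point being to make explicit that the rewriting established abstractly inside the free $f$-ring transfers faithfully to the $f$-ring of maps, which is immediate because the axioms of $\sa{Afr}$ are identities and the product $f$-ring structure is coordinatewise.
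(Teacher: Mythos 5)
Your proposal is correct and takes essentially the same route as the paper, whose proof is simply "very close to the proof of Lemma \ref{lemAfrReecriture}": you make explicit the same idea, namely that the sup-inf rewriting of Lemma \ref{lemAfrReecriture} uses only identities of the purely equational theory \sa{Afr} and therefore transfers to the $f$-ring of maps $\gB^n\to\gB$, where the lattice operations are pointwise. The extra bookkeeping about terms, coordinate maps and constant maps is exactly the content the paper leaves implicit.
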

%
\begin{proof} Very close to proof of Lemma \ref{lemAfrReecriture}.
\end{proof}
%

\begin{examples} \label{exaSIPD}~

\noindent 1. The two elements $ x\vu (1-x) $ and $ 1\vu x\vu (1-x) $ define the same map in $ \SIPD_1(\ZZ) $, but not in $ \SIPD_1(\QQ) $.

\smallskip\noindent 2. Let $ \gK=\QQ(\epsilon) $ with $ \epsilon $ infinitesimal positive and $\gR$ the real closure of $\gK$. 
\\
The semipolynomial $ f=x^+\vi-(x^2-\epsilon)(x^3-\epsilon) $ defines the null map on $\gK$ but does not define a null map on $\gR$: the interval $ [\epsilon^{1/2},\epsilon^{1/3}] $ is invisible on $\gK$. This example can be simplified by taking $\gK=\QQ[\epsilon]$ with  a suitable nilpotent $\epsilon>0$.
\eoe
\end{examples}

\section{Beyond purely equational theories} \label{secArftr}

\Subsection{$f$-rings without zerodivisor} 

\begin{lemma} \label{lemAfrsdz} 
An $f$-ring without zerodivisor is linearly ordered. In other words, if we add the axiom \Tsbf{ASDZ} to the theory \sa{Afr}, the rule \Tsbf{OT} is valid. In other words, the resulting theory \SA{Afrsdz} is essentially identical to the theory \Sa{Atonz} of linearly ordered rings without zerodivisor (see Item 3 of Lemma \ref{lemAtonz}).

\TwoRegles{\lab{ASDZ} {$ \,\,xy=0\vd x=0 \;\vou\;y=0 $}}
{\lab{OT} {$ \vd x\geq 0\;\vou\;x\leq 0 $}}
 
\end{lemma}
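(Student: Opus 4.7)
The plan is to prove the nontrivial direction first, namely that $\sa{Afrsdz}$ validates \tsbf{OT}, and then check the essential identity with $\sa{Atonz}$.

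The key ingredient is the $f$-ring identity $x^+\cdot x^- = 0$, which is rule \tsbf{afr1} listed as equivalent to \tsbf{afr} in Lemma \ref{lemaoafr} and is therefore valid in $\sa{Afr}$. Applying \tsbf{ASDZ} to this product opens two branches: in the first, $x^+ = 0$, and since $x = x^+ - x^-$ by \tsbf{grl6} with $x^-\geq 0$ (by definition $x^- = (-x)\vu 0$), we conclude $x = -x^- \leq 0$. In the second, $x^- = 0$, and symmetrically $x = x^+ \geq 0$. So the dynamic proof of $\vd x\geq 0 \vou x\leq 0$ is a three-step tree: start from the universally valid fact $\vd x^+ x^- = 0$, apply \tsbf{ASDZ} to branch, and close each branch using $x = x^+ - x^-$ together with \tsbf{gao1}--\tsbf{gao2}.

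For the essential identity with $\sa{Atonz}$, I would argue in both directions. From $\sa{Afrsdz}$ to $\sa{Atonz}$: \tsbf{OT} has just been proved; the axioms \tsbf{ao1}, \tsbf{ao2}, \tsbf{Gao} of $\sa{Ato}$ are already valid in every $f$-ring (the first two appear in Lemma \ref{lemaoafr} and \tsbf{Gao} follows from the antisymmetry of the $\ell$-group order); and \tsbf{Anz} is an immediate consequence of \tsbf{ASDZ}. Conversely, starting from $\sa{Atonz}$, since \tsbf{OT} holds and the sup of two elements exists and is unique in a totally ordered ring, we may skolemise this unique existence by a function symbol $\cdot\vu\cdot$ subject to \tsbf{sup1}, \tsbf{sup2}, \tsbf{Sup}, yielding an essentially identical extension in the sense of Definition \ref{defi-exteseq} (the item on \paref{skolemunique}). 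In this extension every $\sa{Afr}$-axiom reduces, via \tsbf{OT}, to a case analysis on the signs of the variables and becomes a consequence of the $\sa{Ato}$ axioms; and \tsbf{ASDZ} itself is valid in $\sa{Atonz}$ by Item 3 of Lemma \ref{lemAtonz}.

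The main obstacle is not the syntactic derivation of \tsbf{OT}, which is essentially immediate, but rather ensuring that the equational axiom \tsbf{afr} (and its consequence \tsbf{afr1}) really does yield $x^+ x^- = 0$ without further hypotheses; this is guaranteed by Lemma \ref{lemaoafr}. A secondary point of care is justifying the extension by the sup symbol as \emph{essentially identical}: one must invoke that the order is total and the sup provably unique, so the procedure of adding a function symbol under unique existence (Definition \ref{defi-exteseq}) applies. Once these two points are in place, the lemma follows directly.
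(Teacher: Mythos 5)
Your proof of \tsbf{OT} is exactly the paper's argument: note $\vd x^+ x^- = 0$ (rule \tsbf{afr1}), apply \tsbf{ASDZ} to branch into $x^+ = 0$ or $x^- = 0$, and read off $x\leq 0$ or $x\geq 0$ respectively. The paper's proof is just this one step; your extra remarks on the essential identity with \sa{Atonz} are correct bookkeeping that the paper leaves implicit.
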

%
\begin{proof}
Since $ x^+x^-=0 $ (\Tsbf{afr1}), we obtain the valid rule

\UneRegle{~} {$ \vd x^+=0 \;\vou\;x^-=0 $}

\vspace{-1em}
\end{proof}

\Subsection{Local $f$-rings}

\begin{lemma} \label{lemAfrLoc0}
Let $\gA$ be a  local $f$-ring and $ x\in\Ati $, then $x$ is $\geq 0$ or $\leq 0$.
\end{lemma}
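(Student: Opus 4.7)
The plan is to exploit the two defining features simultaneously: the $f$-ring identities $x^+\wedge x^-=0$, $x^+x^-=0$, $|x|=x^++x^-$, $|x|^2=x^2$ (rules \tsbf{afr1}, \tsbf{afr6b} of Lemma~\ref{lemaoafr}), together with the local ring axiom \tsbf{AL1} stating that $\U(a+b)$ entails $\U(a)\vou\U(b)$.

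First I would reduce the problem to the absolute value. Since $x\in\Ati$, there is $y$ with $xy=1$, hence $x^2(y^2)=1$, so $x^2\in\Ati$. Using the identity $|x|^2=x^2$ (rule \tsbf{afr6b}), this gives $|x|^2\in\Ati$, and therefore $|x|$ itself is invertible (with inverse $y^2|x|$, since $|x|\cdot y^2|x|=y^2|x|^2=y^2x^2=1$).

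Next, invoking that $|x|=x^++x^-$ is a sum which lies in $\Ati$, the local ring axiom \tsbf{AL1} opens two branches: either $x^+\in\Ati$ or $x^-\in\Ati$. In the first branch, multiplying the $f$-ring identity $x^+x^-=0$ (rule \tsbf{afr1}) by an inverse of $x^+$ yields $x^-=0$, whence $x=x^+-x^-=x^+\geq 0$. Symmetrically, in the second branch $x^+=0$, so $x=-x^-\leq 0$. Putting the two branches together gives the desired disjunction $x\geq 0\vou x\leq 0$.

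I expect no serious obstacle here: the argument is essentially the standard one used for Heyting ordered fields, and every step relies only on equational identities in \sa{Afr} (already catalogued in Lemma~\ref{lemaoafr}) together with the one non-equational axiom \tsbf{AL1}. The only delicate point to verify carefully is that $|x|\in\Ati$ follows from $x\in\Ati$ through $|x|^2=x^2$, but this is immediate once one writes down the explicit inverse $y^2|x|$ of $|x|$. The proof is therefore a short dynamical computation with one disjunctive branching.
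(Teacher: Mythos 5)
Your argument is correct and rests on exactly the same key facts as the paper's proof: decompose via $x^+$ and $x^-$, invoke the local-ring axiom to split into two branches, and kill the other part using $x^+x^-=0$. The only difference is a small unnecessary detour: the paper applies locality directly to $x=x^+-x^-$ (the given invertible element), rather than first passing through $|x|=x^++x^-$ and verifying $|x|\in\Ati$; your extra step is harmless but adds nothing.
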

%
\begin{proof}
Given $ x\in\Ati $, we write $ x=x^+-x^- $, so $ x^+\in\Ati $ or $ x^-\in\Ati $. Now $ x^+x^-=0 $. In the first case we obtain $ x^-=0 $, in the second case $ x^+=0 $. \end{proof}
%

\Subsection{Strict $f$-ring}
\rdb

The following theory merges the theories \sa{Afr} and \sa{Aso}. This theory is essentially identical to the one defined in the article \cite{LM2017}.
\begin{definition} \label{defiAsr}
The language of the Horn theory \SA{Asr} of \textsl{strict $f$-rings} is given by the following signature.
\Sigt{\AsR}{\cdot=0,\cdot\geq 0,\cdot>0\mathrel{;}\cdot+\cdot, \cdot\times\cdot,\cdot\vu\cdot,-\,\cdot,0,1} \label{NOTASigAsr}
\noindent The axioms are as follows.
\begin{itemize}
 
\item the rules of the purely equational theory \Sa{Afr},
 
\item the direct rules from \Tsbf{aso1} to \Tsbf{aso4}, (\paref{Axaso1})
 
\item the Horn rules \coligt, \Tsbf{Iv}, \Tsbf{Aso1} and \Tsbf{Aso2}, (\paref{AxAso1})
 
\item finally, we have the three rules \Tsbf{sup1}, \Tsbf{sup2} and \Tsbf{Sup} (\paref{Axsup1}) to link $ \cdot\geq 0 $ \hbox{ and $ \cdot\vu\cdot $}.
\end{itemize}

\end{definition}

We have put the predicate \gui{$\cdot\geq 0$} directly into the language rather than defining it from  $\cdot\vu\cdot$.
 
The meaning of $x>0$ is not fixed a priori by the axioms. It can range from \gui{$x$ is regular and $\geq 0$} to \gui{$x$ is invertible and $\geq 0$}.

\begin{lemma} \label{lemAsrs}~
Consider the Horn theory of strictly lattice rings to which we add the axiom \Tsbf{OTF}. Then the rule \OTFx\ is also valid (these rules are recalled below). 

\TwoRegles{
\lab{OTF} $ \,\, x+y> 0 \vd x >0 \;\vou\; y>0 $ 
}
{
\lab{OTF $ \eti $} $ \,\, xy< 0 \vd x <0 \;\vou\; y<0 $ 
}
\end{lemma}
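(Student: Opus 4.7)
The plan is to convert the strict negativity hypothesis into information about the lattice decomposition of $xy$, then invoke the $f$-ring identity $(xy)^- = x^+y^- + x^-y^+$ (rule \tsbf{afr3b}, available in any $f$-ring via Lemma~\ref{lemaoafr}) and split cases using the new axiom \tsbf{OTF}.

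First, $xy<0$ unfolds as $-xy>0$, so by \tsbf{aso2} one has $-xy\geq 0$, i.e.\ $xy\leq 0$, which is to say $(xy)^+ = xy \vu 0 = 0$. Then $xy = (xy)^+ - (xy)^- = -(xy)^-$, and therefore $(xy)^- = -xy > 0$. Using \tsbf{afr3b} this reads $x^+y^- + x^-y^+ > 0$, and applying \tsbf{OTF} opens two symmetric branches, $x^+y^->0$ or $x^-y^+>0$.

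In the first branch, $x^+\geq 0$ together with $x^+y^->0$ yields $y^->0$ by \tsbf{Aso2}. Next, the $f$-ring orthogonality $y^+y^-=0$ (rule \tsbf{afr1}) gives $y^-\cdot(-y^+)=0\geq 0$, so from $y^->0$, the rule \tsbf{Aso1} (with $x:=y^-$, $y:=-y^+$) delivers $-y^+\geq 0$, i.e.\ $y^+\leq 0$; combined with $y^+\geq 0$, rule \tsbf{Gao} forces $y^+=0$. Hence $y=-y^-$ and $-y=y^->0$, which is precisely $y<0$. The second branch is symmetric (exchange the roles of $x$ and $y$ throughout) and yields $x<0$.

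The main obstacle is the initial move from the strict inequality $xy<0$ to the $\ell$-group-level information $(xy)^+=0$; once that reduction has been made, everything else is a routine symmetric case split using only the Horn rules of \sa{Asr} together with the standard $f$-ring identities packaged in Lemma~\ref{lemaoafr}. Note that \tsbf{OTF} is genuinely needed in the second step: the purely equational $f$-ring part is not strong enough to strip the sum $x^+y^-+x^-y^+$, but \tsbf{OTF} lets us dynamically isolate one of the two pieces, after which the strict-order simplification rules \tsbf{Aso1}, \tsbf{Aso2}, \tsbf{Gao} finish the job.
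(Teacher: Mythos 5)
Your proof is correct, and it takes a genuinely (if mildly) different route from the one in the paper. The paper squares the hypothesis: from $xy<0$ it gets $x^2y^2>0$, hence $x^2>0$ by \Tsbf{Aso2}, then splits the decomposition $x^2=(x^+)^2+(x^-)^2$ with \Tsbf{OTF}; in the branch $(x^+)^2>0$ the identity $xx^+=(x^+)^2$ gives $x>0$ and then one more application of \Tsbf{Aso2} to $x(-y)>0$ gives $y<0$, while the branch $(x^-)^2>0$ gives $x<0$ directly. You instead decompose the hypothesis itself, passing from $xy\leq 0$ to $(xy)^-=-xy>0$ and using \Tsbf{afr3b}, $(xy)^-=x^+y^-+x^-y^+$, before invoking \Tsbf{OTF}; each branch then pins down the sign of one factor, at the price of the small extra argument ($y^+y^-=0$, \Tsbf{Aso1}, \Tsbf{Gao}) needed to pass from $y^->0$ to $y^+=0$ and hence $y<0$. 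Both proofs stay inside the Horn fragment of \sa{Asr} plus a single use of \Tsbf{OTF} on a sum of two orthogonal nonnegative terms; yours attacks the product's negative part directly and is perhaps more transparent about where the two disjuncts come from, while the paper's version finishes each branch a step faster because knowing the strict sign of $x$ lets \Tsbf{Aso2} extract the sign of $y$ from $xy<0$ in one move. One small point worth making explicit: in \sa{Asr} the predicate $\cdot\geq 0$ is primitive rather than defined from $\vu$, so the step from $xy\leq 0$ to $(xy)^+=0$ is not a definitional unfolding but a one-line deduction from \Tsbf{Sup}, \Tsbf{sup2} and \Tsbf{Gao} (or an appeal to the formal Positivstellensatz for $f$-rings); as stated it reads like an abbreviation, but the gap is trivially filled.
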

%
\begin{proof} 
Assume $ xy<0 $, hence $ x^2y^2>0 $, hence, by \Tsbf{Aso2}, $ x^2>0 $.
\\
Note that $ x^2=(x^+)^2+(x^-)^2 $. So by \Tsbf{OTF}, it is sufficient to treat the cases $ (x^+)^2>0 $ and $ (x^-)^2>0 $ separately. \\
If $(x^+)^2>0 $, we have $ xx^+=(x^+)^2>0 $, so by \Tsbf{Aso2}, $ x>0 $. And again by \Tsbf{Aso2}, we get $ y<0 $.
If $ (x^-)^2>0 $, we have $ -xx^-=(x^-)^2>0 $, so by \Tsbf{Aso2}, $ x<0 $.
\end{proof}
%

\Subsection{Reduced $f$-rings}
\rdb

Here we examine the Horn theory \SA{Afrnz} of \textsl{reduced $f$-rings}. We therefore add to \Sa{Afr} the axiom \Tsbf{Anz} of reduced rings, which is a \rsim.

\Regles{
\laB{Anz} $\,\, a^2=0 \vd a =0$}%

In the sale way the Horn theory \SA{Asrnz} of \textsl{reduced strict $f$-rings} is the theory obtained from the theory \Sa{Asr} by adding as axiom the Horn rule \tsbf{Anz}.

\paragraph{Some derived rules}~

\smallskip Let us prove in the theory \sa{Afrnz} the four rules \Tsbf{Afrnz1}, \Tsbf{Afrnz2}, \Tsbf{Aonz}, and \Tsbf{Aonz3} (these last ones were introduced \paref{AxAonz} and \pageref{corPst1bis}).

\Regles{
\Lab{Afrnz1} $ \,\, x^{3}\geq 0 \vd x\geq 0 $ %
}

\noindent We write $ x=x^+-x^- $. Since $ x^+\,x^-=0 $ we have $ x^{3}=(x^+)^{3}-(x^-)^{3}\geq 0 $. Multiplying by $ x^- $ gives $ (x^-)^{4}\leq 0 $, so $ (x^-)^{4}= 0 $. Now the ring is reduced: $ x^-=0 $ and $ x\geq 0 $.\eop

\smallskip Note that from \tsbf{Afrnz1} we deduce the same rule for an arbitrary odd exponent which replaces the exponent 3.

\smallskip We also have the following reciprocal of the rule \Tsbf{Afr1}.

\Regles{
\Lab{Afrnz2} $ \,\, ab=0 \vd \abs a \vi \abs b =0 $ %
}

\noindent Indeed if $ab=0$, then $ (\abs a\vi\abs b)^2\leq \abs a,\abs b=0 $, therefore $ \abs a\vi\abs b=0 $. \eop

\smallskip Thus, for $ a,b\geq 0 $, $ab=0$ is equivalent to $a\vi b=0$.

\Regles{
\laB{Aonz} $ \,\, c\geq 0\vet x(x^2+c)\geq 0 \vd x\geq 0\, $ 
}

\noindent Indeed, by \Tsbf{Ato2} we have $x^3\geq 0$, hence $x\geq 0$ by \Tsbf{Afrnz1}. And finally

\Regles{
\laB{Aonz3} $\,\, a\geq 0\vet b\geq 0\vet a^2=b^2 \vd a=b$%
}
 
\noindent Indeed $ \abS{a-b}^2\leq \abS{a-b}\, \abS{a+b} =\abs{a^2-b^2}$.

\eop

\medskip It is now easy to obtain the following result. 

\begin{lemma} \label{lemAfr-Afrnz}
In the theory \Sa{Afr} the rules \Tsbf{Afrnz1}, \Tsbf{Afrnz2},  \Tsbf{Aonz}, \Tsbf{Aonz3} and~\Tsbf{Anz} are equivalent. 
\end{lemma}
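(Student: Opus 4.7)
The plan is to combine the forward direction, which has essentially been carried out in the text immediately preceding the lemma, with a collection of short converse derivations. The forward direction says: assuming \tsbf{Anz} in \sa{Afr}, each of \tsbf{Afrnz1}, \tsbf{Afrnz2}, \tsbf{Afrnz3}, \tsbf{Aonz} is valid. These have just been established, using in each case only equational identities valid in every $f$-ring (in particular \tsbf{Afr1}, \tsbf{Afr2}, \tsbf{Afr5}, \tsbf{Gao}) together with one application of \tsbf{Anz}. So it only remains to verify that, in \sa{Afr}, each of the four named rules implies \tsbf{Anz}.

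For the converses, I would assume $a^2 = 0$ and in each case derive $a = 0$. Under \tsbf{Afrnz1}: compute $a^3 = a \cdot a^2 = 0$, hence $a^3 \geq 0$, so $a \geq 0$ by \tsbf{Afrnz1}; the same reasoning applied to $-a$ (using $(-a)^2 = a^2 = 0$) gives $-a \geq 0$; conclude by \tsbf{Gao}. Under \tsbf{Afrnz2}: the hypothesis $a \cdot a = 0$ gives $\abs a \vi \abs a = 0$, but $\abs a \vi \abs a = \abs a$, whence $\abs a = 0$ and therefore $a = 0$. Under \tsbf{Afrnz3}: the triple $(\abs a, 0, \abs a^2 = 0 = 0^2)$ satisfies the hypotheses of \tsbf{Afrnz3}, yielding $\abs a = 0$. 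Under \tsbf{Aonz}: taking $x = a$ and $c = 0$, we have $x(x^2 + c) = a^3 = a \cdot a^2 = 0 \geq 0$, so \tsbf{Aonz} gives $a \geq 0$; the same argument with $-a$ gives $a \leq 0$; conclude again by \tsbf{Gao}.

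Putting the forward and backward directions together yields the pairwise equivalence of all five rules over \sa{Afr}. I do not expect a serious obstacle: the forward direction is already written out in the preceding discussion, and each converse is a one-line computation that uses only manipulations already certified by the formal Positivstellensatz \pstref{thfairecommesi} for $f$-rings (or, equivalently, by direct verification in any linearly ordered ring followed by the representation corollary \ref{corthfairecommesi}). The only point to watch is that the identities used in the converses --- in particular $\abs a \vi \abs a = \abs a$ and $\abs a^2 = a^2$ --- are valid in \sa{Afr} as purely equational consequences of the axioms, so no hypothesis beyond the one being tested is smuggled in.
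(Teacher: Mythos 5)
Your proposal is correct and fills in precisely what the paper leaves to the reader: the paper proves the forward direction (\tsbf{Anz} $\Rightarrow$ each of the other four) in the text immediately preceding the lemma and then merely states the equivalence as ``easy to obtain,'' and your four converses (each rule $\Rightarrow$ \tsbf{Anz}) are exactly the short computations one would use. Each of your reductions to \tsbf{Anz} checks out, relying only on equational $f$-ring identities such as $\abs a \vi \abs a = \abs a$, $\abs a^2 = a^2$, and the rule \tsbf{Gao}.
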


Here is a simple example of a consequence of \pstref{Pst2}.

\begin{lemma} \label{lemsupdansAonz}
In a reduced $f$-ring, the element $c=a\vu b$ is characterised by the following equalities and inequalities
\[
c\geq a,\;c\geq b, \;(c-a)(c-b)=0.
\]
More precisely, the theory \sa{Afrnz} proves the following Horn rule

\Regles{\labu $ \,\,x\geq a\vet x\geq b\vet (x-a)(x-b)=0\vd x=a\vu b $.} 
\end{lemma}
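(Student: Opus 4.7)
The strategy is to show the two inequalities $x \geq a \vu b$ and $x \leq a \vu b$ separately, with only the second requiring work.

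First I would handle the easy direction. From the hypotheses $x \geq a$ and $x \geq b$, the Horn rule \tsbf{Sup} (valid already in $\ell$-groups, and thus in \sa{Afrnz}) gives immediately $x \geq a \vu b$.

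For the reverse inequality, the key observation is the identity $x - (a \vu b) = (x-a) \vi (x-b)$, which follows from the translation compatibility rule \tsbf{grl} applied with $-x$ (or equivalently: $-(x \vu y) = (-x) \vi (-y)$ together with $x + (y \vi z) = (x+y) \vi (x+z)$). This identity is a Horn equality valid in the theory of $\ell$-groups \sa{Grl}, and can be obtained for instance from \pstref{thfairecommesi0} by checking it in any linearly ordered group. Setting $u = x - a$ and $v = x - b$, the hypotheses give $u \geq 0$, $v \geq 0$ and $uv = 0$, and we want to prove $u \vi v = 0$.

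Here is where reducedness enters, and this is the only non-routine step: the rule \tsbf{Afrnz2} (derived on \paref{Afrnz2} in the theory \sa{Afrnz}) states $ab = 0 \vd \abs{a} \vi \abs{b} = 0$. Since $u, v \geq 0$, we have $\abs{u} = u$ and $\abs{v} = v$, and \tsbf{Afrnz2} applied to $uv = 0$ yields $u \vi v = 0$, hence $x - (a \vu b) = 0$, i.e.\ $x \leq a \vu b$. Combined with the first step, this gives $x = a \vu b$.

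The main (and essentially only) obstacle is justifying the identity $x - (a \vu b) = (x-a) \vi (x-b)$: one should be careful to note that it is a purely $\ell$-group identity, not requiring multiplication, and is therefore already part of the computational machinery available in \sa{Afr}. An alternative route, which sidesteps any such identity, would be to invoke \pstref{thfairecommesi} (or its reduced analogue implicit in \pstref{Pst2}) and verify the rule in a reduced linearly ordered ring; by Lemma \ref{lemAtonz}(3) such a ring has no zerodivisors, so $(x-a)(x-b) = 0$ yields $x = a$ or $x = b$, and the conclusion $x = a \vu b$ follows by case analysis using $x \geq a$ and $x \geq b$.
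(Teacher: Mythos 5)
Your proof is correct, but your main argument takes a genuinely different route from the paper's. The paper disposes of the lemma in one line: the Horn rule is obviously valid in \sa{Codsup} (in a discrete ordered field one just splits on $(x-a)(x-b)=0$ and uses $x\geq a$, $x\geq b$), and Item \textsl{3} of the formal \pstref{Pst2} says that \sa{Afrnz} proves the same Horn rules as \Sa{Crcdsup} and the intermediate theories, so the rule transfers to \sa{Afrnz}. That is essentially the alternative route you sketch at the end (you channel it through \pstref{thfairecommesi} plus reducedness and \tsbf{ASDZ}, rather than citing \pstref{Pst2} directly, but it is the same transfer idea). Your primary argument, by contrast, is a direct syntactic derivation inside \sa{Afrnz}: the $\ell$-group identity $x-(a\vu b)=(x-a)\vi(x-b)$ combined with the derived rule \tsbf{Afrnz2} ($uv=0\vd \abs u\vi\abs v=0$), with the hypotheses $x\geq a$, $x\geq b$ entering exactly where they must, namely to replace $\abs{x-a}$, $\abs{x-b}$ by $x-a$, $x-b$; note that once you have $u\vi v=0$ you already get the equality $x=a\vu b$ outright, so the opening appeal to \tsbf{Sup} is redundant (harmless, but unnecessary). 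What each approach buys: the paper's proof is shorter but leans on the heavy Positivstellensatz transfer machinery, whereas your computation is self-contained, stays entirely within the Horn fragment of \sa{Afrnz}, and makes explicit that reducedness (via \tsbf{Anz}, through \tsbf{Afrnz2}) is the only non-$\ell$-group ingredient.
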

%
\begin{proof}
In fact, as the rule is valid for the theory \Sa{Codsup}, this follows from Item~\textsl{3} of  \pstfref{Pst2}.
\end{proof}
%

\paragraph{The rule \tsbf{FRAC} in \sa{Afrnz}}~

\smallskip Recall the rules \Tsbf{FRAC} and \FRACn.

\Regles{\lab{FRAC} $ \,\,0\leq a\leq b\vd \Exists z\; (zb=a^2\vet\,0\leq z\leq a) $ 
\lab{FRAC$ _n $} $ \,\,\abs u^n\leq \abs v^{n+1}\vd \Exists z\; (zv=u\vet\, \abs z^{n}\leq \abs v) $ \quad ($ n\geq 1 $) }

Note that the rule \tsbf{FRAC}, applied with $ a=1 $ implies the invertibility of any element $ \geq 1 $.

\smallskip We now recall for the theory \Sa{Afrnz} the analogue of Lemma \ref{lemCo0FRAC} for the theory \Sa{Co--}. 
\begin{lemma} \label{lemAfrnzFRAC}
The addition of the axiom \tsbf{FRAC} to the theory \sa{Afrnz} can be replaced by the introduction of a function symbol $ \Fr $ with the axioms \tsbf{fr1} and \tsbf{fr2} which we recall below

\TwoRegles{
\lab{fr1} $ \vd \Fr(a,b)\, \abs b=(\abs a\!\vi\! \abs b)^2 $ 
}
{
\lab{fr2} $ \vd 0\leq {\Fr(a,b)}\leq \abs a\!\vi\! \phantom{(\abs a\!\vi\abs! \abs b)^2} $ 
}
\end{lemma}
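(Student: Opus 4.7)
The strategy is to observe that \tsbf{FRAC}, applied to $|a|\wedge|b|$ in place of $a$ and to $|b|$ in place of $b$, is equivalent to a \emph{universally quantified} existential rule, then to verify that the existential is in fact a unique existence, and finally to invoke the skolemisation principle for provably unique existences (see page~\pageref{skolemunique}).

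First I would show that, modulo the axioms of \sa{Afrnz}, the rule \tsbf{FRAC} is equivalent to
\[
\vd \Exists z\;\big(z\,|b| \,=\, (|a|\wedge|b|)^2 \vet\, 0\leq z\leq |a|\wedge|b|\big).
\]
The implication from this form to \tsbf{FRAC} is immediate: when $0\leq a\leq b$ one has $|a|\wedge|b|=a$ and $|b|=b$, so the witness $z$ is exactly the one required by \tsbf{FRAC}. Conversely, because $0\leq |a|\wedge|b|\leq |b|$ is provable in any $f$-ring, specialising \tsbf{FRAC} to this pair yields the universal form with no hypotheses.

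Second, I would establish uniqueness of $z$ in \sa{Afrnz}, following the calculation already used in Lemma~\ref{lemUniqFRAC}. Assume $y\,b' = z\,b' = {a'}^{2}$, $0\leq y\leq a'$ and $0\leq z\leq a'$, where $a':=|a|\wedge|b|$ and $b':=|b|$. Then $(y-z)\,b'=0$, and by the $f$-ring identity \tsbf{afr2} (available in \sa{Afr}, hence in \sa{Afrnz}) we get $|y-z|\cdot b' = |(y-z)\,b'| = 0$. Since moreover $|y-z|\leq a'\leq b'$, this yields
\[
|y-z|^{2} \;\leq\; |y-z|\cdot b' \;=\; 0,
\]
and the reducedness axiom \tsbf{Anz} then forces $|y-z|=0$, i.e.\ $y=z$.

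Third, having unique existence, the skolemisation procedure for unique existences produces an essentially identical extension when we introduce a function symbol $\Fr(a,b)$ together with the defining axioms \tsbf{fr1} and \tsbf{fr2} (which are precisely the two conjuncts of the universal form above with $z$ replaced by $\Fr(a,b)$). Conversely, in the resulting theory the rule \tsbf{FRAC} becomes redundant: for $0\leq a\leq b$, \tsbf{fr1} and \tsbf{fr2} read $\Fr(a,b)\cdot b = a^{2}$ and $0\leq \Fr(a,b)\leq a$, exhibiting $\Fr(a,b)$ as a witness for \tsbf{FRAC}. The only step that is not a pure formality is the uniqueness calculation, and the main (minor) obstacle is simply to notice that it already goes through in the Horn theory \sa{Afrnz}: the argument depends crucially on \tsbf{Anz}, and would fail in \sa{Afr} alone, as illustrated by Example~\ref{exatotordnonreduced}.
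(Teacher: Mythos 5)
Your proof is correct and follows essentially the same route as the paper: rewrite \tsbf{FRAC} as a universally valid existential rule via $(|a|\wedge|b|,\,|b|)$, establish uniqueness of $z$ by the computation of Lemma~\ref{lemUniqFRAC} (where, as you rightly note, \tsbf{Anz} is the crucial ingredient absent from \sa{Afr} alone), and then skolemise the provably unique existence as on page~\pageref{skolemunique}. The only cosmetic difference is that you cite the $f$-ring identity \tsbf{afr2} explicitly to pass from $(y-z)b'=0$ to $|y-z|\cdot b'=0$, a step the paper leaves implicit.
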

%
\begin{proof} As in Lemma \ref{lemCo0FRAC}, we can see that it is a question of skolemising an existential rule. This gives an essentially identical theory if we have unique existence. The proof is that of Lemma \ref{lemUniqFRAC}. Assume $ yb=zb=a^2 $, $ 0\leq y\leq a $ and $ 0\leq z\leq a $. We have $ {(y-z)}\,b=0 $, $ \abs{y-z}\leq a\leq b $\footnote{We are in an $\ell$-group for addition, we can reason case by case, separately with $ 0\leq y\leq z\leq a $ and $ 0\leq z\leq y\leq a $. In both cases we obtain $ 0\leq \abs{z- y}\leq a $.} and thus $ \abs{y-z}^2\leq \abs{y-z}\,b=0 $. 
\end{proof}
%

\begin{lemma} \label{lemfracfrac2}
In the theory \sa{Afrnz} the rule $\tsbf{FRAC}_2$ is deduced from the rule \tsbf{FRAC} and from the rule asserting the existence of the sixth root $\geq 0$ of an element $\geq 0$.
\end{lemma}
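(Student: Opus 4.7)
The plan is to use the sixth-root axiom to extract the relevant cube and square roots, apply \tsbf{FRAC} twice to construct the ``absolute-value'' fraction $|u|/|v|$, and then promote this to a signed $z$. Set $h:=(|v|^3)^{1/6}$ and $k:=(u^2)^{1/6}$, so that $h^2=|v|$, $k^3=|u|$ and the hypothesis $u^2\leq|v|^3$ becomes $k^6\leq h^6$. Since $\sa{Afrnz}$ and $\sa{Atonz}$ prove the same Horn rules (formal \pstfref{Pst2}), the rule ``$x,y\geq 0$ and $x^6\leq y^6$ entail $x\leq y$'', immediate in discrete ordered fields, is available in $\sa{Afrnz}$ and gives $k\leq h$.

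A first application of \tsbf{FRAC} to $(k,h)$ produces $z_1\geq 0$ with $z_1 h=k^2$ and $z_1\leq k$; a second, to $(z_1,k)$, yields $z_2\geq 0$ with $z_2 k=z_1^2$ and $z_2\leq z_1\leq k\leq h$. The last chain gives at once $z_2^2\leq h^2=|v|$. To verify $z_2|v|=|u|$, set $r:=z_2h^2-k^3$; multiplying $z_2k=z_1^2$ by $h^2$ and using $z_1h=k^2$ gives $(z_2h^2)k=k^4$, so $rk=0$, and then (by Afrnz2 followed by Afrnz1) $|r|\,k=0$. The bounds $z_2\leq k$ and $k^2\leq h^2$ force $|r|\leq z_2h^2+k^3\leq 2kh^2$, whence $|r|^2\leq 2h^2(|r|\,k)=0$, and reducedness yields $r=0$, i.e.\ $z_2|v|=|u|$.

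It remains to promote $z_2$ (morally $|u|/|v|$) to a signed $z$ with $zv=u$. Squaring $z_2|v|=|u|$ gives $(z_2v)^2=u^2$, so $(u-z_2v)(u+z_2v)=0$; writing $P:=u+z_2v$ and $Q:=u-z_2v$, the orthogonality $|P|\vi|Q|=0$ combined with $|P|\vu|Q|=|u|+z_2|v|=2|u|$ yields $|P|+|Q|=2|u|$. The desired $z$ is then characterised by $z\cdot 2|v|=|P|-|Q|$ together with $|z|\leq z_2$: these identities imply $zv=u$ and $|z|=z_2$ by a computation that is a Horn rule verifiable by case analysis on the signs of $u$ and $v$ in $\sa{Atonz}$, and hence valid in $\sa{Afrnz}$ by the formal Positivstellensatz. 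The existence of $z$ itself is secured by one further application of \tsbf{FRAC} (equivalently of the symbol $\mathrm{Fr}$) after regularising the denominator so that its hypothesis can be checked from the bounds on $|P|-|Q|$, $|v|$ and $z_2$ already assembled. This last step is the main obstacle: in the absence of a sign test one cannot simply ``pick the nonzero factor'' of $(u-z_2v)(u+z_2v)=0$ by inspection, and the regularisation is what makes the orthogonality-based sign-selection uniform inside the reduced $f$-ring.
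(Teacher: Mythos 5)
Your first two paragraphs are correct and establish, by a route genuinely different from the paper's, the unsigned statement: there is $z_2\geq 0$ with $z_2\abs v=\abs u$ and $z_2^2\leq \abs v$ (you take sixth roots first and then apply \tsbf{FRAC} twice, whereas the paper applies \tsbf{FRAC} twice to build $w$ with $wv^6=u^6$ and takes the sixth root only at the end). The genuine gap is your third paragraph, precisely the step you yourself call ``the main obstacle'': the passage from $z_2$ to a signed $z$ with $zv=u$ is never carried out. The relations $2\abs v\,z=\abs P-\abs Q$ and $\abs z\leq z_2$ only characterise $z$ (a uniqueness statement, hence a Horn rule accessible to the formal Positivstellensatz), but the existence of such a $z$ is an existential claim which the Positivstellensatz cannot supply, and the promised ``one further application of \tsbf{FRAC} after regularising the denominator'' is not available as stated: \tsbf{FRAC} only divides a square $a^2$ by some $b$ with $0\leq a\leq b$, your numerator $\abs P-\abs Q$ is signed, and even for its positive part $\abs P$ the needed hypothesis fails, since $\abs P\leq 2\abs u$ together with $u^2\leq\abs v^3$ does not yield $\abs P\leq (2\abs v)^2$ (this already fails in $\RR$ for small $\abs v$). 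The division you postpone is of exactly the same nature as the rule $\tsbf{FRAC}_2$ you are trying to prove.

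The paper closes this step by splitting the numerator instead of dividing again. Its hypothesis $u^2\leq v^3$ forces $v\geq 0$ (via \tsbf{Afrnz1}), so $\abs v=v$; then $(u^+)^2\leq u^2\leq v^3$ and $(u^-)^2\leq u^2\leq v^3$, so the nonnegative-numerator construction (yours would do just as well as the paper's) applied to $u^+$ and to $u^-$ gives $z',z''\geq 0$ with $z'v=u^+$, $z''v=u^-$, $(z')^2\leq v$, $(z'')^2\leq v$; put $z=z'-z''$, so that $zv=u$ and $z^2\leq 2v$. The missing bound $z^2\leq v$ is then recovered by the cube trick: $z^2v^2=u^2\leq v^3$ gives $v^2(z^2-v)\leq 0$, while $0\leq z^2\leq 2v$ gives $\abs{z^2-v}\leq v$, hence $(z^2-v)^3\leq v^2(z^2-v)\leq 0$ and $z^2\leq v$. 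Grafting this splitting onto your $z_2$-construction closes the gap without any further division. Note finally that you work from $u^2\leq\abs v^3$ with $v$ of unknown sign, which is strictly harder than what the paper's proof addresses; with the splitting above the difficulty disappears because $v\geq 0$ is forced by the hypothesis actually used.
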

%
\begin{proof}
Assume $u^2\leq v^3$ and we want to find a $z$ such that $zv=u$ and $z^2\leq v$. 

\smallskip\noindent 
First assume $u\geq 0$ and show that there is a $z$ such that $zv=u$. The rule \tsbf{FRAC} implies that the fraction $t=\frac{u^4}{v^3}$ is well-defined with $0\leq t\leq u^2$. Again \tsbf{FRAC} gives the fact that the fraction $w=\frac{t^2}{u^2}$ with $ 0\leq w\leq t\leq u^2 $ is well-defined. We then obtain $ u^2wv^6=t^2v^6=u^8 $. So $ u^2(wv^6-u^6)=0 $. Now $ w\leq u^2 $, so $ \abs{wv^6-u^6}\leq u^2(v^6+u^4) $, hence $ \abs{wv^6-u^6}^2\leq \abs{wv^6-u^6} u^2(v^6+u^4)=0 $. Given the rule \tsbf{Anz}, we get $ wv^6=u^6 $. Take $z=w^{\frac1 6}$ and $zv=u$. Furthermore, $z^6\leq u^2\leq v^3$ implies $z^2\leq v$.

\smallskip \noindent For an arbitrary $u$ we write $ u=u^+-u^- $; we have $ u^2=(u^+)^2+(u^-)^2\leq v^3 $. We obtain a $ z_1 $ such that $ z_1v=u^+ $ and a $ z_2 $ such that $ z_2v=u^- $, we put $ z=z_1-z_2 $ and we have $ zv=u $. We also get $ z^2\leq 2v $. Since $ z^2v^2=u^2\leq v^3 $, we have $ v^2(z^2-v)\leq 0 $. The inequalities $ 0\leq z^2\leq 2v $ imply $ \abs{z^2-v}\leq v $, hence $ (z^2-v)^2\leq v^2 $ and $ (z^2-v)^3\leq v^2(z^2-v)\leq 0 $, which implies $ z^2-v\leq 0 $.
\end{proof}
%

%

\section{Back to ordered fields}
\label{secCOG}

\Subsection{Real $f$-rings}

The real number field satisfies all the Horn rules of the theory of discrete real closed fields, but not all the \rdys. Recall the following \rdys satisfied by discrete ordered fields.

\TwoRegles{
\laB{IV} $\,\, x> 0 \vd \Exists y\, xy = 1$
\laB{OTF} $\,\, x+y> 0 \vd x > 0\;\vou\;y>0$
\laB{FRAC} $\,\,0\leq a\leq b\vd \Exists z\; (zb=a^2\vet 0\leq z\leq a)$
}
{
\lab{\coligt} $ \vd x^2> 0 \;\vou\; x= 0$
\laB{OT} $ \vd x \geq 0 \;\vou\; x\leq 0$
}

The real number field verifies \tsbf{IV}, \tsbf{OTF} and \tsbf{FRAC} but neither \tsbf{ED\ineq}, nor \tsbf{OT}.

\smallskip The following lemma prepares the definition of the dynamical theory \Sa{Aftr}.
\begin{lemma} \label{lemArftr0} 
\label{i1lemArftr0} In the theory \Sa{Afrnz} to which we add the rule \Tsbf{FRAC}, if we define \gui{$ x>0 $} as an abbreviation of \gui{$ x\geq 0 \vii \exists z\; zx=1 $}, the predicate $ \cdot>0 $ satisfies all the axioms of \Sa{Asrnz} where it is present as well as the rule \Tsbf{IV}.
\end{lemma}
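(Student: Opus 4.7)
The plan is to verify each axiom involving the predicate $>$, using two auxiliary observations that I would establish first. The first observation: if $zx=1$ and $x\geq 0$, then $z\geq 0$. Indeed, $z=z^2x$ (multiply the relation $zx=1$ by $z$), and $z^2\geq 0$ by \Tsbf{ao1} together with $x\geq 0$ gives $z=z^2x\geq 0$ by \Tsbf{ao2}. The second observation, which is the real work-horse: any element $u\geq 1$ is invertible. This follows from \tsbf{FRAC} applied to $0\leq 1\leq u$, which yields some $w$ with $wu=1^2=1$.

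With these in hand, most axioms are almost immediate. For \Tsbf{aso1} ($\vd 1>0$), take $z=1$. For \Tsbf{aso2}, just project out the first conjunct. For \Tsbf{aso4} ($x>0,y>0\vd xy>0$), the sign $xy\geq 0$ comes from \Tsbf{ao2}, and $(z_1z_2)(xy)=1$ if $z_1x=z_2y=1$. For \coligt ($0>0\vd 1=0$), by definition there is $z$ with $z\cdot 0=1$, i.e.\ $0=1$. For \Tsbf{Iv} ($xy=1\vd x^2>0$), note $x^2\geq 0$ by \Tsbf{ao1} and $(y^2)(x^2)=1$. For \Tsbf{IV}, existence of the inverse is built into the definition. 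For \Tsbf{Aso1} ($x>0,\,xy\geq 0\vd y\geq 0$), multiply $xy\geq 0$ by the inverse $z$ of $x$; since $z\geq 0$ by the first observation, $y=z(xy)\geq 0$ by \Tsbf{ao2}. For \Tsbf{Aso2} ($x\geq 0,\,xy>0\vd y>0$), let $w(xy)=1$; then $(wx)y=1$ gives invertibility of $y$, and $w\geq 0$ (first observation applied to $w\cdot(xy)=1$ with $xy\geq 0$), hence $y=(wx)y^2\geq 0$ using $x\geq 0$ and $y^2\geq 0$.

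The only delicate axiom is \Tsbf{aso3} ($x>0,\,y\geq 0\vd x+y>0$), and this is where the second observation (the consequence of \tsbf{FRAC}) is essential. The sum is $\geq 0$ trivially. For invertibility, let $zx=1$ with $z\geq 0$; then $z(x+y)=1+zy$, and $zy\geq 0$, so $1+zy\geq 1$. By the second observation, $1+zy$ has an inverse $v$, and then $vz$ is an inverse of $x+y$.

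The main obstacle is thus \Tsbf{aso3}: without \tsbf{FRAC}, there is no purely equational way to manufacture an inverse for $x+y$ from an inverse of $x$ and positivity of $y$. Once the trick of reducing to the invertibility of an element $\geq 1$ and invoking \tsbf{FRAC} is in place, everything else is routine bookkeeping using \Tsbf{ao1}, \Tsbf{ao2}, and the absorption identity $z=z^2x$.
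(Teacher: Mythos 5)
Your proof is correct and takes essentially the same approach as the paper: the two working ingredients are identical, namely the absorption identity $u=xu^{2}$ showing that an inverse of a nonnegative element is nonnegative, and the use of \tsbf{FRAC} to supply the invertibility needed for \tsbf{aso3}, all other axioms being routine. The only cosmetic difference is that the paper applies \tsbf{FRAC} directly to $0\leq x\leq x+y$ (getting $z(x+y)=x^{2}$ with $x^{2}$ invertible), whereas you first prove that every element $\geq 1$ is invertible and then reduce to that case; this is the same idea.
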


\begin{proof} Rules
\tsbf{IV}, \Tsbf{aso1}, \Tsbf{aso2}, \Tsbf{aso4}, \colneq\  are trivially valid. Let's look at the rule \Tsbf{aso3}: an element greater than or equal to a positive invertible element is invertible. This follows from the rule \tsbf{FRAC} because if $ b\geq a>0 $, we have a~$z$ such that $ zb=a^2 $, so $b$ is invertible. For \Tsbf{Aso1}, \Tsbf{Aso2} and \Tsbf{Iv}, we begin by validating the rule $ \,\,x>0\vet xu=1\vd u\geq0$: indeed $u=xu^2\geq 0$. The rest follows.
\end{proof}

\begin{definition} \label{defiAftr} 
We now define the dynamical theory \SA{Aftr} of \textsl{strongly real $f$-rings} (or to abbreviate, \textsl{strongly real rings}) on the following signature.
\Sigt{\AftR}{\cdot=0,\cdot\geq 0,\cdot>0\mathrel{;}\cdot+\cdot, \cdot\times\cdot,\cdot\vu\cdot,-\,\cdot,\Fr(\cdot),0,1} 
\label{NOTASigAftr}
\noindent The axioms of the \sa{Aftr} theory are as follows:
\begin{itemize}
 
\item the axioms of \Sa{Afrnz};
 
\item the axioms which define $ x>0 $ as an abbreviation of \gui{$ x\geq 0\vii \exists z\,xz=1 $}; 
 
\item the axioms \Tsbf{fr1} and \Tsbf{fr2}. 
\end{itemize}
\index{strongly real $f$-ring}\index{f-ring@$f$-ring!strongly real ---}\index{strongly real ring}
%
\end{definition}

This definition is justified by the fact that the theory of \und{local} strongly real rings is essentially identical to the theory \sa{Co} of \ndsof: Item \textsl{3} of Lemma \ref{lemArftr}.
\begin{lemma} \label{lemArftr01}\label{i2lemArftr0} The dynamical theory \sa{Aftr} is essentially identical to the theory \sa{Asrnz} to which we add the axioms \Tsbf{IV} and \Tsbf{FRAC}\footnote{This implies that the theory {\ssa{Aftr}} defined here is slightly stronger than the one defined in \cite{LM2017}.}.
\end{lemma}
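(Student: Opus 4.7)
The strategy is to exhibit a chain of essentially identical extensions linking the two theories, exploiting the fact that the only signature difference is the presence of the function symbol $\Fr$, and the only "content" difference is whether $\cdot>0$ is primitive (with Horn axioms) or defined by abbreviation.

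First I would skolemise \tsbf{FRAC} in the theory $\sa{Asrnz}+\tsbf{IV}+\tsbf{FRAC}$. By Lemma \ref{lemAfrnzFRAC}, the witness $z$ in \tsbf{FRAC} is provably unique, so introducing a function symbol $\Fr$ with axioms \tsbf{fr1} and \tsbf{fr2} (applied to $\abs a\vi\abs b$ in place of $a$) yields an essentially identical extension in the sense of \paref{skolemunique}. Call this intermediate theory $\sab{T}_{1}$; it has signature $\Sigma_{\AftR}$ and axioms those of \sa{Asrnz} together with \tsbf{IV}, \tsbf{fr1}, \tsbf{fr2}.

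Next I would show that, inside $\sab{T}_1$, the primitive predicate $\cdot>0$ is provably equivalent to the formula $\varphi(x):=(x\geq 0\,\vii\,\exists z\;xz=1)$. One direction is immediate: if $x>0$ then $x\geq 0$ by \tsbf{aso2} and $\exists z\;xz=1$ by \tsbf{IV}. For the converse, suppose $x\geq 0$ and $xz=1$; then \tsbf{Iv} yields $x^2>0$, i.e.\ $x\cdot x>0$, and applying \tsbf{Aso2} with the hypothesis $x\geq 0$ gives $x>0$. Once this equivalence is in hand, the primitive predicate $\cdot>0$ together with its five axioms (\tsbf{aso1}--\tsbf{aso4}, \coligt, \tsbf{Iv}, \tsbf{Aso1}, \tsbf{Aso2}) and the existential axiom \tsbf{IV} may be replaced by the abbreviation $\varphi(x)$: this is an essentially identical reformulation because the old predicate coincides provably with the new one, so every statement involving $\cdot>0$ carries across unchanged. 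After this replacement the surviving axioms are exactly those of \sa{Afrnz} together with \tsbf{fr1}, \tsbf{fr2} and the definitional scheme for $\cdot>0$, i.e.\ the theory \sa{Aftr}.

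To complete the argument I would verify the reverse passage, which is where Lemma \ref{lemArftr0} does all the work: it asserts precisely that, in $\sa{Afrnz}+\tsbf{FRAC}$, taking $\varphi(x)$ as the definition of $\cdot>0$ makes the full list of \sa{Asrnz} axioms involving $\cdot>0$ valid, and that \tsbf{IV} is also valid. Since \tsbf{FRAC} follows trivially from the presence of $\Fr$ with \tsbf{fr1}, \tsbf{fr2} (take $z=\Fr(a,b)$ when $0\leq a\leq b$), this verifies that every axiom of $\sa{Asrnz}+\tsbf{IV}+\tsbf{FRAC}$ holds in \sa{Aftr} once $\cdot>0$ is read through the abbreviation. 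Combined with Steps 1 and 2, this closes the loop and shows that the two theories are essentially identical.

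The only delicate point is Step 2: one must be careful that replacing a primitive predicate by a provably equivalent formula really is an essentially identical extension in the sense of Definition \ref{defi-exteseq}. This is legitimate because the defining formula $\varphi(x)$ is built from existing predicates using conjunction and existential quantification, both of which are explicitly listed among the operations producing essentially identical extensions; the bidirectional provable equivalence is exactly what is needed to move the axioms freely across the two formulations. Everything else is routine bookkeeping once Lemma \ref{lemArftr0} and Lemma \ref{lemAfrnzFRAC} are invoked.
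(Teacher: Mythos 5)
Your proof is correct and follows essentially the same route as the paper: the two-way identification of the primitive predicate $\cdot>0$ with the abbreviation \gui{$x\geq 0$ and invertible} (via \tsbf{IV}, \tsbf{Iv}, \tsbf{Aso2} in one direction and Lemma \ref{lemArftr0} in the other), combined with Lemma \ref{lemAfrnzFRAC} to trade \tsbf{FRAC} for the symbol $\Fr$ with \tsbf{fr1}, \tsbf{fr2}. The only difference is that you skolemise \tsbf{FRAC} first rather than last, and you spell out the predicate-replacement step in more detail; the content is the same.
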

%
\begin{proof} 
The theory \sa{Asrnz} contains in its signature the predicate~\gui{$ \cdot>0 $} which is not present in \sa{Afrnz}. When we add the axiom \tsbf{IV}, we have $ x>0 $ if, and only if, $x$ is $\geq 0$ and invertible. Lemma \ref{lemArftr0} therefore implies that the theory \sa{Afrnz} to which we add the axiom \tsbf{FRAC} is essentially identical to the theory \sa{Asrnz} to which we add the axioms \tsbf{IV} and \tsbf{FRAC}. Finally, Lemma \ref{lemAfrnzFRAC} shows that adding the axiom \tsbf{FRAC} to \sa{Afrnz} is equivalent to adding the function symbol $ \Fr $ with the axioms \tsbf{fr1} and~\tsbf{fr2}.
\end{proof}

A strongly real ring is therefore a reduced $\QQ$-$f$-algebra in which any element greater than an invertible positive element is itself invertible. Moreover, the validity of the rule~\tsbf{FRAC} adds a little something. 

\begin{lemma} \label{lemArftr}~
\begin{enumerate}
 
\item 
 The Horn theory \Sa{Asrnz} to which we add as axioms the \rdys \Tsbf{IV} and \Tsbf{OTF} is essentially identical to the theory \Sa{Co--}.
 
\item The dynamical theory \Sa{Aftr} to which we add the axiom \Edineq\ is essentially identical to the theory \Sa{Codsup} of discrete ordered fields with sup (or to \Sa{Cod}).
 
\item The dynamical theory \Sa{Aftr} to which we add the axiom \Tsbf{OTF} is essentially identical to the theory \Sa{Co} of \emph{non} discrete ordered fields: a strongly real local ring is a \emph{non} discrete ordered field.\footnote{We could have avoided introducing the predicate $ x>0 $ in \ssa{Aftr} because it is defined as an abbreviation. The rule \Tsbf{OTF} should be replaced by the rule \Tsbf{AFRL} in Item 2. The theory would then be a Horn theory. This is hardly surprising since the theory of real closed rings is purely equational and a \ndrcf is a local real closed ring.}
\end{enumerate}
\end{lemma}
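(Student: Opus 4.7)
My plan is to prove the three items in order, using repeatedly the equivalence between the axiom \Tsbf{FRAC} and the skolemising function symbol $\Fr$ with axioms \tsbf{fr1}, \tsbf{fr2} (Lemmas \ref{lemAfrnzFRAC} and \ref{lemCo0FRAC}), together with the equivalences of Lemma \ref{lemAfr-Afrnz}.

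\textsl{Item 1.} The two theories share the same signature, so I need only verify that each axiom of one is derivable in the other. From \Sa{Co0} to \Sa{Asrnz}$+$\Tsbf{IV}$+$\Tsbf{OTF}, the purely equational $f$-ring axioms are supplied by \tsbf{grl} and \tsbf{afr} together with \tsbf{ao1}, \tsbf{ao2} and the $\ell$-group identities they entail; the non-equational strict-order axioms \tsbf{aso1}--\tsbf{aso4}, \coligt, \tsbf{Iv}, \tsbf{Aso1}, \tsbf{Aso2}, \tsbf{sup1}, \tsbf{sup2}, \tsbf{Sup} are explicit axioms of \Sa{Co0}; the reducedness axiom \tsbf{Anz} follows from \tsbf{Aonz} applied to $\pm x$ with $c=0$ combined with \tsbf{Gao}; and \Tsbf{IV}, \Tsbf{OTF} are themselves axioms of \Sa{Co0}. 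Conversely, every axiom of \Sa{Co0} other than \tsbf{Aonz} is either an axiom of \Sa{Asrnz}$+$\Tsbf{IV}$+$\Tsbf{OTF} or a Horn rule valid in the purely equational theory \sa{Afr}; and Lemma \ref{lemAfr-Afrnz} shows that in reduced $f$-rings \tsbf{Aonz} is equivalent to \tsbf{Anz}, hence derivable from \Sa{Asrnz}.

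\textsl{Item 2.} By Lemma \ref{lemArftr01}, \Sa{Aftr} is essentially identical to \Sa{Asrnz}$+$\Tsbf{IV}$+$\Tsbf{FRAC}, so by Item 1 the only substantial step is to derive \Tsbf{OT} in the presence of \Edineq. Given $x$, apply \Edineq to $x^+$: either $x^+=0$ and $x\leq 0$, or $(x^+)^2>0$; in the second case $(x^+)^2$ is invertible by \Tsbf{IV}, so $x^+$ is invertible and hence $x^+>0$ (by \Tsbf{Aso2} applied with $x:=x^+$, $y:=x^+$); the $f$-ring identity $x^+x^-=0$ (rule \tsbf{afr1} of Lemma \ref{lemaoafr}) then forces $x^-=0$ by multiplying with the inverse of $x^+$, giving $x\geq 0$. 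Combining this with Item 1 shows that \Sa{Aftr}$+$\Edineq proves all axioms of \Sa{Cod}. Conversely, \Sa{Codsup} trivially satisfies \Edineq and \Tsbf{IV}, it proves all the Horn axioms of the reduced strict $f$-ring theory (formal \pstref{Pst1bis}), and it proves \Tsbf{FRAC} by taking $z=0$ if $b=0$ and $z=a^2/b$ otherwise. By Lemma \ref{lemUniqFRAC} the extension of \Sa{Codsup} by $\Fr$ with axioms \tsbf{fr1}, \tsbf{fr2} is essentially identical, yielding the claim.

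\textsl{Item 3.} This is now immediate by assembly: by Lemma \ref{lemArftr01}, \Sa{Aftr}$+$\Tsbf{OTF} is essentially identical to \Sa{Asrnz}$+$\Tsbf{IV}$+$\Tsbf{FRAC}$+$\Tsbf{OTF}; by Lemma \ref{lemAfrnzFRAC} we may replace \Tsbf{FRAC} by the essentially identical presence of $\Fr$ together with axioms \tsbf{fr1}, \tsbf{fr2}; Item 1 then identifies the remaining part with \Sa{Co0}; and Definition \ref{defiConondiscret} recognises the resulting theory as \Sa{Co}. The principal obstacle is the \Tsbf{OT} derivation in Item 2, where the interplay of $f$-ring orthogonality $x^+x^-=0$ with the invertibility produced by \Edineq and \Tsbf{IV} must be carefully orchestrated so as to eliminate one of $x^+,x^-$ without any external sign test; once this step is secured, the remainder of the lemma is an essentially bookkeeping argument about which function symbols are added and under what uniqueness conditions.
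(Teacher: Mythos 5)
Your proof is correct and takes essentially the same route as the paper's: Item 1 by comparing the two axiom lists and trading \tsbf{Aonz} for \tsbf{Anz} via Lemma \ref{lemAfr-Afrnz}, Item 2 by extracting \Tsbf{OT} from \Edineq, \Tsbf{IV} and the identity $x^+x^-=0$ (the paper packages the same idea as \tsbf{ASDZ} plus Lemma \ref{lemAfrsdz}), and Item 3 by assembling Item 1 with Lemmas \ref{lemArftr01} and \ref{lemAfrnzFRAC}. The only slip is that \coligt\ is not an explicit axiom of \Sa{Co0} (see Definition \ref{defiCo0}); it has to be derived from \Tsbf{IV}, which is exactly the point the paper's proof of Item 1 makes.
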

%
\begin{proof} \textsl{1.} Comparing the theories \sa{Co--} (Definition \ref{defiCo0}) and \sa{Asrnz}, we find in the first the additional axioms \tsbf{Aonz}, \tsbf{IV} and~\tsbf{OTF} and the collapse axiom is missing. But collapus follows from \tsbf{IV} and \tsbf{Aonz} is deduced from \tsbf{Anz} (Lemma \ref{lemAfr-Afrnz}). \\
NB: The axiom~\tsbf{IV} implies that $ x>0 $ is equivalent to \gui{$x$ is $\geq 0$ and invertible}. The axiom \tsbf{OTF} adds the fact that the ring is local. 

\smallskip\noindent \textsl{2}.
The new theory is an extension of \sa{Co--} from Item \textsl{1}. To move on to \sa{Codsup} we need only add \Edneq\ and \tsbf{OT}. Since every element is zero or invertible, we are dealing with a discrete field, and the rule \tsbf{ASDZ} is valid. Lemma~\ref{lemAfrsdz} then says that the rule \tsbf{OT} is also valid.

\smallskip\noindent \textsl{3}. Results from Item \textsl{1} and  Lemmas \ref{lemAfrnzFRAC}, \ref{lemArftr0} and \ref{lemAftrloc}.
\end{proof}
%

\begin{lemma} \label{lemAftrloc}
In a reduced $f$-ring where the rule \tsbf{FRAC} is valid, the following rule \tsbf{AFRL} replaces the rule \Tsbf{OTF} when we define the predicate $ a >0 $ as an abbreviation of $ a\geq 0\vii \exists z\;az=1 $.

\Regles{\Lab{AFRL} $\;\;z(x+y)=1\vet x+y\geq 0\vd \Exists u\;(ux=1\vet x\geq 0 )\;\vou\;  \Exists v\;(vy=1\vet y\geq 0 )$ }

\end{lemma}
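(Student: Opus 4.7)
The plan is to show that in the dynamical theory \sa{Aftr} (which already contains \sa{Afrnz} together with \tsbf{FRAC}), the rules \tsbf{OTF} and \tsbf{AFRL} are interderivable once we unfold the abbreviation $a>0 \equiv a\geq0 \vii \exists z\;az=1$. Since the theory is obtained from \sa{Asrnz} plus \tsbf{IV} and \tsbf{FRAC} (Lemma \ref{lemArftr01}), and since in the presence of \tsbf{IV} the predicate $>0$ coincides with the conjunction of $\geq 0$ and invertibility, the equivalence of the two rules will be essentially a syntactic rephrasing modulo the Skolemisation of one existential hypothesis.

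For the direction \tsbf{OTF} $\Rightarrow$ \tsbf{AFRL}, I would assume the hypotheses of \tsbf{AFRL}, namely $z(x+y)=1$ and $x+y\geq 0$. These two facts together say that $x+y$ is invertible and nonnegative, hence $x+y>0$ by the defining abbreviation. Applying \tsbf{OTF} opens two branches: in the first we have $x>0$, so $x\geq 0$ together with a fresh variable $u$ satisfying $ux=1$; in the second, the symmetric situation for $y$. These are precisely the two disjuncts of \tsbf{AFRL}.

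For the reverse direction \tsbf{AFRL} $\Rightarrow$ \tsbf{OTF}, I would assume $x+y>0$. By definition this is $x+y\geq 0$ together with $\exists z\;(x+y)z=1$; the latter existential in a hypothesis is legitimately handled in a dynamical proof by introducing a fresh parameter $z$ with $z(x+y)=1$ (this is the standard \gui{concrete existence implies formal existence} move built into the dynamic calculus). We are now exactly in position to apply \tsbf{AFRL}, which opens two branches providing either $x\geq 0$ with $ux=1$, or $y\geq 0$ with $vy=1$; in each case the abbreviation gives $x>0$ or $y>0$ respectively, which is the conclusion of \tsbf{OTF}.

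The only point requiring any care is the treatment of the existential quantifier hidden inside the abbreviation $>0$ on the left-hand side of \tsbf{OTF}: the role of \tsbf{AFRL} is precisely to Skolemise that existential, so that the rule is written in a form whose hypotheses are an explicit list of atomic formulas, as required by the geometric format. Apart from this (trivial) bookkeeping, no real obstacle arises, and neither the axiom \tsbf{FRAC} nor the reduced-$f$-ring axioms are needed for the equivalence itself; they are only needed, as used in Lemma \ref{lemArftr}, to guarantee that this rewriting of \tsbf{OTF} is the correct geometric form to adjoin to \sa{Aftr} in order to recover the full theory \sa{Co}.
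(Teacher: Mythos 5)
There is a genuine gap: you have proved only the syntactic unfolding of the abbreviation $>0$, whereas the content of this lemma — as the paper proves it and as it is used later — is the equivalence of \tsbf{AFRL} with the ring being \emph{local} in the usual sense (if $x+y$ is invertible then $x$ or $y$ is invertible, with no sign hypothesis). This stronger equivalence is exactly what is invoked in Lemma \ref{lemArcloc} (\gui{a real closed ring is local if, and only if, it satisfies \tsbf{AFRL}}) and in Item \textsl{3} of Lemma \ref{lemArftr}; note also that deferring the substance to Lemma \ref{lemArftr}, as you do in your last paragraph, is circular, since the proof of that item cites the present lemma. Your explicit claim that neither \tsbf{FRAC} nor the reduced $f$-ring axioms are needed is the telltale sign that you are proving a weaker statement than the intended one: those hypotheses are precisely what make the nontrivial directions work.

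Concretely, two implications are missing. (1) \emph{Local $\Rightarrow$ \tsbf{AFRL}}: from $z(x+y)=1$ and $x+y\geq 0$, locality gives $x$ or $y$ invertible, say $x$; by Lemma \ref{lemAfrLoc0} (which uses the decomposition $x=x^+-x^-$ with $x^+x^-=0$) one has $x\geq 0$ or $x\leq 0$, and in the case $x\leq 0$ one gets $y\geq x+y\geq 0$ with $x+y$ invertible, whence $y$ is invertible by Lemma \ref{lemArftr0} — and this last step is where \tsbf{FRAC} enters. (2) \emph{\tsbf{AFRL} $\Rightarrow$ local}: given $x+y$ invertible with no sign information, apply \tsbf{AFRL} to the pair $\big((x+y)^+,(x+y)^-\big)$, whose sum $\abs{x+y}$ is $\geq 0$ and invertible, to conclude $x+y\geq 0$ or $x+y\leq 0$; then apply \tsbf{AFRL} to $(x,y)$ in the first case and to $(-x,-y)$ in the second, obtaining $x$ or $y$ invertible. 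Your two-branch unfolding of the abbreviation is correct as far as it goes, but without these arguments the lemma as the paper uses it is not established.
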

%
\begin{proof} \textsl{Direct implication}. Assume $\gA$ is local and prove the rule \tsbf{AFRL}. Since $ x+y $ is invertible, $x$ or $y$ is invertible. For example~\hbox{$ x\in\Ati $}. By Lemma \ref{lemAfrLoc0} we have $ x\geq 0 $ or $ x\leq 0 $. If $ x\leq 0 $, then $ y\geq x+y\geq 0 $, therefore $ y\in \Ati $ (Lemma \ref{lemArftr0}).
\textsl{Reciprocal}. If $ x+y $ is invertible, then $ x+y\geq 0 $ or $ x+y\leq 0 $ (Lemma \ref{lemAfrLoc0}). In the first case, \tsbf{AFRL} shows that $x$ or $y$ is invertible. In the second case, $ -x $ or $ -y $ is invertible.
\end{proof}

\Subsection{Formal Posivitstellensätze with sup}
\rdb

For simultaneous collapse, we have already given  \thref{thColsimafr}.

\begin{pstf} [formal Positivstellensatz, 2] \label{Pst2} ~%
\index{Positivstellensatz!formal --- !for ordered fields, 2}\\ 
The following dynamical theories prove the same Horn rules.
\begin{enumerate}
 
\item \label{i1Pst2} On the signature \sigt{\Ao}{\cdot=0,\cdot\geq 0\mathrel{;}\cdot+\cdot, \cdot\times\cdot,-\,\cdot,0,1}: the theory \Sa{Aonz} of strictly reduced ordered rings (Definition \ref{defisaAor}), the theory \Sa{Crcdsup} of discrete real closed fields with sup and intermediate theories (for example \Sa{Afrnz}, \Sa{Atonz}, \Sa{Cod} or \Sa{Co--}).
 
\item \label{i2Pst2} On the signature \sigt{\Aso}{\cdot=0,\cdot\geq 0,\cdot>0\mathrel{;}\cdot+\cdot, \cdot\times\cdot,-\cdot, 0,1}: the \Sa{Asonz} theory of reduced strictly ordered rings, the \Sa{Crcdsup} theory and intermediate theories (for example~\Sa{Asrnz}, \Sa{Cod} or~\Sa{Co--}).
 
\item \label{i3Pst2} On the signature \sigt{\AfR'}{\cdot=0,\cdot\geq 0\mathrel{;}\cdot+\cdot, \cdot\times\cdot,\cdot\vu\cdot,-\,\cdot,0,1}: the \Sa{Afrnz} theory of reduced $f$-rings, the \Sa{Crcdsup} theory and intermediate theories (e.g. \Sa{Asrnz}, \Sa{Cod} or \Sa{Co--}). 
 
\item \label{i4Pst2} On the signature \sigt{\AsR}{\cdot=0,\cdot\geq 0,\cdot>0\mathrel{;}\cdot+\cdot, \cdot\times\cdot,\cdot\vu\cdot,-\,\cdot,0,1}: the theory \Sa{Asrnz} of reduced strict $f$-rings, the theory \Sa{Crcdsup} and intermediate theories (for example~\Sa{Co} or \Sa{Codsup}).
 
\item \label{i5Pst2} On the signature \sigt{\Co}{\cdot=0,\cdot\geq 0,\cdot>0\mathrel{;}\cdot+\cdot, \cdot\times\cdot,\cdot\vu\cdot,-\,\cdot,\Fr(\cdot,\cdot),0,1}: the \Sa{Aftr} theory of strongly real rings, the theory \Sa{Crcdsup} and intermediate theories (for example \Sa{Co} or \Sa{Codsup}).
\end{enumerate}
\end{pstf}

%
\begin{proof} \textsl{1} and \textsl{2}. The theory \sa{Crcdsup} is essentially identical to \sa{Crcd}. So Positivstellensätze \ref{Pst1bis} and \ref{Pst1} give Items \textsl{1} and \textsl{2}. 

\smallskip\noindent 
\textsl{3}. Theories \sa{Atonz} and \sa{Crcd} prove the same Horn rules (\ref{Pst1bis}). Theories \sa{Ato} et \sa{Afr} prove the same Horn rules (\pstref{thfairecommesi}), so also Theories \sa{Atonz} and \sa{Afrnz} (\thref{thEseqMemesfaitsbis}).
Finally  \sa{Crcd} and  \sa{Crcdsup} are \esid.

\smallskip\noindent 
\textsl{4}. Same reasoning as in the previous item.

\smallskip\noindent 
\textsl{5}. Note that \sa{Codsup} validates the rules \tsbf{IV} and \tsbf{FRAC}, which can be replaced by the introduction of $ \Fr $ with its two axioms (Lemma \ref{lemAfrnzFRAC}). On the other hand, the dynamical theory \sa{Aftr} is essentially identical to the theory \sa{Asrnz} to which we add the axioms \tsbf{IV} and \tsbf{FRAC} (Item \textsl{2} of Lemma \ref{lemArftr0}). Item \textsl{5} therefore results from Item \textsl{4}.
\end{proof}
%

\begin{remark} \label{remPst2} 
In the theories \Sa{Aftr} and \Sa{Co}, we have added the function symbol $\Fr$ with the axioms~\Tsbf{fr1} and \Tsbf{fr2}, which increases the Horn rules formulable in these theories. Nevertheless the use of the symbol $\Fr$ can be eliminated in the Horn rules in favour of the axiom \Tsbf{FRAC} (see the addition of a function symbol \paref{skolemunique} and Lemma \ref{lemCo0FRAC}). As this axiom is satisfied in the stronger theory~\Sa{Cod}, \pstref{Pst2} is not affected by the presence of $\Fr$. We could also accept the presence of the function symbol $\Fr$ with the axioms \Tsbf{fr1} and \Tsbf{fr2} in the theory \Sa{Codsup}.
\eoe\end{remark}

\begin{corollary} \label{corPst2} Consider the dynamical theory $ \Sa{Asrnz}= \Sa{Asrnz}(\QQ) $ of reduced strict $f$-rings.
\begin{enumerate}
 
\item Let $\gK$ be a discrete ordered field and $\gR$ its algebraic closure. Let $ \gA=\big((G,Rel),\Sa{Asrnz}(\gK)\big) $ be a dynamic algebraic structure with \hbox{$ G=(\xn) $} and $ Rel $ finite. We have an algorithm which decides whether $\gA$ collapses and which in case of a negative answer gives the description of a system $ (\xin) $ in $ \gR^n $ which satisfies the constraints given in the relations $ Rel $.
 
\item We have an algorithm that decides whether a Horn rule of $ \sa{Asrnz} $ is valid. If the answer is negative, the algorithm gives the description of a system $(\xin) $ in $ \gRa^n $ which contradicts the Horn rule.
 
\item The same results are valid with $ \Sa{Aftr}= \Sa{Aftr}(\QQ\cap\ClI{0,1}) $ instead of $ \sa{Asrnz} $. In this case we add the function symbol $ \Fr $ with the two accompanying axioms to \Sa{Crcdsup}.
\end{enumerate}
\end{corollary}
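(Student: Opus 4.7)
The strategy is to reduce each item to the decidability results already available for discrete real closed fields, using formal \pstref{Pst2} as the bridge. The key observation is that the theories $\sa{Asrnz}$ and $\sa{Aftr}$ prove the same Horn rules as $\sa{Crcdsup}$ (items \ref{i4Pst2} and \ref{i5Pst2}), and in particular (being special cases of Horn rules) they collapse simultaneously. Since $\sa{Crcdsup}$ is essentially identical to $\sa{Crcd}$, all decidability questions can ultimately be transported to the classical setting governed by Tarski's \thref{thTarski} and the concrete \pstref{thPstStengle}.

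For Item~1, I would argue as follows. Given $\gA=\big((G,Rel),\sa{Asrnz}(\gK)\big)$, the hypotheses in $Rel$ are of the form $t=0$, $t\geq 0$ or $t>0$ with $t\in\gK[G]$, so the \emph{same} presentation defines a dynamic algebraic structure $\gA'=\big((G,Rel),\sa{Crcd}(\gK)\big)$. By Item~\ref{i4Pst2} of \pstref{Pst2}, $\gA$ collapses if, and only if, $\gA'$ collapses. Item~3 of \pstref{thPstStengle} then provides an algorithm that decides whether $\gA'$ collapses and, in the negative case, produces an explicit point $(\xi_1,\dots,\xi_n)\in\gR^n$ satisfying the constraints in $Rel$; this point is automatically a model of $\gA$ since $\gR$, being a discrete real closed field, is in particular a reduced strict $f$-ring satisfying all the axioms of $\sa{Asrnz}$.

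For Item~2, a Horn rule of $\sa{Asrnz}$ has the shape $\Gamma(\xn)\vd A(\xn)$ where $\Gamma$ is a finite list of atomic formulas and $A$ is a single atomic formula. By Item~\ref{i4Pst2} of \pstref{Pst2} this rule is valid in $\sa{Asrnz}$ if, and only if, it is valid in $\sa{Crcd}$. Using that \Sa{Crcd} is decidable and complete (\thref{thTarski}), the rule is valid if, and only if, the first-order statement $\forall \xn\,(\Gamma \Rightarrow A)$ holds in $\gRa$, which can be decided algorithmically; equivalently, one tests whether the dynamic algebraic structure on $(\xn)$ presented by $\Gamma$ together with one of the finitely many atomic conditions forming the classical negation of $A$ (e.g.\ $t<0$ or $t>0$ when $A$ is $t=0$) collapses, applying \pstref{thPstStengle}. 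A negative answer in any of these branches yields an explicit counter-example in $\gRa^n$.

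For Item~3, the only new feature of $\sa{Aftr}$ compared to $\sa{Asrnz}$ is the function symbol $\Fr$ with axioms \tsbf{fr1} and \tsbf{fr2}; Lemma~\ref{lemArftr01} shows that $\sa{Aftr}$ is essentially identical to $\sa{Asrnz}+\tsbf{IV}+\tsbf{FRAC}$, and in $\sa{Crcdsup}$ extended with $\Fr$ the symbol $\Fr(a,b)$ denotes the semialgebraic function $(|a|\vi|b|)^2/|b|$ (extended by $0$ at $b=0$). Thus any closed term involving $\Fr$ is interpreted by a semialgebraic function over $\gK$, and the arguments of Items~1 and~2 carry over verbatim after replacing polynomial terms with semialgebraic terms: \pstref{Pst2}\ref{i5Pst2} gives the reduction of collapse and of Horn-rule validity to the corresponding questions in $\sa{Crcdsup}+\Fr$, which are again decided by Tarski's theorem. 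The main technical point to check — which is where I expect the genuine work to lie — is that closed terms of $\sa{Aftr}(\gK)$ can be effectively rewritten in a semipolynomial normal form (in the style of Lemmas~\ref{lemAfrReecriture} and~\ref{lemSIPD}, extended to accommodate $\Fr$), so that the semialgebraic description fed into the Tarski–Stengle machinery is computable; everything else is a routine assembly of the previously cited results.
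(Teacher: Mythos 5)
Your proposal is correct and follows essentially the same route as the paper: the formal \pstfref{Pst2} (Items \textsl{4} and \textsl{5}) transfers collapse and Horn-rule validity from $\sa{Asrnz}$ and $\sa{Aftr}$ to \sa{Crcdsup}, and the concrete \pstref{thPstStengle} (with Tarski's \thref{thTarski}) then supplies the decision algorithm and the explicit witnesses in $\gR^n$ or $\gRa^n$. The only small inaccuracy is that already in Items 1 and 2 the terms of $\sa{Asrnz}(\gK)$ may involve $\vu$ (so they are semipolynomials rather than elements of $\gK[G]$), hence the rewriting step you defer to Item 3 — turning a Horn rule of \sa{Crcdsup} into a family of Horn rules of \sa{Crcd} — is needed there as well; this is exactly the one-sentence reduction the paper makes before invoking \pstref{thPstStengle}.
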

%
\begin{proof} 
We have just seen (Formal \pstref{Pst2}) that the Horn theory \Sa{Asrnz} (resp. \Sa{Aftr}) proves the same Horn rules as \Sa{Crcdsup} (resp. by adding $ \Fr $). Moreover, we see that a Horn rule of \Sa{Crcdsup} (possibly by adding $ \Fr $) is equivalent to a family of Horn rules of \Sa{Crcd}. We can therefore conclude with Concrete Positivstellensatz \ref{thPstStengle}.
\end{proof}

The ring $ \SIPD_n(\gA) $ of semipolynomials over $\gA$ in $ n $ variables is explained in Definition \ref{defiSIPD}, the $f$-ring $ \AFR(\gA) $ generated by $\gA$ is defined in \ref{notaAFR}.

\begin{theorem} \label{thSIPDreduit} Fix $ n $ and denote $ \Kux=\Kxn $.
\begin{enumerate}
 
\item Let $\gK$ be a discrete ordered field and $\gR$ its real closure. 
The ring $\SIPD_n(\gK,\gR)$ is identified with the $f$-ring generated by $\Kux$. More precisely: the structure of $\gK$ gives to $\Kux$ a dynamic algebraic structure of $f$-ring and the unique $\gK$-morphism of $f$-rings from $\AFR(\Kux)$ to $\SIPD_n(\gK,\gR) $ is an isomorphism. 
 
\item Let $\gK$ be a discrete ordered field and $\gR$ its real closure. If every semialgebraic open of $\gR^n$ contains points of $\gK^n$, the ring $\SIPD_n(\gK)$ is identified with $\AFR(\Kux)$.
 
\item (incomplete proof) If $\gK$ is a $\QQ$-algebra contained in $\RR$, the ring $\SIPD_n(\gK)$ identifies with $\AFR(\Kux)$.
\end{enumerate}
\end{theorem}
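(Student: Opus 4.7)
The plan is to treat the three items in sequence, with Item 1 supplying the core identification that Items 2 and 3 then exploit.

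For Item 1, I would consider the canonical $\gK$-algebra morphism $\varphi\colon \AFR(\Kux) \to \SIPD_n(\gK, \gR)$ sending each generator $x_i$ to the $i$th coordinate map on $\gR^n$. Surjectivity follows immediately from Lemma \ref{lemSIPD}, which writes every element of $\SIPD_n(\gK, \gR)$ as $\sup_i(\inf_j f_{ij})$ with $f_{ij} \in \Kux$. For injectivity, suppose $t \in \AFR(\Kux)$ is sent to the zero map on $\gR^n$; by Item 2 of Lemma \ref{lemafrgenerique} applied to $\gC = \Kux$, write $t = \sup_i(\inf_j f_{ij})$ with $f_{ij} \in \Kux$. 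Then $t = 0$ in $\AFR(\Kux)$ is equivalent to the validity of the closed Horn rule $\vd t = 0$ in the dynamic algebraic structure of $f$-ring presented by the positive diagram of $\Kux$. The formal Positivstellensatz \ref{Pst2} (Item 3), combined with Theorem \ref{thEseqMemesfaitsbis} applied to the Horn rules constituting the positive diagram of $\Kux$, then reduces this to the validity of $\vd t = 0$ in $\sa{Crcdsup}(\Kux)$.

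A model of $\sa{Crcdsup}(\Kux)$ is a pair $(\gR', \und{a})$ of a discrete real closed field $\gR' \supseteq \gK$ together with a point $\und{a} \in (\gR')^n$ interpreting $\ux$. Classically, Gödel's completeness theorem \ref{thGodel2} makes the validity of $\vd t = 0$ equivalent to $\sup_i(\inf_j f_{ij}(\und{a})) = 0$ in every such model; by Tarski's theorem \ref{thTarski}, the theory of real closed fields over $\gK$ is complete, so this universal statement collapses to the corresponding condition on the prime real closure $\gR$ of $\gK$, which is precisely our hypothesis. Constructively, Corollary \ref{corPst2}(1) supplies an algorithm that decides the Horn rule and, in case of negative answer, produces an explicit witness $\und{a} \in \gR^n$, so the same hypothesis applies directly. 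Either way, $t = 0$ in $\AFR(\Kux)$, completing Item 1.

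For Item 2, factor the evaluation $\AFR(\Kux) \to \SIPD_n(\gK)$ as $\AFR(\Kux) \xrightarrow{\varphi} \SIPD_n(\gK, \gR) \xrightarrow{\rho} \SIPD_n(\gK)$, where $\rho$ is restriction to $\gK^n$. Surjectivity of the composite is clear from Lemma \ref{lemSIPD}, and $\varphi$ is an isomorphism by Item 1, so it suffices to show $\rho$ is injective. Given $f \in \SIPD_n(\gK, \gR)$ with $f|_{\gK^n} = 0$, the set $\{f \neq 0\} = \{f > 0\} \cup \{f < 0\} \subseteq \gR^n$ is semialgebraic open and contains no point of $\gK^n$; by the density hypothesis it is empty, forcing $f = 0$ on $\gR^n$, hence $f = 0$ in $\SIPD_n(\gK, \gR)$.

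For Item 3, where $\gK \subseteq \RR$ is an ordered $\QQ$-algebra without a sign test, one would attempt to reduce to Items 1--2 by approximating $\gK$ with finitely generated $\QQ$-subalgebras and exploiting the density of $\QQ^n$ in semialgebraic opens of $\RR^n$. The main obstacle, which is why the paper flags this proof as incomplete, is that semipolynomials whose coefficients are transcendental real numbers cannot constructively be pushed inside a discrete subring: one cannot test whether a given real is algebraic, so the passage from a relation in $\SIPD_n(\gK)$ to a relation over a discrete coefficient field is genuinely delicate. A complete argument would likely require either a variant of the formal Positivstellensatz tailored to \nds coefficient rings or a careful appeal to the constructive forms of Positivstellensätze available on $\RR$ mentioned in the excerpt at Question \ref{quest17thRR}.
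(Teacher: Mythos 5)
Your proof follows the same route as the paper's: surjectivity from the rewriting lemma, then injectivity via the formal Positivstellensatz (equal Horn rules between the $f$-ring theory and $\sa{Crcdsup}$), with Item 2 reducing to Item 1 by density and Item 3 flagged as incomplete for the same reason the paper gives. Your version is considerably more detailed than the paper's two-sentence argument, which is a good thing; the paper essentially says only that the vanishing of a semipolynomial on $\gR^n$ has an algebraic certificate over $\gK$ by the Positivstellensatz and then invokes the equality of Horn rules between \sa{Afrnz} and \sa{Crcdsup}, trusting the reader to fill in exactly what you made explicit (passing from $\AFR(\Kux)$ to the Horn rule $\vd t=0$, using Theorem~\ref{thEseqMemesfaitsbis}, and invoking completeness/decidability to reduce from ``all models'' to the single model $\gR$).

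Two small remarks. First, a point worth flagging as a shared imprecision between your proof and the paper's: Positivstellensatz~\ref{Pst2}~(Item~3) equates the Horn rules of \sa{Afrnz} (reduced $f$-rings) and \sa{Crcdsup}, not those of \sa{Afr} (general $f$-rings); so what it gives directly is that $t=0$ in $\AFRNZ(\Kux)$, and one still needs to know that $\AFR(\Kux)\to\AFRNZ(\Kux)$ is an isomorphism (equivalently that $\AFR(\Kux)$ is already reduced) to land exactly on the statement as written. The paper's proof has the same implicit step; it would be worth a sentence. Second, when you invoke Corollary~\ref{corPst2} for the constructive decision procedure, Item~2 (decidability of Horn rules in $\sa{Asrnz}$) is the more directly applicable reference than Item~1 (which decides collapse); the two are interchangeable only after converting the Horn rule $\vd t=0$ into the collapse of the structure augmented by $t^2>0$, a conversion that is legitimate because \sa{Crcdsup} is discrete, but that you should spell out if you cite Item~1. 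Neither of these is a gap that invalidates the argument, but both merit a clause.
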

%
\begin{proof} It must be shown that if an expression of the form given in Lemma \ref{lemSIPD} defines the identically zero map, this can be proved using only the Horn rules of reduced $f$-rings.

\sni \textsl{1.}
By the Positivstellensatz, the fact that a semipolynomial is zero at any point in $ \gR^n $ has an algebraic certificate on $\gK$. Now \Sa{Afrnz} and \Sa{Crcdsup} prove the same Horn rules. (For more details on this kind of subject see \cite{GL93}). 

\sni \textsl{2.} Results from the previous item because under the considered hypothesis, a $\gK$-semipolynomial not zero everywhere on $ \gR^n $ is not zero everywhere on $ \gK^n $. 

\sni \textsl{3.} If $\gK$ is discrete, this follows from Item \textsl{1}, because a $\gK$-semipolynomial zero on $\gK$ is zero on~$\QQ$ and therefore on~$\RR$ and a fortiori on the real closure of the field of fractions of $\gK$. Apparently, it takes a bit of effort to obtain the result constructively in all generality, whereas it is clear in classical mathematics. It's the same kind of gymnastics as for the complete constructive proof of the solution of the 17th Hilbert problem on $\RR$, given in \cite{GL93}. The bonus is that the solution is then completely explicit, i.e.\ it does not use any sign test (or dependent choice axiom) on $\RR$.
\end{proof}

\section{The real lattice and spectrum of a commutative ring} 
\label{subsecTRR}

A prime cone of the commutative ring $\gA$, i.e.\ an element of the real spectrum $ \Sper\,\gA $, can be given as a non-trivial integral quotient ring $ \gA/\fP $ with a linearly ordered ring structure, in other words as a minimal model of the $ \Sa{Aito}(\gA) $ theory of non-trivial integral linearly ordered rings based on~$\gA$ (see Definition \ref{defidiagramme}).

As the theory of nontrivial linearly ordered integral rings is a dynamical theory without existential axioms, the real spectrum is identified with the spectrum of the distributive lattice $ \Reel(\gA) $ obtained by \gui{recopying}\footnote{As we do below, in Definition \ref{defZarR}.} the axioms of the dynamical theory $\sa{Aito}(\gA) $.

To find the usual topology on the set $\Spec(\Reel\gA)$\footnote{According to the tradition established when the real spectrum was invented.} underlying the real spectrum $ \Sper\gA $, we must consider the lattice based on the only predicate $ x>0 $. This gives Definition \ref{defZarR}, which corresponds to the following valid \rdys in \sa{Cod}

\TwoRegles
{
\lab{Col\ineq} $ \,\,0> 0 \vd \Bot $ 
\lab{aso3} $ \,\, x > 0\vet y > 0 \vd x +y > 0 $ 
\lab{aso4} $ \,\, x > 0\vet y > 0 \vd x y > 0 $ 
}
{
\lab{aso1} $ \vd 1 > 0 $ 
\lab{OTF} $ \,\, x+y> 0 \vd x > 0\vou y>0 $ \lab{OTF$\eti$} $ \,\, xy< 0 \vd x <0 \vou y<0 $ 
}

\begin{remark} \label{remdefZarR} 
If we base ourselves solely on the predicate $ x>0 $ and if we introduce the predicate $ x\geq 0 $ as the opposite of the predicate $ -x>0 $, and the predicate $ x=0 $ as the conjunction $ x\geq 0 \vii -x\geq 0 $, we obtain on the basis of the previous axioms alone a conservative extension which satisfies all the axioms of \sa{Aito}. The minimal models of the dynamical theory described by the 6 previous axioms are therefore (in classical mathematics) the integral quotients of $\gA$ with a relation of total order. This justifies the following definition: the spectrum of the lattice $ \Reel\gA $ is indeed identified with the real spectrum of $\gA$ (in classical mathematics).
\eoe
\end{remark}

\begin{definition} 
\label{defZarR} 
The \textsl{real lattice} of a commutative ring $\gA$, denoted $ \Reel\gA $, is the distributive lattice generated by $(\gA,\vdash)$ where $\vdash$ is the smallest entailment relation satisfying 
\begin{equation} 
\left.
\begin{array}{rclcrcl} 
 0 & \vdash & &\qquad \qquad &
 & \vdash & 1 ~\\
 x,\,y & \vdash & x+y &\quad & x+y & \vdash & x ,\, y \\
x,\,y & \vdash & xy & & -x y & \vdash & x ,\, y 
 \end{array}
\quad \right\}
\end{equation}
\end{definition}

This simple and constructive way of defining the real lattice goes back to \cite{CC00}, which was inspired by \cite[Section V-4.11]{Joh1986}. 

\newpage \thispagestyle{empty}


\chapter{\textsl{Non} discrete real closed fields} \label{chapreelclos}
\Today
\minitoc

\section*{Introduction}
\addtocontents{toc}{\skip0.8em}
\addcontentsline{toc}{section}{Introduction}
\rdb

Section \ref{subsecclotrlRR} explains how to introduce square roots of the elements~\hbox{$\geq 0$} into an $f$-ring and in particular into a \ndsof. This is intended as an introduction to the more general notion of virtual roots. The case of discrete ordered fields was treated in \cite[section 3.2]{LR91}. The moral of this case is that we don't need to know whether a square root is already present in the ordered field in order to introduce it formally without any risk of contradiction. Here we see the superiority of the constructive point of view over the classical point of view (which usually uses \tsbf{LEM} to decide whether the coveted square root is already present or not).

Section \ref{secCoVR} explains how to add virtual root maps in  \ndsofs. Virtual roots were introduced in \cite{GLM98} for \dofs. The aim was to have, for a real monic polynomial, continuous maps of the coefficients which cover the real roots. In particular, this made it possible to have a constructive version of the intermediate value theorem in which no sign test was used. In fact, similar work can be done on any $f$-ring.  

Section \ref{secArc} deals with real closed rings and Section \ref{secCrc2} proposes a definition for \ndrcfs as local real closed rings. The theory of real closed rings is presented here in an elementary, purely equational form, in the style of \cite{Tre2007}. 

\section{$2$-closed ordered field  (or euclidean field)} 
\label{subsecclotrlRR}
 As a starting point, let's look at the question of introducing the square roots of the elements $\geq 0$. For the case of a discrete ordered field we refer to \cite[section 3.2]{LR91}.
 
We are interested in the following rule which says that the elements $\geq 0$ are squares.

\Regles{
\Lab{sqr} $ \vd \Exists z\geq 0\;x^+=z^2 $ 
}

Another way to state the same thing is to postulate

\Regles{
\Lab{sqa} $\vd \Exists z\;\;(\abs x=z^2\vet z=\abs z)$
}

In an  \grl we have  in \gnl $x^+\neq \abs x$, but conditions $x=x^+$ and $x=\abs x$ are \eqve (they mean $x\geq 0$).

\vspace{-1em}
\Subsection{The case of a \codi} 

Référence: \cite[section 3.2]{LR91}.

\begin{definition} \label{deficdireel}
A \cdi is said to be \textsl{real} if $\sum_{i=1}^nx_i^2=0$ implies $x_i=0$ ($1\leq i\leq n$). 
\end{definition}

A \codi is real. 
In classical mathematics, any real discrete field can be ordered.
In constructive mathematics this simply means that if we formally add an order structure (with the axioms of discrete ordered fields) to a discrete field, and if the formal theory proves $1=0$, then we already had $1=0$ in the discrete field itself.

\begin{definition} \label{deficdidc}
A discrete ordered field is said to be \textsl{$d$-closed} if every polynomial $P$ of degree $\leq d$ that changes sign between $a$ and $b$ has a root on the interval with endpoints $a$ and $b$.
A $2$-closed discrete ordered field is also called an \textsl{Euclidean discrete field}.
\end{definition}

The usual method of solving second-degree equations gives the following lemma. But this only works for a discrete field.

\begin{lemma} \label{lemmacodi2clos}  Soit $\gK$ un \codi.
\Propeq
\begin{enumerate}
\item Any \elt $\geq 0$ is a square.
\item $\gK$ is a $2$-closed \codi.
\end{enumerate}
 
\end{lemma}

\begin{proposition} \label{propcodi2clos} Let $\gK$ be a real \cdi \footnote{We use the reality axiom only for the case $\,\,x^2+y^2+z^2=0\vd x=y=z=0$.}. \Propeq
\begin{enumerate}
\item $\gK$  can be provided with an order (in the sense of discrete ordered fields) for which every element~$\geq 0$ is a square. In which case the possible order relation is unique.
\item Any square is a power $4$.
\end{enumerate}
 \end{proposition}
%
\begin{proof}
\textsl{1}  $\Rightarrow$ \textsl{2}. Let $x$ be equal to a square $y^2$. If $\gK$ is a \codi, we have $y\geq 0$ or $y\leq 0$. So there exists $z$ such that $y=z^2$ ou $-y=z^2$. In both cases $y^2=z^4$.

\smallskip \noindent \textsl{2} $\Rightarrow$ \textsl{1}. From $y^2=z^4$ we deduce that $y=z^2$ or $-y=z^2$. If $P$ is the set of squares, we therefore have $P\cup -P=\gK$.\\
Let us then show that for all $x,y$, there exists $z$ such that $x^2+y^2=z^2$.
Indeed $x^2+y^2=\pm z^2$ and if $x^2+y^2+ z^2=0$ then $x=y=z=0$ and  $x^2+y^2=z^2$. We therefore have $P+P\subseteq P$. We also have $P\cap -P=\so 0$. Indeed if $y^2=-y^2$ then $y^2+y^2=0$ therefore $y=0$. Finally $P\cdot P\subseteq P$ because $x^2y^2=(xy)^2$.
Thus the field can be ordered, in a unique way.
And every positive is a square.
\end{proof}

In \cite{LR91} if $\gK$ is a \codi and $a\geq 0$, the authors formally introduce a square root $\alpha\geq 0$ de $a$ and demonstrate that $\gK[\alpha]$ can be equipped with a discrete ordered field structure without needing to know whether $\alpha\in\gK$. In other words, we know the structure of $\gK[\alpha]$ as a \codi, but a priori we do not know whether $\gK[\alpha]$ is of dimension~$1$ or~$2$ as a \Kev.

This elementary construction is the basic building block for constructing the $2$-closure of a discrete ordered field.

Things are much more difficult for a \ndsof.

\Subsection{The case of an \afr} 

\begin{remark} \label{rem2closreduit} 
In an $f$-ring, in the presence of nilpotents, two elements $z$ and $ y \geq 0 $ which have the same square are not necessarily equal, 
but if the ring is reduced, they are equal, by virtue of the \rsim \Tsbf{Afrnz3}.  So the rule \tsbf{sqr} is a simple existential rule with unique existence and if we slolemise this rule in the theory \sa{Afrnz} we get an essentially identical theory.
\eoe
\end{remark}

We now present a version in which a nonnegative square root of a nonnegative element is given as a unary law in the dynamical theory which extends \sa{Afr}

\smallskip 
\centerline{$ 
\Sqr\colon\gR\to\gR,\;x\mapsto \sqrt{ x^+} $ \quad  in case of a \rcf.} 

\smallskip\noindent This function symbol must obey the following  natural direct rules.

\TwoRegles{
\lAb{sqr$ _= $} $ \,\, y= 0 \vd \Sqr(x+y)=\Sqr(x) $ \label{Axsqreg}
\Lab{sqr0} $ \vd \Sqr(0)=0 $ 
\Lab{sqr1} $ \vd \Sqr(x)\geq 0 $ 
}
{
\Lab{sqr2} $ \vd \Sqr(x)=\Sqr(x^+) $ 
\Lab{sqr3} $ \vd \Sqr(x)^2=x^+ $ 
\Lab{sqr4} $ \vd \Sqr(x^+y^+)=\Sqr(x)\Sqr(y) $ 
}

Note that $ \Sqr(x)=0 $ when $ x\leq 0 $ and $ \Sqr(x)=\sqrt x $ when $ x\geq 0 $.
 
\begin{definition} \label{defiAfr2c}\label{defiAsr2c}~
\begin{itemize}
\item We denote \SA{Afr2c} the purely equational theory of \textsl{$2$-closed} $f$-rings: it is obtained from \Sa{Afr} by adding the unary function symbol $\Sqr$ with the six preceding axioms: \sIgt{\AfRdc}{\cdot=0,\cdot\geq 0,\cdot>0\mathrel{;}\cdot+\cdot,\cdot\times\cdot,\cdot\vu\cdot,-\,\cdot,\Sqr(\cdot),0,1}.\label{NOTASigAfr2c}\index{f-ring@$f$-ring!$2$-closed ---}\index{$2$-closed!$f$-ring}
\item We denote \SA{Asr2c} the dynamical theory of \textsl{$2$-closed strict $f$-rings} obtained from \Sa{Asr} in the same way that \Sa{Afr2c} was obtained from \Sa{Afr}:\\
\sIgt{\AsRdc}{\cdot=0,\cdot\geq 0,\cdot>0\mathrel{;}\cdot+\cdot, \cdot\times\cdot,\cdot\vu\cdot,-\,\cdot,\Sqr(\cdot),0,1}.\label{NOTASigAsr2c}\index{$2$-closed!strict $f$-ring}

\item We denote \SA{Aftr2c} the dynamical theory of \textsl{$2$-closed \aftrs} obtained from \Sa{Aftr} in the same way that \Sa{Afr2c} was obtained from \Sa{Afr}:\\
\sIgt{\AftRdc}{\cdot=0,\cdot\geq 0,\cdot>0\mathrel{;}\cdot+\cdot, \cdot\times\cdot,\cdot\vu\cdot,-\,\cdot,\Fr(\cdot),\Sqr(\cdot),0,1}.\label{NOTASigAftr2c}\index{strongly real!$2$-closed --- ring}\index{$2$-closed!strongly real ring}
\item We denote \SA{Co2c} the dynamical theory of $2$-closed \ndsofs obtained from \Sa{Co} in the same way: same signature as \Sa{Aftr2c}.\label{NOTASigCo2c}\index{ordered field!2-closed@$2$-closed ---}\index{$2$-closed!non disc@\ndsof}

\end{itemize}
\end{definition}

\begin{remark} \label{remSqa} 
We have chosen the function $\Sqr(x)$ which verifies  \fbox{$\Sqr(x)\geq 0$ and $\Sqr(x)^2=x^+$} because it corresponds to the second virtual root of the polynomial $Y^2-x$ in the case of a discrete ordered field. But we could also use \fbox{$\Sqa(x):=\Sqr(\abs x)$} which satisfies the equality
\fbox{$\Sqa(x^+)=\Sqr(x)$} and which can be characterised by \fbox{$\Sqa(x)\geq 0$ and $\Sqa(x)^2=\abs x$}.
We easily verify that the function $\Sqa$ can be introduced with the following axioms.

\DeuxRegles{
\lAb{sqa$_=$} $\,\, y= 0 \vd \Sqa(x+y)=\Sqa(x)$\label{Axsqaeg}
\Lab{sqa1} $ \vd \Sqa(x)= \abS{\Sqa(x)}$
\Lab{sqa3} $ \vd \Sqa(x)^2=\abs x$
}
{
\Lab{sqa0} $ \vd \Sqa(0)=0$
\Lab{sqa2} $ \vd \Sqa(x)=\Sqa(\abs x)$
\Lab{sqa4} $ \vd \Sqa(x y)=\Sqa(x)\Sqa(y)$
}

\end{remark}

\begin{lemma} \label{lemAr2creduit}
A  $ 2 $-closed $f$-ring is reduced. 
\end{lemma}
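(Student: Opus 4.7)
\medskip

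\noindent\textbf{Proof plan.} Let me show that the rule \tsbf{Anz}, namely $x^2 = 0 \vd x = 0$, is derivable from the axioms of \sa{Afr2c}. Since \sa{Afr} already validates all the $\ell$-group/$f$-ring identities of Fact \ref{factGpRtcl} and Lemma \ref{lemaoafr}, the plan is to reduce any $x$ with $x^2 = 0$ to its positive and negative parts, show each of these parts has square zero, and then use the square root symbol to collapse the positive part to zero.

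First, in an $f$-ring we have $x = x^+ - x^-$ and $x^+ x^- = 0$ (rules \tsbf{grl6} and \tsbf{afr1}). Therefore
\[
x^2 \;=\; (x^+ - x^-)^2 \;=\; (x^+)^2 \,-\, 2\,x^+ x^- \,+\, (x^-)^2 \;=\; (x^+)^2 + (x^-)^2.
\]
Assuming $x^2 = 0$, I get $(x^+)^2 + (x^-)^2 = 0$. Since $(x^+)^2 \geq 0$ and $(x^-)^2 \geq 0$ (rule \tsbf{ao1}), and since $(x^+)^2 = -(x^-)^2 \leq 0$ follows from the displayed equality, the antisymmetry rule \tsbf{Gao} yields $(x^+)^2 = 0$ and, symmetrically, $(x^-)^2 = 0$.

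Second, the key step uses the square root symbol to extract $x^+$ from $(x^+)^2$. Since $x^+ \geq 0$, we have $(x^+)^+ = x^+$, so axiom \tsbf{sqr4} applied with the arguments $x^+$ and $x^+$ together with \tsbf{sqr3} gives
\[
\Sqr\!\big((x^+)^2\big) \;=\; \Sqr(x^+)\,\Sqr(x^+) \;=\; \Sqr(x^+)^2 \;=\; (x^+)^+ \;=\; x^+.
\]
On the other hand, since $(x^+)^2 = 0$, axiom \tsbf{sqr$_=$} (with the term $0$ in the first slot and $(x^+)^2$ in the second) together with \tsbf{sqr0} gives $\Sqr\!\big((x^+)^2\big) = \Sqr(0) = 0$. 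Combining, $x^+ = 0$, and the symmetric argument applied to $-x$ gives $x^- = 0$. Hence $x = x^+ - x^- = 0$, which proves \tsbf{Anz}.

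There is no real obstacle: the only subtle point is noticing that axioms \tsbf{sqr3} and \tsbf{sqr4} together let one recover $x^+$ from $(x^+)^2$ inside the theory, which is precisely what the nilpotent-killing argument needs. Note that this derivation also shows, as a by-product, that Definition \ref{defiAfr2c} could alternatively have been given without the $(\cdot)^+$ in the statement of \tsbf{sqr3} and \tsbf{sqr4}, provided reducedness is imposed separately; here the axioms are arranged so that reducedness comes for free.
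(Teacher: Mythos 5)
Your proof is correct and its core is exactly the paper's argument: reduce to a nonnegative element of square zero and use \tsbf{sqr$_=$}, \tsbf{sqr0}, \tsbf{sqr3}, \tsbf{sqr4} to get $0=\Sqr(x^2)=\Sqr(x)^2=x^+=x$. The only (inessential) difference is the preliminary reduction — you split $x$ into $x^+$ and $x^-$ via $x^2=(x^+)^2+(x^-)^2$, while the paper passes directly through $\abs{x}$ using $\abs{x}^2=\abs{x^2}$.
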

%
\begin{proof}
On the one hand $ \abs{a}^2=\abs{a^2} $, and on the other hand for a $ x\geq 0 $ such that $ x^2= 0 $, we have the equalities $ 0=\Sqr(x^2)=\Sqr(x)^2=x^+=x $.
\end{proof}

\Subsubsection{Some derived rules in \sa{Afr2c}}

\DeuxRegles{
\Lab{Aonz0} $\,\, x^2+y^2+z^2=0 \vd  x=0$
\Lab{Afr21} $\,\, (x^2+y^2)^2= z^4 \vd  x^2+y^2 = z^2$
}
{
\Lab{AFR2} $ \vd \Exists y\; x^2=y^4$
}

Let us note that the rule \tsbf{Aonz0} is valid in \Sa{Aonz} (strictly reduced ordered rings).

\Subsubsection{Theories which are \esid to $\sa{Afr2c}$}

\begin{lemma} \label{lemSqr}
The following five extensions of the \sa{Afr} theory are essentially identical.
\begin{enumerate}
\item The purely equational theory \sa{Afr2c}.
\item We add the function symbol $\Sqa$  and the 6 axioms indicated in Remark \ref{remSqa}.
\item We add as axioms the \rdys \Tsbf{Anz} and \Tsbf{sqr}.
\item We add as axioms the \rdys \Tsbf{Anz} and \Tsbf{sqa}.
\item We add as axioms the \rdys \Tsbf{Anz} and \Tsbf{AFR2}.

\end{enumerate}
\end{lemma}

In Items \textsl{3}, \textsl{4}, \textsl{5}, we don't change the language. 

\begin{proof} 
We show that the theory of Item \textsl{3} is \esid to \sa{Afr2c}. 
First, in the theory \sa{Afrnz} the simple existential rule \tsbf{sqr} is with unique existence by virtue of Remark \ref{rem2closreduit}.
Then, we check that the function $\Sqr$ obtained by skolemising the existential axiom \tsbf{sqr} satisfies the 6 desired axioms.
\\
We also show that the theory of Item \textsl{5} is essentially identical to that of Item \textsl{4}.
In the latter we have $\Sqa(x)\geq 0$ and $\Sqa(x)^4= {\abs x} ^2=x^2$,
so the axiom \tsbf{AFR2} is verified.
In the theory of Item \textsl{5} we have $x^2={\abs x} ^2=y^4$, so by \Tsbf{Aonz3} $\abs x=y^2={\abs y}^2$. Thus $\abs y$ holds for the $z$ in \Tsbf{sqa}.
\\
The rest is left to the reader.
\end{proof}

The following lemma can be seen as a generalisation to the \nds case of the fact that on a  $ 2 $-closed discrete ordered field, the commutative ring structure completely determines the order structure.

\begin{lemma} \label{lemAr2cunique}
On a commutative ring, if there is a  $2$-closed $f$-ring structure, it is unique. More generally, a ring morphism between two  $2$-closed $f$-rings is a $2$-closed $f$-ring morphism. 
\end{lemma}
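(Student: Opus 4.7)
The plan is to show that in a $2$-closed $f$-ring, the order relation $\cdot\geq 0$ and the operations $\vu$ and $\Sqr$ are all entirely determined by the underlying commutative ring: the order by an existential formula in the ring language, and the operations by their characterization as unique solutions of Horn conditions. The crucial initial observation is: \emph{in a $2$-closed $f$-ring $\gA$, an element $x\in\gA$ is $\geq 0$ if and only if $x$ is a square.} The implication ($\Leftarrow$) is axiom \Tsbf{ao1}. For ($\Rightarrow$), if $x\geq 0$, then by \Tsbf{sqr2}--\Tsbf{sqr3} we have $x=x^+=\Sqr(x)^2$. Consequently, any ring morphism $\varphi\colon\gA\to\gB$ between $2$-closed $f$-rings automatically preserves $\cdot\geq 0$: from $x=y^2$ one gets $\varphi(x)=\varphi(y)^2\geq 0$ in $\gB$.

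Once $\varphi$ is known to preserve the order, I would recover $x^+$ as follows. Since $\gA$ is reduced (Lemma \ref{lemAr2creduit}), Lemma \ref{lemsupdansAonz} applied with $a=0$ and $b=x$ shows that $x^+$ is the unique $z\in\gA$ satisfying $z\geq 0$, $z\geq x$ and $z(z-x)=0$. The element $\varphi(x^+)$ satisfies the corresponding three conditions for $\varphi(x)$ in $\gB$, so uniqueness gives $\varphi(x^+)=\varphi(x)^+$. Similarly, $\Sqr(x)$ is a nonnegative element whose square is $x^+$, and uniqueness of such an element (Remark \ref{rem2closreduit}, via the simplification rule \Tsbf{Afrnz3} valid in the reduced ring $\gB$) identifies $\varphi(\Sqr(x))$ with $\Sqr(\varphi(x))$: indeed $\varphi(\Sqr(x))\geq 0$ and $\varphi(\Sqr(x))^2=\varphi(x^+)=\varphi(x)^+$. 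For $\vu$, one applies Lemma \ref{lemsupdansAonz} in full generality: both $\varphi(x\vu y)$ and $\varphi(x)\vu\varphi(y)$ satisfy the characterizing Horn conditions $c\geq\varphi(x)$, $c\geq\varphi(y)$, $(c-\varphi(x))(c-\varphi(y))=0$, hence are equal. This shows that $\varphi$ is a morphism of $2$-closed $f$-rings.

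Uniqueness of the structure is then immediate: if a commutative ring $\gA$ carried two $2$-closed $f$-ring structures, the identity map $\mathrm{id}\colon\gA\to\gA$ would be a ring morphism from one structure to the other, and by the preceding paragraph it would preserve $\cdot\geq 0$, $\cdot\vu\cdot$ and $\Sqr$; so the two structures coincide. The only substantive step is the identification of $\{x\in\gA\,:\,x\geq 0\}$ with the set of squares of $\gA$; everything else is a systematic application of universal properties whose uniqueness of solutions is guaranteed by the reduced character of the ring. There is no real obstacle, but one should remember to invoke Lemma \ref{lemAr2creduit} (reducedness is automatic for $2$-closed $f$-rings) before appealing to \Tsbf{Afrnz3} and Lemma \ref{lemsupdansAonz}.
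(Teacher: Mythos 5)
Your proof is correct and follows essentially the same route as the paper: order preservation because the elements $\geq 0$ are exactly the squares, preservation of $\vu$ via the Horn characterisation of Lemma \ref{lemsupdansAonz} in a reduced ring, and preservation of $\Sqr$ via uniqueness of nonnegative square roots (Remark \ref{rem2closreduit}), with uniqueness of the structure obtained by applying this to the identity map. Your explicit appeal to Lemma \ref{lemAr2creduit} for reducedness is a point the paper leaves implicit, but it is the same argument.
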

%
\begin{proof}
Let $ \varphi\colon \gA\to\gB $ be a ring morphism where $\gA$ and $\gB$ are $2$-closed $f$-rings. Since $ x\geq 0 $ are squares, the order relation is respected. Now in a $2$-closed $f$-ring (or more generally in a reduced $f$-ring) the element $c=a\vu b$ is characterised by the equalities and inequalities $c\geq a$, $c\geq b$ and $(c-a)(c-b)=0$ (Lemma \ref{lemsupdansAonz}). We deduce that the ring morphism is also a morphism for $\vu$ laws. Finally, since in a reduced $f$-ring, two elements $\geq 0$ which have the same square are equal (Remark \ref {rem2closreduit}), the law $\Sqr$ is also respected by the ring morphism.
\end{proof}

Since the theory \Sa{Afr2c} is purely equational, any $f$-ring $\gA$ freely generates a  $2$-closed $f$-ring: its 2-closure $\AFRdC(\gA)$. The question then arises: what does the 2-closure of an $f$-ring look like? Here's the first clue. \index{2-closure!of an $f$-ring}\label{notaAFR2C} 

\begin{lemma} \label{lemAfr2ccloture}
Any reduced $f$-ring injects into its 2-closure. 
\end{lemma}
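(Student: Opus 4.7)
\emph{Plan.} The strategy is to apply the formal Positivstellensatz \ref{Pst2} together with the conservativity of essentially equivalent extensions (Theorem \ref{thEseqMemesfaits}) to reduce the claim to the trivial observation that a discrete real closed field already contains square roots of its nonnegative elements.

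First, I will observe that $\AFRdC(\gA)$ is the generic model of the dynamic algebraic structure $\Sa{Afr2c}(\gA)$ presented by the positive diagram of $\gA$ as an $f$-ring. Injectivity of the canonical map $\gA \to \AFRdC(\gA)$ is therefore equivalent to the following conservativity statement: for every constant $a$ coming from the diagram, if $\Sa{Afr2c}(\gA) \vd a = 0$ then $\Sa{Afrnz}(\gA) \vd a = 0$, so that $a =_\gA 0$, since $\gA$ is the generic model of the latter.

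Next, by Lemma \ref{lemSqr} and the unique existence of the nonnegative square root in any reduced $f$-ring (Remark \ref{rem2closreduit}), \sa{Afr2c} is an essentially equivalent extension of \sa{Afrnz} corresponding to the skolemization of the existential axiom \tsbf{sqr}. By formal Positivstellensatz \ref{Pst2}, the theories \sa{Afrnz} and \sa{Crcdsup} prove the same Horn rules. Since every nonnegative element of a discrete real closed field admits a unique nonnegative square root, the analogous extension of \sa{Crcdsup} by $\Sqr$ and its axioms is essentially identical to \sa{Crcdsup} itself. Theorem \ref{thEseqMemesfaits} then delivers that \sa{Afr2c} and that analogous extension prove the same Horn rules, so that \sa{Afr2c} and \sa{Afrnz} prove the same Horn rules in the common language. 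A final application of Theorem \ref{thEseqMemesfaitsbis} (adding as Horn axioms the relations of the positive diagram of $\gA$) transfers this conservativity to the presented dynamic algebraic structures, yielding $\Sa{Afr2c}(\gA) \vd a = 0 \Rightarrow \Sa{Afrnz}(\gA) \vd a = 0$, and hence $a =_\gA 0$.

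\emph{The main obstacle} will be the technical verification of the conditions of Theorems \ref{thEseqMemesfaits} and \ref{thEseqMemesfaitsbis} — specifically, the absence of syntactic interference between the function symbol $\Sqr$ and the predicate $\cdot > 0$ present in \sa{Crcdsup} but not in \sa{Afrnz}, and the careful passage from the bare theories to the dynamic algebraic structures, where the constants of $\gA$ must be treated as free variables bound by the Horn axioms encoding the positive diagram.
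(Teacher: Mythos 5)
Your overall strategy---reduce to Positivstellensatz~\ref{Pst2} via the discrete real closed field, where $\Sqr$ is definable---is the same as the paper's, and the reduction to a conservativity statement for facts of the form $a=0$ via generic models is a reasonable formalisation. However, there is a genuine gap in the second paragraph: you claim that $\sa{Afr2c}$ is an \emph{essentially equivalent} extension of $\sa{Afrnz}$, which is exactly the hypothesis you need to invoke Theorem~\ref{thEseqMemesfaits} with $\sab{T}_{\!1}=\sa{Afrnz}$, $\sab{T}_{\!1}'=\sa{Afr2c}$, $\sab{T}_{\!2}=\sa{Crcdsup}$. But this is false. Definition~\ref{defi-exteseq} only permits adding a function symbol when the \emph{existence} as well as the uniqueness is already provable in the base theory; Remark~\ref{rem2closreduit} supplies uniqueness, but the existence $\vd\Exists z\geq 0\,\,x^+=z^2$ is not valid in $\sa{Afrnz}$ (e.g.\ $\QQ$ is a reduced $f$-ring without $\sqrt 2$). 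Were that hypothesis met, the conservativity would be immediate and the whole argument unnecessary.

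The correct step, as the paper does, is to observe via Lemma~\ref{lemSqr} that $\sa{Afr2c}$ is essentially \emph{identical} to $\sa{Afrnz}+\tsbf{sqr}$ (the existential axiom unskolemised, on the same signature as $\sa{Afrnz}$), and that $\sa{Afrnz}+\tsbf{sqr}$ is an \emph{intermediate} theory between $\sa{Afrnz}$ and $\sa{Crcdsup}$ in the sense of Item~\emph{3} of~\ref{Pst2}: its axioms are all valid in $\sa{Crcdsup}$. Hence $\sa{Afrnz}+\tsbf{sqr}$ already proves no new Horn rules on the common signature, and transferring through the essentially identical $\sa{Afr2c}$ then gives the claim. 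In other words, the relevant comparison is between $\sa{Afrnz}$ and $\sa{Afrnz}+\tsbf{sqr}$ (handled by Pst2~Item~3 directly, not by Theorem~\ref{thEseqMemesfaits}), and the role of $\sa{Crcdsup}$ is precisely to make $\sa{Afrnz}+\tsbf{sqr}$ an intermediate theory.
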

%
\begin{proof}
The theory \Sa{Afr2c} proves the same Horn rules as \Sa{Afrnz}: this follows from Item \textsl{3} of \pstfref{Pst2}, because the map~$ \Sqr $ added to the theory \Sa{Crcd} gives an essentially identical theory. We therefore do not obtain any new equality between elements of the original $f$-ring after formally adding the square roots of the elements $\geq 0$. 
\end{proof}
This generalises the fact that a discrete ordered field is injected into its 2-closure (\cite{LR91,LR90}), which is a discrete ordered field. For the \nds case arises the natural question \ref{quest2cloture}.

\Subsubsection{Axioms in order that a commutative ring be a \afrdc}
This paragraph clarifies Lemma \ref{lemAr2cunique}. It generalizes to the case of a commutative ring what was done for a discrete ordered field in order to make it $2$-closed.
It was enough to impose the axioms \tsbf{Aonz0} (reality) and \tsbf{AFR2} (any square is a power of 4, see proposition \ref{propcodi2clos}).
We propose the following system of axioms, which added to the theory of commutative rings, gives a theory essentially identical to \Sa{Afr2c}.
We must take $x\geq 0$ as an abbreviation of $\Exists z \; x=z^2$, and $x\geq y$ as an abbreviation of $x-y\geq 0$.

\DeuxRegles{
\laB{Aonz0} $\,\, x^2+y^2+z^2=0 \vd  x=0$
\laB{Afr21} $\,\, (x^2+y^2)^2= z^4 \vd  x^2+y^2 = z^2$
\Lab{Afr23} $\,\,  z\geq x\vet z\geq -x\vet x^2=y^4\vd z\geq y^2 $
}
{
\laB{AFR2} $ \vd \Exists y\; x^2=y^4$
\Lab{Afr22} $\,\, x^2= y^4\vet (y^2+x)^2=z^4 \vd  y^2+x = z^2$
}

Here are some explanations. 
Axiom \tsbf{Afr21} admits as a special case $u^4=v^4\vd u^2=v^2$.
This allows us to see that in \tsbf{AFR2} the element $y^2$ is uniquely determined from $x$ and can therefore be skolemised under the name of $\abs x$.
We see that the definition for $x\geq 0$ is equivalent to $x=\abs x$.
Then we must see that this function $\abs {\,\cdot\,}$ satisfies the axioms that we proposed for the definition of a lattice group structure on a given abelian group on \paref{grl-abs}.
We must introduce a constant~$\frac 1 2$ to have the axioms \Tsbf{2div1} and \Tsbf{2div2}. We thus obtain a lattice group structure on the additive group of the ring.\footnote{In fact $\gA$ is replaced by 
$\gA/\sqrt{\gen{0}}$ and if $nx=0$ for an integer $n\geq 1$, we must also cancel $x$. A more comfortable situation would be to suppose that we start from a reduced \QQlg. In this case $\gA$ is injected into the  \afrdc that we construct.}
We then simply need to check the validity of the axioms \Tsbf{ao1}, \Tsbf{ao2} and \Tsbf{afr6b} (\paref{ao1}), which is immediate.

\smallskip \rem We would like to be able to demonstrate that axioms \tsbf{Afr22} and \tsbf{Afr23}, which correspond to \Tsbf{abs3} and \Tsbf{Abs1}, are consequences of the other axioms.
\eoe

\Subsection{The case of a \ndsof} 

\begin{lemma} \label{lemdefiAsr2c} The \sa{Co2c} theory is essentially identical to the following two theories.
\begin{enumerate} 
 
\item On the signature $(\cdot=0\mathrel{;}\cdot+\cdot, \cdot\times\cdot,\cdot\vu\cdot,-\,\cdot,\,\Fr(\cdot),\,\Sqr(\cdot),0,1)$ the theory obtained from \sa{Afr2c} by adding the function symbol $\Fr$ with  axioms \Tsbf{fr1}, \Tsbf{fr2}  and  \Tsbf{AFRL}.
 
\item On the signature $\sIgt{\AsR}{\cdot=0,\cdot\geq 0,\cdot>0\mathrel{;}\cdot+\cdot, \cdot\times\cdot,\cdot\vu\cdot,-\,\cdot,0,1}$ 
 the theory obtained by adding as axioms to \Sa{Asr} the rules \Tsbf{IV}, \Tsbf{OTF}, \Tsbf{FRAC}, \Tsbf{Anz} and \Tsbf{sqr}.
\end{enumerate}
\end{lemma}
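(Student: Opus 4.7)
The plan is to stitch the two alternative presentations together from essential identities already established in the chapter, each time verifying that the passage is an essentially identical extension in the sense of Definition \ref{defi-exteseq}. The master chain begins from the definition $\Sa{Co2c}=\Sa{Co}$ plus the $\Sqr$-axioms, combines Lemma \ref{lemArftr}(3) giving $\Sa{Co}\equiv\Sa{Aftr}+\tsbf{OTF}$, then Lemma \ref{lemArftr01} giving $\Sa{Aftr}\equiv\Sa{Asrnz}+\tsbf{IV}+\tsbf{FRAC}$, and finally the definitional equality $\Sa{Asrnz}=\Sa{Asr}+\tsbf{Anz}$. Assembling these yields an essential identity between $\Sa{Co2c}$ and $\Sa{Asr}+\tsbf{Anz}+\tsbf{IV}+\tsbf{FRAC}+\tsbf{OTF}$ enriched with the $\Sqr$-axioms.

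To obtain (2), I would apply Lemma \ref{lemSqr} in reverse to trade the equational axioms for $\Sqr$ for the existential rule \tsbf{sqr}: the uniqueness argument of Remark \ref{rem2closreduit} shows that \tsbf{sqr} has provably unique existence in the presence of \tsbf{Anz}, so the skolemisation producing $\Sqr$ is an essentially identical extension. Substituting into the master chain gives precisely theory (2), up to the convention that in the $f$-ring signature the predicate $\cdot\geq 0$ is definable from $\cdot\vu\cdot$.

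To obtain (1), I would stop the master chain earlier at $\Sa{Co2c}\equiv\Sa{Aftr}+\tsbf{OTF}$ together with the $\Sqr$-axioms. In $\Sa{Aftr}$ the predicate $x>0$ is, by Definition \ref{defiAftr}, an abbreviation for $x\geq 0 \wedge \exists z\,xz=1$, so it may be eliminated from the signature; the rule \tsbf{OTF} rewritten accordingly is exactly the rule whose equivalence with \tsbf{AFRL}, over a reduced $f$-ring satisfying \tsbf{FRAC}, is proved by Lemma \ref{lemAftrloc}. This produces an essentially identical theory on the signature of (1), namely $\Sa{Afrnz}+\tsbf{fr1}+\tsbf{fr2}+\tsbf{AFRL}$ plus the $\Sqr$-axioms. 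Lemma \ref{lemAr2creduit} then guarantees that \tsbf{Anz} holds automatically in the $\Sqr$-enriched theory, so Lemma \ref{lemSqr} lets me identify the fragment $\Sa{Afrnz}+\Sqr\text{-axioms}$ with $\Sa{Afr2c}$, yielding theory (1).

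The hard part will be verifying that the elimination of the predicate $\cdot>0$ in this last passage is genuinely an essentially identical extension: one must check that no \rdy formulable with the primitive $\cdot>0$ is lost by keeping only its abbreviation, and that the translated \tsbf{OTF} really coincides with \tsbf{AFRL} rather than a weaker sibling. The first point is granted by the defining axioms of $\cdot>0$ in $\Sa{Aftr}$, which force the predicate to coincide with its abbreviation; the second is exactly the content of Lemma \ref{lemAftrloc}, whose proof treats both directions of the equivalence and therefore transfers without modification to the present context.
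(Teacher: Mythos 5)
Your proof is correct, and the paper actually gives no proof of this lemma, so what you have done is exactly the kind of bookkeeping that is implicitly called for: a chain of essentially identical extensions drawn from Lemma \ref{lemArftr}(3), Lemma \ref{lemArftr01}, Lemma \ref{lemSqr}, Lemma \ref{lemAr2creduit}, and Lemma \ref{lemAftrloc}.

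One small imprecision in the last step of your item (1): after observing via Lemma \ref{lemAr2creduit} that $\tsbf{Anz}$ is already provable in the presence of the $\Sqr$-axioms, the identification of $\Sa{Afrnz}+\Sqr\text{-axioms}$ with $\Sa{Afr2c}$ follows directly from the definition of $\Sa{Afr2c}$ (it is $\Sa{Afr}$ plus the $\Sqr$-axioms, and adding a redundant $\tsbf{Anz}$ is a no-op); Lemma \ref{lemSqr} is the statement about trading the $\Sqr$-axioms for the existential rule $\tsbf{sqr}$, which is what you need for item (2), not for this step. Also note that while you invoke Lemma \ref{lemAftrloc} for the $\tsbf{OTF}$–$\tsbf{AFRL}$ passage, if one expands the abbreviation $x>0$ literally, translated $\tsbf{OTF}$ becomes $\tsbf{AFRL}$ syntactically (after moving the existential in the hypothesis to a free variable), so the lemma is not strictly required at that point, though it is certainly safe to cite. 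Neither remark affects the correctness of the argument.
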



\section{Virtual roots}\label{secCoVR}

\Subsection{Definition and first properties} \label{subsecrappelsvr}

References: \cite{GLM98,CLLR06,AG2012,BG2011,Gal2013}.

\smallskip The idea which guided the introduction of virtual roots was to have, for a real monic polynomial, continuous maps of the coefficients which cover the real roots. When a real root disappears in the complex plane, it can be replaced by the root of the derivative that coincides with the double real root when it disappears.

For example, the virtual square roots of an arbitrary real $a$ (i.e.\ $-\Sqr(a)$ and $+\Sqr(a)$) are equal to $-\sqrt a$ and $\sqrt a$ when $a\geq 0$, otherwise they are zero: this is the value they had when they disappeared (imagine the polynomial $X^2-a$ varying continuously with $a\in\gR$).

\smallskip First, let's recall a purely algebraic version of the mean value theorem in case of polynomials.

\begin{lemma}[algebraic mean value theorem] \emph{\cite{LR90,LR91}} \label{lemaccrfinis} \\
We can construct two families $(\lambda_{i,j})_{1\leq i\leq j\leq n}$ and $(r_{i,j})_{1\leq i\leq j\leq n}$ in $\QQ\;\cap\; (0,1)\,$,  with $\sum_{i=1}^nr_{i,n}=1$ for all $n\geq 1$ and such that, for any polynomial $f\in\QQ[X]$ of degree $\leq n$, we have 
 in $\QQ[a,b]$:
\[ 
 f(b)-f(a)=(b-a)\times \som_{i=1}^nr_{i,n}\cdot f'(a+\lambda_{i,n}(b- a)).
\] 
The result applies to any $\QQ$-algebra $\gA$ (in particular to \emph{non}-\dofs). If $\gA$ is a strictly ordered $\QQ$-algebra, this shows that a polynomial whose derivative is $>0$ on an open interval $(a,b)$ is a strictly increasing map on the closed interval $\ClI{a,b}$. 
\end{lemma}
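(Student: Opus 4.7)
My approach is to first reformulate the identity as an assertion about the existence of a rational quadrature formula on $[0,1]$, and then produce such a quadrature.

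Since both sides of the claimed identity are linear in $f$, by writing $f = \sum_{k=0}^{n} c_k X^k$ it suffices to establish it for each monomial $f(X)=X^k$ with $0\le k\le n$. Setting $s=b-a$ and expanding $(a+\lambda_{i,n}s)^{k-1}$ by the binomial theorem, the desired identity
\[
b^k - a^k \;=\; k\, s\, \sum_{i=1}^n r_{i,n}\,(a+\lambda_{i,n}s)^{k-1}
\]
in $\QQ[a,b]$ becomes, after matching the coefficient of $a^{k-1-j}s^{j+1}$ on each side (and using $\binom{k}{j+1}=k\binom{k-1}{j}/(j+1)$), equivalent to the moment equations
\[
\sum_{i=1}^n r_{i,n}\,\lambda_{i,n}^{\,j} \;=\; \frac{1}{j+1}, \qquad j=0,1,\ldots,n-1.
\]
The case $j=0$ recovers the required normalisation $\sum_i r_{i,n}=1$.

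The whole lemma thus reduces to producing, for each $n$, distinct rationals $\lambda_{1,n},\dots,\lambda_{n,n}\in \QQ\cap(0,1)$ together with positive rationals $r_{1,n},\dots,r_{n,n}$ satisfying these moments. For any fixed choice of distinct rational nodes in $(0,1)$, the weights are uniquely determined by the Lagrange-interpolation formula
\[
r_{i,n} \;=\; \int_0^1 \prod_{j\ne i}\frac{t-\lambda_{j,n}}{\lambda_{i,n}-\lambda_{j,n}}\,dt,
\]
which is a rational function of the nodes; hence the $r_{i,n}$ are automatically rational. The hard part is ensuring that all weights are strictly positive. For this I would invoke the classical Gauss--Legendre quadrature on $[0,1]$, whose $n$ real nodes lie in $(0,1)$ and yield positive weights summing to $1$, and then exploit continuity of the Lagrange weights as functions of the nodes together with density of $\QQ$ in $\RR$: any sufficiently small rational perturbation of the Gauss--Legendre nodes preserves positivity of the weights. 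This delivers the required tuple $(\lambda_{i,n},r_{i,n})$.

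Finally, a polynomial identity valid in $\QQ[a,b]$ specialises to any commutative $\QQ$-algebra $\gA$. In the strictly ordered case, if $b>a$ in $\gA$ and $f'$ is $>0$ on the open interval $(a,b)$, then since each $\lambda_{i,n}$ is a strictly positive rational less than $1$, the point $a+\lambda_{i,n}(b-a)$ lies strictly between $a$ and $b$; hence $f'\bigl(a+\lambda_{i,n}(b-a)\bigr)>0$, and the identity expresses $f(b)-f(a)$ as $(b-a)$ times a positive combination of positive elements, which is $>0$. Applying the same identity on any subinterval $[c,d]\subseteq \ClI{a,b}$ with $c<d$ yields $f(d)>f(c)$, i.e.\ strict monotonicity on $\ClI{a,b}$. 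The principal obstacle of the whole argument is the positivity of rational weights; a fully constructive alternative avoiding Gauss--Legendre would give an explicit inductive construction (adjoining one node at a time while controlling the signs of the new weights), but the density/perturbation argument is the cleanest existence proof.
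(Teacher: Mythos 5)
Your reduction to the moment conditions $\sum_{i} r_{i,n}\lambda_{i,n}^{\,j} = 1/(j+1)$ for $j=0,\dots,n-1$ is correct and is the right reformulation, and your derivation of strict monotonicity (each node $c+\lambda_{i,n}(d-c)$ lies in the open interval, so each term $f'(\cdot)$ is strictly positive, and the $r_{i,n}$ are positive rationals) is also sound.

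Where your argument diverges from the paper --- and where you yourself flag the weakness --- is the production of the positive rational weights. Perturbing the Gauss--Legendre rule proves existence classically, but the lemma says \gui{we can construct}, and in the constructive framework of this memoir that wording is load-bearing: the result must be an explicit, exhibitable family. Your route can in principle be made constructive for each fixed $n$ (the Legendre roots are algebraic and isolable, the weights are rational functions of the nodes, and an admissible perturbation radius can be computed), but this is far from what \cite{LR90,LR91} do. Look at Example~\ref{exaACF}: for $n=4$ the rule uses nodes $1/6, 1/3, 2/3, 5/6$ with weights $1/3, 1/6, 1/6, 1/3$. These are manifestly not a small rational perturbation of Gauss--Legendre nodes (the $4$-point Gauss nodes on $[0,1]$ are roughly $0.069, 0.33, 0.67, 0.931$, and the rule here is not even node-minimal, using four nodes for exactness at degree $3$ for $f'$). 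The cited construction builds the rule directly by a recursion that produces simple rational nodes and visibly positive rational weights, so positivity is exhibited rather than inferred from density. Your approach buys conceptual economy, since positivity of Gaussian weights is standard; the paper's approach buys the explicitness that the constructive setting actually requires, and if you want your proposal to stand in this paper you would need to carry out the inductive construction you sketch at the end rather than leave it as an unexecuted alternative.
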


\begin{example} \label{exaACF}
For example, for polynomials of degree $ \leq 4 $ we have 
\[\ndsp 
\frac {f(1)-f(-1)}2= \frac1 3\,f'(-\frac2 3 )+
\frac1 6\,f'(-\frac1 3 )+
\frac1 6\,f'(\frac1 3 )+ 
\frac1 3\,f'(\frac2 3 ), 
\] 
and more generally, with $ \Delta=b-a $ 
\[\ndsp
f(b)-f(a)=\Delta\cdot\big(
\frac1 3\,f'(a+\frac1 6 \Delta)+
\frac1 6\,f'(a+\frac1 3 \Delta)+
\frac1 6\,f'(a+\frac2 3 \Delta)+ 
\frac1 3\,f'(a+\frac5 6 \Delta)\big). 
\]
\eoe
\end{example}

\begin{lemma}[slight variation on {\cite[Proposition 1.2]{GLM98}}] \label{lemBasicVirtualRoots}~
\begin{enumerate}
 
\item Let $ \sigma=\pm1 $ and $ f\colon \ClI{a,b}\to\RR $ ($ a\leq b \in \RR $) be a continuously differentiable map such that $ \sigma\,f'(x)>0 $ on $\; \ClI{a,b} \,$. Then $\abs f$ reaches its  minimum at a unique $x\in\ClI{a,b}$. We call this real $\rR (a,b,f,\sigma)$. \\
We have $(x-a)(x-b)f(x)=0$ and $x$ is the only real number satisfying the following system of inequalities. 

\TwoRegles{
\labu $ a \leq x \leq b $ 
\labu $ \sigma\, (x - a) f(a) \leq 0 $ 
\labu $ \sigma\, (b-x) f(b) \geq 0 $ 
}
{
\labu $ \sigma\, (x-a) f(x)\leq 0 $ 
\labu $ \sigma\, (b-x) f(x) \geq $0 
}

\item 
Let $\sigma=\pm1$ and $ f\colon [\,a,+\infty) \to\RR$ be a continuously differentiable map such that $\sigma\,f'(x)>0$ on $\;(a,+\infty)\,$. It is assumed that there is a $ b>a $ such that $\sigma\,f(b)>0$. 

\noindent Then $\abs f$ reaches its  minimum at a single $ x\in[\,a,+\infty) $. We denote $ \rR (a,+\infty,f,\sigma) $ this real. We have $ (x-a)f(x)=0 $ and $x$ is the only real verifying the following system of inequalities: 

\TwoRegles{
\labu $ a \leq x $ 
\labu $ \sigma\,(x - a) f(a) \leq 0 $ 
}
{
\labu $ \sigma\,(x - a) f(x) \leq 0 $ 
\labu $ \sigma\, f(x) \geq 0 $ 
}

\item 
A statement similar to the previous one, left to the reader, for the interval $ \;(-\infty,a\,] $.

\item 
This lemma is also valid for a discrete real closed field $\gR$ if $f$ is a continuous semialgebraic map continuously derivable on an interval $ \ClI{a,b} $.
\end{enumerate}

\end{lemma}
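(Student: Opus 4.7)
The strategy I would take is to reduce to the case $\sigma = +1$, in which $f$ is strictly increasing on $[a,b]$ (by Lemma \ref{lemaccrfinis} applied to $\sigma f$, which also provides the two-sided estimate $\epsilon(v-u) \leq f(v)-f(u) \leq M(v-u)$ for $a\leq u\leq v\leq b$, where $\epsilon$ and $M$ are respectively the infimum and supremum of $\sigma f'$ on the compact $[a,b]$). Under this reduction I define the desired point $x \in [a,b]$ as the unique $t$ satisfying
\[
f(t) = f(a)^+ - f(b)^-,
\]
the existence and uniqueness coming from the intermediate value theorem together with strict monotonicity; the target value lies in $[f(a), f(b)]$ because both inequalities $f(a)\le f(a)^+ - f(b)^-$ and $f(a)^+ - f(b)^-\le f(b)$ reduce, via $f(a) = f(a)^+ - f(a)^-$ and monotonicity of the $\cdot^+$, $\cdot^-$ maps, to the hypothesis $f(a)\le f(b)$.

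The core algebraic identity is the orthogonality $f(a)^+\cdot f(b)^- = 0$ in the reduced $f$-ring $\RR$, which follows from $0\le f(a)^+ f(b)^-\le f(b)^+ f(b)^- = 0$. Combined with the definition of $x$, straightforward manipulation gives $(x-a)f(x) = -(x-a)f(b)^-$ and $(b-x)f(x) = (b-x)f(a)^+$, which yield two of the five stated inequalities. For the endpoint inequalities I would compute $f(a)^+(f(x) - f(a)) = f(a)^+(f(a)^- - f(b)^-) = 0$ and combine with the lower bound $\epsilon(x-a)\le f(x) - f(a)$ to get $\epsilon(x-a)f(a)^+\le 0$; since $\epsilon > 0$ is invertible and $(x-a)f(a)^+\ge 0$, this forces $(x-a)f(a)^+ = 0$, and symmetrically $(b-x)f(b)^- = 0$. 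The identity $(x-a)(x-b)f(x) = 0$ then follows via $(x-a)(b-x)f(x) = (x-a)(b-x)f(a)^+ = (b-x)\bigl[(x-a)f(a)^+\bigr] = 0$.

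For uniqueness, given $y\in[a,b]$ satisfying all five inequalities, I would first derive $(y-a)f(y)^+ = 0$ and $(b-y)f(y)^- = 0$ using orthogonality of $(y-a)f(y)^+$ and $(y-a)f(y)^-$ in $\RR$. Since $f(a)^+\le f(y)^+$ by monotonicity, this gives $(y-a)(f(y)^+ - f(a)^+) = 0$, and combining with the Lipschitz bound $f(y)^+ - f(a)^+\le M(y-a)$ yields $(f(y)^+ - f(a)^+)^2\le M(y-a)(f(y)^+ - f(a)^+) = 0$; by reducedness of $\RR$, $f(y)^+ = f(a)^+$, and symmetrically $f(y)^- = f(b)^-$, so $f(y) = f(x)$ and $y = x$ by strict monotonicity. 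Items 2 and 3 reduce immediately to Item 1 once one notes that $\sigma f(t)\ge \sigma f(b) > 0$ for $t\ge b$ forces $|f(t)|\ge |f(b)|$, so the minimum of $|f|$ on $[a,+\infty)$ is attained inside $[a,b]$. Item 4 copies the whole argument over a discrete real closed field, with the analytic intermediate value theorem replaced by the rule \RCFn{} and the mean value estimate provided by Lemma \ref{lemaccrfinis}. The principal obstacle is to handle $(x-a)(x-b)f(x) = 0$ and the uniqueness constructively, i.e.\ without any sign test on $\RR$; both are carried out by repeated use of the orthogonality $f(a)^+\cdot f(b)^- = 0$ and of the reducedness trick $u^2\le c(y-a)u = 0\Rightarrow u = 0$ for $c > 0$ invertible in $\RR$.
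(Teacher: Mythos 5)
The paper gives no proof of this lemma, deferring to \cite{GLM98} and noting (Remark \ref{remlemBasicVirtualRoots}) that the reference in fact works under the weaker hypothesis $f' \geq 0$, with $\rR(a,b,f)$ obtained by extension by continuity. Your Item 1 argument is therefore a self-contained replacement, and for Item 1 it is essentially correct: the explicit formula $\sigma f(x) = (\sigma f(a))^+ - (\sigma f(b))^-$ is a clean, sign-test-free description of the minimizer, and the orthogonality $f(a)^+ f(b)^- = 0$ together with the two-sided estimate $\epsilon(v-u) \leq \sigma(f(v)-f(u)) \leq M(v-u)$ (with $\epsilon > 0$ because $\sigma f'$ is continuous and $>0$ on the \emph{compact} interval $[a,b]$) carries the argument through, both for the existence of $x$ (a constructive intermediate value theorem requires exactly such a modulus of strict monotonicity) and for the key identity $(x-a)f(a)^+=0$, the five inequalities, and uniqueness. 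One small presentational slip: you assert $(x-a)f(x)=-(x-a)f(b)^-$ before having established $(x-a)f(a)^+=0$, so the order of claims should be inverted, but the logic closes.

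The genuine gap is the claim that Items 2 and 3 ``reduce immediately to Item 1''. Item 2 assumes $\sigma f' > 0$ only on the \emph{open} interval $(a,+\infty)$. Restricting to $[a,b]$, the pointwise condition $\sigma f'(t) > 0$ for $t\in(a,b]$ does not constructively yield a uniform lower bound $\epsilon > 0$ near $a$; that is, $\inf_{[a,b]}\sigma f'$ may be $0$, and in the intended application (Definition \ref{prdfVirtualRoots}) the endpoints $\rho_{d-1,j}$ are precisely points where $f'$ may vanish. Your Item 1 proof uses $\epsilon > 0$ in two essential places: to make the intermediate value theorem constructive and to obtain $(x-a)f(a)^+=0$ from $f(a)^+(f(x)-f(a))=0$ combined with $\epsilon(x-a)\leq f(x)-f(a)$. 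With $\epsilon=0$ neither step goes through, so the reduction is not immediate: either the hypothesis of Items 2--3 must be read as holding on the closed interval, or one must handle the endpoint by the continuity/semialgebraic route that the paper's remark attributes to \cite{GLM98} (or by a limiting argument over $[a',b]$, $a'\downarrow a$, with some extra control). A minor additional remark: Lemma \ref{lemaccrfinis} is stated for polynomials, so for the general $C^1$ case of Items 1--3 the two-sided estimate should instead be read off the integral form $f(v)-f(u)=\int_u^v f'$; this is immaterial for Item 4, where the polynomial/semialgebraic version is exactly what is needed.
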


\begin{remark} \label{remlemBasicVirtualRoots} 
1) In the article \cite{GLM98}, when $f$ is a monic polynomial of degree $d$, the hypothesis is formulated in the form $ \sigma\,f'(x)\geq 0 $ on $ \ClI{a,b} $, which implies that the set of parameters ($a$, $b$ and the coefficients of $f$) satisfying the hypothesis is a semialgebraic closed subset of $\RR^{d+2}$. We then show that the map $\rR (a,b,f\sigma)$ is semialgebraically continuous on this closed set.
This will be the case here or \ref{prdfVirtualRoots} and \ref{thVirtualRoots}.

\smallskip \noindent 
2) Note that Items \textsl{2} and \textsl{3} are offset from Item \textsl{1}.
\smallskip \noindent 
3) We probably can give explicitely a \mcu for $ \rR $ if we give certain details about the continuous semialgebraic map $ f' $ (details available when $f$ is a monic polynomial). 
\eoe
 \end{remark}

\smallskip From this lemma we obtain the construction of \textsl{virtual roots} for a monic polynomial of degree $d$: firstly they \gui{cover} all the real roots, and secondly they vary continuously as a function of the coefficients of the polynomial.

For a monic polynomial $f$ of degree $d$, we note $f^{[k]}$ the $k$-th derivative of $f$ divided by its leading coefficient $(0\leq k< d) $: it is a monic polynomial of degree $ d-k $.

\begin{propdef} \label{prdfVirtualRoots} Let $\gR$ be a discrete real closed field or the field~$\RR$. For any monic polynomial 

\snic{f(X) = X^{d} - ( a_{d-1} X^{d-1} + \cdots +a_1X+ a_0) \quad (d\geq 1)}

\noindent it is correct to define the maps \emph{virtual roots of $f$} 

\snic{\rho_{d,j}(f) = \rho_{d,j}(a_{d-1}, \ldots, a_0)}

\noindent for $1 \leq j \leq d$ by induction on $d$ in the following way (we abbreviate below $\rho_{\delta ,j}(f^{[d-\delta ]})$ to $\rho_{\delta,j}$). 

\vspace{.1em}
\begin{itemize}
\item $ \rho_{1,1}(X - a) = \rho_{1,1}(a) := a $;
\item $ \rho_{d,j} := \rR (\rho_{d-1,j-1}, \rho_{d-1,j}, f) $ 
for $ 1 \leq j \leq d $\quad  (when $ d\geq 2 $) ;
\end{itemize}

\vspace{.2em}
\noindent By convention we have set $ \rho_{\delta,0}=(-1)^d\infty $ and $ \rho_{\delta,\delta+1}=+\infty $ for all $\delta\geq 1$, and the map $\rR$ is the one defined in Lemma \ref{lemBasicVirtualRoots}.

\smallskip \noindent In other words, the induction is correct because when $\rho_{d-1,j-1}< \rho_{d-1,j}$ we have $(-1)^{d-j}\,f'(x)>0$ on the corresponding open interval. 

\end{propdef}

This proposition can be proved simultaneously with the Items \textsl{\ref{ivrvariation}} and \textsl{\ref{ivrsigne}} of the following theorem, using Lemmas \ref{lemaccrfinis} and \ref{lemBasicVirtualRoots}.

\begin{theorem}[some properties of virtual roots] \label{thVirtualRoots} \emph{\cite{GLM98,CLLR06}}\\
Let $\gR$ be a discrete real closed field or the field $\RR$. Let $ \xi $ be an arbitrary element of the field.  We consider a  monic polynomial $f$ of degree $d$.
\begin{enumerate}
 
\item The $ \frac{d(d+1)}2 $ elements $ \rho_{\delta,j}(f^{[d-\delta]}) $ which have been introduced in Lemma \ref{lemBasicVirtualRoots} are characterised by a system of large inequalities.
 
\item Each map $ \rho_{d,j}:\gR^d\to\gR $ is uniformly continuous on any ball\footnote{$\mathrm{B}_{d,M}:=\sotQ{(a_{d-1},\dots,a_0)\,}{\,\sum_ia_i^2\leq M}$, ($M>0$). Uniform continuity can be given in fully explicit form à la {\L}ojasiewicz.}  $\mathrm{B}_{d,M} $.

\item 
We note \fbox{$\wi f=\prod_{j=1}^d(X-\rho_{d,j})$} and \fbox{$f\sta=\prod_{j=0}^{d-1} f^{[j]}$}, 
 \fbox{$\rho_{\delta,j}=\rho_{\delta,j}(f^{[d-\delta]})$}, and we set  conventions \fbox{$\rho_{\delta,0}=(-1)^\delta\infty$} et \fbox{$\rho_{\delta,\delta+1}=+\infty$}.
\begin{enumerate}
 
\item We have $ \rho_{d,1}\leq \rho_{d-1,1}\leq \dots\leq \rho_{d-1,j}\leq \rho_{d,j+1}\leq \dots\leq \rho_{d-1,d-1}\leq \rho_{d,d} $.
 
\item If $ d\geq 2 $ and $ f=X^d-a $, then $ \rho_{d,d}=\sqrt[d]{a^+} $, 
 $ \rho_{d,j}=0 $ for $ 1<j<d $, $ \rho_{d,1}+\rho_{d,d}=0 $ if $d$ is even and $ \rho_{d,1}+\rho_{d,d}=\sqrt[d]{a} $ if $d$ is odd.
 
\item \label{ivrsupinf} If $ f=\prod_{i=1}^d(X-\xi_i) $ for $ \xi_i\in\gR $, then $ \wi f=f $. Consequently $ \rho_{d,1}=\Vi_{i}\,\xi_i $, $ \rho_{d,d}=\Vu_{\!i}\,\xi_i $ and $ \rho_{d,\delta}=\Vi_{J\subseteq \lrb{1..d}, \#J=\delta}(\Vu_{i\in J}\,\xi_i) $.
 
\item \label{ivrvariation} 
\begin{itemize}
 
\item If $ \rho_{d-1,j}<\rho_{d-1,j+1} $, $ (0\leq j\leq d-1) $, then $f$ is strictly monotonic on the interval, increasing if $ d-j $ odd, decreasing otherwise.
 
\item For $ 0\leq j\leq d-1 $, we have $ (-1)^{d-j}\big(f(\rho_{d-1,j+1})-f(\rho_{d-1,j})\big)\leq 0 $.\footnote{This implies that in the system of large inequalities which characterises the $\rho_{\delta,j}$ for $1\leq j\leq\delta\leq d$, the sign $\sigma$ in Lemma \ref{lemBasicVirtualRoots}  can be given directly, as in the example which follows the theorem. This simplifies things a little: the $\sigma$ \gui{disappear}.}
\end{itemize}
 
\item \label{ivrsigne} 
 If $ \rho_{d,j}<\xi<\rho_{d,j+1} $, $ (0\leq j\leq d) $, then $ (-1)^{d-j}f(\xi)>0 $.
 
\item The zeros of $f$ are zeros of $ \wi f $, with multiplicity greater than or equal to $ \wi f $. More precisely 
\begin{itemize}\itemsep.2em
 
\item If $ f(\xi)=0 $, then $ \wi f(\xi)=0 $;
 
\item If $ \abS{\wi f(\xi)} > 0 $, then $ \abs{f(\xi)} > 0 $;
 
\item If $ f^{[j]}(\xi)=0 $ for $ j\in\lrbk $, then 
 $ {\wi f}^{[j]}(\xi)=0 $ for $ j\in\lrbk $;
  
\item If $f^{[j]}(\xi)=0$ for $ j\in\lrbk $ and $ \abS{{\wi f}^{[\delta+1]}(\xi)}> 0 $, then $ \abs{f^{[\delta+1]}(\xi)}> 0 $.
\end{itemize}
Furthermore, if the multiplicities are known, the difference in multiplicities for $f$ and $ \wi f $ is even (for example, a non-zero $ \rho_{d,j} $ of $f$ is of even multiplicity in $\wi f$). 
 
\item The real zeros of $f\sta$ are exactly the $\rho_{\delta,j}$. More precisely
\begin{itemize}
 
\item each $\rho_{\delta,j}$ is a zero of $f\sta$,
 
\item if all $|\xi-\rho_{\delta,j}|$ are $>0$, then $|f^\star(\xi)|>0$, 
 
\item the polynomial $\wi f$ divides $(f\sta)^d$. 
\end{itemize}
 
\item \label{ivrBudan} \emph{(Budan Fourier count)} Let $a\in\gR$ be such that the $\abs{f^{[\delta]}(a)} > 0$ for $0\leq\delta\leq d$, and let~$r$ be the number of sign changes in the sequence of $f^{[\delta]}(a),\, (\delta=d,\dots,0) $, $ (0\leq r\leq d) $. \\
Then $ \rho_{d,d-r}<a<\rho_{d,d-r+1} $. 
 
\item \label{ivrTVI} \emph{(Intermediate Value Theorem)}\\ 
If $ a<b $ and $ f(a)f(b)<0 $, we have \smash{$ \prod_{j=1}^{d}f(\mu_j)=0 $, where \fbox{$ \mu_j=a\vu(b\vi\rho_{d,j}) $}.}. \\
Special cases.
\vspace{.1em}
\begin{itemize}
 
\item If $d$ is odd, then $ \prod_{j=1}^{d}f(\rho_{d,j})=0 $.
 
\item If $ 0\leq \delta<\ell\leq d $ and $ f(\rho_{d-1,\delta})f(\rho_{d-1,\ell})<0 $, then $ \prod_{j=\delta}^{\ell-1}f(\rho_{d,j})=0 $.
\vspace{.2em}
\item If, according to Item \ref{ivrBudan} we have $ \rho_{d,\delta}<a<\rho_{d,\delta+1}<b<\rho_{d,\delta+2} $, then $ f(\rho_{d,\delta+1})=0 $. 
\end{itemize}
 
\item \emph{(Extrema values)} \label{ivrExtrema} The monic polynomial $f$ \gui{attains its upper bound and its lower bound on any closed bounded interval} in the following precise sense: if $a<b$, we have
\vspace{-.9em} 
\[ 
\begin{array}{rcl} 
\sup_{\xi\in\ClI{a,b}}f(\xi) & = & f(a)\vu f(b) \vu \sup_{j=1}^{d-1}f(\nu_j) \quad \hbox{where}\;\; \fbox{$ \nu_j=a\vu(b\vi\rho_{d-1,j}) $}\,, \oups.3em] 
\inf_{\xi\in\ClI{a,b}}f(\xi) & = & f(a)\vi f(b) 
\vi \inf_{j=1}^{d-1}f(\nu_j) \,. 
 \end{array}
\]

\vspace{-.6em} 
If $f$ has a constant strict sign $ \sigma=\pm1 $ on $ \ClI{a,b} $, we have $ \inf_{\xi\in\ClI{a,b}}\big(\sigma\,f(\xi)\big)>0 $. 
 
\item \label{ivrMinAbs} \emph{(Minimum in absolute value)}~\\ If $ a<b $, we have

\vspace{-.2em}
\snic{\inf_{\xi\in\ClI{a,b}}\abs{f(\xi)}=
\abs{f(a)} \vi \abs{f(b)} \vi \inf_{j=1}^{d}\abs{f(\mu_j)}.
\qquad\qquad \phantom{a}}

Furthermore, if the second member is $ >0 $, then $f$ has a constant sign on $ \ClI{a,b} $.

\item \emph{(A bound)} If $ f(x)=x^d+\sum_{\delta=0}^{d-1}a_\delta x^\delta $ we have $ \abs{\rho_{d,j}}\leq \sup_{\delta=0}^{d}(1+\abs{a_\delta}) $ ($ 1\leq j\leq d $). 
 
\item \emph{(Change of variable)} Let $ f(x)=x^d+\sum_{\delta=0}^{d-1}a_\delta x^\delta $ and $ g(x)=x^d+\sum_{\delta=0}^{d-1}c^{d-\delta}a_\delta x^\delta $ (formally $ g(x)=c^df(x/c) $).
\begin{itemize}
 
\item If $ c\geq 0 $, we have $ \rho_{d,j}(g)=c\rho_{d,j}(f) $ ($ 1\leq j\leq d $).
 
\item If $ c\leq 0 $, we have $ \rho_{d,j}(g)=c\rho_{d,d+1-j}(f) $ ($ 1\leq j\leq d $). 
 
\item In all cases, $ \prod_{1\leq j\leq d}(x-c\rho_{d,j}(f))=\prod_{1\leq j\leq d}(x-\rho_{d,j}(g)) $.
\end{itemize}

\end{enumerate}
\end{enumerate} 
\end{theorem}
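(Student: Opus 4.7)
The plan is to prove the theorem by simultaneous induction on the degree $d$, establishing Proposition/Definition \ref{prdfVirtualRoots} together with items (3a)--(3e), and then deriving the remaining items from these core properties. The base case $d=1$ is trivial since $\rho_{1,1}(X-a)=a$. For the inductive step, fix $f$ monic of degree $d\geq 2$ and suppose the virtual roots $\rho_{d-1,j}=\rho_{d-1,j}(f^{[1]})$ satisfy all claimed properties for $f^{[1]}$. By induction (item 3d applied to $f^{[1]}$), between two consecutive $\rho_{d-1,j}$ and $\rho_{d-1,j+1}$ (for $0\leq j\leq d-1$, with the infinite conventions), the derivative $f'$ has constant strict sign, so Lemma~\ref{lemBasicVirtualRoots} (or its extensions for $\pm\infty$ endpoints) applies to $f$ restricted to this interval and produces the unique point $\rho_{d,j+1}=\rR(\rho_{d-1,j},\rho_{d-1,j+1},f)$. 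This immediately gives (1) (the defining system of large inequalities), (3a) (interleaving is built into the recursive construction), and (3d) via the sign of $f'$ on the cell.

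Next, item (3e) follows from (3d) by integrating: on the interval $(\rho_{d,j},\rho_{d,j+1})$ the polynomial $f$ moves from $0$ or an extremum with known sign to another such point, and the algebraic mean value theorem \ref{lemaccrfinis} provides a sign-preserving argument. Items (3b) and (3c) are verified by direct computation using (3a): for $f=X^d-a$ one reads off the extrema of $f^{[k]}$ explicitly, and for $f=\prod_i(X-\xi_i)$ one checks that the sup-inf formulas satisfy the defining inequalities of the $\rho_{d,k}$. For uniform continuity (item 2), I would use the \L{}ojasiewicz inequality from Lemma~\ref{factfsagcLoja}: the graph of each $\rho_{d,j}$ is a semialgebraic closed set defined by a finite system of large polynomial inequalities in $(a_{d-1},\dots,a_0,x)$, and the projection onto the parameter space is surjective with unique fibre; an explicit modulus on a ball $B_{d,M}$ is obtained by applying the {\L}ojasiewicz bound to the resultant-type certificate giving $|f(x)|^N\leq c\cdot |x-\rho_{d,j}|\cdot P(a,x)$ on the cell.

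The remaining items are then corollaries. Items (3f), (3g) follow from the observation that $\rho_{d,j}$ is a zero of $f$ iff the infimum of $|f|$ on the corresponding cell is zero, combined with the recursive presence of $\rho_{d-1,j}$ as roots of $f'$, giving the multiplicity comparison by induction on $k$. The Budan-type statement (3h) follows from (3e) combined with the sign pattern of the derivatives at $a$, using that $f^{[k]}(\rho_{d,j})$ has a controlled sign. The intermediate value theorem (3i) reduces, via (3a), to the monotone case on an interval $(\rho_{d-1,k},\rho_{d-1,k+1})$ where $f$ is strictly monotone (3d): here $f(a)f(b)<0$ forces a root $\rho_{d,k+1}$ in the interval, and the formula $\mu_j=a\vu(b\vi\rho_{d,j})$ truncates each virtual root to $\ClI{a,b}$ so the relevant one equals the true root. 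Items (3j), (3k) follow the same template: on each monotone sub-interval the extremum of $f$ is at an endpoint which is either $a$, $b$, or a $\rho_{d-1,j}$; truncation via the $\nu_j$ handles the case where $\rho_{d-1,j}\notin\ClI{a,b}$. Item (3l) uses the classical Cauchy bound on roots, applied both to $f$ and to its derivatives (noting that $\rho_{d,j}$ lies in the convex hull of roots of $f\sta$). Finally (3m) follows by verifying that the substitution $x\mapsto cx$ transforms the defining inequalities of Lemma~\ref{lemBasicVirtualRoots} into those for $g$, with the order reversed when $c<0$.

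The main obstacle is the explicit uniform continuity in item (2): while classical semialgebraic continuity gives \emph{some} modulus, producing an explicit, constructively meaningful one valid over arbitrary discrete real closed fields requires carefully tracking the {\L}ojasiewicz exponents through the inductive construction, ideally by expressing the defining inequalities as a single system on the ball $B_{d,M}$ and applying a version of Lemma~\ref{factfsagcLoja} with computable constants. Everything else reduces, after the inductive setup, to careful case analysis using (3a) and (3d)--(3e), with the algebraic mean value theorem \ref{lemaccrfinis} as the constructive substitute for classical sign-change arguments.
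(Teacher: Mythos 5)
Your overall plan (simultaneous induction on the degree, proving the construction of Proposition--Definition \ref{prdfVirtualRoots} together with items (3a)--(3e) via the algebraic mean value Lemma \ref{lemaccrfinis}, then deriving the remaining items as corollaries) is the intended route — the paper itself only sketches this and defers the details to \cite{GLM98,CLLR06}. But there is a concrete gap in your inductive step. You invoke Lemma \ref{lemBasicVirtualRoots} on the interval $\ClI{\rho_{d-1,j},\rho_{d-1,j+1}}$ under the hypothesis that $f'$ ``has constant strict sign'' there. What the induction actually gives you is item (3e) for $f^{[1]}$ (not (3d), which concerns monotonicity of $f^{[1]}$ between the virtual roots of $f^{[2]}$), and it gives the strict sign of $f'$ only at points \emph{strictly between} $\rho_{d-1,j}$ and $\rho_{d-1,j+1}$. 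At the endpoints $f'$ typically vanishes, so the strict hypothesis of Lemma \ref{lemBasicVirtualRoots} on the \emph{closed} interval fails even over a discrete real closed field; and over $\RR$ you cannot decide whether $\rho_{d-1,j}<\rho_{d-1,j+1}$ or the interval is degenerate, so no case split on nondegeneracy is available. The construction must therefore be run with the broad-sign hypothesis $\sigma\,f'\geq 0$ on $\ClI{a,b}$ (a closed condition on the parameters), exactly as flagged in Remark \ref{remlemBasicVirtualRoots}: one shows that for a monic polynomial this still yields a unique minimizer of $\abs f$, characterized by the same system of large inequalities (this is where Lemma \ref{lemaccrfinis} enters, to get strict increase from $f'\geq 0$ plus nonvanishing of the polynomial $f'$ on subintervals), and that $\rR(a,b,f)$ depends (semialgebraically, uniformly) continuously on the parameters on this closed set.

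A consequence is that your item (2) cannot be postponed to the end as an add-on: the continuity of $\rR$ in its parameters (or at least the closed-condition version of the basic lemma) is part of the inductive package, because it is what makes the recursive definition $\rho_{d,j}=\rR(\rho_{d-1,j-1},\rho_{d-1,j},f)$ legitimate at parameter values where roots of $f'$ collide, and over $\RR$ where no sign test is available; alternatively, for $\gR=\RR$ one defines the $\rho_{d,j}$ on $\RRa$ and extends by uniform continuity, but then item (2) is again a prerequisite rather than a corollary. With that repair, the rest of your derivations ((3a)--(3e) in the induction, then (3f)--(3m) by the case analysis you describe) follow the same lines as the cited proofs; the fully explicit Łojasiewicz-type modulus you propose for item (2) is indeed the delicate point, and the paper itself leaves its explicitation open (Question \ref{Qu-continuiteRV}).
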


\begin{example} \label{exavr} We explain here the inequalities mentioned in Item~\textsl{1} of \thref{thVirtualRoots} leading to $ \rho_{4,3}(f) $ for a polynomial $ f(X)=X^4-(a_3X^3+a_2X^2+a_1X+a_0) $, written here in the form of direct rules without hypotheses. We use the conventions of Item \textsl{3} of \thref{thVirtualRoots}. 
Thus, let 
$\rho_{1,1}=\rho_{1,1}(\frac {a_3} 4)$, 
$\rho_{2,j}=\rho_{2,j}(\frac {a_3} 2, \frac {a_2} 6)$,
$\rho_{3,j}=\rho_{3,j}(\frac {3a_3} 4, \frac {a_2} 2, \frac {a_1} 4)$,
$\rho_{4,j}=\rho_{4,j}(a_3,{a_2},{a_1},{a_0})$. 
The inequalities characterising $\rho_{1,1}$, $\rho_{2,2}$, $\rho_{3,2}$ and $\rho_{4,3}$ are given. 
Note that in the definition of virtual roots,
the sign $\sigma=\pm1$ before $x-a$ or $b-x$ in  \lemref{lemBasicVirtualRoots} is known because of Item \textsl{\ref{ivrvariation}} of \thref{thVirtualRoots}, which explains why
this sign does not appear in the inequalities below.
 
\Regles{\lab{vr$_{1,1}$} $\vd \rho_{1,1} = \frac {a_3} 4$}

\vspace{-.8em}
\DeuxRegles{
\lab{vr$_{2,1,0} $} $ \vd \rho_{2,1}\leq \rho_{1,1}
\phantom{(x^2-a^2)} $ 
\lab{vr$_{2,1,1}$} $ \vd (\rho_{2,1} - \rho_{1,1}) \, f^{[2]}(\rho_{1,1}) \leq 0 $ 
}
{
\lab{vr$_{2,1,2}$} $\vd (\rho_{2,1} - \rho_{1,1})\, f^{[2]}(\rho_{2,1}) \geq 0$
\lab{vr$_{2,1,3}$} $\vd f^{[2]}(\rho_{2,1}) \geq 0$
}

\vspace{-.8em}
\DeuxRegles{
\lab{vr$_{2,2,0} $} $ \vd \rho_{1,1}\leq \rho_{2,2}
\phantom{(x^2-a^2)} $ 
\lab{vr$_{2,2,1}$} $ \vd (\rho_{2,2} - \rho_{1,1}) \, f^{[2]}(\rho_{1,1}) \geq 0 $ 
}
{
\lab{vr$_{2,2,2}$} $\vd (\rho_{2,2} - \rho_{1,1})\, f^{[2]}(\rho_{2,2}) \leq 0$
\lab{vr$_{2,2,3}$} $\vd f^{[2]}(\rho_{2,2}) \geq 0$
}

\vspace{-.8em}
\TwoRegles{ 
\lab{vr$_{3,3,0}$} $\vd   \rho_{2,2}\leq \rho_{3,3}
\phantom{f^{[1]}(\rho_{3,3}}$
\lab{vr$_{3,3,1}$} $\vd  (\rho_{3,3} - \rho_{2,2})\, f^{[1]}(\rho_{1,1})\geq 0$
}
{
\lab{vr$ _{3,3,2} $} $ \vd (\rho_{3,3} - \rho_{2,2})\, f^{[1]}(\rho_{3,3}) \leq 0 $ 
\lab{vr$ _{3,3,3} $} $ \vd f^{[1]}(\rho_{3,3}) \geq 0 $ 
}

\vspace{-.8em}
\TwoRegles{ 
\lab{vr$ _{3,2,0} $} $ \vd \rho_{2,1}\leq \rho_{3,2}\leq \rho_{2,2} 
\phantom{f^{[1]}(\rho_{3,2})} $ 
\lab{vr$ _{3,2,1} $} $ \vd (\rho_{3,2} - \rho_{2,1})\, f^{[1]}(\rho_{2,1})\, \geq 0 $ 
\lab{vr$ _{3,2,2} $} $ \vd (\rho_{3,2} - \rho_{2,2})\, f^{[1]}(\rho_{2,2})\, \geq 0 $ 
}
{
\lab{vr$ _{3,2,3} $} $ \vd (\rho_{3,2} - \rho_{2,1})\, f^{[1]}(\rho_{3,2})\, \geq 0 $ 
\lab{vr$ _{3,2,4} $} $ \vd (\rho_{3,2} - \rho_{2,2})\, f^{[1]}(\rho_{3,2})\, \geq 0 $ 
}

\vspace{-.8em}
\TwoRegles{
\lab{vr$ _{4,3,0} $} $ \vd \rho_{3,2}\leq \rho_{4,3}\leq \rho_{3,3}
\phantom{f(\rho_{4,3})} $ 
\lab{vr$ _{4,3,1} $} $ \vd (\rho_{4,3} - \rho_{3,2})\, f(\rho_{3,2})\, \geq 0 $ 
\lab{vr$ _{4,3,2} $} $ \vd (\rho_{4,3} - \rho_{3,3})\, f(\rho_{3,3})\, \geq 0 $ 
}
{
\lab{vr$ _{4,3,3} $} $ \vd (\rho_{4,3} - \rho_{3,2})\, 
f(\rho_{4,3})\geq 0 $ 
\lab{vr$ _{4,3,4} $} $ \vd (\rho_{4,3} - \rho_{3,3})\, f(\rho_{4,3})\,\geq 0 $ 
}
 
\eoe
\end{example}

\Subsection{A result à la Pierce-Birkhoff}

We call \textsl{polyroot map} a map $ \gR^m\to \gR $ which can be written in the form $ \rho_{d,j}(f_1, \dots, f_d) $ for integers $ 1\leq j\leq d $ and polynomials $ f_j\in\gR[\xm] $.
\index{polyroot map}

\smallskip The following theorem à la Pierce-Birhoff is worth noting. It looks like a Nusllstellensatz: it expresses that there is a purely algebraic reason for a map being semialgebraic continuous and integral over the ring of polynomials. 

\begin{theorem} \label{thPBpolyroots} \emph{(\cite[Theorem 6.4]{GLM98})}
Let $\gR$ be a discrete real closed field and let $ g\colon \gR^m\to\gR $ be an continuous semialgebraic map integral on the ring $ \Rxm $ (seen as a ring of functions). Then $g$ is a combination by $\vu$, $\vi$ and $ \,+\, $ of polyroot maps $ \gR^m\to\gR $. 
More precisely, if $ g(\ux) $ is a root of the  $Y$-monic polynomial $  P(Y,\ux)$ of degree $d$, it is expressed as a sup-inf combination of maps of the form

\vspace{-.8em}
\begin{equation} \label {eqthPBpolyroots}
\rho_{d,j}(P)+\sqrt[r]{ R_\ell^+ \cdot\bigg(1+{\som_{i=1}^nx_i^2}
\bigg)^s}
\end{equation}

\vspace{-.4em}
\noindent for $ R_\ell\in\Rxm $ (the second term in the sum \pref{eqthPBpolyroots} is also a polyroot map, see Item {3b} of \thref{thVirtualRoots}). 
\end{theorem}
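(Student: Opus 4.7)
The strategy is to decompose $\gR^m$ into finitely many semialgebraic closed pieces on each of which $g$ coincides with a specific virtual root of $P$, and then to glue these local identifications via a \L{o}jasiewicz-style bound into a single sup-inf combination of virtual roots with polynomial corrections of the prescribed shape.

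First I would use the covering property of virtual roots: since $P(Y,\ux)$ is $Y$-monic of degree $d$ and $P(g(\ux),\ux)=0$ for every $\ux\in\gR^m$, Item \textsl{3f} of \thref{thVirtualRoots} guarantees that for each $\ux$ there exists $j\in\{1,\ldots,d\}$ with $g(\ux)=\rho_{d,j}(P(\cdot,\ux))$. The sets $F_j=\sotq{\ux\in\gR^m}{g(\ux)=\rho_{d,j}(P(\cdot,\ux))}$ are closed semialgebraic (defined by the semialgebraic continuous maps $g$ and the $\rho_{d,j}$) and jointly cover $\gR^m$. By cylindrical algebraic decomposition, or directly by the finiteness theorem \cite[Theorem~2.7.1]{BCR}, I refine this covering into finitely many \emph{basic} closed semialgebraic sets $(F_i)_{i\in I}$ on each of which $g=\rho_{d,j_i}(P)$ for a well-defined index $j_i$, and on each of which $F_i$ is cut out by a single polynomial inequality $R_i(\ux)\leq 0$ for some $R_i\in\Rxm$.

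Next I would invoke the \L{o}jasiewicz inequality in the non-compact form already used in the proof of Lemma \ref{factfsagcLoja} (namely Theorem 2.6.6 of \cite{BCR} applied to the semialgebraic continuous function $|g-\rho_{d,j_i}(P)|$, which vanishes on the zero set of $R_i^+$). This yields for each $i$ integers $r_i,s_i\geq 1$ such that
\[
\abs{g(\ux)-\rho_{d,j_i}(P(\cdot,\ux))}^{r_i}\;\leq\;R_i^+(\ux)\cdot\bigg(1+\sum_{k=1}^{m}x_k^2\bigg)^{\!s_i}\qquad\forall\,\ux\in\gR^m,
\]
with both sides vanishing on $F_i$. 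Setting $\varepsilon_i(\ux)=\sqrt[r_i]{R_i^+(\ux)\,(1+\sum_k x_k^2)^{s_i}}$, I obtain the two-sided bracket $\rho_{d,j_i}(P)-\varepsilon_i\leq g\leq \rho_{d,j_i}(P)+\varepsilon_i$ globally, with equality on $F_i$. Since the $F_i$ cover $\gR^m$, a Pierce-Birkhoff style identity gives
\[
g\;=\;\Vu_{i\in I}\big(\rho_{d,j_i}(P)-\varepsilon_i\big)\;=\;\Vi_{i\in I}\big(\rho_{d,j_i}(P)+\varepsilon_i\big),
\]
so $g$ is expressed as a sup-inf combination of maps $\rho_{d,j}(P)\pm\sqrt[r]{R^+(1+\|\ux\|^2)^s}$. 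To match the form announced in the statement, where only $+$ appears in the building blocks, I would substitute each $-\varepsilon_i$ using the fact that the opposite of a polyroot map is again a polyroot map, combined with the structural identity $-(a\vu b)=(-a)\vi(-b)$ for $\vu,\vi$.

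The main obstacle will be the non-compact \L{o}jasiewicz step in the precise polynomial-at-infinity shape $R^+\cdot(1+\|\ux\|^2)^s$. Theorem 2.6.6 of \cite{BCR} produces an inequality with a continuous semialgebraic multiplier $h$ that is only locally bounded, and one must absorb $h$ into the power $(1+\|\ux\|^2)^s$ by explicit degree bookkeeping tied to the degrees of $P$ and $R_i$. A secondary point, but a necessary one, is to verify that each $\sqrt[r]{R^+(1+\|\ux\|^2)^s}$ is genuinely a polyroot map (the content of Item \textsl{3b} of \thref{thVirtualRoots} referenced in the statement), so that the final expression lives in the announced class; this reduces to observing that the $r$-th virtual root of the univariate monic $Y^r-R^+(1+\|\ux\|^2)^s$ is precisely this quantity, by Item \textsl{3b} specialised to $f=Y^r-a$ with $a\geq 0$.
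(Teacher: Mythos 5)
Your overall plan — cover $\gR^m$ by the closed sets $F_j$ on which $g$ equals the $j$-th virtual root, obtain a global {\L}ojasiewicz bound for $g - \rho_{d,j}(P)$ off each piece, and glue via an inf — matches the strategy the paper attributes to \cite{GLM98} (the paper itself only cites the reference; the remark following the statement confirms that the {\L}ojasiewicz inequality is what produces the $r$-th root). The gap is in your refinement step: you assert that the covering by the sets $F_j = \{g=\rho_{d,j}(P)\}$ can be refined into finitely many pieces of the form $\{R_i\leq 0\}$ for a \emph{single} polynomial $R_i\in\Rxm$, each contained in some $F_{j_i}$ and still jointly covering $\gR^m$. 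That is not a consequence of CAD or of the finiteness theorem: a basic closed semialgebraic set $\{p_1\geq 0,\dots,p_k\geq 0\}$ is in general not a sublevel set of one polynomial (the closed first quadrant in $\gR^2$ is the standard obstruction — no $\{R\leq 0\}$ contained in it can fill it up to the corner), and there is no reason one can shrink the pieces to such sets while preserving both the containment in the $F_{j}$'s and the covering. Without a single polynomial $R_i$ whose non-negative part vanishes precisely where $g-\rho_{d,j_i}(P)$ must vanish, the {\L}ojasiewicz step does not deliver the prescribed shape $R_\ell^+\cdot\big(1+\sum_k x_k^2\big)^s$, and the identity $g=\bigwedge_i\big(\rho_{d,j_i}(P)+\varepsilon_i\big)$ is not attained at every point. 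Producing suitable $R_i$ from the data of $P$ is the real content of the proof in \cite{GLM98}; it is not a free refinement.

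Two smaller points. The obstacle you flag at the end — absorbing the locally bounded semialgebraic multiplier $h$ of \cite[Theorem 2.6.6]{BCR} into the factor $\big(1+\sum_k x_k^2\big)^s$ — is benign: $h$ is bounded by $c\cdot\big(1+\sum_k x_k^2\big)^{k}$ for some $c\in\gR$, and the constant folds into the polynomial via $R_i\mapsto cR_i$. And your closing manipulation converting $\bigvee_i\big(\rho_{d,j_i}(P)-\varepsilon_i\big)$ to the announced form via $-(a\vu b)=(-a)\vi(-b)$ is unnecessary, since your $\bigwedge$ expression alone already has that shape.
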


\rem
When the map $g$ is piecewise polynomial, it cancels a monic polynomial $P(Y)=\prod_{i=1}^d(Y-f_i)$ for $f_i\in\Rxm$. In the expression obtained by \ref{eqthPBpolyroots} for $g$, it is the 
{\L}ojasiewicz inequality which is responsible for the extraction of the $r$-th root in the formula. As for the $\rho_{d,j}(P)$ they are sup-inf combinations of the $f_i$ (Item \textsl{\ref{ivrsupinf}} of \thref{thVirtualRoots}).
\eoe

\Subsection{$f$-rings with virtual roots}\label{subsecafrvr}

\begin{example} \label{exavr2} 
We take again Example \ref{exatotordnonreduced} of the $\QQ$-linearly ordered algebra $\QQ[\alpha]$, with $\alpha>0$ and $\alpha^6=0$. We will see that the constraints imposed on $\rho_{2,2}(f)$, when $f=X^2-a^2$ and $a\geq 0$, do not necessarily imply that $\rho_{2,2}=a$. The constraints are as follows for $x=\rho_{2,2}$ (note that $\rho_{1,1} = 0 $):

\TwoRegles{
\lab{vr$_{2,2,0}$} $\vd   0\leq x
\phantom{(x^2-a^2)}$
\lab{vr$_{2,2,1}$} $\vd  - x \, a^2 \leq 0$
}
{
\lab{vr$ _{2,2,2} $} $ \vd x \, (x^2-a^2) \leq 0 $ 
\lab{vr$ _{2,2,3} $} $ \vd (x^2-a^2) \geq 0 $ 
}

\smallskip\noindent 
If we take $ a=\alpha $, all $ x\geq 0 $ such that $ x^2=a^2 $ fit, and therefore all $ \alpha+y\alpha^5 $ for $ y\in\QQ[\alpha] $ are solutions. If we take $ a^2=0 $ the constraints are equivalent to \gui{$ x\geq 0 $ and $ x^3\leq 0 $} and any element of the interval $  \ClI{0,\alpha^2} $ is a solution, including $ \zeta=\alpha^2 $ whereas $ \zeta^2> 0 $.
\eoe
\end{example}

\begin{lemma} \label{lem2afrvrreduced}
On an $f$-ring, the system of inequalities satisfied by the virtual roots $\rho_{k,j}$ ($1\leq j\leq k\leq d$) for a given monic polynomial of degree $d$, if they exist, defines these elements unambiguously. 
\end{lemma}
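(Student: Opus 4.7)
The plan is to reduce the uniqueness assertion to the case of a discrete real closed field by means of the formal Positivstellensatz, then invoke the analytic characterization of virtual roots from Lemma \ref{lemBasicVirtualRoots}. First, note that what must be proved is a finite conjunction of Horn rules of the theory \sa{Afrnz}: the hypothesis is the system of large inequalities satisfied by two candidate families $(x_{k,j})$ and $(x'_{k,j})$ attached to the same monic polynomial $f$, and the conclusion is the conjunction of the equalities $x_{k,j} = x'_{k,j}$. By Item 3 of \pstfref{Pst2}, the theories \sa{Afrnz} and \sa{Crcdsup} prove the same Horn rules, so it suffices to establish uniqueness in an arbitrary discrete real closed field $\gR$.

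Working in $\gR$, I would then proceed by induction on $k$ from $1$ to $d$. The base case $k=1$ is the explicit equation $\rho_{1,1} = a_{d-1}/d$ (see Proposition-Definition \ref{prdfVirtualRoots}), which directly determines $\rho_{1,1}$. For the inductive step with $k \geq 2$ and $1 \leq j \leq k$, set $g := f^{[d-k]}$, a monic polynomial of degree $k$, and put $a := \rho_{k-1,j-1}$, $b := \rho_{k-1,j}$, using the conventions $\rho_{k-1,0} = (-1)^{k-1}\infty$ and $\rho_{k-1,k} = +\infty$ (which amount to dropping the corresponding inequality when a bound is infinite). By induction, $a$ and $b$ are uniquely determined, and inspection of Example \ref{exavr} shows that the inequalities defining $\rho_{k,j}$ are exactly those characterizing the point $\rR(a,b,g)$ of Lemma \ref{lemBasicVirtualRoots}, with the sign $\sigma$ fixed by the parity condition in Item 3d of \thref{thVirtualRoots}.

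The key input is then that $a$ and $b$ are consecutive virtual roots of the derivative $g' = k \cdot f^{[d-k+1]}$, so by Item 3d of \thref{thVirtualRoots} the polynomial $g$ is strictly monotonic on $[a,b]$ whenever $a < b$. The uniqueness clause of Lemma \ref{lemBasicVirtualRoots} then determines $\rR(a,b,g)$ unambiguously as the single point in $[a,b]$ where $|g|$ attains its minimum, and this is precisely $\rho_{k,j}$. In the degenerate case $a = b$, the inequality $a \leq \rho_{k,j} \leq b$ immediately pins down $\rho_{k,j} = a = b$, closing the induction.

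The main obstacle I foresee is the careful bookkeeping of signs and of the boundary indices $j = 1$ or $j = k$, so that a single induction covers all cases uniformly; in particular one must check that the truncated inequality lists at these boundaries still match, after the reduction to $\gR$, the hypotheses of Lemma \ref{lemBasicVirtualRoots} for a half-line rather than a closed bounded interval (Items 2 and 3 of that lemma). Secondarily, one should verify that the presence of the $\vu$ function symbol in the inequalities does not impede the application of the formal Positivstellensatz; this is guaranteed because the variant \sa{Crcdsup} of \sa{Crcd} already includes the sup symbol in its signature, and the passage from \sa{Afrnz} to \sa{Crcdsup} for Horn rules is precisely the content of Item 3 of \pstfref{Pst2}.
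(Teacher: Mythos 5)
Your proposal follows essentially the same route as the paper: express uniqueness as Horn rules in \sa{Afrnz}, transfer to \sa{Crcdsup} via Item \textsl{3} of the formal Positivstellensatz \ref{Pst2}, and conclude from the characterisation of virtual roots over a discrete real closed field. The only difference is that the paper simply cites Item \textsl{1} of \thref{thVirtualRoots} for the discrete case, whereas you re-derive that uniqueness by induction from Lemma \ref{lemBasicVirtualRoots}, which is correct but amounts to unfolding the cited result.
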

%
\begin{proof}
The uniqueness in question is expressed by means of Horn rules. The theory \Sa{Afrnz} proves the same Horn rules as the theory of discrete real closed fields with sup (Formal \pstref{Pst2}). In the latter theory, uniqueness is guaranteed (Item \textsl{1} of \thref{thVirtualRoots}).
\end{proof}

Example \ref{exavr2} and Lemmas \ref{lem2afrvrreduced} and \ref{lemafrvrreduced} justify the following definition. 

\begin{definition} \label{defiAfrvr} \index{f-ring@$f$-ring!with virtual roots} ~
\begin{enumerate}
 
\item 
 The purely equational theory \SA{Afrrv} of \textsl{$f$-rings with virtual roots} is obtained as follows from the purely equational theory \Sa{Afr}.

\vspace{.1em} \begin{itemize}
 
\item For $1\leq j\leq d$ in $\N$, we add a function symbol $\rho_{d,j}$ of arity $d$;
 
\item as axioms we add the inequalities described in Item \textsl{1} of \thref{thVirtualRoots};
  
\item we add the following rule \tsbf{vrsup} 

\regles{\Lab{vrsup} $ \vd \rho_{2,2}(a+b,-ab)=a\vu b $.}
\end{itemize} 
The signature is therefore as follows:
\sIgt{\AfRrv}{\cdot=0 \mathrel{;}\cdot+\cdot, \cdot\times\cdot,\cdot\vu\cdot,-\,\cdot,(\rho_{d,j})_{1\leq j\leq d},0,1}.\label{NOTASigAfrrv}
 
\item 
In the same way, we define the Horn theory \SA{Asrrv} of \textsl{strict $f$-rings with virtual roots} from the Horn theory \Sa{Asr}:\\
\sIgt{\AsRrv}{\cdot=0,\cdot\geq 0,\cdot>0 \mathrel{;}\cdot+\cdot, \cdot\times\cdot,\cdot\vu\cdot,-\,\cdot,(\rho_{d,j})_{1\leq j\leq d},0,1}.\label{NOTASigAsrrv}
\end{enumerate}

\end{definition}

\begin{lemma} \label{lemafrvrreduced}
An $f$-ring with virtual roots is reduced. 
\end{lemma}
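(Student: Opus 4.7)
The plan is to derive the rule \Tsbf{Anz} directly in \Sa{Afrrv} by a single judicious substitution into the equational axiom \Tsbf{vrsup}. The point is that \Tsbf{vrsup} gives an explicit formula expressing $a \vu b$ as a virtual root of $X^2 - (a+b)X + ab$, and by choosing $a$ and $b$ so that their sum vanishes and their product is $x^2$, one produces a term equal to $|x|$ whose only dependence on $x$ is through $x^2$.

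Concretely, I would instantiate \Tsbf{vrsup} at $a = x$, $b = -x$, obtaining the identity $\vd \rho_{2,2}(0, x^2) = x \vu (-x) = |x|$. A second instantiation at $a = b = 0$ gives $\vd \rho_{2,2}(0, 0) = 0$. Under the hypothesis $x^2 = 0$, the congruence of equality for the function symbol $\rho_{2,2}$ yields $\rho_{2,2}(0, x^2) = \rho_{2,2}(0, 0)$. Chaining these three equalities produces $|x| = 0$. To conclude $x = 0$, I would invoke the basic $f$-ring facts already established: by \Tsbf{sup1} and \Tsbf{sup2} applied to $|x| = x \vu (-x)$, we have $|x| \geq x$ and $|x| \geq -x$; from $|x| = 0$ this gives both $x \leq 0$ and $-x \leq 0$, and the antisymmetry rule \Tsbf{Gao} of \Sa{Afr} finishes the argument.

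There is no real technical obstacle here; the only insight required is the choice of substitution $(a, b) = (x, -x)$ in \Tsbf{vrsup}, which is precisely the algebraic analogue of the identity $\sqrt{x^2} = |x|$ that underlies the proof of Lemma \ref{lemAr2creduit} in the $2$-closed case. In effect, the rule \Tsbf{vrsup} forces $\rho_{2,2}$ to behave like a square root on elements of the form $x^2$, so reducedness of the ambient $f$-ring drops out for free, exactly as in Lemma \ref{lemAr2creduit}.
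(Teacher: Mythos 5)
Your proof is correct and is essentially the paper's own argument: the paper also instantiates \tsbf{vrsup} at $b=-a$, uses $a^2=0$ to rewrite $\rho_{2,2}(0,a^2)$ as $\rho_{2,2}(0,0)=0$, and concludes $\abs a=0$, hence $a=0$. Your extra step deducing $x=0$ from $\abs x=0$ via \tsbf{sup1}, \tsbf{sup2} and \tsbf{Gao} merely makes explicit what the paper leaves implicit.
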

%
\begin{proof}
Given the rule \Tsbf{vrsup}, if $ a^2=0 $, we have with $ b=-a $: 
\[
0=\rho_{2,2}(0,0)=\rho_{2,2}(a+b,-ab)=\abs a.
\]
\vspace{-2em}

\end{proof}

\paragraph{Domain variant}

\begin{definition}[$f$-ring with virtual roots, domain variant] \label{defiAfrvrbis} ~
\\ The Horn theory \SA{Aitorv} of \textsl{linearly ordered domains with virtual roots} is obtained from the Horn theory \Sa{Aito} of linearly ordered domains by adding the virtual roots in the same way as the theory \sa{Afrrv} is obtained from the theory \sa{Afr} in Definition \ref{defiAfrvr}. 
\end{definition}

Note that we don't need to put the rule \tsbf{vrsup} in the axioms.

\begin{lemma} \label{lemAitorv}
A linearly ordered domain with virtual roots is integrally closed and its field of fractions is discrete real closed. Reciprocally, an integrally closed domain whose fraction field is discrete real closed is an integrally closed domain with virtual roots.
\end{lemma}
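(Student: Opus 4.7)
The plan is to prove the two implications separately, leveraging the algebraic content of \thref{thVirtualRoots} and its transfer from discrete real closed fields to arbitrary linearly ordered domains with virtual roots via the Positivstellensatz machinery of \pstref{Pst2} and \pstref{thfairecommesi}. Throughout I tacitly work in a $\QQ$-algebra setting, since the defining axioms for the function symbols $\rho_{d,j}$ already involve division by integer factorials, as already visible in Example \ref{exavr}.

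For the forward direction, let $\gA$ be a linearly ordered domain with virtual roots and $\gK$ its fraction field. To establish integral closure I would take $\xi\in\gK$ satisfying $f(\xi)=0$ for some monic $f\in\gA[X]$ of degree $d$; the chain $\rho_{d,1}(f)\leq\cdots\leq\rho_{d,d}(f)$ in $\gA$ is well-defined, and Item \ref{ivrsigne} of \thref{thVirtualRoots}, viewed as a Horn rule valid in any linearly ordered domain with virtual roots, forbids $\xi$ from lying strictly between two consecutive virtual roots, forcing $\xi$ to equal some $\rho_{d,j}(f)\in\gA$. To see that $\gK$ is discrete real closed, discreteness of $\gA$ (from \tsbf{OT} and \Edineq\ in \Sa{Aito}) propagates to $\gK$; a positive $a=p/q\in\gK$ with $q>0$ admits the square root $\rho_{2,2}(X^2-pq)/q$ by Item 3b of \thref{thVirtualRoots}; and a monic odd-degree $f\in\gK[X]$ can be cleared of denominators by the substitution $X\mapsto Y/c$ to produce a monic $g\in\gA[X]$ of the same odd degree, whence Item \ref{ivrTVI} yields $\prod_j g(\rho_{d,j}(g))=0$ in the integral domain $\gA$, giving a root of $g$ in $\gA$ and hence of $f$ in $\gK$.

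For the reverse direction, let $\gA$ be an integrally closed $\QQ$-algebra whose fraction field $\gK$ is discrete real closed. The linear order on $\gK$ restricts to $\gA$, making $\gA$ a linearly ordered domain. For any monic $f\in\gA[X]$ of degree $d$, the virtual root $\rho_{d,j}(f)$ is defined in $\gK$ via \thref{thVirtualRoots}; by Item 8 of that theorem, it is a root of $f^\star=\prod_{k=0}^{d-1}f^{[k]}$, a monic polynomial which lies in $\gA[X]$ precisely because $\gA$ is a $\QQ$-algebra. Hence $\rho_{d,j}(f)$ is integral over $\gA$ and, by hypothesis, belongs to $\gA$. The defining inequalities of the symbols $\rho_{d,j}$ hold in $\gK$ and, by restriction of the order, continue to hold in $\gA$, which endows $\gA$ with the structure of a linearly ordered domain with virtual roots.

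The main obstacle, beyond the careful handling of the $\QQ$-algebra hypothesis, will be making precise the transfer of the sign and interval content of \thref{thVirtualRoots} (in particular Items \ref{ivrsigne} and \ref{ivrTVI}) from discrete real closed fields to arbitrary linearly ordered domains with virtual roots. The right framework is a variant of the formal Positivstellensatz \ref{Pst2}: the theory \Sa{Aitorv} and the theory \Sa{Crcd} enriched with virtual root symbols should prove the same Horn rules, because the axioms defining the $\rho_{d,j}$ coincide and the underlying theories \Sa{Aito} and \Sa{Crcd} already agree on Horn rules by \pstref{Pst1bis}. Spelling this out carefully, together with the uniqueness of virtual roots exploited in Lemma \ref{lem2afrvrreduced}, is the delicate step of the proof.
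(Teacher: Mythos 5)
Your proof is correct and follows essentially the same strategy as the paper's: both directions are driven by \thref{thVirtualRoots} (with its validity for $f$-rings with virtual roots justified via the Positivstellensatz, as you correctly flag at the end), and the reverse direction in both cases uses the fact that the $\rho_{d,j}(f)$ are roots of the monic $f^\star\in\gA[X]$ together with integral closure. The differences are cosmetic: for integral closure in the direct direction you use Item \textsl{3e} (the strict-sign statement, combined with discreteness to place $\xi$ relative to the virtual roots) whereas the paper invokes Item \textsl{3f} directly, which says $f(\xi)=0\Rightarrow\wi f(\xi)=0$ and makes the argument one line; and to show $\gK$ is discrete real closed you pass through the Artin--Schreier characterisation (square roots of positives via \textsl{3b}, odd-degree roots via \textsl{3i}) whereas the paper establishes $\RCFn$ directly from Item \textsl{3i}, using the change of variables \textsl{3m} to reduce an arbitrary polynomial of $\KX$ to a monic one of $\AX$. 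Your route is marginally longer but buys a cleaner reduction to standard field-theoretic vocabulary; the paper's route stays closer to the axioms $\RCFn$ of \Sa{Crcd}. The one place where you are more careful than the paper is in spelling out that the transfer of \thref{thVirtualRoots} to \Sa{Aitorv} needs justification (the paper invokes the theorem without comment), and in noting the tacit $\QQ$-algebra hypothesis behind the claim $f^\star\in\AX$; both points are accurate and worth recording, although Item~\textsl{4} of \pstref{Pst3} already supplies the transfer you ask for.
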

%
\begin{proof}
Let $\gA$ be the domain and $\gK$ its field of fractions, which is discrete.

\noindent 
\textsl{Direct implication}. A monic polynomial $ f\in\AX $ satisfies \RCFn\ because of Item \textsl{3i} of \ref{thVirtualRoots} and the fact that $\gK$ is discrete. For an arbitrary polynomial of $ \KX $ we use the change of variables in Item \textsl{3m} to reduce to a monic polynomial of $ \AX $. So $\gK$ is discrete real closed. Finally $\gA$ is integrally closed due to Item \textsl{3f}.

\noindent \textsl{Reciprocal implication}. The order on $\gK$ induces a total order on $\gA$. It must be shown that for a monic polynomial $ f\in\AX $ the $ \rho_{d,j}(f) $ are in $\gA$. Now they are zeros of $ f^\star $, a monic polynomial of $ \AX $, so they are in $\gK$, and $\gA$ is integrally closed, so they are in $\gA$. Thus the maps $ \rho_{d,j}(a_0,\dots,a_n) $ defined from $ \gK^n $ to $\gK$ are restricted to maps $ \gA^n\to\gA $. 
\end{proof}
%
\paragraph{Rings of integral continuous  semialgebraic maps}

\smallskip \thref{thPBpolyroots} (for discrete real closed fields) legitimates the following definition.

\begin{definota} \label{defiSacem}
Let $\gR$ be an $f$-ring with virtual roots (special cases: an \hyperref[defiCorv]{ordered field with virtual roots} or a real closed ring). The families $ \Sace_m(\gR) $ ($ m\in\N $) of \textsl{integral continuous semialgebraic maps} are defined as the families of maps  $\gR^m\to\gR$ stable by composition, containing the polynomial maps (with coefficients in~$\gR$) and the virtual root maps. 
In other words, an element of $\Sace_m(\gR)$ is a map $\gR^m\to\gR$ defined by a term of the language of $\Sa{Afrrv}(\gR)$ with the $m$ variables $\xm$ (some of which may be absent). 
\end{definota}

\paragraph{Pierce-Birkhoff rings}

\begin{definotas} \label{defiSacembis}
Let $\gA$ be a ring, or more generally a dynamic algebraic structure of an $f$-ring.
\begin{enumerate}
 
\item The ring $\AFRNZ(\gA)$ is the \textsl{reduced $f$-ring generated by $\gA$}. 
 
\item The ring $\AFRRV(\gA)$ is the $f$-ring with virtual roots generated by $\gA$.
 
\item The ring $\PPM(\gA)$ is defined as the $f$-subring of $\AFRRV(\gA)$ formed by the elements~$x$ which cancel a polynomial $\prod_{i=1}^k(X-a_i)$ for $a_i\in\gA$. 
 
\item A ring $\gA$ is called a \textsl{Pierce-Birkhoff ring} when the natural morphism $ \AFRNZ(\gA)\to\PPM(\gA) $ is an isomorphism.
\end{enumerate} 
\end{definotas}

See Question \ref{questpbring}. 

\section{Real closed rings}\label{secArc}

\Subsection{Constructive definition and variants}\label{subsecArcconst}

\begin{definition}[real closed rings] \label{defiArc} \index{real closed! ring} 
 The purely equational theory \SA{Arc} of \textsl{real closed rings} is obtained by adding to the theory \Sa{Afrrv} the function symbol $\mathrm{Fr}$ and the axioms \Tsbf{fr1} and \Tsbf{fr2}.
\\
The signature is therefore as follows:
\Sigt{\ArC}{\cdot=0\mathrel{;}\cdot+\cdot, \cdot\times\cdot,\cdot\vu\cdot,-\,\cdot,(\rho_{d,j})_{1\leq j\leq d},\mathrm{Fr}(\cdot,\cdot),0,1}\label{NOTASigArc}
\end{definition}

\vspace{-1em}

\begin{lemma} \label{lemArcunique}
On a commutative ring, if there is a real closed ring structure, it is unique. More generally, a ring morphism between two real closed rings is a real closed ring morphism. 
\end{lemma}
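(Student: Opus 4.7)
The plan mirrors the proof of Lemma \ref{lemAr2cunique}: we show that each piece of additional structure carried by a real closed ring (the lattice operation $\vu$, the virtual roots $\rho_{d,j}$, and the fraction symbol $\Fr$) is uniquely determined by the underlying commutative ring structure, so that any ring morphism is automatically compatible with all of it. The uniqueness assertion then follows from the morphism assertion by applying the latter to the identity map between two putative real closed ring structures on the same commutative ring.

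For the morphism claim, let $\varphi\colon \gA\to\gB$ be a ring morphism between real closed rings. First, both $\gA$ and $\gB$ are reduced (Lemma \ref{lemafrvrreduced}). Next, the order relation is algebraically definable: applying Item 3b of Theorem \ref{thVirtualRoots} to the polynomial $X^2-a$ gives $\rho_{2,2}(0,a)=\sqrt{a^+}$, so $a\geq 0$ if and only if $a$ is a square. Hence $\varphi$ preserves nonnegativity and thus the order relation. The operation $\vu$ is then pinned down by Lemma \ref{lemsupdansAonz}: in a reduced $f$-ring, $c=a\vu b$ is equivalent to $c\geq a$, $c\geq b$, and $(c-a)(c-b)=0$, conditions involving only the ring operations and the order; consequently $\varphi$ preserves $\vu$.

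For the virtual root maps, Lemma \ref{lem2afrvrreduced} tells us that in any reduced $f$-ring the system of inequalities of Item 1 of Theorem \ref{thVirtualRoots} characterises each $\rho_{d,j}(f)$ uniquely. Since $\varphi$ preserves ring operations, order, and $\vu$, it transports this characterisation, hence preserves each virtual root map. Finally, the symbol $\Fr$ is determined, in any reduced $f$-ring, by the relations $z\abs b=(\abs a\vi\abs b)^2$ and $0\leq z\leq \abs a\vi\abs b$ (Lemma \ref{lemUniqFRAC}), which again involve only the ring operations, the order, and $\vu$; so $\varphi$ is compatible with $\Fr$ as well.

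I do not expect a serious obstacle: the whole argument reduces to bookkeeping on top of results already established. The only slightly delicate point is the algebraic characterisation of nonnegativity as \textquotedblleft being a square\textquotedblright, which relies crucially on virtual roots to furnish nonnegative square roots of nonnegative elements; without them, one would have to replicate part of the uniqueness analysis of Lemma \ref{lemAr2cunique} by hand.
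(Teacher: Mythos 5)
Your argument is correct and is essentially the paper's proof unwound: the paper simply cites Lemma \ref{lemAr2cunique} (squares determine the order, Lemma \ref{lemsupdansAonz} determines $\vu$) together with Lemma \ref{lem2afrvrreduced} for the virtual roots, which is exactly the chain of reductions you carry out. Your explicit treatment of $\Fr$ via Lemma \ref{lemUniqFRAC} is a detail the paper's one-line proof leaves implicit, but it is the intended argument, not a different route.
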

%
\begin{proof}
Results from the lemmas \ref{lemAr2cunique} and \ref{lem2afrvrreduced}.
\end{proof}

 \CAdre{.9}{In the following, when we do not specify otherwise, a \textsl{real closed ring} always designates a ring defined in \ref{defiArc}.}

\begin{lemma}[variants for \sa{Arc}] \label{lemArcbis} ~
\begin{enumerate}
 
\item The theory \sa{Arc} can also be obtained from the Horn theory \Sa{Aftr} by adding the virtual roots in the same way that the theory \sa{Afrrv} is obtained from the theory~\sa{Afr} in Definition \ref{defiAfrvr}. Moreover, given Lemma \ref{lemafrvrreduced}, the axiom \Tsbf{Anz} of the theory \sa{Aftr} can be omitted. A real closed ring can therefore be seen as a \emph{strongly real ring with virtual roots}.
 
\item The theory \sa{Arc} is essentially identical to the theory \Sa{Asrrv} of strict $f$-rings with virtual roots to which we add the function symbol $ \mathrm{Fr} $ and the axioms \Tsbf{fr1} and \Tsbf{fr2}. NB. The predicate $ x>0 $ must be added to \sa{Arc} as an abbreviation of \gui{$x$ \hbox{is $\geq 0$} and invertible}.
\end{enumerate}
\end{lemma}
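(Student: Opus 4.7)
My strategy is to unfold the definitions of both sides of each essential identity and to reduce each to an application of Lemma \ref{lemafrvrreduced} (for Item 1) followed by Lemma \ref{lemArftr01} and the skolemisation Lemma~\ref{lemAfrnzFRAC} (for Item 2). The plan is to rely on the general fact that the procedures of Definition \ref{defi-exteseq} (adding abbreviations for defined predicates and skolemising uniquely-existential rules) are compatible with adding a fresh package of purely equational axioms and symbols~-- here, the family $(\rho_{d,j})$ together with \tsbf{vrsup}.

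\textbf{Item 1.} Unfolding the definitions, $\sa{Arc}$ is $\sa{Afrrv} + \Fr + \tsbf{fr1} + \tsbf{fr2}$, and $\sa{Afrrv}$ is $\sa{Afr}$ augmented with the function symbols $\rho_{d,j}$, the inequalities from Item \textsl{1} of \thref{thVirtualRoots}, and the axiom \tsbf{vrsup}. On the other side, $\sa{Aftr}$ is $\sa{Afrnz}$ enriched with the abbreviation of $x>0$ by \gui{$x\geq 0 \vii \exists z\,xz=1$} together with $\Fr,\tsbf{fr1},\tsbf{fr2}$. Adding virtual roots to $\sa{Aftr}$ in the analogous way yields $\sa{Afrnz} + \text{(virtual roots)} + \Fr + \tsbf{fr1} + \tsbf{fr2}$ plus the abbreviation of $>$. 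By Lemma \ref{lemafrvrreduced}, any $f$-ring with virtual roots is reduced, so \tsbf{Anz} is already derivable from the virtual-root axioms and can be dropped; thus $\sa{Afrnz} + \text{(virtual roots)} = \sa{Afr} + \text{(virtual roots)} = \sa{Afrrv}$. The abbreviation of $x>0$ is a defined predicate in the sense of Definition \ref{defi-exteseq}, hence yields an essentially identical extension. We obtain $\sa{Aftr} + \text{(virtual roots)} = \sa{Arc}$ up to the abbreviation, which is precisely the claim.

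\textbf{Item 2.} By Lemma \ref{lemArftr01}, $\sa{Aftr}$ is essentially identical to $\sa{Asrnz} + \tsbf{IV} + \tsbf{FRAC}$; the key point in that proof is that under \tsbf{IV} the predicate \gui{$\cdot>0$} of the signature of $\sa{Asrnz}$ coincides with the abbreviation \gui{$\geq 0$ and invertible}. Adding the virtual root package simultaneously to both sides preserves essential identity, because the $\rho_{d,j}$ axioms of Definition \ref{defiAfrvr} are independent of whether the strict-positivity predicate is present in the signature. Using Item 1, the left-hand side becomes $\sa{Arc}$. For the right-hand side, $\sa{Asrnz} + \text{(virtual roots)} = \sa{Asrrv}$, again because \tsbf{Anz} is redundant by Lemma \ref{lemafrvrreduced}. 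Finally, the rule \tsbf{FRAC} has unique existence in any reduced $f$-ring (Lemma \ref{lemUniqFRAC}, whose proof transports verbatim to $\sa{Asrrv}$, as in Lemma \ref{lemAfrnzFRAC}), so skolemising it by introducing $\Fr$ with axioms \tsbf{fr1}, \tsbf{fr2} yields an essentially identical extension. The NB of the statement corresponds precisely to the fact that in $\sa{Arc}$ the predicate $x>0$ is not in the signature and must be reintroduced as the abbreviation matching the one used to define $\sa{Aftr}$.

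\textbf{Main obstacle.} The routine part is keeping track of which axioms are absorbed by the virtual-root package. The only genuinely delicate point is verifying that, on the side with $\sa{Asrrv}$, the primitive predicate \gui{$\cdot>0$} of the signature matches the abbreviation used on the $\sa{Arc}$ side: one direction (from invertibility back to $>0$) uses only \tsbf{Aso2} applied to $xy=1>0$, while the other direction rests on \tsbf{IV}, which becomes tautological under the abbreviation but must be handled via Lemma \ref{lemArftr01} when passing through $\sa{Asrnz}$. Once this correspondence is set up, the rest of the argument is bookkeeping with Definition \ref{defi-exteseq}.
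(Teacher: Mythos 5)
Your proof is correct and follows essentially the same route as the paper: Item 1 by unfolding the definitions and using Lemma \ref{lemafrvrreduced} to absorb \Tsbf{Anz}, and Item 2 by matching the predicate $\cdot>0$ with the abbreviation via Lemma \ref{lemArftr01} (the paper cites its ingredient, Lemma \ref{lemArftr0}) and skolemising \Tsbf{FRAC} through Lemma \ref{lemAfrnzFRAC}. The paper's own proof is just this argument stated in two lines, so your version is simply a more explicit rendering of it.
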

%
\begin{proof} Item \textsl{1} is clear. We deduce Item \textsl{2} by recalling Lemma \ref{lemArftr0}.
\end{proof}
%

\paragraph{Continuous semialgebraic maps}

\smallskip 
We now take Definition \ref{defiFSAGC2} (legitimised by \thref{thParamcontFsagc0}) and extend it to real closed rings. Note that every real closed ring contains a conformal copy of $\RRa$.

We also assume that we have proved \thref{thArc3} and its corollaries.
\begin{definota} \label{defiFSAGC2+} 
Let $\gR$ be a real closed ring and  a map $ f\colon \gR^n\to \gR $.\index{continuous semialgebraic map!on a real closed ring}.
\begin{enumerate}
 
\item (Elementary case) The map $f$ is said to be \textsl{semialgebraic continuous} (in an elementary way) if there exists a semialgebraic continuous map $ g\colon \RRa^{n}\to\RRa $ expressed by a term $ t(\xn) $ of \sa{Arc} and if $f$ coincides with the map defined by this term.
 
\item (General case) The map $f$ is \textsl{semialgebraic continuous} if there exists an integer $ r\geq 0 $, elements $ y_1,\dots,y_r\in \gR $ and a map $ h\colon \gR^{r+n}\to \gR $ which belongs to the previous elementary case such that 
\[
\forall \xn\in\gR\;\; f(\xn)=h(\yr,\xn).
\]
\end{enumerate}
We denote $ \Sac_n(\gR) $ the ring of these maps (it is a real closed ring for the natural order relation).\label{Sacn} \thref{thParamcontFsagc0} shows that for a discrete real closed field $\gR$ we find the usual definition of continuous semialgebraic maps.
\end{definota}

For a comparison of $ \Sac_n(\gR) $ with $ \Sace_n(\gR) $ see the question \ref{Qu-polrootsafrvr}.

\paragraph{An example}
\begin{proposition} \label{propfsagccovrsup}
Let $\gR$ be an $f$-ring with virtual roots and let \hbox{$ f\colon \gR^n\times \gR^p\to \gR $} be a continuous semialgebraic map. The map 
\[
g\colon \gR^p\to \gR,\,\ux \mapsto \sup\nolimits_{\uz\in \ClI{0,1}^n}f(\uz,\ux)
\] 
is well-defined and continuous semialgebraic. 
\end{proposition}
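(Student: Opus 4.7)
The plan is to reduce the statement to a question about $\RRa$, to express the sup as a term of $\sa{Afrrv}$ via the Pierce–Birkhoff theorem \thref{thPBpolyroots}, and then to transfer the resulting expression back to $\gR$ by means of the formal Positivstellensatz \pstfref{Pst2}.

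By Definition \ref{defiSacem}, $f$ is given by a term of $\sa{Afrrv}(\gR)$; setting aside the finitely many constants from $\gR$ appearing in that term as parameters $\uy=(y_1,\dots,y_r)\in\gR^r$, one obtains $f(\uz,\ux)=F(\uy,\uz,\ux)$ where $F$ is a term of $\sa{Afrrv}$ with coefficients in $\ZZ$. Evaluating $F$ over $\RRa$ yields a continuous semialgebraic map $\ov F\colon\RRa^{r+n+p}\to\RRa$. By the classical theory of continuous semialgebraic maps on a discrete real closed field (\cite[Chap.~2]{BCR}), the function
\[
\ov g(\uy',\ux')\;:=\;\sup\nolimits_{\uz\in\ClI{0,1}^n}\ov F(\uy',\uz,\ux')
\]
is well defined, continuous, and semialgebraic on $\RRa^{r+p}$. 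A cylindrical algebraic decomposition of the parameter space, combined with the fact that every term of $\sa{Afrrv}$ is integral over the polynomial ring, realises $\ov g$ as a finite $\vu,\vi$-combination of algebraic branches, each integral over $\RRa[\uy,\ux]$, so that $\ov g$ itself is integral over $\RRa[\uy,\ux]$. Applying \thref{thPBpolyroots} then produces a term $t(\uy,\ux)$ of $\sa{Afrrv}$ over $\ZZ$ with $\ov g=t$ on $\RRa^{r+p}$, the extra $r$th roots in \pref{eqthPBpolyroots} being themselves polyroot maps by Item~3b of \thref{thVirtualRoots}.

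Interpreting $t$ in $\gR$, set $g(\ux):=t(\uy,\ux)$; this is continuous semialgebraic per Definition \ref{defiSacem}. The upper-bound property
\[
0\leq z_1\leq 1,\,\ldots,\,0\leq z_n\leq 1\ \vd\ t(\uy,\ux)\geq F(\uy,\uz,\ux)
\]
is a Horn rule valid in $\sa{Crcd}$ by the previous step, hence in $\sa{Afrrv}$ by the formal Positivstellensatz \pstfref{Pst2}, and therefore in $\gR$. The dual Horn rule asserting that $t(\uy,\ux)$ coincides with $F(\uy,\uz_*,\ux)$ at an explicit algebraic witness $\uz_*(\uy,\ux)$ extracted from the cylindrical decomposition transfers in the same way, yielding both the least-upper-bound condition and the well-definedness of the sup. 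The main obstacle is the Pierce–Birkhoff step: the function $\ov g$, initially defined only analytically, must be presented as a single term of $\sa{Afrrv}$, which requires controlling, uniformly in the parameters, the finite family of algebraic witnesses at which the sup is attained, so that the integrality hypothesis of \thref{thPBpolyroots} is satisfied.
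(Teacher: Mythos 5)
There is a genuine gap, and it sits exactly where you flag ``the main obstacle'': the Pierce--Birkhoff step cannot work, because the integrality hypothesis of \thref{thPBpolyroots} genuinely fails for $\ov g$. Taking a sup over the compact cube does not preserve integrality over the polynomial ring: with $n=p=1$ and the polynomial (hence integral) map $f(z,x)=zx-z^{2}(1+x^{2})$ one finds $\sup_{z\in\ClI{0,1}}f(z,x)=0$ for $x\leq 0$ and $=x^{2}/\bigl(4(1+x^{2})\bigr)$ for $x\geq 0$; this bounded continuous semialgebraic function is not integral over $\RRa[x]$ (a rational function integral over the integrally closed ring $\RRa[x]$ would be a polynomial), hence it is not expressible by any term of \sa{Afrrv}, since every such term is integral over the polynomials. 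So no control of the maximising witnesses can restore the hypothesis of \thref{thPBpolyroots}; to express the sup by a term one needs the symbol $\mathrm{Fr}$, i.e.\ terms of \sa{Arc}, and the relevant input is \thref{thArc3}, not the Pierce--Birkhoff theorem. Relatedly, your opening appeal to Definition \ref{defiSacem} is to the wrong class: the statement concerns the parametrised Definition \ref{defiFSAGC2+} (the rings $\Sac_m(\gR)$, built from \sa{Arc}-terms defined over $\RRa$); with the $\Sace$ reading the conclusion would even be false, by the same example.

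Once the intended definition is used, the argument is essentially the paper's one-line reduction: write $f(\uz,\ux)=h(\uy,\uz,\ux)$ with parameters $\uy\in\gR^r$ and $h$ given by a term over $\RRa$; over $\RRa$ the sup over $\ClI{0,1}^n$ of a continuous semialgebraic family is continuous semialgebraic in the remaining variables (classical, cf.\ the stability-by-upper-bound discussion around Definition \ref{defiFSAGC2}), hence by \thref{thArc3} it is again given by a term; evaluating the parameters in $\gR$ yields $g\in\Sac_p(\gR)$ by Definition \ref{defiFSAGC2+}. A further caveat on your transfer step: the rule asserting that $t$ agrees with $F$ ``at an explicit algebraic witness $\uz_*$'' is not available as stated, because the maximising $\uz_*$ is in general a discontinuous semialgebraic function of the parameters and is not given by a term; the least-upper-bound property must be phrased differently (your first Horn rule $t\geq F$ does transfer correctly via the formal Positivstellensatz, but the ``dual'' one does not), or absorbed, as in the paper, into the reduction to $\gR=\RRa$.
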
 
%
\begin{proof}
Given the ad hoc definition of the rings $ \Sac_{m}(\gR) $ we are immediately reduced to the case where $ \gR=\RRa $.
\end{proof}

See also the questions \ref{questArc2}.
\Subsection{Ordered fields with virtual roots}\label{subsecCo0rv}

\begin{definition} \label{defiCorv} (Compare with Definition \ref{defiAfrvr}, see Lemma \ref{lem2afrvrreduced}).
\begin{enumerate}
 
\item The dynamical theory \SA{Corv} of \textsl{ordered fields with virtual roots} is obtained as follows from the dynamical theory \Sa{Co}
of \ndsofs.\index{ordered field!with virtual roots}

\vspace{.1em} \begin{itemize}
 
\item For $ 1\leq j\leq d $ in $ \N $, we add a function symbol $ \rho_{d,j} $ of arity $d$;
 
\item As axioms, we add the inequalities described in Item \textsl{1} of \thref{thVirtualRoots}.
\end{itemize} 
The signature is therefore as follows:
\Sigt{\CoRv}{\cdot=0,\cdot\geq 0,\cdot>0\mathrel{;}\cdot+\cdot, \cdot\times\cdot,\cdot\vu\cdot,-\,\cdot,(\rho_{d,j})_{1\leq j\leq d},\mathrm{Fr}(\cdot,\cdot),0,1}\label{NOTASigCorv}

\vspace{-1em}

\item The dynamical theory \SA{Co--rv} is obtained in the same way from the theory \Sa{Co--}.
 
\item The dynamical theory \SA{Codrv} is obtained in the same way from the theory \Sa{Cod}.
\end{enumerate}
\end{definition}
%
\rem The theory \Sa{Codrv} is essentially identical to the theory obtained by adding to \Sa{Co0rv} the axiom \gui{of third excluded} \Edineq\ (see remark \ref{remCo0}).

\Subsection{Formal Positivstellensatz}\label{subsecPst3}

\begin{pstf}[formal Positivstellensatz, 3] \label{Pst3}  \index{Positivstellensatz!formal --- !for ordered fields, 3} ~
\begin{enumerate}
 
\item The theories \Sa{Codrv}, \Sa{Crcd} and \Sa{Crcdsup} are essentially identical.
 
\item 
The following dynamical theories prove the same Horn rules (written in the language of \Sa{Afrrv}).
\begin{enumerate}
 
\item The theory \Sa{Afrrv} of $f$-rings with virtual roots.
 
\item The theory \Sa{Arc} of real closed rings.
 
\item The theory \Sa{Codrv} of discrete ordered fields with virtual roots.
\end{enumerate}
\item 
The following dynamical theories prove the same Horn rules (written in the language of \Sa{Asrrv}).
\begin{enumerate}
 
\item The theory \Sa{Asrrv} of strict $f$-rings with virtual roots.
 
\item The theory \Sa{Corv} of ordered fields with virtual roots.
 
\item The theory \Sa{Codrv} of discrete ordered fields with virtual roots.
\end{enumerate}
 
\item \label{i4Pst3} \thref{thVirtualRoots} is entirely valid for the theory \sa{Asrrv} (thus also for \sa{Corv}). The same is true for the purely equational theory \sa{Afrrv} (so also for \sa{Arc}) if the points which use the predicate~$ \cdot> 0 $ are deleted or suitably reformulated \hbox{with $ \cdot\geq 0 $}. 
\end{enumerate}
\end{pstf}
%
\begin{proof} The first point is clear. Items \textsl{2} and \textsl{3} are therefore variants of Formal \pstref{Pst2} taking into account \thref{thEseqMemesfaits}. 
\\
Finally, for Item \textsl{4}, it follows from previous Items that the assertions of Items \textsl{2} and \textsl{3} of \thref{thVirtualRoots} can be written in the form of Horn rules.
\end{proof}

In classical mathematics given the general representation theorem \ref{thcolsimralg}, the formal Positivstllensätze stated so far give the following results, which can be seen in classical mathematics as characterising the dynamical theories under consideration. 
\begin{corollaryc} \label{corPst3}
On their respective signatures, the following objects are all isomorphic to sub-\sa{T}-structures of products of discrete real closed fields (considered with the predicate $ x>0 $ and the maps $ \sup $, $ \mathrm{Fr} $ and~$ \rho_{d,j} $).
\begin{itemize}
 
\item A reduced $f$-ring (theory \Sa{Afrnz}).
 
\item A reduced strict $f$-ring (theory \Sa{Asrnz}). 
 
\item A strongly real ring (theory \Sa{Aftr}). 
 
\item A local $f$-ring (theory \Sa{Co}). 
 
\item An $f$-ring with virtual roots (theory \Sa{Afrrv}). 
 
\item A real closed ring (theory \Sa{Arc}). 
 
\item A strict $f$-ring with virtual roots (theory \Sa{Asrrv}). 
 
\item An ordered field with virtual roots (theory \Sa{Corv}).
\end{itemize}

\end{corollaryc}
%

\Subsection{Quotient, localisation and gluing of real closed rings}

\begin{lemma}[quotient structure] \label{lem1Afrc} 
Let $\gA$ be a real closed ring and $ I $ a radical ideal. Then $ \gA/I $ is a real closed ring. 
\end{lemma}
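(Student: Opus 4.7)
The plan is to reduce the result to three successive observations: (1) any radical ideal $I$ of a real closed ring $\gA$ is automatically an $\ell$-ideal (solid ideal); (2) the quotient $\gA/I$ is therefore a reduced $f$-ring; (3) the virtual root maps $\rho_{d,j}$ and the function $\Fr$ descend to $\gA/I$, yielding a real closed ring structure.

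For (1), I would exploit the \tsbf{FRAC} rule together with the equality $\abs{a}^2=a^2$ (rule \tsbf{afr6b}), both valid in \sa{Arc}. First, if $a\in I$ then $\abs{a}^2=a^2\in I$, so radicality gives $\abs{a}\in I$; in particular $a^+=\tfrac12(\abs a+a)\in I$, so $I$ is stable under $|\cdot|$ and under $\vu 0$. Next, to get convexity, suppose $0\leq x\leq y$ with $y\in I$; the rule \tsbf{FRAC} furnishes a $z$ with $x^2=zy\in I$, whence $x\in\sqrt{I}=I$. A convex additive subgroup stable under $|\cdot|$ is by definition a solid ideal of the $f$-ring $\gA$ (see \paref{subsecgrlsolide}).

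For (2), the quotient of an $f$-ring by a solid ideal is an $f$-ring, the quotient map being an $f$-ring homomorphism. Reducedness of $\gA/I$ is immediate: $\bar a^{2}=0$ in $\gA/I$ gives $a^2\in I$, so $a\in\sqrt I=I$, i.e.\ $\bar a=0$. For (3), since \sa{Arc} is purely equational (Definition \ref{defiArc}), it is enough to show that the congruence $\equiv_I$ is compatible with every function symbol of the signature. Compatibility with $+,\times,-,\vu$ is given by (1)–(2). For $\rho_{d,j}$, let $(a_i)$ and $(a'_i)$ be two lifts in $\gA$ of the same tuple in $\gA/I$. The images of $\rho_{d,j}(\ldots,a_i,\ldots)$ and $\rho_{d,j}(\ldots,a'_i,\ldots)$ in $\gA/I$ both satisfy, in the reduced $f$-ring $\gA/I$, the system of large inequalities characterising the $j$-th virtual root of the common monic polynomial, because the quotient map preserves $\leq$ and polynomial evaluation. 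By Lemma \ref{lem2afrvrreduced} these two elements coincide, so $\rho_{d,j}$ descends. The same argument, using Lemma \ref{lemAfrnzFRAC} for the uniqueness of $\Fr$ in reduced $f$-rings, handles the symbol $\Fr$. The equational axioms of \sa{Arc} then transfer from $\gA$ to $\gA/I$ through the surjective morphism, endowing $\gA/I$ with the structure of a real closed ring; Lemma \ref{lemArcunique} ensures this structure is unique.

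The main obstacle is step (1): the nontrivial point is that, in a real closed ring, the \tsbf{FRAC} rule automatically upgrades any radical ideal into a convex one, and this is what makes the lemma work. Once this is observed, steps (2) and (3) are routine applications of the uniqueness of virtual roots and of $\Fr$ in reduced $f$-rings, combined with the purely equational character of \sa{Arc}.
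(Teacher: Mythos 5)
Your proof is correct and follows essentially the same approach as the paper: first show the radical ideal is solid (stability under $|\cdot|$ from $|a|^2=a^2\in I$ and radicality; convexity from \tsbf{FRAC}), then descend $\rho_{d,j}$ and $\Fr$ to the quotient via the uniqueness of Lemma \ref{lem2afrvrreduced}. You spell out a few steps the paper leaves implicit — notably the reducedness of $\gA/I$ needed to invoke \ref{lem2afrvrreduced} — but the argument is the same.
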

%
\begin{proof} Let us first show that the radical ideal $ I $ is solid. We must first show that if $ x\in I $, then $ \abs x \in I $: indeed $ {\abs x}^2=x^2\in I $. Then if $ 0\leq \abs x\leq y $ \hbox{with $ y\in I $}, we must show that $ x\in I $. Now, by \tsbf{FRAC}, $y$ divides $ \abs x^2=x^2 $, so $ x^2\in I $, then $ x\in I $. The quotient~$ \gA/I $ is therefore an $f$-ring. Next we need to see that the virtual root maps $ \rho_{d,j} $ and the map \gui{fraction} $ \mathrm{Fr} $ \gui{pass to the quotient}. Now these maps, when they exist in a reduced $f$-ring, are well-defined by the systems of inequalities they satisfy (Lemma \ref{lem2afrvrreduced}). As these inequalities pass to the quotient, everything is in order. 
\end{proof}
%

\begin{lemma}[localisation] \label{lem2Afrc}
Let $\gA$ be a real closed ring and $S$ be a monoid. Then $ S^{-1}\gA $ is a real closed ring. 
\end{lemma}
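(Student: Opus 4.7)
The plan is to promote the $f$-ring structure on $S^{-1}\gA$ constructed in Section~\ref{locafr} to a real closed ring structure, by extending the virtual root maps $\rho_{d,j}$ and the fraction map $\mathrm{Fr}$ to $S^{-1}\gA$, and then verifying that they satisfy the defining axioms of \Sa{Arc}. Since every localisation of $\gA$ at a monoid $S$ is the same as localisation at the monoid generated by the squares $\so{s^2 \mid s\in S}$, we may assume that every denominator is a square, and in particular is $\geq 0$ in the $f$-ring structure on $S^{-1}\gA$.

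For the virtual roots, the idea is to clear denominators via the change of variable formula of Item~\textsl{3m} of \thref{thVirtualRoots}. Given a monic polynomial $f(X) = X^d - \sum_{i=0}^{d-1} a_i X^i$ with $a_i \in S^{-1}\gA$, write $a_i = b_i/s_i$ with $s_i \in S$ and $s_i \geq 0$, and choose $s \in S$ with $s \geq 0$ such that $s^{d-i} a_i \in \gA$ for all~$i$ (for instance a suitable power of $s_0 s_1\cdots s_{d-1}$). Then $g(X) := s^d f(X/s) = X^d - \sum_{i=0}^{d-1} s^{d-i} a_i X^i$ is a monic polynomial with coefficients in $\gA$, so $\rho_{d,j}(g) \in \gA$ is already defined. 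Setting $\rho_{d,j}(f) := \rho_{d,j}(g)/s$ in $S^{-1}\gA$ is forced by Item~\textsl{3m}. Independence of this definition from the choices of representatives and of $s$ follows from Lemma~\ref{lem2afrvrreduced}: both candidates satisfy the system of large inequalities characterising the virtual roots in the reduced $f$-ring~$S^{-1}\gA$, which admits at most one solution.

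Similarly, for the fraction map, set $\mathrm{Fr}(\alpha/s,\,\beta/t) := \mathrm{Fr}(\alpha t,\, \beta s)/(st)$ for $s,t \in S$ with $s,t \geq 0$. Independence from the representatives and compatibility with the axioms \Tsbf{fr1} and \Tsbf{fr2} follow from the uniqueness statement of Lemma~\ref{lemUniqFRAC}, applied in $S^{-1}\gA$: both sides of any candidate equality solve the same defining system of relations in the reduced $f$-ring $S^{-1}\gA$, so they coincide.

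Finally, to verify that $S^{-1}\gA$ is a model of \Sa{Arc}, we note that all the axioms of the theory are either purely equational identities (which pass immediately to any localisation, since these are preserved by any ring morphism between $f$-rings and the localisation map sends laws to laws) or systems of large inequalities characterising $\rho_{d,j}$ and $\mathrm{Fr}$ (which hold by construction). The axiom \Tsbf{vrsup} and the axioms \Tsbf{fr1}, \Tsbf{fr2} hold after clearing denominators because they hold in $\gA$ for the cleared polynomial $g$, and then divide out correctly in $S^{-1}\gA$. The main obstacle is verifying that everything is genuinely well-defined, but this reduces each time to the uniqueness of the virtual root data inside a reduced $f$-ring, provided by Lemmas~\ref{lem2afrvrreduced} and~\ref{lemUniqFRAC}, exactly as in the proof of Lemma~\ref{lem1Afrc}.
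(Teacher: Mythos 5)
Your proposal is correct and follows essentially the same route as the paper's proof: clear denominators via the change of variable $Y = sX$ with $s \in S^+$, transport the virtual roots by $\rho_{d,j}(f) := \rho_{d,j}(g)/s$, verify the defining inequalities, and invoke the uniqueness given by Lemma~\ref{lem2afrvrreduced} (and Lemma~\ref{lemUniqFRAC} for $\mathrm{Fr}$). You handle general degree $d$ and give an explicit formula for $\mathrm{Fr}$ where the paper works the degree-$4$ case and then says ``similar reasoning works,'' but the key ideas and the lemmas invoked coincide.
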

%
%
\begin{proof} We already know that $ S^{-1}\gA $ has a $\vu$ law which makes it an $f$-ring. Let's see what happens to the virtual roots. Let's take Example \ref{exavr} with a polynomial $ f(X)=X^4-(\frac{a_3}{s}X^3+\frac{a_2}{s}X^2+\frac{a_1}{s}X+\frac{a_0}{s}) $ with $ a_i\in\gA $ and $ s\in S^+ $. In $ S^{-1}\gA $ with $ Y=sX $ we have 
\[
s^4f(X)=Y^4-(a_3Y^3+sa_2Y^2+s^2a_1Y+s^3a_0)=Y^4-(b_3Y^3+b_2Y^2+b_1Y+b_0)=g(Y)
\] 
and therefore also $ s^4g(\frac Y s)=f(X) $. 
Consider the virtual roots $ \rho_{i,j} $ for the monic polynomial~$g$ of $ \gA[Y] $ with $ (i,j) $ equal to $ (1,1) $, $ (2,1) $, $ (2,2) $, $ (3,3) $, $ (3,2) $, $ (4,3) $, 
and finally the $ \rho'_{i,j}=\frac{\rho_{i,j}}s $. 
We see that the inequalities in Example \ref{exavr}, just as they are satisfied for the $ \rho_{i,j} $ with respect to the polynomial $g$ in $ \gA[Y] $, are ipso facto satisfied for the $ \rho'_{i,j} $ with respect to the polynomial $f$ in $ S^{-1}\gA[X] $. These inequalities completely characterise the virtual roots when they exist (Lemma \ref{lem2afrvrreduced}).
\\
A similar reasoning works for the map $ \mathrm{Fr}(\cdot,\cdot) $.
\end{proof}
%

\begin{plcc}[concrete gluing of real closed rings]\label{plcc.arc} ~
\\
Let $ S_1 $, $ \dots $, $ S_n $ be comaximal monoids of a ring $\gA$. Let $ \gA_i $ denote $ \gA_{S_i} $, $ \gA_{ij} $ denote $ \gA_{S_iS_j} $, and assume that a structure of type \Sa{Arc} is given on each $ \gA_i $. It is further assumed that the images in $ \gA_{ij} $ of the laws of $ \gA_i $ and $ \gA_j $ coincide. Then there exists a unique structure of real closed ring on $\gA$ which induces by localisation in each $ S_i $ the structure defined on $ \gA_i $. This real closed ring is identified with the projective limit of the diagram
\[
\big(( \gA_i)_{i<j\in\lrbn},( \gA_{ij})_{i<j\in\lrbn};(\alpha_{ij})_{i< j\in\lrbn}\big),
\]
where $ \alpha_{ij} $ are localisation morphisms, in the category of real closed rings.
\end{plcc}
%
\begin{proof}
We copy, mutatis mutandis, the proof of the concrete local-global principle~\ref{plcc.frings} for $f$-rings.
\end{proof}

\rems 1) This implies that the notion of a real closed scheme is well-defined.

\noindent 2) The analogous concrete local-global principles, with the same proof, are valid for reduced $f$-rings, for strongly real rings, and for $f$-rings with virtual roots.
\eoe

\Subsection{Comparison with the definition in classical mathematics}

References: \cite{Sch84,PN2002,Tre2007}.

\smallskip The structure of \textsl{real closed ring} is defined by N. Schwartz in a very abstract way in his Phd~\cite[Schwartz, 1984]{Schthesis}. An axiomatisation as a coherent theory was proposed in \cite[Prestel \& Schwartz, 2002]{PN2002} (see Definition \ref{defiPrScArc} and Proposition \ref{propPrScArc}). 

The aim of N. Schwartz was to give an abstract description of the rings of continuous semialgebraic maps on semialgebraic closed subsets for a fixed real closed field $\gR$, and to define abstract \gui{real closed spaces}.

\paragraph{An axiomatic of Niels Schwartz}~\label{subsecAxScw}

\smallskip Here is the definition of real closed rings in classical mathematics given in \cite[Schwartz, 1986]{Sch84}.

\begin{definitionc} \label{defiArcclama} 
 A ring \textsl{real closed} is a reduced ring $\gA$ satisfying the following properties.
\begin{enumerate}
 
\item The set of squares of $\gA$ is the set of $\geq 0$ elements of a partial order which makes $\gA$ an $f$-ring.
 
\item If $0\leq a\leq b$, there exists $z$ such that $zb=a^2$ (\textsl{convexity axiom}).
 
\item For any prime ideal $\fp$, the residual ring $\gA/\fp$ is integrally closed and its field of fractions is a real closed field.
\end{enumerate}
\end{definitionc}


\begin{propositionc} \label{propAfrc} 
In classical mathematics, Definitions \ref{defiArc} and \ref{defiArcclama} are equivalent. 
\end{propositionc}
%
\begin{proof}
\textsl{Direct.} For a real closed ring $\gA$ of Definition \ref{defiArcclama}, the virtual root maps are well-defined, as we know that all continuous semialgebraic maps defined on $\RRa$ are defined on $\gA$. The same applies to the map \gui{fraction} $\mathrm{Fr}$. 

\smallskip\noindent \textsl{Reciprocal.} For a real closed ring $\gA$ of Definition \ref{defiArc}, we must show that Item \textsl{3} of Definition~\ref{defiArcclama} is satisfied. Given a prime ideal $\fp$, the residual ring $\gA/\fp$ has no zerodivisors  and is therefore linearly ordered (Lemma \ref{lemAfrsdz}). It is also a real closed ring by Lemma \ref{lem1Afrc}. Lemma \ref{lemAitorv} tells us that $\gA/\fp$ is integrally closed and that its field of fractions is a real closed field.
\end{proof}
%

\paragraph{The axiomatics of Prestel-Schwartz}~\label{subsecAxPrSc}

\smallskip\noindent The article \cite[Prestel \& Schwartz, 2002]{PN2002} shows in classical mathematics that the real closed ring structure of Definition \ref{defiArcclama} is described by a coherent theory. The existential axioms proposed by the authors to replace Item \textsl{3} of \ref{defiArcclama} are very sophisticated and the proof is also an astonishing tour de force.
\begin{definition} \label{defiPrScArc} \textsl{(Prestel-Schwartz real closed rings)}
A commutative ring is said to be real closed if it satisfies the following axioms. 
\begin{itemize}
 
\item [i-iv)] The commutative ring $\gA$ is reduced, the elements $\geq 0$ are exactly the squares and the order relation makes $\gA$ a   convex $f$-ring (axiom \Tsbf{CVX}) 
 
\item [v)] For each $ d\geq 1 $, let $ f(x)=x^{2d+1}+\sum_{k=0}^{2d}a_kx^k $, $ \delta=\mathrm{discr}_x(P) $ its discriminant, and $ g(x)=x^{2d+1}+\sum_{k=0}^{2d}\delta^{2(2d+1-k)}a_kx^k $, we pose the axiom 
\[
\vd \Exists z \;g(z)=0
\]
 
\item [vi)] For each $ d\geq 1 $ we pose the axiom
\[
\,\, x^d+\som_{k=0}^{d-1}a_kx^ky^{d-k}=0\vd \Exists z_1,\dots,z_d \;y(x-z_1y)\cdots(x-z_dy)=0
\]
\end{itemize}
\end{definition}
\begin{proposition} \label{propPrScArc}
In the theory \sa{Arc} the axioms of Definition \ref{defiPrScArc} are valid \rdys.
\end{proposition}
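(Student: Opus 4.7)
The plan is as follows. Axioms (i)--(iv) are almost direct from the definitions: reducedness is Lemma~\ref{lemafrvrreduced}; the $f$-ring structure with $x^2 \geq 0$ is built into the signature of $\sa{Arc}$ (rule \tsbf{ao1}); conversely, for any $a \geq 0$ the virtual root $\sigma(a) := \rho_{2,2}(0,-a)$ satisfies $\sigma(a)^2 = a^+ = a$ by Item~\textsl{3b} of \thref{thVirtualRoots}, so every nonnegative element is a square; and the convexity axiom \tsbf{CVX} is precisely rule \tsbf{fr1} with witness $z = \mathrm{Fr}(a,b)$, since $0 \leq a \leq b$ gives $\abs a \vi \abs b = a$ and $\abs b = b$, hence $\mathrm{Fr}(a,b) \cdot b = a^2$.

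For axiom (vi) I would skolemize by $z_i := \rho_{d,i}(a_{d-1}, \ldots, a_0)$, the virtual roots of $P(t) = t^d + \sum_k a_k t^k$, and verify the Horn rule
\[
h(x,y) = 0 \vd y \prod_i (x - z_i y) = 0
\]
in $\sa{Arc}$. By the formal Positivstellensatz~\pstfref{Pst3}, it suffices to check validity in $\sa{Codrv}$. In a discrete real closed field with virtual roots one argues by cases: if $y$ is invertible, then $h(x,y) = y^d P(x/y)$, so $P(x/y) = 0$, and Item~\textsl{3f} of \thref{thVirtualRoots} gives $\widetilde{P}(x/y) = \prod_i (x/y - z_i) = 0$; multiplying by $y^{d+1}$ yields the conclusion. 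If $y = 0$, then $h(x,y) = x^d = 0$ forces $x = 0$ in a discrete field, and the conclusion is trivial.

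The genuinely delicate axiom is (v). Its role is an existential statement, and the peculiar rescaling of $f$ by $\delta^2$ is exactly what makes a \emph{continuous} semialgebraic selection of a real root of $g$ possible: one computes $g(\delta^2 y) = \delta^{2(2d+1)} f(y)$, so when $\delta$ is invertible the real roots of $g$ are $\delta^2$ times those of $f$, and as $\delta \to 0$ they collapse to $0$, matching the unique root of the limit polynomial $g = x^{2d+1}$. I would realize the existent $z$ as a term of $\sa{Arc}$ built from the virtual roots $\rho_{2d+1,j}(g)$ of $g$ itself, combined via $\vee$, $\vi$ and $\mathrm{Fr}$. Item~\textsl{3i} of \thref{thVirtualRoots} gives, as a Horn identity valid in \sa{Arc}, $\prod_j g(\rho_{2d+1,j}(g)) = 0$, which encodes the intermediate value property in a reduced $f$-ring; the point is then to exhibit an explicit sup/inf/fraction combination of these $2d+1$ virtual roots whose image under $g$ is \emph{identically} zero, and to verify this identity via \pstfref{Pst3} by reducing it to $\sa{Codrv}$, where the classical IVT is immediate.

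The main obstacle I anticipate is precisely this final construction of the term $z$ in (v). While the existence of a continuous semialgebraic selector of a root of $g$ is classical (and is in fact the motivation behind the Prestel--Schwartz rescaling), one must show that such a selector lives within the signature of $\sa{Arc}$, i.e.\ is expressible using only the polynomial operations, $\vee$, $-$, $\mathrm{Fr}$ and the virtual root symbols $\rho_{d,j}$. This is a Pierce--Birkhoff style question (compare \thref{thPBpolyroots}) whose positive answer hinges on the integrality of the selector over $\ZZ[a_0, \ldots, a_{2d}]$ together with the availability of fractions through $\mathrm{Fr}$; once one has such a term, the identity $g(z) = 0$ follows by reduction to $\sa{Codrv}$ through the formal Positivstellensatz.
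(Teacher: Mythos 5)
Your overall structure matches the paper's: skolemize axiom~\textsl{vi)} by virtual roots and reduce axiom~\textsl{v)} to producing a term of $\sa{Arc}$ via a continuous integral selector and the Pierce--Birkhoff theorem~\ref{thPBpolyroots}. Two things are worth flagging.

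For axiom~\textsl{vi)} your route (case analysis on $y$ in a discrete real closed field, then transfer via the formal Positivstellensatz~\ref{Pst3}) is correct, but the paper instead proves the factorisation directly as an identity in $\sa{Arc}$ without any case split, using Item~\textsl{3m} (change of variable) of \thref{thVirtualRoots} to get $\prod_j(x-y\rho_{d,j}(f))=\prod_j(x-\rho_{d,j}(g))$ and then Item~\textsl{3f} for $g$. Your version buys nothing extra and is arguably less in the spirit of a purely equational theory, but it is valid. (Also, the exponent in the final multiplication is $y^d$, or $y^{d+1}$ if you keep the leading $y$ of the Prestel--Schwartz statement; this is cosmetic. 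And the square-root witness in part~\textsl{(i)--(iv)} should be $\rho_{2,2}(0,a)$, corresponding to $X^2-a$, not $\rho_{2,2}(0,-a)$.)

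For axiom~\textsl{v)} you have correctly identified both the Pierce--Birkhoff ingredient and the mechanism (the $\delta^2$-rescaling collapses the roots of $g$ to $0$ as $\delta\to 0$), but you explicitly leave the crucial step --- existence of a global continuous semialgebraic selector over $\RRa$ --- as an anticipated obstacle. This is precisely what the paper supplies: on each connected component of $\{\delta\neq 0\}$ the real zeros of $f$ are simple and vary continuously, so the largest real zero of $g$ (which is $\delta^2$ times the largest real zero of $f$, and there is at least one since the degree is odd) is continuous there; near the boundary $\{\delta=0\}$ all these zeros tend to $0$, so the pieces glue to a global continuous semialgebraic selector, which is integral over $\ZZ[\ua]$ (it is a zero of the monic $g$) and hence, by \thref{thPBpolyroots}, is a term of $\sa{Arc}$. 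Once one has a term $t$ with the Horn rule $g(t)=0$ valid in $\sa{Codrv}$, \pstfref{Pst3} transfers it to $\sa{Arc}$, and the existential axiom is established with $t$ as witness. So the gap in your proposal is exactly the connected-component continuity/gluing argument; your attempted alternative (guessing an explicit sup/inf/$\mathrm{Fr}$ combination of the virtual roots $\rho_{2d+1,j}(g)$) is not carried through and would be substantially harder to realize directly.
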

%
\begin{proof}
Recall that \thref{thVirtualRoots} is fully valid in the theory \sa{Arc} (Item~\textsl{\ref{i4Pst3}} of \ref{Pst3}).
 
\noindent Let's look at the axiom \textsl{vi)}. Let $ f=x^d+\som_{k=0}^{d-1}a_kx^k $ and $ g=x^d+\som_{k=0}^{d-1}a_kx^ky^{d-k} $. 
\\
We denote $ \wi f=\prod_{1\leq j\leq d}(x-\rho_{d,j}(f)) $ and $ \wi g=\prod_{1\leq j\leq d}(x-\rho_{d,j}(g)) $. Item \textsl{3m} of \thref{thVirtualRoots} gives the equality 
\[
\prod\nolimits_{1\leq j\leq d}(x-y\rho_{d,j}(f))=\prod\nolimits_{1\leq j\leq d}(x-\rho_{d,j}(g)).
\]
Moreover, Item \textsl{3f} of \thref{thVirtualRoots} for the polynomial $g$ gives
\[
\,\, x^d+\som_{k=0}^{d-1}a_kx^ky^{d-k}=0\vd (x-\rho_{d,1}(g))\cdots(x-\rho_{d,d}(g))=0.
\]
We therefore obtain in the \sa{Arc} theory, by taking $ z_k=\rho_{d,k}(f) $, the valid rule 
\[
\,\, x^d+\som_{k=0}^{d-1}a_kx^ky^{d-k}=0\vd \;(x-z_1y)\cdots(x-z_dy)=0.
\]
\noindent Let's look at the axiom \textsl{v)}. We will show that the element $z$ whose existence is asserted can be chosen as a continuous semialgebraic map of the parameters $ a_k $. Since this map is cancelled by the monic polynomial $ Q $ we then conclude by the theorem \gui{ à la Pierce-Birkhoff}~\ref{thPBpolyroots}. Given Formal Positivstellensatz \ref{Pst3} (Item \textsl{2}) we need only prove the validity of the rule in the theory \sa{Codrv}. Let us therefore consider a discrete real closed field and, in the parameter space, a connected component of the open $ \so{delta\neq 0} $. On this connected component, the real zeros of $f$ are simple (there is at least one because the degree is odd) and vary continuously as a function of the parameters. Those of $g$ are simply multiplied by $ \delta^2 $. So on this connected component we have the element $z$ sought as a continuous semialgebraic function of the parameters by choosing the largest of the real zeros. As we approach an edge of a related component, these zeros tend towards $0$ (they are zeros of $f$ multiplied by $ \delta^2 $). So these continuous semialgebraic maps join together to form a global continuous semialgebraic map. 
\end{proof}

In classical mathematics, the reciprocal implication is demonstrated: the Prestel-Schwartz axioms imply the existence of virtual roots (because they are continuous semialgebraic maps). This gives the equivalence in classical mathematics of our axiomatics and that of Prestel-Schwartz. 

\paragraph{The axiomatics of Marcus Tressl}~\label{subsecAxTr}

\smallskip\noindent 
A more elementary version, similar to the one we propose, for the theory of real closed rings can be found in \cite[Tressl, 2007]{Tre2007} (see also \cite{Sch84,Sch84b,Sch97,Sch89}). In this paper, a real closed ring is an $f$-ring $\gR$ on which are given all continuous semialgebraic maps defined on $\RRa$, and for which all algebraic identities linking these maps on $\RRa$ are satisfied in $\gR$. 

A good analysis of the classical mathematical articles on real closed rings should allow us to understand why it is enough to add the fractions allowed by the rule \Tsbf{FRAC} to an $f$-ring with virtual roots to be able to capture all the continuous semialgebraic maps $ {\RRa}^m\to\RRa $. This is the subject of the following concrete results, which are valid in classical mathematics, but for which we would like a constructive proof. See in particular the question \ref{questArc3}.

Recall that according to the finiteness theorem (\cite[Theorem~2.7. 1]{BCR}) the graph $ G_f=sotq{(\ux,y)}{\ux\in\gR^n, y=f(\ux)} $ of a continuous semialgebraic map $ f\colon \RRa^n\to\RRa $ is a semialgebraic closed subset of~$ \RRa^{n+1} $ which can be described as the zero set a \textsl{semipolynomial map} $ F: \RRa^{n+1}\to \RRa $, \textsl{i. e.} a map written in the form
\[
\sup\nolimits_i\,(\inf\nolimits_{1\leq j\leq k_i}\,p_{ij}) \quad \hbox{ where }p_{ij}\in\RRa[\xn,y]
\]
We can decide whether such a graph is that of a continuous semialgebraic map. The following theorem means that in such a case we can prove the existence of $y$ depending on $x_i$ directly in the purely equational theory \sa{Arc}. 

\begin{theorem} \label{thArc3} 
Any continuous semialgebraic map $ \RRa^n\to\RRa $ can be defined by a term of the theory~\sa{Arc}.
\end{theorem}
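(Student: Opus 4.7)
The plan is to reduce \thref{thArc3} to \thref{thPBpolyroots}, the Pierce–Birkhoff-type theorem for \emph{integral} continuous semialgebraic maps. If $g : \RRa^n \to \RRa$ is continuous semialgebraic and integral over $\RRa[\ux]$, then by \thref{thPBpolyroots} $g$ is a sup-inf combination of polyroot maps $\rho_{d,j}(P)$ and of radicals $\sqrt[r]{R^+(1+\sum x_i^2)^s}$. Each $\rho_{d,j}$ is already a primitive function symbol of \sa{Arc}; and each $\sqrt[r]{a}$ with $a \ge 0$ equals $\rho_{r,r}(X^r - a)$ by Item 3b of \thref{thVirtualRoots}. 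Hence every integral continuous semialgebraic map is realised by a term of \sa{Arc}.

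For an arbitrary $f : \RRa^n \to \RRa$, Lemma \ref{factfsagcLoja} provides $c \in \RRa$ and integers $k, \ell \ge 1$ with $|f(\ux)|^\ell \le |c|(1+\|\ux\|)^k$. Using $(1+\|\ux\|)^k \le 2^k(1+\|\ux\|^2)^k$ and splitting according to whether $|f| \le 1$ or $|f| \ge 1$, one extracts a strictly positive polynomial $p \in \RRa[\ux]$, with $p \ge 1$ everywhere and $|f| \le p$ throughout $\RRa^n$. The function $w = f/p$ is then continuous, semialgebraic, and bounded by $1$. A bounded continuous semialgebraic function is integral over $\RRa[\ux]$: starting from a relation $a_d(\ux) w^d + \cdots + a_0(\ux) = 0$ with $a_d \not\equiv 0$, multiplying through by $a_d^{d-1}$ and setting $g = a_d w$ turns the identity into a monic polynomial in $g$, and boundedness of $w$ together with continuity rules out a contribution from the vanishing locus of $a_d$. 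The first paragraph then provides a term $\tau_w \in \sa{Arc}$ with $w = \tau_w(\ux)$, and $f = p \cdot \tau_w$ is the desired term for $f$.

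The main obstacle is the constructive status of \thref{thPBpolyroots} itself, whose existing proof rests on cylindrical algebraic decomposition and Łojasiewicz-type inequalities; producing a fully constructive version would essentially amount to answering Question \ref{questthParamcontFsagc0}. A secondary but more tractable issue is to pin down constructively the fact that a bounded continuous semialgebraic map is integral over $\RRa[\ux]$: over the discrete field $\RRa$ the sign test is available, so the normalisation of the minimal polynomial sketched above goes through, but the argument must be executed carefully enough to be reusable inside the general theory \sa{Arc}, where no sign test is available on the ambient ring. Granting these two classical ingredients, the reduction to the integral case and the final assembly $f = p \cdot \tau_w$ are straightforward.
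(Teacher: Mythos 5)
There is a genuine gap, and it sits at the centre of your reduction: the claim that \emph{a bounded continuous semialgebraic map is integral over $\RRa[\ux]$} is false. The map $x\mapsto 1/(1+x^2)$ is continuous, semialgebraic and bounded by $1$, yet it is not integral over $\RRa[x]$: it lies in the fraction field of $\RRa[x]$, which is integrally closed, so integrality would force it to be a polynomial. (For this example your polynomial $p$ can be taken to be $1$, so the passage from $f$ to $w=f/p$ changes nothing.) Your attempted repair does not close the gap either: multiplying the relation $a_d w^d+\cdots+a_0=0$ by $a_d^{d-1}$ only shows that $g=a_d w$ is integral over $\RRa[\ux]$, so Theorem \ref{thPBpolyroots} gives a term for $g$, not for $w$; recovering $w$ from $g$ means dividing by $a_d$, whose zero set is in general nonempty, and justifying that this quotient is again given by a term is precisely where the function symbol $\Fr$ and the rule \Tsbf{FRAC} must enter. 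In other words, the whole point of Theorem \ref{thArc3} is that virtual roots (which, via \thref{thPBpolyroots}, capture exactly the \emph{integral} continuous semialgebraic maps) are not enough, and that adding the fractions permitted by \Tsbf{FRAC} suffices to reach all continuous semialgebraic maps; your argument assumes away exactly this non-integral part.

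For comparison: the paper does not prove Theorem \ref{thArc3} at all. It records it as a statement valid in classical mathematics (pointing to the literature on real closed rings à la Schwartz and Tressl) and explicitly leaves a constructive proof as an open problem (Question \ref{questArc3}, repeated in the conclusion). So even a correct reduction here would be new material relative to the paper; but as written, the reduction fails at the integrality step, and the remaining honest caveat you raise about the constructive status of \thref{thPBpolyroots} is secondary to that error. A viable strategy would have to analyse how an arbitrary continuous semialgebraic map decomposes as integral pieces combined with quotients controlled à la {\L}ojasiewicz, so that each division is certified by \Tsbf{FRAC}/$\Fr$ — which is essentially the analysis the paper says is still missing.
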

\begin{corollary} \label{corthArc3}
The axiomatisation proposed in \ref{defiArc} for real closed rings is equivalent to that proposed by Tressl \cite{Tre2007}. 
\end{corollary}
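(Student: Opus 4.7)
The plan is to derive both implications from Theorem \ref{thArc3} combined with the formal Positivstellensatz \ref{Pst3}. Tressl's axiomatisation equips $\gR$ with \emph{all} continuous semialgebraic maps $\RRa^n\to\RRa$ as operations, and requires that every algebraic identity valid among these maps on $\RRa$ remain valid in $\gR$. The theory \sa{Arc}, by contrast, retains only finitely many primitive function symbols (ring operations, $\vu$, the $\rho_{d,j}$ and $\mathrm{Fr}$). So the task is to bridge these two presentations.

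First I would handle the easy direction: any ring $\gR$ satisfying Tressl's axioms is a model of \sa{Arc} in the sense of Definition \ref{defiArc}. The virtual root maps $\rho_{d,j}\colon\RRa^d\to\RRa$ are continuous semialgebraic by Proposition \ref{prdfVirtualRoots} and the uniform continuity clause of Theorem \ref{thVirtualRoots}; the map $\mathrm{Fr}$ is continuous semialgebraic by the convexity axiom \tsbf{FRAC}. They are thus available in any Tressl real closed ring, and the defining identities of \sa{Arc} (the inequalities of Item~\textsl{1} of Theorem~\ref{thVirtualRoots}, together with \tsbf{vrsup}, \tsbf{fr1}, \tsbf{fr2}) are identities between continuous semialgebraic maps that are valid on $\RRa$; Tressl's axiom therefore forces them to hold in $\gR$.

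For the converse, let $\gR$ be a real closed ring in the sense of \ref{defiArc}. Given any continuous semialgebraic map $f\colon\RRa^n\to\RRa$, Theorem~\ref{thArc3} furnishes a term $t_f(x_1,\dots,x_n)$ of the language of \sa{Arc} defining $f$ on $\RRa$. I would define the interpretation of $f$ on $\gR^n$ as the evaluation of $t_f$. The critical point is that this definition must not depend on the choice of $t_f$: if $t_f$ and $t'_f$ both define $f$ on $\RRa$, then the equality $t_f=t'_f$ is a Horn rule of \sa{Arc} that is valid in \sa{Codrv} (since it holds on the discrete real closed field $\RRa$, hence on every discrete real closed field by Tarski's quantifier elimination, Theorem~\ref{thTarski}). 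By the formal Positivstellensatz \ref{Pst3}(Item~\textsl{2}), it is then valid in \sa{Arc}, hence in $\gR$. The same argument shows that every algebraic identity between continuous semialgebraic maps valid on $\RRa$ translates, via their term representatives, into a Horn rule valid in \sa{Codrv}, and therefore holds in $\gR$ via \ref{Pst3}.

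The main obstacle is the well-definedness step just described: without Theorem~\ref{thArc3} and the formal Positivstellensatz \ref{Pst3}, there is no a priori reason that two term-representatives of the same semialgebraic map yield the same function on $\gR$, nor that the full tower of identities required by Tressl's axiomatisation is captured by the finite signature of \sa{Arc}. Once Theorem~\ref{thArc3} is granted, however, the formal Positivstellensatz does all the heavy lifting, transferring identities from $\RRa$ to every model of \sa{Arc} as Horn rules. The equivalence of the two axiomatisations is then immediate.
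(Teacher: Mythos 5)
Your argument is correct and follows exactly the route the paper intends: the corollary is left as an immediate consequence of Theorem \ref{thArc3}, the point being precisely what you spell out — term-definability over $\RRa$ of every continuous semialgebraic map, plus transfer of the relevant identities (Horn rules valid on $\RRa$, hence in \sa{Codrv}) to \sa{Arc} and its models via the formal Positivstellensatz \ref{Pst3}, with the easy direction coming from the fact that the virtual roots and $\mathrm{Fr}$ are themselves continuous semialgebraic maps whose \sa{Arc}-axioms are identities valid on $\RRa$. The only cosmetic caveat is that \ref{Pst3} is stated for rules in the language of \sa{Afrrv}, so occurrences of $\mathrm{Fr}$ in your terms should first be eliminated in favour of \tsbf{FRAC} as in Remark \ref{remPst2}; this changes nothing essential.
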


\begin{corollary} \label{cor2thArc3}
Let $\gR$ be a real closed ring. Any continuous semialgebraic map $ \gR^n\to\gR $ (Definition \ref{defiFSAGC2+}) is defined by a term of $ \sa{Arc}(\gR) $ with $ n $ free variables. 
\end{corollary}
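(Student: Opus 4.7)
The plan is to unpack Definition \ref{defiFSAGC2+} and observe that each of the two cases given there directly exhibits $f$ as defined by a term of $\sa{Arc}(\gR)$. All the real content lies upstream, in \thref{thArc3}; the corollary itself is essentially a bookkeeping step about substitution of constants into terms.

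First I would dispose of the elementary case. By Definition \ref{defiFSAGC2+}, an elementary continuous semialgebraic map $f\colon \gR^n\to\gR$ is by construction the map on $\gR^n$ defined by a term $t(x_1,\ldots,x_n)$ of \sa{Arc} which comes from a continuous semialgebraic map $\RRa^n\to\RRa$. Since every term of \sa{Arc} is a fortiori a term of $\sa{Arc}(\gR)$ (the latter is obtained from the former by adjoining the elements of $\gR$ as new constants), $t$ is already a term of $\sa{Arc}(\gR)$ with $n$ free variables, and the conclusion is immediate.

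Next, the general case. Given $f\colon\gR^n\to\gR$ continuous semialgebraic, Definition \ref{defiFSAGC2+} supplies an integer $r\geq 0$, elements $y_1,\ldots,y_r\in\gR$ and an elementary continuous semialgebraic map $h\colon\gR^{r+n}\to\gR$ with $f(x_1,\ldots,x_n)=h(y_1,\ldots,y_r,x_1,\ldots,x_n)$. By the elementary case (resting on \thref{thArc3}), there is a term $s(u_1,\ldots,u_r,v_1,\ldots,v_n)$ of \sa{Arc} defining $h$ on $\gR^{r+n}$. Substituting the constants $y_i\in\gR$ for the free variables $u_i$ — which is legal because the $y_i$ are, by the very construction of $\sa{Arc}(\gR)$, available as closed terms in the extended language — produces a term
\[
t(x_1,\ldots,x_n)\;:=\;s(y_1,\ldots,y_r,x_1,\ldots,x_n)
\]
of $\sa{Arc}(\gR)$ whose only free variables are $x_1,\ldots,x_n$. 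By the substitution rule for terms in a dynamic algebraic structure, the map on $\gR^n$ defined by $t$ coincides with $(x_1,\ldots,x_n)\mapsto h(y_1,\ldots,y_r,x_1,\ldots,x_n)=f(x_1,\ldots,x_n)$, as required.

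The main (and only) obstacle is therefore \thref{thArc3}: once that theorem is granted, nothing in the present statement requires genuine work. One should nevertheless verify carefully that the substitution in the final step is legitimate, i.e.\ that each $y_i\in\gR$ may be used as a constant of $\sa{Arc}(\gR)$, and that the term thus obtained indeed has the intended interpretation as a map $\gR^n\to\gR$; both points are standard consequences of Definition \ref{defiSAD} and the rule \gui{concrete existence implies formal existence} recalled in Chapter \ref{sectgmq}.
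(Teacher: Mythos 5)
Your proposal is correct and follows the same route the paper intends: the corollary is an immediate consequence of Theorem \ref{thArc3} together with Definition \ref{defiFSAGC2+}, obtained by taking the term of \sa{Arc} defining the elementary map $h$ and substituting the parameters $y_1,\dots,y_r\in\gR$, which are available as constants of the dynamic algebraic structure $\sa{Arc}(\gR)$. The only small remark is that in the elementary case the existence of the defining term is built directly into the definition, so \thref{thArc3} serves to legitimise the definition rather than being invoked in the substitution step itself.
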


When the axiom \Tsbf{OTF} is added, \thref{thArc3} gives the following corollary.
\begin{corollary} \label{corCrc1Corv} 
The theories \Sa{Crc1} and \Sa{Corv} are essentially identical.
 \end{corollary}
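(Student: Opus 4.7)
The plan is to reduce the statement to \thref{thArc3}, whose conclusion provides the dictionary between the two theories. The key preliminary observation is that, by Item~3 of Lemma~\ref{lemArftr} and Lemma~\ref{lemArcbis}, the theory \Sa{Corv} is essentially identical to the purely equational theory \sa{Arc} of real closed rings with the locality axiom \Tsbf{OTF} (equivalently \Tsbf{AFRL}) adjoined. Similarly, \Sa{Crc1} is essentially the same as \sa{Arc}\,+\,\Tsbf{OTF} augmented by one function symbol $\fsac_F$ with its graph axiom \Tsbf{Df}$_F$ and its continuity axiom \Tsbf{Cont}$_F$ for each continuous semialgebraic map $F\colon \RRa^n\to\RRa$. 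The task thus reduces to showing that these extra function symbols add nothing beyond abbreviations.

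For the direction ``$\Sa{Corv}\subseteq \Sa{Crc1}$'', I would observe that every virtual root map $\rho_{d,j}\colon \RRa^d\to\RRa$ is continuous semialgebraic (this is precisely the property for which virtual roots were designed, cf.\ \thref{thVirtualRoots} and Lemma~\ref{lemBasicVirtualRoots}), and likewise $\mathrm{Fr}\colon \RRa^2\to\RRa$ is continuous semialgebraic. So both appear among the function symbols $\fsac_F$ of \Sa{Crc1}, and the defining inequalities of virtual roots (Item~1 of \thref{thVirtualRoots}) are valid identities on $\RRa$, hence lie among the axioms of \Sa{Crc1} by its very construction.

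For the converse direction, given any continuous semialgebraic map $F\colon\RRa^n\to\RRa$, \thref{thArc3} provides a term $t_F(\xn)$ of \sa{Arc} that defines $F$ on $\RRa$. Introducing $\fsac_F(\ux):=t_F(\ux)$ is then an essentially identical extension in the sense of Definition~\ref{defi-exteseq} (addition of an abbreviation). It remains to verify that the axioms \Tsbf{Df}$_F$ and \Tsbf{Cont}$_F$, once translated via $\fsac_F \leftrightarrow t_F$, are valid rules of \Sa{Corv}. These are Horn rules which hold in every discrete real closed field, so by Item~3 of \pstref{Pst3} they are provable in \Sa{Corv}. Since both theories have the single field sort and we have exhibited mutual translations that respect all operations, they are essentially identical.

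The main obstacle is that the entire argument rests on \thref{thArc3}, whose proof requires a careful constructive re-reading of the structure theorems for continuous semialgebraic maps (in the vein of \thref{thPBpolyroots}, together with the uniform continuity modulus à la {\L}ojasiewicz controlled by virtual roots and the fraction map $\mathrm{Fr}$). Once \thref{thArc3} is granted, what remains is purely syntactic: producing the term $t_F$ from $F$, verifying that the identities between terms of \sa{Arc} that hold on $\RRa$ are provable in \sa{Arc} (again an instance of the formal Positivstellensatz~\ref{Pst3}), and checking that the two translations compose to the identity modulo provable equalities.
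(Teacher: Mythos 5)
Your proof is correct and follows the same route as the paper: the paper's own ``proof'' is just the one-line remark ``When the axiom \tsbf{OTF} is added, \thref{thArc3} gives the following corollary,'' and your argument is a careful elaboration of that reduction. In particular your two-directional decomposition (virtual-root and fraction symbols being among the $\fsac_F$'s, and conversely each $\fsac_F$ being definable by a term $t_F$ of \sa{Arc} via \thref{thArc3}, with the axioms \Tsbf{Df}$_F$ then validated in \Sa{Corv} through \pstfref{Pst3}) makes explicit exactly what the paper leaves implicit.
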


The following corollary is more problematic, can we return to the case $ \gR=\RRa $ ?

\begin{corollary} \label{cor3thArc3} 
Consider a real closed ring $\gR$, a continuous semialgebraic map  $ g\colon \gR^n \to \gR $ (an element of $ \SaC_n(\gR) $) and a polynomial $ p\in\gR[\xn] $ with at least one invertible coefficient. 
We assume that, on the set $ \sotq{\uxi\in\gR^n}{\abS{p(\uxi)}> 0} $, the fraction $ f=g/p $ satisfies a \mcu on all bounded susbsets à la {\L}ojasiewicz (as in Lemma \ref{factfsagcLoja}). 
Then there exists a unique continuous semialgebraic map \hbox{$ h\colon \gR^n \to \gR $} such that $ hp=g $.
\end{corollary}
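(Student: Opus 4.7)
The plan is to reduce to the case $\gR=\RRa$ via the parametric representation of continuous semialgebraic maps, then extend back by continuity. By Definition~\ref{defiFSAGC2+}, write $g(\ux)=\tilde g(\uy,\ux)$ where $\uy\in\gR^r$ and $\tilde g\colon\RRa^{r+n}\to\RRa$ is continuous semialgebraic in the elementary sense; let $\uc\in\gR^m$ denote the coefficients of $p$, so that $p(\ux)=\tilde p(\uc,\ux)$ for a polynomial $\tilde p\in\ZZ[\uc,\ux]$ (the hypothesis that one coefficient is invertible becomes a condition picking out a sub-semialgebraic set of parameters). If the Łojasiewicz constant $c\in\gR$ is not yet in $\RRa$, append it to the parameter list as an additional coordinate $c_0$.

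The core step is to construct, over $\RRa$, a parametric continuous extension of the fraction. Consider the semialgebraic set $S\subseteq\RRa^{r+m+1}$ of tuples $(\uy,\uc,c_0)$ such that the restriction of $\tilde g(\uy,\cdot)/\tilde p(\uc,\cdot)$ to $\{|\tilde p(\uc,\cdot)|>0\}$ satisfies the Łojasiewicz inequality with constant $c_0$ and the fixed exponents $k,\ell$, and such that one coordinate of $\uc$ is invertible. By \thref{thTarski} the set $S$ is first-order definable hence defined on $\RRa$. For every $(\uy,\uc,c_0)\in S$ the denominator $\tilde p(\uc,\cdot)$ is a nonzero polynomial in the discrete real closed field $\RRa$, so its zero set is nowhere dense and the classical extension-by-continuity theorem provides, pointwise, a continuous semialgebraic function extending the fraction. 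Applying the cylindrical algebraic decomposition argument of \thref{thParamcontFsagc0} to $S\times\RRa^n$ yields a single continuous semialgebraic map $\tilde h\colon\RRa^{r+m+1+n}\to\RRa$ with $\tilde h\cdot\tilde p=\tilde g$ on $S\times\RRa^n$. Extending $\tilde h$ to $\gR$ via Definition~\ref{defiFSAGC2+} and specialising the parameters to $(\uy,\uc,c_0)$ gives the desired $h\in\Sac_n(\gR)$; the identity $hp=g$ transfers from $\RRa$ to $\gR$ because it is an algebraic identity among terms of~$\sa{Arc}$ (Corollary~\ref{cor2thArc3}).

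For uniqueness, suppose $h_1p=h_2p=g$ with both satisfying the Łojasiewicz modulus. Then $\delta:=h_1-h_2\in\Sac_n(\gR)$ satisfies $\delta p=0$; since $\Sac_n(\gR)$ is a real closed, hence reduced, $f$-ring, rule \Tsbf{Afrnz2} yields $|\delta|\wedge|p|=0$. At any $\uxi$ with $|p(\uxi)|>0$ invertible, $\delta(\uxi)=0$; applying the Łojasiewicz bound to $h_1-h_2$ and letting $\uxi'$ vary in the set where $|p|>0$ forces $\delta=0$ everywhere by continuity, using \Tsbf{Anz}. The main obstacle is the parametric extension step over $\RRa$: while the pointwise continuous extension of a semialgebraic fraction controlled by Łojasiewicz is standard, assembling the extensions into a single continuous semialgebraic map in all variables (parameters plus $\ux$) is precisely the difficulty flagged in the proof of \thref{thParamcontFsagc0}, which relies on Remark~7.4.9 of~\cite{BCR} and is not yet fully constructive. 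Thus a complete constructive proof of this corollary is contingent on resolving Question~\ref{questthParamcontFsagc0}.
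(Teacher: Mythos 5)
The paper provides no proof of this corollary. It is stated immediately after the remark ``The following corollary is more problematic, can we return to the case $\gR=\RRa$?'', and the entire cluster of statements around Theorem~\ref{thArc3} depends on the open Question~\ref{questArc3}. Your sketch proposes exactly the parametric reduction to $\RRa$ that the paper's question hints at, and you correctly identify that assembling the parametric family into a single continuous map is the same constructive difficulty as in Theorem~\ref{thParamcontFsagc0} (Remark~7.4.9 of~\cite{BCR}, Question~\ref{questthParamcontFsagc0}); being explicit about this contingency is the right thing to do, since the paper itself does not resolve it.

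Two places still deserve tightening. First, Theorem~\ref{thParamcontFsagc0} starts from a globally defined continuous semialgebraic map, whereas you have a family of partial maps defined only where $\tilde p\neq 0$; the gluing into a continuous family over a cell $\Delta\ni(\uy,\uc,c_0)$ of an adapted CAD is an application of Remark~7.4.9 of~\cite{BCR} directly, and the output is an elementary map $\RRa^{q+n}\to\RRa$ obtained via the semialgebraic homeomorphism $\Delta\simeq\RRa^q$ --- you do not need, and should not claim, an extension of $\tilde h$ to all of $\RRa^{r+m+1+n}$. Second, the last sentence of your uniqueness argument (``$\delta=0$ everywhere by continuity'') is not a step inside the theory; the cleaner route is to observe, via the same parametrisation, that $p$ is a regular element of $\Sac_n(\gR)$ because a polynomial with a unit coefficient is regular in $\Sac_n(\RRa)$, so that $\delta p=0$ already gives $\delta=0$. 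This avoids a pointwise density argument over $\sotq{\uxi\in\gR^n}{\abS{p(\uxi)}>0}$, which is delicate to formulate in the \nds setting.
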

%
\begin{proof}
\fbox{?? give precisions.}
\end{proof}
%


\section{\textsl{Non} discrete real closed fields }\label{secCrc2}

\Subsection{A reasonable definition} \label{subsecdefiCrc2}%
\index{real closed!non dis@\textsl{non} discrete --- field}

\begin{lemma} \label{lemArcloc}
A real closed ring is local if, and only if, it satisfies the rule \Tsbf{AFRL}.
\end{lemma}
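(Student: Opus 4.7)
The plan is to derive this as an immediate consequence of Lemma~\ref{lemAftrloc}, which already establishes the analogous equivalence in the wider Horn theory \sa{Aftr} of strongly real rings.

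First I would observe that any real closed ring $\gA$ is in particular a strongly real ring: by Item~\textsl{1} of Lemma~\ref{lemArcbis}, the theory \sa{Arc} is obtained from \sa{Aftr} by adjoining the virtual root symbols and their defining inequalities, so every model of \sa{Arc} is a model of \sa{Aftr}. In particular $\gA$ is a reduced $f$-ring carrying the function symbol $\Fr$ with the axioms \Tsbf{fr1} and \Tsbf{fr2}, which is equivalent (Lemma~\ref{lemAfrnzFRAC}) to the validity of the rule \tsbf{FRAC} in $\gA$. Hence the hypotheses of Lemma~\ref{lemAftrloc} are fulfilled in $\gA$, with the predicate $a>0$ interpreted as the abbreviation \gui{$a\geq 0$ and $a$ is invertible}.

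Next I would relate locality of $\gA$ to the validity of the rule \tsbf{OTF}. By Item~\textsl{3} of Lemma~\ref{lemArftr}, the dynamical theory \sa{Aftr}${}+{}$\tsbf{OTF} is essentially identical to the theory \sa{Co} of \ndsofs, and a \ndsof in this sense is precisely a (residually discrete) local strongly real ring, so \tsbf{OTF} holds in $\gA$ if, and only if, $\gA$ is local. Lemma~\ref{lemAftrloc} then asserts that, in a reduced $f$-ring in which \tsbf{FRAC} holds, the rule \tsbf{OTF} is equivalent to the rule \tsbf{AFRL}. Combining the two equivalences yields the statement: $\gA$ is local if, and only if, \tsbf{AFRL} is valid in $\gA$.

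There is no real obstacle here; the lemma is a straightforward transcription to the real closed setting of a fact already established for strongly real rings. The only thing worth double-checking in the writeup is that all the function symbols of \sa{Arc} not already present in \sa{Aftr} (the virtual roots $\rho_{d,j}$) play no role in either direction of the equivalence \tsbf{OTF} $\Leftrightarrow$ \tsbf{AFRL}, so that passing from \sa{Aftr} to \sa{Arc} is harmless; this is immediate because the proof of Lemma~\ref{lemAftrloc} uses only the $f$-ring structure, the rule \tsbf{FRAC}, and Lemma~\ref{lemAfrLoc0} on decomposition of units into their positive and negative parts, all of which remain available verbatim in \sa{Arc}.
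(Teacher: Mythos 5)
Your proof is correct and follows essentially the same route as the paper, which simply refers to Lemma~\ref{lemAftrloc}; you spell out why that lemma applies (a real closed ring is in particular a strongly real ring) and bridge locality to \tsbf{OTF} via Lemma~\ref{lemArftr}~Item~\textsl{3}, whereas the paper's proof of Lemma~\ref{lemAftrloc} already establishes the equivalence between locality and \tsbf{AFRL} directly, making the detour through \tsbf{OTF} optional but harmless.
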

%
\begin{proof} See Lemma \ref{lemAftrloc}.
\end{proof}

We now propose for the theory of \ndrcfs a formulation essentially identical to \Sa{Corv}, but almost purely equational. The rule \tsbf{AFRL} is preferred to the rule \Tsbf{OTF} because we do not introduce the predicate $\cdot>0$ which would take us out of the purely equational theories for \Sa{Arc}.

\begin{definition} \label{defiCrc2} 
 The \textsl{dynamical theory of \ndrcfs}, denoted \SA{Crc2}, is the extension of the purely equational theory \sa{Arc} obtained by adding the rule \tsbf{AFRL}. In other words, a \ndrcf is nothing other than a \fbox{local real closed ring}.\index{non discrete@\nds!real closed field}.
\end{definition}

\begin{proposition} \label{propCrc2}
The theories \Sa{Corv}, \Sa{Crc1} and \Sa{Crc2} are essentially identical (we must define the predicate $ >0 $ which we add to \sa{Crc2}). 
\end{proposition}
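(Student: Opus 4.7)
The plan is to establish the two essential identities separately, namely $\Sa{Crc1}\equiv\Sa{Corv}$ and $\Sa{Corv}\equiv\Sa{Crc2}$. The first is nothing more than Corollary~\ref{corCrc1Corv}, which follows from \thref{thArc3}. The whole substance therefore lies in the second identity, whose proof I would organise as a chain of essentially identical extensions, most of whose links have already been established in the preceding chapters.

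First I would normalise the signatures. The theory $\Sa{Crc2}$ extends the purely equational theory $\Sa{Arc}$ by the rule \Tsbf{AFRL}, but does not contain the predicate $\cdot>0$ of $\Sa{Corv}$. I would repair this by the essentially identical extension that introduces $x>0$ as an abbreviation for \emph{$x\geq 0$ and there exists $z$ with $xz=1$}, exactly as in Definition~\ref{defiAftr} and Item~\textsl{2} of Lemma~\ref{lemArcbis}. Once both theories live on the same signature, the target is to show that, on top of $\Sa{Arc}$, the rules \Tsbf{OTF} and \Tsbf{AFRL} are interdeducible (so that $\Sa{Arc}+\Tsbf{OTF}$ and $\Sa{Arc}+\Tsbf{AFRL}$ are essentially identical), and then to identify $\Sa{Arc}+\Tsbf{OTF}$ with $\Sa{Corv}$.

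For the equivalence of \Tsbf{OTF} and \Tsbf{AFRL} over $\Sa{Arc}$ I would invoke Lemma~\ref{lemArcloc}, which states that a real closed ring is local if, and only if, it satisfies \Tsbf{AFRL}. Since $\Sa{Arc}$ contains $\Sa{Aftr}$ (Item~\textsl{1} of Lemma~\ref{lemArcbis}), the proof of Item~\textsl{3} of Lemma~\ref{lemArftr} applies verbatim and shows that, over $\Sa{Arc}$, the rule \Tsbf{OTF} is likewise equivalent to being local (using the characterisation of invertibility by \gui{$\geq 0$ and invertible} provided by Lemma~\ref{lemAfrLoc0} and by \Tsbf{IV}). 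Hence $\Sa{Arc}+\Tsbf{OTF}$ and $\Sa{Arc}+\Tsbf{AFRL}=\Sa{Crc2}$ are essentially identical.

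It remains to identify $\Sa{Arc}+\Tsbf{OTF}$ with $\Sa{Corv}$. Reading $\Sa{Arc}$ as $\Sa{Aftr}$ enriched by the virtual root symbols $\rho_{d,j}$ together with the inequalities of Item~\textsl{1} of \thref{thVirtualRoots} (Lemma~\ref{lemArcbis}), and using Item~\textsl{3} of Lemma~\ref{lemArftr} which identifies $\Sa{Aftr}+\Tsbf{OTF}$ with $\Sa{Co}$, I would conclude that
\[
\Sa{Arc}+\Tsbf{OTF}\;\equiv\;\Sa{Aftr}+\Tsbf{OTF}+\{\rho_{d,j}\}\;\equiv\;\Sa{Co}+\{\rho_{d,j}\}\;=\;\Sa{Corv},
\]
where the middle step amounts to observing that adding the same function symbols together with axioms formulated in the common sublanguage preserves essential identity (a routine application of the extension procedures in Definition~\ref{defi-exteseq}). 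Combining this with the previous paragraph gives $\Sa{Corv}\equiv\Sa{Crc2}$, and together with Corollary~\ref{corCrc1Corv} finishes the proof.

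The main obstacle is the signature bookkeeping at the very first step: one must be careful that the predicate $\cdot>0$ added to $\Sa{Crc2}$ as an abbreviation really defines, in the presence of the axioms of $\Sa{Arc}$, the same predicate as the one present in $\Sa{Corv}$, and that the rule \Tsbf{IV} (which is part of the signature of $\Sa{Corv}$ via $\Sa{Co}$) is in fact deducible in $\Sa{Arc}+\Tsbf{AFRL}$ once $>0$ is so defined. This amounts to the content of Lemma~\ref{lemArftr0}, which I would apply inside $\Sa{Arc}$ to obtain the desired equivalence without circularity.
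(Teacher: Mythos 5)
Your proof is correct and takes essentially the same route as the paper: both arguments invoke Corollary~\ref{corCrc1Corv} for $\Sa{Crc1}\equiv\Sa{Corv}$, and both reduce $\Sa{Corv}\equiv\Sa{Crc2}$ to the interchangeability of the local axiom and the virtual-root symbols over $\Sa{Aftr}$, relying on Lemma~\ref{lemArcbis} (Item 1), Lemma~\ref{lemArftr} (Item 3), and the \Tsbf{OTF}$\,\Leftrightarrow\,$\Tsbf{AFRL} equivalence coming from Lemma~\ref{lemAftrloc}. You merely spell out the signature bookkeeping and the commutativity-of-extensions step more explicitly than the paper's terse version does.
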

%
\begin{proof} 
Corollary \ref{corCrc1Corv} compares \sa{Corv} and \sa{Crc1}. Lemma \ref{lemArftr} tells us that a \ndsof is none other than a local strongly real ring. In other words, the theory \sa{Co} is essentially identical to the theory \sa{Aftr} to which we add the axiom \tsbf{AFRL}. Let's start with \sa{Aftr}. If we add the virtual roots then \tsbf{AFRL} we pass to \sa{Arc} (Lemma \ref{lemArcbis} Item~\textsl{1}) then to \sa{Crc2}. If we add \tsbf{AFRL} then the virtual roots we go to \sa{Co} then to \sa{Corv}.
\end{proof}
%

\begin{remarks} \label{remCrc}~

\noindent 1) The field $\RR$ is a constructive model of the theory \sa{Crc2}.

\smallskip\noindent 2) The theory \sa{Crcd} of discrete real closed fields is essentially identical to the theory obtained by adding to \sa{Crc2} the axiom~\Edineq\ which says that equality is decidable.

\smallskip\noindent 3) The theory \sa{Crc2} is nothing other than the theory of local real closed rings. However, there are local real closed rings which are not fields in Heyting's sense. For example, consider the ring $\gA$ of continuous semialgebraic maps on $\RRa$, and let $ \gB=S^{-1}\gA $ where $S$ is the monoid of maps~$f$ such that $ f(0)\neq 0 $. It is the ring of germs at $ (0) $ of maps~\hbox{$ f\in\gA $}. An element $ f\in\gA $ is $ >0 $ in $\gB$ (resp. $\leq 0$ in $\gB$) if, and only if, $ f(0)>0 $ in $\RRa$ (resp.~\hbox{$ f(x)\leq 0 $} in the neighbourhood of $0$). This shows that \Tsbf{HOF} is not satisfied in $\gB$, because it is not enough for $ f(0)\leq 0 $ for $f$ to be $\leq 0$ in the neighbourhood of $0$. Note that this locally real closed ring admits two minimal prime ideals, with the respective locals being the germs of maps to the right (or left) of $0$. 

\smallskip\noindent 4) The theory \Sa{Crc2} can be used to prove the existence of a square root for a complex number of modulus 1. The unit circle $ \so{x^2+y^2=1} $ is covered by the open areas $ \so{x>-1} $ and $ \so{x<1} $, on each of which the existence is guaranteed by a continuous map. However, this existence cannot be proved in \Sa{Arc}, because in this purely equational theory, every existence is certified by a term, and every term defines a continuous semialgebraic map.
\eoe
\end{remarks}

\Subsection{Real closure of a reduced $f$-ring?}

Given a reduced $f$-ring $\gA$, we know (\pstref{Pst2}) that the theory $ \Sa{Crcdsup}(\gA) $ proves the same Horn rules as $ \Sa{Afrnz}(\gA) $. The same applies to all intermediate theories, in particular to the theories \Sa{Afrrv} and \Sa{Arc}. 

As the latter are purely equational theories, the reduced $f$-ring $\gA$ gives rise to an $f$-ring with virtual roots $ \AFRRV(\gA) $ and a real closed ring $ \ARC(\gA) $.\label{notaARC} 

Since the theories \sa{Afrnz}, \sa{Afrrv} and \sa{Arc} prove the same Horn rules, $\gA$ is a substructure (of $f$-ring) of $ \AFRRV(\gA) $ which is itself a substructure (of $f$-ring with virtual roots) of $ \ARC(\gA) $. In other words, adding the symbols for virtual roots and fractions (with their axioms) does not change $\gA$ as an $f$-ring. 

These two constructions of \gui{real fences} are without mystery, and unique to within a single isomorphism.

We are in the same situation as for the construction of the real closure of a discrete ordered field (\cite{LR90,LR91}), but here the result seems completely obvious whereas it requires a non-negligible effort in the articles quoted. The main reason for this (very small) miracle is that we are relying here on a constructive proof of the Positivstellensatz. The secondary reason is that we are dealing here only with Horn theories (instead of dynamical theories). 

\begin{remark} \label{remcloturereelle} 
A construction of the real closure of a discrete ordered field $\gK$ can also be obtained according to the following argument. We begin by establishing the simultaneous collapse of the theory of discrete ordered fields and that of discrete real closed fields (as in \cite[Theorem 3.6]{CLR01}). This is a variant of the formal Positivstellensatz. Then we dynamically evaluate~$\gK$ as a discrete real closed field. This forces us to introduce the real zeros of any polynomial, with a Thom coding for each of them (for a polynomial which cancels this zero). Since no ambiguity is possible, the dynamic algebraic structure constructed is in fact a usual algebraic structure of a real closed field. This construction is admittedly less detailed than the one explained in \cite{LR91}, but it is essentially equivalent. In fact, in the other direction, we could probably deduce Theorem 3.6 of \cite{CLR01} from the construction given in \cite{LR91}. What improves \cite{Lom91} and \cite{CLR01} on the previous result is, on the one hand, that the formal Positivstellensatz is more general (Theorem 3.8 in \cite{CLR01}), and on the other hand, and above all, that the concrete Positivstellensatz is demonstrated. 
\eoe
\end{remark}

\Subsection{Real closure of a \ndsof}
 
Let us consider a discrete ordered field, \textsl{i.e.} a model $\gK$ of the theory \Sa{Co}. We know that \sa{Corv} proves the same Horn rules as \sa{Co}.

Let us denote $\gR$ the dynamic algebraic structure $ \sa{Corv}(\gK) $. 

All the closed terms of the dynamic algebraic structure $\gR$ are constructed on elements of $\gK$ by means of the function symbols given in the signature (polynomials, virtual roots, legitimate fractions).

The dynamic structure $\gR$ is a natural candidate to be \und{the} (usual) algebraic structure of type \Sa{Corv} generated by~$\gK$, if one exists. However, the problem is that $\gR$ is a dynamic algebraic structure of type \sa{Corv}, but not necessarily a model of this theory, because this dynamical theory is defined with non-algebraic axioms. 

We can first consider the usual real closed ring algebraic structure $ \ARC(\gK) $ which is identified with the dynamic algebraic structure $ \sa{Arc}(\gK) $. The question is: is the axiom \Tsbf{AFRL} a valid rule in $ \ARC(\gK) $? In other words, is $ \ARC(\gK) $ a model of~\sa{Corv}? In which case we can identify $\gR$ (dynamic algebraic structure) and $ \ARC(\gK) $ (usual algebraic structure). 
\label{pagevirtualclosure}

The answer is not obvious (see Question \ref{questCloturevirtuelle}). 


\section{A non-archimedean \ndrcf} \label{crcndna}

 \newcommand{\Pu}[1]{\RRa\big[\big[\vep^{1/#1}\big]\big]}

In this section we describe what we believe\footnote{The proof of \thref{propPuiseux} is not so clear.} to be an example of
a non-archimedean \ndrcf.

\noindent Let  $ \vep $ be an indeterminate. In Section \ref{condna} we introduced the ordered non-archimedean \ndsof $ \gQ=\gZ[1/\vep] $ where $ \gZ=\QQ[[\vep]] $ is the ring of formal series in $ \vep $ with rational coefficients where $ \vep $ is a strictly positive infinity.

In fact, the coefficients of the series under consideration could have been taken from any discrete ordered field, in particular from the field $\RRa$ of algebraic real numbers. We will denote $\gR_0=\RRa[[\vep]]$ the analogue of $\gZ$ and $ \gR=\gR_0[1/\vep] $ the analogue of $\gQ$.

\smallskip We now extend these constructions to the field $\gP$ of Puiseux series with real algebraic coefficients. 

First we have the rings of series $\gP_{0,d}=\Pu{d}$ for the integers $d\geq 1$, all isomorphic to $\gR_0$, with the inclusion morphisms $\gP_{0,d}\to \gP_{0,dd'}$. This forms an inductive system whose limit $\gPo$ (the Puiseux series of valuation~\hbox{$\geq 0$}) can be seen as the union of $\gP_{0,d}$. 

Finally, the Puiseux series themselves form the ring defined as $\gP:=\gPo[1/\vep]$.

\smallskip Note that $ \gP_{j,d}=\sotq{\alpha\in\gP_{0,d}[1/\vep]}{ \alpha/\vep^{j/d}\in \gP_{0,d}} $. We have $ \gP=\Vu_{j,d}\gP_{j,d} $.

We introduce notations which generalise to $ \gP_{j,d} $ those already given for $\gR$. These notations are consistent with the inclusions $ \gP_{j,d}\subseteq \gP_{jd',dd'} $.

Let $ \alpha=\sum_{k=j}^\infty a_{k/d}\vep^{k/d}\in \gP_{j,d}\subseteq \gP_{0,d}[1/\vep] $ ($ j,d\in\ZZ, d\geq 1 $). We define:
\begin{itemize}
 
\item $ \rc_{\ell/d}(\alpha)=
\formule 
{0&\hbox{ if }\ell<j,\\  
a_{\ell/d} &\hbox{ if } \ell\geq j.
} $ 
 
\item $ \kappa_{\ell/d}(\alpha)=s_{\ell/d}\in\so{-1,0,1} $ is defined by recurrence as follows:\\
 $ s_{\ell/d}=\formule {\hbox{ if }\ell<j, & \hbox{then }
0&
\\
\hbox{if }\ell\geq j,& 
\formule{\hbox{if }s_{(\ell-1)/d}\neq 0,& \hbox{alors } s_{(\ell-1)/d},\\
\hbox{if }s_{(\ell-1)/d}= 0,& \hbox{then }\hbox{sign of }a_{\ell/d}.} 
} $ 
 
\item $ \alpha>0 $ means $ \exists \ell\geq j\; \kappa_{\ell/d}(\alpha)=1 $. 
 
\item $ \alpha\geq 0 $ means $ \forall \ell\geq j\; \kappa_{\ell/d}(\alpha)\geq 0 $. 
 
\item $ \rv(\alpha)>k/d $ means $ \kappa_{k/d}(\alpha)=0 $.
 
\item $ \rv(\alpha)\leq k/d $ means $ \kappa_{k/d}(\alpha)=\pm1 $.
 
\item $ \rv(\alpha)\geq k/d $ means $ \kappa_{(k-1)/d}(\alpha)=0 $.
 
\item $ \rv(\alpha)=k/d $ means $ \rv(\alpha)\geq k/d $ and $ \rv(\alpha)\leq k/d $.
%
%
\end{itemize}
 
\smallskip From the previous study in Section \ref{condna} which led to Proposition \ref{propQ[[T]]} for the ring $\gZ$ and to \thref{propQ((T))} for the ring $\gQ$, we deduce analogous results for the rings $ \gP_{0,d} $ then for $ \gPo $, then for~$\gP$. 
\begin{proposition} \label{propP_0}~
\begin{enumerate}
 
\item 
The ring $ \gPo $ is a reduced strict $f$-ring which satisfies the following properties. 
\begin{itemize}
 
\item It satisfies the rules \Tsbf{OTF}, \OTFx, \Tsbf{FRAC} and \Tsbf{Val2}. In particular (Lemma \ref{lemAfrnzFRAC}) the \fsagc~$\Fr$ satisfying the rules \Tsbf{Fr1} and \Tsbf{Fr2} is well-defined and the corresponding function symbol can be considered as part of the signature. 
 
\item This is a residually discrete henselian local ring. 
 
\item Its residual field is isomorphic to $\RRa$. \\
We have $ \gPo\eti=\sotq{\xi\in\gPo}{\kappa_0(\xi)=\pm1} $ and $ \Rad(\gPo)=\sotq{\xi\in\gPo}{\kappa_0(\xi)=0} $.

\item The valuation group is isomorphic to $ (\QQ,+,\geq) $ (the class of $ \vep $ corresponds to the element~$ 1 $ of~$\QQ$).
 
\item The elements $\geq 0$ are squares: the ring $ \gPo $ is a  $ 2 $-closed $f$-ring (theory \Sa{Asr2c}).
 
\item More generally, the elements $\geq 0$ are powers $k$-th of elements $\geq 0$. Since we are dealing with unique existence, we can introduce the corresponding function symbols in the signature.
 
\item Furthermore, the ordered Heyting axiom $ \lnot(\xi>0)\Rightarrow\xi\leq 0 $ is satisfied.
\end{itemize}
 
\item 
The ring $\gP$ is a reduced strict  $f$-ring which satisfies the following properties. 
\begin{itemize}
 
\item An element is $ >0 $ if, and only if, it is $\geq 0$ and invertible.
 
\item The rules \Tsbf{OTF}, \tsbf{OTF $ \eti $}, \Tsbf{FRAC} and \Tsbf{IV} (a fortiori \Tsbf{Val2}) are satisfied. In particular (Lemma \ref{lemAfrnzFRAC}) the continuous semialgebraic map~$ \Fr $ satisfying the rules \tsbf{Fr1} and \tsbf{Fr2} is well-defined.
 
\item It is a local ring with $ \Rad(\gP)=0 $ (a Heyting field in the terminology of \cite{CACM} or \cite{MRR}).
 
\item The elements $\geq 0$ are squares of elements $\geq 0$: the ring $\gP$ is a  $2$-closed strict $f$-ring (theory \Sa{Asr2c}).
 
\item More generally, the elements $\geq 0$ are powers $k$-th of elements $\geq 0$. Since we are dealing with unique existence, we can introduce the corresponding function symbols in the signature.
 
\item The ordered Heyting axiom $ \lnot(\xi>0)\Rightarrow\xi\leq 0 $ is satisfied. 
\end{itemize}
\end{enumerate}

\end{proposition}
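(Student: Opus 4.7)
The plan is to lift, level by level, the analysis of Section \ref{condna} to the Puiseux setting, then to pass to the directed union $\gPo = \bigcup_d \gP_{0,d}$, and finally to localise at $\vep$. The first step is to note that each $\gP_{0,d}$ is isomorphic to $\RRa[[\vep]]$ under the substitution $\vep \leftrightarrow \vep^{1/d}$, so every argument that led to Proposition \ref{propQ[[T]]} applies verbatim (with $\RRa$ replacing $\QQ$ in the residue field) and yields, on each $\gP_{0,d}$, a residually discrete henselian local ring that is a reduced strict $f$-ring satisfying \tsbf{OTF}, \OTFx, \tsbf{FRAC}, \tsbf{Val1}, \tsbf{Val2}, with residue field $\RRa$ and valuation group $\frac{1}{d}\ZZ$. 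One then checks that the data $\rc_{\ell/d}$, $\kappa_{\ell/d}$, $\rv$, $\vu$ and $\Fr$ are stable under the inclusions $\gP_{0,d}\hookrightarrow \gP_{0,dd'}$: reindexing $\ell/d = \ell d'/(dd')$ alters neither coefficients nor the first nonzero exponent.

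Next I would observe that the axioms of reduced strict $f$-rings, together with all the Horn rules above, are preserved under filtered colimits, since every finite family of elements sits in some $\gP_{0,d}$; the same holds for the simple existential rules with unique existence, the witness built at level $d$ remaining the unique one upward. This equips $\gPo$ with the announced structure, with residue field $\RRa$ and valuation group $\bigcup_d \frac{1}{d}\ZZ = \QQ$. The Heyting axiom $\lnot(\xi>0)\Rightarrow \xi\leq 0$ is then immediate from the $\kappa$-encoding: if no $\kappa_{\ell/d}(\xi) = +1$, then every $\kappa_{\ell/d}(\xi)$ lies in $\{-1,0\}$, which is by definition $\xi\leq 0$.

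The genuinely new content, absent from the analysis of $\gZ$, is the extraction of $k$-th roots. Given $\alpha \in \gPo$ with $\alpha \geq 0$ and $k \geq 2$, I would pick $d$ with $\alpha \in \gP_{0,d}$ and write $\alpha = \vep^{j/d}\gamma$ with $\gamma$ a unit and $\rc_0(\gamma) > 0$ in $\RRa$. Since $\RRa$ is real closed, $\rc_0(\gamma)$ has a positive $k$-th root $r \in \RRa$, and $\gamma/r^k$ is a unit with residue $1$; by the Hensel property of $\gP_{0,d}$ established in the first step, the polynomial $X^k - \gamma/r^k$ has a unique root $\delta$ with residue $1$, so that $\vep^{j/(dk)}\, r\delta \in \gP_{0,dk}\subseteq \gPo$ is the desired non-negative $k$-th root of $\alpha$. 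Uniqueness of non-negative $k$-th roots in a reduced $f$-ring (analogue of the rule \tsbf{Afrnz3}) ensures that the corresponding function symbols can be adjoined to the signature without ambiguity; specialising $k=2$ yields the $2$-closed $f$-ring structure.

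Finally, the passage from $\gPo$ to $\gP = \gPo[1/\vep]$ mirrors exactly the passage from $\gZ$ to $\gQ$ in Theorem \ref{propQ((T))}: elements are encoded as pairs $(j_0,\alpha)$ with $\alpha \in \gPo$ and $j_0 \in \QQ$; the data $\rc$, $\kappa$, $\rv$, $\vu$ and $\Fr$ extend by the formulas given at the end of Section \ref{condna}, and the $k$-th root maps transfer similarly. The rule \tsbf{IV} becomes valid: $\beta > 0$ means that some $\kappa_{\ell/d}(\beta) = +1$, whence $\beta = \vep^{m/d}\,u$ with $u \in \gPo^{\times}$, and $\beta^{-1} = \vep^{-m/d}\,u^{-1} \in \gP$; the local structure with $\Rad(\gP) = 0$ and the Heyting axiom then follow as for $\gQ$. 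The main obstacle I anticipate is precisely at step 3: checking that the Hensel construction is genuinely independent of the auxiliary choice of $d$ (so that the adjoined function symbol is well defined), and that the whole procedure uses nothing more than the valuation datum and Hensel iteration, with no recourse to a sign test on $\RR$ or to dependent choice.
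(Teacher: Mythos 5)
Your overall plan matches the paper's remark preceding Proposition~\ref{propP_0}, which simply transports Propositions~\ref{propQ[[T]]} and~\ref{propQ((T))} from $\gZ$, $\gQ$ to $\gP_{0,d}$, $\gPo$, $\gP$, observes that the axioms in question pass to filtered colimits, and notes that only the $k$-th root extraction is genuinely new (the paper then leaves that to the reader). You try to fill in that missing point, which is the right instinct; but your $k$-th root argument has a gap at its very first step. Taking $\alpha\geq 0$ in $\gP_{0,d}$, you \emph{write} $\alpha=\vep^{j/d}\gamma$ with $\gamma$ a unit. This requires knowing $\rv(\alpha)$ concretely, i.e.\ deciding ``$\alpha=0$ or $\alpha>0$'' (and, if $\alpha>0$, which exponent carries the first nonzero coefficient). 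But this decision is precisely \tsbf{LPO} on $\gPo$, exactly as the paper records for $\gZ$ just after Proposition~\ref{propQ[[T]]}, and the whole point of this setting is to do without any such test. Your Hensel step is correct once the valuation is known, but that is the easy case; the obstacle is not, as you suggest at the end, a sign test on $\RR$, but the sign test \emph{on $\gPo$ itself}.

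The fix is the coefficient-by-coefficient device the paper uses to construct $\Fr$ in Section~\ref{condna}. To define the $k$-th root $\beta\in\gP_{0,dk}$ of $\alpha\in\gP_{0,d}$ with $\alpha\geq 0$, define $\rc_{m/(dk)}(\beta)$ by a recursion whose cases depend only on the finitely much information available at stage $m$: if $\kappa_{m/d}(\alpha)=0$, set $\rc_{m/(dk)}(\beta)=0$ (one may only commit to coefficients provably zero); if $\rv(\alpha)=j/d$ is revealed at or before stage $m$ (i.e.\ $\kappa_{(j-1)/d}(\alpha)=0$, $\kappa_{j/d}(\alpha)=+1$ with $j\leq m$), apply your Hensel argument to the unit $\alpha\vep^{-j/d}$ and read off $\rc_{m/(dk)}(\beta)$ from the resulting series. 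The compatibility of these cases --- the Hensel coefficients in the second agree with the zeros committed in the first --- holds because $\rv(\beta)=\rv(\alpha)/k$ forces the leading zeros, and this is what makes the adjoined function symbol well-defined without any sign test. It also dissolves your worry about independence from $d$: the recursion is expressed entirely in terms of the intrinsic data $\kappa$, $\rc$, which you have already checked to be compatible under the embeddings $\gP_{0,d}\hookrightarrow\gP_{0,dd'}$.
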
 
%
\begin{proof}
Only the fact that the elements $\geq 0$ are powers $k$-th of elements $\geq 0$ is a really new point which requires a proof. This is left to the reader.
\end{proof}

We denote $ \gPal $ the integral closure of $ \RRa(\vep) $ in $\gP$: this is the ring of Puiseux series which are integral over the discrete ordered subfield $ \QQ(\vep) $.

In the following we will use the notion of extension by continuity. To talk about extension by continuity, we need to define the notion of a convergent sequence, and check that the usual rules for boundary crossing work for this notion. 
\begin{definition}[convergent sequences in $\gP$ ] \label{deficonvinP}
We will say that \textsl{the sequence~$ (\alpha_n)_{n\in\N} $ converges towards $\alpha$ in~$\gP$} if there exist $ j $ and $ d\in \ZZ $ with $ d\geq 1 $ such that 
\begin{itemize}
 
\item $\alpha$ and the $ \alpha_n $ are all in $ \gP_{j,d} $,
 
\item $ \lim_n\rv(\alpha_n-\alpha)=+\infty $, i.e.\ again:

\smallskip 
\centerline{$ \forall k\geq j \, \exists N \, \forall m\geq N\, \forall \ell\in \lrb{j.. k}\; \rc_{\ell/d}(\alpha_m)=\rc_{\ell/d}(\alpha) $. } 
\end{itemize}
We will then write $ \alpha=\lim_n\alpha_n $. 
 
\end{definition}

We can easily establish the following properties.
\begin{proposition} \label{propconvinP}~
\begin{enumerate}
 
\item $ \lim_n \alpha_n=0 $ if, and only if, $ \lim_n \rv(\alpha_n)=+\infty $.
 
\item If $ \lim_n \alpha_n=\alpha $ then $\alpha$ is invertible if, and only if, $ \exists N\,\forall n>N \;\rv(\alpha_n)=\rv(\alpha_N)<infty $. In this case $ \alpha^{-1}=\lim_{n\geq N}\alpha_n^{-1} $.
 
\item If $ \alpha=\lim_n \alpha_n $, $ \beta=\lim_n \beta_n $ and $ a\in\RRa $, then 
\begin{itemize}
 
\item $ a\alpha=\lim_na\alpha_n $,
 
\item $ \alpha+ \beta=\lim_n(\alpha_n+ \beta_n) $,
 
\item $ \alpha \beta=\lim_n(\alpha_n \beta_n) $,
 
\item $ \alpha\vu \beta=\lim_n(\alpha_n\vu \beta_n) $,
 
\item $ \Fr(\alpha,\beta)=\lim_n\Fr(\alpha_n,\beta_n) $ and
 
\item $ ({\alpha^+})^q=\lim_n(\alpha_n^+)^q $ ($ q\in\QQ,q>0 $).
\end{itemize}
 
\item All $ \alpha\in \gP_{j,d} $ is the limit of the sequence of Laurent polynomials $ \pi_{m}(\vep^{1/d}) $ for $ m\geq j $ obtained by truncation of the series $\alpha$, defined precisely by 
\[
\pi_{m}\in\RRa[\vep^{1/d}][1/\vep],\;\pi_{m}=\som_{k:j\leq k<m}\rc_{k/d}(\alpha)\vep^{k/d}
\] 
We also note that $ \RRa[\vep^{1/d}][1/\vep]\subseteq \gPal $. 
\end{enumerate}
 \end{proposition}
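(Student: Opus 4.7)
My plan is to organize all four verifications around the coefficient description of the topology: unpacking the definition, convergence $\alpha_n\to\alpha$ in $\gP_{j,d}$ is exactly the statement that for every $k\geq j$, eventually $\rc_{\ell/d}(\alpha_n)=\rc_{\ell/d}(\alpha)$ for all $\ell\in\lrb{j..k}$, i.e.\ $\rv(\alpha_n-\alpha)\to+\infty$. Item~1 then reads off the definition, since $\rv(\beta)>k/d$ means $\kappa_{k/d}(\beta)=0$, which is to say $\rc_{\ell/d}(\beta)=0$ for all $\ell\leq k$.

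For Item~2, I will invoke Proposition~\ref{propP_0}: an element of $\gPo$ is invertible iff $\kappa_0=\pm1$, so an element of $\gP=\gPo[1/\vep]$ is invertible iff its valuation is finite. If $\alpha$ is invertible with $r=\rv(\alpha)$, eventual coefficient-agreement up to order $r/d$ gives $\rv(\alpha_n)=r$ and $\rc_{r/d}(\alpha_n)=\rc_{r/d}(\alpha)\neq 0$, hence $\alpha_n$ is invertible from some rank on; the converse is symmetric. The identity $\alpha_n^{-1}-\alpha^{-1}=\alpha_n^{-1}\alpha^{-1}(\alpha-\alpha_n)$, combined with $\rv(\alpha_n^{-1}\alpha^{-1})=-2r$ (eventually), yields $\rv(\alpha_n^{-1}-\alpha^{-1})\to+\infty$.

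For Item~3, the arithmetic cases are coefficient identities holding in $\RRa$: the rule $\rc_{\ell/d}(\alpha+\beta)=\rc_{\ell/d}(\alpha)+\rc_{\ell/d}(\beta)$ and the Cauchy product formula (finitely many terms, once a common $j,d$ is fixed) give additivity, multiplicativity, and $\RRa$-linearity. For $\vuu$ I would first show $|\alpha_n|\to|\alpha|$: the formula $\rc_{\ell/d}(|\beta|)=\kappa_{\ell/d}(\beta)\rc_{\ell/d}(\beta)$ shows that $|\beta|$'s coefficients up to order $k/d$ depend only on $\beta$'s coefficients up to that order, so eventual agreement propagates; then $\alpha\vuu\beta=\tfrac12(\alpha+\beta+|\alpha-\beta|)$ handles the upper bound. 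For $\Fr$ and for $(\alpha^+)^q$ with $q\in\QQ_{>0}$, the right move is to exploit uniqueness: Lemma~\ref{lemAfrnzFRAC} and Proposition~\ref{propP_0} guarantee that the defining identities (resp.\ $\Fr(\alpha,\beta)|\beta|=(|\alpha|\vi|\beta|)^2$ with $0\leq\Fr(\alpha,\beta)\leq|\alpha|\vi|\beta|$, resp.\ being the nonnegative $k$-th root of $(\alpha^+)^{qk}$ for $qk\in\ZZ_{>0}$) pin down a unique element in a reduced strict $f$-ring. The sequences $\Fr(\alpha_n,\beta_n)$ and $(\alpha_n^+)^q$ live in the common space $\gP_{j,d}$ for a suitable $d$ absorbing the denominator of $q$, and the identities they satisfy pass to the coefficient-wise limit by what precedes; uniqueness forces the limit to be $\Fr(\alpha,\beta)$ and $(\alpha^+)^q$ respectively.

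The main obstacle I anticipate is not the algebraic part but making the $\Fr$ and rational-power cases fully rigorous without circular appeals to continuity: one must argue that a Cauchy sequence in $\gP_{j,d}$ has a limit (which is clear, since $\gP_{j,d}$ is $\vep^{1/d}$-adically complete, being isomorphic to $\gR_0[1/\vep]$), then identify this limit via the uniqueness characterization. Finally, Item~4 is immediate, since by construction $\alpha-\pi_m(\vep^{1/d})\in\gP_{m,d}$, so $\rv(\alpha-\pi_m(\vep^{1/d}))\geq m/d\to+\infty$ and Item~1 concludes.
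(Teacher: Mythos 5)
Items 1, 2 and 4 of your plan, and the linearity, product and sup parts of Item 3, are correct coefficient-wise verifications, in the spirit of what the paper leaves to the reader. The genuine gap is in the $\Fr(\cdot,\cdot)$ and $(\cdot^{+})^{q}$ cases. Your argument there is: the image sequence lives in the complete space $\gP_{j,d}$, the defining identities pass to the limit, and uniqueness (Lemmas \ref{lemUniqFRAC}, \ref{lemAfrnzFRAC}) identifies that limit. But uniqueness only identifies a limit that is already known to exist: you never show that $\big(\Fr(\alpha_n,\beta_n)\big)_n$ (or $\big((\alpha_n^{+})^{q}\big)_n$) is Cauchy, and completeness is of no help without that. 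Nor can convergence be retrieved \emph{order by order} from uniqueness, because modulo $\vep^{(k+1)/d}$ the truncated ring has nilpotents, and there the systems of inequalities characterising $\Fr$ and nonnegative roots no longer pin down a unique element — exactly the phenomenon of Examples \ref{exatotordnonreduced} and \ref{exavr2}. So a quantitative input is unavoidable, and it is precisely where the content of these two cases lies.

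A workable repair uses the discreteness of $\RRa$: fix a target order $k$; since the coefficients are algebraic reals, you can decide whether $\rc_{\ell/d}(\beta)=0$ for all $\ell\leq k$, or whether $\rv(\beta)=j\leq k/d$ is determined. In the first case the bounds $0\leq \Fr(\alpha,\beta)\leq|\alpha|\vi|\beta|\leq|\beta|$, and for large $n$ (when $\beta_n$ agrees with $\beta$ up to order $k$) the same bounds for $(\alpha_n,\beta_n)$, show that both $\Fr(\alpha,\beta)$ and $\Fr(\alpha_n,\beta_n)$ vanish up to order $k$, hence agree there. In the second case $|\beta|$, and eventually $|\beta_n|$, is $\vep^{j}$ times a unit, so $\Fr(\alpha_n,\beta_n)=(|\alpha_n|\vi|\beta_n|)^2\,|\beta_n|^{-1}$ is a composite of operations you have already proved continuous (absolute value, $\vi$, product, and inversion along a sequence of constant finite valuation, i.e.\ your Item 2), and convergence to $\Fr(\alpha,\beta)$ follows. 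The same dichotomy on the coefficients of $\alpha$ up to order about $k/q$ handles $(\alpha^{+})^{q}$: either $\alpha^{+}$, and eventually $\alpha_n^{+}$, is $0$ or of valuation $>k/q$, or $\alpha=\vep^{j}$ times a positive unit, where for the $t$-th root the factorisation $u_n-u=\big(u_n^{1/t}-u^{1/t}\big)\sum_{i=0}^{t-1}u_n^{i/t}u^{(t-1-i)/t}$, whose second factor is a unit, gives $\rv\big(u_n^{1/t}-u^{1/t}\big)\geq\rv(u_n-u)$ and hence convergence.
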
 

What we hope is to prove the following theorem.

\begin{theorem} \label{propPuiseux}
The ring $ \gP=\gPo[1/\vep] $ satisfies all the axioms of the theory \Sa{Crc2}. It is therefore a discrete Heyting non-archimedean real closed field.
\end{theorem}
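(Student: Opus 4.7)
The core observation is that Proposition \ref{propP_0} already establishes almost everything: $\gP$ is a reduced strict $f$-ring, $2$-closed, satisfying \tsbf{OTF}, \tsbf{IV}, \tsbf{FRAC}, and is a residually discrete local ring with $\Rad(\gP)=0$. Since $\gP$ is reduced and \tsbf{FRAC} holds, the function symbol $\Fr$ with axioms \tsbf{fr1}, \tsbf{fr2} is already in place (Lemma \ref{lemAfrnzFRAC}). Thus to prove $\gP$ is a model of \Sa{Crc2}, the plan is to verify three points: (i) construct the virtual root maps $\rho_{d,j}$ on $\gP^d$ for every $d\geq 1$ and verify their defining axioms, (ii) check the purely equational axioms of \Sa{Arc} linking these operations, and (iii) establish the locality rule \tsbf{AFRL}. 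Point (iii) is immediate: by Proposition \ref{propP_0}, $\gP$ is a local reduced $f$-ring satisfying \tsbf{FRAC} and \tsbf{IV}, hence \tsbf{AFRL} holds by Lemma \ref{lemAftrloc}.

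For the virtual roots, I would use the subring $\gPal\subseteq \gP$ as a dense core. By the classical Newton--Puiseux theorem, constructively available via the construction of the real closure of the discrete ordered field $\RRa(\vep)$ as in \cite{LR91}, the ring $\gPal$ is a discrete real closed field; in particular the virtual roots $\rho_{d,j}$ are defined on $\gPal^d$ by \thref{thVirtualRoots} and are continuous semialgebraic maps endowed with {\L}ojasiewicz-type uniform continuity moduli (Lemma \ref{factfsagcLoja}). The subring $\bigcup_{d\geq 1}\RRa[\vep^{1/d},\vep^{-1/d}]$ lies in $\gPal$ and is dense in $\gP$ for the valuation-convergence of Definition \ref{deficonvinP} (Item 4 of Proposition \ref{propconvinP}). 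To extend $\rho_{d,j}$ to $\gP^d$: given a monic $f\in\gP[X]$ of degree $d$, approximate its coefficients by Cauchy sequences $(f_n)$ in $\gPal$ and define $\rho_{d,j}(f)$ as the valuation-limit of $\rho_{d,j}(f_n)$. The characterisation of virtual roots by a system of large inequalities (Item 1 of \thref{thVirtualRoots}) passes to such limits because the order relation is preserved under valuation-convergence; uniqueness of virtual roots in reduced $f$-rings (Lemma \ref{lem2afrvrreduced}) then ensures that the extension is independent of the approximating sequence and satisfies the required axioms. All purely equational identities of \Sa{Arc} relating the $\rho_{d,j}$, the $f$-ring operations and $\Fr$ hold on $\gPal$ and propagate to $\gP$ by continuity of each operation in valuation (Proposition \ref{propconvinP}).

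The main technical obstacle is the valuation-Lipschitz control needed to show that $(\rho_{d,j}(f_n))$ is indeed Cauchy in $\gP$. The classical {\L}ojasiewicz estimate of the form $\abs{\rho_{d,j}(f)-\rho_{d,j}(g)}^{\ell}\leq c\,(1+\norm{f}+\norm{g})^k\,\norm{f-g}$ valid on bounded subsets of $\gPal^d$ must be translated into a valuation-theoretic statement $\ell\cdot \rv(\rho_{d,j}(f)-\rho_{d,j}(g))\geq C+\rv(f-g)$, on sets where the valuations of the coefficients of $f$, $g$ are bounded below. This transfer rests on the general principle that, for $\alpha,\beta\geq 0$ in $\gP$ with $\beta$ of bounded valuation, an inequality $\alpha^{\ell}\leq c\beta$ yields $\ell\cdot \rv(\alpha)\geq \rv(c)+\rv(\beta)$, so that valuation-convergence of the input forces valuation-convergence of the output. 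Once this estimate is established, $\gP$ satisfies every axiom of \Sa{Crc2}; together with Proposition \ref{propP_0} this exhibits $\gP$ as a non-archimedean \ndrcf in the sense of Definition \ref{defiCrc2}, with $\vep$ an infinitesimal positive and Heyting field structure already guaranteed by $\Rad(\gP)=0$ and the ordered Heyting axiom.
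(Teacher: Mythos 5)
Your proof follows essentially the same route as the paper's \emph{first} proof of \thref{propPuiseux}: both use $\gPal$ as a dense discrete real closed core, both extend operations to $\gP$ by valuation-convergence of coefficients (Proposition~\ref{propconvinP}), and both isolate the same technical obstacle — converting the {\L}ojasiewicz modulus of uniform continuity on $\gPal$ into a valuation-Lipschitz estimate — which the paper itself only sketches via the incomplete Proposition-Definition~\ref{lemonvfsagc} (\enquote{Not so simple!}). Your handling of \tsbf{AFRL} via Lemma~\ref{lemAftrloc} and locality from Proposition~\ref{propP_0} is correct and in fact made more explicit than in the paper. Be aware that the paper also offers a \emph{second}, more elementary proof via Lemma~\ref{lemBasicVirtualRootsbis}, which avoids appealing to the continuity extension of all continuous semialgebraic maps and instead directly constructs the single-step map $\rR(\alpha,\beta,f)$ of Lemma~\ref{lemBasicVirtualRoots} for $\gPo$; this second route is worth knowing since it sidesteps part of the {\L}ojasiewicz transfer argument (although the paper marks its hard case $f(0)<0<f(\beta)$ as still open).
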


%
\begin{proof}[First proof] We are going to generalise the passage to the limit properties described in Proposition \ref{propconvinP} to all continuous semialgebraic maps defined on $\RRa$.

\noindent The paper \cite{CM2013} shows that $ \gPal $ is a discrete real closed field. It is therefore a real closure of $ \RRa(\vep) $, constructed in a very different way from that proposed in~\cite{LR91}. Now consider a cube $ [-a,a]^r=K\subseteq \RRa^r $ and a continuous semialgebraic map $ f\colon K\to\RRa $. Since $ \gPal $ is a discrete real closed field, $f$ extends uniquely into a continuous semialgebraic map $ f_1\colon K_1\to\gPal $, where $ K_1\subseteq \gPal^r $ is defined by the same system of inequalities as $ K $. We will show that $ f_1 $ extends by continuity into a map $ f_2\colon K_2\to\gP $, where $ K_2\subseteq \gP^r $ is defined by the same system of inequalities as $ K $. This will suffice to show that $\gP$ is a model of \sa{Crc1}.\footnote{The details of this statement are left to the reader.}
\begin{propdef} \label{lemonvfsagc} We apply the previous notations for $ K\subseteq K_1\subseteq K_2 $. Let $ f\colon K\to\RRa $ be a continuous semialgebraic map and $ f_1\colon K_1\to \gPal $ be its extension to $ \gPal $. Then for any sequence $ (\alpha_n) $ in $ \RRa[\vep,\vep^{-1}]^r\cap K_2 $ which converges to a $ \alpha\in\gP $, the sequence $ f_1(\alpha_n) $ converges in $\gP$. The limit depends only on $\alpha$ and is denoted $ f_2(\alpha) $. 
\end{propdef}
\begin{proof} Not so simple! First we have to see that the $f_1(\alpha_n)$'s belong to a given $\gP_{j,D}$; next a \L ojasievicz inequality could be used for the convergence. \fbox{?? Precisions}
\end{proof}
\end{proof}
\begin{proof}[Second proof]
Given Proposition \ref{propP_0} it suffices to prove the existence property of virtual roots for the ring $\gP$.
One solution is perhaps to take up proposition \ref{lemonvfsagc} by limiting oneself to extending by continuity the virtual root functions, which could be done by induction.
\fbox{?? Precisions} 
\end{proof}


\section{Use of virtual roots in constructive real algebra}\label{secRVARC}
The results stated in this subsection for the real number field also seem valid in the dynamical theory \Sa{Corv}. Some may require only~\Sa{Co0rv} or~\Sa{Arc}.

\Subsection{Basic semialgebraic subsets of the real line}

Let us define a \textsl{basic semialgebraic closed subset} of the real line as a subset of the form $\rF\!_f=\sotq{x\in\RR}{f(x)\geq 0}$ for an $ f\in\RRX $.

\smallskip\noindent \textsl{First example}. Consider the polynomials $ f(X)=X^2-b $ and $ g=-f $.
\begin{itemize}
 
\item If $ b<0 $, we have $ \rF\!_f=\RR $ and $ \rF\!_g=\emptyset $.
 
\item If $ b>0 $, we have $ \rF\!_f=\,]-\infty,-\sqrt b]\cup[\sqrt b,+\infty[ $ and $ \rF\!_g=[-\sqrt b,\sqrt b] $.
  
\item If $ b=0 $, we have $ \rF\!_f=\RR $ and $ \rF\!_g=\so 0 $.
\end{itemize}
To obtain such a precise description of these semialgebraic closed subsets it is absolutely necessary to know the sign of $ b=-f(0) $. 
\\
If we denote $\alpha$ and $\beta$ the virtual roots of $f$, we have the following alternative description. 
\begin{itemize}
 
\item If $\alpha<\beta$, i.e.\ if $ f\big(\frac {\alpha+\beta} 2\big)>0 $ we have $ \rF\!_f=\,]-\infty,\alpha]\cup[\beta,+\infty[ $ and $ \rF\!_g=[\alpha,\beta] $.
 
\item If $\alpha=\beta$ and $f(\alpha)<0$, i.e.\ if $f\big(\frac {\alpha+\beta} 2\big)<0 $, we have $\rF\!_f=\RR$ and $\rF\!_g=\emptyset$.
 
\item If $\alpha=\beta$ and $f(\alpha)=0$, that is if $f\big(\frac {\alpha+\beta} 2\big)=0$, we have $\rF\!_f=\RR$ and $\rF\!_g=\so \alpha$.
\end{itemize}

\smallskip\noindent \textsl{Second example}. \\
The case of a monic polynomial $f$ of degree $ \delta>2 $. Let us denote $\mathrm{Vr}_f$ the list of its virtual roots. \thref{thVirtualRoots} allows us to describe the adherence of $ \rF\!_f\cup \mathrm{Vr}_f $ exactly as the adherence of the union of the following intervals 
\begin{itemize}
 
\item $ (-\infty,\rho_{\delta,1}\,]\;\hbox{if }\delta\equiv0 \mod 2 $ 
 
\item $  \ClI{\rho_{\delta,k}, \rho_{\delta,k+1}}  \hbox{ for } k\in\lrb{0..\delta-2},\,k\equiv\delta\mod 2$ 
  
\item $ [\,\rho_{\delta,\delta}, +\infty) $ 
\end{itemize}
In imprecise imagery: \gui{we know $\rF\!_f$ to the nearest $\mathrm{Vr}_f$}. 
\\
Generally speaking, the problem with a polynomial of known degree arises from the fact that \thref{thVirtualRoots} asserts something precise about the sign of the polynomial on an interval $\ClI{\rho_{\delta,j},\rho_{\delta,j+1}}$ only when $\rho_{\delta,j}< \rho_{\delta,j+1}$. The result is as follows.

\begin{lemma} \label{lemrFf}
Let $f\in\RRX$ be a polynomial of degree $\delta$ known and $g=f/c_\delta$ the corresponding monic polynomial ($c_\delta$ is the leading coefficient, $>0$ or $<0$). Let us note $\rho_{\delta,k}=\rho_{\delta,k}(g)$.
\begin{enumerate}
 
\item The adherence of $ \rF\!_f\cup \mathrm{Vr}_f $ is equal to the adherence of an explicit finite union of closed intervals whose bounds are $\rho_{\delta,k}$ or $+\infty $, or $-\infty$.
 
\item When we know the signs of $(\rho_{\delta,k+1}-\rho_{\delta,k})$ and $g(\rho_{\delta,k})$, we have an exact description of the closed $\rF\!_f$ in the form of a finite union of disjoint closed intervals. The information required is equivalent to giving the signs of $g(\frac{\rho_{\delta,k}+\rho_{\delta,k+1}} 2)$.
\end{enumerate}

\end{lemma}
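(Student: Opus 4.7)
The plan is to exploit Theorem \ref{thVirtualRoots}, in particular Item \textsl{3a} (the chain $\rho_{\delta,1}\leq\cdots\leq\rho_{\delta,\delta}$) and Item \textsl{3e} (the sign of $g$ on each open interval between two consecutive virtual roots), to partition $\RR$ into $\delta+1$ closed intervals on whose interiors the sign of $f$ is a priori determined by the parity of $\delta-k$ and by $\mathrm{sign}(c_\delta)$. Setting by convention $\rho_{\delta,0}=-\infty$ and $\rho_{\delta,\delta+1}=+\infty$, write $\rF\!_f=\sotq{x\in\RR}{\mathrm{sign}(c_\delta)\, g(x)\geq 0}$. Define the ``good'' set of indices
\[
J\;=\;\sotq{k\in\lrb{0..\delta}}{\mathrm{sign}(c_\delta)\,(-1)^{\delta-k}\geq 0},
\]
which depends only on the parity of $\delta-k$ and on $\mathrm{sign}(c_\delta)$, hence is explicitly computable. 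Set
\[
U\;=\;\bigcup_{k\in J}[\rho_{\delta,k},\,\rho_{\delta,k+1}].
\]

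\textbf{Proof of Item 1.} I would show $\overline{\rF\!_f\cup\mathrm{Vr}_f}=\overline{U}$. For the inclusion $\overline{U}\subseteq\overline{\rF\!_f\cup\mathrm{Vr}_f}$: fix $k\in J$ and a point $\xi\in[\rho_{\delta,k},\rho_{\delta,k+1}]$; if the interval is degenerate, then $\xi\in\mathrm{Vr}_f$ and we are done; otherwise pick an arbitrary point $\eta$ of the open interior, to which Item \textsl{3e} applies, giving $(-1)^{\delta-k}g(\eta)>0$, hence $\mathrm{sign}(c_\delta)\,g(\eta)\geq 0$ by definition of $J$; so $\eta\in \rF\!_f$, and letting $\eta\to\xi$ shows $\xi\in\overline{\rF\!_f}$. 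Conversely for $\overline{\rF\!_f\cup\mathrm{Vr}_f}\subseteq\overline{U}$: the complement $\RR\setminus\overline{U}$ is the union of the \emph{open} intervals $(\rho_{\delta,k},\rho_{\delta,k+1})$ for $k\notin J$, and on such an interval Item \textsl{3e} gives $\mathrm{sign}(c_\delta)\,g<0$ strictly, so this open interval is disjoint from $\rF\!_f$; it is also disjoint from $\mathrm{Vr}_f$, whence $\rF\!_f\cup\mathrm{Vr}_f\subseteq\overline{U}$. Note that $\mathrm{Vr}_f$ is contained in $U$ because each $\rho_{\delta,k}$ is an endpoint of at least one of the intervals listed in $U$ (either $[\rho_{\delta,k-1},\rho_{\delta,k}]$ or $[\rho_{\delta,k},\rho_{\delta,k+1}]$ has index in $J$, since exactly one of $k-1,k$ has the parity making $\mathrm{sign}(c_\delta)(-1)^{\delta-\bullet}\geq 0$).

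\textbf{Proof of Item 2 and the final equivalence.} With the additional signs of $\rho_{\delta,k+1}-\rho_{\delta,k}$ and of $g(\rho_{\delta,k})$, I would refine $U$ to an exact equality. For each $k\in J$: if the sign of $\rho_{\delta,k+1}-\rho_{\delta,k}$ is $>0$ include the full closed interval $[\rho_{\delta,k},\rho_{\delta,k+1}]$; if it is $=0$ the interval reduces to the single point $\rho_{\delta,k}$ and is included iff $\mathrm{sign}(c_\delta)\,g(\rho_{\delta,k})\geq 0$. For $k\notin J$ symmetrically: include only the endpoints $\rho_{\delta,k}$, $\rho_{\delta,k+1}$ that satisfy $\mathrm{sign}(c_\delta)\,g(\rho_{\delta,\bullet})\geq 0$. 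Merging adjacent included pieces yields a disjoint finite union of closed intervals exactly equal to $\rF\!_f$. For the stated equivalence of data, set $m_k=(\rho_{\delta,k}+\rho_{\delta,k+1})/2$. When the interval is non-degenerate, Item \textsl{3e} forces $\mathrm{sign}\,g(m_k)=(-1)^{\delta-k}$ (the ``expected'' sign); when it is degenerate, $m_k=\rho_{\delta,k}$ so $g(m_k)=g(\rho_{\delta,k})$. Thus knowledge of $\mathrm{sign}\,g(m_k)$ jointly encodes degeneracy (by comparison with $(-1)^{\delta-k}$) and, in the degenerate case, the sign of $g(\rho_{\delta,k})$, and conversely from the two pieces of data one recovers $\mathrm{sign}\,g(m_k)$.

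\textbf{Main obstacle.} The only delicate point is precisely the one that motivates Item 2: in a \ndrcf\ we cannot decide the disjunction $\rho_{\delta,k}<\rho_{\delta,k+1}$ or $\rho_{\delta,k}=\rho_{\delta,k+1}$, so although the abstract set $U$ above is explicitly definable, the indicator of each closed interval being degenerate or not is not. This is why Item 1 can only describe $\rF\!_f\cup\mathrm{Vr}_f$ up to closure (degenerate intervals are harmlessly absorbed into $\mathrm{Vr}_f$), and why Item 2 assumes the two sign tests needed to resolve the ambiguity. The remaining verifications reduce to a case distinction on the parity of $\delta-k$ and to applying Item \textsl{3e}, so no further obstruction arises; in the dynamical theory \Sa{Corv} the argument transposes unchanged since \thref{thVirtualRoots} is validated there by Item \textsl{4} of formal \pstref{Pst3}.
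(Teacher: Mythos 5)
Your proof is correct and follows what the paper itself does implicitly: the lemma is stated there with no separate proof, being read off the parity-indexed decomposition of $\RR$ into the closed intervals $[\rho_{\delta,k},\rho_{\delta,k+1}]$ combined with Items \textsl{3a} and \textsl{3e} of Theorem~\ref{thVirtualRoots} — the displayed list of intervals just before the lemma is exactly your set $U$ (your index set $J$, specialised to the monic case), so your Item 1 and Item 2 arguments are essentially the paper's intended ones. One small overclaim in your final paragraph: the midpoint sign does \emph{not} always detect degeneracy (for $g=(X^2+1)^2$ all four virtual roots coincide at $0$, and the midpoint of the middle interval $k=2$ carries precisely the expected sign $+1$), but this is harmless, because including $[\rho_{\delta,k},\rho_{\delta,k+1}]$ exactly when $\mathrm{sign}(c_\delta)\,g\big(\tfrac{\rho_{\delta,k}+\rho_{\delta,k+1}}{2}\big)\geq 0$ already yields $\rF\!_f$ exactly, so the equivalence of data in Item 2 stands without ever deciding whether an interval is degenerate.
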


\noindent \textsl{When the degree of $f$ is not known}, we lose control of the situation {in $ +\infty $ and $ -\infty $}. The fuzziest situation, in which we have no control at all, arises when we don't know whether the polynomial is identically zero or not.

\smallskip Similar results hold for a basic open $ \rU_f=\sotq{x\in\RR}{f(x)> 0} $. 

\Subsection{Sign and variation tables}

Let $\gR$ be a constructive model of \Sa{Co--}. Two elements $a$ and $b$ are said to be \gui{distinct} if $a\neq b$, i.e.\ $a-b$ is invertible.

\begin{lemma} \label{lemMinoration}
Given a list $L$ of $k$ elements and a list $L'$ of $k+\ell$ distinct elements in $\gR $, at least $\ell$ elements of $L'$ are distinct from all elements of $L$. 
\end{lemma}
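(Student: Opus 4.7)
The plan is to prove the lemma by a two-layer induction, the outer one on $k=\abs L$, and to use at its core the fact that every model of $\Sa{Co0}$ is a local ring in the following constructive sense: if $x+y$ is invertible, then $x$ or $y$ is invertible. This follows from the axioms of $\Sa{Co0}$ as follows: invertibility is equivalent to $(\cdot)^2>0$ by rules \Tsbf{IV} and \Tsbf{Iv}; the inequality $x^2+y^2\geq \tfrac12(x+y)^2$ (valid in every $f$-ring and thus in the underlying strictly reduced $f$-ring structure) together with rule \Tsbf{aso3} propagates $(x+y)^2>0$ to $x^2+y^2>0$; and rule \Tsbf{OTF} then delivers the disjunction $x^2>0\vou y^2>0$.

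From this local-ring property I first derive the two-element case of a sublemma: given two distinct elements $b,b'$ of $\gR$ and any $a\in\gR$, the invertibility of $b-b'=(b-a)-(b'-a)$ forces $b-a$ or $b'-a$ to be invertible, so at least one of $b,b'$ is distinct from $a$. Iterating this along the list $L'$ yields the full sublemma: among $n$ pairwise distinct elements, at least $n-1$ are distinct from $a$. I would prove this by induction on $n$, maintaining the invariant that after processing the first $i$ elements, one has a list of $i-1$ elements already certified distinct from $a$ together with a single candidate index whose status is undetermined; at step $i+1$, applying the two-element case to the candidate and the new element determines whether the candidate or the new element becomes the next uncertain one.

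Equipped with the sublemma, I tackle the main statement by induction on $k$. The base case $k=0$ is vacuous. For the inductive step, isolate some $a\in L$ and set $L_0=L\setminus\{a\}$; apply the sublemma to extract a sub-list $L''\subseteq L'$ of $k+\ell$ pairwise distinct elements, all distinct from $a$. The induction hypothesis, applied to $L_0$ (of size $k$) and $L''$ (of size $k+\ell$), supplies $\ell$ elements of $L''$ distinct from every element of $L_0$; these elements are distinct from $a$ by construction, hence distinct from every element of $L$, as required.

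The main obstacle is the constructive handling of the candidate-index bookkeeping in the sublemma: the disjunction produced by \Tsbf{OTF} is not exclusive, and I must verify that at each step the disjunction between \emph{the current candidate $b-a$ is invertible} and \emph{the new element's difference is invertible} is effective enough (in the dynamical-theory sense of branch opening, and in a constructive model in the sense that one actually obtains a witness) to preserve the invariant. This is the only place where the argument genuinely uses the constructive content of $\Sa{Co0}$ beyond the pigeonhole principle of classical mathematics.
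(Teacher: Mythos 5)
Your proof is correct, and since the paper states Lemma \ref{lemMinoration} without proof, your argument is exactly the one left implicit there: the whole content is the cotransitivity of the apartness $\nneq$, which you correctly derive from the local-ring property given by \Tsbf{IV}, \Tsbf{Iv} and \Tsbf{OTF}, followed by a routine constructive counting induction. Two cosmetic points: the inequality is better written $2(x^2+y^2)\geq (x+y)^2$ so as not to presuppose division by $2$ (then conclude $x^2+y^2>0$ via \Tsbf{Aso2} with $2\geq 0$, or by \Tsbf{OTF} again), and in the inductive step your size bookkeeping only matches if the step is read as passing from $k$ to $k+1$ (removing $a$ from a list of $k+1$ elements and keeping the $k+\ell$ elements of $L'$ certified apart from $a$ by the sublemma); under that reading the counts are right.
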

%
%
%

\thref{thVirtualRoots}, Items \textsl{\ref{ivrvariation}} and \textsl{\ref{ivrsigne}}, almost gives a complete table of signs and variations for the monic polynomial $f$.

For the complete table of signs of $f$, any hesitations concern the signs of $f$ in the virtual roots~$\xi_k$ of $f'$. The same applies to the table of variations of $f$, with the signs of $f'$ at the virtual roots of~$f''$.

This leads to the following result.

\begin{proposition} \label{factCompleteTable} Let $\gR$ be an ordered field with virtual roots.
\begin{enumerate}
 
\item Let $f(x)\in\Rx$ be a monic polynomial of degree $k\geq 2$ and  $k+\ell-1$  distinct elements  $a_i\in\gR$. For at least $\ell$ of these elements, the polynomial $f(x)+a_i$ has a known strict sign at each of the virtual roots of $f'$, and its complete sign table is known exactly. 
\\
If $k=2$ then the complete table of signs and variations is known exactly.
 
\item Let $f(x)\in\Rx $ be a monic polynomial of degree $k\geq 3$,  $k+\ell-1$  distinct elements  $a_i\in\gR$, and $ k+\ell-2 $  distinct  elements  $b_j\in\gR$. For at least $\ell^2$ of the pairs $(a_i,b_j)$, we have a complete table of signs and variations known exactly for the polynomial $f(x)+b_jx+a_i$.
\end{enumerate} 
\end{proposition}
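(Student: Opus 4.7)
The plan is to reduce each of the two items to a counting argument in the style of \lemref{lemMinoration}, using items~(\ref{ivrvariation}), (\ref{ivrsigne}) and (\ref{ivrMinAbs}) of \thref{thVirtualRoots}: for a monic polynomial $g$ of degree $d\geq 2$, once a strict (i.e.\ invertible) value of $g$ is known at each virtual root of its monicized derivative $g^{[1]}$, the complete sign table of $g$ is determined, because $g$ is strictly monotonic between two consecutive such virtual roots on which $g^{[1]}$ has a known constant strict sign, and the boundary values then propagate the sign. Note that in \Sa{Corv} (indeed already in \Sa{Co}) any invertible element $a$ admits a strict sign: from $a^2=(a^+)^2+(a^-)^2>0$ and \tsbf{OTF} one gets $a^+$ or $a^-$ invertible, whence $a>0$ or $a<0$; so ``$g(\xi)$ has a strict sign'' and ``$g(\xi)$ is invertible'' are interchangeable below.

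For item~(1), I would set $g_i(x):=f(x)+a_i$, so that $g_i'=f'$ shares its virtual roots $\xi_1,\ldots,\xi_{k-1}$ (those of $f^{[1]}$) with $f$; what one needs is the invertibility of $g_i(\xi_j)=f(\xi_j)+a_i$ for every $j\in\{1,\ldots,k-1\}$. The ``bad'' $a_i$'s form the list $L:=\{-f(\xi_1),\ldots,-f(\xi_{k-1})\}$ of cardinality at most $k-1$, and \lemref{lemMinoration} applied to $L$ and to the $k+\ell-1$ distinct $a_i$'s yields at least $(k+\ell-1)-(k-1)=\ell$ indices $i$ with $a_i\nneq -f(\xi_j)$ for all $j$, which is exactly what is asked. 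When $k=2$ one has moreover $g_i''=f''=2$, a strictly positive constant, hence $g_i'$ is strictly increasing with unique sign change at $\xi_1$; the variation table of $g_i$ is thus automatically determined (decreasing then increasing, with minimum at $\xi_1$), and combined with the sign table this gives the complete table of signs and variations.

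For item~(2), I would put $g_{ij}(x):=f(x)+b_jx+a_i$, so that $g_{ij}'(x)=f'(x)+b_j$ is monic of degree $k-1\geq 2$ and $g_{ij}''=f''$; let $\eta_1,\ldots,\eta_{k-2}$ be the virtual roots of $f^{[2]}$. The variation table of $g_{ij}$ coincides with the sign table of $g_{ij}'$, which by item~(1) applied to $g_{ij}'$ is determined as soon as $g_{ij}'(\eta_m)=f'(\eta_m)+b_j$ is invertible for every $m$; a first use of \lemref{lemMinoration} with $L:=\{-f'(\eta_1),\ldots,-f'(\eta_{k-2})\}$ and the $k+\ell-2$ distinct $b_j$'s produces at least $\ell$ ``good'' values of $b_j$, for which the virtual roots $\zeta_1(b_j),\ldots,\zeta_{k-1}(b_j)$ of $g_{ij}'$ are well-identified (they depend only on $b_j$, not on $a_i$). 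Fixing such a good $b_j$, the sign table of $g_{ij}$ itself additionally requires invertibility of $g_{ij}(\zeta_p(b_j))=f(\zeta_p(b_j))+b_j\zeta_p(b_j)+a_i$ for every $p\in\{1,\ldots,k-1\}$, and a second use of \lemref{lemMinoration} with the $k+\ell-1$ distinct $a_i$'s produces at least $\ell$ good $a_i$ for this $b_j$; multiplying the two lower bounds yields the announced $\ell^2$ pairs.

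The genuinely delicate point will be the constructive reading of ``distinct'' in $\gR$: it must mean that the differences are invertible, which is precisely the hypothesis \thref{thVirtualRoots} needs in order to deliver a strict sign table at its virtual roots via the equivalence with \tsbf{OTF}. Once this reading is fixed, the proof is a straightforward double bookkeeping à la \lemref{lemMinoration}.
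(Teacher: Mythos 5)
Your proposal is correct and is essentially the argument the paper intends: the text introduces \lemref{lemMinoration} and observes that the only hesitations in the tables concern the signs of $f$ at the virtual roots of $f^{[1]}$ (resp.\ of $f'$ at those of $f^{[2]}$), and the proposition is then exactly your double counting of the ``bad'' values $-f(\xi_j)$, resp.\ $-f'(\eta_m)$ and $-(f(\zeta_p)+b_j\zeta_p)$, combined with the fact that in \Sa{Corv} an invertible element has a known strict sign via \tsbf{OTF}. The only slip — calling $f'+b_j$ monic — is harmless, since the virtual roots and strict signs are those of the monicization $g^{[1]}$, which differs by the positive factor $1/k$.
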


\begin{remarks} \label{remfactCompleteTable} ~

\noindent 1) We probably have a perturbation result of the same style which says that \textsl{for almost all perturbations} of a finite number of monic polynomials~$f_i$, we know with certainty the strict equalities and inequalities between all the virtual roots of~$f_i$ and all their derivatives, as well as the signs of~$f_i$ in each of these virtual roots, which gives a complete table of signs and variations for the family of~$f_i$ and their derivatives.

\smallskip\noindent 2) If we want a result analogous to Proposition \ref{factCompleteTable} for a continuous semialgebraic map, we will have to place ourselves in the theory \Sa{Corv} and restrict the table of signs and variations sought to a bounded closed interval. 
\eoe
\end{remarks}

\Subsection{An approximate cylindrical algebraic decomposition ({CAD})?}

 The problem arises of giving an approximate CAD for a finite family of polynomials of $ \RXn $ where $\gR$ is a constructive model of \Sa{Co0rv} (or of \Sa{Corv}). This would be a result that cleverly generalises Lemma \ref{lemrFf} or Proposition \ref{factCompleteTable}.

In piano-mover terms, instead of deciding whether \gui{this passes} or \gui{that doesn't pass}, we'd get approximate results of the following kind: given the data of the problem and a desired precision $ \epsilon $, we'd compute \und{uniformly} a $ \alpha\in\gR $ such that:
\begin{itemize}
 
\item if $ \alpha>0 $, there is a way of passing at a distance $ >\epsilon $ from the obstacles, and we'll tell you how,
 
\item si $ \alpha<1 $ il n'y a pas moyen de passer en respecter un distance $ >2\epsilon $.
 \end{itemize}
Naturally, the piano must be a well-defined semialgebraic compact, as must the obstacles, and as must the space in which the piano is moved.

\smallskip In general, since it is impossible to control, even in an approximate way, the behaviour at infinity of a polynomial whose coefficients are all close to $0$, we must necessarily limit ourselves to calculating an approximate CAD for a finite family of polynomials on a well-defined compact of the style $ \ClI{0,1}^n $. If we try to reproduce a usual CAD (for a discrete real closed field) on $\RR$, we can see that the coefficients of a sub-resultant polynomial may well all be very close to $0$. But \textsl{a priori} virtual roots are only effective for monic polynomials.

On this kind of subject, we're still in our infancy.

\Subsection{Stratifications}

It seems that stratifications, when assumed, are a restful framework in which many results valid for discrete real closed fields can be extended without too much difficulty to the  \nds case. 
 
\Subsection{The Fundamental Theorem of Algebra (\tsbf{FTA})}

 For a treatment of \tsbf{FTA} without the axiom of dependent choice, see \cite{Ric2000}.

 Since the virtual roots are continuous maps, and since it is impossible to follow by continuity the zeros of a complex polynomial (monic of  fixed degree $m$ and with variable coefficients), we certainly cannot obtain one of these zeros expressed as an element of $ \SaCe_m(\RR) $. Nevertheless, we can cover the zeros of a complex polynomial of degree $ \delta $ by a finite number of expressions in $ \Sace_{\delta^2}(\RR) $.

What we'd like to do here is to do it in a fairly optimal way.

\paragraph{1. The square roots of a complex number $ c=a+ib $.} 

\noindent 
The zeros of the polynomial $ f(Z)=Z^2-c $ are given in the form $ x+iy $ by the real solutions of the system \gui{$ x^2-y^2=a $, $ 2xy=b $} and are calculated as follows:
\begin{itemize}
 
\item $ (x^2+y^2)^2=a^2+b^2 $, so $ x=\pm u $ with $ u=\sqrt{\frac1 2 (a+\sqrt{a^2+b^2})}\in\Sace_{4}(\RR) $ 
 
\item $ y=\pm v $ with $ v=\sqrt{\frac1 2 (-a+\sqrt{a^2+b^2})} $, with the constraint $ xyb\geq 0 $.
\end{itemize}

\noindent If we denote $ z_1=u+iv $, $ z_2=-z_1 $, $ f_1(Z)=(Z-z_1)(Z-\ov{z_1}) $, $ f_2(Z)=(Z-z_2)(Z-\ov{z_2}) $ and $ g(Z)=(Z-c)(Z-\ov c) $ we obtain the equality
\vspace{-.5em}
\begin{equation} \label {eqsqrC}
g(Z^2)=f(Z)\ov f(Z)=f_1(Z)f_2(Z)
\end{equation}
The polynomials $f_1$, $f2 $, $g$ and $ f\ov f$ are real, everywhere $\geq 0$, each with a simple algebraic certificate for its character $\geq 0$ when the variable is real. When $c\neq0$, the zeros of $f$ are divided between the zeros of $f_1$ and those of $f_2$.

We can estimate that we have thus obtained the optimal solution for the square roots of a complex number in the context of the $\RR$-algebra of maps generated by the maps  \gui{virtual square roots} $\rho_{2,2}$, and more generally the optimal solution in the context of the algebras $\Sace_m(\RR)$.

Note that $x$ and $y$ being roots of real polynomials of degree $4$, there were  16 possible choices for $x+iy$. 

\paragraph{2. The general case}. ~\\
We have the following non-optimal result.

\begin{proposition}[\tsbf{FTA} via the virtual roots]\label{propTFA}~\\
Let $f$ be a complex monic polynomial of degree $\delta$.
There exist $\delta^4$ polynomials $q_\ell$ positive quadratic\footnote{Precisely: monic polynomials of degree $2$ everywhere $\geq0$.} having their coefficients constructed over $\rho_{\delta^2,k}(\dots)$ (polyroots in the real and imaginary parts of the coefficients of $f$) such that $f\ov f$ divides the product of $q_\ell$. \\
If the real closed field under consideration is discrete, the polynomial $ f(z) $ decomposes into a product of $(z-\zeta_j)$ factors explicit on $\gC$, with the $\zeta_j$ whose real and imaginary parts are roots of monic  real polynomials of degree $\delta^2$, whose coefficients are $\QQ$-polynomials in the real and imaginary parts of the coefficients of $f$.
\end{proposition}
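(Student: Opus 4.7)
The plan is to exhibit the $q_\ell$ explicitly via a resultant construction on the real and imaginary parts of $f$, and then to transfer the required divisibility from the discrete case to the general setting via the formal \pstref{Pst3}. Writing the coefficients of $f$ as $a_k + ib_k$ with $a_k, b_k \in \gR$, and $f(X+iY) = R(X,Y) + i\, I(X,Y)$ with $R, I \in \gR[X,Y]$, I would first set
\begin{equation*}
P(X) := \mathrm{Res}_Y\big(R, I\big) \in \gR[X], \qquad Q(Y) := \mathrm{Res}_X\big(R, I\big) \in \gR[Y].
\end{equation*}
Both are polynomials of degree at most $\delta^2$ whose coefficients are $\ZZ$-polynomials in the $a_k, b_k$; after suitable padding and normalisation one may assume they are monic of degree exactly $\delta^2$. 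In a discrete \rcf, every common zero $(x_j, y_j)$ of $R$ and $I$ gives a root $z_j = x_j + iy_j$ of $f$, so the real zeros of $P$ (resp.\ $Q$) include all real (resp.\ imaginary) parts of the complex roots of $f$.

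Next set $\xi_k := \rho_{\delta^2,k}(P)$ and $\eta_\ell := \rho_{\delta^2,\ell}(Q)$ for $1 \leq k,\ell \leq \delta^2$, and define $q_{k,\ell}(Z) := (Z-\xi_k)^2 + \eta_\ell^2 \in \gR[Z]$. Each $q_{k,\ell}$ is monic of degree $2$, everywhere $\geq 0$ on $\gR$, with coefficients that are polyroot expressions in the $a_k, b_k$; there are exactly $\delta^2 \cdot \delta^2 = \delta^4$ of them, matching the statement. The proposition then reduces to the divisibility $f\ov f \mid \Pi$ in $\gR[Z]$, with $\Pi(Z) := \prod_{k,\ell} q_{k,\ell}(Z)$. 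Since $f\ov f$ is monic, Euclidean division yields $\Pi = (f\ov f)H + r$ with $\deg r < 2\delta$, and the vanishing of each coefficient of $r$ is a finite list of Horn rules. In a discrete \rcf, virtual roots coincide with actual ordered roots, so for each root $z_j = x_j + iy_j$ of $f$ there exist indices with $\xi_{k_j} = x_j$ and $\eta_{\ell_j} = y_j$ (up to the sign of $y_j$, which does not affect $\eta_\ell^2$); consequently $q_{k_j,\ell_j}(Z) = (Z - z_j)(Z - \ov{z_j})$, and $f\ov f = \prod_j (Z-z_j)(Z-\ov{z_j})$ divides $\Pi$. These Horn rules are therefore valid in \Sa{Codrv}, and by \pstfref{Pst3} they are valid in \Sa{Arc} and in every extension thereof, which yields the first assertion for every real closed ring, in particular for $\RR$ and for every \ndrcf. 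The final assertion of the proposition is then immediate: in a discrete \rcf, the sign test picks out among the $\delta^4$ quadratic factors those that actually contribute linear factors to $f$, and the corresponding $\zeta_j$ have real and imaginary parts among the roots of the monic normalisations of $P$ and $Q$, both of degree $\delta^2$ with coefficients that are $\QQ$-polynomials (even $\ZZ$-polynomials) in the $a_k, b_k$.

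The main obstacle is not the transfer step --- \pstfref{Pst3} makes it automatic --- but the discrete-case verification of the polynomial identity $r = 0$: it must hold as an \emph{identity} in the coefficients $a_k, b_k$ of $f$, not merely pointwise for each specialisation. Carrying out the check at the level of the generic real closed field over $\QQ(a_\bullet, b_\bullet)$ requires careful bookkeeping of the indexing $j \mapsto (k_j, \ell_j)$, including the cases of coincident roots of $f$ and of $y_j = 0$, as well as the presence of extraneous real roots of $P$ or $Q$ not corresponding to roots of $f$ (these simply contribute additional monic $\geq 0$ factors to $\Pi$ and do not obstruct divisibility). Once this combinatorial verification is completed, no further computation is needed in the \nds setting, and the construction produces the desired $\delta^4$ quadratic factors with no sign test and no dependent choice.
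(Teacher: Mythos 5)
Your proof follows the paper's overall strategy --- construct two monic real auxiliary polynomials of degree $\delta^2$, take their virtual roots $\xi_k$ and $\eta_\ell$, form the $\delta^4$ quadratics $(Z-\xi_k)^2+\eta_\ell^2$, verify the divisibility in the discrete case and transfer via the formal Positivstellensatz \ref{Pst3}. The transfer step, the honest flagging of the combinatorial bookkeeping (repeated roots, $y_j=0$, extraneous roots), and the final count of $\delta^4$ are all in line with what the paper does. Where you diverge is in the construction of the auxiliary polynomials: you set $P=\mathrm{Res}_Y(R,I)$ and $Q=\mathrm{Res}_X(R,I)$ with $f(X+iY)=R+iI$, whereas the paper takes $h_1$ to be the monic polynomial whose roots are the $\delta^2$ half-sums $(\zeta_j+\ov{\zeta_k})/2$ (and similarly $h_2$ for the half-differences divided by $2i$), with coefficients given directly as $\QQ$-polynomials in the real and imaginary parts of the coefficients of $f$ by symmetry in the $\zeta_j$ and $\ov{\zeta_k}$ separately.

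This difference matters, and it is where your argument has a real gap. The virtual root maps $\rho_{d,j}$ are defined \emph{only on monic polynomials}, and the paper insists on this point repeatedly (see e.g.\ the discussion at the end of Section~\ref{secRVARC}: \gui{a priori virtual roots are only effective for monic polynomials}). The paper's $h_1=\prod_{j,k}(X-(\zeta_j+\ov{\zeta_k})/2)$ is monic of degree exactly $\delta^2$ by construction, so $\rho_{\delta^2,k}(h_1)$ is immediately meaningful. Your $P=\mathrm{Res}_Y(R,I)$ is not automatically monic: its leading coefficient in $X$ is some $\ZZ$-polynomial in the $a_k,b_k$, and its degree may drop below $\delta^2$ on the locus where higher coefficients of $R$ or $I$ in $Y$ vanish. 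You dismiss this with \gui{after suitable padding and normalisation one may assume they are monic of degree exactly $\delta^2$}, but this is precisely the step that cannot be waved through: in the \nds setting one cannot divide by a leading coefficient that is not provably invertible, and padding to higher degree changes the resultant. If you can prove (say, by analysing the Sylvester determinant for the homogeneous leading parts $\mathrm{Re}((X+iY)^\delta)$, $\mathrm{Im}((X+iY)^\delta)$) that the $X^{\delta^2}$-coefficient of $P$ is a fixed nonzero integer independent of the $a_k,b_k$, then the normalisation is harmless and your construction stands; but that lemma is not stated or proved, and without it $\rho_{\delta^2,k}(P)$ is not a legitimate term of the theory. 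The paper's symmetric-function construction sidesteps the whole issue, which is the reason it is preferable here even though the resultant idea is natural.
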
 
%
\begin{proof}
The real part of a zero $\zeta_j$ of $f$ is written $(\zeta_j+\ov{\zeta_j})/2$. The $(\zeta_j+\ov{\zeta_k})/2$ are $ \delta^2 $ and are the zeros of a real polynomial $ h_1 $ of degree $\delta^2$ whose coefficients are expressed as $\QQ$-polynomials in the real and imaginary parts of the coefficients of $f$. Among the real zeros of~$ h_1 $ are the $\frac1 2(\zeta_j+\ov{\zeta_j})$. These are therefore virtual roots of$ h_1$. Similar reasoning applies to the imaginary part, with a real polynomial $ h_2 $ of degree $\delta^2$. If $\alpha$ is a virtual root of~$ h_1 $ and~$ \beta $ a virtual root of~$h_2$, we associate the polynomial 
\[
q=(z-\alpha+i\beta)(z-\alpha+i\beta)=(z-\alpha)^2+\beta^2
\] 
which is one of the $ q_\ell $ in the statement.
\end{proof}

\begin{remark} \label{remPR2019} 
In the paper \cite{PR2019} the authors prove that a discrete ordered field $ \delta^2 $-closed (i.e.\ satisfying the intermediate value theorem for polynomials of degree $ \leq \delta^2 $) satisfies the fundamental theorem of algebra for polynomials of degree $ \leq \delta $. We can deduce this result from Proposition \ref{propTFA} using the formal Positivstellensatz as follows. Assume that the real closed field is discrete. Then the fact that $ f\ov f $ divides the product of $ q_\ell $ implies that $f$ admits at least one complex zero, among the zeros of $ q_\ell $.\footnote{We have a little better. The product of $ q_\ell $ decomposes into a product of linear, and therefore irreducible, factors in $ \gC[Z] $. Since $ f(Z) $ divides this product, and since $ \gC[Z] $ is a gcd domain, it is in fact a by-product.} Moreover, the virtual roots of $ h_1 $ and $ h_2 $ are characterised by systems of large inequalities. A Horn rule on the language of ordored fields states that, for a discrete real closed field, if we put these systems of large inequalities into hypotheses, we obtain as a valid consequence the fact that the product of $ f(\alpha\pm i\beta) $ suitable is zero. According to Item \textsl{2} of Formal Positivstellensatz \ref{Pst1}, this Horn rule is valid for any ordered field (discrete or not) as well as for real closed rings, since it is valid in the theory \Sa{Asonz}. And if the field satisfies the \tsbf{TVI} for polynomials of degree $ \leq \delta^2 $, the hypotheses are satisfied by the virtual roots of $ h_1 $ and~$ h_2 $. In the same way, if the language of ordered fields has been enriched by introducing virtual root maps for polynomials of degree $ \leq \delta^2 $, with the corresponding axioms, we will also obtain for the corresponding dynamic algebraic structures the fact that the product of $ f(\alpha\pm i\beta) $ suitable is zero.\eoe 
\end{remark}

\paragraph{3. The general case in terms of multisets.} \label{TFA3}
Reference: the \tsbf{FTA} in \cite[Richman]{Ric2000}. 
\\
\textsl{A priori}, the \gui{\tsbf{FTA} version multisets} seems difficult to formulate correctly without having the metric space of $ n $-multisets of complex numbers.

To get around this, we can reduce the \gui{\tsbf{FTA} version multisets} to a set of dynamically valid rules giving an essentially equivalent formulation that uses counting the number of zeros inside rectangles in the style of~\cite[Eisermann]{Eis2012}. The article \cite{PR2019} seems to us to give all the necessary details.

Since we do not assume that the ordered field is discrete, we must use only rectangles on whose edges we are certain that there are no complex zeros of the polynomial under consideration. 

An explicit test shows that we must avoid at most $ \delta $ horizontal lines and at most $ \delta $ vertical lines for our rectangles. This is formulated by saying that if we consider $ \delta+m $ distinct horizontal lines, we are certain that at least $m$ of them are good (the same goes for the verticals).

For these rectangles, counting the zeros inside works and always gives a well-defined integer. 

We have a valid rule which ensures that no complex zero lies outside an explicitly large enough rectangle. For this sufficiently large rectangle the count gives the expected number~$ \delta $. And a valid rule says that when a rectangle is cut in half, the sum of the two counts equals the previous count.

If we also want to deal with the non-archimedean case, we need to establish Horn rules saying that we can enclose the $ \delta $ zeros in a union of rectangles of arbitrarily small size. 

Further study of the paper \cite{PR2019} should lead to the desired results, which are more precise than the \tsbf{FTA} considered in Item 2, results which can be considered to be \underline{the} satisfactory constructive form of \tsbf{FTA}, and which will be valid for \sa{Corv} theory, formulable as valid Horn rules in that theory. But these Horn rules would not be valid in the theory of real closed rings.

\section{Some questions}

\Subsection{Continuous semialgebraic maps}

\begin{question} \label{Qu-continuiteRV} Make more explicit the (constructive) result
of continuity of virtual root maps: Item \emph{2} of \thref{thVirtualRoots}. \textsl{Each map $ \rho_{d,j}:\gR^d\to\gR $ is uniformly continuous on any ball $\mathrm{B}_{d,M}:=\sotQ{(a_{d-1},\dots,a_0)\,}{\,\sum_ia_i^2\leq M}$, ($M>0$).} Continuity should be given in fully explicit form à la {\L}ojasiewicz.
\end{question}

\begin{question} \label{questArc3} 
Give a constructive proof of \thref{thArc3}.
\end{question}
\begin{question} \label{Qu-polrootsafrvr} Let $\gR$ be a real closed ring. Is any element of $ \SaC_n(\gR) $ an integer on the ring of polynomials $ \Rxn $ an element of $ \SaCe_n(\gR) $ ? 
\end{question}

\begin{question} \label{Qu-polrootsRR} Is every continuous map $ \RR^m\to\RR $ which is integral on the ring of polynomials an element of $ \SaCe_m(\RR) $ ? The answer in classical mathematics is yes, because we can apply \thref{thPBpolyroots} to $\RR$. 
\end{question}

Questions \ref{questArc2} have to do with the o-minimal character of the \ndrcf structure. The word \gui{compact} below is used to mean \gui{closed bounded subset}.
\begin{questions} \label{questArc2} \textsl{(remember Proposition \ref{propfsagccovrsup})}\\
Consider an ordered field with virtual roots $\gR$. 
\begin{enumerate}
 
\item Show that a continuous semialgebraic map which is everywhere $ >0 $ on the compact $ \ClI{0,1}^n\subseteq \gR^n $ is minorized (on this compact) by an element $ >0 $. And that the lower bound is an element of $\gR$.
 
\item Extend the result to an arbitrary \gui{well-defined} semialgebraic compact: by this we mean a bounded semialgebraic closed subset $K$ for which the function \gui{distance to $ K $} is a continuous semialgebraic map (an element of $ \Sac_n(\gR) $).
\end{enumerate}
\end{questions}

\Subsection{Real closure}

\begin{question} \label{quest2cloture} 
If $\gK$ is a model of \sa{Co} (or of \sa{Co--}), is its 2-closure $ \gL $ as an $f$-ring still a model of \sa{Co} (or of \sa{Co--})? 
\end{question}

We repeat the previous question (adding some details) for the real closure.

\begin{question}\label{questCloturevirtuelle}
Let  $\gK$ be a model of the theory \Sa{Co} and $\gR$ be the dynamic algebraic structure $ \Sa{Corv}(\gK) $ (as \paref{pagevirtualclosure}). Is $\gR$ a constructive model of \sa{Corv}? 
\\
In particular, is the following metatheorem satisfied? Given two closed terms $\alpha$ and $ \beta $ of $\gR$ such that the rule $ \vd \alpha+\beta>0 $ is valid, is it true that one of the two rules $ \vd \alpha>0\, $, $ \vd \beta>0 $ is valid?
\\
We can ask the same question in the following form: if $\gK$ is a model of \sa{Co}, does the (usual) algebraic structure $ \ARC(\gK) $ satisfy the rule \Tsbf{AFRL}? 
\end{question}

\begin{question} \label{questEis2012} 
Can the article \cite[Eisermann]{Eis2012} be reread for a \ndrcf, i.e.\ in the theory \Sa{Crc2}? This question requires a detailed development of the ideas in Item 3, \paref{TFA3} in the paragraph concerning \tsbf{FTA}.
\end{question}

\begin{question} \label{questTVICrc} 
Show that the intermediate value theorem, stated in the form of the rule \RCFn\ \paref{RCFn}, is not valid in the theory \Sa{Crc2}. Note that a slightly weakened form is valid: see Item \textsl{\ref{i4Pst3}} of Formal Positivtellensatz \ref{Pst3} and Item \textsl{\ref{ivrTVI}} of \thref{thVirtualRoots}.
\end{question}

\begin{question} \label{questTVICrc2} 
Show that the theorem which states that every complex number has a square root is not a valid rule in the theory \Sa{Crc2}. Compare with Proposition \ref{propTFA} which might seem to assert the opposite.
\end{question}

\Subsection{Pierce-Birkhoff}

\begin{questions} \label{questpbring} ~

\noindent 1) Does the definition of a Pierce-Birkhoff ring given in \ref{defiSacembis} coincide in classical mathematics with the notion defined in \cite[Madden, 1989]{Mad89}?

\smallskip\noindent 2) If this is indeed the case, the question arises of giving constructive proofs for sophisticated results, such as the fact that a regular Noetherian coherent ring of dimension $ \leq 2 $ is a Pierce-Birkhoff ring \cite{LMSS2012}.

\smallskip\noindent 3) Recall that the usual Pierce-Birkhoff conjecture is proved in \cite{Mahe83} for $ \gR[x,y] $ when $\gR$ is a discrete real closed field but it is not so clear that there is a constructive proof for $ \RR[x,y] $. 
\end{questions}

\Subsection{The 17th Hilbert problem}

\begin{question} \label{quest17H} 
To what extent does the constructive solution of the 17th Hilbert problem for~$\RR$ (see \cite[section 6.1]{GL93}) apply to any strict $f$-ring with virtual roots? If not, what stronger theory would do: \Sa{Crc2}, \Sa{Arc}, \Sa{Crca}, \Sa{Crc3} (\paref{theorieCrc3})?
\end{question}

\Subsection{The Grail?}

The question arises of a theorem analogous to \thref{thTarski}, but now for the non-discrete case. 

\smallskip Formal Positivstellensatz \ref{Pst3} implies that the theory \sa{Arc} is the Horn theory generated by $\gRa$, by $\RR$ or by $\RR_{\tsbf{PR}}$ on the signature of \sa{Arc}.

\begin{question} \label{questGraal1} 
Is the theory \sa{Arc} skolemised from the cartesian theory generated by $ \gRa $, by $\RR$ or by $ \RR_{\tsbf{PR}} $ on the signature of commutative rings? 
\end{question}

\begin{question} \label{questGraal2} 
In what sense could we say that the theory \Sa{Corv} is the dynamical theory generated by $\RR$ \gui{without axiom of dependent choice} on the signature of \sa{Arc}? Same question with $ \RR_{\tsbf{PR}} $.

\noindent NB: this question seems impossible to formulate in classical mathematics, and in constructive mathematics, we would need to have a clear idea of $\RR$ \gui{without an axiom of dependent choice}. 
\end{question}

\newpage \thispagestyle{empty}

\chapter{The axiom of archimedianity}\label{secGeomReelsArchi}
\Today
\minitoc

In this chapter, in order to better describe the algebraic properties of $\RR$, we make an attempt which consists in not leaving the dynamical theories while preserving the essence of the non-\rdy~\Tsbf{HOF}.

However, the language remains essentially that of ordered rings.

In the third part, we will make a much more ambitious attempt using a much richer language, which will essentially show us a geometric theory of the reals as a precursor of the theory of o-minimal structures. 

\section{Archimedean \ndrcfs}

 The following rule, which means that the field is archimedean, is satisfied on $\RR$ 

\Regles{\Lab{AR1} $ \dsp\vd \Vou_{n\in\N} \abs x \leq n $ \qquad \qquad\qquad ({\bf Archimedes 1})}

\begin{definition} \label{defiCrca}
We define the theory \SA{Crca} of real closed archimedean fields as the geometric theory obtained by adding the axiom \Tsbf{AR1} to the theory \Sa{Crc2}. 
\end{definition}

The example given in Item 3 of Remark \ref{remCrc} (a local real closed ring with zerodivisors, model of the theory \sa{Crc2}) remains a model of \sa{Crca}. Examples \ref{exacorpsnondiscret} are also models of the theory \sa{Crca}: in general the subrings of $\RR$ stable for virtual root maps, the fraction $ \mathrm{Fr} $ and the inverses of invertible elements, are models of \sa{Crca}. 


\section{Some questions}

\Subsection{Axiom of archimedianity}

\begin{questions} \label{questRarchi} ~\\
We know that we cannot express the fact that $\RR$ is archimedian in a finitary way. We express it with the infinite rule \Tsbf{AR1}.
\begin{itemize}
 
\item One question that arises is whether the theory \Sa{Crca} obtained by adding the rule \Tsbf{AR1} is a conservative extension of \Sa{Crc2}, this seems likely.
 
\item We could start by showing that \Sa{Crca} proves the same Horn rules as~\Sa{Crc2}. 
 
\item On the other hand, for the corresponding formal theory in which we allow the introduction of predicates for $ P\Rightarrow Q $ and $ \forall x P $ (with Gentzen's natural deduction rules) it could be that a statement like \Tsbf{HOF} becomes provable. 
\end{itemize}

\end{questions}

\paragraph{The principle of omniscience \tsbf{LPO} is safe in real algebra?}

\begin{question} \label{questRLPO} ~\\ 
The following rule is not satisfied on $\RR$, because it implies the \Edineq\ rule. 

\Regles{\Lab{AR2} $ \dsp\vd x= 0 \; \vou \; \Vou
_{n\in\N} \abs x > 1/2^n $ \qquad ({\bf Archimedes 2})
}

\noindent But no doubt it is \gui{admissible}, in the sense that adding it to \Sa{Crca} would provide a conservative extension of \Sa{Crcd}. 

\noindent This result would be a kind of realisation of Hilbert's programme for \tsbf{LPO}, restricted to the theory~\Sa{Crca}. Indeed, the theory \Sa{Crcd} is itself harmless compared to \Sa{Crc2} because it proves the same Horn rules.
\end{question}

\paragraph{Convergent series in real algebra?}

\begin{question}
Let $[x]^n=\frac 1{2^n}\vi(x\vu - \frac 1{2^n}) $. The following rule is not a \rdy

\UneRegle{Cauchy} {$ \dsp\vd \Exists x\,\Vii_{n\in\N}\, 
\abs {x -\som_{p=0}^n[x_p]^p} \leq 1/2^n $ 
}

A function symbol $ \sum_{n=0}^\infty [x_n]^n $ should be introduced for these infinite sums. This would replace the illegitimate rule \tsbf{Cauchy} by an infinite number of legitimate Horn rules. But is such a function symbol legitimate?
\end{question}

\Subsection{Schmüdgen's Positivstellensatz}\index{Positivstellensatz!Schmüdgen's}

References: \cite{Schm91,Schw2002,Schw2003}

\begin{question} \label{questSchmudgen} 
Is geometric theory sufficient to develop theorems of the Schmüdgen type?
 \end{question}
\newpage \thispagestyle{empty}


\chapter*{Conclusion}
\addstarredchapter{Conclusion}

The most important questions that remain to be resolved for this 2nd part seem to us to be the following.

\begin{enumerate}
 
\item Question \ref{questthParamcontFsagc0}. Give a constructive proof of \thref{thParamcontFsagc0}.
\\
\item Let $\gR$ be a discrete real closed field and $ f\colon \gR^n\to\gR $ be a continuous semialgebraic map. There exists an integer $ r\geq 0 $, a continuous semialgebraic map $ g\colon \gR^{r+n}\to\gR $ defined on~$\RRa$, and an element $ \uy\in\gR^r $ such that
\[
\forall \xn\in\gR\;\; f(\xn)=g(\yr,\xn).
\]

\item Question \ref{Qu-continuiteRV}. Make explicit Item \emph{2} of \thref{thVirtualRoots} asserting the uniform continuity of maps $ \rho_{d,j}:\gR^d\to\gR $ on any closed ball.

\item Question \ref{questArc3}. Give a constructive proof of \thref{thArc3} \textsl{Any continuous semialgebraic map $ \RRa^n\to\RRa $ can be defined by a term of the theory~\sa{Arc}}. This will make it possible to clarify definitively the constructive Definition \ref{defiArc} of real closed rings and its relationship in classical mathematics with various constructive characterisations of real closed rings.

\item Question \ref{questCloturevirtuelle} concerning the possibility of constructing a real closure of a non-discrete ordered field. Let $\gK$ be a model of the theory \Sa{Co} and $\gR$ the dynamical algebraic structure $ \Sa{Corv}(\gK) $ (as \paref{pagevirtualclosure}). Is $\gR$ a constructive model of \sa{Corv}? 

\end{enumerate}

\part[Improved version of the theory of \ndrcfs]{An improved version of the theory of \ndrcfs and an attempt at a constructive version of o-minimal structures}

\chapter*{Introduction}
\addstarredchapter{Introduction}

In this third part we explore the possibility of better describing the algebraic properties of~$\RR$ by extending the language through the introduction of sorts for continuous semialgebraic maps on compact cubes.

\smallskip Indeed, we note that the general situation became clearer when we introduced the maps $\cdot\vu\cdot$, $\cdot\vi\cdot$ and the virtual root maps. These natural extensions to the language used have gone a long way towards overcoming the obstacles that the notion of \nds order seems to offer to a formalisation in finitary dynamical theory. 

\smallskip However, from a constructive point of view, it is not natural to be interested in the real zeros of polynomials whose degree is fixed. The good reason for this with $\RR$ is that we don't control the zeros in the neighbourhood of infinity when the degree is not clearly fixed. By replacing $\RR$ by the real interval $ \II_\RR=\ClI{-1,1}\subseteq \RR $ this so-called good reason disappears by itself. 

\smallskip The idea is that you control things constructively only within the compact framework. We need to detox from $ \pm\infty $ and go back to Greek mathematics! Consequently, we must drop $\RR$ in favour of the interval $ \II_\RR $, for example by replacing $ + $ by the half-sum. This requires us to go back to the axiomatics, but the benefit will be that it will be easier to formulate certain properties linked to the fact that from the constructive point of view $\RR$ \textsl{is not} discrete. 

\smallskip Note that up to now, we have been rather dry concerning some of the desirable properties stated in \ref{prptaCrcdesirable}: indeed we have not been able to correctly state the principles of extension by continuity or the gluing principles with sufficient generality. We could only talk about uniform continuity from outside the dynamical theory. Indeed, uniform continuity requires an alternation of quantifiers of the type $ \forall n \exists m \forall x,y $ which requires a priori to leave the framework of geometric theories. This is also due to the fact that we had no sort of continuous semialgebraic maps. 

In this section we try to make up for this lack. And we must remember that from a constructive point of view, a continuous map on a compact does not exist without a \mcu. The gamble we take here is to internalise the question of uniform continuity. This means that, for the moment, we remain within a finitary dynamical theory framework. 

Moreover, the extended framework that we propose with the introduction of these new sorts seems to be a correct framework for approaching a constructive treatment of o-minimal structures.

\smallskip Here is a brief description of the contents of the third part. 

\smallskip Chapter \ref{chapomin} recalls the fascinating properties of o-minimal structures in classical mathematics. These are finiteness properties exactly similar to those of the algebraic geometry of discrete real closed fields, and yet devoid of algorithmic character by the use of the sign test on real numbers in classical theory. Constructing an algorithmic theory of o-minimal structures is a crucial challenge in the \gui{constructive Hilbert programme}, which aims to uncover hidden constructions in contemporary classical mathematics and to reformulate purely ideal theorems into constructive statements. This programme avoids the use of the formal theory \sa{ZFC}, which describes an hypothetical set universe that does not correspond to any proven mathematical construction. 

\smallskip Chapter \ref{chapfnfrb} proposes a first finitary dynamical theory for sorts describing uniformly continuous real maps with values in $ \II_\RR $. 

\smallskip Chapter \ref{chapfnbg} gives a general framework to describe the properties of uniformly continuous maps defined on $ \II_\RR^{\,m} $ with values in $ \II_\RR^{\,n} $. A decisive aspect is to take into account the fact that a \mcu of a map $f$ can be seen as another uniformly continuous map $g$ attached to the map $f$. 

\smallskip Chapter \ref{chapaxiomesomin} proposes new axioms which are a priori satisfied for the algebraic geometry of real closed fields and which seem decisive for approaching an hypothetical and highly desirable constructive theory of o-minimal structures. We are nevertheless very far from having formalised in a dynamical theory what would be a constructive version of o-minimal structures.

\newpage \thispagestyle{empty}


\chapter{O-minimal structures} \label{chapomin}
\Today
\minitoc

\Subsection{Definition, definable parts}

References: \cite{cos99}, \cite[\hbox{1998}]{vdD}, \cite{vdD86,DD88}. 

\smallskip 
The definition of an o-minimal structure over a real closed field $\gR$ in classical mathematics is given by a collection $(S_n)_{n\in\N}$, where each $S_n$ is a set of parts of $\gR^n$. 

The elements of $S_n$ are called the \textsl{definable parts} of the o-minimal structure under consideration.

As defining axioms we ask the following stability properties.
\begin{enumerate}
 
\item The semi-algebraic subsets of $ \gR^n $ are in $ S_n $.
 
\item Every $ S_n $ is a Boolean algebra of sets (stability by finite intersection and reunion, and complementary passage).
 
\item If $A\in S_n$ and $B\in S_m$ then $ A\times B\in S_{m+n} $.
 
\item If $ A\in S_{n+1} $ and $ p_n:\gR^{n+1}\to\gR^n $ is the projection onto the first subspace $ \gR^n $ of coordinates (forgetting the last coordinate), then $ p_n(A)\in S_n $.

\item Any element of $S_1$ is a finite union of definable open intervals and points.
\end{enumerate}

\Subsection{Definable maps, outstanding results}
A map $ A\to B $ between definable sets is said to be \textsl{definable} if its graph is definable. 

\smallskip Let's recall some key results.

\smallskip 
\begin{itemize}
\labu The domain of definition and the image set of a definable map are definable.
\labu The composite of two definable maps is definable. 
\labu Any definable part is a Boolean combination of definable closed parts. More precisely, we have a definable cylindrical decomposition of $ \gR^n $ adapted to any finite family of definable parts (analogously to the CAD in the case of semi-algebraic parts for a discrete real closed field). The cells of the decomposition are homeomorphic to open simplexes, with definable homeomorphisms. 
\labu If $ A\in S_n $ is closed (for the Euclidean distance of $ \gR^n $) and non-empty, then the function \gui{distance to~$A$} 
\[
d_A:\gR^n\to \gR,\,x\mapsto \inf\nolimits_{y\in A}\norme{x-y}
\] 
is (continuous and) definable.
\labu If $ f\colon \gR^n\to \gR $ is continuous and definable, the zeros of $f$ form a definable closed part. Conversely, according to the previous item, any definable closed part of $ \gR^n $ is the zero set a definable continuous map. 
\labu If $ I=(a,b)\subseteq \gR $ (with $a,b\in\gR_\infty:=\gR\cup\so{-\infty,+\infty} $) and if $ f\colon I\to\gR $ is a definable map, then
\begin{itemize}
 
\item there is a subdivision of $ I $ 
\[
a=a_0<a_1<\dots<a_k=b
\] 
such that on each open interval of the subdivision, $f$ is either constant, or strictly monotone and continuous,
 
\item we also have a subdivision such that on each open interval of the subdivision, $f$ is derivable with definable derivative, continuous and of constant sign ($ =0 $ or $ >0 $ or $ <0 $). 
\end{itemize}
\labu If $ A\in S_n $ is a definable closed subset  and $ f\colon A\to \gR $ is definable continuous, it can be extended into a definable continuous map on $ \gR^n $.
\labu Any definable continuous map $ (-1,1)\to(-1,1) $ extends by continuity into a definable continuous map $ \ClI{-1,1}\to \ClI{-1,1} $. 
\labu Any continuous map $\ClI{-1,1}\to \ClI{-1,1} $ is bounded. 
\labu If $ f\colon \ClI{-1,1}^{n+1}\to \gR $ is continuous and definable, the map $ g\colon \ClI{-1,1}^{n}\to \gR $ defined by 
\[
g(\xn):=\sup\nolimits_{y\in\ClI{-1,1}}f(\xn,y)
\] 
is continuous and definable.
Note that in particular if $f$ is everywhere $\leq 0$ and if $A$ is the zero set $f$, then $ p_n(A) $ is the zero set $g$. If $A$ is a definable closed set $ \subseteq \ClI{-1,1}^{n+1} $, we can take for $f\,$ the map $ -d_A:\ClI{-1,1}^{n+1}\to \gR $. 
\end{itemize}

\Subsection{Variant} 
All this implies that we could just as easily define the considered o-minimal structure on $\gR$ by giving the following objects.
\begin{enumerate}
 
\item Definable continuous maps 
 $ \ClI{-1,1}^{n}\to \ClI{-1,1} $.
 
\item The bicontinuous increasing bijection (definable in any o-minimal structure) 
\[
(-1,1)\to \gR,\;x\mapsto x/(1-x^2)
\] 
and the reciprocal bijection
\[
\gR\to (-1,1),\;x\mapsto \big(\sqrt{4x^2+1}-1\big)/2x
\]
\end{enumerate}

In fact, using the coding given in Item 2, to get the definable continuous maps $ \gR^n\to \gR$ we just need to know how to describe the definable continuous maps $ f\colon (-1,1)^{n}\to(-1,1) $. To do this, all we need to know is how to describe the continuous definable maps $ g\colon\ClI{-1,1}^n\to\ClI{-1,1} $.

Let us note $ \norme{x}=\sup_{i\in\lrbn}\abs{x_i} $ for $ x=(\xn)\in\gR^n $.

In the case where the growth to infinity of any definable map $f$ from $ \gR^n $ to $\gR$ is bounded by a polynomial, for such a map $f$, we have a continuous definable map $ g\colon \ClI{-1,1}^{n}\to \ClI{-1,1} $ written in the form $ g(x)=\big(1-\norme{x}^2\big)^kf(x) $, and the map $f$ can be encoded by the pair $ (g,k) $. The map $g$ tends uniformly towards $0$ when $x$ tends towards the edge of $ \ClI{-1,1}^n $.

In the general case, we can replace $ h(x):=1-\norme{x}^2 $ in $ g(x) $ by a map $ \varphi\circ h $ where $ \varphi\colon \ClI{0,1}\to \ClI{0,1} $ is continuous definable and strictly positive on $ [0, 1)\, $. 

\newpage \thispagestyle{empty}

\chapter{Rings of bounded real maps}
\label{SubsecIcrftr}
\label{chapfnfrb}
\Today
\minitoc

%

\section{Some reminders of the second part}

The Horn theory \Sa{Aftr} of strongly real rings is the theory of reduced $f$-rings to which we add the relation symbol $\cdot >0$ as an abbreviation of \gui{$ x\geq 0\vii \exists z\,xz=1 $} and the function symbol $\Fr$ with the axioms \Tsbf{fr1} and \Tsbf{fr2} (Definition \ref{defiAftr}).

A strongly real ring is therefore a reduced  $\QQ$-$f$-algebra in which any element greater than an invertible positive element is itself invertible, and in which the rule~\tsbf{FRAC} is valid.

Finally, the dynamical theory \sa{Co} of \ndsofs can be described as the theory of local strongly real rings, which amounts to adding the axiom \Tsbf{OTF} (Lemma \ref{lemArftr}, Item \textsl{3}) to the theory \sa{Atfr}.

\begin{lemma} \label{lemIFR}\label{NOTAuplus}
Let $\gA$ be a strongly real ring. Let $\gI=\sotq{x\in\gA}{-1\leq x\leq 1} $. We define on $\gI$ the law $ x\uplus y=\frac 1 2 (x+y) $. The structure obtained on $\gI $ for the signature 
\Sigt{\Icro}{\cdot= \cdot,\cdot\geq \cdot, \cdot>\cdot\mathrel{;}\cdot\,\uplus\,\cdot, \cdot\times \cdot, \cdot\vu\cdot, \Fr(\cdot,\cdot),\,- \cdot,0} \label{NOTASigIcro}
\noindent allows us to reconstruct, in a unique way, the structure of $\gA$ as a strongly real ring.
\end{lemma}
%
%
\begin{proof}
This is essentially because any element $ z\in\gA $ can be written in the form $ x/y $ with $ -1<x<1 $ and $ 0<y<1 $ (for example $ x=\frac{z}{2+\abs z} $ and $ y=\frac{1}{2+\abs z} $).
\end{proof}
%

\section[Dynamical theory of rings of bounded real maps]{Dynamical theory of rings of bounded real maps}
\label{secAfrb}

We are going to use a more complete signature which corresponds better to the intuition of an interval as a convex subset.
\rdb

\smallskip We denote $ x\oplus y $ the following composition law in an $f$-ring: $ (x,y)\mt -1\vu(1\vi(x+y)) $.\footnote{This is the addition, put back into the interval $\gI $ if it comes out of it.} \label{oplus}

\smallskip We denote $ \mathrm{Cb} $ the set of \textsl{systems of barycentric coefficients}, defined precisely as follows: 
\[\ndsp
\mathrm{Cb}=\sotq{(r_k)_{k\in\lrbn}}{n\geq 2,\,r_1,\dots,r_n\in\QQ,\,r_1,\dots,r_n\geq 0, \,\sum_{k=1}^n r_k=1}. 
\]
We note $\II_\QQ=\sotq{r\in\QQ}{-1\leq r\leq 1}$.

For each $ \rho=(r_k)_{k\in\lrbn} $ in $ \mathrm{Cb} $, $ \mathrm{Brc}_{\rho} $ is a function symbol of arity $ n $ corresponding to the map: $\gI^n\to\gI,\;(x_k)_{k\in\lrbn}\mapsto\sum_{k=1}^nr_kx_k $. \label{notacb} The language of the dynamical theory of \textsl{rings of bounded real maps} \SA{Afrb} is defined by the following signature. There is only one sort, denoted~$ \AfrB$ 
\Sigt{\AfrB}{\cdot=0,\cdot\geq 0,\cdot>0\mathrel{;}(\mathrm{Brc}_{\rho})_{\rho\in\mathrm{Cb}}, \cdot\oplus\cdot, \cdot\times\cdot,-\,\cdot,\cdot\vu\cdot,\mathrm{Fr}(\cdot,\cdot), (r)_{r\in\II_\QQ}} \label{NOTASigAfrb}

\noindent {\bf Abbreviations}

\smallskip\noindent \textsl{Function symbols}

\vspace{-1em} \TwoCols{
\begin{itemize}
\labu $ x\vi y $ means $ - (-x\vu -y) $ 
\labu $ \abs{x} $ means $ x \vu -x $ 
\end{itemize}
}
{
\begin{itemize}
\labu $ {x}^+ $ means $ x \vu 0 $ 
\labu $ {x}^- $ means $ -x \vu 0 $ 
\end{itemize}
}

\medskip\noindent \textsl{Predicates}

\vspace{-1em} \TwoCols{
\begin{itemize}
\labu $ x = y $ means $ x - y = 0 $ 
\labu $ x \geq y $ means $ x - y \geq 0 $ 
\labu $ x > y $ means $ x - y > 0 $ 
\end{itemize}
}
{\begin{itemize}
\labu $ x \perp y $ means $ \abs x \vi \abs y =0 $ 
\labu $ x \leq y $ means $ y\geq x $ 
\labu $ x < y $ means $ y> x $ 
\end{itemize}
} 

\smallskip\noindent {\bf Axioms}\label{AxiomesAfrb}

\smallskip\noindent 
\textsl{The axioms are all the \rdys stated in the language of \sa{Afrb} which are valid for the interval $\gI=\ClI{-1,1}$ in the theory $\Sa{Aftr}(\QQ)$ of strongly real $\QQ$-algebras}. 

\begin{lemma} \label{lemAxiomesAfrb}
Valid Horn rules in \sa{Afrb} are decidable.
\end{lemma}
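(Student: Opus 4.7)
The plan is to reduce the decidability question for Horn rules in \sa{Afrb} to the corresponding decidability result for the dynamical theory \Sa{Aftr} of strongly real rings, and from there, via the formal Positivstellensatz \ref{Pst2} and Corollary \ref{corPst2}, to the decidable theory \Sa{Crcdsup} of discrete real closed fields with sup.

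First I would translate the language of \sa{Afrb} into the language of \Sa{Aftr}. Using the construction of Lemma \ref{lemIFR}, every term $t(\ux)$ of \sa{Afrb} in variables ranging over the interval $\gI$ can be effectively rewritten as a pair $(p(\ux),q(\ux))$ of terms in the language of \Sa{Aftr}, representing a fraction $p/q$ on $\gI$, where the denominators come from the $\mathrm{Fr}$ symbol and the barycentric constructors $\mathrm{Brc}_\rho$ are expanded using the dyadic formula behind $\uplus$ together with the rational constants $r \in \II_\QQ$. More precisely, any atomic formula $t_1 = 0$, $t_1 \geq 0$, or $t_1 > 0$ of \sa{Afrb} is equivalent, under the hypothesis $-1\leq x_i\leq 1$ for each free variable $x_i$, to a finite conjunction of atomic formulas in the language of \Sa{Aftr} that mention only $+$, $\times$, $\vu$, $\Fr$, the predicates of \Sa{Aftr}, and the rational constants.

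Next I would use the definition of \sa{Afrb}: a Horn rule $\Gamma \vd A$ is valid in \sa{Afrb} if and only if it is valid on the interval $\gI_\gB = \ClI{-1,1}_\gB$ of every model $\gB$ of $\Sa{Aftr}(\QQ)$. By the translation above, this is equivalent to the validity, in $\Sa{Aftr}(\QQ)$, of a single Horn rule obtained by (i)~translating each atomic formula to its $\Sa{Aftr}$-counterpart, and (ii)~adding the bounding hypotheses $-1\leq x_i \leq 1$ to the left-hand side, treating the denominators introduced by $\Fr$ as part of the hypotheses in the way described in Lemma \ref{lemAfrnzFRAC}. Writing this out is routine but requires care; this is the step I expect to absorb most of the technical effort.

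Once the translation is in place, I invoke Item \textsl{3} of Corollary \ref{corPst2}: the theory $\Sa{Aftr}(\QQ\cap\ClI{0,1})$, and hence $\Sa{Aftr}(\QQ)$ with finitely many rational constants, has a decision algorithm for the validity of Horn rules, by reduction to \Sa{Crcdsup} through the formal Positivstellensatz \ref{Pst2}, and ultimately to Tarski's quantifier elimination for \Sa{Crcd} (\thref{thTarski}). Since the translation from \sa{Afrb} to \Sa{Aftr} is effective and preserves Horn-rule validity, we obtain a decision procedure for Horn rules of \sa{Afrb}.

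The principal obstacle is the first step: systematically handling the barycentric function symbols $\mathrm{Brc}_\rho$ and the bounded addition $\oplus$, as well as verifying that the $\Fr$-denominators can always be cleared under the hypothesis that the variables lie in $\ClI{-1,1}$, without altering the logical form of the rule. Everything else is then a direct appeal to the results established in the second part.
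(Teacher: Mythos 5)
Your proposal is correct and follows essentially the same route as the paper, whose entire proof is the citation of Item \textsl{3} of Corollary \ref{corPst2}: the translation into the language of \Sa{Aftr} with the bounding hypotheses $-1\leq x_i\leq 1$ that you spell out is exactly what the definition of \sa{Afrb} (axioms = rules valid for $\gI$ in $\Sa{Aftr}(\QQ)$) leaves implicit. The detour through Lemma \ref{lemIFR} and fraction pairs $(p,q)$ is unnecessary — every function symbol of \sa{Afrb} already expands directly into a term of $\Sa{Aftr}(\QQ)$ — but this does not affect the argument.
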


%
\begin{proof}
Consequence of Item \textsl{3} of Corollary \ref{corPst2}.
\end{proof}

Note that it is not known whether valid \rdys are decidable. The same question arises in the local case for the dynamical theory \sa{Co}. This question does not seem very important insofar as we are essentially interested in the case of the theories \sa{Crc1} and \sa{Crc2}, where the problem remains mysterious and is added to that of knowing whether we have captured all the algebraic properties of the field $\RR$. 

\section{Dynamical theory of compact real intervals}\label{secIcr}

The dynamical theory \SA{Icr} of \textsl{compact real intervals} has a single sort, denoted $\Icr$. Its language is defined by the following signature.
\Sigt{\Icr}{\cdot=0,\cdot>0,\cdot\geq0\mathrel{;}(\mathrm{Brc}_{\rho})_{\rho\in\mathrm{Cb}}, \cdot\oplus\cdot, \cdot\times\cdot,-\,\cdot,\cdot\vu\cdot,(\rT_n)_{n\in\NN},\mathrm{Fr}(\cdot,\cdot), (r)_{r\in\II_\QQ}}\label{NOTASigIcr}
\noindent The dynamical theory \sa{Icr} is obtained from the theory \Sa{Afrb} described in Section \ref{secAfrb} by adding
\begin{itemize}
\item The function symbols $ \rT_n $ for Chebyshev polynomials, with the axioms\\
$\vd T_0(x) =1$,  $\vd T_1(x) =x$,  $\vd T_{n}(x) = 2xT_{n-1}(x) - T_{n-2}(x)$ ($n\geq 2$).\\ 
For the main properties of Chebyshev polynomials we refer to the book \cite[Chebyshev Polynomials]{MH2003}
\item The axiom \Tsbf{OTF} (valid for the \ndsof structure) reformulated as follows:

\regles{\lAb{OTF$'$} $ \,\, x\oplus y>0 \vdi_{x,y:\Icr}\; x>0 \;\;\vou\;\;y>0 $}
 
\end{itemize}

\begin{remark} \label{questIcr}
Theories \Sa{Icr} and \Sa{Co} are probably essentially identical. Otherwise it would be necessary to add axioms to \sa{Icr} to make it true.
\eoe
\end{remark}
\newpage \thispagestyle{empty}

\stepcounter{chapter}

\chapter{A reinforced language and the first corresponding axioms}
\label{chapfnbg}
\Today
\minitoc

\section*{Introduction}
\addtocontents{toc}{\skip0.8em}
\addcontentsline{toc}{section}{Introduction}
\rdb
We now introduce the sorts of continuous semialgebraic maps in order to obtain a more expressive dynamical theory than \Sa{Crc2} for \ndrcfs.

This new dynamical theory, which we shall call \sa{Crc3}, attempts here to summarise what we are entitled to expect from an o-minimal structure for uniformly continuous definable maps on $ \II_\RR $.%
\index{real closed!non dis@\textsl{non} discrete --- field}
 
 As we have already indicated, we restrict ourselves to uniformly continuous bounded maps, in much the same spirit as Bishop.

\section{The sorts of reinforced language}
\begin{enumerate}
 
\item The sort $\Icr$, for the compact interval $\gI=\ClI{-1,1}$.
 
\item For each $ m\geq 0,n\geq 1 $, a sort $ \Df_{m,n} $ for uniformly continuous definable maps\footnote{Continuous semialgebraic maps for the theory of real closed fields.} $\gI^m\to \gI^n $, the sort $ \Df_{m,1} $ is noted~$\Df_m$. In particular $ \Icr=\Df_0=\Df_{0,1} $.
  
\item A sort  $ \Li_n $ seen as a subset of $ \Df_n $, for certain smooth maps given at the start (at least Chebyshev polynomials).
 
\item A sort $ \Mc $ for \mcus. They are seen as particular objects of sort $ \Df_1 $.
 
\item A sort $ \Dfmc_n $ for pairs formed by an object of sort $ \Df_n $ and by a \mcu that fits it.
\end{enumerate}

\section{An abstraction principle}

For any term $ t(\xn) $ of type $ \Icr^n\to \Icr $ from the theory (where the $x_i$ cover all the free variables present in the term), a term which provides a map $ \gI^n\to\gI $ in a model, we must do what is necessary so that there exists a term $ \dot t $ in $ \Df_n $ which \gui{evaluates as $t$}. In other words, we need to put in place what we need to mimic, within our geometric theory, the $\lambda$-abstraction of the $\lambda$-calcul.

To do this, the signature
\begin{itemize}
 
\item symbols of type $ \Df_n\times \, \Icr^n\to\,\Icr $ for the evaluation of a $ u:\Df^n $ into $ x_i:\Icr $;
 
\item symbols for the composition of maps (with suitable axioms); 
 
\item symbols which give a name to the maps given in the signature (for example $ \cdot\times \cdot $ must have a name as an object so $ \Df_2 $); 
 
\item \dots
\end{itemize}

\smallskip This approach is essential if we are to be able to talk uniformly, and not just occasionally, about the properties of continuous definable maps.\footnote{This is reminiscent of what Kleene does when he defines (uniformly) primitive recursive maps.} 

\begin{remark} \label{remsymbolsfunctions} 
 One might think that some function symbols introduced \textsl{a priori} to mimic \hbox{$\lambda$-abstraction} could have been added \textsl{a posteriori} by virtue of the possibility of adding a function symbol in the case of unique existence, thus providing a dynamical theory essentially identical to the previous one. But the existence (in the unique existence in question) of a well-defined map from the sort $ \Icr\times \Icr $ to the sort $\Icr$ does not mean the existence of a corresponding object in~$ \Df_2 $, or even its uniqueness (because the extensionality axioms introduced later are too weak). What we mean by introducing \textsl{a priori} these maps as objects of sort $ \Df_{m,n} $, is that all sufficiently simple maps, in particular those described in the signatures, are indeed continuous and definable. \eoe
\end{remark}

\section{First structures on sorts $ \Df_{m,n} $}

\Subsection{Sorts $ \Df_{m,n} $}

The sort $\Icr$ of compact real intervals ($f$-rings) has the structure  described in Section \ref{secIcr}. 

\smallskip Each sort $\Df_m$ ($ m\geq 0,n\geq 1 $) is accompanied by function symbols and predicates as well as axioms of rings of bounded real maps (dynamical theory \Sa{Afrb}). 

\smallskip \rem The axiom \tsbf{OTF'} \paref{secIcr} is not valid for the sorts $ \Df_m=\Df_{m,1} $ for $ m\geq 1 $.
\eoe

\paragraph{Identification of $ \Df_{m,n} $ and $ ({\Df_m})^n $}~

\smallskip For each $ i\in\lrbn $ we have a function symbol $ \pi_{m,n,i} $ of type $ \Df_{m,n}\to \Df_m $ corresponding to the $i$-th coordinate. We also give a function symbol of type $ (\Df_m)^n\to \Df_{m,n} $ for the bijection; we will note it $ (\varphi_1,\dots,\varphi_n) $ in an admittedly somewhat ambiguous way. With the appropriate axioms, this allows us to identify $ \Df_{m,n} $ and $ (\Df_m)^n $.
 
 \Regles{ 
 \lab{ } 
 $ \vdi_{\varphi_1,\dots,\varphi_n:\Df_m}; \varphi_i=\pi_{m,n,i}((\varphi_1,\dots,\varphi_n)) $ \quad $ (i\in\lrbn) $ %
 \lab{ }
 $ \vdi_{\varphi\colon \Df_{m,n}}\; \varphi=(\pi_{m,n,1}(\varphi), \dots,\pi_{m,n,n}(\varphi)) $ 
 } 

We give the axioms that $ \pi_{m,n,i} $ is a morphism for the ring structures of bounded real maps of $ \Df_{m,n} $ and $\Df_m$.
 
\paragraph{Composition of maps}~

\smallskip We have function symbols $ \rC_{m,n,p} $ of type $ \Df_{n,p}\times \Df_{m,n}\to \Df_{m,p} $ corresponding to the composition of maps. For $ \varphi\colon \Df_{m,n} $ and $ \psi:\Df_{n,p} $, we write $ \psi\circ \varphi:=\rC_{m,n,p}(\psi,\varphi) $. We have constants of sort $ \Df_{n,n} $ for the \gui{identity maps} $ \Id_n\colon \gI^n\to\gI^n $.

\smallskip The axioms for the associativity of composition are given.

\smallskip We give the axioms which say that for $\varphi$ fixed of sort $\Df_{m,n}$, the map $\psi\mt\psi\circ \varphi$ is a morphism for the ring structures of bounded real maps of  $\Df_{n,p}$ and  $\Df_{m,p}$.

\smallskip We abbreviate $\rC_{n,m}$ to the term of 
type $\Df_n\times\, (\Df_m)^n\to\Df_m$, defined by 
\[
\rC_{n,m}(\varphi,\eta_1,\dots,\eta_n)\eqdef \varphi\circ(\eta_1,\dots,\eta_n).
\]
\paragraph{Evaluation of maps}~

\smallskip For $ \Icr=\Df_0 $, the function symbol $ \rC_{n,0} $ of type $ \Df_n\times \,\Icr^n\to \Icr $ defines the evaluation of a map in variables taken from $\gI $. The associativity of composition is then naturally related as follows with $x_i$ of sort $\Icr$, $\varphi$ of sort $ \Df_n $ and $ \psi $ of sort $ \Df_1 $ 
\[
(\psi\circ \varphi)(\xn)=(\psi\circ \varphi)\circ (\xn)=\psi\circ (\varphi\circ (x_1,\dots,x_n))=\psi(\varphi(\xn)).
\]

\paragraph{Constant maps}~

\smallskip We have a function symbol $ \jmath_n=\jmath_{0,n} $ of type $ \Icr\to\Df_n $ for constant maps.

Axioms are given which say that these are morphisms for ring structures of bounded real maps\footnote{For example for $ r\in\II_\QQ $, an axiom says that $ \jmath_n(r) $ is equal, as an object of sort $ \Df_n $, to the~$r$ given in the ring structure of bounded real maps.} and that the evaluation of a constant map in any arguments is indeed the desired constant.

\smallskip More generally, if $ 0\leq m<n $ we have a function symbol $\jmath_{m,n}$ of type $\Df_m\to\Df_n$ for objects of sort $\Df_n$ corresponding to maps which depend only on the $m$ first variables and which can therefore be expressed from objects of sort $\Df_m$. The axioms are analogous to those given for the case $m=0$.

\paragraph{Rearrangement of variables}~

\smallskip For $ m,n>0 $ and a map $ \kappa\colon \lrbm\to\lrbn $ we have an object $ \wi\kappa $ of sort $ \Df_{n,m} $ with the axiom 

\Regles{
\lab{c$_\kappa$} $\vdi_{\xn:\Icr}\; \wi\kappa(\xn)=(x_{\kappa_1},\dots,x_{\kappa_m})$
}

 We also give the associated natural axioms: $\wi{\kappa\circ \tau}=\wi\kappa\circ \wi\tau$.
 
\smallskip So for $ m<n $ we have the equality $ \jmath_{m,n}(\varphi)=\varphi\circ \wi\kappa $, where $ \kappa\colon \lrbm\to\lrbn $ verifies $ \kappa(i)=i $ for $ i\in\lrbm $. This equality means that we do not need to introduce the symbol $ \jmath_{m,n} $.

\smallskip We also have, for $ \tau_{n,i}\colon \lst1 \to \lrbn $ defined by $ \tau_{n,i}(1)=i $ and $ \psi $ so that $ \Df_{m,n} $, the equality $ \pi_{m,n,i}(\psi)=\wi{\tau_{n,i}}\circ \psi $. 

\Subsection{Gluing of elements of $ \Df_1 $ on consecutive intervals}

\paragraph{Restrict an element of $ \Df_1 $ to an interval}~

\smallskip If $f$ is of sort $ \Df_1 $, we want to have a name for the map $g$ obtained from the restriction of $f$ to an interval $ \ClI{a,b}\subseteq\gI $. 

This is done using a function symbol $ \Rs\colon\Df_1 \times \Icr \times \Icr \to \Df_1 $. 

When $ a\leq b $, we extend $g$ with $ g(x)=f(b) $ if $ x\geq b $ and $ g(x)=f(a) $ if $ x\leq a $. When $ a\geq b $, we permute $a$ and $b$. We therefore have the following axioms

\Regles
{\labu $ \vdi_{a,b:\Icr,f:\Df_1} \Rs(f,a,b)=\Rs(f,a\vi b,a\vu b) $ 
\labu $ \,\,x\leq a\vi b\vdi_{a,b,x:\Icr,f:\Df_1} \Rs(f,a,b)(x)=f(a\vi b) $ 
\labu $ \,\,x\geq a\vu b\vdi_{a,b,x:\Icr,f:\Df_1} \Rs(f,a,b)(x)=f(a\vu b) $ 
\labu $ \,\,a\vi b \leq x\leq a\vu b\vdi_{a,b,x:\Icr,f:\Df_1} \Rs(f,a,b)(x))=f(x)) $ 
}

\paragraph{Gluings} 

\smallskip If $ f_0,\dots,f_n $ are of sort $ \Df_1 $, and if $ 0\leq a_1\leq \dots\leq a_n\leq 1 $ we want to have a name for the map which glues the $f_i$ restricted to $ [a_i,a_{i+1}] $, possibly shifted vertically to ensure continuity. 

This is done using a function symbol $ \Rc_n\colon (\Df_1)^{n+1} \times (\Icr)^{n} \to \Df_1 $. 

We have the following basic axioms (let $ a_0=0 $ and $ a_{n+1}=1 $) 

\Regles{\lab{Rc$_{n,j}$} $ \Vii_{i} (a_i\leq a_{i+1}, f_{i}(a_{i+1})=f_{i+1}(a_{i+1}))
\vdi_{a_i:\Icr,f_i:\Df_1} \Rs(\Rc(\uf,\ua),a_j,a_{j+1})=\Rs(f_j,a_j,a_{j+1}) $} 

We add the appropriate axioms to force the assumptions of \tsbf{Rc$ _{n,j} $}. 

\Subsection{Axioms of weak extensionality}

For each sort $ \Df_n $ with $ n\geq 1 $ we have the following axiom of weak extensionality.

\UneRegle{EXT$_n$} {$\,\,a> 0 \vdi_{a:\Icr,\varphi:\Df_n}\;  
\abs {\varphi} < \jmath_n(a) \;\vou\; \Exists x\, \abs{\varphi(x)}>\frac {a} {2}$  }

As a consequence, a map which is everywhere null is \gui{almost} null: it is increased in absolute value by any constant $ >0 $. To conclude that it is null, we would have to invoke \Tsbf{HOF}, a non-geometric axiom which we do not want, or a dubious axiom of archimedeanity such as~\Tsbf{AR2} in an infinitary geometric theory.\footnote{A very unsound solution to this weakness of dynamical theory would be to consider as models only those where objects of sort $\Df_n$ are $\geq 0$ (resp. $>0 $) exactly when they are evaluated $\geq 0$ (resp.\ $ >0 $) at any point of $\gI $.} 

\smallskip Note that the axiom \tsbf{EXT$_0$} simply says that for $ x,a:\Icr $ and $ a>0 $, we have $ \abs x<a $ or $ \abs x>\frac a 2 $, which is a variant of \Tsbf{OTF}.

\smallskip Finally, note that the rule \tsbf{EXT$ _n $} follows from \tsbf{OTF} and the upper bound axioms in Section~\ref{AxBsup} (with $ m=0 $).

\newpage \thispagestyle{empty}

\chapter{Decisive axioms}\label{chapaxiomesomin}

\Today
\minitoc


\section{Upper bound axioms}\label{AxBsup}

The upper bound axioms replace \textsl{a priori} the projection axiom for definable parts in o-minimal structures.

\smallskip 
For $m>0$ we have a function symbol $ \sup_{m} $ of type $\Df_m\to\Icr$ for the lub. It satisfies the axioms describing the lub, namely

\regles{
\lab{sup$^{\Df}_{m}$}  $ \vdi_{\varphi:\Df_m}
\;\varphi\leq \jmath_m(\sup_{m}(\varphi)) $
\lab{SUP$^{\Df}_{m}$}  $\,\,\epsilon>0\vdi_{\epsilon:\Icr;\varphi:\Df_m}
\;\Exists \uy \;\;\varphi(\uy)+\epsilon> \sup_{m}(\varphi)$
}

More generally for $n\geq 0$ and $m>0$ we have a function symbol $\sup_{m+n,n}$ of type $\Df_{m+n}\to \Df_{n}$ for the upper bound on the $m$ last variables (in $\gI^m $) with the following axioms (so $\sup_{m,0}$ is none other than $\sup_m$).

\regles{
\lab{sup$^{\Df}_{m+n,n}$} $ \vdi_{\varphi:\Df_{n+m}}
\jmath_{n,m+n}(\sup_{m+n,n}(\varphi)) $ 
\lab{SUP$^{\Df}_{m+n,n}$} $ \,\epsilon>0\vdi_{\epsilon,\xn:\Icr;\varphi:\Df_{n+m}}
\sup_{m+n,n}(\varphi(\ux,\uy)+\epsilon> \sup_{m+n,n}(\varphi)(\ux) $ 
}

\eoe

\Subsection{Axioms of uniform continuity}

We now explain how a suitable system of axioms can translate the fact that any definable continuous map admits a \mcu, while remaining within the framework of a geometric theory. This is possible because definable continuous maps admit \mcus that are themselves particular continuous definable maps. The sorts $\Mc$ and $\Dfmc_n$ with their axioms are crucial here.

\smallskip We start by giving a function symbol $\jmath_\Mc$ for an injection of type $\Mc\to\Df_1$. An axiom specifies that $\jmath_\Mc$ is injective.

\smallskip We have a predicate $\mathrm{Mcu}_{n}$ on $\Df_n\times\,\Mc$ which expresses that $\mu$ is a modulus for $\varphi$ by means of the following abbreviation.

\Regles
{
\labu $\,\,\mathrm{Mcu}_n(\varphi,\mu)$ is an abbreviation
for:  $\mu \big(\abs{\varphi(\ux)-\varphi(\ux')}\big)\leq \norme{(\ux)-(\ux') }$ 
}

\noindent where $ \norme{(\uz)}=\sup_i\abs{z_i} $. Here the inequality seems to be written, in the form of evaluated maps, with $x_i$ and~$x'_i$ of sort $\Icr$. But in fact, this inequality should be read as linking two objects of sort $\Df_{2n}$. This avoids the use of the universal quantifier on $x_i$ and $ x'_j $ in the definition of uniform continuity! Dynamic theories do not allow the creation of new formulas using universal quantifiers, so we get round the difficulty by mimicking $\lambda$-abstraction! 

\medskip The following axioms specify constraints on objects $\mu$ of sort $\Mc$.

\DeuxRegles
{
\lab{Mc$_1$} $\,\, 0<b<c\vdi_{\mu:\Mc;b,c:\Icr}
\;0<\mu(b)< \mu(c)$
\lab{mc$_1$} $\vdi_{\mu:\Mc} 
\;\mathrm{Mcu}_1(\jmath_\Mc(\mu),\mu)$}
{\lab{Mc$_2$} $\,\, a\leq 0\vdi_{\mu:\Mc;a:\Icr}
\;\mu(a)=0$
}

\smallskip 
\rem In the case where we consider only continuous semialgebraic maps, 
{\L}ojasievicz assures us that any \mcu can be taken from the only maps $ \epsilon>0 \mt c \,\epsilon^n $ (with some $ c>0 $) 
\eoe

\smallskip 
The not very intuitive axiom $ \tsbf{mc}_1 $ will be a valid rule if we require in another axiom that any object of sort $ \Mc $ corresponds to a convex map. 
\eoe

\smallskip The sort $\Dfmc_n$ is defined as a subsort of the product sort $\Df_n\times \Mc$. It is accompanied by two function symbols $\mathrm{df}_n$ and $ \mathrm{mc}_n$, with the appropriate axioms, which mean that an object of sort $\Dfmc_n$ can be considered as a pair of objects $ (\varphi,\mu)$ of respective sorts\footnote{It is not necessary to create the product type as such. The following axiom will suffice: $ \mathrm{df}_n(\theta)=\mathrm{df}_n(\theta'), \mathrm{mc}_n(\theta)=\mathrm{mc}_n(\theta')\vdi_{\theta,\theta':\Dfmc_n} \theta=\theta'$.} $\Df_n$ and $\Mc$. The axiom $ \tsbf{mcu}_n $ says how the subsort is defined: if $ (\varphi,\mu) $ is of sort $ \Dfmc_n $, then the predicate $ \mathrm{Mcu}_n(\varphi,\mu) $ is satisfied. 

\regles
{\lab{mcu$_n$} $
\vdi_{\psi:\Dfmc_n}\;
\mathrm{Mcu}_n(\mathrm{df}_n(\psi), \mathrm{mc}_n(\psi))$
}

\smallskip Finally, we have the axiom $ \tsbf{DFMC}_n $ which says that any object $\varphi$ of sort $ \Df_n $ is the image by $ \mathrm{df}_n $ of an object $ \theta $ of sort $ \Dfmc_n $.

\regles{\lab{DFMC$ _n $} $ \vdi_{\varphi:\Df_n}\; \Exists \theta;\; \mathrm{df}_n(\theta)=\varphi $} 

All this machinery explicitly guarantees the uniform continuity of maps represented by objects of sort $ \Df_n $.
 
\begin{remark} \label{remMcupartout} 
Each time we introduced a constant of sort $ \Df_n $, we actually had to introduce a constant \gui{above it} of sort $ \Dfmc_n $. This is not difficult because in each case a \mcu is obvious.
\eoe
\end{remark}

\section{Axioms for smooth maps}

Objects of sort $ \Li_n $ are seen as objects of sort $ \Df_n $ which define certain smooth maps (i.e.~$ \Cin $). The signature includes a function symbol $ \jmath_{\Li_n} $ for the corresponding injection, with the axioms which say that it is an injective morphism for suitable laws (those which preserve the smooth maps). 

Constant maps and coordinate maps are given as objects of sort $ \Li_n $. 

The sort $ \Li_1 $ contains the Chebyshev polynomials. 

It might be possible to introduce other Nash maps into $ \Li_n $; this should not change the dynamical theory but could facilitate certain proofs. 

In the case where we are aiming at a particular o-minimal structure (other than that provided by the continuous semialgebraic maps), other maps can be given which will serve as a basis for the definition of the structure. 

\Subsection{Density axiom} 

The zeros of a non-zero smooth map (in an o-minimal structure) form a closed $F$ with an empty interior. We can express (at least partially) this density property (for the complementary of $ F $) by means of the following axiom

\Regles{
\lab{Dens$ _n $}
 $ \,\abs{\varphi(\an)}>0,\; \varphi\times \psi=0 \vdi_{a_1,\dots,a_n:\Icr;\varphi:\Li_n;\psi:\Df_n}\;\psi=0
 $ 
}

\Subsection{The derivation}

We think it is convenient to introduce the derivative (or partial derivative) following the Bridger-Stolzenberg definition (see \cite{AD1994} and \cite{BS1999}). A map $ \varphi\colon \II\to\RR $ is continuously derivable if the map \gui{rate of increase} can be extended by continuity, i.e.\ if there exists a uniformly continuous map $ \psi\colon \II^2\to\RR $ satisfying the identity \fbox{$ \varphi(x_1)-\varphi(x_2)=\psi(x_1,x_2)\times (x_1-x_2) $}. The derivative of $\varphi$ is then given by $ \varphi'(x)=\psi(x,x) $.

As we only want maps  $\gI^n \to \gI$, we must use an implicit coding $(x,p)$ with $x\in\gI$ and $ p\in \N $ for the real $px$. 

The map $\psi$ is uniquely determined by $\varphi$ (see below the valid rule \Tsbf{Der}) so in our dynamical theory we can introduce it by means of a function symbol $ \Delta=\Delta_{1,1} $ of type $ \Li_1\to\Li_2 $ which satisfies the axiom

\Regles{
\Lab{der}
 $ \vdi_{\varphi:\Li_1}\;\varphi(x_1)-\varphi(x_2)=\Delta(\varphi)(x_1,x_2)\times (x_1-x_2) $ 
}

\noindent \rem This equality appears to be written in the form of maps evaluated as linking two objects of sort~$ \rR $, but in fact \textsl{it should be read as linking two objects of sort $\Li_2$}, which are evaluated in $(x_1,x_2)$ in the form indicated in the axiom as it appears to be written.

\smallskip In fact we have to use the implicit coding alluded to above and the rule \tsbf{der} must in fact be written in the form

\Regles{
\lab{der}
 $ \vdi_{\varphi:\Li_1}\;\frac 1 {2p} (\varphi(x_1)-\varphi(x_2))=\Delta_p(\varphi)(x_1,x_2)\times (x_1-x_2) $ 
}

To avoid complicating the presentation, in the following we pretend that $ \Delta(\varphi) $ is the real map \gui{rate of increase}. 

\smallskip The following uniqueness rule follows from the axiom \tsbf{Dens$ _2 $}: in the first member we must read an equality between objects of sort $ \Df_2 $ and the smooth map is $ x_1-x_2 $ seen as an element of $~\Li_2 $. 

\Regles{
\Lab{Der}
 $ \varphi(x_1)-\varphi(x_2)=\psi(x_1,x_2)\times (x_1-x_2) \vdi_{\varphi:\Li_1;\psi:\Df_2}\;\Delta(\varphi)=\psi $ 
}
\smallskip 
In the same way, for several variables, analogous axioms are required for each partial derivative. In particular, for $ n\geq 2 $ and $ i\in\lrbn $ we have a function symbol $ \Delta_{n,i} $ of type $ \Li_n\to\Li_{n+1} $ which satisfies the axiom

\Regles{
\lab{der$_{n,i}$} {\mathrigid 1.7mu
$\vdi_{\varphi:\Li_n}\;\varphi(x_1,\dots,x_i,\dots,x_n)-\varphi(x_1,\dots,x'_i,\dots,x_n)=\Delta_{n,i}(\varphi)(x_1,\dots,x_i,x'_i,\dots,x_n)\times (x_i-x_i')$}
}

\noindent \textsl{This equality must be read as linking two objects of sort $ \Li_{n+1} $}.

\smallskip 
\rem Using the upper bound axiom, we obtain that smooth maps are lipschitzian, which gives a particularly simple \mcu.
\eoe

\Subsection{What other axioms for derivation?} 

Here we need to consider which axioms need to be introduced corresponding to the usual properties of derivation. Most of these properties should result from the definition (axioms $ \tsbf{der}_{n,i} $) and the axioms $ \tsbf{Dens}_{n} $. 
\Subsection{Axioms of virtual roots}

Virtual roots can be defined a priori for any smooth map whose derivative of order $k $ is $ >0 $ (on $ \II $), by virtue of Lemma \ref{lemBasicVirtualRoots} and the uniform constructive version of the mean value theorem. We then obtain most of the properties described in Definition \ref{prdfVirtualRoots} and \thref{thVirtualRoots}. The polynomial $ f(X) = X^{d} - ( a_{d-1} X^{d-1} + \cdots +a_1X+ a_0) $ which depends on $ d+1 $ variables can be replaced by any smooth $\varphi$ map of $ d+1 $ variables $ X, a_1,\dots,a_{d} $ whose $k$-th partial derivative  with respect to $ X $ is $ >0 $ as an object of sort $ \Li_{d+1} $.

If $ \inf(\varphi^{(k)})=\phi(a_1,\dots,a_{d}) $, we can treat the map $ \psi_k=\varphi+(c-\phi)^+X^k/k! $, for a constant \hbox{$ c>0 $}. Its $k$-th derivative with respect to $ X $ is $ \geq c $, and it is equal to $\varphi$ if $ \phi\geq c $. 
We can then introduce the $k$ virtual roots of $\varphi$ on $\gI $ as objects of sort $ \Df_d $ as in Definition \ref{prdfVirtualRoots} and \thref{thVirtualRoots}, but using our $\lambda$-abstraction. More precisely, we have  \gui{virtual roots} function symbols  $\mathrm{Rv}_{d,k,j}$ of type $\Li_{d+1}\to\Df_d$. And we have the corresponding axioms, direct translations of Definition \ref{prdfVirtualRoots} and of \thref{thVirtualRoots} (by replacing $-\infty$ and $+\infty $ by $-1$ and $+1$).

\section{Axioms of real closure or o-minimal closure}

From now on we deal with axioms that correspond to the general idea of real closure and o-minimal structure.

\Subsection{Finiteness axioms}

 The virtual root axioms are already axioms of finiteness, but independent of any o-minimal structure. 

We should have an analogue to Proposition \ref{factCompleteTable} (table of signs and variations) for continuous semialgebraic maps, and this should also work for o-minimal structures. In classical mathematics, tables of signs and variations exist for definable maps of an o-minimal structure, and Proposition~\ref{factCompleteTable} shows how to transform the classical statement into a constructive one. Here again, the problem is to formulate dynamical axioms that capture this type of result. One solution would be to have an infinite dynamical theory with axioms that say roughly that a continuous definable map is \gui{piecewise smooth monotone} in a statement to be specified, similar to Item \textsl{2} of Proposition~\ref{factCompleteTable}. 

\Subsection{Gluing of maps defined on an open covering}

A finite cover of $\gI^n $ by definable opens is given here in the form 
\[ 
V_i=\sotq{\ux\in\gI^n}{g_i(\ux)>0}\quad i\in\lrbp 
\]
where $g_i$ are of sort $\Df_n$ and satisfy \fbox{$\sum_{i}g_i^+>0$ (1)}. 
Functions $h_i$ of sort $\Df_n$ are considered, for which a priori only the restrictions $h_i\frt{V_i}$ are relevant. 
The fact that $h_i$ and $h_j$ coincide on $V_i\cap V_j$ results in the equality \fbox{$h_ig_i^+g_j^+=h_jg_j^+g_i^+$ (2)}. 
Under hypotheses (1) and (2) we ask for the existence and uniqueness of an $f$ of sort $\Df_n$ verifying $ fg_i^+=h_ig_i^+ $ for each $i$ (which means that $f\frt{V_i}=h_i\frt{V_i}$). 
A priori we must have $f=(\sum_{i}h_ig_i^+)(\sum_{i}g_i^+)^{-1}$ (hence the uniqueness). And we get 
\[
fg_k^+=\frac{\sum_{i}h_ig_i^+g_k^+}{\sum_{i}g_i^+}=
\frac{\sum_{i}h_kg_i^+g_k^+}{\sum_{i}g_i^+}=\frac{(h_kg_k^+)\,\sum_{i}g_i^+}{\sum_{i}g_i^+}=h_kg_k^+.
\] 

\Subsection{Gluing of maps defined on a closed covering}

A finite covering of $\gI^n $ by definable closed subsets is given here in the form 
\[ 
F_i=\sotq{\ux\in\gI^n}{g_i(\ux)\geq 0}\quad i\in\lrbp 
\] 
where $ g_i $ are of sort $ \Df_n $ and satisfy \fbox{$ \sup_{i}g_i\geq 0 $}. Functions $ h_i $ of sort $ \Df_n $ are considered, for which a priori only the restrictions $ h_i\frt{F_i} $ are relevant. The fact that $ h_i $ and $ h_j $ coincide on $ F_i\cap F_j $ results in the validity of the rules ($ i,j\in\lrbp $) 

\Regles{
\lab{} $ \,\,g_i(\ux)\geq 0\vet g_j(\ux)\geq 0\vdi_{\xn:\Icr;g_i,h_i,g_j,h_j:\Df_n)}\;h_i(\ux)=h_j(\ux) $ 
}

A uniform algebraic version of this validity can be stated as follows

\Regles{
\labu $ \vdi_{g_i,h_i,q_{ij},q_{ji}:\Df_n} \;({h_i-h_j})^2+g_iq_{ij}^++g_jq_{ji}^+=0 $ 
}

\noindent where the $ q_{k\ell} $ are of sort $ \Df_n $. Let's abbreviate the second member as $ E_{ij} $. Under the hypothesis of the equalities $ E_{ij} $, we want to have a map $f$ (an object $f$ of sort $ \Df_n $) satisfying an identity which means that $ f\frt{F_i}=h_i\frt{F_i} $. This can be expressed in the form of the following rule

\Regles{
\lab{RCVF} $ \,\,\sup_ig_i\geq 0\vet E_{1,2}\vet\dots\vet E_{p-1,p}\vdi_{g_i,h_i,q_{ij},q_{ji}:\Df_n}\;\Exists f,q_1,\dots,q_p\;\Vi_i(f-h_i)^2+g_iq_i^+=0 $ 
}

\noindent All the (free or dummy) variables in this rule are of sort $ \Df_n $.

In classical mathematics, this type of rule is valid for o-minimal structures. However, from a constructive point of view, we may have to restrict ourselves to coverings by \textsl{located closed subsets}.\footnote{A closed subset is said to be \textsl{located} when the distance to it is a well-defined map from a constructive point of view.
It seems necessary to add an axiom saying that the distance map to the sero set of a continuous definable map is itself definable.} This will complicate the writing of the axioms.

Note that the object $f$ whose existence is postulated is provably unique by virtue of a classical calculation for Positivstellensätze: we use the identity $(a+b)^2+(a-b)^2=2(a^2+b^2)$.
\Subsection{Axioms of extension by continuity}

Typically, the \FRACn\ rules are special continuity extension axioms. The aim here is to state different rules that apply more generally (without the continuity extension giving $0$ to the disputed values) but with a smooth denominator.

For example, a map that is definable outside the zeros of a smooth (non-zero) map and continuous on its domain of definition is uniquely extended by continuity if it is uniformly continuous.

The problem is to formulate this in the context of our dynamical theory.

\smallskip It will be good enough to be able to formulate it for a quotient $f/g$ (well-defined outside the zeros of $g$) with $g$ smooth.

The fact that $f$ cancels at the zeros of $g$ can be put as an hypothesis in the following strong form: there exists an $\alpha$ of such a sort $ \Mc $ that \fbox{$\alpha\circ \abs f\leq \abs g$}.

The uniform continuity of $ f/g $ outside the zeros of $g$ seems to be stated using the reciprocal bijection of an object of sort $ \Mc $ on the interval $\ClI{0,1}$. In fact, we want to write something like
\[ 
\mu\left(\abs{\frac{f(\ux)g(\ux')-f(\ux')g(\ux)}{g(\ux)g(\ux')}}\right)\leq \norme
{(\ux)-(\ux')}
\] 
for $ \norme{(\ux)-(\ux')}>0 $, which could be rewritten without the assumption $ \norme{(\ux)-(\ux')}>0 $ in the framework of geometric theory as
\[ 
\abs{f(\ux)g(\ux')-f(\ux')g(\ux)}\leq \abs{g(\ux)g(\ux')},\nu(\norme{(\ux)-(\ux')})
\] 
with the reciprocal bijection $ \nu $ (on $ [0,\mu(1)] $) of $ \mu\frt{\ClI{0,1}} $.

\Subsection{Conclusion: the improved real closed field structure}

The theory \SA{Crc3} will be obtained once all the axioms have been worked out. We have seen that the theory \Sa{Icr} can be considered as a variant of the theory \Sa{Co}. The theory \sa{Crc3}, which could also be called the theory of \textsl{compact real closed intervals}, is an improved variant of \Sa{Crc2}, in which o-minimal structures (which are enriched structures of real closed fields) could have a place as particular dynamic algebraic structures. 

\begin{theorem} \label{thicrc1}
In constructive mathematics, the real interval $\II=\II_\RR$ and the continuous semialgebraic maps ${\II}^n\to\II$, provides a model of the dynamical theory \sa{Crc3}.
\end{theorem}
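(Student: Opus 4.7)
The plan is to exhibit a concrete interpretation of each sort of \sa{Crc3} and verify every axiom scheme constructively, relying primarily on \thref{thParamcontFsagc0}, the {\L}ojasiewicz bound of \lemref{factfsagcLoja}, the general properties of $\Sac_n(\RR)$ established in Section~\ref{PropGenFsagcs}, and the Horn-rule equivalences encoded in the formal Positivstellensätze of Chapter~\ref{chap-afr}. First, I would interpret $\Icr$ as $\II_\RR=\ClI{-1,1}$ with the operations $\uplus$, $\cdot\times\cdot$, $\cdot\vu\cdot$, $\mathrm{Fr}$, the Chebyshev constants, and the rationals in $\II_\QQ$ acting as their usual real operations; the sort $\Df_{m,n}$ as the set of continuous semialgebraic maps $\II^m\to\II^n$ in the sense of Definition~\ref{defiFSAGC2}, identified with $\Sac_m(\RR)^n$ via the coordinate projections; the sort $\Li_n$ as the smooth (Nash) semialgebraic maps $\II^n\to\II$, containing the constants, coordinates and Chebyshev polynomials; the sort $\Mc$ as the convex nondecreasing semialgebraic maps $\II\to\II$ vanishing on $\ClI{-1,0}$ suitable as {\L}ojasiewicz moduli; and $\Dfmc_n$ as the pairs $(\varphi,\mu)\in\Sac_n(\RR)\times\Mc$ for which $\mu$ actually bounds the modulus of $\varphi$.

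Next, I would check the \sa{Afrb} and \sa{Icr} fragment pointwise on each $\Df_{m,n}$: this is obtained by transport from the reduced strict $f$-ring structure of $\II$ (\lemref{lemIFR}), using formal \pstfref{Pst2} to guarantee that every Horn rule valid for $\Sa{Aftr}(\QQ)$ on $\II$ holds pointwise on $\Sac_m(\RR)^n$. Evaluation, composition, constant injections, coordinate projections and rearrangement symbols are interpreted by the obvious set-theoretic operations, and their equational axioms follow from associativity and the functoriality of composition. Weak extensionality \tsbf{EXT}$_n$ is a direct consequence of \tsbf{OTF} applied to $a-\sup\abs\varphi$, whose well-definedness is ensured by the supremum axioms \tsbf{sup}$^{\Df}_{m+n,n}$ and \tsbf{SUP}$^{\Df}_{m+n,n}$; the latter are exactly Proposition~\ref{propfsagccovrsup}, stating that a parametrised $\sup$ of a continuous semialgebraic map over the compact $\II^m$ is again continuous semialgebraic and is attained up to any $\epsilon>0$ at some point of $\II^m$.

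The uniform continuity machinery is where the introduction of the sorts $\Mc$ and $\Dfmc_n$ pays off: by \lemref{factfsagcLoja} every $g\in\Sac_n(\RR)$ admits a modulus of the shape $\epsilon\mapsto \abs c\,\epsilon^\ell$, which after truncation to $\II$ lives in $\Mc$, making \tsbf{mc}$_1$, \tsbf{Mc}$_1$, \tsbf{Mc}$_2$, \tsbf{mcu}$_n$ and \tsbf{DFMC}$_n$ simultaneously realisable and internalising uniform continuity without the forbidden $\forall\exists$ pattern. Nash maps in $\Li_n$ are uniformly Lipschitz on $\II^n$, so the injection $\jmath_{\Li_n}$ and the rate-of-increase symbols $\Delta_{n,i}$ are interpreted by the Bridger--Stolzenberg constructive derivative; the uniqueness rule \tsbf{Der} reduces to the density axiom \tsbf{Dens}$_n$, which holds because the zero locus of a nonzero Nash map has empty interior in $\II^n$. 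The virtual root symbols $\mathrm{Rv}_{d,k,j}$ are interpreted as in Definition~\ref{prdfVirtualRoots} after the renormalisation $\psi_k=\varphi+(c-\phi)^+X^k/k!$ described in the text, and their accompanying axioms reduce to Horn rules of \sa{Crcdsup} valid on $\II_\RR$ by formal \pstfref{Pst3}.

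Finally, the o-minimal closure axioms (finiteness, gluing on open or closed definable coverings, extension by continuity through zeros of a smooth denominator) are interpreted using the semialgebraic triangulation results and the {\L}ojasiewicz inequality. Gluing on an open covering reduces to interpreting $f$ as the pointwise ratio $(\sum_i h_ig_i^+)/(\sum_i g_i^+)$, whose membership in $\Sac_n(\RR)$ and uniform continuity on $\II^n$ follow from \tsbf{FRAC} combined with a {\L}ojasiewicz bound on the strictly positive denominator. The hard part, and the point at which the informal nature of the statement becomes most visible, is the rule \tsbf{RCVF} for closed coverings: producing constructively the function $f$ and the certificates $q_i$ in the absence of a sign test requires a careful stratification argument and, as the paper anticipates, is probably only realistic for \emph{located} closed coverings. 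A second, more structural obstacle is that \sa{Crc3} itself is only sketched, so a fully rigorous proof of \thref{thicrc1} depends on first freezing the outstanding finiteness axioms of o-minimal flavour and pairing each of them with a known constructive semialgebraic result; once this is done, the theorem should follow from the combination of \thref{thParamcontFsagc0}, \lemref{factfsagcLoja}, the real-closure results of Chapter~\ref{chapreelclos}, and the semialgebraic finiteness theorems recalled in Chapter~\ref{chapomin}.
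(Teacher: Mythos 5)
The paper's own \textsl{proof} is a one-line draft note (set in typewriter font) saying only that the ad hoc Definition~\ref{defiFSAGC2+} of continuous semialgebraic maps \emph{seems} to reduce the theorem to a statement essentially about $\RRa$, that all details remain to be checked, and that the checking may force changes in the formulation of the axioms of \sa{Crc3}. Your proposal follows exactly that reduction-to-$\RRa$ strategy, but fills it out sort by sort and axiom group by axiom group, calling on the relevant supporting results at each step (\thref{thParamcontFsagc0}, \lemref{factfsagcLoja}, \pstfref{Pst2} and \pstfref{Pst3}, Proposition~\ref{propfsagccovrsup}, the Bridger--Stolzenberg derivative). So in approach you are squarely on the path the paper sketches; you have simply written out far more detail than the authors committed to print.

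You have also correctly identified the two reasons the argument cannot presently be closed off: the theory \sa{Crc3} is itself only sketched, so there is no fixed list of axioms against which to verify the model, and the closed-covering gluing rule \tsbf{RCVF} is the most likely candidate to fail constructively without a restriction to located closed coverings. These are precisely the concessions the paper's draft note makes implicitly when it warns that checking the details ``may lead us to change the formulation of some axioms.'' In short, there is no gap in your reasoning beyond what already sits in the paper's own statement of the theorem; the theorem is offered as plausible rather than proved, and your proposal is an accurate diagnosis of what stands between the sketch and a complete proof.
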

%
\begin{proof}
{\tt It seems that the ad hoc definition of continuous semialgebraic maps adopted in \ref{defiFSAGC2+} reduces this theorem to a theorem concerning essentially $\RRa$. But we need to check all the details and this may lead us to change the formulation of some axioms}.
\end{proof}

This theory \sa{Crc3} should make it possible to demonstrate constructive results which escape the more elementary theory \sa{Crc2} for the simple reason that they do not correspond to \rdys of \sa{Crc2}. Moreover, the same question arises for the \rdys of \sa{Crc2} themselves. 

\section{O-minimal structures}

It seems that the axioms proposed here for the structure of compact real closed intervals are almost correct for constructively describing certain o-minimal structures defined in classical mathematics: those generated by the restrictions to the compact cube $ \II^n $ of certain smooth maps in the neighbourhood of $ \II^n $. 

The weakest point seems to be stability by projection. A priori, the current system of axioms only guarantees this stability for definable closed bounded parts. 

The resulting structure depends on the smooth maps given at the outset in the $ \Li_n $ sorts.

We are primarily interested in the structure obtained by taking the real analytic maps in the vicinity of the cube as the starting smooth maps. In dimension 1, this probably works well with Chebyshev series. 

It is a real challenge to give a constructive version of the classical theory, for example starting from the presentations given in \cite{vdD86} and \cite{DD88}. It would at least be necessary to demonstrate constructively that real analytic maps in the neighbourhood of the cube give rise to a structure which is a model of the dynamical theory \Sa{Crc3}.

Note also that from a strictly computational point of view, we are a priori more interested in the enumerable field $ \RR_{\tsbf{PR}} $ of real numbers computable in primitive recursive time, or in the enumerable field $ \RR_{\tsbf{Ptime}} $ of real numbers computable in polynomial time (see Example \ref{exacorpsnondiscret}). As for the definable continuous maps corresponding to these fields (for a fixed o-minimal structure), they too can no doubt be enumerated using Chebyshev series. 

Finally, it should be pointed out that, as things stand, the system of axioms envisaged does not seem sufficient to really describe o-minimal structures, since it only guarantees stability by projection for bounded closed definable parts.

\section{Some questions}
\begin{question} \label{questRRmodele}~

\noindent Does the theory \Sa{Crc3} prove more \rdys than the theory of the interval $\ClI{-1,1} $ for a real closed field described by \Sa{Crc1}, or by \Sa{Crc2}?
\end{question}
\begin{questions} \label{questRROmin}~

\noindent 1) 
Does $\RR$ (for the sort $ \rR $), with $ \II=\II_\RR $ (for the sort $\Icr$) and for $ \Li_n $ the maps $ {\II}^n\to\II $ which are analytic in a neighbourhood of $ {\II}^n $, support a model of the dynamical theory \Sa{Crc3}? 

\smallskip\noindent 2)
If so, do the objects of sort $ \Df_n $ correspond exactly to the elements of the strongly real ring generated by the maps associated with the objects of sort $ \Li_n $?
\end{questions}

\newpage \thispagestyle{empty}

\chapter*{General conclusion}
\addstarredchapter{General conclusion}

This dissertation, and the unanswered questions it contains, is a measure of our ignorance of real algebra.

\newpage \thispagestyle{empty}
 
\rdb

\part*
{Références et index}
\addstarredpart{References and index}

\rdb
\addcontentsline{toc}{section}{References. Books}

\setlength\labelnumberwidth{4em}
\printbibliography[title=References. Books,keyword=book,resetnumbers=true]

\rdb
\addcontentsline{toc}{section}{References. Articles}
\setlength\labelnumberwidth{2em}
\printbibliography[title=References. Articles,keyword=paper,resetnumbers=true]

\newpage \thispagestyle{empty}

\normalsize

\catcode`\@=11

\newbox\toto
\newbox\tata
\newlength\largeurtoto
\newlength\largeurtata

\newcommand \ttt[3]{\setbox\tata=\hbox{#1}%
\ifdim\wd\tata<.17\textwidth\relax%
\setlength{\largeurtoto}{.80\textwidth}%
\setlength{\largeurtata}{.15\textwidth}%
\else%
\setlength{\largeurtoto}{.95\textwidth}%
\addtolength{\largeurtoto}{-\wd\tata}%
\setlength{\largeurtata}{\wd\tata}%
\fi%
\setbox\toto=\hbox{\parbox[b]{\largeurtoto}{\leftskip10pt\parindent-\leftskip\strut#2\dotfill\par}}%
\smallskip\noindent\mbox{%
\parbox[b][\ht\toto][t]{\largeurtata}{\strut#1}%
\parbox[b]{\largeurtoto}{\leftskip10pt\parindent-\leftskip\strut#2\dotfill\par}%
\hspace{.01\textwidth}%
\parbox[b]{.04\textwidth}{\hfill\strut#3}%
}%
\par}

\newcommand\NOTA[3]{\ttt{\smash{#1}}{\smash{#2}}{\pageref{#3}}}
\newcommand\NOTAx[2]{\ttt{\smash{\tsbf{#1}}}{\smash{#2}}{\pageref{Ax#1}}}
\newcommand\NOTAt[2]{\ttt{\smash{\sa{#1}}}{\smash{#2}}{\pageref{theorie#1}}}

\newpage
\chapter*{Notations index}
\addcontentsline{toc}{section}{Notations index}
\markboth{Notations}{Notations index}

\vspace{.5em}
\subsection*{Logic}

\NOTA{$ \vd $}{deduction rule}{NOTAvou}
\NOTA{$\vou$}{open branches of computation}{NOTAvou}
\NOTA{$ \Exists u $}{introduce a fresh variable $u$}{NOTAExists}
\NOTA{$\Bot$}{collapse symbol}{NOTABot}
\NOTA{$ \vii $}{logical \gui{and}}{NOTAvii}
\NOTA{$\vuu$}{logical \gui{or}}{NOTAvuu}
\NOTA{$ \exists $}{logical \gui{there exists}}{NOTAexists}
\subsection*{Function symbols}

\NOTA{$ a\vu b $}{$ \sup(a,b) $}{deficodisup}
\NOTA{$ \Fr(a,b) $}{$ a/b $ (supposed well-defined)}{lemCo0FRAC}
\NOTA{$ \fsac_F $}{continuous semialgebraic map of graph $ F $}{Notafsa}
\NOTA{$ a\vi b $}{$ \inf(a,b) $}{secgrl}
\NOTA{$ \Sqr $}{$ \Sqr(x)=\sqrt{ x^+} $}{subsecclotrlRR}
\NOTA{$ \rho_{d,j} $}{virtual root}{prdfVirtualRoots}
\NOTA{$ \uplus $}{$ {\frac 1 2} \,(x+y) $ on interval $ [-1,+1] $}{NOTAuplus}
\NOTA{$ \oplus $}{forced addition on $ [-1,+1] $}{oplus}
\NOTA{$ \mathrm{Cb} $}{barycentric coefficients}{notacb}
\NOTA{$ \mathrm{Brc} $}{barycenters}{notacb}
\NOTA{$ \rT_n $}{Chebyshev polynomial}{NOTASigIcr}

\subsection*{Theories}
\NOTAt{Cd}{discrete fields}
\NOTAt{Al}{local rings}
\NOTAt{Ac}{commutative rings}
\NOTAt{Asdz}{without zerodivisor rings}
\NOTAt{Ai}{integral rings, domains}
\NOTAt{Cod}{discrete ordered fields}
\NOTAt{Crcd}{discrete real closed fields}
\NOTAt{Apo}{preordered rings}
\NOTAt{Ao}{ordered rings}
\NOTAt{Aonz}{strictly reduced ordered rings}
\NOTAt{Ato}{linearly ordered rings}
\NOTAt{Atonz}{reduced linearly ordered rings}
\NOTAt{Apro}{proto-ordered rings}
\NOTAt{Aso}{strictly ordered rings}
\NOTAt{Asto}{linearly, strictly ordered rings}
\NOTAt{Asonz}{reduced strictly ordered rings}
\NOTAt{Aito}{linearly ordered domains}
\NOTAt{Codsup}{discrete ordered fields with sup}
\NOTAt{Atosup}{linearly ordered rings with sup}
\NOTAt{Astosup}{strict $f$-rings with sup}
\NOTAt{Crcdsup}{real closed fields with sup}
\NOTAt{Co--}{minimal theory of \ndsofs}
\NOTAt{Co}{\ndsofs}
\NOTAt{Crc1}{\ndrcfs}
\NOTAt{Tr0}{(bounded) lattices}
\NOTAt{Tr}{nontrivial lattices}
\NOTAt{Trdi}{distributive lattices}
\NOTAt{Grl}{$\ell$-groups}
\NOTAt{Gtosup}{linearly ordered groups with sup}
\NOTAt{Afr}{$f$-rings}
\NOTAt{Afrsdz}{$f$-rings without zerodivisor}
\NOTAt{Asr}{strict $f$-rings}
\NOTAt{Afrnz}{reduced $f$-rings}
\NOTAt{Asrnz}{reduced strict $f$-rings}
\NOTAt{Aftr}{strongly real $f$-rings}
\NOTAt{Afr2c}{$2$-closed $f$-rings}
\NOTAt{Asr2c}{$2$-closed strict $f$-rings}
\NOTAt{Co2c}{$2$-closed \ndsofs}
\NOTAt{Afrrv}{$f$-rings with virtual roots}
\NOTAt{Asrrv}{strict $f$-rings with virtual roots}
\NOTAt{Aitorv}{linearly ordered domains with virtual roots}
\NOTAt{Arc}{real closed rings}
\NOTAt{Corv}{\ndsofs with virtual roots}
\NOTAt{Co--rv}{}
\NOTAt{Crc2}{\ndrcfs, 2; essentially identical to \Sa{Corv} and to \Sa{Crc1}}
\NOTAt{Crca}{archimedean \ndrcfs}
\NOTAt{Afrb}{rings of bounded real maps}
\NOTAt{Icr}{compact real intervals, analog to \Sa{Co}}
\NOTAt{Crc3}{\ndrcfs, 3}
\subsection*{Signatures}

\NOTA{$\Sigma_\Ac$}{$(\cdot=0\mathrel{;}\cdot+\cdot,\cdot\times \cdot,-\,\cdot,0,1)$\quad commutative rings }{NOTASigAc}

\NOTA{$\Sigma_\Ai$}{$(\cdot=0,\cdot\neq0\mathrel{;}\cdot+\cdot,\cdot\times \cdot,-\,\cdot,0,1)$\quad integral rings (domains) }{NOTASigAi}

\NOTA{$\Sigma_\Alu$}{$(\cdot=0,\U(\cdot)\mathrel{;}\cdot+\cdot,\cdot\times \cdot,-\,\cdot,0,1)$\quad local rings with units }{NOTASigAlu}

\NOTA{$\Sigma_\Alrd$}{$(\cdot=0,\U(\cdot),\Rn(\cdot)\mathrel{;}\cdot+\cdot,\cdot\times \cdot,-\,\cdot,0,1)$ residually discrete local rings }{NOTASigAlrd}

\NOTA{$\Sigma_\Aso$}{$(\cdot=0,\cdot\geq 0,\cdot>0\mathrel{;}\cdot+\cdot, \cdot\times\cdot,-\cdot, 0,1)$\quad strictly ordered rings}{NOTASigAso}

\NOTA{$\Sigma_\Ao$}{$(\cdot=0,\cdot\geq 0\mathrel{;}\cdot+\cdot, \cdot\times\cdot,-\,\cdot,0,1)$\quad ordered rings }{NOTASigAo}

\NOTA{$\Sigma_{\CoO}$}{$(\cdot=0,\cdot\geq 0,\cdot>0\mathrel{;}\cdot+\cdot, \cdot\times\cdot,\cdot\vu\cdot,-\,\cdot,0,1)$\quad \ndsof, 0 }{NOTASigCoO}

\NOTA{$\Sigma_\Co$}{$(\cdot=0,\cdot\geq 0,\cdot>0\mathrel{;}\cdot+\cdot, \cdot\times\cdot,\cdot\vu\cdot,-\,\cdot,\mathrm{Fr}(\cdot,\cdot),0,1)$\;\ndsof}{NOTASigCo}

\NOTA{$\Sigma_\TR$}{$(\cdot=\cdot\mathrel{;}\cdot \vi \cdot, \cdot \vu \cdot, 0,1)$\quad bounded lattices }{NOTASigTr}

\NOTA{$\Sigma_\Gao$}{$(\cdot=0,\cdot\geq 0\mathrel{;}\cdot+\cdot,-\,\cdot,0)$\quad ordered abelian groups }{NOTASigGao}

\NOTA{$\Sigma_\GRL$}{$(\cdot=0\mathrel{;}\cdot+\cdot,-\,\cdot,\cdot\vu\cdot,0)$\quad $\ell$-groups }{NOTASigGrl}

\NOTA{$\Sigma_\AfR$}{$(\cdot=0\mathrel{;}\cdot+\cdot, \cdot\times\cdot,\cdot\vu\cdot,-\,\cdot,0,1)$\quad $f$-rings }{NOTASigAfr}

\NOTA{$\Sigma_{\AfR'}$}{$(\cdot=0,\cdot\geq 0\mathrel{;}\cdot+\cdot, \cdot\times\cdot,\cdot\vu\cdot,-\,\cdot,0,1)$\; $f$-rings, bis}{NOTASigAfr'}

\NOTA{$\Sigma_\AsR$}{$(\cdot=0,\cdot\geq 0,\cdot>0\mathrel{;}\cdot+\cdot, \cdot\times\cdot,\cdot\vu\cdot,-\,\cdot,0,1)$\quad strict $f$-rings }{NOTASigAsr}

\NOTA{$\Sigma_\AftR$}{$(\cdot=0,\cdot\geq 0,\cdot>0\mathrel{;}\cdot+\cdot, \cdot\times\cdot,\cdot\vu\cdot,-\,\cdot,\Fr(\cdot),0,1)$\quad strongly real rings }{NOTASigAftr}

\NOTA{$\Sigma_\AfRdc$}{$(\cdot=0,\mathrel{;}\cdot+\cdot, \cdot\times\cdot,\cdot\vu\cdot,-\,\cdot,\Sqr(\cdot),0,1)$\quad $2$-closed $f$-rings }{NOTASigAfr2c}

\NOTA{$\Sigma_\AsRdc$}{$(\cdot=0,\cdot\geq 0,\cdot>0\mathrel{;}\cdot+\cdot, \cdot\times\cdot,\cdot\vu\cdot,-\,\cdot,\Sqr(\cdot),0,1)$\quad $2$-closed strict $f$-rings }{NOTASigAsr2c}

\NOTA{$\Sigma_\AftRdc$}{$\cdot=0,\cdot\geq 0,\cdot>0\mathrel{;}\cdot+\cdot, \cdot\times\cdot,\cdot\vu\cdot,-\,\cdot,\Fr(\cdot),\Sqr(\cdot),0,1)$ }{NOTASigAftr2c}

\NOTA{$\Sigma_\AfRrv$}{$(\cdot=0 \mathrel{;}\cdot+\cdot, \cdot\times\cdot,\cdot\vu\cdot,-\,\cdot,(\rho_{d,j})_{1\leq j\leq d},0,1)$\quad $f$-rings with virtual roots }{NOTASigAfrrv}

\NOTA{$\Sigma_\ArC$}{$(\cdot=0\mathrel{;}\cdot+\cdot, \cdot\times\cdot,\cdot\vu\cdot,-\,\cdot,(\rho_{d,j})_{1\leq j\leq d},\mathrm{Fr}(\cdot,\cdot),0,1)$\quad real closed rings }{NOTASigArc}

\NOTA{$\Sigma_\CoRv$}{$(\cdot=0,\cdot\geq 0,\cdot>0\mathrel{;}\cdot+\cdot, \cdot\times\cdot,\cdot\vu\cdot,-\,\cdot,(\rho_{d,j})_{1\leq j\leq d)},\mathrm{Fr}(\cdot,\cdot),0,1)$  }{NOTASigCorv}

\NOTA{$\Sigma_{\Icro}$}{$(\cdot= \cdot,\cdot\geq \cdot, \cdot>\cdot\mathrel{;}\cdot\,\uplus\,\cdot, \cdot\times \cdot, \cdot\vu\cdot, \mathrm{Fr}(\cdot,\cdot),\,- \cdot,0)$\quad compact real interval, 0}{NOTASigIcro}

\NOTA{$\Sigma_\AfrB$}{$(\cdot=0,\cdot\geq 0,\cdot>0\mathrel{;}(\mathrm{Brc}_{\rho})_{\rho\in\mathrm{Cb}}, \cdot\oplus\cdot, \cdot\times\cdot,-\,\cdot,\cdot\vu\cdot,\mathrm{Fr}(\cdot,\cdot), (r)_{r\in\II_\QQ})$ }{NOTASigAfrb}

\NOTA{$\Sigma_\Icr$}{$(\cdot=0,\cdot>0,\cdot\geq0\mathrel{;}(\mathrm{Brc}_{\rho})_{\rho\in\mathrm{Cb}}, \cdot\oplus\cdot, \cdot\times\cdot,-\,\cdot,\cdot\vu\cdot,\mathrm{Fr}(\cdot,\cdot), (r)_{r\in\II_\QQ}, (\rT_n)_{n\in\NN})$ }{NOTASigIcr}


\subsection*{Some axioms and dynamical rules}
\NOTAx{AL}{$\,\, (x+y)z=1 \vd \Exists u \; xu=1 \;\vou\;\Exists v\;yv=1$ }
\NOTAx{Anz}{$\,\, x^2= 0 \vd x = 0$ }
\NOTAx{ASDZ}{$\,\,xy=0 \vd x=0 \;\;\vou\;\;y=0$ }
\NOTAx{CD}{$\vd x=0\;\;\vou\;\;\Exists {y}\;\;xy=1$ }
\NOTA{$\tsbf{CL$_{\Ac}$}$}{$\,\,1=_\Ac0\vd \Bot$ }{AxCLnqAc}
\NOTA{$\tsbf{ED\inq}$}{$\vd x=0 \vou x\neq 0$ }{AxEdnq}
\NOTA{$\tsbf{col\inq}$}{$\,\,0\neq 0 \vd 1=0$ } {Axcolnq}
\NOTAx{Uv}{$\,\, xy = 1 \vd \U(x) $ }
\NOTAx{UV}{$\,\, \U(x) \vd \Exists y\, xy = 1$ }
\NOTAx{AL1}{$\,\, \U(x+y) \Vd \U(x) \vou \U(y)$ }
\NOTAx{NIL}{$\,\,\mathrm{Z}(x)\vd\Vou_{n\in\NN^+} x^n=0$ }
\NOTA{\tsbf{aso1}  -  \tsbf{aso4}}{}{Axaso1}
\NOTAx{Iv}{$\,\, xy = 1 \vd x\neq 0 $ }
\NOTAx{IV}{$\,\, x\neq 0 \vd \Exists y\, xy = 1$ }
\NOTA{$\tsbf{col\igt}$}{$\,\,0> 0 \vd 1=0$ }{Axcoligt}
\NOTAx{OT}{$\vd x \geq 0 \;\vou\; x\leq 0$ }
\NOTA{\tsbf{RCF$_n$}}{$\,\, a< b\vet P(a)P(b)<0 \vd \Exists x\, 
\big(P(x)=0,\,a<x<b\big)$ \quad ($P(x)=\sum_{k=0}^na_kx^k$) }{RCFn}\NOTAx{Aonz}{$\,\, c\geq 0\vet x(x^2+c)\geq 0 \vd x\geq 0$ }
\NOTAx{Aso1}{$\,\, x> 0\vet xy\geq 0 \vd y\geq 0 $ }
\NOTAx{Aso2}{$\,\, x\geq 0\vet xy> 0 \vd y> 0$ } 
\NOTAx{OTF}{$\,\, x+y> 0 \vd x > 0\;\vou\;y>0$ }
\NOTA{\tsbf{OTF}$\eti$}{$\,\, xy< 0 \vd x <0 \;\vou\; y<0 $ }{AxOTFx}
\NOTAx{Ato1}{$\,\, y\geq 0 \vet xy=1\vd x\geq 0$ }
\NOTAx{Ato2}{$\,\, c\geq 0\vet x(x^2+c)\geq 0 \vd x^3\geq 0$ }
\NOTAx{Aonz3}{$\,\, a\geq 0\vet b\geq 0\vet a^2=b^2 \vd a=b$ }
\NOTAx{CVX}{$\,\,0\leq a\leq b\vd \Exists z\; zb=a^2$ }\NOTAx{FRAC}{$\,\,0\leq a\leq b\vd \Exists z\; (zb=a^2\vet 0\leq z\leq a)$ }\NOTAx{fr1}{$\vd \mathrm{Fr}(a,b)\, \abs b=(\abs a\!\vi\! \abs b)^2$ }\NOTAx{fr2}{$\vd 0\leq {\mathrm{Fr}(a,b)}\leq \abs a\!\vi\! \abs b$ }
\NOTA{\tsbf{FRAC$_n$}}{\smash{$\,\,\abs u^n\leq \abs v^{n+1}\vd \Exists z\; (zv=u\vet \abs z^{n}\leq \abs v)$ \quad ($n\geq 1$)}}{AxFRACn}
\NOTAx{Gao}{$\,\, x\geq 0\vet x\leq 0 \vd x = 0$ }
\NOTAx{grl}{$\vd x+(y\vu z)=(x+y)\vu(x+z)$ }
\NOTAx{sup1}{$\vd x\vu y\,\geq x$ }
\NOTAx{sup2}{$\vd x\vu y\,\geq y$ }
\NOTAx{Sup}{$\,\, z\geq x\vet z\geq y\vd z\geq x\vu y$ }
\NOTAx{afr}{$\vd x^+\, (y\vu z)=(x^+\, y)\vu(x^+\, z)$ }
\NOTAx{sup}{$\vd ((x\vu y)- x)\,((x\vu y)- y)=0$ }
\NOTA{\tsbf{afr1} - \tsbf{afr7} }{}{Axafr1}
\NOTA{\tsbf{Afr1} - \tsbf{Ato2} }{}{AxAfr1}
\NOTA{\tsbf{Afrnz1} - \tsbf{Afrnz2} }{}{AxAfrnz1}
\NOTAx{AFRL}{$\,\, z(x+y)=1\vet x+y\geq 0\vd \Exists u\;(ux=1\vet x\geq 0 )\;\vou\; \Exists v\;(vy=1\vet y\geq 0 )$ }
\NOTAx{sqr0}{$\vd \Sqr(0)=0$ }
\NOTAx{sqr1}{$\vd \Sqr(x)=\Sqr(x)^+ $ }
\NOTAx{sqr2}{$\vd \Sqr(x)=\Sqr(x^+)$ }
\NOTAx{sqr3}{$\vd \Sqr(x)^2=x^+ $ }
\NOTAx{sqr4}{$\vd \Sqr(x^+y^+)=\Sqr(x)\Sqr(y)$ }
\NOTA{\tsbf{vr$ _{i,j,k} $}}{axioms for virtual roots, examples}{exavr} 

\subsection*{Algebraic structures, generic models}

\NOTA{$\RRa$}{the field of real algebraic numbers}{NOTARa}
\NOTA{$ \RR_{\tsbf{PR}} $}{the field of primitive recursive real numbers}{exacorpsnondiscret}
\NOTA{$ \RR_{\tsbf{Ptime}} $}{the field of real numbers computable in polynomial time}{exacorpsnondiscret}
\NOTA{$ \RR_{\tsbf{Rec}} $}{the field of recursive real numbers}{exacorpsnondiscret}
\NOTA{$ \Sac_n(\RR) $}{\ref{defiFSAGC2}: semialgebraic continuous fonctions in $n$ variables, see also \ref{defiFSAGC2+}}{defiFSAGC2}

\smallskip\noindent In the following $\gA$ is a commutative ring with a suitable algebraic structure over a suitable dynamical theory in some context, $\gR$ is an $f$-ring with virtual roots.
 
\NOTA{$ \AFR(\gA) $}{$f$-ring freely generated by $\gA$}{notaAFR}
\NOTA{$ \SIPD_n(\gA) $}{ring of semipolynomials (or sipd maps) in $n$ variables over $\gA$}{defiSIPD}
\NOTA{$ \SIPD_n(\gA,\gB) $}{ring of $\gA$-semipolynomials in $n$ variables over $\gB$}{defiSIPD}
\NOTA{$ \AFRdC(\gA) $}{$2$-closure of an $f$-ring}{notaAFR2C}
\NOTA{$ \Sace_n(\gR) $}{ring of integral continuous semialgebraic maps in $n$ variables}{defiSacem}
\NOTA{$\Sac_n(\gR)$}{ring of continuous semialgebraic maps in $n$ variables}{defiFSAGC2+}
\NOTA{$\AFRNZ(\gA)$}{reduced $f$-ring generated by $\gA$}{defiSacembis}
\NOTA{$\AFRRV(\gA)$}{$f$-ring with virtual roots generated by $\gA$}{defiSacembis}
\NOTA{$\PPM(\gA)$}{ring of piecewise polynomial elements of $\AFRRV(\gA)$}{defiSacembis}
\NOTA{$\ARC(\gA)$}{real closed ring generated by $\gA$}{notaARC}

\rdb
\addcontentsline{toc}{section}{Terms index}
\printindex

\end{document}